\newcommand\textcyr[1]{{\fontencoding{OT2}\selectfont #1}}
\theoremstyle{plain}
\numberwithin{equation}{subsection}
\theoremstyle{definition} 
\newtheorem{theorem}[equation]{Theorem}
\newtheorem{lemma}[equation]{Lemma}
\newtheorem{proposition}[equation]{Proposition}
\newtheorem{prop}[equation]{Proposition}
\newtheorem{corollary}[equation]{Corollary}
\newtheorem{cor}[equation]{Corollary}
\newtheorem{question}[equation]{Question}
\newtheorem{construction}[equation]{Construction}
\newtheorem{definition}[equation]{Definition}
\newtheorem{defn}[equation]{Definition}
\newtheorem{example}[equation]{Example}
\newtheorem{ex}[equation]{Example}
\newtheorem{notation}[equation]{Notation}
\newtheorem{recollection}[equation]{Recollection}
\newtheorem{remark}[equation]{Remark}
\newtheorem{rmk}[equation]{Remark}
\newtheorem{warning}[equation]{Warning}
\newtheorem*{remark*}{Remark}
\newtheorem*{terminology*}{Terminology}
\newtheorem*{interpretation*}{Interpretation}
\newtheorem*{definition*}{Definition}
\newtheorem*{conjecture*}{Conjecture}
\newtheorem*{notation*}{Notation}
\newtheorem*{convention*}{Convention}
\newcommand{\euscr}[1]{\EuScript{#1}} 
\newcommand{\acat}{\euscr{A}} 
\newcommand{\bcat}{\euscr{B}} 
\newcommand{\ccat}{\euscr{C}} 
\newcommand{\dcat}{\euscr{D}} 
\newcommand{\ecat}{\euscr{E}} 
\newcommand{\pcat}{\euscr{P}} 
\newcommand{\qcat}{\euscr{Q}} 
\newcommand{\rcat}{\euscr{R}}
\newcommand{\jcat}{\euscr{J}}
\newcommand{\tcat}{\euscr{T}}
\newcommand{\mcat}{\euscr{M}}
\newcommand{\xcat}{\euscr{X}}
\newcommand{\ycat}{\euscr{Y}}
\newcommand{\lcat}{\euscr{L}}
\newcommand{\gcat}{\euscr{G}}
\newcommand{\Xss}{X_\bullet}
\newcommand{\Yss}{Y_\bullet}
\newcommand{\Zss}{Z_\bullet}
\newcommand{\Pss}{P_\bullet}
\newcommand{\Qss}{Q_\bullet}
\newcommand{\Kcs}{K^\bullet}
\newcommand{\calO}{\euscr{O}}
\newcommand{\calL}{\euscr{L}}
\newcommand{\bbF}{\mathbb{F}}
\newcommand{\bbC}{\mathbb{C}}
\newcommand{\bbT}{\mathbb{T}}
\newcommand{\bfE}{\mathbf{E}}
\newcommand{\bfF}{\mathbf{F}}
\newcommand{\frakm}{\mathfrak{m}}
\DeclareMathOperator{\MU}{MU}
\DeclareMathOperator{\Mod}{Mod}
\DeclareMathOperator{\map}{map}
\DeclareMathOperator{\fun}{fun}
\DeclareMathOperator{\Endo}{end}
\DeclareMathOperator{\Map}{Map}
\DeclareMathOperator{\Fun}{Fun}
\DeclareMathOperator{\Alg}{Alg}
\DeclareMathOperator{\CAlg}{CAlg}
\DeclareMathOperator{\Lie}{Lie}
\DeclareMathOperator{\End}{End}
\DeclareMathOperator{\Tor}{Tor}
\DeclareMathOperator{\Hom}{Hom}
\DeclareMathOperator{\im}{im}
\DeclareMathOperator*{\colim}{colim}
\DeclareMathOperator{\Tot}{Tot}
\newcommand{\spaces}{\euscr{S}\mathrm{pc}} 
\newcommand{\pretheories}{\widehat{\euscr{T}\mathrm{hry}}}
\newcommand{\spectra}{\euscr{S}\mathrm{p}} 
\newcommand{\sets}{\euscr{S}\mathrm{et}} 
\newcommand{\simplicialspaces}{\Fun(\Delta^\op,\spaces)}
\newcommand{\malcevtheories}{\euscr{M}\mathrm{alc}} 
\newcommand{\synspectra}{\euscr{S}\mathrm{yn}} 
\newcommand{\largecatinfty}{\widehat{\euscr{C}\mathrm{at}}_{\infty}} 
\newcommand{\PrL}{\mathrm{Pr}^{L}} 
\newcommand{\lmodoperad}{\mathcal{LM}\mathrm{od}}
\newcommand{\cfrees}{\euscr{L}_0}
\newcommand{\frees}{\euscr{F}}
\newcommand{\lfrees}{\euscr{L}}
\newcommand{\spheres}{\euscr{G}}
\newcommand{\presheaves}{\euscr{P}\mathrm{sh}}
\newcommand{\smallpresheaves}{\euscr{P}\mathrm{sh}}
\newcommand{\largemodels}{\Model^{\nb}}
\newcommand{\LMod}{\mathrm{L}\euscr{M}\mathrm{od}}
\newcommand{\Model}{\euscr{M}\mathrm{odel}}
\newcommand{\Syn}{\synspectra}
\newcommand{\CoAlg}{\mathrm{Co}\euscr{A}\mathrm{lg}}
\newcommand{\Sph}{\euscr{S}\mathrm{ph}}
\newcommand{\Sp}{{\euscr{S}\mathrm{p}}}
\newcommand{\Free}{\operatorname{Free}}
\newcommand{\cterminal}{\mathbbm{1}_\ccat}
\newcommand{\integers}{\mathbb{Z}}
\newcommand{\thesphere}{\mathbf{S}}
\newcommand{\pt}{\mathrm{pt}}
\newcommand{\id}{\mathrm{id}}
\newcommand{\proj}{\mathrm{proj}}
\newcommand{\h}{\mathrm{h}}
\newcommand{\op}{\mathrm{op}}
\newcommand{\der}{\mathrm{der}}
\newcommand{\fil}{\mathrm{fil}}
\newcommand{\core}{{\text{core}}}
\newcommand{\Cpl}{\mathrm{Cpl}}
\newcommand{\res}{\mathrm{res}}
\newcommand{\ev}{\mathrm{ev}}
\newcommand{\nb}{\mathrm{nb}}
\newcommand{\stablyctame}{\mathrm{sct}}
\newcommand{\cpl}{\mathrm{cpl}}
\newcommand{\derivable}{derivable}
\newcommand{\bs}{{-}}
\newcommand*\noloc{%
        \nobreak
        \mskip6mu plus1mu
        \mathpunct{}%
        \nonscript
        \mkern-\thinmuskip
        {:}%
        \mskip2mu
        \relax
}
\newcommand{\pto}{}
\newcommand{\pgets}{}
\DeclareRobustCommand{\pto}{\mathrel{\mathpalette\p@to@gets\to}}
\DeclareRobustCommand{\pgets}{\mathrel{\mathpalette\p@to@gets\gets}}
\newcommand{\p@to@gets}[2]{%
  \ooalign{\hidewidth$\m@th#1\mapstochar\mkern5mu$\hidewidth\cr$\m@th#1\to$\cr}%
}
\tikzset{mid vert/.style={/utils/exec=\tikzset{every node/.append style={outer sep=0.8ex}},
postaction=decorate,decoration={markings,
mark=at position 0.5 with {\draw[-] (0,#1) -- (0,-#1);}}},
mid vert/.default=0.75ex}
  \def\subsection{\@startsection{subsection}{1}%
  \z@{.7\linespacing\@plus\linespacing}{.5\linespacing}%
  {\normalfont\bfseries\centering}}
\let\oldtocsection=\tocsection
\let\oldtocsubsection=\tocsubsection
\let\oldtocsubsubsection=\tocsubsubsection
\renewcommand{\tocsection}[2]{\hspace{0em}\oldtocsection{#1}{#2}}
\renewcommand{\tocsubsection}[2]{\hspace{1em}\oldtocsubsection{#1}{#2}}
\renewcommand{\tocsubsubsection}[2]{\hspace{2em}\oldtocsubsubsection{#1}{#2}}
\begin{document}

\title[Malcev theories]{Unstable synthetic deformations I: \\ Malcev theories}
\author{William Balderrama}
\author{Piotr Pstr\k{a}gowski}

\begin{abstract}
This paper is the first in a series of articles devoted to the construction and study of synthetic deformations of $\infty$-categories in the unstable context: that is, deformations of $\infty$-categories that categorify spectral sequence or obstruction-theoretic information. Our approach is based on the techniques of higher universal algebra, with deformations built from the $\infty$-categories of models of $\infty$-categorical variants of algebraic, or Lawvere, theories.

This paper sets up the foundations of our study. We introduce and study various classes of $\infty$-categorical and infinitary algebraic theories. We establish many basic properties of the $\infty$-categories of the models of different classes of theories, such as the existence of free resolutions, image factorizations, monadicity theories, and presentability, as well as recognition theorems identifying the $\infty$-categories that arise this way.

We give an intrinsic definition of a Malcev theory in higher universal algebra. These are characterized as those theories satisfying Quillen's condition that all simplicial models satisfy the Kan condition, and we prove that this is equivalent to a weak grouplike condition which is easily verified in practice. We establish that the $\infty$-category of models of a Malcev theory may be characterized as freely adjoining geometric realizations to the theory. This leads to the notion of a derived functor between $\infty$-categories of models of Malcev theories, and we study some of the behavior of these derived functors with respect to connectivity and limits. 

The key idea in our work is that the $\infty$-category of models of a Malcev theory $\pcat$ can be thought of as a deformation whose special fibre is the $\infty$-category of models of its homotopy category $\h\pcat$. We also recall the notion of a loop theory, a class of Malcev theories whose $\infty$-category of models also admits a generic fibre, given by the full subcategory of loop models. We study in detail the interaction between functors and derived functors of $\infty$-categories of loop models and models, establishing in particular that monads and a large class of comonads on the generic fibre lift canonically to the whole deformation.

In the last part of the paper, we show that by considering the coalgebras for these deformed comonads over $\infty$-categories of models, one can recover various stable deformations considered in the literature, such as filtered models or Postnikov-complete synthetic spectra. We then expand on these results by constructing $\infty$-categories of synthetic spaces and synthetic $\bfE_k$-rings which, as will be further developed in the sequels, categorify the generalized unstable Adams spectral sequence and Goerss--Hopkins spectral sequence respectively.
\end{abstract}

\maketitle 

\tableofcontents

\section{Introduction}

A crucial ingredient in many recent advances in stable homotopy theory is the discovery that many spectral sequences and obstruction theories can be encoded by appropriate \emph{deformations} of $\infty$-categories, in particular the $\infty$-categories of synthetic spectra which categorify stable Adams-type spectral sequences \cite{gheorghe2022c, pstrkagowski2018synthetic}. This paper is the first in a series which develops a comprehensive theory of such deformations in the \emph{unstable} context. In particular, we construct $\infty$-categories of:
\begin{enumerate}
    \item Synthetic spaces, categorifying the unstable Adams spectral sequence \cite{bendersky1978unstable, benderskythompson2000bousfield};
    \item Synthetic $\bfE_k$-rings, categorifying the Goerss--Hopkins spectral sequence \cite{moduli_spaces_of_commutative_ring_spectra, moduli_problems_for_structured_ring_spectra};
    \item And many variants, such as equivariant, motivic, and $K(n)$-local analogues, synthetic $\bfE_k$-spaces, synthetic $v_n$-periodic spaces, synthetic $\calO$-algebras, and more.
\end{enumerate}

We explain how these $\infty$-categories can be thought of as deformations: there are well-behaved notions of a \emph{generic fibre} (which is the $\infty$-category being deformed), \emph{special fibre} (which is purely algebraic, and encodes the $E_{2}$-term of the corresponding spectral sequence, or obstruction groups of the corresponding obstruction theory), and a suitable ``cofibre of $\tau$'' or ``$\tau$-Bockstein'' formalism refining the associated spectral sequence or obstruction theory. In particular, we show that in each case the deformation itself can be constructed as the limit of a tower of square-zero extensions of $\infty$-categories starting from the special fibre.

Our approach is based on the observation of \cite{pstrkagowski2023moduli, balderrama2021deformations} that many deformations can be constructed as the $\infty$-category of models of an $\infty$-categorical \emph{Malcev theory}. These are a certain well-behaved variation of the classical notion of an algebraic, or Lawvere, theory \cite{lawvere1963functorial,borceuxbourn2004malcev}. We vastly expand the deformations that may be constructed using these methods by showing that there is also a natural way to deform (co)monads, and that the resulting $\infty$-category of (co)algebras over a deformed (co)monad itself inherits all the same structure allowing it to be thought of as a deformation in its own right.

For example, the $\infty$-category of $\mathbf{F}_{p}$-synthetic spaces we construct arises from deforming the comonad associated to the adjunction
\[
\mathbf{F}_{p} \otimes \Sigma^{\infty}_{+}({-}) : \spaces \rightleftarrows \Mod_{\mathbf{F}_{p}} : \Omega^\infty.
\]
This construction has the advantage that its special fibre and relation to the unstable Adams spectral sequence is easily identified directly from the definition. In addition, by replacing $\spaces$ with $\spectra$, one instead obtains the (Postnikov completion of the) $\infty$-category of $\mathbf{F}_p$-synthetic spectra, providing direct compatibility between the stable and unstable deformations.

\subsection{Outline of the project} 
\label{subsection:outline_of_the_project}

The three papers in this series developing a theory of unstable deformations are as follows:

\begin{enumerate}
    \item[(I)] \emph{Malcev theories} - the current work. Develops a general theory of infinitary algebraic theories in the $\infty$-categorical context: the Malcev condition, properties and characterizations of $\infty$-categories of models, derived functors and (co)monads.
    \item[(II)] \emph{Infinitesimal extensions} - \cite{usd2}. Develops a nonabelian deformation theory of Malcev theories and their models. The Postnikov tower of a Malcev theory is constructed as a tower of square-zero extensions, preserved by passage to $\infty$-categories of models, leading to the \emph{spiral tower}, or Goerss--Hopkins tower, of a Malcev theory. This provides an unstable ``cofibre of $\tau$'' formalism including obstruction theories to lifting objects and morphisms.
    \item[(III)] \emph{Naturality of the spiral tower} - \cite{usd3}. Establishes naturality of the spiral tower of a Malcev theory in an $(\infty,2)$-categorical sense. The structure on the spiral tower of a Malcev theory is closed under many $(\infty,2)$-categorical constructions, leading to the spiral tower for coalgebras over deformed comonads, such as for synthetic spaces.
\end{enumerate}

These three papers establish a general abstract theory of unstable deformations. This is motivated by particular examples such as synthetic spaces, but applies more broadly. A benefit of our direct algebraic approach is that it is easy to construct new deformations with prescribed special fibre, categorifying a very flexible class of spectral sequences and obstruction theories.

In separate work, we plan to specialize to study the particular case of synthetic spaces in more detail. In particular, forthcoming work of Bachmann and Hopkins on the motivic Wilson space hypothesis allows us to relate the $\infty$-category of $\bbC$-motivic spaces with our $\infty$-category of even $\MU$-synthetic spaces, extending the stable comparison of \cite{gheorghe2022c, pstrkagowski2018synthetic}.

\subsection{Summary of results}

In this thesis \cite{lawvere1963functorial}, Lawvere introduced a categorical approach to universal algebra through what are now known as \emph{Lawvere theories}. A classical Lawvere theory is an ordinary category $\tcat$ which admits finite coproducts and is generated under these by a single object. The associated classical category of models is then given by the category of presheaves $\tcat^\op\to\sets$ of sets which preserve finite products, i.e.\ send finite coproducts in $\tcat$ to finite products of sets. These encode algebraic structures that may be described as a set together with various $n$-ary operations satisfying universally quantified relations. The current work is largely concerned with the following natural generalization of Lawvere theories: 

\begin{definition}[{\ref{def:theory}}]
\label{definition:introduction_theory}
A (multi-sorted, infinitary algebraic) \emph{theory} $\pcat$ is an $\infty$-category which admits all small coproducts and which is generated under small coproducts and retracts by a small set of objects.\footnote{In the main body of the text, we also consider \emph{pretheories}, which are not necessarily generated by a small set of objects. We stick to theories in the introduction for simplicity.}
The $\infty$-category of \emph{models} of theory $\pcat$ is the full subcategory
\[
\Model_{\pcat} \subseteq \Fun(\pcat^{\op}, \spaces) 
\] 
of presheaves which preserve all small products. 
\end{definition}

Any classical Lawvere theory $\tcat$ may be extended to a theory in the above sense by attaching infinite coproducts; the associated $\infty$-category of models is then equivalent to the \emph{animation} of the classical category of models of $\tcat$ (\cref{thm:animation}). The infinitary context is more flexible, as it allows one to encode algebraic structures which admit operations with infinitely many inputs. The motivating examples of this type are theories of complete modules, with motivating applications being to deformations relevant in $K(n)$-local homotopy theory.

Sitting between Lawvere theories and \cref{definition:introduction_theory}, for each regular cardinal $\kappa$ one may define a (multi-sorted) \emph{$\kappa$-ary theory} to be an $\infty$-category generated under $\kappa$-small coproducts by a small set of objects. Any $\kappa$-ary theory may be extended to a theory in the sense of \cref{definition:introduction_theory} by attaching the remaining small coproducts (\cref{ex:kappabounded}). We prove that this does not affect the associated $\infty$-category of models (\cref{thm:bounded}) and that the theories which arise by such process are exactly those whose $\infty$-categories of models are presentable (\cref{cor:presentable}). Thus \cref{definition:introduction_theory} subsumes essentially all reasonable definitions of an infinitary algebraic theory.

Classically, the category of discrete models for a discrete Lawvere theory $\tcat$ may be identified with its $1$-categorical free cocompletion under filtered colimits and reflexive coequalizers. Its animation, the $\infty$-category of space-valued models of $\tcat$, is then its $\infty$-categorical free cocompletion under filtered colimits and geometric realizations \cite[\S5.5.8]{lurie_higher_topos_theory}. On the other hand, if $\pcat$ is a general infinitary theory, then the inclusion $\Model_\pcat\subset\Fun(\pcat^\op,\spaces)$ need not be closed under geometric realizations, as infinite products need not preserve geometric realizations, and accordingly $\Model_\pcat$ need not be the free geometric realization-cocompletion of $\pcat$.

This subtlety in working with infinitary theories was first observed, implicitly, in Quillen's work on homotopy theories of algebraic structures \cite{quillen1967homotopical}. The classical solution is to observe that infinite products \emph{do} preserve geometric realizations of Kan complexes. Thus one is naturally led to consider those theories whose simplicial models always take values in Kan complexes. These are exactly the \emph{Malcev theories}. We give the definitions most convenient for our purposes here, and refer the reader to \S\ref{subsection:malcevkanhistory} for a discussion of the history of these notions.

\begin{defn}[{\ref{def:malcevop}}]
\label{definition:introduction:malcev_operation}
Let $\ccat$ be an $\infty$-category with finite products. A \emph{Malcev operation} on an object $X \in \ccat$ is a ternary operation
\[
t\colon X \times X \times X \to X,
\]
together with a choice of two homotopies filling the triangles
\begin{equation}
\begin{tikzcd}
X\times X\ar[r,"X\times \Delta"]\ar[dr,"\pi_1"']&X\times X \times X\ar[d,"t"]&X\times X\ar[l,"\Delta\times X"']\ar[dl,"\pi_2"]\\
&X
\end{tikzcd}.
\end{equation}
\end{defn}

For example, any group $G$ admits the Malcev operation $t(g,h,k) = g h^{-1} k$. Malcev operations play a privileged role in the theory of simplicial resolutions as a result of the following.

\begin{theorem}[{\ref{prop:univkancharacterize}}]
\label{theorem:introduction:universally_kan_objects_are_the_same_as_those_admitting_a_malcev_operation}
Let $\ccat$ be an $\infty$-category with finite products. For an object $X \in \ccat$, the following are equivalent:
\begin{enumerate}
\item $X$ admits a Malcev operation in the sense of \cref{definition:introduction:malcev_operation};
\item $X$ is \emph{universally Kan}: for any finite product-preserving functor
\[
F\colon \ccat\to\Fun(\Delta^\op,\spaces)
\]
valued in simplicial spaces, $F(X)$ satisfies the Kan condition (\cref{def:kancondition}).
\end{enumerate}
\end{theorem}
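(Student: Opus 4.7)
The plan is to prove both implications of the stated equivalence. The direction $(1) \Rightarrow (2)$ is an $\infty$-categorical generalization of Moore's classical theorem that simplicial groups satisfy the Kan condition, while $(2) \Rightarrow (1)$ proceeds by applying the universally Kan hypothesis to a carefully chosen test functor.

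For $(1) \Rightarrow (2)$, the key observation is that the data of a Malcev operation is formulated entirely in terms of finite products and finite-dimensional simplicial data, and is therefore preserved by any finite-product-preserving functor $F \colon \ccat \to \Fun(\Delta^\op, \spaces)$. Hence $F(X)$ inherits a Malcev operation whenever $X$ does, reducing the claim to the statement that every simplicial space equipped with a Malcev operation is Kan. I would proceed by induction on the horn dimension $n$: given a horn $\Lambda^n_k \to Y_\bullet$, one constructs an explicit filler by applying the Malcev operation $t$ to suitable degeneracies of the prescribed horn faces, and uses the two defining homotopies $t(x, x, y) \simeq y$ and $t(x, y, y) \simeq x$ to verify the correct boundary behavior. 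The case $n = 2$ is handled directly; higher cases reduce via Kan-ness already established in lower dimensions.

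For $(2) \Rightarrow (1)$, I would apply the universally Kan hypothesis to a finite-product-preserving functor $F \colon \ccat \to \Fun(\Delta^\op, \spaces)$ constructed so that the Kan filler of a specific horn in $F(X)$ yields the data of a Malcev operation on $X$. The general shape is $F(Y) := \Map_{\ccat}(C^\bullet, Y)$ for a suitable cosimplicial-type object $C^\bullet$ in $\ccat$ built from iterated products of $X$ via its canonical commutative comonoid structure (the diagonal $X \to X^2$ and the terminal map $X \to \ast$); this $F$ is automatically finite-product-preserving in $Y$. Then $F(X)_n$ captures $(n+1)$-ary operations on $X$, and in particular the projections $\pi_1, \pi_2 \colon X \times X \to X$ appear as specific $1$-simplices arising as degeneracies of $\id_X$. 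The Kan filler of the horn $\Lambda^2_1$ whose $d_0, d_2$ faces are these projections is then a $2$-simplex whose $d_1$ face is the ternary operation $t \colon X^3 \to X$, together with the two triangle fillings required by \cref{definition:introduction:malcev_operation}.

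The principal obstacle lies in $(1) \Rightarrow (2)$: Moore's classical horn-filling argument proceeds via strict equalities satisfied by the Malcev operation, whereas in the $\infty$-categorical setting these are only coherent homotopies. Executing the inductive construction of fillers while tracking the higher coherence data requires either careful step-by-step bookkeeping or repackaging the argument universally, for instance by identifying a generic simplicial space equipped with a Malcev operation and verifying its Kan property once and for all, then deducing the general case via the universal property. A secondary obstacle in $(2) \Rightarrow (1)$ is assembling the required cosimplicial object $C^\bullet$ in $\ccat$ using only the finite-product structure; since $X$ is not assumed to be pointed, classical Amitsur-type cosimplicial constructions are unavailable, and one must carefully arrange the comonoid perspective to produce enough cofaces for the horn to be meaningful.
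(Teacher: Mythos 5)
Your direction $(2)\Rightarrow(1)$ is essentially correct and matches the paper's argument. The cosimplicial object you gesture at is precisely the cotensor $X^{S^{1}_{\bullet}}$ by the simplicial circle $S^{1}_{\bullet} = \Delta^{1}/\partial\Delta^{1}$: its level-$n$ term is a finite product of copies of $X$ (indexed by the $n{+}1$ simplices of $S^{1}_{n}$), built using only the diagonal and the terminal map, so your concern about the absence of a basepoint does not materialize. Filling the $\Lambda^{2}_{1}$-horn in $\pi_{0}\,\map_{\ccat}(X^{S^{1}_{\bullet}},X)$ whose two faces are the projections $\pi_{1},\pi_{2}\colon X\times X\to X$ produces exactly the ternary operation together with the two required triangle homotopies. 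That argument goes back to Jibladze--Pirashvili and is reproduced faithfully in the paper.

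The genuine gap is in $(1)\Rightarrow(2)$, and you have correctly identified it but not resolved it. The direct Moore-style induction on horn dimension does not survive the passage from simplicial sets to simplicial spaces, because the Malcev identities hold only up to unspecified homotopy and the Kan condition for a simplicial space $\Yss$ asks that the matching map $\Yss[\Delta^{n}]\to \Yss[\Lambda^{n}_{i}]$ be an effective epimorphism of spaces, not that a literal filler exist. The paper's way around this is \emph{not} either of the alternatives you float (careful bookkeeping or a generic model), but a Bousfield--Friedlander-style reduction to $\pi_{0}$. Concretely, one proves two interleaved statements $(\ast_{k})$ and $(\ast'_{k})$ by a double induction: $(\ast_{k})$ says that for any simplicial $H$-herd $\Yss$ and subset $S$ of size $k$, the map $\Yss[\Delta^{T}]\to \Yss[\Lambda^{T}_{S}]$ admits a section; $(\ast'_{k})$ says that $\pi_{0}(\Yss[\Lambda^{T}_{S}])\to(\pi_{0}\Yss)[\Lambda^{T}_{S}]$ is a bijection. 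The passage $(\ast'_{k})\Rightarrow(\ast_{k})$ imports the \emph{classical} Moore result (simplicial herds of sets are Kan) as a black box applied to $\pi_{0}\map(A,\Yss)$, and the passage $(\ast_{k-1},\ast'_{k-1})\Rightarrow(\ast'_{k})$ exploits the explicit splitting of pullbacks along $H$-herd morphisms with sections to propagate the $\pi_{0}$ bijections. With these in hand one shows that effective epimorphisms of simplicial $H$-herds are Kan fibrations by reducing the whole question to the $\pi_{0}$ level, where it is classical. This $\pi_{0}$-matching argument is the essential idea your proposal is missing, and without it the $(1)\Rightarrow(2)$ direction does not go through.
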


Given its privileged position among all $\infty$-categories, one is immediately led to consider the universally Kan objects in the $\infty$-category $\spaces$ of spaces. By the above theorem, these are exactly those spaces which admit a Malcev operation. This important class of spaces admits the following more intrinsic definition.

\begin{definition}[{\ref{def:supersimple}}]
\label{def:intro:supersimple}
We say that a space $X \in \spaces$ is \emph{supersimple} for all pointed connected spaces $A$ and $B$ and every map $A\vee B\to X$, there exists a dashed arrow making the diagram
\begin{center}\begin{tikzcd}
A\vee B\ar[r]\ar[d]&X\\
A\times B\ar[ur,dashed]
\end{tikzcd}\end{center}
commute.
\end{definition}

The following relates this class of spaces to those characterized by \cref{theorem:introduction:universally_kan_objects_are_the_same_as_those_admitting_a_malcev_operation}: 

\begin{theorem}[{\ref{cor:ukanspaces}}]
\label{thm:intro:ukanspaces}
For a space $X \in \spaces$, the following are equivalent: 
\begin{enumerate}
    \item $X$ is universally Kan;
    \item $X$ is supersimple;
    \item Every path component of $X$ admits a structure of an $H$-space. 
\end{enumerate}
\end{theorem}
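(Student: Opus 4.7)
The plan is to prove the chain $(1) \Rightarrow (2) \Rightarrow (3) \Rightarrow (1)$, using the previous theorem to replace (1) throughout with the equivalent condition that $X$ admits a Malcev operation.

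For $(1) \Rightarrow (2)$, given a Malcev operation $t$ on $X$ and pointed connected maps $(f, g) \colon A \vee B \to X$ with common image $x_0 \in X$, I would set $h(a, b) = t(f(a), x_0, g(b))$; the Malcev identities then give $h(a, *) \simeq f(a)$ and $h(*, b) \simeq g(b)$, so $h$ is the required extension. For $(2) \Rightarrow (3)$, I would apply the supersimple condition with $A = B = X_0$ (where $X_0$ is a path component of $X$ based at any chosen $x_0$) to the fold map $X_0 \vee X_0 \to X_0$; the resulting extension is an $H$-space multiplication $\mu \colon X_0 \times X_0 \to X_0$ with unit $x_0$.

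For $(3) \Rightarrow (1)$, I would first show that any connected $H$-space $Y$ admits a Malcev operation, and then assemble these across path components. The $H$-space unit laws imply that the shear map $\sigma \colon Y \times Y \to Y \times Y$, $(y_1, y_2) \mapsto (y_1, \mu(y_1, y_2))$, induces on each $\pi_n Y \oplus \pi_n Y$ the map $(a, b) \mapsto (a, a + b)$, which is invertible; since $Y$ is connected, Whitehead's theorem then shows that $\sigma$ is an equivalence in $\spaces$. Composing $\sigma^{-1}$ with projection to the second factor yields a division map $\delta \colon Y \times Y \to Y$ characterized by $\mu(y, \delta(y, z)) \simeq z$, and hence $\delta(y, y) \simeq e$; the formula $t(x, y, z) = \mu(x, \delta(y, z))$ is then immediately checked to be a Malcev operation on $Y$. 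For a general $X = \coprod_i X_i$, I would define $t \colon X^3 \to X$ piecewise on the path components of $X^3$: on diagonal pieces $X_i^3$ use the operation just constructed; on $X_i \times X_j \times X_j$ with $i \neq j$ take $t = \pi_1$; on $X_i \times X_i \times X_k$ with $i \neq k$ take $t = \pi_3$; and on the remaining pieces make any convenient constant choice.

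The main obstacle is the $(3) \Rightarrow (1)$ step: producing a Malcev operation on a connected $H$-space without any associativity hypothesis on $\mu$. The key point is that the naive formula $x \cdot y^{-1} \cdot z$ requires a choice of bracketing and satisfies both Malcev identities only if $\mu$ is homotopy associative; replacing $y^{-1}$ by division via the shear equivalence bypasses this, since the formula $\mu(x, y \backslash z)$ satisfies both identities using only the unit law for $\mu$.
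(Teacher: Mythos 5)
Your proof is correct and follows essentially the same route as the paper: the formula $t(x,y,z) = \mu(x, \delta(y,z))$ with $\delta$ obtained by inverting the shear map is exactly the construction given in \cref{ex:hspaceherd} specialized to a connected (hence grouplike) $H$-space, and your piecewise assembly over path components is \cref{lem:coprodmalcev}. The section witnessing supersimplicity, $h(a,b) = t(f(a), f(\ast), g(b))$, and the use of the fold map for the reverse implication likewise appear verbatim in the paper's proof of \cref{cor:ukanspaces}.
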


By taking $A$ and $B$ to be positive-dimensional spheres in \cref{def:intro:supersimple}, one sees that if $X$ is supersimple then $\pi_\ast(X,x)$ has vanishing Whitehead products at every basepoint $x\in X$. Thus the supersimple spaces are a natural subclass of the spaces with vanishing Whitehead products, themselves a natural subclass of the componentwise simple spaces.  Our characterization in terms of the Kan condition implies that the class of supersimple spaces has good closure properties, such as being closed under taking mapping spaces, see \cref{prop:univkanclosure}, which do not hold for the class of spaces with vanishing Whitehead products \cite[Example 6.6]{luptonsmith2010whitehead}.

Returning to our discussion of theories, \cref{theorem:introduction:universally_kan_objects_are_the_same_as_those_admitting_a_malcev_operation} motivates the following definition.

\begin{defn}[{\ref{def:malcevtheory}}]\label{def:intro:malcev}
A theory $\pcat$ is said to be \emph{Malcev} if either of the following equivalent conditions is satisfied:
\begin{enumerate}
\item Every simplicial model
\[
\Xss\colon \Delta^\op\to \Model_\pcat
\]
satisfies the Kan condition, i.e.\ $\Xss(P)$ satisfies the Kan condition for every $P\in \pcat$.
\item Every object of $\pcat$ admits a co-Malcev operation, i.e.\ a Malcev operation in $\pcat^\op$.
\end{enumerate}
\end{defn}

The general theory of free resolutions for models of theories (\cref{thm:splithypercovering}), together with the fact that infinite products preserve geometric realizations of simplicial spaces satisfying the Kan condition (\cref{prop:kanlimits}), leads to the following characterization of the $\infty$-category of models for a Malcev theory.

\begin{theorem}[{\ref{lem:malcevcocomplete}, \ref{thm:freecocompletion}}]
\label{introduction:theorem_universal_property_of_models_of_a_malcev_theory}
Let $\pcat$ be a Malcev theory. Then the $\infty$-category $\Model_{\pcat}$ of models is complete and cocomplete, and the Yoneda embedding $\nu \colon \pcat \hookrightarrow \Model_{\pcat}$ exhibits $\Model_\pcat$ as the free cocompletion of $\pcat$ under geometric realizations.
\end{theorem}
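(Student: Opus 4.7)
The plan is to first establish completeness and cocompleteness, then deduce the universal property using the free resolution theorem and a density argument.

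\emph{Completeness and geometric realizations.} The subcategory $\Model_\pcat \subseteq \Fun(\pcat^{\op}, \spaces)$ is closed under all small limits since limits in presheaves are pointwise and commute with the products appearing in the model condition. For geometric realizations, I claim the pointwise realization in presheaves of a simplicial diagram $X_\bullet\colon \Delta^{\op}\to\Model_\pcat$ is again a model. Evaluating at any $P\in\pcat$ yields a simplicial space $X_\bullet(P)$ which, by the Malcev condition in form (1) of \cref{def:intro:malcev}, satisfies the Kan condition. For any coproduct $\coprod_i Q_i$ in $\pcat$, since each $X_n$ is a model,
\[
|X_\bullet|\Bigl(\coprod_i Q_i\Bigr) = \Bigl|\prod_i X_\bullet(Q_i)\Bigr| \simeq \prod_i |X_\bullet(Q_i)| = \prod_i |X_\bullet|(Q_i),
\]
where the middle equivalence uses \cref{prop:kanlimits} that products preserve geometric realizations of Kan simplicial spaces. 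Hence $|X_\bullet|$ is a model and geometric realizations in $\Model_\pcat$ are computed pointwise.

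\emph{Cocompleteness.} Coproducts of representables satisfy $\coprod_i \nu(P_i) \simeq \nu(\coprod_i P_i)$, which follows by directly checking the universal property using that maps out of $\nu(Q)$ compute evaluations. For a general family $\{X_i\}$ of models, \cref{thm:splithypercovering} provides free resolutions $X_i \simeq |\nu(P_i^\bullet)|$, and then $|\nu(\coprod_i P_i^\bullet)|$ serves as $\coprod_i X_i$ by commuting colimits past colimits. Together with geometric realizations, the standard bar construction yields all small colimits.

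\emph{Universal property.} For an $\infty$-category $\dcat$ with geometric realizations, restriction along $\nu$ gives
\[
\Fun^{|-|}(\Model_\pcat, \dcat) \longrightarrow \Fun(\pcat, \dcat).
\]
Two facts make this an equivalence: first, for each $P \in \pcat$ the corepresentable $\Map_{\Model_\pcat}(\nu P, -) = \ev_P$ preserves geometric realizations, since these are pointwise; second, by \cref{thm:splithypercovering} every model is canonically a geometric realization of objects in the image of $\nu$. The inverse sends $F\colon\pcat\to\dcat$ to its left Kan extension along $\nu$, explicitly $\tilde F(X) = |F(P_\bullet)|$ for any free resolution $X\simeq|\nu(P_\bullet)|$; the two facts guarantee well-definedness, functoriality, and preservation of geometric realizations, while fully faithfulness of the restriction follows because a natural transformation of geometric-realization-preserving functors out of $\Model_\pcat$ is determined by its values on representables.

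\emph{Main obstacle.} The substantive input is the Malcev condition, which is what makes geometric realizations in $\Model_\pcat$ behave as in presheaves despite the infinitary product condition; without this, the passage from free resolutions of individual models to the universal property breaks down. The subtlest formal step is the construction of the extension $\tilde F$, which requires carefully matching the pointwise formula for geometric realizations in $\Model_\pcat$ with the left Kan extension formula and invoking functoriality of free resolutions—everything else reduces to formal manipulations once the Malcev condition has done its work.
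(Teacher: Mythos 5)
Your proof is essentially correct and uses the same underlying ingredients as the paper: the Malcev condition forces levelwise Kan simplicial objects so that geometric realizations are computed pointwise (as in \cref{prop:pointwisegeoreal}), free resolutions from \cref{thm:splithypercovering}, and strong projectivity of the representables. But your route differs in two places from the paper's, and it is worth comparing.

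For cocompleteness, you construct coproducts by hand via resolutions and then invoke the Bousfield--Kan reduction of general colimits to coproducts and realizations. The paper's \cref{lem:malcevcocomplete} instead shows that the inclusion $\Model_\pcat \hookrightarrow \smallpresheaves(\pcat)$ admits a \emph{left adjoint}, directly exhibiting $\Model_\pcat$ as a localization of a cocomplete $\infty$-category. This buys more than cocompleteness in one stroke: it gives the explicit description of arbitrary colimits as reflections of presheaf colimits, and it makes the cocompletion statement applicable to Malcev \emph{pre}theories where \cref{thm:splithypercovering} is unavailable. For the universal property, the paper simply establishes \cref{lem:malcevcocomplete}(2) — that $\Model_\pcat$ is the smallest full subcategory of $\smallpresheaves(\pcat)$ containing the representables and closed under geometric realizations — and then cites the abstract characterization of free cocompletions (Lurie, HTT Remark 5.3.5.9), which outputs the equivalence
\[
\Fun^{|-|}(\Model_\pcat,\dcat)\xrightarrow{\ \simeq\ }\Fun(\pcat,\dcat)
\]
directly. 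Your argument constructs the inverse by hand via the formula $\tilde F(X)=|F(\Pss)|$; this is correct in spirit, but the claim that "the two facts guarantee well-definedness, functoriality, and preservation of geometric realizations" is where all the $\infty$-categorical coherence lives. Specifically, $\dcat$ only has geometric realizations, so the existence of the left Kan extension $\colim_{(\pcat)_{/X}} F$ in $\dcat$ is not automatic — one must know that the comma category can be cofinally replaced by a simplicial diagram, which is exactly what Lurie's lemma packages. Your sketch would need to reproduce the proof of that lemma or cite it, at which point it coincides with the paper's.

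One more caveat: your coproduct verification needs to check the universal property against test objects $Z \in \Model_\pcat$ using that $\map(\nu(\coprod_i P_{n,i}), Z) \simeq \prod_i Z(P_{n,i})$ commutes with the totalization coming from the resolution, which relies on products of spaces commuting with totalizations; this is fine, but notice the resulting object is not the presheaf-level coproduct, only the reflected one, so the phrase "commuting colimits past colimits" slightly obscures what is actually being checked. The paper's localization formulation makes this transparent.
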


This result endows $\Model_\pcat$ with a universal property, and confirms our intuition that $\Model_\pcat$ acts as an $\infty$-category of ``formal simplicial resolutions'' by the objects of $\pcat$. Conversely, one might consider the free geometric realization-cocompletion of $\pcat$ as the more fundamental notion, in which case \cref{introduction:theorem_universal_property_of_models_of_a_malcev_theory} provides a concrete description of this cocompletion in terms of product-preserving presheaves.

We emphasize that the Malcev condition is useful not only for providing a universal property of $\infty$-categories of models in the infinitary setting. It is used in a fundamental way throughout our development of unstable synthetic deformations. 

One particularly useful consequence of the definition is that in the $\infty$-category of models of a Malcev theory, geometric realizations commute with pullbacks along effective epimorphisms, see \cref{corollary:for_models_of_malcev_theory_geometric_realization_commutes_with_products_and_pullbacks_along_effective_epi}. As every model can be written as a geometric realization of representables, this enables one to reduce many arguments or constructions to the case of free objects. This incredibly robust technique is used repeatedly in our study of infinitesimal extensions in the sequel \cite{usd2}. Another consequence is that for a Malcev theory, the objects in the image of the Yoneda embedding have the following useful property: 

\begin{definition}[{\ref{def:stronglyprojective}}]\label{def:intro:stronglyprojective}
Let $\ccat$ be an $\infty$-category which admits geometric realizations. We say that an object $P \in \ccat$ is \emph{strongly projective} if for every simplicial object $\Xss$ in $\ccat$, the simplicial space $\map_{\ccat}(P, \Xss)$ is a hypercovering of $\map_{\ccat}(P, |\Xss|)$. 
\end{definition}

This property was, to our knowledge, first considered by Lurie in \cite[Definition 4.2.1]{lurie2011dag8}. Note that since a hypercovering is in particular a colimit diagram, strongly projective objects are projective in the usual sense of \cite[{Definition 5.5.8.18}]{lurie_higher_topos_theory}. The following result gives an intrinsic characterization of the $\infty$-categories of models for Malcev theories in these terms.

\begin{theorem}[{\ref{thm:characterization_of_malcev_theories_in_terms_of_strong_projectivity}}]
\label{theorem:introduction:characterization_of_models_of_malcev_theory_in_temrs_of_projectivity}
Let $\dcat$ be a cocomplete $\infty$-category. Then the following are equivalent: 
\begin{enumerate}
    \item $\dcat$ is \emph{strongly projectively generated}: there exists a small set $\{P_i\}$ of strongly projective objects of $\dcat$ for which $\map_{\dcat}(P_i, -)$ jointly detect equivalences;
    \item There exists a Malcev theory $\pcat$ for which $\dcat \simeq \Model_{\pcat}$. 
\end{enumerate}
In this case, one can take $\pcat\subset\dcat$ to be the full subcategory of strongly projective objects. 
\end{theorem}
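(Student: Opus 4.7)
The plan is to prove the two implications separately. For $(2)\Rightarrow(1)$, suppose $\dcat\simeq\Model_\pcat$ for a Malcev theory $\pcat$; I claim the representables $\{\nu(P):P\in\pcat\}$ form a jointly detecting family of strongly projective objects. Joint detection is immediate since $\map_{\Model_\pcat}(\nu(P),X)\simeq X(P)$ by Yoneda, so a map inducing equivalences on all such mapping spaces is already a pointwise equivalence of presheaves. For strong projectivity of $\nu(P)$: given a simplicial model $\Xss$, the simplicial space $\Xss(P)$ is Kan by \cref{def:malcevtheory}.(1), and combining \cref{thm:freecocompletion} with the fact that products preserve geometric realizations of Kan simplicial spaces (\cref{prop:kanlimits}) shows that $|\Xss|(P)\simeq|\Xss(P)|$; the augmentation of a Kan simplicial space to its geometric realization is then a hypercovering.

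For the reverse implication, let $\{P_i\}$ be a small set of strongly projective generators of $\dcat$ and let $\pcat\subset\dcat$ be the smallest full subcategory containing the $P_i$ and closed under retracts and small coproducts (formed in $\dcat$). I would first verify that every object of $\pcat$ remains strongly projective: closure under retracts is a direct diagram chase, while closure under coproducts follows from $\map_\dcat(\bigsqcup_\alpha Q_\alpha,-)\simeq\prod_\alpha\map_\dcat(Q_\alpha,-)$ together with \cref{prop:kanlimits}, since a product of Kan hypercoverings remains a Kan hypercovering. Thus $\pcat$ is a small theory consisting of strongly projective objects. Define $\Phi\colon\dcat\to\Model_\pcat$ by restricted Yoneda, $\Phi(X)=\map_\dcat(-,X)|_{\pcat^\op}$; this lands in $\Model_\pcat$ because coproducts in $\pcat$ are computed in $\dcat$. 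Its left adjoint $\Psi\colon\Model_\pcat\to\dcat$ is the left Kan extension of the inclusion $\pcat\hookrightarrow\dcat$ along the Yoneda embedding, which exists since $\dcat$ is cocomplete.

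The bulk of the work is showing the adjunction $\Psi\dashv\Phi$ is an equivalence. Here $\Phi$ is conservative by joint detection, so it suffices to show the counit $\Psi\Phi(X)\to X$ is an equivalence for each $X\in\dcat$. For this I would construct a simplicial resolution $F_\bullet\to X$ in $\dcat$ by iterated bar construction, starting from $F_0=\bigsqcup_{P\in\pcat,\,P\to X}P$; joint detection ensures this is an effective epimorphism, and strong projectivity of every $P\in\pcat$ ensures that applying $\map_\dcat(P,-)$ produces a split hypercovering, so that $|F_\bullet|\simeq X$. Since $\Psi$ preserves colimits, since $\Phi$ preserves such geometric realizations on account of strong projectivity and \cref{prop:kanlimits}, and since $\Psi\Phi$ restricts to the identity on $\pcat$ (by the formula for left Kan extension along Yoneda combined with full faithfulness of $\pcat\hookrightarrow\dcat$), we conclude $\Psi\Phi(X)\simeq|F_\bullet|\simeq X$. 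Having established $\dcat\simeq\Model_\pcat$, the Malcev property is automatic: for any simplicial model $\Xss$, transported to a simplicial object of $\dcat$, the value $\Xss(P)\simeq\map_\dcat(P,\Xss)$ is a Kan hypercovering by strong projectivity, yielding \cref{def:malcevtheory}.(1). The final assertion that one may take $\pcat$ to be the full subcategory of all strongly projective objects then follows, since enlarging from the generating subcategory to all strongly projective objects does not change the associated category of models. The main obstacle is the construction of the resolution $F_\bullet\to X$ and the verification that it behaves compatibly under $\map_\dcat(P,-)$ for every $P\in\pcat$ simultaneously; this is precisely where the full force of strong projectivity, as opposed to ordinary projectivity, is needed.
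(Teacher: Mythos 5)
Your argument follows the same essential strategy as the paper, but unfolds the black boxes: where the paper invokes \cref{lem:strongproj} (strong projectivity is projectivity plus the couniversal Kan condition) to conclude that $\pcat$ is Malcev, and then cites \cref{cor:restriction} to identify the restricted Yoneda embedding as an equivalence, you reconstruct the adjunction $\Psi\dashv\Phi$ by hand and verify it is an equivalence via bar resolutions. This is a legitimate reproof and nothing is conceptually wrong, but two points deserve attention.

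First, a logical ordering issue: you defer the verification that $\pcat$ is Malcev to the very end and justify it there via the already-established equivalence $\dcat\simeq\Model_\pcat$, which is circular as written. The Malcev property (and hence \cref{thm:freecocompletion} and the commutation of geometric realization with products) is needed \emph{during} the proof---to guarantee that $\Model_\pcat$ is closed under geometric realizations, to know that $\nu\colon\pcat\to\Model_\pcat$ is a free geometric-realization cocompletion, and hence to make the left Kan extension $\Psi$ well-behaved. The correct, non-circular argument is to invoke \cref{lem:strongproj} right after your closure verification: every object of $\pcat$ is strongly projective in $\dcat$, hence projective and couniversally Kan, hence $\pcat$ is Malcev. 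You have all the ingredients; you just need to state the deduction earlier.

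Second, the claim that the left Kan extension $\Psi$ ``exists since $\dcat$ is cocomplete'' and is left adjoint to the restricted Yoneda $\Phi$ is asserted rather than argued. Since $\Model_\pcat$ is not a presheaf category, the usual nerve/realization adjunction does not apply off the shelf; this is precisely what \cref{cor:restriction}.(1) (via \cref{thm:freecocompletion}) establishes, and it relies on the Malcev property. Once the ordering issue above is fixed, you can either cite \cref{cor:restriction} directly---as the paper does, collapsing your bar-resolution argument to a single reference---or carry out your longer argument, but you should say why $\Psi\dashv\Phi$ rather than take it for granted. With these two adjustments the proof is complete and matches the paper's in content.
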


We also prove variants for general theories in the sense of \cref{definition:introduction_theory}, where the assumption of projectivity needs to be slightly weakened, and for additive theories, where classical and strong projectivity are equivalent, see \cref{thm:weaklyprojectivecats} and \cref{prop:projprestable}. 

A natural notion of a morphism between theories is that of a \emph{homomorphism} $f \colon \pcat \rightarrow \qcat$, which is a coproduct-preserving functor. Using \cref{introduction:theorem_universal_property_of_models_of_a_malcev_theory}, if $\pcat$ and $\qcat$ are Malcev theories then left Kan extension determines an equivalence between homomorphisms $f\colon \pcat\to\qcat$ and left adjoints $f_! \colon \Model_{\pcat} \rightarrow \Model_{\qcat}$ between $\infty$-categories of models which preserve representables. However, for many purposes both of these conditions are too strict: many useful constructions may preserve neither all colimits nor subcategories of free objects. Example include the free symmetric algebra or free $\bfE_k$-algebra functors on various $\infty$-categories of modules.  This, together with the universal property of models, motivates the following. 

\begin{definition}[{\ref{definition:derived_functors_between_malcev_pretheories}}]
Let $\pcat$ and $\qcat$ be Malcev theories. A \emph{derived functor} $f\colon \pcat\pto\qcat$ is either of the following equivalent pieces of data:
\begin{enumerate}
\item An arbitrary functor $f\colon \pcat\to\Model_\qcat$;
\item A geometric realization-preserving functor $f_!\colon \Model_\pcat\to\Model_\qcat$.
\end{enumerate}
\end{definition}

Derived functors in this sense are a generalization of the \emph{left-derived functors} of classical homological algebra (\cref{ex:classicalderived}). They are always right exact both in the sense that they preserve geometric realizations and in the sense that they preserve connectivity of morphisms (\cref{prop:derivedfunctorpreservesconnectivity}). On the other hand, they are rarely left exact, i.e.\ rarely preserve finite limits and rarely preserve truncativity of morphisms. In \S\ref{ssec:weakleftexact}, we introduce the notion of a \emph{weakly left exact} functor: a derived functor which preserves pullbacks along effective epimorphisms. This condition is satisfied in many useful examples, such as for left adjoint derived functors between additive theories, see \cref{proposition:additive_derived_functors_preserve_pullbacks_along_pi0_epis}.

Associated to any Malcev theory $\pcat$ is a tower of Malcev theories given by its homotopy $n$-categories $\h_n\pcat$. Passing to associated $\infty$-categories of models we obtain what we call the \emph{spiral tower}, or Goerss--Hopkins tower, of $\pcat$:
\begin{equation}\label{eq:intro:spiraltower}
\Model_{\pcat} \rightarrow \cdots \rightarrow \Model_{h_{2} \pcat} \rightarrow \Model_{h \pcat}.
\end{equation}
In the sequel \cite{usd2}, we show that this tower is, in a precise sense, a tower of square-zero extensions of $\infty$-categories, providing a natural nonabelian analogue of the deformation theory of connective modules over connective ring spectra, and providing categorifications of variants of the Blanc--Dwyer--Goerss and Goerss--Hopkins obstruction theories of \cite{realization_space_of_a_pi_algebra, moduli_spaces_of_commutative_ring_spectra, moduli_problems_for_structured_ring_spectra, abstract_gh_theory}. 

We now discuss the connection between Malcev theories and deformations. The above tower allows one to view $\Model_\pcat$ as a deformation of the essentially algebraic $\infty$-category $\Model_{\h\pcat}$, in the same way that a connective ring spectrum $A$ is a deformation of its $0$th homotopy ring $A_{\leq 0} = \pi_0 A$. In the usual language of synthetic homotopy theory, $\Model_{\h\pcat}$ is the \emph{special fibre} of $\Model_\pcat$. 

At this level of generality, there is no generic fibre being deformed. Moreover, the characterization of \cref{theorem:introduction:characterization_of_models_of_malcev_theory_in_temrs_of_projectivity} shows that for an $\infty$-category to be equivalent to $\Model_\pcat$ for a Malcev theory $\pcat$ is a very rigid condition; for example, no nonzero stable $\infty$-category can ever be of this form (\cref{cor:neverstable}). To capture a greater variety of $\infty$-categories, we make use of the following slightly more refined concept.

\begin{definition}[{\ref{def:looptheory}, \ref{def:loop_model}}]
\label{definition:introduction_loop-theory}
A Malcev theory $\pcat$ is a \emph{loop theory} if it admits tensors by $S^{1}$; that is, for all $P \in \pcat$, the constant colimit $S^1\otimes P = \colim_{S^1}P$ exists in $\pcat$. The $\infty$-category of \emph{loop models} of $\pcat$ is the full subcategory
\[
\Model_{\pcat}^{\Omega} \subseteq \Model_{\pcat} 
\]
spanned by those models $X$ for which the canonical comparison map $X(S^{1} \otimes P) \rightarrow X(P)^{S^{1}}$ is an equivalence for all $P \in \pcat$. 
\end{definition}

This definition is a slight generalization of the corresponding notion considered in \cite{balderrama2021deformations} (see \cref{rmk:loopdefns}). The key insights of \cite{pstrkagowski2023moduli, balderrama2021deformations} are that: 
\begin{enumerate}
    \item Many naturally arising $\infty$-categories $\dcat$ (for example: spectra, modules, $\mathbf{E}_{k}$-rings, pointed connected spaces) can be naturally written in the form 
    \[
    \dcat \simeq \Model_{\pcat}^{\Omega},
    \]
    that is, as $\infty$-categories of loop models for a suitable loop theory $\pcat\subset\dcat$.
    \item The $\infty$-category $\Model_{\pcat}$ of all models can be interpreted as an $\infty$-categorical deformation of which $\Model_{\pcat}^{\Omega} \simeq \dcat$ is the generic fibre. In particular, $\Model_\pcat^\Omega$ is a localization of the $\infty$-category $\Model_\pcat$ of all models (under a minor size condition on $\pcat$).
\end{enumerate}

Note that there can be many nonequivalent choices of a loop theory $\pcat\subset\dcat$ for which $\dcat \simeq \Model_{\pcat}^{\Omega}$. Such a choice can be thought of as a choice of free, or detecting, objects for $\dcat$. Different choices can lead to very different deformations of $\dcat$, and consequently to different spectral sequences and obstruction groups converging to the same thing, of which one may be suitable for a given purpose than another. We give an illuminating example in \cref{remark:writing_something_as_loop_models_requires_choice}. 

A priori, one limitation of the deformations obtained in this way is that they are entirely controlled by a theory of simplicial resolutions by a chosen class of ``free objects''. As a consequence, the spectral sequences and obstruction theories that may be derived from the spiral tower of (\ref{eq:intro:spiraltower}) are essentially only those which may be given in terms of corepresentable invariants, such as homotopy groups. This rules out, for example, the classical Adams spectral sequence, whose $E_2$-page is given in terms of $\mathbf{F}_p$-homology, which is not a corepresentable functor on spectra.

In general, if $ER$ is an $\bfE_1$-ring spectrum then the $R$-based Adams spectral sequence for a spectrum $X$ can be thought of as combining a projective resolution of $R_\ast X$ with descent along the adjunction $\spectra\rightleftarrows \LMod_R$. Thus, informally, the ingredient we are missing to encode a synthetic deformation along $R$-homology is \emph{descent}. 

In \cite{pstrkagowski2018synthetic, patchkoria2021adams}, these issues are dealt with by encoding the needed descent in terms of a Grothendieck topology. In the current work, we instead work directly with the comonad associated to this adjunction. This has the advantage of generalizing readily to unstable contexts, such as to synthetic spaces, and providing deformations with immediately identifiable special fibre.

Let $\pcat$ be a loop theory. Given an endofunctor $f \colon \Model_{\pcat}^{\Omega} \rightarrow \Model_{\pcat}^{\Omega}$, the composite 
\[
\begin{tikzcd}
	{\pcat} & {\Model_{\pcat}^{\Omega}} & {\Model_{\pcat}^{\Omega}} & {\Model_{\pcat}}
	\arrow[hook, from=1-1, to=1-2]
	\arrow["f", from=1-2, to=1-3]
	\arrow["\nu", hook, from=1-3, to=1-4]
\end{tikzcd},
\]
where the first arrow is the restricted Yoneda embedding and the last arrow is the natural inclusion, determines a derived functor $D(f) \colon \pcat\pto\pcat$. We show that this construction is compatible with composition in the following sense.

\begin{theorem}[{\ref{remark:associated_derived_functor_construction_is_monoidal}}]\label{thm:intro:laxderived}
The construction $f \mapsto D(f)$ refines to a lax monoidal right adjoint 
\[
\End(\Model_{\pcat}^{\Omega}) \rightarrow \Endo_!(\pcat), 
\]
where $\Endo_!(\pcat)\simeq\End^\sigma(\Model_\pcat)$ is the $\infty$-category of derived functors $\pcat\pto\pcat$, i.e.\ of geometric realization-preserving endofunctors of $\Model_\pcat$.
\end{theorem}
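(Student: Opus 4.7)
The plan is to identify an explicit left adjoint $E$ to $D$ and show that it is oplax monoidal; the lax monoidal structure on $D$ then follows by taking mates in the standard $\infty$-categorical framework of monoidal adjunctions. Let $i\colon \Model_\pcat^\Omega \hookrightarrow \Model_\pcat$ denote the inclusion and $L\colon \Model_\pcat \to \Model_\pcat^\Omega$ its left adjoint, which exists since $\Model_\pcat^\Omega$ is a reflective localization of $\Model_\pcat$. The restricted Yoneda embedding is $j = L \circ \nu$. The candidate left adjoint is then
\[
E\colon \End^\sigma(\Model_\pcat) \to \End(\Model_\pcat^\Omega), \qquad g \mapsto L \circ g \circ i.
\]

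To verify the adjunction $E \dashv D$, I would first invoke the universal property of $\Model_\pcat$ as the free geometric realization-cocompletion of $\pcat$ (\cref{introduction:theorem_universal_property_of_models_of_a_malcev_theory}): restriction along $\nu$ is an equivalence $\End^\sigma(\Model_\pcat) \xrightarrow{\sim} \Fun(\pcat, \Model_\pcat)$, and under this equivalence $D$ corresponds to the restriction functor $f \mapsto i f j$. For $g \in \End^\sigma(\Model_\pcat)$ and $f \in \End(\Model_\pcat^\Omega)$, the chain of equivalences
\begin{align*}
\Map(L g i, f)
&\simeq \Map(g i, i f) \simeq \Map(g, i f L) \\
&\simeq \Map(g|_\pcat, (ifL)|_\pcat) = \Map(g|_\pcat, i f j)
\end{align*}
uses $L \dashv i$ applied to both post- and precomposition, and then the universal property applied to $g$, together with the identity $(ifL) \circ \nu = i f \circ (L \nu) = i f j$.

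The oplax monoidal structure on $E$ is produced directly from the adjunction data. The unit map is $E(\id_{\Model_\pcat}) = L i \xrightarrow{\sim} \id_{\Model_\pcat^\Omega}$, which is invertible by full faithfulness of $i$. For products, the unit $\eta\colon \id_{\Model_\pcat} \to iL$ of the adjunction supplies natural transformations
\[
E(g \circ h) = L g h i \to L g \circ i L \circ h i = E(g) \circ E(h),
\]
and associativity and unit coherences reduce to the triangle identities and naturality of $\eta$. Passing to right adjoints thus produces the desired lax monoidal structure on $D$. The principal obstacle is not any of these individual verifications, which are routine once the candidate left adjoint is in hand, but rather the $\infty$-categorical bookkeeping: one must organize the construction $f \mapsto L f i$ into a functor between monoidal $\infty$-categories and oplax monoidal functors by realizing the adjunction, the $\nu$-restriction equivalence, and the unit/counit of $L \dashv i$ as coherent structures in the ambient $(\infty,2)$-category. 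This is technical but follows standard methods.
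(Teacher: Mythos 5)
Your proposal takes essentially the same route as the paper: $E(g) = L g i$ is identified as the left adjoint to $D$, the oplax monoidal structure on $E$ is built from the unit and counit of $L \dashv i$, and lax monoidality of $D$ follows by taking mates. The one place where the paper does work beyond your sketch is the $\infty$-categorical coherence of the oplax structure on $E$ (which you correctly flag as the genuine technical point): rather than appealing to ``standard methods,'' the paper factors $D$ as the conjugation $(-)_{L\dashv\nu}$ followed by the derivization $\der$, handling the first stage via Brantner--Campos--Nuiten and the second via Lurie's result that the right adjoint of a strong monoidal functor is canonically lax monoidal, and your adjunction chain is exactly an unwinding of this two-stage factorization.
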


If $m$ is a monad on $\Model_{\pcat}^{\Omega}$, then this implies that $D(m)$ is a monad on $\Model_\pcat$. In fact, even more is true: as we explain in \S\ref{ssec:malcevmonads}, Malcev theories and loop theories are informally ``closed under monads'', entirely bypassing \cref{thm:intro:laxderived}. In particular, the $\infty$-category $\Alg_{D(m)}$ of algebras for $D(m)$ may be constructed directly as the $\infty$-category of models for the loop theory obtained as the essential image of the restriction of $D(m)\colon \Model_\pcat\to\Alg_{D(m)}$ to $\pcat\subset\Model_\pcat$, and in good cases $\Alg_m$ is equivalent to its $\infty$-category of loop models. 

Our main use of \cref{thm:intro:laxderived} is that it allows us to also work with comonads on $\Model_\pcat^\Omega$. In general, if $c$ is a comonad on $\Model_\pcat^\Omega$, then $D(c)$ need not be a comonad on $\Model_\pcat$, but it is if the lax structure maps $D(c)^{\circ n} \to D(c^{\circ n})$ are equivalences; we call such comonads \emph{\derivable{}}. In \cref{theorem:monoidal_functor_between_lmod_induced_by_induced_derived_functor_construction}, we show that the lax functor $D$ is in additional compatible with the tautological actions of these $\infty$-categories of endomorphisms. Using this, we show that if $c$ is a \derivable{} comonad then (under a minor size condition on $\pcat$) there is a generic fibre functor
\[
\CoAlg_{D(c)}(\Model_\pcat) \to \CoAlg_c(\Model_\pcat^\Omega),
\]
as well as a class of ``stably $c$-tame'' loop models for which there is an inclusion
\[
\CoAlg_c(\Model_\pcat^{\Omega,\stablyctame}) \hookrightarrow \CoAlg_{D(c)}(\Model_\pcat)
\]
which one can think of as a synthetic analogue functor. In the third paper in the series \cite{usd3}, we show that the formation of the spiral tower of a Malcev theory is in a precise sense lax functorial in derived functors. This provides a spiral tower for the deformation $\CoAlg_{D(c)}(\Model_\pcat)$ of $\CoAlg_c$, with attendant ``cofibre of $\tau$'' formalism, spectral sequences, and obstruction theories.

In the last section of the paper, we give some examples of deformations that may be constructed using our formalism. In particular, we discuss: 

 \begin{enumerate}
     \item[{(\S\ref{subsection:filtered_modules})}]\emph{Filtered modules}. If $R$ is an $\mathbf{E}_{1}$-ring spectrum, we write $\lfrees(R)\subset\LMod_R$ for the full subcategory generated by $R$ under suspensions, desuspensions, and direct sums. This is a loop theory in the sense of \cref{definition:introduction_loop-theory}, and in \cref{prop:models_of_free_r_modules_as_modules_over_the_whitehead_tower} we show that 
\[
\Model_{\lfrees(R)}^\Omega\simeq\LMod_R,\qquad \Model_{\lfrees(R)}\simeq \LMod_{\tau_{\geq \star}R}(\Sp^\fil).
\]
Thus, the Malcev theory $\lfrees(R)$ encodes the filtered deformation of $R$-modules based on the Whitehead tower of $R$.

\item[{(\S\ref{subsection:synthetic_spectra})}]\emph{Synthetic spectra}. The above example gives a natural way to encode a deformation of $R$-modules using Malcev theories. As outlined above, to go down to spectra, we deform the comonad $R \otimes_{\thesphere} -$ associated to the adjunction $\spectra\rightleftarrows\LMod_R.$ 

We show that the derivability conditions necessary to do so are satisfied if, for example, $R_\ast R$ is flat as a right $R_\ast$-module. The $\infty$-category of coalgebras for the deformed comonad is a variant of connective synthetic spectra, categorifying the $R$-Adams spectral sequence by construction, with special fibre given by the complete derived $\infty$-category of $R_{*}R$-comodules. We show in \cref{theorem:nilpotent_complete_synthetic_spectra_as_coalgebras_for_a_derived_functor} that if $R$ is Adams-type then our construction recovers the Postnikov completion of the connective synthetic spectra of \cite{pstrkagowski2018synthetic}. 

\item[{(\S\ref{subsection:examples_synthetic_spaces})}] \emph{Synthetic spaces}. Motivated by the previous example, in \cref{def:rsyntheticspaes} we \emph{define} the $\infty$-category $\Syn\spaces_R$ of \emph{$R$-synthetic spaces} as the $\infty$-category of coalgebras for the comonad on $\Model_{\lfrees(R)}\simeq \LMod_{\tau_{\geq \star}R}(\Sp^\fil)$ deforming the comonad associated to the adjunction
\[
R\otimes_{\thesphere} \Sigma^\infty_+ (\bs) : \spaces\rightleftarrows\LMod_R : \Omega^\infty.
\]
In \cref{proposition:comonad_defining_synthetic_spaces_is_self_tame}, we show that the necessary derivability conditions are satisfied if $R_\ast\Omega^{\infty-n}R$ is projective as a left $R_\ast$-module for all $n\in\integers$. For example, this holds when $R_{\ast}$ is a field or when $R = \MU$. 

By construction, the $\infty$-category of $R$-synthetic spaces categorifies the unstable $R$-Adams spectral sequence of \cite{bendersky1978unstable, benderskythompson2000bousfield}. Our tameness conditions are variations of the assumptions needed there to give good descriptions of the $E_2$-page in terms of unstable coalgebras.

\item[{(\S\ref{ssec:examples:syntheticekrings})}] \emph{Synthetic $\calO$-algebras}. 
If $R$ is (for simplicity) an $\bfE_\infty$-ring and $\calO$ is an $\infty$-operad, then by deforming the comonad associated to the adjunction
\[
\Alg_{\calO} \rightleftarrows \Alg_{\calO}(\Mod_R)
\]
we likewise obtain an $\infty$-category of \emph{$R$-synthetic $\calO$-algebras}. Here, the needed derivability assumption holds when $R_\ast R$ is flat over $R_\ast$, and the special fibre is an algebraic category of $R_{*}$-algebras equipped with a compatible $R_{*}R$-comodule structure and power operations.

Taking $R = \bfF_p$ and $\calO = \bfE_k$, this categorifies a version of the Goerss--Hopkins spectral sequence for maps between $\bfE_k$-rings developed in unpublished work of Senger \cite[\S4]{lawson2019calculating}. Our work therefore provides progress on the problem posed by Lawson \cite[Problem 1.9.1]{lawson2020en} to develop obstruction theories for maps between $\bfE_k$-algebras in a wide variety of contexts. 

A completed variation of this construction, made possible by our use of infinitary theories, similarly provides deformations of the $\infty$-category $\CAlg(\Sp_{K(n)})$ of $K(n)$-local $\bfE_\infty$-rings which categorify and extend the Goerss--Hopkins spectral sequence for maps between $K(1)$-local $\bfE_\infty$-rings \cite[Theorem 2.4.14]{moduli_problems_for_structured_ring_spectra} and provide one answer to a conjecture of Lawson \cite[Conjecture 1.6.6]{lawson2020en}. 
 \end{enumerate}

We remark that several ideas related to the ones explored here appear in upcoming work of Raksit \cite{arpon_spectral_algebraic_theories}. One of the constructions given in that paper, \emph{energization}, associates to an $\infty$-category with a suitable functor into graded spectra a \emph{derived $\infty$-category}, obtained by explicitly writing down a variant of a loop theory and considering its corresponding $\infty$-category of loop models. Raksit proves that applied to the $\infty$-category of spectra with homotopy groups concentrated in even degrees, this produces (nonconnective) even $\MU$-based synthetic spectra. This provides another construction of a variant of synthetic spectra using the techniques of higher universal algebra, independent from \S\ref{subsection:synthetic_spectra} discussed above. 

We note that the two approaches differ in focus. In the present work, loop theories encode deformations whose $\infty$-category of loop models recovers some existing objects of interest, and whose non-loop models then provide a useful connective deformation. In \cite{arpon_spectral_algebraic_theories}, the focus is on constructing nonconnective deformations, such as the theory of derived rings, as $\infty$-categories of the analogue of loop models in that context.

\subsection{A brief history of the Malcev condition} 
\label{subsection:malcevkanhistory}

Although the precise definition of an $\infty$-categorical Malcev theory we give is new, the general idea goes back much further. Since the notion may seem esoteric on first encounter, in this section we recall some of its history. At the end, we also ask a few questions about the behavior of general theories, not necessarily in the presence of the Malcev condition.

Malcev operations were first introduced by \textcyr{Мальцев} \cite{malcev1954general}, variously transliterated Malcev, Mal’cev, Mal’tsev, Maltsev, who proved, in modern language, that if $\ccat$ is the category of models for an algebraic theory, then the composition of internal equivalence relations in $\ccat$ is commutative if and only if the theory admits a Malcev operation. In \cite{findlay1960reflexive}, Findlay proved that this implies every internal reflexive relation in $\ccat$ is an internal equivalence relation, and the converse was shown by Shafaat \cite{shafaat1974note}. These conditions and generalizations have since seen considerable study in universal algebra; see \cite{borceuxbourn2004malcev,bourn2017groups} for textbook accounts.

Sets equipped with a Malcev operation were given the name \emph{herd} by Lambek \cite{lambek1955groups,lambek1992ubiquity}. A \emph{heap} is a set equipped with a Malcev operation $t$ which is \emph{associative} in the sense that $t(a,b,t(c,d,e)) = t(t(a,b,c),d,e)$. If $G$ is a group, then $t(x,y,z) = xy^{-1}z$ determines a heap, and conversely every heap $G$ with specified element $e \in G$ determines a group with binary operation $x\cdot y = t(x,e,y)$ and neutral element $e$. Thus heaps are an unpointed or affine generalization of groups. Heaps were introduced earlier by Wagner \cite{wagner1953theory}, and can be traced further back to work of Pr\"ufer \cite{prufer1924theorie}. See \cite[Section 3]{hollingslawson2017wagner} for further historical discussion. 

In \cite[Page 3]{barrgrilletosdol1971exact}, Barr observed without proof that, in an exact category, every simplicial object satisfies the Kan condition if and only if every internal reflexive relation is an equivalence relation. A proof appears, for instance, in work of Carboni--Kelly--Pedicchio \cite[Theorem 4.2]{carbonikellypedicchio}. In \cite{jibladzepirashvili2002kan}, Jibladze--Pirashvili give a direct proof of the fact that, in the category of models for an algebraic theory, every simplicial object satisfies the Kan condition if and only if the theory contains a Malcev operation. They also prove that in this case every surjection of simplicial objects is a Kan fibration, generalizing a classical theorem for simplicial groups \cite[II.3.8 Proposition 1]{quillen1967homotopical}. 

It is a classical result of Moore \cite{moore1954homotopie} that every simplicial object in the $1$-category of groups is a Kan complex, and the above results generalize this to simplicial objects in the $1$-category of herds, i.e.\ to simplicial sets equipped with a Malcev operation. A \emph{loop} is a set $G$ equipped with a unital binary operation for which both shearing maps
\[
G\times G \to G\times G,\qquad (x,y) \mapsto (x,xy)\quad(x,y)\mapsto(xy,x)
\]
are bijections. Thus a connected $H$-space can be regarded as a loop up to homotopy, and the theory of loops is Malcev, see \cref{ex:hspaceherd}. In \cite{klaus2001simplicial}, Klaus proves that the minimal simplicial set model of a connected $H$-space inherits the structure of a strict simplicial loop. \cref{thm:intro:ukanspaces} shows that every space which admits a Malcev operation is equivalent to a disjoint union of $H$-spaces, and together these imply that every space which admits a Malcev operation is equivalent to a geometric realization of a simplicial set equipped with a Malcev operation.

The implicit use of Malcev theories in homotopy theory can be traced back at least to Quillen \cite[II.4.2]{quillen1967homotopical}. In his seminal work introducing model categories, Quillen constructed a closed model structure on the category of simplicial objects in an ordinary category $\dcat$ under certain assumptions on $\dcat$ which ask essentially that $\dcat$ is the category of models for a classical finitary theory or for a possibly infinitary Malcev theory. From this perspective, Malcev theories form the natural basis for analogues of animation in non-compactly generated settings; see \S\ref{ssec:quillen} for more information.

In \cite[Corollary 4.2.7]{lurie2011dag8}, \cite[Corollary A.5.6.4]{lurie_spectral_algebraic_geometry}, Lurie proves a fully homotopical generalization of Moore's theorem, demonstrating that simplicial objects in the $\infty$-category of grouplike $\bfE_1$-spaces always satisfy the Kan condition. The fully coherent $\bfE_1$-structure is used in the proof in order to apply the equivalence between grouplike $\bfE_1$-spaces and pointed connected spaces. \cref{theorem:introduction:universally_kan_objects_are_the_same_as_those_admitting_a_malcev_operation}, whose proof is inspired by Bousfield and Friedlander's work on the $\pi_\ast$-condition for simplicial spaces \cite[Appendix B]{bousfieldfriedlander1978homotopy}, shows that this full coherence is not necessary: any simplicial space which merely admits a Malcev operation already satisfies the Kan condition.

In \cite[Definition 4.2.9]{lurie2011dag8}, Lurie introduced the notion of a \emph{socle}. In our language, a socle is a theory $\pcat$, in the infinitary sense of \cref{definition:introduction_theory}, all of whose objects admit the structure of a cogroup (i.e.\ admit the structure of a grouplike $\bfE_1$-object in $\pcat^\op$). This assumption is used to guarantee that if $P \in \pcat$, then the corresponding free object $\nu P \in \Model_\pcat$ is strongly projective (\cref{def:intro:stronglyprojective}), allowing $\Model_\pcat$ to be identified as the free cocompletion of $\pcat$ under geometric realizations \cite[Proposition 4.2.15]{lurie2011dag8}. Socles have also been used by Brantner in \cite{brantner2017lubin} to control some of the non-compactly generated $\infty$-categories that arise in $K(n)$-local homotopy theory.

An ad hoc generalization of Lurie's socles were introduced under the name of ``Malcev theories'' in \cite{balderrama2021deformations}, where the strict cogroup condition was replaced by a strict coherd condition. This is a stricter condition than the one considered in the current work; nevertheless, there is some overlap in our treatment of Malcev theories with the treatment there. In these cases, where we can give either a short or better proof, we have opted to do so. 

As we explain in \S\ref{ssec:stronglyprojectivelygenerated}, a theory $\pcat$ has the property that every representable is strongly projective in $\Model_\pcat$ if and only if the theory $\pcat$ is Malcev in the sense of \cref{def:intro:malcev}. Thus the Malcev theories we consider here may be viewed as a \emph{maximal} generalization of socles. This generalization has the advantage that it has good closure properties, such as under slices and certain pullbacks, see \cref{lem:malcevclosure}. Moreover, a theory $\pcat$ is Malcev in our sense if and only if its homotopy category $\h\pcat$ is Malcev in the classical sense, a condition which is easily verified in practice.

Our work in this series uses the Malcev condition in essentially two orthogonal ways. In this paper, it is used primarily to deal with issues that arise when considering the $\infty$-category of models of an infinitary theory. In the sequel, it is used as a key technical assumption needed to develop a working deformation theory. That the Malcev condition is useful for doing deformation theory in universal algebra appears to go back to work of Smith \cite[Chapter 6]{smith1976malcev}; similar technical assumptions also appear in work on resolution model structures \cite{dwyerkanstover1993e2,bousfield2003cosimplicial}, which form the historical precursor to the modern theory of synthetic deformations.

\begin{question}
\label{question:properties_of_models_outside_of_the_malcev_case}
There are, of course, theories that do not satisfy the Malcev condition. These are quite well understood in the finitary case, with the fundamentals developed in Lurie's work on nonabelian derived categories \cite[\S5.5.8]{lurie_higher_topos_theory}. In the absence of the Malcev condition, working with infinitary theories and their models is much more delicate. We have separated out some of the higher universal algebra that can be developed in this generality, particularly in \S\ref{sec:generaltheories} and \S\ref{sec:genmonadic}, but advertise here a few open problems that our treatment leaves unresolved:
\begin{enumerate}
\item Let $\pcat$ be a theory. Does $\Model_\pcat$ admit all small colimits?
\item Let $S$ be a set and let $\pcat$ be a theory generated under coproducts and retracts by a family of objects $\{P_s : s\in S\}$. Is $\Model_\pcat$ monadic over $\spaces_{/S}$?
\item Let $\ccat$ be an $\infty$-category monadic over $\spaces_{/S}$ for a space $S$. Does $\ccat$ admit all small colimits?
\end{enumerate}
All of these admit positive answers classically: if $\pcat$ is a theory generated under coproducts and retracts by a set of objects $\{P_s : s\in S\}$, then the category $\Model_\pcat^\heartsuit$ of set-valued models of $\pcat$ is easily seen to be monadic over $\sets_{/S}$ (compare the proof of \cref{prop:sometimesmonadic}), and every such category admits all small colimits \cite[Example following Proposition 4]{linton1969coequalizers}. They can also be resolved in the presence of the Malcev condition, or in the presence of certain boundedness or presentability conditions.

The motivating example of an infinitary theory to which these assumptions do not apply is the theory of compact Hausdorff spaces: if $\pcat$ is the full subcateory of compact Hausdorff spaces spanned by the Stone--\v{C}ech compactifications of arbitrary sets, then it is classical that the category $\Model_\pcat^\heartsuit$ of set-valued models of $\pcat$ is equivalent to the category of compact Hausdorff spaces. We do not know, for example, whether the $\infty$-category $\Model_\pcat$ of space-valued models of $\pcat$ admits all small colimits.
\end{question}

Orthogonal to these considerations, we also ask the following.

\begin{question}
Is the full subcategory of $\pretheories$ spanned by the class of bounded theories (see \S\ref{ssec:bounded}) closed under small limits?
\end{question}

\subsection{Conventions}

All $\infty$-categories are implicitly assumed to be locally small, with the exception of $\infty$-categories defined as subcategories of the very large $\infty$-category $\largecatinfty$  of large $\infty$-categories, or defined as subcategories of $\Fun(\ccat^\op,\spaces)$ for a locally small $\infty$-category $\ccat$.

Given an $\infty$-category $\ccat$, we write $\smallpresheaves(\ccat)\subset\Fun(\ccat^\op,\spaces)$  for the full subcategory spanned by those presheaves which are 
\emph{small}; that is, which may be written as a small colimit of representable presheaves. This is a locally small $\infty$-category when $\ccat$ is, even though $\Fun(\ccat^\op,\spaces)$ need not be. If $\ccat$ is small, then $\smallpresheaves(\ccat) = \Fun(\ccat^\op,\spaces)$.

\section{Simplicial objects and Malcev operations}\label{sec:simplicialspaces}

The $\infty$-categories of models of theories that we consider in this paper can be thought of as $\infty$-categories of \emph{formal simplicial resolutions}. To realize this, we will need some of the theory of simplicial spaces. Our primary goal in this section is to study the interaction between Kan fibrations of simplicial spaces and Malcev operations.

We start in \S\ref{ssec:kanfibrations} by recalling the definition and some basic properties of Kan fibrations of simplicial spaces, and in \S\ref{ssec:herds} by recalling the definition and establishing some fundamental properties of Malcev operations in $\infty$-categories. We identify several useful characterizations and properties of Malcev operations on spaces and maps of spaces that will be needed later.

We then give the main technical theorem of the section in \S\ref{ssec:surjherd}: any effective epimorphism of simplicial spaces which admits a Malcev operation is a Kan fibration. Using this, we give in \S\ref{ssec:univkan} several alternate characterizations and closure properties of objects in $\infty$-categories that admit a Malcev operation.

\subsection{Kan fibrations}\label{ssec:kanfibrations}

\begin{definition}
Let $\ccat$ be an $\infty$-category. Given a simplicial object $\Xss \in \Fun(\Delta^\op,\ccat)$ and simplicial space $\Lambda \in \simplicialspaces$, consider the presheaf on $\ccat$ defined by
\[
\map_{\simplicialspaces}(\Lambda,\map_\ccat(\bs,\Xss))\colon \ccat^\op\to \spaces.
\]
The \emph{cotensor} $\Xss[\Lambda]$, when it exists, is the representing object for this presheaf. That is, it is an object of $\ccat$ together with equivalences
\[
\map_\ccat(Y,\Xss[\Lambda])\simeq \map_{\simplicialspaces}(\Lambda,\map_\ccat(Y,\Xss)),
\]
natural in $Y\in \ccat$.
\end{definition}

\begin{example}
\label{example:cotensors_out_of_simplex}
If $\Xss$ is a simplicial object, then
\[
\Xss[\Delta^n] = X_n,\qquad \Xss[\Lambda^2_1] = \lim\left(X_1 \xrightarrow{d_0} X_0 \xleftarrow{d_1} X_1\right).
\]
\end{example}

\begin{example}
If $\Xss$ and $\Lambda$ are simplicial spaces, then
\[
\Xss[\Lambda] = \map_{\simplicialspaces}(\Lambda,\Xss).
\]
Thus, the cotensor can be thought of as a generalization of the mapping space construction. 
\end{example}

\begin{remark}
\label{remark:formation_of_cotensors_is_continuous}
As is clear from the definition, the formation of cotensors is continuous in the second variable and takes colimits in the first variable to limits in $\ccat$. 
\end{remark}

\begin{remark}
Combining the first part of \cref{example:cotensors_out_of_simplex} and the second part of \cref{remark:formation_of_cotensors_is_continuous} we see that if $\ccat$ admits finite limits then $\Xss[\Lambda]$ exists for any simplicial object $\Xss \in \Fun(\Delta^\op,\ccat)$ and any simplicial set $\Lambda$ with finitely many nondegenerate simplices.
\end{remark}

\begin{definition}[{\cite[\S 6.2.3]{lurie_higher_topos_theory}}] 
\label{def:effepi}
Let $\ccat$ be an $\infty$-category with pullbacks. We say that a morphism $f\colon X \to Y$ is an \emph{effective epimorphism} if its \v{C}ech nerve
\begin{center}\begin{tikzcd}
Y&\ar[l]X &\ar[l,shift right]\ar[l,shift left]X\times_Y X&\ar[l,shift right]\ar[l,shift left]\ar[l]X\times_Y X \times_Y X&\ar[l,shift right=0.5mm]\ar[l,shift right=1.5mm]\ar[l,shift left=0.5mm]\ar[l,shift left=1.5mm]\cdots
\end{tikzcd}\end{center}
is a colimit diagram in $\ccat$.
\end{definition}

\begin{example}
\label{example:effective_epimorphism_of_spaces_are_pi0_epis}
A map $f\colon X \to Y$ of spaces is an effective epimorphism if and only if it induces a surjection $\pi_0 X \to \pi_0 Y$ on path components \cite[{Proposition 7.2.1.14}]{lurie_higher_topos_theory}.
\end{example}

\begin{example}
Any morphism $f\colon X \to Y$ which admits a section is an effective epimorphism \cite[Corollary 6.2.3.12]{lurie_higher_topos_theory}.
\end{example}

\begin{definition}\label{def:kancondition}
Let $\ccat$ be a $\infty$-category with finite limits. We say that 
\begin{enumerate}
\item A map $\Xss\to \Yss$ of simplicial objects in $\ccat$ is a \emph{Kan fibration} if
\[
\Xss[\Delta^n] \to \Yss[\Delta^n]\times_{\Yss[\Lambda^n_i]}\Xss[\Lambda^n_i]
\]
is an effective epimorphism for all $0\leq i \leq n$;
\item A simplicial object $\Xss$ \emph{satisfies the Kan condition} if $\Xss \to \ast$ is a Kan fibration; that is, if $\Xss[\Delta^n] \to \Xss[\Lambda^n_i]$ is an effective epimorphism for all $0\leq i \leq n$.
\end{enumerate}
\end{definition}

\begin{example}
\label{example:cech_nerve_is_a_kan_complex}
Let $f\colon X \to Y$ be a morphism in an $\infty$-category $\ccat$ with finite limits. Then the \v{C}ech nerve $\check{C}(f)$ of $f$ satisfies the Kan condition. In fact, the map
\[
\check{C}(f)[\Delta^n] \to \check{C}(f)[\Lambda^n_i]
\]
is an equivalence for all $0 \leq i \leq n$; see \cite[{\S 6.1.2}]{lurie_higher_topos_theory}.
\end{example}

\begin{example}
Let $K \in \ccat$ be an object considered as a constant simplicial object. Then a map $f\colon \Xss \to K$ is a Kan fibration if and only if $\Xss$ satisfies the Kan condition.
\end{example}

\begin{remark}
In the context of simplicial sets, \cref{def:kancondition} is classical. An analogous notion was considered by Seymour \cite{seymour1980kan} for simplicial topological spaces. Kan fibrations of sheaves of simplicial spaces (i.e.\ $\infty$-groupoids) were later introduced by Henriques in \cite[Definition 2.3]{henriques2008integrating}, and a thorough account of the theory of simplicial objects in an $\infty$-topos appears in work of Lurie \cite[Section A.5]{lurie_spectral_algebraic_geometry}.
\end{remark}

We are mostly interested in the case where the theory of Kan fibrations in $\ccat$ may be reduced to the theory of Kan fibrations of simplicial spaces.

\begin{remark}
Recall that a simplicial object $\Xss$ is said to be a \emph{hypercovering} if
\[
\Xss[\Delta^n] \to \Xss[\partial \Delta^n]
\]
is an effective epimorphism for all $n\geq 0$. By \cite[Corollary A.5.6.4]{lurie_spectral_algebraic_geometry}, a simplicial object $\Xss$ in an $\infty$-topos $\xcat$ satisfies the Kan condition if and only if it is a hypercovering of its geometric realization $|\Xss|$ (i.e.\ is a hypercovering in the slice category $\xcat_{/|\Xss|}$).
\end{remark}

The Kan condition is useful for us due to the compatibilities it guarantees between geometric realizations and limits: for example, every Kan fibration is a realization fibration in the sense of Rezk \cite{rezk_realization_fibration}. Specifically, we will make use of the following.

\begin{prop}
\label{prop:kanlimits}
Geometric realizations interact with the Kan condition as follows: 
\begin{enumerate}
\item Suppose given a small family $\{\Xss(i) : i \in I\}$ of simplicial spaces. If each $\Xss(i)$ satisfies the Kan condition, then the canonical map 
\[
|\prod_{i\in I}\Xss(i)| \to \prod_{i\in I}|\Xss(i)|
\]
is an equivalence.
\item Suppose given a cartesian square
\begin{center}\begin{tikzcd}
\Xss'\ar[r]\ar[d]&\Xss\ar[d,"f"]\\
\Yss'\ar[r]&\Yss
\end{tikzcd}\end{center}
of simplicial spaces. If $f$ is a Kan fibration, then the canonical map
\[
|\Xss'| \to |\Yss'|\times_{|\Yss|}|\Xss|
\]
is an equivalence.
\end{enumerate}
\end{prop}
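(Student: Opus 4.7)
The plan is to reduce both parts to the equivalence, recalled in the preceding remark and due to \cite[Corollary A.5.6.4]{lurie_spectral_algebraic_geometry}, between simplicial spaces satisfying the Kan condition and hypercoverings of their geometric realization, combined with the standard behavior of effective epimorphisms in the $\infty$-topos $\spaces$.

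For part (2), I would invoke the principle that Kan fibrations are \emph{realization fibrations} in the sense of \cite{rezk_realization_fibration}: for any pullback square of simplicial spaces along a Kan fibration, the resulting square on realizations is again a pullback. In the present $\infty$-categorical setting this is a consequence of the hypercovering characterization together with descent of effective epimorphisms in $\spaces$. Concretely, if $f\colon \Xss\to\Yss$ is a Kan fibration and $\Yss'\to\Yss$ is an arbitrary map, one first checks that the base change $\Yss'\times_\Yss\Xss\to\Yss'$ is again a Kan fibration (the new comparison maps are obtained from those for $f$ by pullback, and effective epimorphisms in $\spaces$ are stable under pullback), and then applies the descent property to promote the pullback square of simplicial spaces to one of realizations.

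For part (1), I would show directly that $\prod_{i\in I}\Xss(i)$ is a hypercovering of the constant simplicial space on $\prod_{i\in I}|\Xss(i)|$, which then forces $|\prod_{i\in I}\Xss(i)|\simeq\prod_{i\in I}|\Xss(i)|$ since a hypercovering has its augmentation target as realization. By assumption each $\Xss(i)$ is a hypercovering of $|\Xss(i)|$. The hypercovering condition is phrased in terms of effective epimorphisms between certain matching objects; since matching objects are finite limits in the slice over the augmentation target and products commute with limits, the comparison maps for $\prod_{i\in I}\Xss(i)$ are simply products of the corresponding comparison maps for the individual $\Xss(i)$. By \cref{example:effective_epimorphism_of_spaces_are_pi0_epis}, effective epimorphisms in $\spaces$ are precisely $\pi_0$-surjections, and $\pi_0$ commutes with arbitrary products, so a product of effective epimorphisms remains effective.

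The main obstacle is the passage between the Kan condition, expressed through horn inclusions, and the hypercovering condition, expressed through boundary inclusions relative to a base. This translation is nontrivial in the $\infty$-categorical setting and relies on the cited input of \cite[Corollary A.5.6.4]{lurie_spectral_algebraic_geometry}; once it is in hand, both parts follow by routine manipulation of products, pullbacks, and effective epimorphisms in $\spaces$.
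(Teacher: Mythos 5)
For part (1), your argument is essentially identical to the paper's: both of you identify each $\Xss(i)$ as a hypercovering of $|\Xss(i)|$ via \cite[Corollary A.5.6.4]{lurie_spectral_algebraic_geometry}, observe that effective epimorphisms of spaces are closed under arbitrary products because they are detected on $\pi_0$, and conclude that $\prod_{i\in I}\Xss(i)$ is a hypercovering of $\prod_{i\in I}|\Xss(i)|$, whence the realization is as claimed.

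For part (2), however, your sketch has a real gap. The paper simply cites \cite[Theorem A.5.4.1]{lurie_spectral_algebraic_geometry}, whereas you try to give a proof. Your first reduction — that base change along any map preserves Kan fibrations, since the relevant comparison maps pull back and effective epimorphisms of spaces are stable under pullback — is correct (and is exactly \cref{lem:kanpb} in the paper). But the final step, where you ``apply the descent property to promote the pullback square of simplicial spaces to one of realizations,'' is where the entire content of the theorem lives, and universality of colimits alone does not supply it. Universality of colimits in $\spaces$ lets you commute a fixed pullback past a colimit in one variable, but here both vertices of the square are geometric realizations and you need a genuine interchange of a pullback against a simultaneous pair of geometric realizations. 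Without further hypotheses this interchange fails: it is precisely the Kan condition that makes it go through, and it has to enter the argument in a substantive way (Lurie's proof of A.5.4.1 is a multi-page argument using the Kan condition to produce effective epimorphisms that let one check the equivalence after a cover). So you should either cite Lurie's theorem outright, as the paper does, or supply the missing interchange argument in detail — the one-word appeal to ``descent'' does not do the job.
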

\begin{proof}
(1)~~As $\Xss(i)$ satisfies the Kan condition, it is a hypercovering of $|\Xss(i)|$ by \cite[Corollary A.5.6.4]{lurie_spectral_algebraic_geometry}. As a consequence of \cref{example:effective_epimorphism_of_spaces_are_pi0_epis}, effective epimorphisms of spaces are closed under all small products. It follows that $\prod_{i\in I}\Xss(i)$ is a hypercovering of $\prod_{i\in I}|\Xss(i)|$. Thus $|\prod_{i\in I}\Xss(i)| \simeq \prod_{i\in I}|\Xss(i)|$ as claimed.

(2)~~This is \cite[Theorem A.5.4.1]{lurie_spectral_algebraic_geometry}.
\end{proof}

We also record the following for later use.

\begin{lemma}\label{lem:kanpb}
Consider a cartesian square
\begin{center}\begin{tikzcd}
\Xss'\ar[r]\ar[d,"f'"]&\Xss\ar[d,"f"]\\
\Yss'\ar[r,"g"]&\Yss
\end{tikzcd}\end{center}
of simplicial spaces. If $f$ is a Kan fibration, then so is $f'$, and the converse holds if $g$ is an effective epimorphism.
\end{lemma}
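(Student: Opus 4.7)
The plan is to unfold the Kan fibration condition levelwise, reducing to a statement about effective epimorphisms of spaces, and then invoke two standard facts: that effective epimorphisms of spaces are stable under pullback (\cref{example:effective_epimorphism_of_spaces_are_pi0_epis}) and that effective epimorphisms in the presheaf $\infty$-topos $\Fun(\Delta^\op,\spaces)$ are detected levelwise.

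First I would observe that the cotensor $(-)[\Lambda]$ preserves limits in the first variable by \cref{remark:formation_of_cotensors_is_continuous}, so the hypothesis square remains cartesian after applying either $[\Delta^n]$ or $[\Lambda^n_i]$. A routine pasting argument then yields a cartesian square
\[
\begin{tikzcd}
\Xss'[\Delta^n] \ar[r]\ar[d, "b"'] & \Xss[\Delta^n] \ar[d, "c"] \\
\Yss'[\Delta^n]\times_{\Yss'[\Lambda^n_i]}\Xss'[\Lambda^n_i] \ar[r, "d"'] & \Yss[\Delta^n]\times_{\Yss[\Lambda^n_i]}\Xss[\Lambda^n_i]
\end{tikzcd}
\]
whose vertical maps $b$ and $c$ are precisely the matching maps controlling the Kan fibration conditions for $f'$ and $f$, and in which a further pasting identifies $d$ itself as the pullback of $\Yss'_n \to \Yss_n$ along the projection from the lower-right matching object.

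For the forward direction, the claim then reduces to the observation that $c$ an effective epimorphism implies its pullback $b$ is, with no hypothesis on $g$ required. For the converse, assuming $g$ is an effective epimorphism of simplicial spaces, levelwise detection of effective epimorphisms in $\Fun(\Delta^\op,\spaces)$ makes each $\Yss'_n\to\Yss_n$ an effective epimorphism of spaces, and hence so is $d$. Combining this with $b$ being an effective epimorphism (by $f'$ a Kan fibration), the composite $d\circ b$, which equals $c$ precomposed with the top arrow, is an effective epimorphism, and left cancellation on $\pi_0$-surjections forces $c$ to be one as well.

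I do not anticipate any real obstacle; the step with the most content is the matching-object bookkeeping identifying $d$ as a pullback of $\Yss'_n\to\Yss_n$, which is elementary but is the identification on which the whole argument pivots.
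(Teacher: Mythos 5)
Your proof is correct and takes essentially the same route as the paper's: form the cube of cotensors, use continuity of $(-)[\Lambda]$ to see that the front and back faces are cartesian, and deduce that the square of matching maps is cartesian, after which the forward direction is pullback-stability of effective epimorphisms of spaces and the converse uses that $g$ effective epi makes $d$ one. You have made explicit the step the paper compresses into ``it follows easily,'' namely the identification of $d$ as the base change of $\Yss'[\Delta^n]\to\Yss[\Delta^n]$ along a projection, which is indeed where the real content lies.
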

\begin{proof}
The assumptions provide for us a cube
\begin{center}\begin{tikzcd}
{\Xss'[\Delta^n]}\ar[rr]\ar[dd]\ar[dr]&&{\Xss[\Delta^n]}\ar[dd]\ar[dr]\\
&{\Xss'[\Lambda^n_i]}\ar[rr]\ar[dd]&&{\Xss[\Lambda^n_i]}\ar[dd]\\
{\Yss'[\Delta^n]}\ar[rr]\ar[dr]&&{\Yss[\Delta^n]}\ar[dr]\\
&{\Yss'[\Lambda^n_i]}\ar[rr]&&{\Yss[\Lambda^n_i]}
\end{tikzcd}\end{center}
in which the front and back faces are cartesian. It follows easily that if $\Xss[\Delta^n]\to \Yss[\Delta^n]\times_{\Yss[\Lambda^n_i]}\Xss[\Lambda^n_i]$ is an effective epimorphism then so is $\Xss'[\Delta^n]\to \Yss'[\Delta^n]\times_{\Yss'[\Lambda^n_i]}\Xss'[\Lambda^n_i]$, and that the converse holds if $\Yss'[\Delta^n]\to \Yss[\Delta^n]$ is an effective epimorphism.
\end{proof}

\subsection{Malcev operations}\label{ssec:herds}

Our goal for the rest of this section is to develop the theory of \emph{Malcev operations} in $\infty$-categories. Our main goal in this subsection is to introduce these operations, and study them in the $\infty$-category of spaces. In particular, in \cref{cor:ukanspaces} we characterize the spaces which admit a Malcev operation in terms of the property of being \emph{supersimple} (\cref{def:supersimple}), a natural strengthening of the notion of a componentwise simple space.

\begin{definition}
\label{def:malcevop}
A \emph{Malcev operation} on an object $X$ in an $\infty$-category is a ternary operation
\[
t\colon X \times X \times X \to X,
\]
together with a choice of two homotopies filling the triangles
\begin{equation}
\label{eq:malcdef}
\begin{tikzcd}
X\times X\ar[r,"X\times \Delta"]\ar[dr,"\pi_1"']&X\times X \times X\ar[d,"t"]&X\times X\ar[l,"\Delta\times X"']\ar[dl,"\pi_2"]\\
&X
\end{tikzcd}.
\end{equation}
An \emph{$H$-herd object} is an object equipped with a Malcev operation.
\end{definition}

\begin{remark}
In equations, the two homotopies of \cref{def:malcevop} can be written as 
\[
t(a,a,b)\simeq b\qquad\text{and}\qquad t(a,b,b)\simeq a.
\]
\end{remark}

\begin{remark}
We have required that the data of a Malcev operation includes specified homotopies filling in the two triangles of (\ref{eq:malcdef}) in order to properly formulate the sense in which certain constructions are natural in the Malcev operation. However, these two triangles are not required to satisfy any further coherences. As a consequence, an object of an $\infty$-category $\ccat$ \emph{admits} a Malcev operation if and only if it admits a Malcev operation up to homotopy, i.e.\ its image in the homotopy category $\h\ccat$ admits a Malcev operation.
\end{remark}

\begin{remark}
We will sometimes (in particular, \cref{ex:hspaceherd} and \cref{ex:pseudotorsor} below) abuse terminology and say that a map $t \colon X \times X \times X \rightarrow X$ is a Malcev operation if there exists a choice of homotopies making it into a Malcev operation; that is, if (\ref{eq:malcdef}) commutes up to homotopy.
\end{remark}

\begin{example}
\label{ex:hspaceherd}
Let $X$ be a grouplike $H$-space object, i.e.\ a pointed object equipped with a unital product $m$ for which the shearing map
\[
s \colonequals (X\times m) \circ (\Delta \times X)\colon X \times X \to X \times X \times X \to X \times X
\]
is an equivalence. Then a diagram chase shows that
\[
t \colonequals m \circ (X \times (\pi_2 \circ s^{-1})) \colon X \times X \times X \to X\times X \times X \to X\times X \to X
\]
is a Malcev operation.
If $m$ is associative, i.e.\ $X$ is a group object inside the homotopy category $\h\ccat$, then this Malcev operation is given in equations by
$
t(a,b,c) = a \cdot b^{-1} \cdot c.
$
\end{example}

\begin{example}
\label{ex:pseudotorsor}
Let $\ccat$ be an $\infty$-category with finite limits and terminal object $\cterminal$. Given an object $A \in \ccat$ and maps $x,y\colon \cterminal \to A$, there is the associated path object
\[
\Omega_{x,y}A = \lim\left(\cterminal \xrightarrow{x} A \xleftarrow{y} \cterminal\right).
\]
Given maps $x,y,z\colon \cterminal \to A$, there are path composition and path inversion maps
\[
\Omega_{x,y} A \times\Omega_{y,z} A \to \Omega_{x,z}A,\qquad \Omega_{x,y} A \to \Omega_{y,x} A,
\]
and the composite
\[
\Omega_{x,y} A \times \Omega_{x,y}A \times \Omega_{x,y}A \to \Omega_{x,y}A \times \Omega_{y,x} A \times \Omega_{x,y} A \to \Omega_{x,y} A,
\]
given informally by
\[
(f,g,h) \mapsto f \circ g^{-1}\circ h,
\]
is a Malcev operation on $\Omega_{x,y}A$.
\end{example}

\begin{example}
Specializing \cref{ex:pseudotorsor} to the case where $\ccat = \xcat$ is an $\infty$-topos and $A = \mathrm{B}G$ for some group object $G$, we learn that every principal $G$-bundle $p\colon E \to X$ admits a Malcev operation in the slice category $\xcat{/X}$. For example, every $n$-gerbe for $n\geq 2$ and every abelian $1$-gerbe admits a natural Malcev operation. 

More generally, one can show that every $n$-connective and $(2n-2)$-truncated object of an $\infty$-topos admits a natural Malcev operation. A version of this observation was, to our knowledge, first made by Vokřínek \cite{vokrinek2013computing,vokrinek2014heaps}.
\end{example}

Our goal for this rest of this subsection is to establish some properties of objects and morphisms which admit Malcev operations in the $\infty$-category of spaces. We begin by giving an elementary characterization of these spaces.

\begin{definition}\label{def:supersimple}
Say that a space $X \in \spaces$ is \emph{supersimple} if for all pointed connected spaces $A$ and $B$, the map
\[
\map(A\times B,X) \to \map(A\vee B,X),
\]
restricting along the inclusion $A\vee B \to A \times B$, is an effective epimorphism.
\end{definition}

\begin{remark}
Recall that if $X$ is a space, $x\in X$, $a \in \pi_n (X,x)$ and $b \in \pi_r (X,x)$, then the \emph{Whitehead product} $[a,b] \in \pi_{n+r-1}(X,x)$ is defined as the top horizontal composite in a diagram
\begin{center}\begin{tikzcd}
S^{n+r-1}\ar[r,"w"]\ar[d]&S^n\vee S^r\ar[r,"a\vee b"]\ar[d]&X\\
\ast\ar[r]&S^n\times S^r
\end{tikzcd},\end{center}
where $w$ is the top attaching map for the standard cell structure on $S^n\times S^r$, defined so that the square is cocartesian. It follows that $[a,b] = 0$ for all $a \in \pi_n(X,x)$ and $a\in \pi_r(X,x)$ precisely when
\[
\map_\ast(S^n\times S^r,X) \to \map_\ast(S^n\vee S^r,X)
\]
is an effective epimorphism, and therefore $[a,b] = 0$ for all basepoints $x\in X$ and $a\in \pi_n(X,x)$ and $b\in \pi_r(X,x)$ precisely when
\[
\map(S^n\times S^r,X) \to \map(S^n\vee S^r,X)
\]
is an effective epimorphism. Thus the supersimple spaces form a natural subclass of those spaces with vanishing Whitehead products \cite{porter1965spaces}, itself a natural subclass of the spaces with simple path components. Note, however, that we do \emph{not} require supersimple spaces to be connected.

As we will see below, the supersimple spaces are in many ways better behaved than either the spaces with vanishing Whitehead products or the componentwise simple spaces. For example, if $X$ is supersimple then so is the cotensor $X^K$ for any space $K$, whereas the corresponding statement where $X$ is assumed just to have vanishing Whitehead products fails \cite[Example 6.6]{luptonsmith2010whitehead}.
\end{remark}

\begin{lemma}\label{lem:coprodmalcev}
Let $\ccat$ be an $\infty$-category with finite products and with coproducts that distribute across them. If $\{X_i : i\in I\}$ is a set of objects in $\ccat$ which admit Malcev operations, then $\coprod_{i\in I}X_i$ admits a Malcev operation.
\end{lemma}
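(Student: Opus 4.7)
The plan is to exploit distributivity to define the Malcev operation summand by summand. Writing $X = \coprod_{i \in I} X_{i}$ and using that finite products distribute across coproducts in $\ccat$, one first obtains
\[
X \times X \times X \simeq \coprod_{(i,j,k) \in I^{3}} X_{i} \times X_{j} \times X_{k}, \qquad X \times X \simeq \coprod_{(i,j) \in I^{2}} X_{i} \times X_{j}.
\]
Under these decompositions, the diagonal of $X_{i}$ lands in the $(i,i)$-summand of $X \times X$, so the map $X \times \Delta \colon X \times X \to X \times X \times X$ carries the $(k,i)$-summand into the $(k,i,i)$-summand, while $\Delta \times X$ carries the $(i,k)$-summand into the $(i,i,k)$-summand.

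Next, I would define the candidate $t \colon X \times X \times X \to X$ one summand at a time. On the diagonal summand $X_{i} \times X_{i} \times X_{i}$, take the given $t_{i}$ composed with the inclusion $X_{i} \hookrightarrow X$. On any off-diagonal summand $X_{i} \times X_{i} \times X_{k}$ with $i \neq k$, take the projection $\pi_{3}$ composed with $X_{k} \hookrightarrow X$. On every remaining summand, take the projection $\pi_{1}$ composed with $X_{i} \hookrightarrow X$. Since a map out of a coproduct in any $\infty$-category is determined by its restrictions to the summands with no further coherences, this specifies $t$.

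It remains to provide the two homotopies of \cref{def:malcevop}. The space of $2$-cells between two maps $X \times X \to X$ decomposes as a product over $(k,i) \in I^{2}$ of the spaces of $2$-cells out of $X_{k} \times X_{i}$, so the fillings can themselves be supplied summand by summand. For the right-hand triangle and the $(k,i)$-summand, when $k = i$ the required filling is the first Malcev homotopy of $X_{i}$ composed with the inclusion, and when $k \neq i$ both sides of the triangle restrict on the nose to $X_{k} \times X_{i} \xrightarrow{\pi_{1}} X_{k} \hookrightarrow X$, so the identity homotopy suffices. The left-hand triangle is analogous: on the diagonal summand one uses the second Malcev homotopy of $X_{i}$, while on the $(i,i,k)$-summand with $i \neq k$ the asymmetric choice $t = \pi_{3}$ makes $t \circ (\Delta \times X)$ agree strictly with $\pi_{2}$.

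There is no real obstacle here; the main point to attend to is the asymmetric choice of $\pi_{3}$ (rather than $\pi_{1}$) on $(i,i,k)$-summands with $i \neq k$, without which one of the two Malcev triangles could not be filled strictly on the off-diagonal summands. The rest of the argument is bookkeeping, since all coherence data required is absorbed into the universal property of coproducts together with the given Malcev structures on the $X_{i}$.
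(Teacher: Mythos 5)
Your proof is correct and takes essentially the same approach as the paper's: distribute the triple product over the coproduct, define $t$ summand-by-summand with the given Malcev operation on diagonal summands, $\pi_3$ on $(i,i,k)$-summands, and a projection elsewhere, then fill the two triangles summand-by-summand. The paper's proof differs only in choosing $\pi_2$ rather than $\pi_1$ on the generic off-diagonal summand (a free choice, as you both note), and it leaves the final verification as "an easy calculation," whereas you spell out the homotopy-filling step and helpfully flag the necessary asymmetry between the $\pi_3$ and $\pi_1$ choices.
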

\begin{proof}
We construct a Malcev operation
\[
t\colon (\coprod_{i\in I}X_i)^{\times 3} \to \coprod_{i\in I}X_i
\]
by hand. As coproducts distribute across products, we may identify
\[
(\coprod_{i\in I}X_i)^{\times 3}\simeq \coprod_{(i,j,k)\in I^3}X_i\times X_j \times X_k.
\]
It therefore suffices to specify maps
\[
t_{i,j,k}\colon X_i\times X_j \times X_k \to X_l
\]
for each triple $i,j,k \in I$, where $l\in I$ is some element depending on this triple. We specify this in cases.
If $i = j = k$, then
\[
t_{i,i,i}\colon X_i\times X_i \times X_i \to X_i
\]
is any Malcev operation on $X_i$. If $i = j \neq k$, then
\[
t_{i,i,k} = \pi_3\colon X_i\times X_i\times X_k \to X_k
\]
is the projection. If $i \neq j = k$, then 
\[
t_{i,k,k} = \pi_1\colon X_i\times X_k \times X_k \to X_i
\]
is the projection. If $i\neq j$ and $j\neq k$, then $t_{i,j,k}$ can be anything, such as the projection
\[
t_{i,j,k} = \pi_2\colon X_i\times X_j\times X_k \to X_j.
\]
An easy calculation shows that this defines a Malcev operation on $\coprod_{i\in I}X_i$.
\end{proof}

\begin{warning}
\cref{lem:coprodmalcev} implies, for example, that if $X$ and $Y$ are $H$-herds in spaces then $X \coprod Y$ admits the structure of an $H$-herd in a natural way. However, beware that $X\coprod Y$ is \emph{not} the coproduct of $X$ and $Y$ in the $\infty$-category of $H$-herds unless $X$ or $Y$ is empty.
\end{warning}

\begin{theorem}
\label{cor:ukanspaces}
Given a space $X$, the following are equivalent:
\begin{enumerate}
\item $X$ admits a Malcev operation.
\item Every path component of $X$ admits the structure of an $H$-space.
\item Let $A$ and $B$ be pointed connected spaces. Then the map
\[
\map(A\times B,X)\to \map(A\vee B,X),
\]
restricting along the inclusion $A\vee B \to A\times B$, admits a section.
\item $X$ is supersimple.
\end{enumerate}
Moreover, in (1)$\Rightarrow$(3), the choice of a section can be made natural in the choice of Malcev operation on $X$.
\end{theorem}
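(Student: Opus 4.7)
The plan is to close the cycle $(1)\Rightarrow(3)\Rightarrow(4)\Rightarrow(2)\Rightarrow(1)$. The key step $(1)\Rightarrow(3)$ is proved by an explicit formula, which makes the naturality clause transparent; the remaining implications then chain together with the help of \cref{ex:hspaceherd} and \cref{lem:coprodmalcev}.

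For $(1)\Rightarrow(3)$, given a Malcev operation $t\colon X\times X\times X\to X$ together with its two structural homotopies, and a map $\phi\colon A\vee B\to X$ presented as a pair $(f,g)$ with $f\colon A\to X$, $g\colon B\to X$ and common basepoint value $x\colonequals f(a_0)=g(b_0)$, define the section by
\[
s(\phi)(a,b) \colonequals t(f(a),x,g(b)).
\]
The two Malcev homotopies $t(c,c,d)\simeq d$ and $t(c,d,d)\simeq c$ yield natural identifications $s(\phi)(a_0,b)\simeq g(b)$ and $s(\phi)(a,b_0)\simeq f(a)$, exhibiting $s(\phi)$ as an extension of $\phi$ along $A\vee B\hookrightarrow A\times B$. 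Since $s$ is assembled functorially from the triple $(t,f,g)$, it is manifestly natural in the choice of Malcev operation on $X$, establishing also the final clause of the theorem.

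Next, $(3)\Rightarrow(4)$ is immediate, since any map of spaces that admits a section is an effective epimorphism by \cref{example:effective_epimorphism_of_spaces_are_pi0_epis}. For $(4)\Rightarrow(2)$, pick a path component $X'\subseteq X$ and a basepoint $x_0\in X'$. Applying $(4)$ with $A=B=X'$, the fold map $\nabla\colon X'\vee X'\to X'\hookrightarrow X$ lifts up to homotopy to some $m\colon X'\times X'\to X$. Since $X'\times X'$ is connected and $m(x_0,x_0)=x_0$, the map $m$ factors through $X'$, yielding a unital multiplication $X'\times X'\to X'$ and hence an $H$-space structure on $X'$.

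For $(2)\Rightarrow(1)$, each path component $X_\alpha$ is a connected $H$-space, which I claim is automatically grouplike in the sense of \cref{ex:hspaceherd}. On $\pi_0$ the shearing map is trivially an isomorphism (both sides connected); on $\pi_n$ with $n\geq 1$ the concatenation and $H$-space induced products agree by Eckmann--Hilton, so the shearing map has the form $(\alpha,\beta)\mapsto(\alpha,\alpha+\beta)$ in the abelian group $\pi_n(X_\alpha,x_0)$, whose inverse is $(\alpha,\gamma)\mapsto(\alpha,\gamma-\alpha)$. Thus \cref{ex:hspaceherd} produces a Malcev operation on each $X_\alpha$, and \cref{lem:coprodmalcev} assembles these into a Malcev operation on $X=\coprod_\alpha X_\alpha$. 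The main subtlety in the argument is verifying the grouplike property in sufficient generality to invoke \cref{ex:hspaceherd}; the naturality claim in $(1)\Rightarrow(3)$ follows immediately from the explicit formula and requires no further work.
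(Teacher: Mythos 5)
Your proof is correct and follows essentially the same route as the paper's: the same explicit section formula $s(\phi)(a,b)=t(f(a),x,g(b))$ for $(1)\Rightarrow(3)$, sections give effective epimorphisms for $(3)\Rightarrow(4)$, the fold-map lift factoring through a path component for $(4)\Rightarrow(2)$, and grouplikeness plus \cref{ex:hspaceherd} and \cref{lem:coprodmalcev} for $(2)\Rightarrow(1)$. The only differences are organizational --- you close a single cycle where the paper first establishes $(1)\Leftrightarrow(2)$ and then chains through $(3)$ and $(4)$ --- and that you usefully spell out the Eckmann--Hilton argument on homotopy groups for why a connected $H$-space is automatically grouplike, a fact the paper asserts without justification.
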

\begin{proof}
(1)$\Rightarrow$(2). By assumption, $X$ admits a Malcev operation
\[
t\colon X \times X \times X \to X.
\]
By inspection, we see that a ternary operation $t$ on $X$ is a Malcev operation if and only if its adjoint
\[
\hat{t}\colon X \to \map(X\times X,X),\qquad \hat{t}(x)(a,b) = t(a,x,b)
\]
sends every point $x\in X$ to a unital product on the pointed space $(X,x)$. If $a,b\in X$ are in the same path component as $x$, then choices of paths $a \sim x$ and $b\sim x$ induce homotopies
\[
t(a,x,b) \sim t(x,x,x)\simeq x,
\]
and therefore this unital product restricts to give the path component of $x$ the structure of an $H$-space.

(2)$\Rightarrow$(1). By \cref{lem:coprodmalcev}, it suffices to prove that connected $H$-spaces admit Malcev operations. A connected $H$-space is grouplike, so this follows from \cref{ex:hspaceherd}.

(2)$\Rightarrow$(3). A section is given by the composite
\[
s\colon \map(A\vee B,X) \to \map(A,X)\times X\times \map(B,X) \to \map(A\times B,X\times X \times X) \to \map(A\times B,X)
\]
sending a map
\[
f\vee g \colon A \vee B \to X
\]
to the map
\[
s(f\vee g)\colon A \times B \to X,\qquad s(f\vee g)(a,b) = t(f(a),f(\ast)=g(\ast),g(b)).
\]

(3)$\Rightarrow$(4) holds as every morphism which admits a section is an effective epimorphism.

(4)$\Rightarrow$(2). Without loss of generality we may suppose that $X$ is connected, and must show that $X$ admits the structure of an $H$-space. An $H$-space structure on $X$ is exactly a lift of the fold map $X\vee X \to X$ through $X\vee X \to X\times X$, which exists by taking $A = B = X$.
\end{proof}

We now record some additional facts about supersimple spaces and morphisms of $H$-herds that will be used in the sequel. Perhaps the most useful general fact is the following. For a space $A$ and point $a \in A$, write $A_a\subset A$ for the path component of $a$.

\begin{prop}
\label{prop:locallysplit}
Let $f\colon X \to Y$ be a map of $H$-herds, and suppose that the underlying map of spaces is equipped with section $s \colon Y \to X$. Then $f$ is locally split in the following sense: given $x \in X$, if we write
\[
F \colonequals \{ f(x) \} \times_{Y} X
\]
for the fibre over $f(x)$ and $i \colon F_{x} \to X_x$ for the inclusion of the path component of $x$, then
\[
 F_{x} \times Y_{f(x)}\to X_x,\qquad (x',y) \mapsto t(i(x'),x,s(y))
\]
is an equivalence.
\end{prop}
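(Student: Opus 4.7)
The plan is to construct an explicit candidate homotopy inverse and verify the two compositions via a $\pi_\ast$-level computation, using that path components of $H$-herds in $\spaces$ are $H$-spaces.

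Define the candidate inverse
\[
\psi \colon X_x \to F_x \times Y_{f(x)}, \qquad x'' \mapsto \bigl(t(x'', s(f(x'')), x), f(x'')\bigr).
\]
The first component lies in $F$ because
\[
f(t(x'', s(f(x'')), x)) \simeq t(f(x''), f(s(f(x''))), f(x)) = t(f(x''), f(x''), f(x)) \simeq f(x),
\]
using that $f$ preserves the Malcev operation, $f \circ s = \id_Y$, and the Malcev identity $t(a,a,b) \simeq b$. Continuity then places the first component of $\psi(x'')$ in the path component $F_x$ for $x''$ near $x$ in $X_x$.

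To verify $\phi \circ \psi \simeq \id_{X_x}$ and $\psi \circ \phi \simeq \id_{F_x \times Y_{f(x)}}$, I invoke \cref{cor:ukanspaces} to recognize $X_x$, $Y_{f(x)}$, and $F_x$ as connected $H$-spaces. By Whitehead's theorem it suffices to check that $\phi_\ast$ and $\psi_\ast$ are mutually inverse bijections on all homotopy groups (the $\pi_0$ level is automatic as all three spaces are connected). For $n \geq 1$, Eckmann--Hilton forces $\pi_n$ of a connected $H$-space to be abelian, and since the Malcev operation sends the triple diagonal point to something equivalent to the basepoint, the induced map $t_\ast \colon \pi_n(X_x,x)^3 \to \pi_n(X_x,x)$ is a group homomorphism satisfying $t_\ast(a,a,b) = b$ and $t_\ast(a,b,b) = a$. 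An elementary linear algebra argument shows any such $\integers$-linear map on an abelian group must equal $(a,b,c) \mapsto a - b + c$. A direct computation then yields $\phi_\ast(a,b) = i_\ast(a) + s_\ast(b)$ and $\psi_\ast(c) = (c - s_\ast f_\ast(c), f_\ast(c))$, and the splitting of the long exact sequence of $F_x \hookrightarrow X_x \xrightarrow{f} Y_{f(x)}$ provided by $s_\ast$ shows these are mutually inverse.

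The main obstacle is path-component bookkeeping: a priori $\phi(x, f(x)) \simeq s(f(x))$, which need not lie in $X_x$, and similarly $\psi(x)$ has an unreduced first component. We resolve this by observing that replacing $s$ by a homotopic section landing (on the path component $Y_{f(x)}$) in $X_x$ is harmless for proving the statement, and that the strict fibre $F$ has the correct homotopy type because $f$ admits a section, and therefore is an effective epimorphism, making the relevant long exact sequence applicable. A secondary technical point is that the formal identity $t(t(a,b,c), c, d) \simeq t(a,b,d)$ does not follow from the Malcev axioms alone, which is precisely why the argument is routed through the abelian computation on $\pi_n$ rather than through a direct manipulation of the map at the space level.
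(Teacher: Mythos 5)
Your route — construct an explicit candidate inverse $\psi$ and verify the two compositions on homotopy groups via Whitehead, exploiting that a Malcev operation on an abelian group is forced to be $(a,b,c) \mapsto a-b+c$ — is a genuinely different argument from the paper's, which instead reduces to the standard splitting result for a sectioned map of connected $H$-spaces. Your observation that $t(t(a,b,c),c,d) \simeq t(a,b,d)$ is not forced by the Malcev axioms, so one cannot verify the inverse identities by a space-level manipulation, is correct and is exactly why a detour through $\pi_\ast$ or through a strict $H$-space structure is needed. The $\pi_\ast$ calculation itself is sound.

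The path-component step is where the proposal breaks, and it cannot be repaired in the way you suggest. You propose to replace $s$ by a \emph{homotopic} section landing in $X_x$ on $Y_{f(x)}$, but no such section need exist: $s$ sends $Y_{f(x)}$ into the path component $X_{sf(x)}$, nothing ties $sf(x)$ to the component of $x$, and any section homotopic to $s$ lands in the same component $X_{sf(x)}$. (Take $X = \integers$ with $t(a,b,c)=a-b+c$, $Y=\ast$, $s(\ast)=17$, $x=0$.) Consequently $\phi(x,f(x)) \simeq sf(x)$ and the first entry $t(x,sf(x),x)$ of $\psi(x)$ do not lie in the components you need, and the basepointed long-exact-sequence argument does not get started. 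The fix the paper uses is not a homotopy of sections but a \emph{new} section built from the Malcev operation: $s_x(y) := t(x,sf(x),s(y))$ is again a section of $f$ (since $fs_x(y) \simeq t(f(x),f(x),y) \simeq y$) and satisfies $s_x(f(x)) \simeq t(x,sf(x),sf(x)) \simeq x$, so it restricts to a section of $X_x \to Y_{f(x)}$ even though it is generally not homotopic to $s$. This is the move that legitimizes the reduction to the connected $H$-space case, and it is also what your $\psi$ needs — replacing the constant last slot $x$ by $sf(x)$ realigns everything with $s_x$ and lets your $\pi_\ast$ argument land in the right components.
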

\begin{proof}
For $x \in X$, define
\[
s_x\colon Y \to X,\qquad s_x(y) = t(x,sf(x),s(y)).
\]
As
\[
f(s_x(y)) = f(t(x,sf(x),s(y)) \sim t(f(x),fsf(x),fs(y)) \sim t(f(x),f(x),y) \sim y,
\]
we see that $s_x$ is again a section to $f$. As
\[
s_x(f(x)) = t(x,sf(x),sf(x))\sim x,
\]
we see that $s_x$ defines a section to the restriction of $f$ to $X_x \to Y_{f(x)}$. We therefore reduce to the case where $f\colon X \to Y$ is a morphism of connected $H$-spaces with section $s$. In this case the map is the usual equivalence
\[
 F \times Y \to X,\qquad (x',y) \mapsto x' \cdot s(y).\qedhere
\]
\end{proof}

\begin{cor}\label{cor:truncatemalcevsquare}
Consider a cartesian square
\begin{equation*}\label{eq:malcevsquare}
\begin{tikzcd}
X'\ar[r,"g'"]\ar[d,"f'"']\ar[dr,phantom,"\sigma"]&X\ar[d,"f"]\\
Y'\ar[r,"g"']&Y
\end{tikzcd}
\end{equation*}
of spaces. Suppose that $f$ is a map of $H$-herds and that the underlying map of spaces is equipped with a section $s\colon Y \to X$.
\begin{enumerate}
\item Let $\spaces_0\subset\spaces$ be a full subcategory containing $\sigma$ and closed under subobjects. If $F\colon \spaces_0\to\spaces$ preserves coproducts and finite products, then $F(\sigma)$ is cartesian.
\item In particular, $\sigma$ remains cartesian after $n$-truncation for all $n$.
\item $(f',g')\colon X' \to Y' \times X$ admits a retraction, natural in the choice of Malcev operation on $f$ and section $s$. The associated idempotent on $Y'\times X$ may be described as
\[
(y',x) \mapsto (y',t(x,sf(x),sg(y')))
\]
\item If $g$ admits a retraction, then $g'$ admits a retraction, natural in the choice of Malcev operation on $f$, section $s$, and retraction $r$ of $g$. The associated idempotent on $X$ may be described by
\[
x \mapsto t(x,sf(x),sgrf(x)).
\]
Moreover, the resulting map
\[
X \to X'\times_{Y'} Y
\]
is an equivalence.
\end{enumerate}
\end{cor}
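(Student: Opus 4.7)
The plan is to handle the four clauses in the order (3), (4), (1), (2), using Proposition~\ref{prop:locallysplit} as the main geometric input and repeated use of the Malcev axioms $t(a,a,b) \simeq b$ and $t(a,b,b) \simeq a$ together with the fact that $f$, being a map of $H$-herds, commutes with $t$.

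For (3), I would define $r \colon Y' \times X \to X'$ by the formula $(y', x) \mapsto (y', t(x, sf(x), sg(y')))$. That the second coordinate lands in the pullback $X' = Y' \times_Y X$ follows by applying $f$ and using $fs \simeq \id_Y$ and $t(a, a, b) \simeq b$ to get $f(t(x, sf(x), sg(y'))) \simeq t(f(x), f(x), g(y')) \simeq g(y')$. To see $r \circ (f',g') \simeq \id_{X'}$, use the pullback relation $fg' \simeq gf'$ to rewrite the second coordinate of $r(f'(x'), g'(x'))$ as $t(g'(x'), sg(f'(x')), sg(f'(x')))$, which collapses to $g'(x')$ via $t(a, b, b) \simeq a$. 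For (4), I would define $r' \colon X \to X'$ by $x \mapsto (rf(x), t(x, sf(x), sgrf(x)))$; well-definedness is the same calculation, and $r' \circ g' \simeq \id_{X'}$ follows because the retraction identity $rg \simeq \id_{Y'}$ forces $rf(x) = y'$ on $(y', x) \in X'$, whereupon the second coordinate collapses to $x$ by $t(a, b, b) \simeq a$.

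The moreover in (4) and the preservation statement (1) both follow from a global decomposition induced by Proposition~\ref{prop:locallysplit}. Grouping path components of $X$ by their image in $\pi_0(Y)$ yields equivalences
\[
X \simeq \coprod_{[y] \in \pi_0(Y)} F_{[y]} \times Y_{[y]}, \qquad Y \simeq \coprod_{[y]} Y_{[y]}, \qquad Y' \simeq \coprod_{[y]} Y'_{[y]},
\]
where $F_{[y]}$ is the fibre of $f$ over a chosen representative of $[y]$ and $Y'_{[y]} = g^{-1}(Y_{[y]})$, and under which $f$ restricts to a projection on each summand. Pulling $g$ back summandwise then gives $X' \simeq \coprod_{[y]} F_{[y]} \times Y'_{[y]}$, so $\sigma$ decomposes as a coproduct over $[y]$ of cartesian squares, each obtained as the finite product of the constant square on $F_{[y]}$ with the trivially cartesian square whose verticals are identities on $Y'_{[y]}$ and whose horizontals are $g|_{Y'_{[y]}}$. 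Any $F$ preserving coproducts and finite products therefore carries $\sigma$ to a cartesian square, proving (1); the hypothesis that $\spaces_0$ is closed under subobjects ensures all $Y_{[y]}, Y'_{[y]}, F_{[y]}$ lie in $\spaces_0$. The moreover in (4) becomes the analogous identification $X' \times_{Y'} Y \simeq \coprod_{[y]} F_{[y]} \times Y'_{[y]} \times_{Y'_{[y]}} Y_{[y]} \simeq \coprod_{[y]} F_{[y]} \times Y_{[y]} \simeq X$. Finally (2) is the special case $\spaces_0 = \spaces$, $F = \tau_{\leq n}$, since truncation is a left adjoint (so preserves coproducts) and preserves finite products.

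The main obstacle I anticipate is making the global coproduct decomposition in (1) genuinely rigorous: Proposition~\ref{prop:locallysplit} is stated one path component of $X$ at a time and depends on a chosen basepoint, so one must carefully assemble these local equivalences into a coproduct decomposition of $X$ over $Y$ that is simultaneously compatible with the pullback along $g$, and then verify the resulting summand squares genuinely factor as a constant square times a trivial pullback square.
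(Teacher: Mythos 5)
Your proposal mirrors the paper's proof: both rest on \cref{prop:locallysplit}, which is used to write the square as a coproduct, indexed by $\pi_0(Y)$, of product squares $F_{[y]}\times Y'_{[y]}\to F_{[y]}\times Y_{[y]}$ over $Y'_{[y]}\to Y_{[y]}$; the paper then deduces the explicit retraction formulas by ``tracing through the proof'' of that proposition. Your direct verifications of the formulas in (3) and in the first half of (4) via the Malcev identities and the relation $f\circ t_X\simeq t_Y\circ f^{\times 3}$ are correct and arguably cleaner, since they do not invoke the decomposition at all. Your treatment of (1)--(2) via the decomposition is exactly the paper's argument and is sound: one sums the local equivalences $F_x\times Y_{[y]}\simeq X_x$ over the components of $f^{-1}(Y_{[y]})$, checks by applying $f$ to the explicit formula that the result lies over $Y$, and pulls back along $g$.

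The concern you flag at the end is, however, a genuine gap in the ``moreover'' clause of (4), one shared by the paper's own proof. The retraction $r\colon Y\to Y'$ need not respect the $\pi_0(Y)$-indexed decomposition: if $g$ misses a component $Y_{[y]}$ of $Y$, so that $Y'_{[y]}=\emptyset$, then $r$ carries $Y_{[y]}$ into some other nonempty summand, and the summand square over $[y]$ (whose top-left corner is empty) admits no summandwise retraction of its $g'$ at all. Over $Y$, the comparison map $(r',f)\colon X\to X'\times_{Y'}Y$ restricts on fibers to the translation $F_y\to F_{gr(y)}$, $x\mapsto t(x,s(y),sgr(y))$, which is an equivalence when $y$ and $gr(y)$ lie in the same component of $Y$ --- equivalently when $g$ is surjective on $\pi_0$ --- but need not be otherwise. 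Concretely, take $Y=\{1,2\}$ discrete, $Y'=\{1\}\hookrightarrow Y$ with the constant retraction $r$, $X=\{1,a,b\}$ with $f(1)=1$, $f(a)=f(b)=2$, $s(1)=1$, $s(2)=a$, and any Malcev operation on $f$ in $\spaces^{[1]}$ (the constraint $f\circ t_X=t_Y\circ f^{\times 3}$ together with $f^{-1}(1)=\{1\}$ forces $t_X(b,a,1)=1$); then $X'$ is a point, $X'\times_{Y'}Y$ has two elements, while $X$ has three, so the map in the ``moreover'' clause is not an equivalence. An extra hypothesis, such as $\pi_0$-surjectivity of $g$ or a compatibility condition on the Malcev operation, appears to be needed for this final clause, and the downstream uses of it should be checked against this.
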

\begin{proof}
By \cref{prop:locallysplit}, such a square is equivalent to a disjoint union of squares of the form
\begin{center}\begin{tikzcd}
F\times Y'\ar[r,"F\times g"]\ar[d,"\pi_2"]&F\times Y\ar[d,"\pi_2"]\\
Y'\ar[r,"g"]&Y
\end{tikzcd}.\end{center}
This square clearly remains cartesian after application of any product-preserving functor, implying (1,2). For (3), the map
\[
Y'\times F \to Y'\times Y\times F
\]
admits a retraction projecting away from $Y$, and for (4), if $g$ admits a retraction $r$ then $F \times g$ admits the retraction $F \times r$, in which case
\[
F\times Y \to (F\times Y')\times_{Y'}Y
\]
is an equivalence as claimed. Tracing through the proof of \cref{prop:locallysplit} provides the given formulas for these retractions.
\end{proof}

\begin{example}\label{ex:pointedmaps}
Let $(A,a)$ be a pointed space and $(X,x)$ be a pointed supersimple space. Then
\begin{center}\begin{tikzcd}
\pi_0 \map_\ast(A,X)\ar[r]\ar[d]&\pi_0 \map(A,X)\ar[d,"f\mapsto f(a)"]\\
\{x\}\ar[r]&\pi_0 X
\end{tikzcd}\end{center}
is cartesian.
\end{example}

\begin{example}\label{ex:splitproduct}
Let $X$ be a supersimple space. For any pointed connected spaces $A$ and $B$, there is a cartesian square
\begin{center}\begin{tikzcd}
X^{A\wedge B}\ar[r]\ar[d]&X^{A\times B}\ar[d]\\
X\ar[r]&X^{A\vee B}
\end{tikzcd}.\end{center}
As $A\vee B$ is pointed, $X\to X^{A\vee B}$ is equipped with a retraction. As $X$ is supersimple, $X^{A\times B} \to X^{A\vee B}$ admits a section, and therefore \cref{cor:truncatemalcevsquare} implies that there exists an equivalence
\[
X^{A\times B}\simeq X^{A\vee (A\wedge B)\vee B}.
\]
A choice of Malcev operation on $X$ provides a choice of section of $X^{A\times B} \to X^{A\vee B}$ by \cref{cor:ukanspaces}, in which case the associated retraction of $X^{A\wedge B} \to X^{A\times B}$ sends a map $f\colon A \times B \to X$ to the map
\[
\tilde{f}\colon A \wedge B \to X,\qquad \tilde{f}(a\wedge b) = t(f(a,b),t(f(a,\ast),f(\ast,\ast),f(\ast,b)),f(\ast,\ast)).
\]
When $t$ is the Malcev operation associated to an $H$-group structure on $X$ with unit $\ast$, this formula reduces to the familiar construction $a\wedge b \mapsto f(a,b) \cdot (f(a,\ast) \cdot  f(\ast,b))^{-1}$.
\end{example}

\subsection{Interaction with the Kan condition}\label{ssec:surjherd}

We now wish to prove the following theorem: 

\begin{theorem}
\label{thm:surjherd}
Let $f\colon \Xss \to \Yss$ be an effective epimorphism of simplicial $H$-herds; in other words, an effective epimorphism of simplicial spaces which admits a Malcev operation in the arrow $\infty$-category $\Fun(\Delta^\op,\spaces)^{[1]}$. Then $f$ is a Kan fibration.
\end{theorem}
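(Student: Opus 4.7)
The plan is to first reduce to the case that $f$ admits a section, and then construct Kan fillers by a direct formula mimicking the classical proof for surjections of simplicial groups, using $t(a,b,c)$ in place of $a b^{-1} c$.

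For the reduction, pull $f$ back along itself to obtain the second projection $\pi_{2}\colon \Xss \times_{\Yss} \Xss \to \Xss$. Since $\simplicialspaces$ is an $\infty$-topos, effective epimorphisms are stable under base change, so $\pi_{2}$ is an effective epimorphism; it admits the diagonal as a section; and because finite limits in the arrow $\infty$-category are computed componentwise, the Malcev operation on $f$ in $\simplicialspaces^{[1]}$ pulls back to one on $\pi_{2}$. Applying \cref{lem:kanpb} to the pullback along the effective epimorphism $f$, we see that $f$ is a Kan fibration if and only if $\pi_{2}$ is. So we may assume that $f$ comes with a section $s\colon \Yss \to \Xss$ and that the Malcev operation in the arrow category has components $t_{\Xss}\colon \Xss^{\times 3}\to\Xss$ and $t_{\Yss}\colon \Yss^{\times 3} \to \Yss$ compatible with $f$.

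To verify the Kan condition, by \cref{example:effective_epimorphism_of_spaces_are_pi0_epis} it suffices to show that the map
\[
\phi\colon \Xss[\Delta^{n}] \to \Yss[\Delta^{n}]\times_{\Yss[\Lambda^n_{i}]} \Xss[\Lambda^n_{i}]
\]
is surjective on $\pi_{0}$ for every $n$ and $0\le i \le n$. Fix a point $(y, x_{\Lambda})$ in the codomain. Since the source projection $\simplicialspaces^{[1]}\to\simplicialspaces$ is product-preserving, $\Xss$ itself admits a Malcev operation, hence by \cref{prop:univkancharacterize} (applied in $\simplicialspaces$ with the identity functor) $\Xss$ satisfies the Kan condition. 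Thus the horn $x_{\Lambda}$ extends to some $\alpha \in \Xss[\Delta^{n}]$ with $\alpha|_{\Lambda^n_{i}} \simeq x_{\Lambda}$. Let $y_{1}\colonequals f\alpha \in \Yss[\Delta^{n}]$ and $\beta \colonequals s(y_{1})$, so that $f\beta = y_{1}$ and $\beta|_{\Lambda^n_{i}} \simeq s(y)|_{\Lambda^n_{i}}$ by naturality of $s$. Now set
\[
\tilde x \colonequals t_{\Xss}(\alpha, \beta, s(y))\in \Xss[\Delta^{n}].
\]
Applying $f$, the Malcev identity $t(a,a,b)\simeq b$ yields $f\tilde x \simeq t_{\Yss}(y_{1}, y_{1}, y) \simeq y$; restricting to the horn, the identity $t(a,b,b)\simeq a$ yields $\tilde x|_{\Lambda^n_{i}} \simeq t_{\Xss}(x_{\Lambda}, s(y)|_{\Lambda^n_{i}}, s(y)|_{\Lambda^n_{i}}) \simeq x_{\Lambda}$. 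Hence $\tilde x$ is the required preimage of $(y, x_{\Lambda})$.

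The main obstacle is the reduction step, ensuring that base change preserves the Malcev-in-arrow structure; once $f$ has a section, the formula $\tilde x = t(\alpha, \beta, s(y))$ is a coherence-free analogue of the classical group-theoretic $\alpha\beta^{-1}s(y)$, with the Malcev identities directly exhibiting $\tilde x$ as the desired lift. Some bookkeeping with the homotopies specified in \cref{def:malcevop} is needed to ensure that the separate homotopies $f\tilde x \simeq y$ and $\tilde x|_{\Lambda^n_{i}} \simeq x_{\Lambda}$ assemble into a single path in $\Yss[\Delta^{n}]\times_{\Yss[\Lambda^n_{i}]} \Xss[\Lambda^n_{i}]$, but this is routine.
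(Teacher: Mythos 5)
Your proposal runs into a genuine circularity, and the coherence issue you dismiss at the end as ``routine'' is in fact not.

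\textbf{Circularity.} To extend the horn $x_{\Lambda}$ to a simplex $\alpha \in \Xss[\Delta^{n}]$, you invoke the claim that $\Xss$ satisfies the Kan condition because it admits a Malcev operation, citing \cref{prop:univkancharacterize} (the implication (5)$\Rightarrow$(1)). But in the paper, the only route from (5) to (1) in that proposition is through (6), and the proof of (5)$\Rightarrow$(6) explicitly appeals to \cref{thm:surjherd} --- the very theorem you are proving. So as written, your argument presupposes the absolute case ($\Yss = \ast$) of the theorem. The absolute case is not easier: even there, one needs some mechanism to pass from a Malcev operation (which lives only up to uncoherent homotopy) to actual horn-filling of simplicial spaces, and that mechanism cannot come for free.

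\textbf{The coherence step.} You construct a single simplex $\tilde x = t_{\Xss}(\alpha, \beta, s(y))$ and produce two paths: one from $f\tilde x$ to $y$ using the homotopy $t(a,a,b)\simeq b$, and one from $\tilde x|_{\Lambda^{n}_{i}}$ to $x_{\Lambda}$ using the homotopy $t(a,b,b)\simeq a$. To exhibit $\tilde x$ as a preimage in the pullback $\Yss[\Delta^{n}]\times_{\Yss[\Lambda^n_{i}]} \Xss[\Lambda^n_{i}]$, these paths must restrict to the \emph{same} path in $\Yss[\Lambda^{n}_{i}]$. Both restrictions become homotopies from $t_{\Yss}(y|_{\Lambda},y|_{\Lambda},y|_{\Lambda})$ to $y|_{\Lambda}$, but the first comes from the homotopy witnessing $t(a,a,b)\simeq b$, while the second comes from the homotopy witnessing $t(a,b,b)\simeq a$; \cref{def:malcevop} imposes no coherence between them, so they need not agree, and there is no canonical homotopy comparing them. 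This is precisely the kind of uncoherent bookkeeping that the direct formula cannot handle in the $\infty$-categorical setting.

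\textbf{What the paper does instead.} The reduction to the split case via \cref{lem:kanpb} is the same as yours (and is fine). But rather than building horn fillers directly, the paper proves \cref{prop:pimatching} by a mutual induction: using \cref{cor:truncatemalcevsquare} repeatedly, one shows that $\pi_{0}$ commutes with all the cotensors $\Xss[\Lambda^{T}_{S}]$. This pushes the whole problem down to $\pi_{0}\Xss \to \pi_{0}\Yss$, where the Malcev operation is a strict operation on sets and the classical Jibladze--Pirashvili horn-filling formula (exactly your $\alpha\beta^{-1}s(y)$ analogue) applies without any coherence concerns. Your formula is the right one, but it belongs at the level of $\pi_{0}$, after the commutation lemma has been established --- not at the level of simplicial spaces, where the up-to-homotopy nature of the Malcev operation prevents the argument from closing.
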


Our proof of \cref{thm:surjherd} is inspired by Bousfield and Friedlander's work on simplicial spaces \cite[Appendix B]{bousfieldfriedlander1978homotopy}. We require a couple preliminaries.

\begin{definition}
Let $T$ be a finite linear order and $S\subset T$ be a subset. Then
\[
\Lambda^T_S\subset\Delta^T
\]
is defined to be the union of those faces which are opposite to some vertex $s\in S$.
\end{definition}

\begin{example}
We have
\[
\Lambda^T_{\{t\}} \simeq\Delta^{T\setminus\{t\}},\qquad \Lambda^{[n]}_{[n]\setminus\{i\}}\simeq \Lambda^n_i.
\]
\end{example}

\begin{lemma}
Let $T$ be a finite linear order and $S\subset T$ be a subset. Suppose given $S' = S \cup \{s'\}$ for some $s' \in T \setminus S$. Then the square
\begin{center}\begin{tikzcd}
\Lambda^{T\setminus\{s'\}}_S\ar[r]\ar[d]&\Lambda^T_S\ar[d]\\
\Delta^{T\setminus\{s'\}}\ar[r]&\Lambda^T_{S'}
\end{tikzcd}\end{center}
is cartesian and cocartesian.
\end{lemma}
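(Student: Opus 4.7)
The plan is to verify the lemma by an elementary combinatorial computation inside $\Delta^T$, reducing both the cartesian and cocartesian claims to the standard fact that for a pair of monomorphisms in a presheaf category (here simplicial sets), the square with intersection in the top-left and union in the bottom-right is automatically both a pullback and a pushout.

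First I would unwind the definitions to express every corner of the square as an explicit subobject of $\Delta^T$. By definition, $\Lambda^T_S = \bigcup_{s\in S}\Delta^{T\setminus\{s\}}$, and since $S' = S\cup\{s'\}$ with $s'\notin S$, we have $\Lambda^T_{S'} = \Lambda^T_S\cup \Delta^{T\setminus\{s'\}}$. Similarly $\Lambda^{T\setminus\{s'\}}_S = \bigcup_{s\in S}\Delta^{(T\setminus\{s'\})\setminus\{s\}} = \bigcup_{s\in S}\Delta^{T\setminus\{s,s'\}}$ as a subobject of $\Delta^{T\setminus\{s'\}}$, and the latter sits inside $\Delta^T$ as the face opposite $s'$.

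Next I would compute the pairwise intersection inside $\Delta^T$, using that intersections of subobjects distribute over unions:
\[
\Delta^{T\setminus\{s'\}}\cap \Lambda^T_S = \bigcup_{s\in S}\bigl(\Delta^{T\setminus\{s'\}}\cap\Delta^{T\setminus\{s\}}\bigr) = \bigcup_{s\in S}\Delta^{T\setminus\{s,s'\}} = \Lambda^{T\setminus\{s'\}}_S,
\]
which uses that the intersection of two faces of $\Delta^T$ is the face on the intersection of their vertex sets. Combined with $\Lambda^T_{S'} = \Lambda^T_S \cup \Delta^{T\setminus\{s'\}}$ from the previous step, this identifies the square in the statement with the canonical square
\[
\begin{tikzcd}
A\cap B\ar[r]\ar[d]& A\ar[d]\\
B\ar[r] & A\cup B
\end{tikzcd}
\]
for $A = \Lambda^T_S$ and $B = \Delta^{T\setminus\{s'\}}$ regarded as sub-simplicial sets of $\Delta^T$.

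Finally, since simplicial sets form a presheaf category and both the formation of unions and intersections of subobjects and the formation of pullbacks and pushouts of monomorphisms are computed levelwise, it suffices to observe that for subsets $A,B$ of a set $X$, the corresponding square of inclusions in $\sets$ is simultaneously a pullback and a pushout. This gives both claims at once. The main (minor) obstacle is simply keeping track of which face corresponds to which missing vertex, i.e.\ being careful that $s'\notin S$ so that removing $s'$ from $T$ interacts correctly with the horns indexed by $S$; once this is in place the argument is purely bookkeeping.
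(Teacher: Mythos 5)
Your proof is correct and takes essentially the same route as the paper: identify $\Lambda^T_{S'}$ as the union $\Lambda^T_S \cup \Delta^{T\setminus\{s'\}}$ inside $\Delta^T$, compute the intersection $\Lambda^T_S \cap \Delta^{T\setminus\{s'\}} = \Lambda^{T\setminus\{s'\}}_S$, and invoke the fact that an intersection/union square of subobjects in a presheaf category is simultaneously a pullback and a pushout. The paper states this more tersely but the underlying argument is identical.
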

\begin{proof}
By definition, $\Lambda^T_{S'} \subset \Delta^T$ may be obtained from $\Lambda^T_S\subset\Delta^T$ by adjoining the face opposite to $s'$, realized by $\Delta^{T\setminus \{s'\}} \subset\Delta^T$. The lemma then amounts to the observation that $\Delta^{T\setminus\{s'\}}\cap \Lambda^T_S = \Lambda^{T\setminus\{s'\}}_S$ consists of those faces of $\Delta^{T\setminus\{s'\}}$ contained in a face of $\Delta^T$ opposite to some $s\in S$.
\end{proof}

\begin{lemma}
A simplicial space $\Xss$ satisfies the Kan condition if and only if, for every finite linear order $T$ and nonempty proper subset $S\subsetneq T$, the map
\[
\Xss[\Delta^T] \to \Xss[\Delta^T_S]
\]
is an effective epimorphism.
\end{lemma}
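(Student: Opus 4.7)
The ``if'' direction is immediate: the standard Kan horn $\Lambda^n_i$ is the special case $\Lambda^{[n]}_{[n]\setminus\{i\}}$ exhibited in the example preceding the lemma, so the general hypothesis specializes to the defining property in \cref{def:kancondition}.

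For the ``only if'' direction the plan is a double induction: an outer induction on $|T|$, and for fixed $T$ a secondary induction decreasing on $|S|$ (equivalently, increasing on $|T\setminus S|$). The outer base cases $|T|\leq 2$ are either vacuous or already the $1$-dimensional Kan condition, and the inner base case $|S|=|T|-1$ is precisely the Kan condition on $\Xss$ in dimension $|T|-1$.

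For the inner inductive step I would take $S\subsetneq T$ with $|T\setminus S|\geq 2$, pick any $s'\in T\setminus S$, and form $S'=S\cup\{s'\}$. Applying $\Xss[-]$ to the cocartesian square of the previous lemma and using that cotensors turn colimits in the shape variable into limits (\cref{remark:formation_of_cotensors_is_continuous}), one obtains a cartesian square
\[
\begin{tikzcd}
\Xss[\Lambda^T_{S'}]\ar[r]\ar[d]&\Xss[\Delta^{T\setminus\{s'\}}]\ar[d]\\
\Xss[\Lambda^T_S]\ar[r]&\Xss[\Lambda^{T\setminus\{s'\}}_S].
\end{tikzcd}
\]
Because $s'\notin S$ and $|T\setminus S|\geq 2$, the set $S$ remains nonempty and proper inside the strictly smaller linear order $T\setminus\{s'\}$, so the outer inductive hypothesis tells us that the right vertical map is an effective epimorphism. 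Effective epimorphisms of spaces are $\pi_0$-surjections (\cref{example:effective_epimorphism_of_spaces_are_pi0_epis}) and hence stable under pullback, so $\Xss[\Lambda^T_{S'}]\to\Xss[\Lambda^T_S]$ is an effective epimorphism. Since $|S'|>|S|$, the inner inductive hypothesis gives that $\Xss[\Delta^T]\to\Xss[\Lambda^T_{S'}]$ is also an effective epimorphism, and composing yields the required effective epimorphism $\Xss[\Delta^T]\to\Xss[\Lambda^T_S]$.

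The only real delicacy is the bookkeeping of the induction: one must verify that removing $s'$ keeps $S$ both nonempty and proper, which is exactly what the assumption $|T\setminus S|\geq 2$ buys. Everything else is formal — continuity of cotensors, pullback stability of effective epimorphisms of spaces, and composition of effective epimorphisms — so I do not expect any substantial obstacle beyond organizing these two nested inductions cleanly.
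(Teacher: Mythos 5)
Your proof is correct and follows essentially the same route as the paper's: the same double induction (up on $|T|$, down on $|S|$), the same base case $|S|=|T|-1$, and the same cartesian square obtained by cotensoring the cocartesian square of the preceding lemma, with the outer inductive hypothesis applied to the right vertical map. You spell out the pullback-stability of effective epimorphisms and the final composition slightly more explicitly than the paper does, but the argument is the same.
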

\begin{proof}
As $\Delta^{[n]}_{[n]\setminus\{i\}} \simeq \Lambda^n_i$, clearly if these maps are all effective epimorphisms then $\Xss$ satisfies the Kan condition.

For the converse, we induct up on $|T|$ and, with $|T|$ fixed, induct down on $|S|$. For the base case $|T| = 2$, necessarily $|S| = |T| - 1$. In general, if $|S| = |T| - 1$, then $\Xss[\Delta^T] \to \Xss[\Lambda^T_S]$ is isomorphic to $\Xss[\Delta^n] \to \Xss[\Lambda^n_i]$ for some $0\leq i \leq n$, which is an effective epimorphism as $\Xss$ satisfies the Kan condition. This establishes the base case for both $S$ and $T$.

For the inductive step, it suffices to prove that if $S' = S \cup \{s'\} \subsetneq T$ for some $s' \in T \setminus S$, then $\Xss[\Lambda^T_{S'}] \to \Xss[\Lambda^T_S]$ is an effective epimorphism. Consider the cartesian square
\begin{center}\begin{tikzcd}
{\Xss[\Lambda^T_{S'}]}\ar[r]\ar[d]&{\Xss[\Delta^{T\setminus\{s'\}}]}\ar[d]\\
{\Xss[\Lambda^T_S]}\ar[r]&{\Xss[\Lambda^{T\setminus\{s'\}}_S]}
\end{tikzcd}.\end{center}
The right vertical map is an effective epimorphism by induction, hence so is the left vertical map.
\end{proof}

\begin{prop}\label{prop:pimatching}
The following assertions hold for all $k\geq 0$:
\begin{enumerate}
\item[($\ast_k$)] For every simplicial $H$-herd $\Xss$, finite linear order $T$, and subset $S\subsetneq T$ of cardinality $k$, the map $\Xss[\Delta^T] \to \Xss[\Lambda^T_S]$ admits a section.
\item[($\ast'_k$)] For every simplicial $H$-herd $\Xss$, finite linear order $T$, and $S\subset T$ of cardinality $k$, the map $\pi_0(\Xss[\Lambda^T_S]) \to (\pi_0 \Xss)[\Lambda^T_S]$ is a bijection.
\end{enumerate}
\end{prop}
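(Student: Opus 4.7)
My plan is to prove both $(\ast_k)$ and $(\ast'_k)$ simultaneously by induction on $k$. The base case $k=0$ is essentially trivial: $\Lambda^T_\emptyset=\emptyset$, so $\Xss[\emptyset]$ is terminal, and both statements hold (modulo levelwise nonemptiness of $\Xss$, which I may assume). For the inductive step, fix $S\subsetneq T$ with $|S|=k+1$, pick $s'\in S$, and set $S_0 = S\setminus\{s'\}$. Invoking the preceding lemma yields a cartesian square
\[
\begin{tikzcd}
\Xss[\Lambda^T_S] \ar[r] \ar[d,"\beta'"'] & \Xss[\Delta^{T\setminus\{s'\}}] \ar[d,"\alpha"] \\
\Xss[\Lambda^T_{S_0}] \ar[r] & \Xss[\Lambda^{T\setminus\{s'\}}_{S_0}]
\end{tikzcd}.
\]
Since $\Xss$ is a simplicial $H$-herd and cotensors are finite limits, each corner carries a natural Malcev operation and each map is a map of $H$-herds; by the inductive hypothesis $(\ast_k)$ applied to $S_0\subsetneq T\setminus\{s'\}$, the right vertical $\alpha$ admits a section.

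To establish $(\ast'_{k+1})$, I would apply \cref{cor:truncatemalcevsquare}(1) with $F=\pi_0$: because $\alpha$ is an $H$-herd map admitting a section, $\pi_0$ preserves the above cartesian square, exhibiting $\pi_0\Xss[\Lambda^T_S]$ as the pullback of the three corner $\pi_0$-values. On the other hand, applying $(\pi_0\Xss)[-]$ to the cocartesian decomposition $\Lambda^T_S = \Lambda^T_{S_0} \cup_{\Lambda^{T\setminus\{s'\}}_{S_0}} \Delta^{T\setminus\{s'\}}$ presents $(\pi_0\Xss)[\Lambda^T_S]$ as the analogous pullback in sets. The induction hypotheses $(\ast'_j)$ for $j\le k$ identify the three corner comparison maps as bijections, whence the induced comparison on pullbacks is a bijection as well, yielding $(\ast'_{k+1})$.

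For $(\ast_{k+1})$, which I expect to be the main obstacle, the plan is a Moore-style construction using the Malcev operation to correct face mismatches. By $(\ast_k)$ applied to $S_0\subsetneq T$, there is a section $\sigma\colon \Xss[\Lambda^T_{S_0}]\to X_n$ (with $n=|T|-1$); given $x\in\Xss[\Lambda^T_S]$, the lifted simplex $\sigma(\beta'(x))$ has the correct $S_0$-faces, but its $s'$-face may differ from the specified $s'$-face $q(x)$ recorded by $x$. Following Moore's classical argument for simplicial groups, adapted via the ternary Malcev operation $t_n$ in place of the binary group product, I would iteratively correct $\sigma(\beta'(x))$ by modifications of the form
\[
w \longmapsto t_n\bigl(w,\; \epsilon(d_j w),\; \epsilon(x_j)\bigr),
\]
for suitable degeneracies $\epsilon\colon X_{n-1}\to X_n$ and indices $j$, chosen in an order ensuring that each correction fixes one face while leaving previously-fixed ones unchanged. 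The Malcev axioms $t(a,a,b)=b$ and $t(a,b,b)=a$, combined with the simplicial identities for $d_j\circ s_l$ and the matching relation $d_?f_0 = d_?q(x)$ inherited from $\sigma$ being a section, supply the required equalities. The key combinatorial difficulty is that the naive one-shot correction breaks down precisely when $S_0$ contains both $s'-1$ and $s'+1$ (so neither $s_{s'}$ nor $s_{s'-1}$ avoids a secondary collision), forcing the correction to be genuinely iterative over a carefully chosen linear order on $S_0$; this bookkeeping is standard but tedious, and constitutes the bulk of the work in completing the inductive step.
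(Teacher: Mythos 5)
Your proof of $(\ast'_{k+1})$ follows the same route as the paper: decompose $\Lambda^T_S$ via the cartesian square built from $\Lambda^T_{S_0}$ and $\Delta^{T\setminus\{s'\}}$, note that the right vertical map admits a section by the previous $(\ast_k)$, invoke \cref{cor:truncatemalcevsquare} to see that $\pi_0$ preserves the square, and conclude from $(\ast'_j)$ for $j\le k$. This is exactly the paper's argument, modulo an index shift and relabelling. One small note: the paper's base case is $(\ast_1')$, observed directly from $\Lambda^T_{\{i\}}\simeq\Delta^{T\setminus\{i\}}$; your attempted $k=0$ base case needs a levelwise nonemptiness hypothesis the statement does not carry, and is not needed for the induction.

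Your treatment of $(\ast_{k+1})$ is where the proposal diverges from the paper, and it has a real gap. You want to run Moore's iterative face-correction argument directly at the level of simplicial spaces, using the ternary Malcev operation $t$ and relying on identities like $t(a,a,b)=b$, $d_j s_j = \mathrm{id}$, and the matching relations to hold on the nose. But in this $\infty$-categorical context the Malcev axioms are supplied only as \emph{unspecified} homotopies filling the two triangles of \cref{def:malcevop}, with no higher coherence between them; likewise the simplicial operators only commute with $t$ and with each other up to coherent homotopy. After even a single correction step you get a homotopy $d_l w'\simeq x_l$ rather than an equality, and the next step must feed that homotopy into another application of a Malcev axiom. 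What you defer as ``standard but tedious'' bookkeeping is in fact the genuine coherence problem that makes a space-level Moore argument hard, and nothing in your outline controls it. The paper avoids the issue entirely: it instead proves $(\ast_k')\Rightarrow(\ast_k)$ by observing that for every space $A$ the simplicial set $\pi_0\map(A,\Xss)$ is an honest $1$-categorical simplicial herd, hence a Kan complex by the classical Moore/Jibladze--Pirashvili theorem cited in \S\ref{subsection:malcevkanhistory}, and that by $(\ast_k')$ applied to the simplicial $H$-herd $\map(A,\Xss)$ the comparison $\pi_0\map(A,\Xss[\Lambda^T_S])\to(\pi_0\map(A,\Xss))[\Lambda^T_S]$ is a bijection. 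Therefore $\map(A,\Xss[\Delta^T])\to\map(A,\Xss[\Lambda^T_S])$ is $\pi_0$-surjective for all $A$, and taking $A=\Xss[\Lambda^T_S]$ yields the section. This uses the Moore argument only in the strict $1$-categorical setting where the required equalities are real, and lifts the conclusion formally -- a much cheaper path than re-deriving it with coherences attached.
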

\begin{proof}
We will prove that $(\ast_{k-1} \text{ and }\ast_{k-1}') \Rightarrow (\ast_{k}')$ and that $(\ast_k') \Rightarrow (\ast_k)$. The equivalence $\Lambda^T_{\{i\}}\simeq\Delta^{T\setminus\{i\}}$ implies that $(\ast_1')$ is always satisfied, and so this implies that both statements hold for all $k\geq 1$.

($\ast_{k-1}$ and $\ast_{k-1}'$)$\Rightarrow$($\ast_{k}'$). Suppose that $|S'| = k$ and write $S' = S \cup \{s'\}\subsetneq T$ for some $s' \in T\setminus S$. Consider the cartesian square
\[
\begin{tikzcd}
{\Xss[\Lambda^T_{S'}]}\ar[r]\ar[d]&{\Xss[\Delta^{T\setminus\{s'\}}]}\ar[d]\\
{\Xss[\Lambda^T_S]}\ar[r]&{\Xss[\Lambda^{T\setminus\{s'\}}_S]}
\end{tikzcd}.
\]
By ($\ast_{k-1}$), the right vertical map admits a section. By \cref{cor:truncatemalcevsquare}, the square therefore remains cartesian on taking path components. Combined with $(\ast_{k-1}')$, this implies that
\begin{align*}
\pi_0(\Xss[\Lambda^T_{S'}]) &\cong \pi_0(\Xss[\Lambda^T_{S}])\times_{\pi_0(\Xss[\Lambda^{T\setminus\{s'\}}_S])} \pi_0 \Xss[\Delta^{T\setminus\{s'\}}_S]\\
&\cong (\pi_0 \Xss)[\Lambda^T_S]\times_{(\pi_0 \Xss)[\Lambda^{T\setminus\{s'\}}_S]}(\pi_0 \Xss)[\Delta^{T\setminus\{s'\}}_S] \cong (\pi_0 \Xss)[\Lambda^T_{S'}].
\end{align*}
This establishes $(\ast_k')$.

($\ast_k'$)$\Rightarrow$($\ast_k$). Let $T$ be a finite linear order and $S\subsetneq T$ be a subset of cardinality $k$, so that we wish to prove that $\Xss[\Delta^T] \to \Xss[\Delta^T_S]$ admits a section. As $\Xss$ is a simplicial $H$-herd, it follows that if $A$ is any space then $\pi_0 \map(A,\Xss)$ is a simplicial herd, which is therefore a Kan complex by \S\ref{subsection:malcevkanhistory}. In particular, the composite
\[
\pi_0 \map(A,\Xss[\Delta^T]) \to \pi_0\map(A,\Xss[\Delta^T_S]) \to (\pi_0 \map(A,\Xss))[\Delta^T_S]
\]
is a surjection. As $\map(A,\Xss)$ is a simplicial $H$-herd, $(\ast_k')$ implies that
\[
\pi_0\map(A,\Xss[\Delta^T_S]) \to (\pi_0 \map(A,\Xss))[\Delta^T_S]
\]
is a bijection, and therefore
\[
\map(A,\Xss[\Delta^T]) \to \map(A,\Xss[\Delta^T_S])
\]
is an effective epimorphism. As $A$ was arbitrary, we may take $A = \Xss[\Delta^T_S]$ to deduce that $\Xss[\Delta^T] \to \Xss[\Delta^T_S]$ admits a section.
\end{proof}

We can now give the following.

\begin{proof}[Proof of \cref{thm:surjherd}]
By \cref{lem:kanpb}, after possibly pulling $f$ back along itself, we may suppose that $f$ admits a section. To prove that $f$ is a Kan fibration, we must prove that
\[
\Xss[\Delta^n] \to \Yss[\Delta^n]\times_{\Yss[\Lambda^n_i]}\Xss[\Lambda^n_i]
\]
is an effective epimorphism for all $0\leq i \leq n$. As $\Xss \to \Yss$ admits a section, so does $\Xss[\Lambda^n_i] \to \Yss[\Lambda^n_i]$. Therefore \cref{cor:truncatemalcevsquare} implies that 
\[
\pi_0 \left(\Yss[\Delta^n]\times_{\Yss[\Lambda^n_i]}\Xss[\Lambda^n_i]\right) \to \pi_0 (\Yss[\Delta^n])\times_{\pi_0 (\Yss[\Lambda^n_i])}\pi_0 (\Xss[\Lambda^n_i])
\]
is a bijection. By \cref{prop:pimatching}, the map
\[
\pi_0(\Zss[\Lambda^n_i]) \to (\pi_0 \Zss)[\Lambda^n_i]
\]
is a bijection for any simplicial $H$-herd $\Zss$. Applied to $\Xss$ and $\Yss$, this combines with the above to prove that 
\[
\pi_0 \left(\Yss[\Delta^n]\times_{\Yss[\Lambda^n_i]}\Xss[\Lambda^n_i]\right) \to (\pi_0 \Yss)[\Delta^n]\times_{(\pi_0 \Yss)[\Lambda^n_i]}(\pi_0 \Xss)[\Lambda^n_i]
\]
is a bijection. It follows that $\Xss \to \Yss$ is a Kan fibration if and only if $\pi_0 \Xss \to \pi_0 \Yss$ is a Kan fibration, which is classical (see \S\ref{subsection:malcevkanhistory}).
\end{proof}

\subsection{Universally Kan objects}\label{ssec:univkan}

In this subsection, we formulate and prove a precise sense in which admitting a Malcev operation is the minimal algebraic condition that guarantees a simplicial space satisfies the Kan condition.

\begin{definition}
\label{def:univkan}
Let $\ccat$ be an $\infty$-category with finite products. We say  that an object $X\in \ccat$ is \emph{universally Kan} if the following condition is satisfied:
\begin{enumerate}
\item[{($\ast$)}] For every finite product-preserving functor $F\colon \ccat\to\Fun(\Delta^\op,\spaces)$, the simplicial space $F(X)$ satisfies the Kan condition 
\end{enumerate}
\end{definition}

\begin{theorem}
\label{prop:univkancharacterize}
Let $\ccat$ be an $\infty$-category with finite products. Given $X\in\ccat$, the following are equivalent:
\begin{enumerate}
\item $X$ is universally Kan.
\item For every finite product-preserving functor $F\colon \ccat\to\Fun(\Delta^\op,\sets)$, the simplicial set $F(X)$ satisfies the Kan condition.
\item For every cosimplicial object $K^\bullet \in \Fun(\Delta,\ccat)$, the simplicial space $\map_\ccat(K^\bullet,X)$ satisfies the Kan condition.
\item Let $S^1_\bullet = \Delta^1/\partial\Delta^1$ denote the simplicial circle. Then the simplicial set $\pi_0 \map_\ccat(X^{S^1_\bullet},X)$ satisfies the horn filling condition with respect to $\Lambda^2_1\subset\Delta^2$.
\item $X$ admits a Malcev operation.
\item For every finite product-preserving functor $F\colon \ccat \to \Fun(\Delta^\op,\spaces)^{[1]}$, if $F(X)$ is an effective epimorphism then it is a Kan fibration.
\end{enumerate}
\end{theorem}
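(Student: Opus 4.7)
The plan is to establish a cycle of implications organized around (5) as the central condition: I would prove (5) $\Rightarrow$ (1) $\Rightarrow$ (3) $\Rightarrow$ (4) $\Rightarrow$ (5), and argue (1) $\Leftrightarrow$ (2) and (5) $\Leftrightarrow$ (6) as parallel equivalences. The unifying observation is that a finite product-preserving functor $F$ applied to an object with a Malcev operation produces a simplicial object with a Malcev operation, so that the Kan-condition technology from \S\ref{ssec:surjherd}---in particular \cref{prop:pimatching} and \cref{thm:surjherd}---applies directly.

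For (5) $\Rightarrow$ (1) and (5) $\Rightarrow$ (6), the idea is that $F(t)$ is a Malcev operation on $F(X)$, so $F(X)$ is a simplicial $H$-herd (respectively, a morphism of simplicial $H$-herds). In case (1), the internal analysis in the proof of \cref{prop:pimatching} already shows the matching maps $F(X)[\Delta^n]\to F(X)[\Lambda^n_i]$ admit sections, hence are effective epimorphisms, giving the Kan condition. In case (6), one applies \cref{thm:surjherd} directly. The implication (1) $\Rightarrow$ (2) is immediate via the inclusion $\sets\hookrightarrow\spaces$, and (2) $\Rightarrow$ (1) follows by postcomposing with $\pi_0\map_{\spaces}(A,\bs)$ and varying $A$, using \cref{example:effective_epimorphism_of_spaces_are_pi0_epis} to detect effective epimorphisms. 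The implication (1) $\Rightarrow$ (3) uses that $\map_\ccat(K^\bullet,\bs)$ preserves finite products, and (3) $\Rightarrow$ (4) specializes $K^\bullet$ to the cosimplicial object $X^{S^1_\bullet}$ (which exists by our finite product hypothesis, since each $S^1_n$ is finite) and applies $\pi_0$.

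The crucial direction is (4) $\Rightarrow$ (5). Unwinding definitions, $X^{S^1_n}$ is identified with a finite power of $X$ whose coordinates are indexed by the $n$-simplices of $S^1$, and the coface structure is given by partial diagonals and projections induced by the face maps of $S^1$. Consequently the face maps in the simplicial set $\pi_0\map_\ccat(X^{S^1_\bullet},X)$ in degree two send a class $[g]$ of a ternary operation $X^3\to X$ to the classes of binary operations obtained by identifying pairs of arguments. The two Malcev identities $t(a,a,b)\simeq b$ and $t(a,b,b)\simeq a$ are precisely the conditions that two of these faces equal specific projections $\pi_i\colon X\times X\to X$. The plan is to identify a $\Lambda^2_1$-horn whose two edges are given by such projections and whose coherence at the shared vertex is automatic (both restricting to $\id_X$ under the degeneracies), and to extract the Malcev operation as a horn filler; the two required triangles of \cref{def:malcevop} are then witnessed by the remaining face data. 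Since admitting a Malcev operation is a property (\cref{def:malcevop} imposes no coherence on the homotopies), working at the level of $\pi_0$ suffices.

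The main obstacle will be Step 5, specifically organizing the bookkeeping of face maps in $X^{S^1_\bullet}$ so that the horn indicated in (4) matches the Malcev identities (modulo conventions for indexing faces of $S^1_\bullet$ and of horns). Once the combinatorial identification is fixed, the argument is an unwinding. All other implications are essentially formal manipulations on top of the work already done in \S\ref{ssec:surjherd}, with (5) $\Rightarrow$ (1,6) providing the bulk of the content through \cref{prop:pimatching} and \cref{thm:surjherd}.
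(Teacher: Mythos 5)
Your overall approach is close to the paper's: the same key inputs appear (\cref{prop:pimatching}, \cref{thm:surjherd}, the Jibladze--Pirashvili extraction of a Malcev operation from $\pi_0\map_\ccat(X^{S^1_\bullet},X)$), and your shortcut $(5)\Rightarrow(1)$ directly from the section property $(\ast_k)$ of \cref{prop:pimatching} is a small simplification over the paper's route $(5)\Rightarrow(6)\Rightarrow(1)$ (it sidesteps the case $F(X)=\emptyset$). But the global implication structure you propose has a genuine gap, which is where the paper is careful and you are not.

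You present $(1)\Leftrightarrow(2)$ and $(5)\Leftrightarrow(6)$ as ``parallel equivalences'' sitting alongside the main cycle, with $(2)\Rightarrow(1)$ argued by postcomposing $F$ with $\pi_0\map_\spaces(A,-)$ and varying $A$. This does not work as a self-contained argument. The functor $G_A=\pi_0\map_\spaces(A,F(-))$ does preserve finite products, so $(2)$ gives you that $G_A(X)$ is a Kan simplicial set, i.e.\ that $\pi_0\map(A,F(X)_n)\to(\pi_0\map(A,F(X)))[\Lambda^n_i]$ is surjective. But what you need in order to conclude the Kan condition on the simplicial \emph{space} $F(X)$ (taking $A=F(X)[\Lambda^n_i]$ and lifting the identity) is surjectivity of $\pi_0\map(A,F(X)_n)\to\pi_0\map(A,F(X)[\Lambda^n_i])$, whose target is the $\pi_0$ of a homotopy limit rather than the limit of $\pi_0$'s. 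The comparison map $\pi_0(\Zss[\Lambda^n_i])\to(\pi_0\Zss)[\Lambda^n_i]$ is not injective for a general simplicial space $\Zss$, so the lifted class need not project back to your original class. The paper's way out is precisely condition $(\ast_k')$ of \cref{prop:pimatching}, which asserts that this comparison is a bijection --- but only for simplicial $H$-herds, i.e.\ once the Malcev operation of $(5)$ is already in hand. That is why the paper wires $(2)$ into the cycle as $(1)\Rightarrow(2)\Rightarrow(4)\Rightarrow(5)\Rightarrow(6)\Rightarrow(1)$ rather than treating $(2)\Rightarrow(1)$ as a standalone reduction. The same remark applies to $(6)\Rightarrow(5)$, which you do not address at all: there is no direct argument, and it is obtained in the paper by routing through $(1)$ and the rest of the chain. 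A minor variant of the same $\pi_0$-versus-cotensor subtlety also hides in your step $(3)\Rightarrow(4)$; there it is harmless, because for the cospan shape $\Lambda^2_1$ one does get $\pi_0(\Zss[\Lambda^2_1])\twoheadrightarrow(\pi_0\Zss)[\Lambda^2_1]$ for any simplicial space $\Zss$ (this is the observation the paper spells out), but you should make that explicit rather than saying ``applies $\pi_0$.'' So: keep your $(5)\Rightarrow(1)$ via \cref{prop:pimatching} if you like, but reorganize the remaining implications into a single cycle that reaches $(5)$ from $(2)$ via $(4)$ and reaches $(1)$ from $(6)$, rather than claiming $(2)\Rightarrow(1)$ and $(6)\Rightarrow(5)$ directly.
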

\begin{proof}
(1)$\Rightarrow$(2 and 3). Clear.

(2)$\Rightarrow$(4). Clear.

(3)$\Rightarrow$(4). In general, if $\Zss$ is a simplicial space, then
\[
\pi_0(\Zss[\Lambda^2_1]) \to (\pi_0 \Zss)[\Lambda^2_1]
\]
is an effective epimorphism. In particular, if $\Zss$ satisfies the Kan condition then $\pi_0 \Zss$ satisfies the horn filling condition with respect to $\Lambda^2_1 \subset\Delta^2$. Therefore (3) implies (4) by taking $\Zss = \map_\ccat(X^{S^1_\bullet},X)$.

(4)$\Rightarrow$(5). This argument is due to Jibladze--Pirashvili \cite{jibladzepirashvili2002kan}. By assumption,
\[
\pi_0 \map_\ccat(X^{S^1_\bullet},X)[\Delta^2] \to (\pi_0 \map_\ccat(X^{S^1_\bullet},X))[\Lambda^2_1]
\]
is an effective epimorphism. In general, a point of $(\pi_0 \map_\ccat(X^{S^1_\bullet},Y))[\Lambda^2_1]$ consists of two maps $f_1,f_2\colon X\times X \to Y$ for which the outer diagram in
\begin{center}\begin{tikzcd}
X\ar[r,"\Delta"]\ar[d,"\Delta"]&X\times X\ar[d,"X\times\Delta"]\ar[ddr,bend left,dashed,"f_1"]\\
X\times X\ar[r,"\Delta\times X"]\ar[drr,bend right,dashed,"f_2"']&X\times X \times X\ar[dr,dotted]\\
&&Y
\end{tikzcd}\end{center}
commutes up to unspecified homotopy. A lift to $\pi_0 \map_\ccat(X^{S^1_\bullet},Y)[\Delta^2]$ is given by a map $X \times X \times X \to Y$ making the outer triangles commute up to unspecified homotopy. Taking $Y = X$ and $f_i = \pi_i$ the projection, such lifts are exactly Malcev operations on $X$. 

(5)$\Rightarrow$(6). If $X$ admits a Malcev operation and $F\colon \ccat\to\Fun(\Delta,\spaces)^{[1]}$ preserves finite products, then $F(X)$ admits the structure of a map between simplicial herds. Therefore if $F(X)$ is an effective epimorphism then it is a Kan fibration by \cref{thm:surjherd}.

(6)$\Rightarrow$(1). Let $F\colon \ccat\to\Fun(\Delta^\op,\spaces)$ be a finite product preserving-functor. If $F(X) = \emptyset$, then $F(X)$ satisfies the Kan condition. Otherwise, $F(X) \to \ast$ is an effective epimorphism, and the unique natural transformation $F \to \ast$ determines a finite product-preserving functor $\ccat \to \Fun(\Delta^\op,\spaces)^{[1]}$. It follows that $F(X) \to \ast$ is a Kan fibration, and thus $F(X)$ satisfies the Kan condition.
\end{proof}

\begin{cor}
A space $X$ is universally Kan if and only if it is supersimple.
\end{cor}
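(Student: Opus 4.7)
The plan is to observe that this corollary is an immediate combination of the two main theorems already established in this subsection and the previous one. Specifically, \cref{prop:univkancharacterize} applied to the $\infty$-category $\ccat = \spaces$ gives the equivalence between $X$ being universally Kan and $X$ admitting a Malcev operation (this is the (1) $\Leftrightarrow$ (5) portion of that theorem). Then \cref{cor:ukanspaces} provides the equivalence between admitting a Malcev operation and being supersimple (its (1) $\Leftrightarrow$ (4) portion).

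Concretely, I would write: by \cref{prop:univkancharacterize}, $X \in \spaces$ is universally Kan if and only if it admits a Malcev operation $t \colon X \times X \times X \to X$. By \cref{cor:ukanspaces}, the latter is equivalent to $X$ being supersimple in the sense of \cref{def:supersimple}. Chaining the two equivalences yields the claim.

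There is no real obstacle here: both of the ingredients are already proven in full. The only thing to verify is that the hypothesis of \cref{prop:univkancharacterize} (an $\infty$-category with finite products) is trivially satisfied by $\spaces$, and that the notion of ``admits a Malcev operation'' is the one shared by both theorem statements, which it is by \cref{def:malcevop}. This means the write-up can be a one- or two-sentence proof citing the two previous theorems.
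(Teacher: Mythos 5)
Your proposal is correct and is exactly what the paper does: the paper's proof is the one-liner ``Combine \cref{prop:univkancharacterize} and \cref{cor:ukanspaces}.'' Your spelled-out version correctly identifies the relevant equivalences (items (1)$\Leftrightarrow$(5) of \cref{prop:univkancharacterize} and (1)$\Leftrightarrow$(4) of \cref{cor:ukanspaces}) and notes that the hypothesis on $\spaces$ is trivially satisfied.
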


\begin{proof}
Combine \cref{prop:univkancharacterize} and \cref{cor:ukanspaces}.
\end{proof}

\begin{example}\label{ex:additiveukan}
Let $\ccat$ be an additive $\infty$-category. Then every object of $\ccat$ is universally Kan.
\end{example}

Universally Kan objects satisfy a number of pleasant closure properties.

\begin{prop}\label{prop:univkanclosure}
Let $\ccat$ be an $\infty$-category with finite products.
\begin{enumerate}
\item If $\{X_i \in \ccat : i\in I\}$ are universally Kan and the product $\prod_{i\in I}X_i$ exists in $\ccat$, then it is universally Kan.
\item Let $f\colon X \to Y$ be a morphism in $\ccat$ which admits a section. If $X$ is universally Kan, then $f$ is universally Kan as an object of $\ccat^{[1]}$.
\item If $X \in \ccat$ is universally Kan and $Y$ is a retract of $X$, then $Y$ is universally Kan.
\item Suppose that $X \in \Fun(\jcat,\ccat)$ is universally Kan. If $\lim X \in \ccat$ exists, then it is universally Kan.
\item If $\ccat$ admits coproducts which distribute across products and $\{X_i : i\in I\}$ is a set of universally Kan objects in $\ccat$, then the coproduct $\coprod_{i\in I}X_i$ is universally Kan.
\item If $F\colon \ccat\to\dcat$ preserves finite products and $X\in \ccat$ is universally Kan, then $F(X)$ is universally Kan.
\item If $X\in \ccat$ is universally Kan and $K$ is a space for which the cotensor $X^K$ exists, then it is universally Kan.
\item If $X \in \ccat$, then $X$ is universally Kan if and only if its image in $\h\ccat$ is universally Kan.
\item Given a map $f\colon Y \to X$ in $\ccat$, the object $X$ is universally Kan as an object of $\ccat$ if and only if it is universally Kan as an object of $\ccat_{Y/}$.
\item Suppose given a cartesian square
\begin{center}\begin{tikzcd}
\ccat'\ar[r,"g'"]\ar[d,"f'"]&\ccat\ar[d,"f"]\\
\dcat'\ar[r,"g"]&\dcat
\end{tikzcd}\end{center}
of $\infty$-categories which admit finite products and functors which preserve them. If $f$ is full, then an object $X' \in \ccat'$ is universally Kan if and only if $g'(X') \in \ccat$ and $f'(X') \in \dcat'$ are universally Kan.
\item Suppose given a family $\{\ccat_i : i \in I\}$ of $\infty$-categories which admit finite products. An object $\{X_i : i \in I\} \in \prod_{i\in I}\ccat_i$ is universally Kan if and only if $X_i\in \ccat_i$ is universally Kan for all $i\in I$.
\end{enumerate}
\end{prop}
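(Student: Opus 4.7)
The uniform approach is to use \cref{prop:univkancharacterize}(5), identifying universally Kan objects with those admitting a Malcev operation. Since the Malcev operation of \cref{def:malcevop} consists of finite-product-type data, most parts follow by transporting this data through the relevant construction. Part (8) is noted in the remark following \cref{def:malcevop}, part (6) is immediate (a finite product-preserving functor sends Malcev data to Malcev data), and parts (1) and (11) follow by taking componentwise Malcev operations. Part (4) follows because a Malcev operation on a diagram $X\colon\jcat\to\ccat$ in $\Fun(\jcat,\ccat)$ is given levelwise and $\lim$ preserves finite products when the limit exists; part (7) is the case where the diagram is a constant functor out of a space $K$. Part (3) is by conjugating the Malcev operation on $X$ by the retraction data, and part (5) is exactly \cref{lem:coprodmalcev}.

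For (9), the forgetful functor $U\colon\ccat_{Y/}\to\ccat$ preserves finite products, since the product of $y_1\colon Y\to X_1$ and $y_2\colon Y\to X_2$ in $\ccat_{Y/}$ is the diagonal $Y\to X_1\times X_2$. By (6) this gives one implication. Conversely, if $y\colon Y\to X$ has underlying object $X$ admitting a Malcev operation $t_X\colon X\times X\times X\to X$, the triangle conditions imply $t_X\circ y^{\times 3}\simeq y$, which is precisely the condition for $t_X$ to canonically promote to a Malcev operation on $y$ in $\ccat_{Y/}$.

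The main obstacle is part (2). A Malcev operation on $f\colon X\to Y$ in $\ccat^{[1]}$ amounts to Malcev operations $t_X$ on $X$ and $t_Y$ on $Y$ together with a compatibility square $f\circ t_X\simeq t_Y\circ f^{\times 3}$; the naive choice $t_Y\colonequals f\circ t_X\circ s^{\times 3}$ is Malcev on $Y$ but fails the compatibility unless $f\circ t_X\simeq f\circ t_X\circ (sf)^{\times 3}$. My plan is to bypass this by using characterization (1) of \cref{prop:univkancharacterize}: given a finite-product-preserving $G\colon\ccat^{[1]}\to\Fun(\Delta^\op,\spaces)$, the simplicial space $G(f)$ is a morphism of simplicial spaces whose source and target are both supersimple (the target using part (3) applied to $Y$, which is a retract of $X$), and which admits a section induced by $s$. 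By \cref{cor:ukanspaces} both sides admit Malcev operations, and a space-level application of \cref{prop:locallysplit} (componentwise) expresses $G(f)$ locally as a product projection, canonically promoting it to a morphism of simplicial $H$-herds. Since it is moreover an effective epimorphism, \cref{thm:surjherd} concludes that $G(f)$ is a Kan fibration, hence a Kan simplicial space. The main technical step is verifying that this canonical $H$-herd refinement of $G(f)$ exists in $\Fun(\Delta^\op,\spaces)^{[1]}$ without needing to first choose compatible Malcev operations in $\ccat^{[1]}$.

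For (10), the forward direction is immediate from (6), since $f'$ and $g'$ preserve finite products as base changes of product-preserving functors in a cartesian square of $\infty$-categories. For the reverse direction, Malcev operations $t_A$ on $g'X'$ in $\ccat$ and $t_B$ on $f'X'$ in $\dcat'$ give rise to two a priori distinct Malcev operations $ft_A$ and $gt_B$ on the same object of $\dcat$; fullness of $f$ lets one replace $t_A$ by a map $\tilde t_A$ with $f\tilde t_A\simeq gt_B$, after which the pair $(\tilde t_A, t_B)$ assembles into a Malcev operation on $X'\in\ccat'=\ccat\times_\dcat\dcat'$ via the pullback description of $\map_{\ccat'}(X'^{\times 3},X')$. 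The subtlety is lifting the two Malcev triangle homotopies through $f$; the cleanest way is to interpret "full" as sufficient to guarantee essential surjectivity on the relevant higher homotopies of mapping spaces, which is for example automatic when $f$ is fully faithful.
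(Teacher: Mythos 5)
Your treatment of parts (1), (3)--(8), (11) is correct and essentially equivalent to the paper's, which on some of these items (e.g.\ (1), (3), (11)) argues via closure properties of the Kan condition rather than transporting Malcev data, but the two are interchangeable.

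Your observation about (2) is sharp and correct: the paper's own proof of (2) takes $t_Y \colonequals f\circ t\circ s^{\times 3}$ and asserts it is ``a Malcev operation on $Y$ compatible with $f$,'' but the required square $f t \simeq t_Y f^{\times 3} = f t (sf)^{\times 3}$ genuinely fails in general. A concrete counterexample in $\sets$: take $X=\integers/3$ with $t(a,b,c)=a-b+c$, $Y=\{0,1\}$, $f(0)=f(1)=0$, $f(2)=1$, $s(0)=0$, $s(1)=2$; then $ft(0,1,2)=f(1)=0$ while $ft(sf0,sf1,sf2)=ft(0,0,2)=f(2)=1$. So you have found a genuine gap in the paper's argument for (2) (one can still construct a \emph{different} compatible pair $(t_X',t_Y)$, so the statement appears true, but the paper's construction does not supply it). Your proposed repair, however, does not parse as written: for a product-preserving $G\colon\ccat^{[1]}\to\Fun(\Delta^\op,\spaces)$, the value $G(f)$ is a single simplicial space, not a morphism of simplicial spaces, so there is no section, no effective epimorphism, and no appeal to \cref{thm:surjherd} available. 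To use the Kan-fibration strategy one must instead work as the paper does for (9) and (10): fix a cosimplicial object $K^\bullet=(K_0^\bullet\to K_1^\bullet)$ in $\ccat^{[1]}$, identify $\map_{\ccat^{[1]}}(K^\bullet,f)$ with the pullback $\map_\ccat(K_0^\bullet,X)\times_{\map_\ccat(K_0^\bullet,Y)}\map_\ccat(K_1^\bullet,Y)$, and show the left leg is a Kan fibration; this is where the real work lies and it is not trivial, since one is back to needing a compatible Malcev structure on the arrow $f_\ast$.

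Your arguments for (9) and (10) also have gaps that the paper avoids. For (9), the statement ``$t_X\circ y^{\times 3}\simeq y$ is precisely the condition for $t_X$ to canonically promote to a Malcev operation on $y$ in $\ccat_{Y/}$'' is only the condition for $t_X$ to be a morphism $y^{\times 3}\to y$ in $\ccat_{Y/}$; the two triangle homotopies must additionally be lifted along $\map_{\ccat_{Y/}}(y^{\times 2},y)\to\map_\ccat(X^{\times 2},X)$, and since a Malcev operation carries no coherence between its two filling homotopies, there is a nontrivial loop obstruction that you do not address. The paper sidesteps this entirely by observing that $\map_{\ccat_{Y/}}(K^\bullet,X)$ is the fibre of a map to a constant simplicial space from the Kan simplicial space $\map_\ccat(K^\bullet,X)$, hence a pullback of a Kan fibration (\cref{lem:kanpb}). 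For (10), you yourself note that lifting the triangle homotopies through $f$ requires more than fullness; the paper never lifts anything, and this is the key difference. The paper takes a single Malcev operation $t$ on $g'(X')$ and observes that the induced operations on $\map_\ccat(g'K^\bullet,g'X')$ and on $\map_\dcat(fg'K^\bullet,fg'X')$ are \emph{tautologically} compatible with $f_\ast$, because $f_\ast$ is post-composition with $f$ and both operations come from the one map $t$. This makes $f_\ast$ a surjection of simplicial $H$-herds, hence a Kan fibration by \cref{thm:surjherd}, needing fullness only for the surjectivity and no homotopy lifting at all.
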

\begin{proof}
(1)~~This holds as simplicial spaces which satisfy the Kan condition are closed under products.

(2)~~Let $s\colon Y \to X$ be a section, so that $f\circ s = \id_Y$. If $t\colon X \times X \times X \to X$ is a Malcev operation on $X$, then the composite
\[
f \circ t \circ (s \times s \times s)\colon Y\times Y \times Y \to X \times X \times X \to X \to Y
\]
is a Malcev operation on $Y$ compatible with $f$. Therefore $f$ admits a Malcev operation, and so is universally Kan.

(3)~~This holds as simplicial spaces which satisfy the Kan condition are closed under retracts.

(4)~~If $X \in \Fun(\jcat,\ccat)$ is universally Kan, then it admits a Malcev operation. If $\lim X$ exists in $\ccat$, then $ \lim (X^3) \to (\lim X)^3$ is an equivalence, and therefore a Malcev operation on $X$ induces one on $\lim X$, implying that $\lim X$ is universally Kan.

(5)~~This follows from \cref{prop:univkancharacterize} and \cref{lem:coprodmalcev}.

(6)~~This follows from \cref{prop:univkancharacterize}, as if $X\in\ccat$ admits a Malcev operation and $F$ preserves products then $F(X)$ admits a Malcev operation.

(7)~~As the diagonal $\ccat \to \Fun(K,\ccat)$ preserves finite products, this follows from (3) and (5).

(8)~~This follows from \cref{prop:univkancharacterize}, as $X \in \ccat$ admits a Malcev operation if and only if $\tau(X) \in \h\ccat$ admits a Malcev operation.

(9)~~As the forgetful functor $\ccat_{Y/} \to \ccat$ preserves finite products, it follows from (5) that if $X$ is universally Kan in $\ccat_{Y/}$ then it is universally Kan in $\ccat$. Suppose conversely that $X$ is universally Kan in $\ccat$. By \cref{prop:univkancharacterize}, it suffices to prove that if $K^\bullet \in \Fun(\Delta,\ccat_{Y/})$ is a cosimplicial object then $\map_{\ccat_{Y/}}(K^\bullet,X)$ satisfies the Kan condition. This simplicial space sits in a cartesian square
\begin{center}\begin{tikzcd}
\map_{\ccat_{Y/}}(K^\bullet,X)\ar[r]\ar[d]&\map_\ccat(K^\bullet,X)\ar[d]\\
\{f\}\ar[r]&\map_\ccat(Y,X)
\end{tikzcd}.\end{center}
As every map from a simplicial space satisfying the Kan condition to a constant simplicial space is a Kan fibration, it follows from \cref{lem:kanpb} that $\map_{\ccat_{Y/}}(K^\bullet,X)\to\{f\}$ is a Kan fibration, meaning that $\map_{\ccat_{Y/}}(K^\bullet,X)$ satisfies the Kan condition as claimed.

(10)~~Essentially the same argument as (9) applies. As $g'$ and $f'$ preserve finite products, if $X'\in \ccat'$ is universally Kan then by (5) it follows that $g'(X')$ and $f'(X')$ are universally Kan. Conversely, suppose given $X' \in \ccat'$ for which $g'(X')$ and $f'(X')$ are universally Kan. By \cref{prop:univkancharacterize}, it suffices to prove that if $\Kcs \in \ccat'$ is a cosimplicial object then $\map_{\ccat'}(K^\bullet,X)$ satisfies the Kan condition. This simplicial space sits in a cartesian square of the form
\begin{center}\begin{tikzcd}
\map_{\ccat'}(\Kcs,X)\ar[r]\ar[d,"f'_\ast"]&\map_\ccat(g'(\Kcs),g'(X))\ar[d,"f_\ast"]\\
\map_{\dcat'}(f'(\Kcs),f'(X))\ar[r]&\map_\dcat(fg'(\Kcs),fg'(X))
\end{tikzcd}.\end{center}
As $f$ is full and $X$ is universally Kan, $f_\ast$ is an effective epimorphism of simplicial spaces which admits a Malcev operation, and is therefore a Kan fibration by \cref{thm:surjherd}. By \cref{lem:kanpb}, it follows that $f'_\ast$ is a Kan fibration. As $f'(X)$ is universally Kan, $\map_{\dcat'}(f'(\Kcs),f'(X))$ satisfies the Kan condition, and therefore the same is true of $\map_{\ccat'}(\Kcs,X)$ as needed.

(11)~~As
\[
\map_{\prod_{i\in I}\ccat_i}(\{Y_i : i\in I\},\{X_i : i\in I\})\simeq\prod_{i\in I}\map_{\ccat_i}(Y_i,X_i),
\]
this holds as simplicial spaces which satisfy the Kan condition are closed under products.
\end{proof}

We also record the following for later use.

\begin{lemma}\label{lem:productsplitting}
Let $\ccat$ be an $\infty$-category with finite products and $X \in \ccat$ be an $H$-herd object. Suppose given pointed connected spaces $A$ and $B$ for which the cotensor $X^{A\times B}$ exists in $\ccat$. Then there is an equivalence
\[
X^{A\times B}\simeq X^{A\vee (A\wedge B)\vee B}.
\]
In particular, if $X \in \ccat$ is universally Kan and $X^{A\times B}$ exists in $\ccat$, then $X^{A\vee (A\wedge B)\vee B}$ exists in $\ccat$.
\end{lemma}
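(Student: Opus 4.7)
The plan is to reduce the statement to the already-proven case of spaces in \cref{ex:splitproduct} via the Yoneda embedding. Fix a Malcev operation $t \colon X \times X \times X \to X$ on $X$. For any object $Y \in \ccat$, the functor $\map_\ccat(Y,-) \colon \ccat \to \spaces$ preserves finite products, so post-composition with $t$ endows $\map_\ccat(Y, X)$ with an induced Malcev operation $t_\ast$. By \cref{cor:ukanspaces}, this means $\map_\ccat(Y, X)$ is a supersimple space for every $Y \in \ccat$.

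Next, I would invoke the defining universal property of the cotensor to identify
\[
\map_\ccat(Y, X^{A \times B}) \simeq \map(A \times B, \map_\ccat(Y, X)),
\]
and then apply \cref{ex:splitproduct} to the supersimple space $\map_\ccat(Y, X)$ to obtain an equivalence
\[
\map(A \times B, \map_\ccat(Y, X)) \simeq \map(A \vee (A \wedge B) \vee B, \map_\ccat(Y, X)).
\]
Composing gives $\map_\ccat(Y, X^{A \times B}) \simeq \map(A \vee (A \wedge B) \vee B, \map_\ccat(Y, X))$.

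The main point to verify is that this equivalence is natural in $Y$. This is where the explicit formulas in \cref{ex:splitproduct} are essential: both the section of $X^{A \times B} \to X^{A \vee B}$ and the retraction of $X^{A \wedge B} \to X^{A \times B}$ are built from $t_\ast$ by finite product-theoretic operations. Any morphism $Y' \to Y$ in $\ccat$ induces a morphism $\map_\ccat(Y, X) \to \map_\ccat(Y', X)$ which commutes with $t_\ast$ (since both are obtained by post-composition with $t$), so it intertwines the sections and retractions supplied by \cref{ex:splitproduct}. This is the step I expect to be the main (mild) obstacle, though it is essentially a bookkeeping exercise once the formulas are written out.

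With naturality in hand, the presheaf $Y \mapsto \map(A \vee (A \wedge B) \vee B, \map_\ccat(Y, X))$ on $\ccat^\op$ is represented by $X^{A \times B}$. By the definition of cotensor, this says exactly that $X^{A \vee (A \wedge B) \vee B}$ exists in $\ccat$ and is canonically equivalent to $X^{A \times B}$, yielding both the equivalence and the ``in particular'' existence clause simultaneously. Under the additional hypothesis that $X$ is universally Kan, \cref{prop:univkancharacterize} shows that the $H$-herd hypothesis is automatic, so the two formulations of the lemma coincide.
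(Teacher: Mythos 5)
Your proof is correct and follows essentially the same route as the paper's: reduce to the space-level statement of \cref{ex:splitproduct} via the representable functors $\map_\ccat(Y,-)$, and then deduce the $\ccat$-level equivalence from the naturality in $Y$ of the resulting space-level equivalence. The only difference is that the paper dispatches the naturality step in a single word, whereas you correctly identify it as the point worth checking and explain why it holds (the Malcev structure $t_\ast$ is obtained by postcomposition with $t$, hence is natural in $Y$, and every map in \cref{ex:splitproduct} is built product-theoretically from $t_\ast$).
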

\begin{proof}
By \cref{ex:splitproduct}, there is for every $Y\in \ccat$ a natural equivalence 
\[
\map_\ccat(Y,X^{A\times B})\simeq \map_\ccat(Y,X)^{A\times B} \simeq \map_\ccat(Y,X)^{A\vee (A\wedge B)\vee B},
\]
and therefore $X^{A\times B}\simeq X^{A\vee (A\wedge B)\vee B}$ as claimed.
\end{proof}

\section{Infinitary algebraic theories}\label{sec:generaltheories}

We now turn our attention to the study of infinitary algebraic theories and their models. This section establishes those basic results that can be proved for general (i.e.\ non-Malcev) theories. We give the definitions and establish some basic properties and constructions in \S\ref{ssec:nonboundedtheories}--\ref{ssec:imagefactorizations}.

The naive definition of an infinitary theory as a (locally small) $\infty$-category with coproducts is not suitable for some purposes. We work in this generality when it is most natural, calling such $\infty$-categories \emph{pretheories}, but otherwise require that our theories are generated by a small set of objects. This gives access to a reasonable theory of free resolutions, which we establish in \S\ref{ssec:sorts}.

In \S\ref{ssec:bounded}, we relate these general infinitary theories to the notion of a $\kappa$-ary theory for a regular cardinal $\kappa$. As we explain, every $\kappa$-ary theory can be completed to a general theory with the same models, and if $\pcat$ is a theory then $\Model_\pcat$ is presentable if and only if it arises this way for some $\kappa$. In particular, the theories and models we consider strictly generalize the more familiar finitary theories and their nonabelian derived categories (obtained by taking $\kappa = \omega$).

\subsection{Theories and their models}\label{ssec:nonboundedtheories}

\begin{definition}
\label{definition:pretheory_and_pretheory_homomorphism}
We say that
\begin{enumerate}
\item A \emph{pretheory} is a (locally small) $\infty$-category $\pcat$ which admits small coproducts,
\item A \emph{homomorphism} $f\colon \pcat \to \qcat$ of pretheories is a functor which preserves small coproducts.
\end{enumerate}
\end{definition}

The main reason we use the adjective \emph{pre} in the above definition is technical, as in practice we want to work with pretheories satisfying the following natural smallness condition: 

\begin{definition}\label{def:theory}
A \emph{theory} is a pretheory $\pcat$ satisfying the following additional condition:
\begin{enumerate}
\item[{($\ast$)}] There is a small set $\{P_s : s\in S\}$ of objects generating $\pcat$ under coproducts and retracts.
\end{enumerate}
In this case, we say that $\{P_s : s\in S\}$ is a \emph{set of generators} (or \emph{set of sorts}) for $\pcat$ and call the full subcategory $\pcat_0\subset \pcat$ it generates a \emph{generating subcategory} of $\pcat$. 
\end{definition}

\begin{remark}
Let $f\colon \pcat\to\qcat$ be an essentially surjective homomorphism of pretheories. If $\pcat$ is a theory, then $\qcat$ is a theory. Indeed, the image of any set of generators of $\pcat$ forms a set of generators of $\qcat$. 
\end{remark}

Pretheories and homomorphisms assemble into a (very large) $\infty$-category $\pretheories$, which is a non-full subcategory of $\largecatinfty$. We note the following.

\begin{prop}[{\cite[{Corollary 5.3.6.10}]{lurie_higher_topos_theory}}]
The $\infty$-category $\pretheories$ has all small limits, and these are preserved by the forgetful functor $\pretheories \to \largecatinfty$ to locally small $\infty$-categories.
\qed
\end{prop}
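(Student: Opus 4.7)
The plan is to construct limits of pretheories directly as limits of underlying $\infty$-categories, since the statement already suggests the forgetful functor to $\largecatinfty$ is conservative on this information. Given a small diagram $J \to \pretheories$, $j \mapsto \pcat_j$, I would form the limit $\pcat \colonequals \lim_{j \in J} \pcat_j$ in $\largecatinfty$ and then verify: (a) $\pcat$ is locally small; (b) $\pcat$ admits small coproducts, computed componentwise; (c) each projection $\pcat \to \pcat_j$ preserves small coproducts; (d) $\pcat$, equipped with these projections, has the correct universal property in $\pretheories$.

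For (a), mapping spaces in the limit are small limits of the mapping spaces in the $\pcat_j$, and a small limit of small spaces is small. For (b), given a small family of objects $\{X^{(\alpha)}\}_{\alpha \in A}$ of $\pcat$ corresponding to compatible tuples $\{X^{(\alpha)}_j\}$, the componentwise coproducts $Y_j \colonequals \coprod_\alpha X^{(\alpha)}_j$ exist in each $\pcat_j$, and since every transition functor $\pcat_j \to \pcat_{j'}$ is a homomorphism (hence preserves coproducts), these $Y_j$ assemble coherently into an object $Y \in \pcat$ via the universal property of the limit in $\largecatinfty$. The mapping space formula of (a) then shows directly that $Y$ is a coproduct of $\{X^{(\alpha)}\}$ in $\pcat$, yielding (c) for free. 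For (d), a functor $F\colon \qcat \to \pcat$ from a pretheory $\qcat$ corresponds by the universal property of $\pcat$ in $\largecatinfty$ to a compatible family $F_j\colon \qcat \to \pcat_j$; since coproducts in $\pcat$ are pointwise, $F$ preserves small coproducts if and only if each $F_j$ does, which is exactly the condition that the family lives in $\pretheories$.

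The main obstacle is the coherent assembly in step (b): one must verify not merely that the pointwise coproducts lift to an object of the limit, but that this lift genuinely has the universal property of a coproduct in the $\infty$-category $\pcat$. This requires that transition functors preserve coproducts in a coherent rather than merely pointwise sense, which is guaranteed by the definition of $\pretheories$ as a non-full subcategory of $\largecatinfty$ where morphisms are coproduct-preserving functors. This is precisely the general mechanism encoded by \cite[Corollary 5.3.6.10]{lurie_higher_topos_theory}, applied to the class of small discrete simplicial sets, and I would simply invoke that result to conclude.
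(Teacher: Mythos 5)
Your proposal is correct and takes essentially the same route as the paper, which simply cites \cite[Corollary 5.3.6.10]{lurie_higher_topos_theory} without further argument; your sketch of the underlying mechanism (forming the limit in $\largecatinfty$, constructing coproducts componentwise via the fact that transition functors are homomorphisms, and checking the universal property) is exactly what that cited result encapsulates, and you correctly identify the relevant class of simplicial sets as the small discrete ones.
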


Fix for the rest of this subsection a pretheory $\pcat$. There are (at least) two good definitions for the $\infty$-category of models of $\pcat$. We will show that these definitions coincide when $\pcat$ is a theory in \S\ref{ssec:sorts} below.

\begin{definition}\label{def:modelsnonbounded}
The $\infty$-category of \emph{nonbounded models} of $\pcat$ is the full subcategory
\[
\largemodels_\pcat\subset\Fun(\pcat^\op,\spaces)
\]
of product-preserving presheaves on $\pcat$, i.e.\ the $\infty$-category of functors $X\colon \pcat^\op\to\spaces$ for which the canonical map
\[
X(\coprod_{i\in I}P_i) \to \prod_{i\in I}X(P_i)
\]
is an equivalence for any set of objects $\{P_i : i \in I\}$ in $\pcat$.
\end{definition}

For a general pretheory $\pcat$, the $\infty$-category $\largemodels_\pcat$ could fail to be locally small. For this and other reasons we also consider the following subcategory:

\begin{definition}\label{def:modelsbounded}
The $\infty$-category of \emph{models} of $\pcat$ is the full subcategory
\[
\Model_\pcat = \largemodels_\pcat \times_{\Fun(\pcat^\op,\spaces)} \smallpresheaves(\pcat)\subset\largemodels_\pcat
\]
spanned by those nonbounded models whose underlying presheaf is small, i.e.\ may be written as a small colimit of representable presheaves. 
\end{definition}

In either case, we write
\[
\nu\colon \pcat\to\Model_\pcat \subset \largemodels_\pcat
\]
for the restricted Yoneda embedding.

\begin{prop}
\label{prop:limitsinlargemodels}
The full subcategory $\largemodels_\pcat\subset\Fun(\pcat^\op,\spaces)$ is closed under:
\begin{enumerate}
\item Small limits;
\item Geometric realizations of simplicial objects $\Xss$ which satisfy the Kan condition levelwise, i.e.\ for which the simplicial space $\Xss(P)$ satisfies the Kan condition for all $P \in \pcat$;
\item Geometric realizations of simplicial objects $\Xss$ which are levelwise split in the sense of \cite[Definition 4.7.2.2]{higher_algebra}.
\end{enumerate}
\end{prop}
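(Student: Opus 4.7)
The plan is to observe that all three claims reduce to a pointwise computation. Indeed, both small limits and geometric realizations in $\Fun(\pcat^\op,\spaces)$ are computed pointwise, so one need only check that in each case the pointwise construction commutes with the small products appearing in the definition of a nonbounded model.

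For (1), fix a small diagram $\{X_i\}_{i\in I}$ of nonbounded models and a set $\{P_j\}_{j\in J}$ in $\pcat$. Computing pointwise and using that each $X_i$ is product-preserving, one has
\[
(\lim_i X_i)(\coprod_j P_j) \simeq \lim_i X_i(\coprod_j P_j) \simeq \lim_i \prod_j X_i(P_j).
\]
Swapping the two limits (limits commute with limits in $\spaces$) identifies this with $\prod_j \lim_i X_i(P_j) \simeq \prod_j (\lim_i X_i)(P_j)$, proving that $\lim_i X_i$ is product-preserving.

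For (2), let $\Xss$ be a simplicial object of $\largemodels_\pcat$ such that $\Xss(P)$ is Kan for every $P\in\pcat$. Computing pointwise and using that each $X_n$ is product-preserving gives
\[
|\Xss|(\coprod_j P_j) \simeq |\Xss(\coprod_j P_j)| \simeq \Bigl|\prod_j \Xss(P_j)\Bigr|.
\]
Each $\Xss(P_j)$ satisfies the Kan condition, so \cref{prop:kanlimits}(1) identifies the last term with $\prod_j |\Xss(P_j)| \simeq \prod_j |\Xss|(P_j)$, as required. Part (3) is entirely parallel: the relevant input is that colimits of split simplicial objects are absolute, hence preserved by every functor, and in particular by the small product functor $\prod_j\colon \spaces^J\to\spaces$. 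Thus if $\Xss$ is levelwise split the same string of equivalences goes through without any Kan hypothesis.

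There is no real obstacle here; the content of the proposition is essentially just unwinding definitions, combined with the two clean external inputs (limits commute with products in $\spaces$ for (1), \cref{prop:kanlimits}(1) for (2), and absoluteness of split colimits for (3)). The only minor care needed is noting that for (3) one applies absoluteness to the simplicial object obtained by evaluation on the family $\{P_j\}$ to deduce that the product of its realizations is the realization of its product.
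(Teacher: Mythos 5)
Your proof is correct and follows essentially the same line as the paper's. Parts (1) and (2) match exactly: pointwise computation, commuting limits for (1), and \cref{prop:kanlimits}(1) for (2). For (3) you invoke absoluteness of split colimits and note that the product functor $\prod_J\colon\spaces^J\to\spaces$ preserves them; the paper says the same thing just slightly more explicitly, by choosing lifts $\widetilde{\Xss}(i)\in\Fun(\Delta^\op_{-\infty},\spaces)$ for each component and observing that these assemble into a lift of the $J$-tuple — which is exactly the verification that the simplicial object in $\spaces^J$ you apply absoluteness to is in fact split. You gesture at this in your final remark, so the only thing worth making fully explicit is that a family of (independently chosen) splittings of the components gives a splitting of the tuple, since $\Fun(\Delta^\op_{-\infty},\spaces^J)\simeq\Fun(\Delta^\op_{-\infty},\spaces)^J$.
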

\begin{proof}
(1)~~As limits commute with limits, the pointwise limit of any diagram of nonbounded models of $\pcat$ is again a nonbounded model.

(2)~~Fix a simplicial object $\Xss \in \Fun(\Delta^\op,\largemodels_\pcat)$ for which $\Xss(P) \in \simplicialspaces$ satisfies the Kan condition for all $P \in \pcat$. Let $|\Xss|$ denote the geometric realization of $\Xss$ taken levelwise in $\Fun(\pcat^\op,\spaces)$. We must show that $|\Xss|$ is again a model. In other words, we must show that if $\{P_i : i \in I\}$ is a set of objects in $\pcat$, then the natural map
\[
|\Xss|(\coprod_{i\in I} P_i) \to \prod_{i\in I}|\Xss|(P_i)
\]
is an equivalence. As $\Xss$ is a simplicial model, this map factors as
\[
|\Xss|(\coprod_{i\in I}P_i) = |\Xss(\coprod_{i\in I} P_i)| \simeq |\prod_{i\in I}\Xss(P_i)| \to \prod_{i\in I}|\Xss(P_i)| = \prod_{i\in I}|\Xss|(P_i),
\]
where the first and last equalities just expand the definition of $|\Xss|$. We therefore reduce to verifying that the map
\[
|\prod_{i\in I}\Xss(P_i)| \to \prod_{i\in I}|\Xss(P_i)|
\]
is an equivalence. By \cref{prop:kanlimits}, this holds under the assumption that $\Xss(P_i)$ satisfies the Kan condition for all $i\in I$.

(3)~~Arguing as in (2), it suffices to show instead that if $\{\Xss(i) : i \in I\}$ is a set of split simplicial spaces, then the map
\[
|\prod_{i\in I}\Xss(i)| \to \prod_{i\in I}|\Xss(i)|
\]
is an equivalence. Indeed, if we choose lifts of each $\Xss(i) \in \Fun(\Delta^\op,\spaces)$ to an object $\widetilde{\Xss}(i) \in \Fun(\Delta^\op_{-\infty},\spaces)$, then $\prod_{i\in I}\Xss(i)$ lifts to $\prod_{i\in I}\widetilde{\Xss}(i)$, implying that
\[
|\prod_{i\in I}\Xss(i)| \simeq \prod_{i\in I}\widetilde{X}_{-1}(i) \simeq \prod_{i\in I} |\Xss(i)|
\]
as claimed.
\end{proof}

\begin{cor}
A simplicial object $\Xss$ in $\largemodels_\pcat$ satisfies the Kan condition if and only if it satisfies the Kan condition levelwise.
\end{cor}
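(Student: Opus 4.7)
The plan is to reduce the statement to the observation that effective epimorphisms in $\largemodels_\pcat$ are detected pointwise. First, since $\largemodels_\pcat$ is closed under small limits in $\Fun(\pcat^\op,\spaces)$ by \cref{prop:limitsinlargemodels}(1), the cotensors $\Xss[\Delta^n]$ and $\Xss[\Lambda^n_i]$, being finite limits, exist in $\largemodels_\pcat$ and are computed pointwise: $\Xss[\Lambda](P) = \Xss(P)[\Lambda]$. Hence the canonical map $\Xss[\Delta^n] \to \Xss[\Lambda^n_i]$ in $\largemodels_\pcat$ evaluates at each $P \in \pcat$ to the corresponding horn-filling map $\Xss(P)[\Delta^n] \to \Xss(P)[\Lambda^n_i]$ of spaces.

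The key step is to show that a morphism $f \colon A \to B$ in $\largemodels_\pcat$ is an effective epimorphism if and only if $f(P) \colon A(P) \to B(P)$ is an effective epimorphism in $\spaces$ for every $P \in \pcat$. Since pullbacks in $\largemodels_\pcat$ are pointwise, the \v{C}ech nerve $\check{C}(f)$ agrees levelwise with the pointwise one, so $\check{C}(f)(P) = \check{C}(f(P))$. By \cref{example:cech_nerve_is_a_kan_complex}, any such \v{C}ech nerve satisfies the Kan condition, so $\check{C}(f)$ is levelwise Kan in the sense of \cref{prop:limitsinlargemodels}(2). That result therefore ensures that the geometric realization of $\check{C}(f)$ in $\largemodels_\pcat$ coincides with its pointwise realization in $\Fun(\pcat^\op,\spaces)$. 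Consequently the canonical map $|\check{C}(f)| \to B$ is an equivalence in $\largemodels_\pcat$ if and only if $|\check{C}(f(P))| \to B(P)$ is an equivalence of spaces for every $P$, which is exactly to say that each $f(P)$ is an effective epimorphism.

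Combining the two steps, $\Xss$ satisfies the Kan condition in $\largemodels_\pcat$ if and only if each map $\Xss[\Delta^n] \to \Xss[\Lambda^n_i]$ is an effective epimorphism there, equivalently (by the key step) a pointwise effective epimorphism, equivalently $\Xss(P)$ satisfies the Kan condition for every $P \in \pcat$. The main obstacle is the key step itself; its crucial input is that \v{C}ech nerves are always levelwise Kan, which is what allows \cref{prop:limitsinlargemodels}(2) to commute their geometric realizations with evaluation at each object of $\pcat$.
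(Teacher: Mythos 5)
Your proof is correct and follows essentially the same route as the paper's: both reduce the Kan condition to showing that effective epimorphisms in $\largemodels_\pcat$ are detected pointwise, and both establish this by noting that \v{C}ech nerves satisfy the Kan condition (via \cref{example:cech_nerve_is_a_kan_complex}) so that \cref{prop:limitsinlargemodels}(2) makes their geometric realizations pointwise. You simply spell out the reduction to cotensors and the pointwise computation of \v{C}ech nerves in more detail than the paper does.
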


\begin{proof}
As we observed in \cref{example:cech_nerve_is_a_kan_complex}, \v{C}ech nerves satisfy the Kan condition, and so \cref{prop:limitsinlargemodels} implies that $\largemodels_\pcat\subset\Fun(\pcat^\op,\spaces)$ is closed under geometric realizations of \v{C}ech nerves. In particular, a morphism $X \to Y$ in $\largemodels_\pcat$ is an effective epimorphism if and only if $X(P) \to Y(P)$ is an effective epimorphism for all $P \in \pcat$. The claim follows. 
\end{proof}

\begin{cor}
\label{cor:nearmonadicity}
Let $f\colon \pcat\to\qcat$ be a homomorphism of pretheories. Then restriction along $f$ defines a functor
\[
f^\ast\colon \largemodels_\qcat\to\largemodels_\pcat
\]
which preserves all small limits and all geometric realizations of simplicial objects which either satisfy the Kan condition or are levelwise split. If $f$ is essentially surjective, then $f^\ast$ is conservative.
\qed
\end{cor}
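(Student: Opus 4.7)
The plan is to reduce every claim to a pointwise computation in the ambient functor $\infty$-categories $\Fun(\pcat^\op,\spaces)$ and $\Fun(\qcat^\op,\spaces)$ and then invoke \cref{prop:limitsinlargemodels}. First I would observe that $f^\ast$ is well-defined: if $X\colon\qcat^\op\to\spaces$ preserves small products, then for any family $\{P_i\}$ in $\pcat$ we have
\[
(f^\ast X)(\coprod_i P_i) = X(f(\coprod_i P_i)) \simeq X(\coprod_i f(P_i)) \simeq \prod_i X(f(P_i)) = \prod_i (f^\ast X)(P_i),
\]
the middle equivalence using that $f$ preserves coproducts. So $f^\ast X \in \largemodels_\pcat$.

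For limits, \cref{prop:limitsinlargemodels}(1) says that $\largemodels_\qcat\subset\Fun(\qcat^\op,\spaces)$ and $\largemodels_\pcat\subset\Fun(\pcat^\op,\spaces)$ are closed under small limits; that is, these limits are computed pointwise in $\spaces$. Since restriction along $f^\op$ tautologically commutes with pointwise limits, $f^\ast$ preserves all small limits.

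For the two classes of geometric realizations, the key point is that each relevant property is detected pointwise and is preserved under $f^\ast$. Namely, if $\Xss\in\Fun(\Delta^\op,\largemodels_\qcat)$ satisfies the Kan condition levelwise, meaning $\Xss(Q)$ is Kan for every $Q\in\qcat$, then $(f^\ast \Xss)(P) = \Xss(f(P))$ is Kan for every $P\in\pcat$; similarly if $\Xss$ is levelwise split then so is $f^\ast\Xss$ (splittings are detected pointwise and restrict). By \cref{prop:limitsinlargemodels}(2, 3) the geometric realization of $\Xss$ in $\largemodels_\qcat$ is computed pointwise in $\spaces$, and likewise for $f^\ast \Xss$ in $\largemodels_\pcat$; since restriction along $f^\op$ preserves pointwise colimits, the canonical comparison
\[
|f^\ast \Xss| \xrightarrow{\sim} f^\ast |\Xss|
\]
is an equivalence.

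Finally, conservativity when $f$ is essentially surjective is immediate: a morphism $\alpha\colon X\to Y$ in $\largemodels_\qcat$ is an equivalence if and only if $\alpha_Q$ is an equivalence in $\spaces$ for every $Q\in\qcat$. If $f^\ast\alpha$ is an equivalence, then $\alpha_{f(P)}$ is an equivalence for every $P\in\pcat$; essential surjectivity then extends this to all $Q\in\qcat$. I do not anticipate any real obstacle here --- the whole corollary is a formal consequence of \cref{prop:limitsinlargemodels} together with the observation that the relevant Kan and split conditions are pointwise and restriction is a pointwise operation.
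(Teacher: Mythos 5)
Your proof is correct and is exactly the argument the paper intends (the \texttt{\textbackslash qed} indicates the corollary is regarded as an immediate consequence of \cref{prop:limitsinlargemodels}): all three properties of $f^\ast$ are reduced to pointwise facts in $\spaces$, which restriction along $f^\op$ trivially preserves. The only small thing worth being precise about is that the statement says ``satisfy the Kan condition'' while \cref{prop:limitsinlargemodels}(2) is phrased in terms of the \emph{levelwise} Kan condition; these coincide by the unnumbered corollary that sits between the two results (effective epimorphisms in $\largemodels$ are detected pointwise, so a simplicial model is Kan if and only if it is levelwise Kan), and your argument implicitly relies on that identification.
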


\subsection{Constructions of theories} \label{ssec:constructionsoftheories}

We briefly describe two pleasant closure properties of pretheories.

\begin{prop}
\label{prop:slicecat}
Let $X \in \largemodels_\pcat$ be a nonbounded model of $\pcat$. Then the $\infty$-category
\[
\pcat_{/X} = \pcat\times_{\Fun(\pcat^\op,\spaces)}\Fun(\pcat^\op,\spaces)_{/X}
\]
of elements of $X$ is again a pretheory, and there is a natural equivalence
\[
\largemodels_{\pcat_{/X}} \simeq (\largemodels_\pcat)_{/X}.
\]
If $X \in \Model_\pcat$ is a model, then this restricts to an equivalence
\[
\Model_{\pcat_{/X}}\simeq (\Model_\pcat)_{/X}.
\]
\end{prop}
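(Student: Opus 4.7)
The plan is to break the proof into four pieces: (i) verify $\pcat_{/X}$ is a pretheory, (ii) invoke the standard equivalence between presheaves on a slice and the slice of presheaves, (iii) restrict this equivalence to models on both sides, and (iv) further restrict to small presheaves.

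For (i), I would first identify objects of $\pcat_{/X}$ with pairs $(P, x)$ consisting of $P \in \pcat$ together with a point $x \in X(P)$. Because $X$ is a nonbounded model, $X(\coprod_{i\in I} P_i) \simeq \prod_{i \in I} X(P_i)$, so any family $\{(P_i, x_i) : i \in I\}$ assembles into a canonical point $x \in X(\coprod_i P_i)$. A direct verification then shows that $(\coprod_i P_i, x)$ is a coproduct in $\pcat_{/X}$, so $\pcat_{/X}$ admits all small coproducts.

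Step (ii) is the classical identification
\[
\Fun((\pcat_{/X})^\op, \spaces) \simeq \Fun(\pcat^\op, \spaces)_{/X},
\]
obtained from straightening/unstraightening applied to the right fibration $\pcat_{/X} \to \pcat$ classifying $X$. Under this equivalence, a natural transformation $p \colon Y \to X$ corresponds to the presheaf $F_p$ on $\pcat_{/X}$ defined by $F_p(P, x) = \mathrm{fib}_x(Y(P) \to X(P))$, and representables $\nu(P, x) \in \Fun((\pcat_{/X})^\op, \spaces)$ correspond to maps $\nu P \to X$ classified by $x \in X(P)$.

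The nontrivial part of the proof is step (iii). Given a family $\{(P_i, x_i)\}$ with coproduct $(\coprod_i P_i, x)$ as above, one has a fibre sequence $F_p(\coprod_i P_i, x) \to Y(\coprod_i P_i) \to X(\coprod_i P_i)$, and using product-preservation of $X$ this identifies the fibre with $\mathrm{fib}_{(x_i)}(Y(\coprod_i P_i) \to \prod_i X(P_i))$. If $Y$ preserves products, this becomes $\prod_i \mathrm{fib}_{x_i}(Y(P_i) \to X(P_i)) = \prod_i F_p(P_i, x_i)$, so $F_p$ preserves products. Conversely, if $F_p$ preserves products, then the map $Y(\coprod_i P_i) \to \prod_i Y(P_i)$ lies over $X(\coprod_i P_i) \simeq \prod_i X(P_i)$ and induces an equivalence on fibres over every basepoint of the form $(x_i)$; since every basepoint of $\prod_i X(P_i)$ is of this form, $Y$ is a nonbounded model. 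Step (iv) is then immediate: under the equivalence representables correspond to representables, so a presheaf on $\pcat_{/X}$ is small iff the corresponding $Y$ is small, giving the restricted equivalence $\Model_{\pcat_{/X}} \simeq (\Model_\pcat)_{/X}$. The main technical obstacle will be keeping track of basepoints of the spaces $X(P)$ cleanly in step (iii); once (ii) is available as input, the rest is bookkeeping.
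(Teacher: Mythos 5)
Your proof is correct and takes essentially the same route as the paper's: both use the straightening/unstraightening equivalence $\Fun((\pcat_{/X})^\op,\spaces)\simeq\Fun(\pcat^\op,\spaces)_{/X}$, check via product-preservation of $X$ that $Y$ preserves products if and only if the associated fibrewise presheaf does, and then restrict to small presheaves on both sides. You are more explicit than the paper in verifying that $\pcat_{/X}$ has coproducts (the paper simply asserts they are ``created'' by the forgetful functor, which is a mild abuse of terminology for a right fibration) and in the fibrewise bookkeeping for step (iii), but the structure and key ideas coincide.
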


\begin{proof}
The slice category $\pcat_{/X}$ admits all small coproducts, created by the forgetful functor $\pcat_{/X} \to \pcat$. By unstraightening, there is an equivalence
\[
\Fun(\pcat,\spaces)_{/X}\simeq\Fun(\pcat_{/X},\spaces)
\]
sending a map $Y \to X$ to the presheaf $\widetilde{Y}$ on $\pcat_{/X}$ given by
\[
\widetilde{Y}(\alpha\colon P \to X) = Y(P)\times_{X(P)}\{\alpha\}.
\]
If $X$ is small, then this restricts to an equivalence between categories of small presheaves. Under the assumption that $X$ preserves products, we see that $Y\colon \pcat^\op\to\spaces$ preserves products if and only if $\widetilde{Y}\colon (\pcat_{/X})^\op\to\spaces$ preserves products, and so this further restricts to the claimed equivalences $\largemodels_{\pcat_{/X}} \simeq (\largemodels_\pcat)_{/X}$ and, when $X$ is small, $\Model_{\pcat_{/X}}\simeq (\Model_{\pcat})_{/X}$.
\end{proof}

Given an $\infty$-category $\ccat$, write $\ccat^\sharp$ for the idempotent completion of $\ccat$.

\begin{prop}\label{prop:idempotentcompletion}
Let $\pcat$ be a pretheory. Then the idempotent completion $\pcat^\sharp$ is again a pretheory, and restriction along the inclusion $\pcat\subset\pcat^\sharp$ induces equivalences
\[
\largemodels_\pcat\simeq\largemodels_{\pcat^\sharp},\qquad \Model_\pcat \simeq \Model_{\pcat^\sharp}.
\]
\end{prop}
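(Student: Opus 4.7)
The plan is to first verify that $\pcat^\sharp$ admits small coproducts (making it a pretheory), then use the universal property of idempotent completion to reduce the claimed equivalences of model $\infty$-categories to checking that product-preservation and smallness transfer across the resulting equivalence of functor $\infty$-categories.

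For the first step, I would construct small coproducts in $\pcat^\sharp$ explicitly. Given a family $\{X_i\}_{i \in I}$ in $\pcat^\sharp$, write each $X_i$ as a retract of some $P_i \in \pcat$, with associated idempotent $e_i \colon P_i \to P_i$. The coproduct $\coprod_i P_i$ exists in $\pcat$ and carries the induced idempotent $e = \coprod_i e_i$; I claim that the splitting of $e$ in $\pcat^\sharp$ represents $\coprod_i X_i$. To verify this by mapping out, for $Z \in \pcat^\sharp$ the mapping space from the splitting is the splitting of the idempotent induced by $e$ on $\map_{\pcat^\sharp}(\coprod_i P_i, Z) \simeq \prod_i \map_{\pcat^\sharp}(P_i, Z)$, and since retracts in $\spaces$ are limits commuting with small products, this equals $\prod_i \map_{\pcat^\sharp}(X_i, Z)$, as required.

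For the equivalence of nonbounded models, since $\spaces$ is idempotent complete and $(\pcat^\sharp)^\op$ is the idempotent completion of $\pcat^\op$, restriction induces an equivalence $\Fun((\pcat^\sharp)^\op, \spaces) \xrightarrow{\sim} \Fun(\pcat^\op, \spaces)$. It suffices to show that this equivalence identifies the full subcategories of product-preserving presheaves on either side. The forward direction is immediate, since the inclusion $\pcat \hookrightarrow \pcat^\sharp$ preserves coproducts by construction. For the converse, given a product-preserving $X$ on $\pcat^\op$ with extension $\widetilde{X}$ on $(\pcat^\sharp)^\op$, apply the explicit description from the previous paragraph: $\widetilde{X}(\coprod_i X_i)$ is the splitting in $\spaces$ of the idempotent $\prod_i X(e_i)$ on $\prod_i X(P_i)$, which again by commutation of splittings with products equals $\prod_i \widetilde{X}(X_i)$.

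Finally, to descend to the statement for $\Model_\pcat$, I would check that smallness transfers both ways. Given a small $\widetilde{X} \in \largemodels_{\pcat^\sharp}$ written as a small colimit of representables $\map_{\pcat^\sharp}(-, Y_j)$, each $Y_j$ is a retract of some $P_j \in \pcat$, so $\widetilde{X}|_{\pcat^\op}$ is a small colimit of retracts of representables on $\pcat$, hence small. Conversely, since restriction of the representable $\map_{\pcat^\sharp}(-, P_j)$ along $\pcat \hookrightarrow \pcat^\sharp$ recovers $\map_\pcat(-, P_j)$, the extension of a small presheaf on $\pcat$ built from a small colimit of representables is realized by the analogous small colimit of representables on $\pcat^\sharp$. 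The main obstacle here is the verification in the third paragraph that product-preservation transfers across the equivalence; once the explicit description of coproducts in $\pcat^\sharp$ from paragraph two is in place, this reduces to the diagram-chase observation that retracts commute with products in $\spaces$.
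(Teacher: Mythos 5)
Your proof is correct and takes essentially the same approach as the paper's, which also passes through the equivalence $\Fun((\pcat^\sharp)^\op,\spaces)\simeq\Fun(\pcat^\op,\spaces)$ (citing \cite[Proposition 5.1.4.9]{lurie_higher_topos_theory}) and then checks that it restricts to models and to small models; the paper simply asserts the pretheory claim as "clear" and the product-preservation transfer as "easily seen" where you fill in the details. One small gap worth noting: in your second paragraph the identification $\map_{\pcat^\sharp}(\coprod_i P_i, Z) \simeq \prod_i \map_{\pcat^\sharp}(P_i, Z)$ for general $Z\in\pcat^\sharp$ (i.e.\ that coproducts formed in $\pcat$ remain coproducts in $\pcat^\sharp$) is not automatic — it requires writing $Z$ as a retract of some $Q\in\pcat$ and using once more that retracts commute with products — but this is the same observation you invoke immediately afterward, so it merely needs to be pointed out.
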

\begin{proof}
That $\pcat^\sharp$ is again a pretheory is clear. By \cite[Proposition 5.1.4.9]{lurie_higher_topos_theory}, restriction along the inclusion $\pcat\subset\pcat^\sharp$ defines an equivalence $\Fun((\pcat^\sharp)^\op,\spaces)\to\Fun(\pcat^\op,\spaces)$, and this is easily seen to restrict to an equivalence between full subcategories of nonbounded models. As the walking idempotent is small, the Yoneda embedding $\nu\colon \pcat^\sharp\to \Fun((\pcat^\sharp)^\op,\spaces)\simeq\Fun(\pcat^\op,\spaces)$ lands in the full subcategory $\smallpresheaves(\pcat)\subset\Fun(\pcat^\op,\spaces)$ of small presheaves, so this further restricts to an equivalence between full subcategories of models.
\end{proof}

\subsection{Higher image factorizations}\label{ssec:imagefactorizations}

A basic operation in ordinary universal algebra is the formation of \emph{image factorizations}: any homomorphism $f\colon X \to Y$ between algebraic objects canonically factors as an effective epimorphism $X \to \im f$ followed by monomorphism $\im f \to Y$. This section is devoted to the analogues of this construction in higher universal algebra. 

\begin{definition}\label{def:truncated}
Let $\ccat$ be an $\infty$-category with finite limits. A morphism $X \to Y$ in $\ccat$ is said to be
\begin{enumerate}
\item \emph{$(-2)$-truncated} if it is an equivalence;
\item \emph{$(n+1)$-truncated} for some $n \geq -2$ if the diagonal $X \to X\times_Y X$ is $n$-truncated.
\end{enumerate}
An object $X \in \ccat$ is \emph{$n$-truncated} if $X \to \ast$ is $n$-truncated.
\end{definition}

\begin{definition}\label{def:connected}
Let $\ccat$ be an $\infty$-category with finite limits. A morphism $X \to Y$ is said to be \begin{enumerate}
\item \emph{$(-1)$-connective} if it is any morphism.
\item \emph{$(n+1)$-connective} for some $n\geq -1$ if it is an effective epimorphism (\cref{def:effepi}) and the diagonal $X \to X\times_Y X$ is $n$-connective.
\end{enumerate}
\end{definition}

For the $\infty$-category of spaces, $n$-truncated and $n$-connective morphisms are exactly the $n$-truncated and $n$-connected maps of classical homotopy theory, see \cite[Section 6.5]{lurie_higher_topos_theory}. As a consequence, we have the following.

\begin{prop}\label{prop:levelwiseimage}
A morphism $X \to Y$ in $\largemodels_\pcat$ is $n$-connective, resp., $n$-truncated, if and only if $X(P) \to Y(P)$ is an $n$-connective, resp., $n$-truncated map of spaces for all $P \in \pcat$.
\end{prop}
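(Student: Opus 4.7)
The plan is to prove both assertions by parallel inductions on $n$, reducing everything to two facts already recorded in the previous subsection: first, that the inclusion $\largemodels_\pcat \subseteq \Fun(\pcat^\op,\spaces)$ is closed under small limits (\cref{prop:limitsinlargemodels}(1)), so that in particular all pullbacks and diagonals in $\largemodels_\pcat$ are formed pointwise in $\pcat$; second, that a map $f\colon X \to Y$ in $\largemodels_\pcat$ is an effective epimorphism if and only if each $f(P)\colon X(P) \to Y(P)$ is (the corollary immediately following \cref{prop:limitsinlargemodels}).

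For the truncatedness statement, I would induct on $n \geq -2$. The base case $n = -2$ is the assertion that equivalences in $\largemodels_\pcat$ are detected pointwise, which is immediate from the definition of the functor category $\Fun(\pcat^\op,\spaces)$. For the inductive step, by \cref{def:truncated} the morphism $f$ is $(n+1)$-truncated if and only if the diagonal $X \to X \times_Y X$ is $n$-truncated. Since $\largemodels_\pcat$ is closed under pullbacks, the diagonal in $\largemodels_\pcat$ agrees pointwise with the diagonal $X(P) \to X(P)\times_{Y(P)}X(P)$ in $\spaces$ for every $P \in \pcat$. Applying the inductive hypothesis to this diagonal, and invoking the analogous characterization of $(n+1)$-truncated maps of spaces, yields the claim.

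The connectivity statement is essentially dual. I would induct on $n \geq -1$, with the case $n = -1$ being vacuous since every morphism is $(-1)$-connective on both sides. For the step, by \cref{def:connected} the morphism $f$ is $(n+1)$-connective if and only if it is an effective epimorphism and its diagonal is $n$-connective. The first condition is detected pointwise by the corollary to \cref{prop:limitsinlargemodels}, and the second is detected pointwise by the inductive hypothesis combined with the fact that the diagonal is computed pointwise. Conversely, if $f(P)$ is $(n+1)$-connective for each $P$, then $f(P)$ is an effective epimorphism and its diagonal is $n$-connective for each $P$, so the same two conclusions hold for $f$ in $\largemodels_\pcat$, giving $(n+1)$-connectivity of $f$.

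There is no real obstacle here: the substantive inputs have already been established, and everything is a formal unwinding of the recursive definitions of truncatedness and connectivity. The only thing to be slightly careful about is that the pullback used in defining the diagonal is formed in $\largemodels_\pcat$ rather than in $\Fun(\pcat^\op,\spaces)$, but these two pullbacks coincide thanks to the limit-closure of \cref{prop:limitsinlargemodels}(1).
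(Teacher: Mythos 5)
Your proof is correct and takes essentially the same approach as the paper's, which simply observes that since \v{C}ech nerves satisfy the Kan condition, effective epimorphisms and limits in $\largemodels_\pcat$ are both detected levelwise, and so the recursive conditions of \cref{def:truncated} and \cref{def:connected} may be checked levelwise; you have merely spelled out the two inductions that the paper leaves implicit.
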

\begin{proof}
As the \v{C}ech nerve of a map of spaces is a simplicial space satisfying the Kan condition, \cref{prop:limitsinlargemodels} implies that the conditions of \cref{def:truncated} and \cref{def:connected} may be checked levelwise.
\end{proof}

\begin{cor}
Let $f\colon \pcat\to\qcat$ be a homomorphism of pretheories. Then
\[
f^\ast\colon\largemodels_\qcat\to\largemodels_\pcat
\]
preserves $n$-connective and $n$-truncated morphisms. If $f$ is essentially surjective then $f^\ast$ reflects $n$-connective and $n$-truncated morphisms.
\qed
\end{cor}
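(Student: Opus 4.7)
The plan is to reduce everything to a pointwise statement via \cref{prop:levelwiseimage} and then chase definitions. Specifically, \cref{prop:levelwiseimage} tells us that for any pretheory $\rcat$, a morphism $g\colon X \to Y$ in $\largemodels_\rcat$ is $n$-connective (resp.\ $n$-truncated) if and only if $g(R)\colon X(R) \to Y(R)$ is $n$-connective (resp.\ $n$-truncated) for every $R \in \rcat$. So both claims in the corollary amount to comparing the collections
\[
\{g(Q)\colon X(Q) \to Y(Q)\}_{Q \in \qcat} \qquad\text{and}\qquad \{g(f(P))\colon X(f(P)) \to Y(f(P))\}_{P \in \pcat},
\]
since $(f^\ast g)(P) = g(f(P))$ by the very definition of restriction along $f$.

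For preservation, fix $g\colon X \to Y$ in $\largemodels_\qcat$ which is $n$-connective (the $n$-truncated case is identical). By \cref{prop:levelwiseimage}, $g(Q)$ is $n$-connective for every $Q \in \qcat$; specializing to $Q = f(P)$ gives that $(f^\ast g)(P) = g(f(P))$ is $n$-connective for every $P \in \pcat$. Applying \cref{prop:levelwiseimage} in the opposite direction over $\pcat$, we conclude that $f^\ast g$ is $n$-connective in $\largemodels_\pcat$.

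For reflection, assume $f$ is essentially surjective and that $f^\ast g$ is $n$-connective (resp.\ $n$-truncated). Then, again by \cref{prop:levelwiseimage}, $g(f(P))$ has the desired property for every $P \in \pcat$. Given an arbitrary $Q \in \qcat$, essential surjectivity of $f$ provides some $P \in \pcat$ with an equivalence $f(P) \simeq Q$, so $g(Q) \simeq g(f(P))$ is $n$-connective (resp.\ $n$-truncated). Invoking \cref{prop:levelwiseimage} once more yields the statement for $g$ in $\largemodels_\qcat$.

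There is no real obstacle here: both parts are immediate once \cref{prop:levelwiseimage} is available. The only point worth noting is that essential surjectivity is used precisely to ensure that pointwise information over $\pcat$ suffices to recover pointwise information over all of $\qcat$, which is why that hypothesis appears exactly for the reflection half of the statement.
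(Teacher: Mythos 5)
Your proof is correct and is exactly the argument the paper leaves implicit behind the \qed: restriction along $f$ is evaluation at $f(P)$, so both claims reduce by \cref{prop:levelwiseimage} to a pointwise check, with essential surjectivity needed only so that the points $f(P)$ exhaust $\qcat$ up to equivalence for the reflection half. Nothing to add.
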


\cref{prop:levelwiseimage} leads to the desired higher image factorizations.

\begin{prop}\label{prop:imagefactorizations}
The $(n+1)$-connective and $n$-truncated morphisms form a factorization system on $\largemodels_\pcat$ (see \cite[Definition 5.2.8.8]{lurie_higher_topos_theory}), with factorizations constructed levelwise. 
\end{prop}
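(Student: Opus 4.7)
The plan is to lift the factorization system from $\spaces$ pointwise to $\Fun(\pcat^\op, \spaces)$ and then restrict it to $\largemodels_\pcat$. Classically, the $(n+1)$-connective and $n$-truncated morphisms form a factorization system on $\spaces$; applying this pointwise gives such a factorization system on $\Fun(\pcat^\op, \spaces)$, since factorization systems on a target lift to any functor category (the existence of factorizations and the orthogonality both being detected levelwise in the target). By \cref{prop:levelwiseimage}, the $(n+1)$-connective and $n$-truncated morphisms in $\largemodels_\pcat$ agree with those in $\Fun(\pcat^\op, \spaces)$, and since $\largemodels_\pcat \subset \Fun(\pcat^\op, \spaces)$ is a full subcategory, orthogonality of the two classes is inherited automatically once the factorizations themselves live in $\largemodels_\pcat$.

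The single substantive check is therefore that the pointwise factorization preserves the property of being a model. Given $f \colon X \to Y$ in $\largemodels_\pcat$, factor it pointwise as $X \xrightarrow{g} Z \xrightarrow{h} Y$, with $g(P)$ being $(n+1)$-connective and $h(P)$ being $n$-truncated for every $P \in \pcat$; I must show that $Z$ again preserves products. For any small family $\{P_i\}_{i \in I}$ in $\pcat$, functoriality of factorizations in $\spaces$ yields a commutative ladder comparing the factorization at $\coprod_i P_i$ with the product $\prod_i X(P_i) \to \prod_i Z(P_i) \to \prod_i Y(P_i)$, whose outer vertical maps are equivalences because $X$ and $Y$ are models. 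If in the lower row the first map is $(n+1)$-connective and the second is $n$-truncated, then uniqueness of factorizations forces the middle vertical $Z(\coprod_i P_i) \to \prod_i Z(P_i)$ to be an equivalence, as desired.

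Everything thus reduces to showing that $(n+1)$-connective and $n$-truncated morphisms of spaces are each stable under small products. For $n$-truncated morphisms this is immediate from the inductive definition, which characterizes them by iterated diagonals being equivalences; this class is stable under all small limits. Stability of $(n+1)$-connective morphisms under products is the main point, but is still straightforward: an $(n+1)$-connective map of spaces is exactly one whose $\pi_0$ is surjective and whose $\pi_j$ is bijective for $1 \leq j \leq n$ and surjective for $j = n+1$ at every basepoint, and each of these conditions is preserved under small products of spaces because $\pi_j$ commutes with products and surjections of sets are closed under products. Granting this last stability, the proposition follows.
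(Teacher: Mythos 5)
Your approach matches the paper's: lift the levelwise factorization system from $\Fun(\pcat^\op,\spaces)$ and observe that it preserves the subcategory of models, which the paper abbreviates to the single sentence that ``fibrewise truncations preserve infinite products'' and which you spell out as closure of each class under small products plus uniqueness of factorizations. One small imprecision worth flagging: with the paper's indexing, an $(n+1)$-connective map of spaces has $n$-connected homotopy fibres, which by the long exact sequence forces $\pi_0$ to be a \emph{bijection} (not merely a surjection), $\pi_j$ bijective for $1 \le j \le n$, and $\pi_{n+1}$ surjective at every basepoint; since all of these conditions are preserved under small products, your argument nevertheless goes through unchanged.
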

\begin{proof}
It is essentially classical that $(n+1)$-connective and $n$-truncated morphisms form a factorization system on $\spaces$, see for example \cite[Example 5.2.8.16]{lurie_higher_topos_theory}, and this extends to show that they form a factorization system on any presheaf category with factorizations constructed levelwise. As fibrewise truncations preserve infinite products, this levelwise factorization system on $\Fun(\pcat^\op,\spaces)$ preserves the full subcategory $\largemodels_\pcat\subset\Fun(\pcat^\op,\spaces)$.
\end{proof}

\begin{example}
Taking $n = -1$, every morphism $f\colon X \to Y$ in $\largemodels_\pcat$ factors uniquely as an effective epimorphism $X \to \im f$ followed by a monomorphism $\im f \to Y$. This factorization may be constructed explicitly:
\[
\im f = \colim\left(\begin{tikzcd}
Y&\ar[l]X &\ar[l,shift right]\ar[l,shift left]X\times_Y X&\ar[l,shift right]\ar[l,shift left]\ar[l]X\times_Y X \times_Y X&\ar[l,shift right=0.5mm]\ar[l,shift right=1.5mm]\ar[l,shift left=0.5mm]\ar[l,shift left=1.5mm]\cdots
\end{tikzcd}\right)
\]
is nothing more than the geometric realization of the \v{C}ech nerve of $f$.
\end{example}

\subsection{Existence of free resolutions}\label{ssec:sorts}

The main feature distinguishing theories from pretheories is the existence of \emph{free resolutions}.

\begin{lemma}\label{lem:resolutions}
Let $\pcat$ be a pretheory. The following are equivalent:
\begin{enumerate}
\item The inclusion $\Model_\pcat\subset\largemodels_\pcat$ is an equivalence;
\item For every $X \in \largemodels_\pcat$, there exists an effective epimorphism $\nu P \to X$ for some $P \in \pcat$;
\item Every $X \in \largemodels_\pcat$ admits a free simplicial resolution: there exists an equivalence
\[
X \simeq | \nu \Pss|
\]
for some simplicial object $\Pss \in \Fun(\Delta^\op,\pcat)$ for which $\nu \Pss$ satisfies the Kan condition.
\end{enumerate}
\end{lemma}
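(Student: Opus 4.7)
The plan is to prove the implications (3) $\Rightarrow$ (1), (1) $\Rightarrow$ (2), and (2) $\Rightarrow$ (3), with the last being the substantive one.

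The implication (3) $\Rightarrow$ (1) is immediate: a geometric realization of representables is a small colimit of representables, hence a small presheaf, so $X \in \Model_\pcat$ whenever (3) holds. For (1) $\Rightarrow$ (2), I would fix $X \in \largemodels_\pcat = \Model_\pcat$ and use smallness to write $X$ as a small colimit $\colim_{i \in \jcat} \nu Q_i$ in $\Fun(\pcat^\op, \spaces)$. Setting $P = \coprod_{i \in \Ob\jcat} Q_i$ in $\pcat$ (which exists since $\pcat$ is a pretheory), the canonical map $\nu P \to X$ factors the canonical effective epimorphism $\coprod^{\Fun}_i \nu Q_i \to X$ (from a coproduct of vertices onto the colimit in the presheaf $\infty$-topos), and so is itself an effective epimorphism in $\Fun(\pcat^\op, \spaces)$. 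Since $\largemodels_\pcat \subset \Fun(\pcat^\op, \spaces)$ is closed under \v{C}ech nerves (\cref{prop:limitsinlargemodels}(1)) and their realizations (\cref{prop:limitsinlargemodels}(2), using \cref{example:cech_nerve_is_a_kan_complex}), effective epimorphisms agree in the two settings for maps in $\largemodels_\pcat$, giving the desired $\nu P \to X$.

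For (2) $\Rightarrow$ (3), the plan is to inductively construct $\Pss \in \Fun(\Delta^\op, \pcat)$ together with an augmentation $\varepsilon\colon \nu\Pss \to X$ satisfying the hypercovering condition at every stage: $\nu P_n \to M_n$ is an effective epimorphism, where
\[
M_n \colonequals (\nu\Pss)[\partial\Delta^n] \times_{X[\partial\Delta^n]} X[\Delta^n]
\]
is the $n$-th relative matching object, formed in $\largemodels_\pcat$ using closure under finite limits (\cref{prop:limitsinlargemodels}(1)). The base case $n=0$ is condition (2) applied to $X$. Inductively, having constructed the $(n{-}1)$-skeleton, the object $M_n$ lies in $\largemodels_\pcat$, and applying (2) yields $\nu P_n \to M_n$ effective epi; composing with the projections to the previously constructed $\nu P_k$ furnishes the face maps into the $n$-skeleton, and the projection to $X[\Delta^n] = X$ extends the augmentation.

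To verify that the resulting $\nu\Pss$ satisfies the Kan condition and $|\nu\Pss| \simeq X$, I would reduce to the classical case pointwise. Finite limits and effective epimorphisms in $\largemodels_\pcat$ are both detected levelwise on $\pcat$ (\cref{prop:limitsinlargemodels}(1) and \cref{prop:levelwiseimage}), so for each $P \in \pcat$ the simplicial space $(\nu\Pss)(P) = \map_\pcat(P, \Pss)$ is a hypercovering of $X(P)$ in spaces. Classical hypercovering theory then yields both that $(\nu\Pss)(P)$ satisfies the Kan condition (so $\nu\Pss$ does, by the levelwise characterization) and that $|(\nu\Pss)(P)| \simeq X(P)$; the Kan condition then allows \cref{prop:limitsinlargemodels}(2) to compute $|\nu\Pss|$ levelwise, giving $|\nu\Pss| \simeq X$ in $\largemodels_\pcat$. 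The main obstacle is the coherent $\infty$-categorical execution of the inductive step in (2) $\Rightarrow$ (3), specifically the extension of an $(n{-}1)$-truncated simplicial diagram by a new $n$-simplex covering the matching object: once framed through the hypercovering pattern, the difficulty is packaged into the existence and functoriality of matching objects, after which the descent/realization verification reduces formally to the classical case via the levelwise arguments.
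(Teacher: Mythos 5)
Your proof follows essentially the same route as the paper's. For (2)$\Rightarrow$(3), where you sketch the inductive hypercover construction via matching objects, the paper simply cites \cite[Proof of Lemma 5.5.8.13]{lurie_higher_topos_theory} and remarks that the same proof applies; your (1)$\Rightarrow$(2) via smallness (effective epi from a coproduct of representables, factored through $\nu$ of the coproduct taken in $\pcat$ using that $X$ preserves products) and the trivial (3)$\Rightarrow$(1) match the paper exactly.
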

\begin{proof}
(1)$\Rightarrow$(2): We must show that if $X \in \Model_\pcat$, then there exists some $P \in \pcat$ and an effective epimorphism $\nu P \to X$. As the underlying presheaf of $X$ is small, there exists a small set $\{P_s : s\in S\}$ of objects of $\pcat$ and an effective epimorphism $\coprod_{s\in S} \nu P_s \to X$ of presheaves. As $X$ is a model, this map factors as
\[
\coprod_{s\in S} \nu P_s \to \nu \coprod_{s\in S} P_s \to X.
\]
Therefore $\nu \coprod_{s\in S} P_s \to X$ is an effective epimorphism, so we may take $P = \coprod_{s\in S} P_s$.

(2)$\Rightarrow$(3): For finitary theories, this appears in \cite[Proof of Lemma 5.5.8.13]{lurie_higher_topos_theory}, and the same proof applies here.

(3)$\Rightarrow$(1): This is clear as a geometric realization of representables is in particular a small colimit of representables.
\end{proof}

\begin{theorem}
\label{thm:splithypercovering}
Let $\pcat$ be a theory. Then the three equivalent properties of \cref{lem:resolutions} are satisfied.
\end{theorem}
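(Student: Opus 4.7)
The plan is to verify condition (2) of \cref{lem:resolutions} directly, since the equivalence with (1) and (3) has already been established. That is, given any nonbounded model $X \in \largemodels_\pcat$, I will construct an explicit effective epimorphism $\nu P \twoheadrightarrow X$ from some representable, using the set of generators guaranteed by the definition of a theory.

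Fix a generating set $\{P_s : s \in S\}$ for $\pcat$. For each $s \in S$, pick a set $T_s$ of representatives for $\pi_0 X(P_s)$; since each $X(P_s)$ is a small space, $T_s$ is a small set. Form
\[
P \colonequals \coprod_{s \in S}\coprod_{\alpha \in T_s} P_s \in \pcat,
\]
which is a well-defined object since $\pcat$ admits small coproducts and the total index set is small. The chosen representatives assemble, via Yoneda, into a canonical morphism $\nu P \to X$ in $\largemodels_\pcat$. The substance of the proof is to show that this morphism is an effective epimorphism.

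By \cref{prop:levelwiseimage} (applied with $n = -1$), a morphism in $\largemodels_\pcat$ is an effective epimorphism if and only if it is a levelwise effective epimorphism, which for spaces means a $\pi_0$-surjection (\cref{example:effective_epimorphism_of_spaces_are_pi0_epis}). I will verify this levelwise condition in three steps corresponding to the closure operations defining a theory. First, for $Q = P_t$ a generator, the map $\Hom_\pcat(P_t, P) \to X(P_t)$ hits every class of $\pi_0 X(P_t)$ by construction of $T_t$, so it is a $\pi_0$-surjection. Second, for $Q = \coprod_{i \in I} P_{s_i}$ a small coproduct of generators, both $\nu P$ and $X$ convert this coproduct into a product, so the map at $Q$ is the small product of the component maps, each of which is a $\pi_0$-surjection; since small products of surjections of sets are surjections, the resulting map is a $\pi_0$-surjection. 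Third, for $Q$ a retract of such a coproduct, the map $\nu P(Q) \to X(Q)$ is a retract of a $\pi_0$-surjection, hence is itself a $\pi_0$-surjection. As $\pcat$ is generated under coproducts and retracts by $\{P_s : s \in S\}$, this exhausts all objects of $\pcat$.

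The main subtlety, and the only place where the theory hypothesis (rather than the weaker pretheory hypothesis) is used, is in step one, where smallness of the generating set ensures that the coproduct $P$ actually lives in $\pcat$. Given this, the argument is essentially a bookkeeping exercise in assembling free covers out of generators and verifying the effective-epi condition levelwise. Once property (2) holds, \cref{lem:resolutions} yields the equivalences with (1) and (3), completing the proof.
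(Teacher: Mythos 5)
Your proof is correct and follows essentially the same strategy as the paper: both verify condition (2) of \cref{lem:resolutions} by choosing representatives of $\pi_0 X$ over the generators, assembling them into a single map $\nu P \to X$ from a representable, and checking effective epimorphy levelwise using the fact that $\pi_0$-surjections of spaces are closed under small products and retracts. A minor indexing slip: you should cite \cref{prop:levelwiseimage} with $n=0$, not $n=-1$, since in \cref{def:connected} effective epimorphisms are precisely the $0$-connective morphisms.
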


\begin{proof}
We verify (2). Let $\pcat_0\subset\pcat$ be a small generating subcategory. As effective epimorphisms of spaces are closed under products and retracts, a map $f\colon X \to Y$ in $\largemodels_\pcat$ is an effective epimorphism if and only if $f(P) \colon X(P)\to Y(P)$ is an effective epimorphism for all $P \in \pcat_0$. As $\pcat_0$ is essentially small and $X$ is valued in small spaces, the slice category $(\pcat_0)_{/X}$ is essentially small. By choosing representatives for each path component of $((\pcat_0)_{/X})^\core$, we obtain a small family of objects $\{P_i \in \pcat_0 : i \in I\}$ and a map
\[
\coprod_{i\in I}\nu P_i \to X
\]
which restricts to an effective epimorphism in $\presheaves(\pcat_0)$. This now factors through a map $\nu \coprod_{i\in I}P_i \to X$ of models which is an effective epimorphism on restriction to $\pcat_0$, and is therefore an effective epimorphism.
\end{proof}

\begin{cor}\label{cor:modelscomplete}
For any theory $\pcat$, the $\infty$-category $\Model_\pcat$ admits all small limits.
\end{cor}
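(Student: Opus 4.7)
The plan is that this corollary should follow almost immediately by combining \cref{thm:splithypercovering} with \cref{prop:limitsinlargemodels}. The key observation is that for a theory (as opposed to an arbitrary pretheory), the potentially delicate smallness condition built into the definition of $\Model_\pcat$ is automatic.

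More precisely, I would proceed as follows. Since $\pcat$ is a theory, \cref{thm:splithypercovering} tells us that the inclusion $\Model_\pcat \subset \largemodels_\pcat$ is an equivalence, so it suffices to show that $\largemodels_\pcat$ admits all small limits. By \cref{prop:limitsinlargemodels}(1), the full subcategory $\largemodels_\pcat \subset \Fun(\pcat^\op, \spaces)$ is closed under small limits. Since $\Fun(\pcat^\op, \spaces)$ has all small limits (computed pointwise in $\spaces$), we conclude that $\largemodels_\pcat$, and hence $\Model_\pcat$, has all small limits, computed pointwise.

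There is no significant obstacle here; the heavy lifting has already been done in \cref{prop:limitsinlargemodels}, where one verifies that the pointwise limit of product-preserving presheaves is again product-preserving (because limits commute with products), and in \cref{thm:splithypercovering}, which produces free resolutions and thus identifies $\Model_\pcat$ with $\largemodels_\pcat$. If one wanted a slightly more self-contained argument avoiding appeal to \cref{thm:splithypercovering}, one could instead verify directly that a small limit of small presheaves is small by writing each term of the diagram as a small colimit of representables and observing that the resulting limit can be computed as a small colimit of representables as well, but this is unnecessary given the previous results.
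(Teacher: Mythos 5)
Your proposal is correct and follows essentially the same route as the paper: invoke \cref{thm:splithypercovering} (via \cref{lem:resolutions}) to identify $\Model_\pcat$ with $\largemodels_\pcat$, then apply \cref{prop:limitsinlargemodels}(1). The paper's proof is precisely this two-step reduction, stated even more tersely.
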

\begin{proof}
As $\Model_\pcat\simeq\largemodels_\pcat$, this follows from \cref{prop:limitsinlargemodels}.
\end{proof}

\begin{cor}\label{cor:nearmonadic2}
Let $f\colon \pcat\to\qcat$ be a homomorphism between theories. Then restriction along $f$ defines a functor
\[
f^\ast\colon \Model_\qcat\to\Model_\pcat
\]
which creates all small limits and all geometric realizations of simplicial objects which either satisfy the Kan condition or are levelwise split. If $f$ is essentially surjective, then $f^\ast$ is conservative.
\end{cor}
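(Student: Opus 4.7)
The plan is to reduce the statement to the corresponding one for $\largemodels$ in \cref{cor:nearmonadicity}. Since $\pcat$ and $\qcat$ are both theories, \cref{thm:splithypercovering} identifies $\Model_\pcat\simeq\largemodels_\pcat$ and $\Model_\qcat\simeq\largemodels_\qcat$, and under these identifications the restriction functor $f^\ast$ of the present corollary agrees with the one appearing in \cref{cor:nearmonadicity}. Preservation of small limits and of geometric realizations of simplicial objects which either satisfy the Kan condition or are levelwise split, as well as conservativity when $f$ is essentially surjective, are therefore immediate from \cref{cor:nearmonadicity}.

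The only new content is to upgrade \emph{preserves} to \emph{creates}. For this it suffices to observe that the relevant limits and geometric realizations already exist in the source $\Model_\qcat$: existence of all small limits is \cref{cor:modelscomplete}, while existence of geometric realizations of simplicial objects satisfying the Kan condition or which are levelwise split is furnished by \cref{prop:limitsinlargemodels}, which in fact shows that such realizations are computed pointwise in $\Fun(\qcat^\op,\spaces)$ and remain in $\largemodels_\qcat$. Combining existence in $\Model_\qcat$ with the preservation statement yields creation in the usual sense: given any diagram $D\colon J\to\Model_\qcat$ of one of the permitted shapes, a limit (resp.\ geometric realization) exists in $\Model_\qcat$ and is carried by $f^\ast$ to the limit (resp.\ geometric realization) of $f^\ast D$ in $\Model_\pcat$.

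There is no substantial obstacle here: the corollary is a purely formal consequence of \cref{thm:splithypercovering}, which provides the equivalence $\Model\simeq\largemodels$ in the presence of a generating set, applied to the already established \cref{cor:nearmonadicity}, \cref{cor:modelscomplete}, and \cref{prop:limitsinlargemodels}.
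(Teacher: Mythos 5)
Your proof is correct and follows the same route as the paper: the paper's own argument is simply ``As $\Model_\pcat\simeq\largemodels_\pcat$, this follows from \cref{cor:nearmonadicity}.'' Your additional discussion of the ``creates'' versus ``preserves'' distinction, invoking \cref{cor:modelscomplete} and \cref{prop:limitsinlargemodels} to supply existence of the relevant (co)limits in $\Model_\qcat$, is a sound and slightly more careful version of the same reduction -- the paper glosses over this point since \cref{cor:nearmonadicity} only asserts preservation.
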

\begin{proof}
As $\Model_\pcat\simeq\largemodels_\pcat$, this follows from  \cref{cor:nearmonadicity}.
\end{proof}

\subsection{Bounded theories and presentability}\label{ssec:bounded}

The theories that arise in nature are often generated by a smaller amount of data.

\begin{definition}
Let $\kappa$ be a regular cardinal. A \emph{$\kappa$-ary theory} is a small $\infty$-category $\pcat_\kappa$ which admits $\kappa$-small coproducts. The $\infty$-category of \emph{models} of $\pcat_\kappa$ is the full subcategory
\[
\presheaves_\Sigma^\kappa(\pcat_\kappa)\subset \presheaves(\pcat_\kappa)
\]
of presheaves $X \colon \pcat_\kappa^\op\to\spaces$ that preserve $\kappa$-small products.
\end{definition}

The observations of the previous sections admit variants for $\kappa$-ary theories that we shall not spell out.

\begin{example}
The $\omega$-ary theories are the \emph{finitary theories}. For a finitary theory $\ccat$, we abbreviate
\[
\presheaves_\Sigma(\ccat) = \presheaves_\Sigma^\omega(\ccat).
\]
This $\infty$-category is often called the \emph{nonabelian derived category} or \emph{animation} of $\ccat$, and is studied in \cite[Section 5.5.8]{lurie_higher_topos_theory}.
\end{example}

\begin{warning}
Let $\kappa$ be a regular cardinal and $\pcat_\kappa$ be a $\kappa$-ary theory. Then $\pcat_\kappa$ is also a $\lambda$-ary theory for every regular cardinal $\lambda \leq \kappa$, but the $\infty$-categories $\presheaves_\Sigma^\kappa(\pcat_\kappa)$ and $\presheaves_\Sigma^\lambda(\pcat_\kappa)$ are almost never equivalent for $\kappa\neq\lambda$. 
\end{warning}

\begin{prop}\label{prop:kappaaccessible}
Let $\pcat_\kappa$ be a $\kappa$-ary theory. Then $\presheaves_\Sigma^\kappa(\pcat_\kappa)$ is a $\kappa$-accessible localization of $\presheaves(\pcat_\kappa)$. In particular, $\presheaves_\Sigma^\kappa(\pcat_\kappa)$ is presentable.
\end{prop}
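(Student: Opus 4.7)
The plan is to realize $\presheaves_\Sigma^\kappa(\pcat_\kappa)$ as the subcategory of $S$-local objects in $\presheaves(\pcat_\kappa)$ for a suitable small set $S$ of morphisms between $\kappa$-compact objects, and then invoke the standard theory of accessible localizations.

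First, I would identify the set $S$ explicitly. By the Yoneda lemma, for any presheaf $X\colon \pcat_\kappa^\op\to\spaces$ and any $\kappa$-small family $\{P_i\}_{i\in I}$ of objects of $\pcat_\kappa$, the comparison map
\[
X\bigl(\coprod_{i\in I}P_i\bigr)\to \prod_{i\in I}X(P_i)
\]
is identified with the restriction map
\[
\Map_{\presheaves(\pcat_\kappa)}\bigl(\nu(\coprod_{i\in I}P_i),X\bigr)\to \Map_{\presheaves(\pcat_\kappa)}\bigl(\coprod_{i\in I}\nu P_i,X\bigr).
\]
Hence $\presheaves_\Sigma^\kappa(\pcat_\kappa)$ agrees with the full subcategory of $S$-local objects, where
\[
S=\bigl\{\coprod_{i\in I}\nu P_i\to \nu\bigl(\coprod_{i\in I}P_i\bigr)\;:\; \{P_i\}_{i\in I}\text{ a $\kappa$-small family in }\pcat_\kappa\bigr\}.
\]
Since $\pcat_\kappa$ is small and $\kappa$ is fixed, $S$ is a set.

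Second, I would check that the sources and targets of arrows in $S$ are $\kappa$-compact in $\presheaves(\pcat_\kappa)$. The targets $\nu(\coprod_i P_i)$ are representable, hence compact, hence $\kappa$-compact. The sources are $\kappa$-small coproducts of representables, and $\kappa$-compact objects are closed under $\kappa$-small colimits (\cite[Corollary 5.3.4.15]{lurie_higher_topos_theory}), so they are $\kappa$-compact as well.

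Finally, $\presheaves(\pcat_\kappa)$ is presentable and $\kappa$-accessible. By the standard theory of accessible localizations (\cite[Proposition 5.5.4.15]{lurie_higher_topos_theory} together with its variant for $\kappa$-accessibility), the localization of a $\kappa$-accessible presentable $\infty$-category at a small set of morphisms between $\kappa$-compact objects is a $\kappa$-accessible reflective subcategory. Applying this to $\presheaves(\pcat_\kappa)$ and $S$ identifies $\presheaves_\Sigma^\kappa(\pcat_\kappa)$ as a $\kappa$-accessible localization of $\presheaves(\pcat_\kappa)$, and in particular as a presentable $\infty$-category. No step here is a serious obstacle: the content is really just combining Yoneda with Lurie's black-box result on accessible localizations, and the only mildly nontrivial point is the $\kappa$-compactness of the sources of $S$, which is immediate from closure of $\kappa$-compact objects under $\kappa$-small colimits.
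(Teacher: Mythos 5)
Your proof is correct and uses the same localization set $S$ and the same black box (\cite[Proposition 5.5.4.15]{lurie_higher_topos_theory}) as the paper. The only divergence is in how $\kappa$-accessibility is argued: you observe that the sources and targets of $S$ are $\kappa$-compact and then appeal to a (true but unstated-in-HTT) general principle that localizing a $\kappa$-accessible presentable $\infty$-category at morphisms between $\kappa$-compact objects yields a $\kappa$-accessible subcategory; the paper instead directly shows that the inclusion $\presheaves_\Sigma^\kappa(\pcat_\kappa) \hookrightarrow \presheaves(\pcat_\kappa)$ preserves $\kappa$-filtered colimits, using that $\kappa$-filtered colimits of spaces commute with $\kappa$-small products. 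These arguments are closely related (your $\kappa$-compactness observation is precisely what makes the locality condition stable under $\kappa$-filtered colimits), but the paper's version is a bit more self-contained since it avoids appealing to a ``variant'' of Lurie's result that is not literally stated there. If you wanted to tighten your proof, you could replace the appeal to an unnamed variant with the one-line check that $S$-locality is preserved under $\kappa$-filtered colimits, exactly because the source and target of each morphism in $S$ is $\kappa$-compact.
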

\begin{proof}
As $\kappa$-filtered colimits preserve $\kappa$-small products, we see that the inclusion $\presheaves_\Sigma^\kappa(\pcat_\kappa) \to \presheaves(\pcat_\kappa)$ preserves $\kappa$-filtered colimits. By construction, this inclusion realizes $\presheaves_\Sigma^\kappa(\pcat_\kappa)$ as the full subcategory of objects which are local with respect to $\coprod_{i\in I}\nu P_i \to \nu \coprod_{i\in I}P_i$ for any $\kappa$-small collection $\{P_i : i \in I\}$ of object of $\pcat_\kappa$. As $\pcat_\kappa$ is small, this constitutes a small set of morphisms, and therefore $\presheaves_\Sigma^\kappa(\pcat_\kappa)$ is a localization of $\presheaves(\pcat_\kappa)$ \cite[Proposition 5.5.4.15]{lurie_higher_topos_theory}.
\end{proof}

It is inconvenient to have a different notion of $\kappa$-ary theory for every regular cardinal $\kappa$. We now explain how $\kappa$-ary theories for varying $\kappa$ may be uniformly embedded into the framework of general infinitary theories.

\begin{definition}\label{def:boundedtheory}
Let $\kappa$ be a regular cardinal. We say  that a theory $\pcat$ is \emph{$\kappa$-bounded} if there exists an essentially small full subcategory $\pcat_\kappa\subset\pcat$ satisfying the following conditions:
\begin{enumerate}
\item Every object of $\pcat$ is a retract of a small coproduct of objects of $\pcat_\kappa$;
\item $\pcat_\kappa$ is closed under $\kappa$-small coproducts in $\pcat$;
\item For every $P\in \pcat_\kappa$ and every set of objects $\{P_i : i \in I\}$ in $\pcat$, the canonical map $\colim_{F\subset I,\, |F|<\kappa}\map_\pcat(P,\coprod_{i\in F}P_i)\to \map_{\pcat}(P,\coprod_{i\in I}P_i)$ is an equivalence.
\end{enumerate}
In this case, $\pcat_\kappa$ is a $\kappa$-ary theory, and we say that $\pcat$ is \emph{generated by the $\kappa$-ary theory} $\pcat_\kappa$. 
\end{definition}

\begin{definition}
A \emph{bounded theory} is a theory which is $\kappa$-bounded for some $\kappa$.
\end{definition}

This definition goes back in some form at least to Wraith \cite[Page 13]{wraith1969algebraic}.

\begin{warning}
Let $\pcat$ be a theory and $\pcat_0\subset\pcat$ be a generating subcategory in the sense of \cref{def:theory} which is closed under $\kappa$-small coproducts. Then $\pcat_0$ is a $\kappa$-ary theory, but $\pcat$ need not be generated by the $\kappa$-ary theory $\pcat_0$ in the sense of \cref{def:boundedtheory} as condition (3) may not be satisfied.
\end{warning}

\begin{example}
Not every interesting theory is bounded. For example, let $\pcat$ be the full subcategory of the $1$-category of compact Hausdorff topological spaces spanned by the Stone--\v{C}ech compactifications of sets. Then it is classical that the category of set-valued models of $\pcat$ is the category of compact Hausdorff topological spaces, but the theory $\pcat$ is not bounded.
\end{example}

\begin{example}\label{ex:kappabounded}
Let $\pcat_\kappa$ be a $\kappa$-ary theory. By \cref{prop:kappaaccessible}, if $P \in \pcat$ then $\nu P \in \presheaves_\Sigma^\kappa(\pcat_\kappa)$ is $\kappa$-compact. It follows that the full subcategory of $\presheaves_\Sigma^\kappa(\pcat_\kappa)$ generated under coproducts by $\nu P$ for $P \in \pcat_\kappa$ forms a $\kappa$-bounded theory generated by $\pcat_\kappa$.
\end{example}

The main observation here is that \cref{ex:kappabounded} accounts for all bounded theories.

\begin{theorem}\label{thm:bounded}
Suppose that $\pcat$ is generated by the $\kappa$-ary theory $\pcat_\kappa$. Then restriction along $\pcat_\kappa\subset\pcat$ defines an equivalence
\[
\Model_\pcat\simeq\presheaves_\Sigma^\kappa(\pcat_\kappa).
\]
\end{theorem}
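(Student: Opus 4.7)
The plan is to construct an explicit right inverse to $r$ and then upgrade $r$ to a fully faithful functor using free resolutions. First, restriction is well-defined (i.e., lands in $\presheaves_\Sigma^\kappa(\pcat_\kappa)$) because by condition (2) of \cref{def:boundedtheory} the inclusion $\iota \colon \pcat_\kappa \hookrightarrow \pcat$ preserves $\kappa$-small coproducts, so any product-preserving presheaf restricts to a $\kappa$-product-preserving one.

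The main technical step is to analyze the functor $\Phi \colon \pcat \to \presheaves_\Sigma^\kappa(\pcat_\kappa)$ defined by $\Phi(P)(Q) = \map_\pcat(\iota Q, P)$, which restricts to the Yoneda embedding on $\pcat_\kappa$. I claim $\Phi$ preserves small coproducts (and retracts). Given a family $\{P_i\}_{i\in I} \subset \pcat_\kappa$ and a test object $Q \in \pcat_\kappa$, condition (3) expresses $\Phi(\coprod_i P_i)(Q)$ as the $\kappa$-filtered colimit $\colim_{F \subset I,\, |F|<\kappa} \map_{\pcat_\kappa}(Q, \coprod_F P_i)$; the same $\kappa$-filtered colimit computes $(\coprod_i \Phi(P_i))(Q)$ in the $\kappa$-accessible category $\presheaves_\Sigma^\kappa(\pcat_\kappa)$ (\cref{prop:kappaaccessible}), using that $\kappa$-small coproducts of representables are representable, that representables are $\kappa$-compact, and that small coproducts in the target are $\kappa$-filtered colimits of $\kappa$-small coproducts. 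Condition (1) then reduces the general case $\{P_i\} \subset \pcat$ to this one. Combined with the fact that $\Phi|_{\pcat_\kappa}$ is the Yoneda embedding, this coproduct-and-retract preservation formally yields that $\Phi$ is fully faithful.

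Now define $l \colon \presheaves_\Sigma^\kappa(\pcat_\kappa) \to \Fun(\pcat^\op,\spaces)$ by $l(Y)(P) = \map_{\presheaves_\Sigma^\kappa(\pcat_\kappa)}(\Phi(P), Y)$. Coproduct-preservation of $\Phi$ makes $l(Y)$ into a product-preserving presheaf, so $l$ lands in $\Model_\pcat$. For $Q \in \pcat_\kappa$ we have $\Phi(\iota Q) = \nu_\kappa Q$, whence $r(l(Y))(Q) = \map(\nu_\kappa Q, Y) = Y(Q)$; thus $r \circ l \simeq \mathrm{id}$, so $r$ is essentially surjective. For full faithfulness, note first that on $\nu(\pcat) \subset \Model_\pcat$ we have $r \circ \nu = \Phi$, and for any $Y \in \Model_\pcat$ the target mapping space $\map_{\presheaves_\Sigma^\kappa}(\Phi(P), rY)$ is identified with $Y(P)$ via the coproduct-and-retract analysis of $\Phi$ (using that $rY$ is a $\kappa$-product-preserving presheaf). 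For general $X \in \Model_\pcat$, apply \cref{thm:splithypercovering} to write $X \simeq |\nu \Pss|$ with $\nu \Pss$ satisfying the Kan condition; by \cref{prop:limitsinlargemodels}(2) this realization is computed pointwise in $\Model_\pcat$, and in $\presheaves_\Sigma^\kappa(\pcat_\kappa)$ it is computed pointwise because that subcategory is closed under $\kappa$-sifted colimits in the full presheaf category. Hence $r$ preserves the realization, and both mapping spaces $\map_{\Model_\pcat}(X,Y)$ and $\map_{\presheaves_\Sigma^\kappa}(rX, rY)$ reduce to the same totalization of the representable case, yielding full faithfulness.

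The main obstacle is the coproduct-preservation of $\Phi$: this is where all three conditions of \cref{def:boundedtheory} enter the argument in a coordinated way, and specifically where condition (3) meets the $\kappa$-accessibility of $\presheaves_\Sigma^\kappa(\pcat_\kappa)$ to match the two ways of computing $\Phi(\coprod P_i)(Q)$.
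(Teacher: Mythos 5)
Your proposal is essentially the paper's proof, differing only in how the two sides of the equivalence are assembled at the end: you show $r \circ l \simeq \mathrm{id}$ and full faithfulness of $r$ directly, while the paper exhibits $R \dashv G$ as a colocalization and concludes from conservativity of $R$. The substantive content — preservation of coproducts by $\Phi = R\nu$ via condition (3) of \cref{def:boundedtheory} and $\kappa$-filtered closure of $\presheaves_\Sigma^\kappa(\pcat_\kappa)$, followed by reduction to representables using Kan resolutions from \cref{thm:splithypercovering} — is the same.

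One misstatement worth flagging, since it touches the central subtlety of infinitary theories: you justify that $|\nu\Pss|$ is computed pointwise in $\presheaves_\Sigma^\kappa(\pcat_\kappa)$ on the grounds that this subcategory ``is closed under $\kappa$-sifted colimits,'' but $\Delta^\op$ is not $\kappa$-sifted for $\kappa > \omega$, and indeed $\presheaves_\Sigma^\kappa(\pcat_\kappa)$ is \emph{not} closed in $\presheaves(\pcat_\kappa)$ under arbitrary geometric realizations — infinite products fail to commute with realizations in general, which is precisely what forces one to use Kan resolutions. The correct justification is the analogue of \cref{prop:limitsinlargemodels}(2), i.e.\ \cref{prop:kanlimits}: since $\nu\Pss$ satisfies the Kan condition levelwise, its pointwise realization still preserves $\kappa$-small products. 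You already have the Kan resolution from \cref{thm:splithypercovering} in hand, so the fix is immediate, but the reason as you stated it would (wrongly) apply to non-Kan resolutions as well, so it should not be left as is.
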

\begin{proof}
By \cref{prop:idempotentcompletion}, we are free to replace $\pcat$ with the full subcategory generated by $\pcat_\kappa$ under coproducts, and so assume that every object of $\pcat$ is a coproduct of objects in $\pcat_\kappa$. Clearly restriction along $\pcat_\kappa\subset\pcat$ defines a functor
\[
R\colon \Model_\pcat\to\presheaves_\Sigma^\kappa(\pcat_\kappa).
\]
First we claim that the composite 
\[
R\nu\colon \pcat\to\Model_\pcat \to \presheaves_\Sigma^\kappa(\pcat_\kappa)
\]
preserves coproducts. As $\pcat$ is generated by $\pcat_\kappa$, to show that $R\nu$ preserves coproducts it suffices to show that $R\nu$ preserves coproducts of sets of objects in $\pcat_\kappa\subset\pcat$. Let $\{P_i : i \in I\}$ be a set of objects in $\pcat_\kappa$. The poset of $\kappa$-small subsets of $I$ is $\kappa$-filtered, so as $\presheaves_\Sigma^\kappa(\pcat_\kappa)\subset\presheaves(\pcat_\kappa)$ is closed under $\kappa$-filtered colimits by \cref{prop:kappaaccessible}, we deduce that
\[
R\nu \coprod_{i\in I}P_i\simeq R\nu \colim_{F\subset I,\, |F|<\kappa} \coprod_{i\in F}P_i\simeq \colim_{F\subset I,\, |F|<\kappa}R\nu \coprod_{i\in F}P_i\simeq \colim_{F\subset I,\,|F|<\kappa}\coprod_{i\in F}R\nu P_i \simeq \coprod_{i\in I}R\nu P_i.
\]
Here, the second equivalence uses part $(3)$ of \cref{def:boundedtheory}.

Next we claim that $R\nu$ is fully faithful. As $R\nu$ preserves coproducts and $\pcat_\kappa$ generates $\pcat$, it suffices to show that if $Q \in \pcat_\kappa\subset\pcat$ and $P \in \pcat$ then
\[
\map_\pcat(Q,P)\simeq\map_{\pcat_\kappa}(R\nu Q,R\nu P).
\]
This holds tautologically by the definition of $R$.

As the composite $\pcat\to\Model_\pcat\to\presheaves_\Sigma^\kappa(\pcat_\kappa)$ preserves coproducts, there is a restricted Yoneda embedding
\[
G\colon \presheaves_\Sigma^\kappa(\pcat_\kappa)\to\Model_\pcat.
\]
We claim that $R \dashv G$. It suffices to prove that if $X\in \Model_\pcat$ then there exists a natural equivalence
\[
\map_\pcat(X,G(\bs))\simeq \map_{\pcat_\kappa}(RX,\bs).
\]
If $X = \nu P$ for some $P \in \pcat$, then these two functors agree by definition of $G$. In general, by \cref{thm:splithypercovering}, we may choose a resolution $X \simeq |\nu \Pss|$ which satisfies the levelwise Kan condition with $P_n \in \pcat$. It is easily seen that $R$ preserves geometric realizations of simplicial objects which satisfy the Kan condition levelwise, as these are computed levelwise in either $\Model_\pcat$ or $\presheaves_\sigma^\kappa(\pcat_\kappa)$, and therefore
\[
\map_\pcat(X,G(\bs))\simeq \Tot \map_\pcat(\nu \Pss,G(\bs))\simeq \Tot \map_{\pcat_\kappa}(R \nu \Pss,\bs)\simeq \map_{\pcat_\kappa}(RX,\bs)
\]
as needed.

If $Y \in \presheaves_\Sigma^\kappa(\pcat_\kappa)$, then the counit $RG Y \to Y$ is tautologically an equivalence. Therefore $R \dashv G$ realizes realizes $\presheaves_\Sigma^\kappa(\pcat_\kappa)$ as a colocalization of $\Model_\pcat$, and to be an adjoint equivalence it is sufficient that $R$ be conservative. This is clear given the assumption that $\pcat$ is generated by $\pcat_\kappa$.
\end{proof}

\begin{cor}\label{cor:presentable}
A theory $\pcat$ is bounded if and only if $\Model_\pcat$ is presentable.
\end{cor}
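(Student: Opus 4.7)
The forward direction will be essentially immediate from results already in hand. If $\pcat$ is bounded, then by definition it is $\kappa$-bounded for some regular cardinal $\kappa$, hence generated by a $\kappa$-ary theory $\pcat_\kappa$. Then \cref{thm:bounded} identifies $\Model_\pcat$ with $\presheaves_\Sigma^\kappa(\pcat_\kappa)$, and \cref{prop:kappaaccessible} asserts that this latter $\infty$-category is presentable.

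For the reverse direction, assume $\Model_\pcat$ is presentable. Fix a small set $\{P_s : s \in S\}$ of generators for $\pcat$ in the sense of \cref{def:theory}. Since $\Model_\pcat$ is presentable and $\{\nu P_s : s \in S\}$ is a small collection of objects, we may choose a regular cardinal $\kappa$ large enough that each $\nu P_s$ is $\kappa$-compact in $\Model_\pcat$. The plan is then to take $\pcat_\kappa \subset \pcat$ to be the smallest full subcategory containing all $P_s$ and closed under $\kappa$-small coproducts formed in $\pcat$, and to verify the three conditions of \cref{def:boundedtheory}.

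Conditions (1) and (2) will hold essentially by construction: (1) because the generators lie in $\pcat_\kappa$ and generate $\pcat$ under coproducts and retracts, and (2) because $\pcat_\kappa$ is defined to be closed under $\kappa$-small coproducts (with regularity of $\kappa$ ensuring the closure is essentially small). The real content is condition (3). Here the first step is to observe that the Yoneda embedding $\nu \colon \pcat \to \Model_\pcat$ preserves coproducts, since for any model $X$ one has $\map_{\Model_\pcat}(\nu(\coprod_i Q_i), X) \simeq X(\coprod_i Q_i) \simeq \prod_i X(Q_i) \simeq \map_{\Model_\pcat}(\coprod_i \nu Q_i, X)$. Combined with the closure of $\kappa$-compact objects under $\kappa$-small colimits (valid in any cocomplete $\infty$-category), this implies that $\nu P$ is $\kappa$-compact for every $P \in \pcat_\kappa$. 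Condition (3) will then follow by writing $\coprod_{i \in I} \nu P_i$ as the $\kappa$-filtered colimit of its $\kappa$-small sub-coproducts $\coprod_{i \in F} \nu P_i \simeq \nu(\coprod_{i \in F} P_i)$, and using $\kappa$-compactness of $\nu P$ to commute $\map_{\Model_\pcat}(\nu P, -)$ past this colimit.

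The main obstacle is making condition (3) work: it forces $\kappa$ to be chosen so that the Yoneda images of the generators are $\kappa$-compact, and requires carefully handling the interaction between coproducts in $\pcat$, coproducts in $\Model_\pcat$, and $\kappa$-compactness. Once $\nu P$ is known to be $\kappa$-compact for all $P \in \pcat_\kappa$, the remaining verification is essentially bookkeeping.
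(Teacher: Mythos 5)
Your proof is correct and follows essentially the same route as the paper's: for the forward direction, combine \cref{thm:bounded} with \cref{prop:kappaaccessible}; for the converse, choose $\kappa$ so that the Yoneda images of a generating set are $\kappa$-compact, close up under $\kappa$-small coproducts, and deduce condition (3) of \cref{def:boundedtheory} from $\kappa$-compactness of $\nu P$ combined with the fact that $\nu$ carries coproducts in $\pcat$ to coproducts in $\Model_\pcat$. You spell out the two ingredients the paper leaves implicit (that $\nu$ preserves coproducts, and that $\kappa$-compact objects are closed under $\kappa$-small colimits in a cocomplete $\infty$-category), but the argument is the same.
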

\begin{proof}
\cref{thm:bounded} and \cref{prop:kappaaccessible} together imply that if $\pcat$ is bounded then $\Model_\pcat$ is presentable. Conversely, suppose given a theory $\pcat$ for which $\Model_\pcat$ is presentable. Given a set $\{P_s : s\in S\}$ of generators for $\pcat$, there exists a regular cardinal $\kappa$ for which $\nu P_s \in \Model_\pcat$ is $\kappa$-compact for all $s\in S$. If $\pcat_\kappa\subset\pcat$ is the full subcategory generated by $\{P_s : s\in S\}$ under $\kappa$-small products, then it follows that $\nu P \in \Model_\pcat$ is $\kappa$-compact for all $P \in \pcat_\kappa$. Therefore $\pcat$ is generated by the $\kappa$-ary theory $\pcat_\kappa$ in the sense of \cref{def:boundedtheory}, implying that $\pcat$ is bounded. 
\end{proof}

\begin{warning}
Although the condition that $\pcat$ is bounded guarantees that $\Model_\pcat$ is presentable, it is \emph{not} the case that $\pcat$ itself must be an accessible category. However, accessibility of $\pcat$ is neither a reasonable nor useful condition to impose. For example, if $\pcat$ is the ordinary category of free abelian groups, then accessibility of $\pcat$ is independent of the usual axioms of set theory \cite[\S 5.5]{makkaipare1989accessible}.
\end{warning}

\section{Malcev theories}\label{sec:malcev}

We now commence our study of Malcev theories, building on the framework developed in the preceding sections. We give the definition and establish basic properties in \S\ref{ssec:malcevtheories}, then in \S\ref{ssec:cocompletion} establish the fundamental result: the $\infty$-category of models for a Malcev theory is its free cocompletion under geometric realizations. In \S\ref{ssec:quillen}, we then recall the connection between $\infty$-categories of models for discrete theories and classical homotopy theories of simplicial objects.

\subsection{The Malcev condition}\label{ssec:malcevtheories}

\begin{definition}
\label{def:malcevtheory}
An $\infty$-category $\pcat$ with finite coproducts is \emph{Malcev} if every object of $\pcat$ is couniversally Kan. That is, when every object is universally Kan when considered as an object of the opposite category $\pcat^\op$.
\end{definition}

From \cref{def:malcevtheory}, one obtains notions of Malcev (pre)theories, $\kappa$-ary Malcev theories, and so on, as those (pre)theories, $\kappa$-ary theories, and so on whose underlying $\infty$-category is Malcev.

Malcev pretheories and homomorphisms form a full subcategory $\malcevtheories\subset\pretheories$ of the $\infty$-category of all pretheories. We make some basic observations about this subcategory.

\begin{prop}\label{prop:malcevcharacterizations}
Let $\pcat$ be a pretheory. Then the following are equivalent:
\begin{enumerate}
\item $\pcat$ is Malcev.
\item If $\Xss$ is a simplicial object in $\pcat$ and $P \in \pcat$, then the simplicial space $\map_\pcat(P,\Xss)$ satisfies the Kan condition.
\item Every simplicial model of $\pcat$ satisfies the Kan condition.
\item Every simplicial nonbounded model of $\pcat$ satisfies the levelwise Kan condition.
\item Every $P \in \pcat$ admits a co-Malcev operation. In other words, for every $P\in \pcat$, there exists some map $t\colon P \to P \coprod P \coprod P$ for which the diagram
\begin{center}\begin{tikzcd}[column sep=large]
&P\ar[dr,"i_1"]\ar[dl,"i_2"']\ar[d,"t"]\\
P\coprod P&P\coprod P \coprod P\ar[l,"\nabla\coprod P"]\ar[r,"P\coprod\nabla"']&P\coprod P
\end{tikzcd}\end{center}
commutes up to homotopy.
\end{enumerate}
Moreover, if $\pcat$ is generated under coproducts and retracts by a subclass of objects $\pcat_0\subset\pcat$, then in (2--5) one need only check the given condition on objects $P \in \pcat_0$.
\end{prop}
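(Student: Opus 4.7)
The plan is to deduce the entire equivalence almost immediately from \cref{prop:univkancharacterize} applied to the opposite category $\pcat^\op$ (which has finite products since $\pcat$ has finite coproducts), together with the closure properties of \cref{prop:univkanclosure}. By definition, $\pcat$ being Malcev means precisely that every $P \in \pcat$ is universally Kan when considered in $\pcat^\op$, so I can read off (5) and (2) as conditions (5) and (3) of \cref{prop:univkancharacterize} applied to $\pcat^\op$: the former unwinds to the existence of a co-Malcev operation on $P$, and the latter, after identifying cosimplicial objects in $\pcat^\op$ with simplicial objects $\Xss$ in $\pcat$ and using $\map_{\pcat^\op}(\Xss, P) \simeq \map_\pcat(P, \Xss)$, yields exactly (2).

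For the remaining conditions I will chase (1) $\Rightarrow$ (4) $\Rightarrow$ (3) $\Rightarrow$ (2). By currying, a simplicial nonbounded model $\Xss\colon \Delta^\op \to \largemodels_\pcat$ is the same data as a finite product-preserving functor $F\colon \pcat^\op \to \Fun(\Delta^\op,\spaces)$ with $F(P) = \Xss(P)$; condition (1) of \cref{prop:univkancharacterize} applied to $(\pcat^\op, P)$ then gives that $\Xss(P)$ satisfies the Kan condition, which is (4). The implication (4) $\Rightarrow$ (3) is immediate from the inclusion $\Model_\pcat \subset \largemodels_\pcat$, and (3) $\Rightarrow$ (2) is Yoneda: the simplicial model $\nu \Xss\colon \Delta^\op \to \Model_\pcat$ associated to $\Xss\colon \Delta^\op \to \pcat$ satisfies $(\nu \Xss)(P) = \map_\pcat(P,\Xss)$, so the levelwise Kan condition on $\nu \Xss$ evaluated at $P$ is exactly (2).

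For the final clause on restricting to a generating subcategory $\pcat_0$, I will invoke \cref{prop:univkanclosure}: in $\pcat^\op$, universally Kan objects are closed under arbitrary products and retracts (parts (1) and (3) of that proposition), so couniversally Kan objects in $\pcat$ are closed under small coproducts and retracts, which handles (5). For (2), (3), and (4), writing a general $P \in \pcat$ as a retract of a coproduct $\coprod_i P_i$ with each $P_i \in \pcat_0$, the mapping (respectively evaluation) simplicial space at $P$ becomes a retract of a product of the corresponding spaces at the $P_i$, using that models and nonbounded models send coproducts in $\pcat$ to products of spaces; since the Kan condition on simplicial spaces is stable under small products and retracts, the reduction to $\pcat_0$ suffices. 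The whole argument is purely formal — the substantive work is absorbed into the two cited propositions — so there is no real obstacle, the only mildly nontrivial step being the currying in (1) $\Rightarrow$ (4).
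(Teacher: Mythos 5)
Your proposal is correct and takes essentially the same approach as the paper, which cites \cref{prop:univkancharacterize} with no further elaboration; your proof fills in precisely what that citation leaves implicit, including the currying and Yoneda steps that tie conditions (3) and (4) into the picture and the closure argument for the generating-subcategory clause. One minor imprecision: a simplicial nonbounded model is not literally ``the same data as'' a finite product-preserving functor $\pcat^\op\to\Fun(\Delta^\op,\spaces)$ (it preserves \emph{all} small products, not just finite ones), but the implication you need only goes one way, so the argument is unaffected.
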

\begin{proof}
These follow from \cref{prop:univkancharacterize}.
\end{proof}

\begin{example}
Let $\pcat$ be a pretheory. If $\pcat$ is additive, then it is Malcev.
\end{example}

\begin{example}\label{ex:malcevtheoryofkanobjects}
Let $\ccat$ be an $\infty$-category with small products, and let $\ccat_h\subset\ccat$ be the full subcategory of universally Kan objects. Then $\ccat_h^\op$ is a (possibly trivial) Malcev pretheory.
\end{example}

We record some closure properties of the Malcev condition.

\begin{lemma}\label{lem:malcevclosure}
Let $\pcat$ be a pretheory.
\begin{enumerate}
\item $\pcat$ is Malcev if and only if its homotopy category $\h\pcat$ is Malcev. 
\item If $\pcat$ is a $1$-category, then $\pcat$ is Malcev in the sense of \cref{def:malcevtheory} if and only if it is Malcev in the classical sense (see \S\ref{subsection:malcevkanhistory}).
\item Let $\qcat\to\pcat$ be an essentially surjective homomorphism. If $\qcat$ is Malcev then $\pcat$ is Malcev.
\item If $X \in \largemodels_\pcat$, then the slice category $\pcat_{/X}$ is Malcev.
\end{enumerate}
\end{lemma}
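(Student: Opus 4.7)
The plan is to deduce each of (1)--(4) by combining the characterizations of the Malcev condition collected in \cref{prop:malcevcharacterizations} with the closure properties of universally Kan objects established in \cref{prop:univkanclosure}, applied throughout to the opposite category $\pcat^\op$. Parts (1)--(3) are essentially formal; part (4) contains the genuine content.

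For (1), by definition $\pcat$ is Malcev iff every object of $\pcat^\op$ is universally Kan, and \cref{prop:univkanclosure}(8) says that an object is universally Kan iff its image in the homotopy category is. So this is equivalent to every object of $\h(\pcat^\op) = (\h\pcat)^\op$ being universally Kan, i.e.\ $\h\pcat$ being Malcev. For (2), since a $1$-category agrees with its homotopy category, (1) reduces the problem to matching definitions, which is immediate from \cref{prop:malcevcharacterizations}(5): in a $1$-category, the existence of a co-Malcev operation on every object is precisely the classical Malcev condition. For (3), the opposite functor $f^\op\colon \qcat^\op \to \pcat^\op$ preserves finite products (as $f$ preserves coproducts) and is essentially surjective, so \cref{prop:univkanclosure}(6) transports universally Kan objects from $\qcat^\op$ into the essential image of $f^\op$; essential surjectivity finishes the job.

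For (4), assuming $\pcat$ is Malcev, I would apply \cref{prop:malcevcharacterizations}(2): it suffices to show that for every $(P, x) \in \pcat_{/X}$ and every simplicial object $(P_\bullet, x_\bullet)$ of $\pcat_{/X}$, the simplicial space $\map_{\pcat_{/X}}((P, x), (P_\bullet, x_\bullet))$ satisfies the Kan condition. Unwinding the definition of $\pcat_{/X}$ from \cref{prop:slicecat} via Yoneda, a morphism $(P, x) \to (P', x')$ in $\pcat_{/X}$ is a morphism $f\colon P \to P'$ in $\pcat$ together with a path $x \simeq X(f)(x')$ in $X(P)$, and so the mapping space sits in a cartesian square
\[
\map_{\pcat_{/X}}((P, x), (P_\bullet, x_\bullet)) \simeq \map_\pcat(P, P_\bullet) \times_{X(P)} \{x\},
\]
with $X(P)$ viewed as a constant simplicial space and the horizontal map induced by the structure data. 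Since $\pcat$ is Malcev, \cref{prop:malcevcharacterizations}(2) applied to $\pcat$ itself gives that $\map_\pcat(P, P_\bullet)$ is Kan, and any map from a Kan simplicial space to a constant simplicial space is automatically a Kan fibration. The Kan condition on the pullback then follows from \cref{lem:kanpb}.

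The principal obstacle is in (4), in correctly identifying the slice mapping space with the displayed pullback; once this identification is in hand, the rest reduces to the standard observation that a map from a Kan simplicial space to a constant base is a Kan fibration, together with the stability of Kan fibrations under pullback.
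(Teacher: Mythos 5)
Your proposal is correct and follows essentially the same route as the paper: parts (1)--(3) are handled by the same citations (or an equivalent one, in (3), where you use \cref{prop:univkanclosure}(6) while the paper invokes \cref{prop:malcevcharacterizations}(5)), and for (4) you have in effect unwound the paper's one-line citation of \cref{prop:univkanclosure}(9), reproducing its cartesian-square argument (Kan simplicial space over a constant base is a Kan fibration; pull back by \cref{lem:kanpb}) directly in the setting of $\pcat_{/X}$. The inlined argument for (4) is a welcome clarification, since the cited closure property is stated for coslices $\ccat_{Y/}$ with $Y\in\ccat$, whereas here $X$ lives in the larger category $\largemodels_\pcat$ and one must observe that the same pullback argument applies verbatim to the lax slice.
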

\begin{proof}
(1)~~This follows from \cref{prop:univkanclosure}(8).

(2)~~This follows from \cref{prop:malcevcharacterizations} and the discussion of \S\ref{subsection:malcevkanhistory}.

(3)~~This follows from \cref{prop:malcevcharacterizations}(5).

(4)~~This follows from \cref{prop:univkanclosure}(9).
\end{proof}

In general, there seems no reason to expect that the limit of a small diagram of Malcev pretheories and homomorphisms is again Malcev. Nonetheless we have the following.

\begin{prop}
\label{prop:limitmalcevtheory}
Malcev pretheories have the following closure properties: 
\begin{enumerate}
\item Suppose we are given small family $\{\pcat(i) : i \in I\}$ of pretheories. If each $\pcat(i)$ is Malcev, then the product $\prod_{i\in I}\pcat(i)$ is Malcev.
\item Suppose we are given a cartesian diagram
\begin{center}\begin{tikzcd}
\pcat'\ar[r,"g'"]\ar[d,"f'"]&\pcat\ar[d,"f"]\\
\qcat'\ar[r,"g"]&\qcat
\end{tikzcd}\end{center}
of pretheories and homomorphisms. If $f$ is full and $\pcat$ and $\qcat'$ are Malcev, then $\pcat'$ is Malcev.
\end{enumerate}
\end{prop}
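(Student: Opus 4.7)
The plan is to deduce both statements from the corresponding closure properties of universally Kan objects, \cref{prop:univkanclosure}(10) and (11), by passing to opposite categories. Recall from \cref{prop:malcevcharacterizations}(5) that a pretheory $\pcat$ is Malcev if and only if every object of $\pcat^\op$ admits a Malcev operation, which in turn (by \cref{prop:univkancharacterize}) is equivalent to every object of $\pcat^\op$ being universally Kan.

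For (1), a homomorphism between products of pretheories preserves coproducts if and only if its opposite preserves finite products, so there is a natural identification $(\prod_{i\in I}\pcat(i))^\op \simeq \prod_{i\in I}\pcat(i)^\op$ as $\infty$-categories with finite products. Since each $\pcat(i)$ is Malcev, every object of $\pcat(i)^\op$ is universally Kan, and then \cref{prop:univkanclosure}(11) implies that every object of the product $\prod_{i\in I}\pcat(i)^\op$ is universally Kan. This is exactly the statement that $\prod_{i\in I}\pcat(i)$ is Malcev.

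For (2), passing to opposite categories transforms the given cartesian square of pretheories and homomorphisms into a cartesian square
\begin{center}\begin{tikzcd}
(\pcat')^\op\ar[r,"(g')^\op"]\ar[d,"(f')^\op"]&\pcat^\op\ar[d,"f^\op"]\\
(\qcat')^\op\ar[r,"g^\op"]&\qcat^\op
\end{tikzcd}\end{center}
of $\infty$-categories with finite products and functors preserving them. The fullness of $f$ transports to fullness of $f^\op$. By the assumption that $\pcat$ and $\qcat'$ are Malcev, every object of $\pcat^\op$ and of $(\qcat')^\op$ is universally Kan. Given an arbitrary object $P' \in \pcat'$, both $(g')^\op(P')$ and $(f')^\op(P')$ are therefore universally Kan, so \cref{prop:univkanclosure}(10) yields that $P' \in (\pcat')^\op$ is universally Kan. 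As $P'$ was arbitrary, $\pcat'$ is Malcev.

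There is no real obstacle: once one makes the dualization systematic, both parts are immediate corollaries of \cref{prop:univkanclosure}. The only minor point to verify is that the dualization is compatible with the relevant categorical data, namely that coproduct-preserving homomorphisms opposite to finite-product-preserving functors, that a cartesian square of pretheories remains cartesian on passing to opposites, and that the fullness hypothesis is self-dual.
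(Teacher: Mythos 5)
Your proof is correct and takes essentially the same approach as the paper, which states tersely that both parts ``follow from the dual forms of \cref{prop:univkanclosure}(10,11).'' You have simply spelled out the dualization step, checking that products and cartesian squares of pretheories pass to opposites as expected and that fullness is self-dual, which is exactly what the paper leaves implicit.
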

\begin{proof}
These follow from the dual forms of \cref{prop:univkanclosure}(10,11).
\end{proof}

Given a Malcev pretheory $\pcat$, consider the $\infty$-categories
\[
\Model_\pcat\subset\largemodels_\pcat \subset\Fun(\pcat^\op,\spaces)
\]
of models of $\pcat$ introduced in \cref{def:modelsnonbounded} and \cref{def:modelsbounded}. The key difference from the non-Malcev case in studying these categories ultimately boils down to the following.

\begin{prop}\label{prop:pointwisegeoreal}
Let $\pcat$ be a Malcev pretheory. Then the full subcategories 
\[
\Model_\pcat \subset \largemodels_\pcat\subset\Fun(\pcat^\op,\spaces)
\]
are closed under geometric realizations.
\end{prop}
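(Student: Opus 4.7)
The plan is to combine two results already established earlier in the excerpt. The key observation is that since geometric realizations in $\Fun(\pcat^\op, \spaces)$ are computed pointwise in $\spaces$, the claim reduces to checking that the relevant pointwise realizations stay inside the two subcategories. Concretely, I would proceed as follows.

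First, let $\Xss \in \Fun(\Delta^\op, \largemodels_\pcat)$ be a simplicial nonbounded model of $\pcat$. By \cref{prop:malcevcharacterizations}(4), since $\pcat$ is Malcev, the simplicial space $\Xss(P) \in \simplicialspaces$ satisfies the Kan condition for every $P \in \pcat$. In other words, $\Xss$ satisfies the levelwise Kan condition in the sense of \cref{prop:limitsinlargemodels}(2). Applying that proposition, the pointwise geometric realization $|\Xss|$ computed in $\Fun(\pcat^\op, \spaces)$ remains in the full subcategory $\largemodels_\pcat$. Since pointwise realizations in a presheaf category are the realizations in $\Fun(\pcat^\op, \spaces)$, this shows that $\largemodels_\pcat \subset \Fun(\pcat^\op, \spaces)$ is closed under geometric realizations, and moreover that the inclusion $\largemodels_\pcat \hookrightarrow \Fun(\pcat^\op, \spaces)$ preserves them.

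For the second inclusion, suppose $\Xss \in \Fun(\Delta^\op, \Model_\pcat)$. By the previous paragraph, the geometric realization $|\Xss|$ taken in $\largemodels_\pcat$ agrees with the one computed in $\Fun(\pcat^\op, \spaces)$. To conclude that $|\Xss| \in \Model_\pcat$, it remains to observe that its underlying presheaf is small. But the full subcategory $\smallpresheaves(\pcat) \subset \Fun(\pcat^\op, \spaces)$ of small presheaves is closed under small colimits (any small colimit of small colimits of representables is again a small colimit of representables), so in particular under geometric realizations. Hence $|\Xss| \in \Model_\pcat$, and the inclusion $\Model_\pcat \subset \largemodels_\pcat$ is closed under geometric realizations as well.

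There is no real obstacle here; the content of the statement is essentially packaged into \cref{prop:malcevcharacterizations}(4) and \cref{prop:limitsinlargemodels}(2). The only small point worth emphasizing in the write-up is that pointwise geometric realization is what computes the realization in $\Fun(\pcat^\op,\spaces)$, so the hypothesis of \cref{prop:limitsinlargemodels}(2) (levelwise Kan condition) is applicable directly, and that closure of small presheaves under small colimits takes care of the refinement from $\largemodels_\pcat$ to $\Model_\pcat$.
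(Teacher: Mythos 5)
Your proof is correct and follows essentially the same route as the paper's: use \cref{prop:malcevcharacterizations}(4) to get the levelwise Kan condition, apply \cref{prop:limitsinlargemodels}(2) to handle the inclusion $\largemodels_\pcat\subset\Fun(\pcat^\op,\spaces)$, and then observe that $\smallpresheaves(\pcat)$ is closed under small colimits to pass from $\largemodels_\pcat$ down to $\Model_\pcat$. The paper phrases the second step slightly more compactly as "$\Model_\pcat\subset\largemodels_\pcat$ is closed under any small colimits that are preserved by $\largemodels_\pcat\subset\Fun(\pcat^\op,\spaces)$," but the content is identical.
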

\begin{proof}
As $\Model_\pcat\subset\largemodels_\pcat$ is closed under any small colimits that are preserved by $\largemodels_\pcat\subset\Fun(\pcat^\op,\spaces)$, it suffices to prove that $\largemodels_\pcat\subset\Fun(\pcat^\op,\spaces)$ is closed under geometric realizations. By \cref{prop:malcevcharacterizations}, every simplicial object in $\largemodels_\pcat$ satisfies the levelwise Kan condition, so this follows from \cref{prop:limitsinlargemodels}(2).
\end{proof}

This has several pleasant consequences.

\begin{cor}\label{corollary:for_models_of_malcev_theory_geometric_realization_commutes_with_products_and_pullbacks_along_effective_epi}
Let $\pcat$ be a Malcev pretheory. 
\begin{enumerate}
\item Suppose given a small family $\{\Xss(i) : i \in I\}$ of simplicial objects in $\largemodels_\pcat$. Then
\[
|\prod_{i\in I}\Xss(i)| \to \prod_{i\in I}|\Xss(i)|
\]
is an equivalence.
\item Suppose given a cartesian square
\begin{center}\begin{tikzcd}
\Xss'\ar[r]\ar[d]&\Xss\ar[d,"f"]\\
\Yss'\ar[r]&\Yss
\end{tikzcd}\end{center}
of simplicial objects in $\largemodels_\pcat$. If $f$ is an effective epimorphism, then
\[
|\Xss'| \to |\Yss'|\times_{|\Yss|}|\Xss|
\]
is an equivalence.
\end{enumerate}
\end{cor}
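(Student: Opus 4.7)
The strategy is to reduce both statements to facts about simplicial spaces by evaluating at each $P \in \pcat$. The key input is that, because $\pcat$ is Malcev, geometric realizations in $\largemodels_\pcat$ are computed pointwise in $\spaces$ (\cref{prop:pointwisegeoreal}); pullbacks and small limits are likewise pointwise by \cref{prop:limitsinlargemodels}(1). So after fixing $P \in \pcat$, both parts of the corollary reduce to assertions about the simplicial spaces $\Xss(i)(P)$, $\Xss'(P)$, $\Xss(P)$, $\Yss'(P)$, $\Yss(P)$.

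\textbf{Part (1).} Each $\Xss(i)(P)$ is a simplicial space which satisfies the Kan condition: this is the levelwise Kan condition from \cref{prop:malcevcharacterizations}(4). Since products in $\Fun(\pcat^\op,\spaces)$ are also computed pointwise, after evaluating at $P$ the claim becomes
\[
\bigl| \prod_{i \in I} \Xss(i)(P)\bigr| \xrightarrow{\;\sim\;} \prod_{i \in I} |\Xss(i)(P)|,
\]
which is exactly \cref{prop:kanlimits}(1).

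\textbf{Part (2).} The subtle point is to promote the hypothesis that $f\colon \Xss\to\Yss$ is an effective epimorphism of simplicial objects in $\largemodels_\pcat$ to the statement that $f(P)$ is a \emph{Kan fibration} of simplicial spaces for each $P \in \pcat$. Once this is done, pullbacks and realizations being pointwise reduces the claim to
\[
|\Xss'(P)| \xrightarrow{\;\sim\;} |\Yss'(P)| \times_{|\Yss(P)|} |\Xss(P)|,
\]
which is \cref{prop:kanlimits}(2). For the promotion, effective epimorphisms in $\largemodels_\pcat$ are pointwise by \cref{prop:limitsinlargemodels}(1), so $f(P)$ is a levelwise effective epimorphism of simplicial spaces, hence an effective epimorphism in $\Fun(\Delta^\op,\spaces)$. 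Moreover, $\pcat$ being Malcev provides a co-Malcev operation $t\colon P \to P \coprod P \coprod P$ on every $P \in \pcat$; applying the product-preserving functor $f(\bs)\colon \pcat^\op \to \Fun(\Delta^\op,\spaces)^{[1]}$ underlying $f$ to $t$ gives a Malcev operation on the arrow $f(P)$, i.e.\ endows $f(P)$ with the structure of a morphism of simplicial $H$-herds. Then \cref{thm:surjherd} finishes the job by turning the effective epimorphism $f(P)$ into a Kan fibration.

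\textbf{Main obstacle.} The only real point requiring care is bookkeeping: checking that ``effective epimorphism in simplicial objects of $\largemodels_\pcat$'' corresponds levelwise and pointwise to an effective epimorphism of simplicial spaces admitting the needed Malcev operation, so that \cref{thm:surjherd} applies. Once this reduction is made, the corollary becomes a direct citation of \cref{prop:kanlimits}. No further estimates or constructions are required.
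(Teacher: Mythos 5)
Your proposal is correct and takes essentially the same approach as the paper: reduce to pointwise statements in $\spaces$ (since limits and geometric realizations in $\largemodels_\pcat$ are computed levelwise), then invoke \cref{prop:kanlimits}. The only difference is that you spell out why $f(P)$ is a Kan fibration — via the co-Malcev operation on $P$ inducing a Malcev operation on the arrow $f(P)$ and then applying \cref{thm:surjherd} — whereas the paper asserts this directly, tacitly appealing to \cref{prop:univkancharacterize}(6); these are the same argument.
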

\begin{proof}
As limits and geometric realizations in $\largemodels_\pcat$ are computed levelwise, it suffices to verify these claims after evaluation on any $P\in \pcat$. As $\pcat$ is Malcev, each $\Xss(i)(P)$ satisfies the Kan condition, and the map $\Xss(P) \to \Yss(P)$ is a Kan fibration, so the corollary follows from \cref{prop:kanlimits}.
\end{proof}

\begin{cor}\label{cor:universality}
Let $\pcat$ be a Malcev pretheory. Then geometric realizations in $\largemodels_\pcat$ are strongly universal. In other words, the comparison map
\[
(\largemodels_\pcat)_{/|\Xss|} \to \Tot \left((\largemodels_\pcat)_{/\Xss}\right),
\]
is an equivalence of $\infty$-categories.
\end{cor}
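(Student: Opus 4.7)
The plan is to deduce the equivalence from descent for hypercoverings in the ambient presheaf $\infty$-topos, using the Malcev hypothesis to identify $\Xss$ as a hypercovering of $|\Xss|$ and then to restrict the resulting descent equivalence to the subcategories of models.

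First, I would observe that $\xcat \colonequals \Fun(\pcat^\op, \spaces)$ is a hypercomplete $\infty$-topos: it is a presheaf $\infty$-topos by the standard theory, and it is hypercomplete because any $\infty$-connected morphism of presheaves is pointwise an $\infty$-connected map of spaces, which is automatically an equivalence. By the Malcev hypothesis and \cref{prop:malcevcharacterizations}(4), every simplicial object $\Xss$ in $\largemodels_\pcat$ satisfies the Kan condition pointwise on $\pcat$, and hence satisfies the Kan condition as a simplicial object of $\xcat$. By \cite[Corollary A.5.6.4]{lurie_spectral_algebraic_geometry}, applied in $\xcat$, this is equivalent to $\Xss \to |\Xss|$ being a hypercovering. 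Descent for hypercoverings in the hypercomplete $\infty$-topos $\xcat$ then yields the natural equivalence
\[
\xcat_{/|\Xss|} \xrightarrow{\ \sim\ } \Tot(\xcat_{/\Xss}),
\]
whose inverse is given by geometric realization.

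Next, I would verify that this descent equivalence restricts to the claimed equivalence between the appropriate slices of $\largemodels_\pcat$. For the forward direction, if $Y \to |\Xss|$ with $Y \in \largemodels_\pcat$, then each pullback $Y \times_{|\Xss|} X_n$ is again a model by closure of $\largemodels_\pcat$ under small limits (\cref{prop:limitsinlargemodels}(1)), using that $|\Xss| \in \largemodels_\pcat$ by \cref{prop:pointwisegeoreal}. For the backward direction, the inverse equivalence sends a compatible family $\Yss \to \Xss$ with each $Y_n$ in $\largemodels_\pcat$ to $|\Yss|$, which lies in $\largemodels_\pcat$ by \cref{prop:pointwisegeoreal} since $\Yss$ satisfies the Kan condition by the Malcev hypothesis.

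The main obstacle is the clean invocation of descent for hypercoverings, which is not developed in this paper but is standard in hypercomplete $\infty$-topoi (see \cite{lurie_higher_topos_theory, lurie_spectral_algebraic_geometry}). As a more self-contained alternative, one can work with the adjunction in which geometric realization $R$ is left adjoint to the comparison functor $L$: the counit $RL \to \id$ is directly an equivalence by \cref{corollary:for_models_of_malcev_theory_geometric_realization_commutes_with_products_and_pullbacks_along_effective_epi}(2), since $\Xss \to |\Xss|_\bullet$ is a levelwise effective epimorphism of simplicial objects, and verifying that the unit $\id \to LR$ is also an equivalence then constitutes the main descent-theoretic content of the argument.
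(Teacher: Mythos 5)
Your main argument is the same as the paper's: deduce the equivalence from descent in the ambient presheaf $\infty$-topos $\Fun(\pcat^\op,\spaces)$, then check that both the forward functor (pullback) and its inverse (geometric realization) preserve the full subcategories of nonbounded models, using \cref{prop:limitsinlargemodels} and \cref{prop:pointwisegeoreal} together with the Malcev condition for the realization direction. One remark: the detour through hypercompleteness and the identification of $\Xss\to|\Xss|$ as a hypercovering is unnecessary for the descent step, since the equivalence $\xcat_{/|\Xss|}\simeq\Tot(\xcat_{/\Xss})$ holds for \emph{any} simplicial object in \emph{any} $\infty$-topos $\xcat$: the geometric realization is a colimit, and $\infty$-topoi satisfy descent with respect to all small colimit diagrams. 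The Kan condition on $\Xss$ enters only in the restriction check, where it ensures (via \cref{prop:pointwisegeoreal}) that the realization of a cartesian family $\Yss\to\Xss$ of models lands back in $\largemodels_\pcat$. Finally, the alternative adjunction-based route at the end is not a complete proof: the counit argument via \cref{corollary:for_models_of_malcev_theory_geometric_realization_commutes_with_products_and_pullbacks_along_effective_epi}(2) is correct, but the unit being an equivalence is exactly where the descent content lives, and you leave that unverified; rely on the first route.
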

\begin{proof}
By descent, the comparison map
\[
\Fun(\pcat^\op,\spaces)_{/|\Xss|} \to \Tot\left( \Fun(\pcat^\op,\spaces)_{/\Xss}\right)
\]
is an equivalence. This functor sends a morphism $Y \to |\Xss|$ to the object of the totalization represented by the compatible collection $\{Y\times_{|\Xss|} X_n \to X_n\}$, and has inverse sending a compatible collection $\{\Yss \to \Xss\}$ to its geometric realization $|\Yss| \to |\Xss|$. Both these functors preserve full subcategories of nonbounded models, so this restricts to the claimed equivalence.
\end{proof}

\subsection{Models of a Malcev theory as cocompletion}\label{ssec:cocompletion}

The $\infty$-category of models for a Malcev pretheory is categorically quite well behaved. To start, we have the following.

\begin{lemma}
\label{lem:malcevcocomplete}
Let $\pcat$ be a Malcev pretheory.
\begin{enumerate}
\item The inclusion $i\colon \Model_\pcat \to \smallpresheaves(\pcat)$ admits a left adjoint.
\item $\Model_\pcat\subset\smallpresheaves(\pcat)$ is the smallest full subcategory containing the representables and closed under geometric realizations.
\item The $\infty$-category $\Model_\pcat$ admits all small colimits.
\end{enumerate}
\end{lemma}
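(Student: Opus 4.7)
My plan is to establish the three parts in the order (1), (2), (3), with (1) providing the core construction via simplicial resolutions.

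For part (1), I would apply the standard criterion that $i$ admits a left adjoint iff for every $Y \in \smallpresheaves(\pcat)$ the presheaf $F_Y \colonequals \map_{\smallpresheaves}(Y, i(\bs))$ on $\Model_\pcat$ is corepresentable. Since $Y$ is small, I can build a hypercover $Y^\bullet \to Y$ in the presheaf $\infty$-topos $\Fun(\pcat^\op, \spaces)$ with each $Y^n \simeq \coprod_{\alpha \in A_n} \nu Q^n_\alpha$ a coproduct of representables, starting from an effective epimorphism $\coprod_I \nu Q_i \to Y$ furnished by smallness of $Y$ and iterating in the standard way. Setting $P_n \colonequals \coprod_\alpha Q^n_\alpha \in \pcat$ (possible since the pretheory $\pcat$ admits all small coproducts), this yields a simplicial object $\nu P_\bullet$ in $\Model_\pcat$. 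The key input from the Malcev hypothesis, via \cref{prop:malcevcharacterizations}, is that $\nu P_\bullet$ satisfies the Kan condition levelwise, so \cref{prop:pointwisegeoreal} places the candidate $L(Y) \colonequals |\nu P_\bullet|$ (computed in $\Fun(\pcat^\op, \spaces)$) in $\largemodels_\pcat$; smallness follows since it is a small colimit of representables. I then verify corepresentability by identifying both sides with a common totalization: for $X \in \Model_\pcat$,
\[
\map_{\Model_\pcat}(L(Y), X) \simeq \Tot X(P_n) \simeq \Tot \prod_\alpha X(Q^n_\alpha)
\]
using that $X$ preserves products, while $\map_{\smallpresheaves}(Y, iX) \simeq \map(|Y^\bullet|, iX) \simeq \Tot \prod_\alpha X(Q^n_\alpha)$ using $Y \simeq |Y^\bullet|$ in the presheaf $\infty$-topos (an instance of hypercover descent).

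For part (2), let $\dcat \subseteq \smallpresheaves(\pcat)$ denote the smallest full subcategory containing representables and closed under geometric realizations. Representables belong to $\Model_\pcat$, and $\Model_\pcat$ is closed under geometric realizations in $\smallpresheaves$ by \cref{prop:pointwisegeoreal} (since these are computed levelwise as in $\Fun(\pcat^\op, \spaces)$ and a geometric realization of small presheaves is small). For the converse, any $X \in \Model_\pcat$ satisfies $L(iX) \simeq X$ since $i$ is fully faithful, and the construction of (1) applied to $Y = iX$ exhibits $L(iX)$ as a geometric realization of representables, so $X \in \dcat$.

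For part (3), part (1) makes $\Model_\pcat$ a reflective subcategory of $\smallpresheaves(\pcat)$, which as the free cocompletion of $\pcat$ under small colimits admits all small colimits. Reflective subcategories inherit all colimits from the ambient, computed as $\colim_I F \simeq L(\colim_I^{\smallpresheaves} iF)$.

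The main technical obstacle lies in part (1): constructing the hypercover $Y^\bullet \to Y$ by coproducts of representables (standard but requires care to keep track of smallness) and establishing that the Malcev hypothesis is precisely what ensures $\nu P_\bullet$ satisfies the Kan condition and hence that $|\nu P_\bullet|$ lies in $\largemodels_\pcat$. Without Malcev, $\nu P_\bullet$ could fail the Kan condition and its realization could fail to preserve the infinite products required of a model, so the entire construction of the left adjoint would collapse.
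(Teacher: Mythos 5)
Your argument follows the same strategy as the paper — present a small presheaf $Y$ as a geometric realization of coproducts of representables, then invoke the Malcev hypothesis (via \cref{prop:pointwisegeoreal}) to conclude that this realization lies in $\Model_\pcat$, giving the reflection — and parts (2) and (3) are handled identically. The key difference is how you obtain the simplicial presentation of $Y$: you build a hypercover $Y^\bullet \to Y$ and invoke hypercover descent, while the paper simply rewrites the defining small colimit $Y \simeq \colim_J \nu F$ via the Bousfield--Kan formula as a geometric realization $\colim_{n \in \Delta^\op} \coprod_{j_0 \to \cdots \to j_n} \nu F(j_0)$, which is a colimit diagram by pure combinatorics and requires no descent theorem.

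The hypercover route contains a genuine gap. To ``iterate in the standard way'' you must, at each level, produce an effective epimorphism from a \emph{small} coproduct of representables onto a matching object such as $Y^0 \times_Y Y^0$. This is possible when the target is itself a small presheaf, but $\smallpresheaves(\pcat)$ is not in general closed under finite limits when $\pcat$ is merely a (locally small, not small) pretheory, and the hypothesis that $\pcat$ admits small coproducts does not help. This is precisely why the paper's \cref{thm:splithypercovering}, which does construct free hypercovers, is stated only for \emph{theories}: there a small generating set of sorts is what makes the iteration go through. For pretheories the Bousfield--Kan formula is the correct tool, since it involves only the given colimit diagram and no auxiliary limits, so the smallness issue never arises. (A secondary imprecision: when $\pcat$ is not small, $\Fun(\pcat^\op, \spaces)$ is not a presentable $\infty$-topos, so ``hypercover descent'' would need to be verified levelwise; but this is cosmetic compared to the matching-object issue.) The remainder of your argument — the totalization comparison, and the observation that Malcev is exactly what guarantees the realization preserves infinite products — is sound.
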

\begin{proof}
(1)~~For the inclusion $i\colon \Model_\pcat\to\smallpresheaves(\pcat)$ to admit a left adjoint, we must prove that if $X \in \smallpresheaves(\pcat)$ then 
\[
\map_{\smallpresheaves(\pcat)}(X,i(\bs))\colon \Model_\pcat\to\spaces
\]
is corepresentable. If $X \in \presheaves(\pcat)$ is a small presheaf, then by definition $X$ is a small colimit of representables. By the Bousfield--Kan formula for colimits, it follows that $X$ admits a presentation of the form $X\simeq \colim_{n\in\Delta^\op}\coprod_{i\in I_n}\nu(P_{n,i})$ for some sets $I_n$ and objects $P_{n,i} \in \pcat$, and thus
\[
\map_{\smallpresheaves(\pcat)}(X,i(\bs))\simeq \map_{\smallpresheaves(\pcat)}(\colim_{n\in\Delta^\op}\nu(\coprod_{i\in I_n} P_{n,i}),i(\bs)).
\]
As $\colim_{n\in\Delta^\op}\nu(\coprod_{i\in I_n}P_{n,i}) \in \Model_\pcat$ by \cref{prop:pointwisegeoreal}, it follows that $i$ admits a left adjoint as claimed.

(2)~~The proof of (1) gives an explicit way of writing every model of $\pcat$ as a geometric realization of representables. Alternately, at least if $\pcat$ is a theory, (2) follows from \cref{thm:splithypercovering} as the Malcev condition ensures that every simplicial model satisfies the levelwise Kan condition.

(3)~~This follows from (1) as $\smallpresheaves(\pcat)$ admits all small colimits.
\end{proof}

\begin{cor}
\label{cor:malcevcomplete}
Let $\pcat$ be a Malcev theory. Then $\Model_\pcat$ admits all small limits and colimits.
\end{cor}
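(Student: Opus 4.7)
The plan is to simply assemble two results already proved in the excerpt. Since \cref{cor:modelscomplete} shows that $\Model_\pcat$ admits all small limits for any theory $\pcat$ (via the identification $\Model_\pcat\simeq\largemodels_\pcat$ from \cref{thm:splithypercovering} combined with the closure under small limits established in \cref{prop:limitsinlargemodels}(1)), the limit half is already in hand with no extra work needed in the Malcev setting.

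For the colimit half, I would invoke \cref{lem:malcevcocomplete}(3), which asserts that $\Model_\pcat$ admits all small colimits whenever $\pcat$ is a Malcev pretheory. Since every Malcev theory is in particular a Malcev pretheory, this applies. Recall that the argument there proceeds by showing the inclusion $\Model_\pcat \hookrightarrow \smallpresheaves(\pcat)$ admits a left adjoint (using \cref{prop:pointwisegeoreal}, which in turn relies on the Kan-condition consequence \cref{prop:limitsinlargemodels}(2) of the Malcev hypothesis), and then transporting colimits from $\smallpresheaves(\pcat)$.

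There is no genuine obstacle; the only thing worth noting is that the Malcev hypothesis is essential for cocompleteness (without it, $\Model_\pcat\subset\Fun(\pcat^\op,\spaces)$ need not be closed under geometric realizations because infinite products fail to commute with realizations in general), whereas the Malcev hypothesis plays no role in completeness. In the actual proof I would just write: this follows by combining \cref{cor:modelscomplete} with \cref{lem:malcevcocomplete}(3).
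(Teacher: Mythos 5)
Your proposal is correct and matches the paper's proof exactly: the paper also cites \cref{lem:malcevcocomplete} for small colimits and \cref{cor:modelscomplete} for small limits. Your additional remarks on where the Malcev hypothesis is actually used are accurate but not needed for the proof itself.
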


\begin{proof}
\cref{lem:malcevcocomplete} shows that $\Model_\pcat$ admits all small colimits. The further assumption that $\pcat$ is a theory implies by \cref{cor:modelscomplete} that $\Model_\pcat$ admits all small limits.
\end{proof}

\cref{lem:malcevcocomplete}.(2) can be reformulated as saying that if $\pcat$ is a Malcev pretheory, then $\Model_\pcat$ is obtained by freely adjoining geometric realizations to $\pcat$. This leads to the following universal property of the $\infty$-category $\Model_\pcat$.

\begin{theorem}
\label{thm:freecocompletion}
Let $\pcat$ be a Malcev pretheory and $\dcat$ be an $\infty$-category which admits geometric realizations.
\begin{enumerate}
\item The pointwise left Kan extension of any functor $f\colon \pcat \to \dcat$ along $\nu \colon \pcat\to\Model_\pcat$ exists. In particular, left Kan extension and restriction along $\nu$ defines an adjunction
\[
\Fun(\pcat,\dcat) \rightleftarrows \Fun(\Model_\pcat,\dcat).
\]
\item The above adjunction restricts to an equivalence between arbitrary functors $f\colon \pcat\to\dcat$ and functors $f_!\colon \Model_\pcat\to\dcat$ which preserve geometric realizations.
\item Suppose given a functor $f\colon \pcat\to\dcat$ which is fully faithful and lands in the full subcategory of projective objects of $\dcat$. Then $f_!\colon \Model_\pcat\to\dcat$ is fully faithful.
\item A geometric realization-preserving functor $f_!\colon \Model_\pcat\to\dcat$ preserves all small colimits if and only if $f\colon \pcat\to\dcat$ preserves all small coproducts. In particular, if $\dcat$ admits all small colimits, then the above adjunction restricts to an equivalence between coproduct-preserving functors $f\colon \pcat\to\dcat$ and functors $f_!\colon \Model_\pcat\to\dcat$ which preserve all small colimits.
\end{enumerate}
\end{theorem}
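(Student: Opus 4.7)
The plan is to deduce all four parts from the fact, established in \cref{lem:malcevcocomplete}(2), that $\Model_\pcat$ is the smallest full subcategory of $\smallpresheaves(\pcat)$ containing the representables and closed under geometric realizations; equivalently, $\Model_\pcat$ is the free cocompletion of $\pcat$ under geometric realizations. For small $\pcat$, the universal property in (1) and (2) then follows from the general theory of free cocompletion under a class of colimits, as developed in \cite[\S 5.3.6]{lurie_higher_topos_theory}, applied with $\kcat = \{\Delta^\op\}$. For a general theory $\pcat$, I reduce to the small case via \cref{thm:bounded}: choosing $\kappa$ so that $\pcat$ is generated by a small $\kappa$-ary theory $\pcat_\kappa$, the equivalence $\Model_\pcat \simeq \presheaves_\Sigma^\kappa(\pcat_\kappa)$ transports the universal property from $\pcat_\kappa$ to $\pcat$.

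Concretely, given $X \in \Model_\pcat$, \cref{thm:splithypercovering} provides a resolution $X \simeq |\nu\Pss|$ with $\nu\Pss$ levelwise Kan, and one sets $(\nu_!f)(X) = |f\Pss|$, which exists in $\dcat$ by hypothesis. This agrees with the pointwise formula $\colim_{\pcat_{/X}} f \circ \pi$ via the cofinality of the functor $\Delta^\op \to \pcat_{/X}$ induced by the resolution, a cofinality that underlies the cited free cocompletion theorem. The adjunction in (1) is then formal, and (2) is immediate since $\nu_!f$ preserves geometric realizations by construction while $\nu_!f \circ \nu \simeq f$ by full faithfulness of $\nu$.

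For (3), given $f$ fully faithful into projective objects, I compute both sides of the would-be equivalence $\map_\dcat(f_!X, f_!Y) \simeq \map_{\Model_\pcat}(X, Y)$ via simplicial resolutions $X = |\nu\Pss|$ and $Y = |\nu\Qss|$. Using the universal property of geometric realization, the projectivity of each $f(P_n)$ (which lets $\map_\dcat(f(P_n), -)$ pass past the realization $|f\Qss|$), and the full faithfulness of $f$, one obtains
\[
\map_\dcat(f_!X, f_!Y) \simeq \Tot_n \map_\dcat(f(P_n), |f\Qss|) \simeq \Tot_n |\map_\dcat(f(P_n), f\Qss)| \simeq \Tot_n |\map_\pcat(P_n, \Qss)|,
\]
which matches $\map_{\Model_\pcat}(X, Y) \simeq \Tot_n |\map_\pcat(P_n, \Qss)|$ computed using \cref{prop:pointwisegeoreal} (geometric realizations in $\Model_\pcat$ are pointwise) together with $\map_{\Model_\pcat}(\nu P_n, Y) \simeq Y(P_n)$.

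For (4), the computation $\map_{\Model_\pcat}(\nu P, X) \simeq X(P)$ shows directly that $\nu$ preserves coproducts in any Malcev pretheory, yielding the ``only if'' direction via $f \simeq f_! \circ \nu$. Conversely, if $f$ preserves coproducts, then the identification $\coprod_i |\nu\Pss^i| \simeq |\nu(\coprod_i \Pss^i)|$, combined with coproduct-preservation of $f$, geometric realization-preservation of $f_!$, and (in cocomplete $\dcat$) the commutativity of coproducts with geometric realizations, shows that $f_!$ preserves all small coproducts; the Bousfield--Kan presentation of small colimits as coequalizers of coproducts then gives preservation of all small colimits. The main obstacle throughout is the free-cocompletion universal property at the infinitary level: for small $\pcat$ this is classical, but for arbitrary theories it requires either the reduction via \cref{thm:bounded} or a direct cofinality argument exploiting the Malcev condition and the Kan property of simplicial resolutions.
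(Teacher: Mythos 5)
Your treatment of parts (3) and (4) is essentially the paper's argument, modulo expanding a two-step resolution into an explicit double $\Tot$–realization computation; both are fine. The real problem is your handling of (1) and (2).

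The theorem is stated for Malcev \emph{pretheories}, and your proposed reduction is not available at that generality. First, \cref{thm:splithypercovering} requires a theory (a pretheory with a small set of generators), not a pretheory; the resolution you need for a general Malcev pretheory comes instead from the Bousfield--Kan presentation used in the proof of \cref{lem:malcevcocomplete}(1). Second, \cref{thm:bounded} requires a \emph{bounded} theory, which is a strictly stronger condition than being a theory, let alone a pretheory (e.g.\ the Stone--\v{C}ech theory is a theory that is not bounded). Third, even where \cref{thm:bounded} does apply, the reduction does not yield what you want: the equivalence $\Model_\pcat\simeq\presheaves_\Sigma^\kappa(\pcat_\kappa)$ would give you a universal property of $\Model_\pcat$ phrased in terms of functors out of $\pcat_\kappa$, whereas the statement to be proved concerns functors out of the strictly larger $\pcat$ — a coproduct $\coprod_i P_i$ of objects of $\pcat_\kappa$ is generally not a geometric realization of objects of $\pcat_\kappa$, so restriction along $\pcat_\kappa\hookrightarrow\pcat$ is not an equivalence on the relevant functor categories.

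The paper's proof of (1,2) is much shorter: \cref{lem:malcevcocomplete}(2) exhibits $\Model_\pcat\subset\smallpresheaves(\pcat)$ as the smallest full subcategory containing the representables and closed under geometric realizations, and \cite[Remark 5.3.5.9]{lurie_higher_topos_theory} is Lurie's formulation of the free-cocompletion universal property that applies directly to this situation, with no smallness or boundedness hypotheses. The ``direct cofinality argument'' you gesture at in your last sentence is the right idea, but it is packaged by \cite[Remark 5.3.5.9]{lurie_higher_topos_theory} — that is the single citation that makes the reduction unnecessary.
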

\begin{proof}
(1, 2)~~These follow from \cref{lem:malcevcocomplete}(2) and \cite[Remark 5.3.5.9]{lurie_higher_topos_theory}.

(3)~~We must verify that if $X,Y\in \Model_\pcat$, then the map
\[
\map_\pcat(X,Y) \to \map_\dcat(f_! X,f_! Y)
\]
is an equivalence. As $f_!$ preserves geometric realizations, by resolving $X$ by representables we may as well assume that $X = \nu P$ for some $P \in \pcat$, so that $f_! X \simeq f P$. As $fP$ is projective by assumption, by resolving $Y$ by representables we may as well assume that $Y = \nu P'$ for some $P' \in \pcat$. We are now asking that
\[
\map_\pcat(P,P') \to \map_\dcat(f P,fP')
\]
is an equivalence, which holds as $f$ is fully faithful by assumption.

(4)~~As $\nu\colon \pcat\to\Model_\pcat$ preserves coproducts, if $f\colon \pcat\to\dcat$ is a functor for which $f_!\colon \Model_\pcat\to\dcat$ preserves colimits then $f$ preserves coproducts. Conversely, as all colimits can be built out of geometric realizations and coproducts, it suffices to prove that if $f\colon \pcat\to\dcat$ preserves coproducts then so does $f_!\colon \Model_\pcat\to\dcat$. If $\{X_i : i \in I\}$ is a set of objects of $\pcat$ and we choose resolutions $X_i\simeq |\nu(P_{\bullet,i})|$, then 
\[
\coprod_{i\in I}X_i\simeq \coprod_{i\in I}|\nu(P_{\bullet,i})| \simeq |\coprod_{i\in I}\nu(P_{\bullet,i})| \simeq |\nu(\coprod_{i\in I}P_{\bullet,i})|,
\]
so this follows from the fact, guaranteed by (2), that $f_!$ necessarily preserves geometric realizations.
\end{proof}

We also note the following.

\begin{prop}\label{cor:restriction}
Let $\pcat$ be a Malcev theory and $\dcat$ be an $\infty$-category which admits all small colimits, and let $f\colon \pcat\to\dcat$ be a coproduct-preserving functor.
\begin{enumerate}
\item The induced colimit-preserving functor $f_!\colon \Model_\pcat\to\dcat$ admits a right adjoint given by
\[
f^\ast\colon \dcat\to \Model_\pcat,\qquad (f^\ast D)(P) = \map_\dcat(f(P),D).
\]
\item $f^\ast$ preserves geometric realizations if and only if $f(P) \in \dcat$ is projective for all $P \in \pcat$.
\item $f_! \dashv f^\ast$ is an adjoint equivalence if and only if the following conditions are satisfied:
\begin{enumerate}
\item $f$ is fully faithful;
\item $f(P)\in \dcat$ is projective for all $P \in \pcat$;
\item $f^\ast$ is conservative.
\end{enumerate}
\end{enumerate}
\end{prop}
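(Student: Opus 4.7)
The plan is to prove the three parts in order. For (1), I will define $f^{\ast}$ explicitly by the given formula $(f^\ast D)(P) = \map_\dcat(f(P),D)$ and first check that it lies in $\Model_\pcat$: since $f$ preserves coproducts, $f^\ast D$ sends coproducts in $\pcat$ to products of spaces, so it is a nonbounded model, and for a theory $\Model_\pcat = \largemodels_\pcat$ by \cref{thm:splithypercovering}, so there is no further smallness to verify. To establish the adjunction, I use \cref{thm:freecocompletion}.(4) to see that $f_!$ preserves all small colimits (since $f$ preserves coproducts). Hence both functors $\map_\dcat(f_!(-),D)$ and $\map_{\Model_\pcat}(-,f^\ast D)$ on $\Model_\pcat^\op$ send colimits to limits. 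They agree on representables $\nu P$, where each evaluates to $\map_\dcat(f(P),D)$; since $\Model_\pcat$ is generated under geometric realizations by representables (\cref{lem:malcevcocomplete}.(2)), this identification extends uniquely to a natural equivalence.

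For (2), the crucial ingredient is that geometric realizations in $\Model_\pcat$ are computed pointwise (\cref{prop:pointwisegeoreal}, using the Malcev condition). Thus for a simplicial object $\Dss$ in $\dcat$ one has $(f^\ast|\Dss|)(P) = \map_\dcat(f(P),|\Dss|)$ while $|f^\ast\Dss|(P) = |\map_\dcat(f(P),\Dss)|$, and the canonical comparison map is an equivalence for all such $\Dss$ and $P$ precisely when each $\map_\dcat(f(P),-)$ preserves geometric realizations, that is, when each $f(P)$ is projective.

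For (3), the forward direction is routine: an adjoint equivalence $f_! \dashv f^\ast$ makes $f_!$ fully faithful, giving (a) after restricting to representables via Yoneda; it makes $f^\ast$ an equivalence, which in particular preserves geometric realizations, giving (b) by part~(2); and every equivalence is conservative, giving (c). For the converse, conditions (a) and (b) together with \cref{thm:freecocompletion}.(3) yield that $f_!$ is fully faithful, equivalently that the unit $\eta\colon \id_{\Model_\pcat}\to f^\ast f_!$ is an equivalence. The triangle identity then forces $f^\ast \epsilon\colon f^\ast f_! f^\ast \to f^\ast$ to be an equivalence (since $\eta_{f^\ast D}$ is), and conservativity of $f^\ast$ from (c) upgrades this to the counit $\epsilon\colon f_! f^\ast \to \id_\dcat$ being an equivalence.

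The substantive step is part~(1), where the adjunction must be constructed by hand rather than extracted from an adjoint functor theorem, since no presentability is assumed on $\dcat$. Once that is in place, parts~(2) and~(3) follow essentially formally from combining this construction with the Malcev condition's guarantee of pointwise geometric realizations in $\Model_\pcat$ and the universal property already recorded in \cref{thm:freecocompletion}.
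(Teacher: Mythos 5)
Your proof is correct and follows essentially the same approach as the paper's: construct $f^{\ast}$ by the given formula, verify it lands in $\Model_{\pcat}$ using that $f$ preserves coproducts and that $\pcat$ is a theory, establish the adjunction by reduction to representables via the universal property of $\Model_{\pcat}$ as a geometric-realization cocompletion, and deduce (2) and (3) formally from pointwise geometric realizations and \cref{thm:freecocompletion}. You expand part~(2), which the paper dismisses as ``clear,'' and handle both directions of part~(3) explicitly; you also cite the correct item \cref{thm:freecocompletion}.(3) for the fully-faithfulness of $f_{!}$, where the paper's proof appears to have a small reference slip.
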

\begin{proof}
(1)~~The condition that $f$ preserves coproducts ensures that if $D \in \dcat$, then $f^\ast D \colon \pcat^\op\to\spaces$ preserves coproducts. As $\pcat$ is a theory, $f^\ast D$ is therefore a model of $\pcat$. This object satisfies
\[
\map_\pcat(\nu P,f^\ast D)\simeq \map_\dcat(fP,D)\simeq \map_\dcat(f_! \nu P,D)
\]
for $P \in \pcat$ by definition, and as both sides are compatible with geometric realizations in $P$ this implies that
\[
\map_\pcat(X,f^\ast D)\simeq \map_\dcat(f_! X,D)
\]
for all $X \in \Model_\pcat$. Therefore $f_! \dashv f^\ast$ as claimed.

(2)~~Clear.

(3)~~If $f$ is fully faithful and lands in the full subcategory of projective objects of $\dcat$, then $f_!$ is fully faithful by \cref{thm:freecocompletion}.(4), and is therefore an equivalence if and only if its right adjoint $f^\ast$ is conservative as claimed.
\end{proof}

\subsection{Discrete theories and animation}
\label{ssec:quillen}

The Malcev condition is well established in classical universal algebra; see \S\ref{subsection:malcevkanhistory}. For example, any variety of algebras which consists of groups with extra structure arises as the category of set-valued models for a Malcev theory. We introduce some notation to access these categories more easily.

\begin{definition}
Given a theory $\pcat$, we write
\[
\Model_\pcat^\heartsuit \subset\Model_\pcat
\]
for the full subcategory of $0$-truncated objects.
\end{definition}

\begin{remark}
\label{remark:for_a_theory_discrete_models_are_the_same_as_for_its_homotopy_category}
For any theory $\pcat$, restriction along the homomorphism $\tau\colon \pcat\to\h\pcat$ defines an equivalence
\[
\Model_\pcat^{\heartsuit}\simeq\Model_{\h\pcat}^{\heartsuit}.
\]
In particular, we lose nothing in this subsection by restricting to the case where $\pcat$ is a discrete theory, i.e.\ is a $1$-category.
\end{remark}

In \cite[II.4.2]{quillen1967homotopical}, Quillen constructed a closed model structure on the category $\Fun(\Delta^\op,\dcat)$ of simplicial objects in a $1$-category $\dcat$ under certain assumption on $\dcat$, which correspond essentially to asking that $\dcat\simeq\Model_\pcat^\heartsuit$ for $\pcat$ a theory which is either $\omega$-bounded \emph{or} Malcev. The weak equivalences and fibrations in either case are defined levelwise. Under these assumptions, we have the following characterization of the $\infty$-category $\Model_\pcat$.

\begin{theorem}
\label{thm:animation}
Let $\pcat$ be a discrete theory, and suppose that $\pcat$ is either $\omega$-bounded or Malcev. Then geometric realization
\[
|\bs|\colon \Fun(\Delta^\op,\Model_\pcat^\heartsuit)\subset\Fun(\Delta^\op,\Model_\pcat) \to \Model_\pcat
\]
is a localization realizing $\Model_\pcat$ as the underlying $\infty$-category of Quillen's model structure on $\Fun(\Delta^\op,\Model_\pcat^\heartsuit)$.
\end{theorem}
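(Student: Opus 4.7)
The plan is to verify that $|\bs|$ descends to an equivalence between the Quillen localization and $\Model_\pcat$ by checking well-definedness, essential surjectivity, and full faithfulness. Both hypotheses on $\pcat$ can be treated uniformly, diverging only in which earlier theorem supplies a key input.

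First I would show $|\bs|$ lands in $\Model_\pcat$ and descends to the Quillen localization. In either case $\Model_\pcat$ admits geometric realizations (\cref{lem:malcevcocomplete} in the Malcev case; presentability via \cref{thm:bounded} in the $\omega$-bounded case). Quillen's weak equivalences are by definition levelwise weak equivalences after evaluation at a generating subcategory, and since geometric realization in $\Model_\pcat$ is computed pointwise (\cref{prop:pointwisegeoreal}, or via the presheaf description from \cref{thm:bounded}), such equivalences are sent to equivalences on realizations. Essential surjectivity comes from free resolutions: in the Malcev case, \cref{thm:splithypercovering} writes every $X \in \Model_\pcat$ as $|\nu \Pss|$ for some simplicial object $\Pss$ in $\pcat$, whose image in $\Model_\pcat^\heartsuit$ is discrete; in the $\omega$-bounded case, \cref{thm:bounded} identifies $\Model_\pcat$ with $\presheaves_\Sigma(\pcat_\omega)$, inside which every object is a sifted, and therefore simplicial, colimit of representables.

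Full faithfulness, the main content, I would approach through universal properties. On the $\Model_\pcat$ side, \cref{thm:freecocompletion} (Malcev case) or Lurie's identification of animation with a sifted cocompletion (in the $\omega$-bounded case) characterizes $\Model_\pcat$ as the free cocompletion of $\pcat$ under geometric realizations, respectively sifted colimits. Quillen's model structure on $\Fun(\Delta^\op, \Model_\pcat^\heartsuit)$ is simplicial and cofibrantly generated, with the representables $\nu P$ as canonical cofibrant objects, so its underlying $\infty$-category admits the parallel universal property as a free cocompletion of $\pcat$ — this is a combination of Dugger's presentation theorem for combinatorial simplicial model categories and the general identification of the underlying $\infty$-category of such a model category with its Dwyer--Kan localization. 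Since both sides satisfy the same universal property relative to $\pcat$, and the comparison functor matches $\overline{|\bs|}$ on representables (which appear on both sides as constant simplicial objects), it must be an equivalence.

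The main obstacle is the verification that Quillen's underlying $\infty$-category has the claimed universal property, particularly in the Malcev case where the theory need not be compactly generated. This amounts to showing that coproducts of representables correctly model $\infty$-categorical coproducts (true by cofibrancy of representables) and that derived mapping spaces between a cofibrant simplicial resolution and a constant diagram reduce to the expected totalization via the simplicial enrichment. The Malcev hypothesis enters crucially here: it ensures sufficient fibrancy — via \cref{prop:pointwisegeoreal} and the Kan fibration principle of \cref{thm:surjherd} — for the simplicial enrichment to compute the correct derived mapping spaces, which is ultimately why Quillen required this condition in his original construction.
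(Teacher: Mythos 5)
The paper's own proof is purely a citation: for the $\omega$-bounded case it refers to Badzioch, Bergner, and Lurie \cite[Proposition 5.5.9.2]{lurie_higher_topos_theory}, and for the Malcev case to \cite[Proposition 2.2.2]{balderrama2021deformations}. Your proposal instead sketches an actual argument, and the overall strategy — show $|\bs|$ descends, establish essential surjectivity via free resolutions, and establish full faithfulness by identifying both sides via the same universal property (free cocompletion under geometric realizations) — is sound and tracks what those cited sources do.

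However, there is a specific gap in the Malcev branch. You invoke ``Dugger's presentation theorem for combinatorial simplicial model categories'' to establish that the underlying $\infty$-category of Quillen's model structure on $\Fun(\Delta^\op, \Model_\pcat^\heartsuit)$ has the desired universal property. But in the infinitary Malcev case, $\Model_\pcat^\heartsuit$ need not be locally presentable — the compact Hausdorff example in \cref{question:properties_of_models_outside_of_the_malcev_case} illustrates this — and so Quillen's model structure, while it exists (his hypothesis (b): all objects fibrant), is generally not combinatorial, and Dugger's theorem does not apply. You correctly flag that the Malcev case is ``the main obstacle'' and attribute the difficulty to fibrancy and derived mapping space computations, but the sharper problem is that your cited tool fails outright. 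What one actually needs here is a direct argument, roughly of the shape of \cref{thm:freecocompletion} transported to the model-categorical setting: constants are cofibrant and strongly projective (every object fibrant implies the simplicial enrichment computes derived mapping spaces directly), the latching/skeletal filtration resolves every cofibrant object by coproducts of constants, and geometric realizations exist by the simplicial structure. This is essentially what \cite[Proposition 2.2.2]{balderrama2021deformations} does. Your $\omega$-bounded branch, by contrast, is fine: there the theory is finitary, $\Model_\pcat^\heartsuit$ is locally presentable, the model category is combinatorial, and the machinery you invoke genuinely applies.
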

\begin{proof}
Note that, in the $\omega$-bounded case, we may realize the given functor as the levelwise geometric realization functor
\[
|\bs|\colon \Fun(\Delta^\op,\presheaves_\Sigma(\pcat_\omega,\sets))\to\presheaves_\Sigma(\pcat_\omega).
\]
Here the theorem goes back essentially to work of Badzioch \cite{badzioch2002algebraic} and Bergner \cite{bergner_rigidification_of_algebras}, see \cite[Proposition 5.5.9.2]{lurie_higher_topos_theory}. Essentially the same proof applies in the Malcev case, see for example \cite[Proposition 2.2.2]{balderrama2021deformations}.
\end{proof}

Thus, if $\pcat$ is a discrete $\omega$-bounded or Malcev theory, then $\Model_{\pcat}$ is a familiar ``algebraic'' $\infty$-category.

\begin{example}
Suppose that $\pcat$ is a discrete additive Malcev theory. Then $\Model_\pcat^\heartsuit$ is a cocomplete abelian category with enough projectives, and $\Model_\pcat$ is equivalent to its connective derived $\infty$-category.
\end{example}

\begin{example}
Let $k$ be a commutative ring and $\pcat$ be the category of polynomial $k$-algebras (on possibly infinitely many variables). Then $\Model_\pcat$ is equivalent to the localization of the category of simplicial commutative $k$-algebras at the weak equivalences; that is, to the $\infty$-category of animated rings under $k$.
\end{example}

\section{Recognition principles}

So far, we have been interested in the study of $\infty$-categories of the form $\Model_\pcat$ for a theory $\pcat$. In this context, the choice of theory $\pcat$ is additional structure on the $\infty$-category $\Model_\pcat$, analogous to the choice of a presentation of a ring. 

Our goal in this section is to give an intrinsic characterization of those $\infty$-categories of the form $\Model_\pcat$ for various types of $\pcat$, and to explain the sense in which they admit canonical presentations. We deal with general theories in \S\ref{subsection:weakly_projectively_generated_infty_cats}, Malcev theories in \S\ref{ssec:stronglyprojectivelygenerated}, and finally additive theories in \S\ref{subsection:prestable_infty_cats}.

\subsection{Weakly projectively generated \texorpdfstring{$\infty$}{infty}-categories}
\label{subsection:weakly_projectively_generated_infty_cats}

Recall that an object $P$ of an $\infty$-category $\dcat$ is said to be \emph{projective} if $\map_\dcat(P,\bs)$ preserves geometric realizations. To characterize the categories of models for theories we will consider the following natural weakening of this notion.

\begin{definition}
\label{def:weakprojective}
Let $\dcat$ be an $\infty$-category with finite limits which admits geometric realizations of simplicial objects satisfying the Kan condition. We say  that $P \in \dcat$ is \emph{weakly projective} if
\[
\map_\dcat(P,\bs)\colon \dcat\to\spaces
\]
preserves geometric realizations of simplicial objects which satisfy the Kan condition.
\end{definition}

\begin{example}
A space $X\in \spaces$ is weakly projective if and only if it is equivalent to a set, whereas it is projective if and only if it is equivalent to a finite set.
\end{example}

\begin{prop}\label{prop:wbprojective}
Let $\pcat$ be a theory. Then the idempotent completion $\pcat^\sharp$ may be identified as the full subcategory of $\Model_\pcat$ spanned by the weakly projective models.
\end{prop}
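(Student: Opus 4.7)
The plan is to verify the two inclusions separately: every object of $\pcat^\sharp$, viewed inside $\Model_\pcat$ via the restricted Yoneda embedding, is weakly projective, and conversely every weakly projective model is a retract of a representable, hence belongs to $\pcat^\sharp$ by \cref{prop:idempotentcompletion}. The second step is where the hypothesis that $\pcat$ is a theory (rather than just a pretheory) enters, via the existence of free resolutions from \cref{thm:splithypercovering}.

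For the first inclusion, let $\Xss$ be a simplicial object in $\Model_\pcat$ satisfying the Kan condition. Since $\pcat$ is a theory we have $\Model_\pcat = \largemodels_\pcat$ by \cref{thm:splithypercovering}, and the corollary following \cref{prop:limitsinlargemodels} tells us that the Kan condition on $\Xss$ is equivalent to the levelwise Kan condition, i.e.\ $\Xss(P) \in \simplicialspaces$ is a Kan complex for every $P \in \pcat$. By \cref{prop:limitsinlargemodels}(2), the realization $|\Xss|$ is therefore computed pointwise, and Yoneda yields
\[
\map_{\Model_\pcat}(\nu P, |\Xss|) \simeq |\Xss|(P) \simeq |\Xss(P)| \simeq |\map_{\Model_\pcat}(\nu P, \Xss)|,
\]
showing $\nu P$ is weakly projective. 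Weak projectivity is then closed under retracts: if $X$ is a retract of $\nu P$ in $\Model_\pcat$, then the canonical comparison $|\map(X,\Xss)| \to \map(X,|\Xss|)$ is naturally a retract of the corresponding comparison for $\nu P$, and a retract of an equivalence of spaces is an equivalence.

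For the reverse inclusion, suppose $X \in \Model_\pcat$ is weakly projective. By \cref{thm:splithypercovering} it admits a free resolution $X \simeq |\nu\Pss|$ with $\Pss \in \Fun(\Delta^\op,\pcat)$ and $\nu\Pss$ satisfying the Kan condition. Applying weak projectivity gives
\[
\map_{\Model_\pcat}(X,X) \simeq \map_{\Model_\pcat}(X,|\nu\Pss|) \simeq |\map_{\Model_\pcat}(X,\nu\Pss)|.
\]
The $0$-simplices of any simplicial space surject onto $\pi_0$ of its realization, so the class of $\id_X$ lifts through $\pi_0\map(X,\nu P_0) \to \pi_0|\map(X,\nu\Pss)|$ to a map $s\colon X \to \nu P_0$ whose composite with the structure map $\nu P_0 \to |\nu\Pss| \simeq X$ is homotopic to $\id_X$. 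Thus $X$ is a retract of the representable $\nu P_0$, and so lies in $\pcat^\sharp$.

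No single step is technically delicate; the substantive content was already packaged in \cref{prop:limitsinlargemodels}, its corollary, and \cref{thm:splithypercovering}. The main conceptual point to keep straight is that once $\Model_\pcat = \largemodels_\pcat$ and the Kan condition may be checked levelwise, the whole picture reduces to one about presheaves of spaces, where both directions become essentially formal.
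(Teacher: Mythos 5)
Your proof is correct and follows the same two-step strategy as the paper: representables (hence retracts thereof) are weakly projective via \cref{prop:limitsinlargemodels}, and conversely a weakly projective model retracts off a stage of a free resolution from \cref{thm:splithypercovering}. You spell out the Yoneda bookkeeping in the first direction and observe explicitly that the identity lifts to $\nu P_0$ (the $0$-simplices surject onto $\pi_0$ of the realization), which is a mild sharpening of the paper's "for some $n$", but the argument is the same.
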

\begin{proof}
By \cref{prop:limitsinlargemodels}, if $P \in \pcat$ then $\nu P \in \Model_\pcat$ is weakly projective. As weakly projective objects are closed under retracts, it follows that all objects of $\pcat^\sharp\subset\Model_\pcat$ are weakly projective.

Conversely, suppose that $X \in \Model_\pcat$ is projective. By \cref{thm:splithypercovering}, we may write $X \simeq |\nu \Pss|$ for some $\Pss \in \Fun(\Delta^\op,\pcat)$ for which $\nu \Pss$ satisfies the Kan condition. As $X$ is weakly projective, it follows that
\[
\map_\pcat(X,X)\simeq |\map_\pcat(X,\nu \Pss)|.
\]
Therefore the identity of $X$ factors through $\nu P_n$ for some $n\geq 0$, implying $X$ is a retract of $\nu P_n$ and therefore lives in $\pcat^\sharp\subset\Model_\pcat$.
\end{proof}

\begin{corollary}
Let $\pcat$ and $\qcat$ be theories. Then there exists an equivalence $\Model_\pcat\simeq\Model_\qcat$ if and only if there exists an equivalence $\pcat^\sharp\simeq\qcat^\sharp$; that is, if and only if $\pcat$ and $\qcat$ are equivalent up to idempotent completion. 
\qed
\end{corollary}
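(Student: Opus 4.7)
The plan is to deduce the corollary directly from the two immediately preceding results: \cref{prop:idempotentcompletion}, which says that passing to the idempotent completion does not affect the $\infty$-category of models, and \cref{prop:wbprojective}, which identifies $\pcat^\sharp$ with the full subcategory of $\Model_\pcat$ spanned by the weakly projective models. Together these give an intrinsic reconstruction of $\pcat^\sharp$ from $\Model_\pcat$, which is exactly what is needed for the corollary.

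For the ``if'' direction, I would simply apply \cref{prop:idempotentcompletion} twice: an equivalence $\pcat^\sharp \simeq \qcat^\sharp$ yields equivalences
\[
\Model_\pcat \simeq \Model_{\pcat^\sharp} \simeq \Model_{\qcat^\sharp} \simeq \Model_\qcat.
\]

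For the ``only if'' direction, note that weak projectivity in the sense of \cref{def:weakprojective} is an intrinsic property of an object of $\Model_\pcat$ — it is defined purely in terms of finite limits and geometric realizations of simplicial objects satisfying the Kan condition, both of which are preserved by any equivalence of $\infty$-categories. Hence any equivalence $\Model_\pcat \simeq \Model_\qcat$ restricts to an equivalence between the full subcategories of weakly projective models, and by \cref{prop:wbprojective} these are precisely $\pcat^\sharp$ and $\qcat^\sharp$.

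There is no real obstacle here; the work has already been done in the two preceding results, and the corollary is essentially a formal bookkeeping statement combining them. The only thing worth being careful about is confirming that the class of weakly projective objects is genuinely invariant under equivalences of $\infty$-categories (not merely under the specific equivalence of \cref{prop:idempotentcompletion}), which is immediate from the definition since it is phrased in categorical language.
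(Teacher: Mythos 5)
Your proof is correct and matches the argument the paper intends: the corollary is stated with just a \qed precisely because it follows immediately from \cref{prop:wbprojective} (intrinsic characterization of $\pcat^\sharp$ inside $\Model_\pcat$ via weak projectivity) together with \cref{prop:idempotentcompletion} (invariance of models under idempotent completion). Your observation that weak projectivity is a purely categorical property, hence preserved by any equivalence of $\infty$-categories, is exactly the point.
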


We also have the following bounded variant.

\begin{prop}\label{prop:compactweaklyprojective}
Suppose that $\pcat$ is generated by a $\kappa$-ary theory $\pcat_\kappa\subset\pcat$. Then the idempotent completion $\pcat_\kappa^\sharp$ may be identified as the full subcategory of $\kappa$-compact weakly projective objects of $\Model_\pcat$.
\end{prop}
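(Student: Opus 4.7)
The plan is to prove both inclusions, using the identification $\Model_\pcat \simeq \presheaves_\Sigma^\kappa(\pcat_\kappa)$ from \cref{thm:bounded} to reason inside the presentable $\infty$-category of $\kappa$-product-preserving presheaves on $\pcat_\kappa$. The easy direction is to check that objects of $\pcat_\kappa^\sharp$ are both $\kappa$-compact and weakly projective: each $P' \in \pcat_\kappa$ yields a $\kappa$-compact object $\nu P' \in \presheaves_\Sigma^\kappa(\pcat_\kappa)$ (as noted in \cref{ex:kappabounded}) which is moreover weakly projective by \cref{prop:wbprojective}, and both classes are closed under retracts.

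For the reverse inclusion the key claim is that every $X \in \presheaves_\Sigma^\kappa(\pcat_\kappa)$ is a $\kappa$-filtered colimit of representables. To see this, I would start from the tautological Yoneda presentation $X \simeq \colim_{\pcat_\kappa/X} \nu P$ and verify that the slice $\pcat_\kappa/X$ admits $\kappa$-small coproducts. Given a $\kappa$-small family $\{(P_i, x_i)\}_{i \in I}$ with $P_i \in \pcat_\kappa$ and $x_i \in X(P_i)$, set $P \colonequals \coprod_{i \in I} P_i$ in $\pcat_\kappa$ and let $x \in X(P) \simeq \prod_{i \in I} X(P_i)$ be the tuple $(x_i)_{i \in I}$; this uses crucially that $X$ preserves $\kappa$-small products, and gives a coproduct $(P, x)$ in $\pcat_\kappa/X$. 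Since an $\infty$-category with $\kappa$-small coproducts is $\kappa$-filtered, and $\presheaves_\Sigma^\kappa(\pcat_\kappa) \subseteq \presheaves(\pcat_\kappa)$ is closed under $\kappa$-filtered colimits by \cref{prop:kappaaccessible}, this exhibits $X$ as a $\kappa$-filtered colimit of representables inside $\presheaves_\Sigma^\kappa(\pcat_\kappa)$.

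With the claim in hand, if $X \in \Model_\pcat$ is $\kappa$-compact, then the identity $\id_X$ factors through some $\nu P'$ with $P' \in \pcat_\kappa$, exhibiting $X$ as a retract of a representable and hence an object of $\pcat_\kappa^\sharp$. The weakly projective hypothesis is not strictly needed for this direction; it is included for parallelism with \cref{prop:wbprojective}. The only substantive step is the $\kappa$-filteredness of $\pcat_\kappa/X$, which reduces to the routine construction of $\kappa$-small coproducts in the slice sketched above.
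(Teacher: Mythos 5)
Your easy direction is correct and matches the paper's. The converse has a genuine gap: the assertion that \emph{an $\infty$-category admitting $\kappa$-small coproducts is $\kappa$-filtered} is false. Coproducts supply cone points over $\kappa$-small \emph{discrete} diagrams, but $\kappa$-filteredness also requires cone points over non-discrete $\kappa$-small diagrams --- in particular, weak coequalizers of parallel pairs --- and these do not follow from coproducts alone. Concretely, let $\pcat_\omega$ be the $1$-category of finitely generated free abelian groups and let $X = \integers/2$ be the corresponding discrete model. In $\pcat_\omega/X$, the parallel pair $1, 3 \colon (\integers, 1) \rightrightarrows (\integers, 1)$ admits no cocone: a cocone $v\colon(\integers,1)\to(\integers^m, c)$ would give $v\in\integers^m$ with $v = 3v$, forcing $v = 0$, which is incompatible with the requirement $\langle c, v\rangle = 1$ in $\integers/2$. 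So $\pcat_\omega/X$ has finite coproducts but is not filtered, and $X$ is not an $\omega$-filtered colimit of representables.

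Accordingly, the consequence you draw --- that every $\kappa$-compact model is a retract of a representable --- is also false: $\integers/2$ is $\omega$-compact in $\Model_{\pcat}\simeq\dcat_{\geq 0}(\Ab)$ but is not a retract of any free abelian group. Your parenthetical remark that the weak projectivity hypothesis ``is not strictly needed'' is exactly the sign that something has gone wrong, since dropping it produces a false statement. The paper keeps both hypotheses, and they enter sequentially: weak projectivity (via \cref{prop:wbprojective}) places $X$ in $\pcat^\sharp$, exhibiting it as a retract of a possibly large coproduct $\coprod_{i\in I}P_i$ with $P_i\in\pcat_\kappa$; then $\kappa$-compactness, applied to the $\kappa$-filtered colimit $\coprod_{i\in I}P_i \simeq \colim_{F\subset I,\,|F|<\kappa}\coprod_{i\in F}P_i$, lets one factor the section through some $\kappa$-small subcoproduct $\coprod_{i\in F}P_i\in\pcat_\kappa$.
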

\begin{proof}
By the same reasoning as in \cref{prop:wbprojective}, every object of $\pcat_\kappa^\sharp$ is $\kappa$-compact and weakly projective in $\Model_\pcat$. Conversely, suppose that $X \in \Model_\pcat$ is $\kappa$-compact and weakly projective. By \cref{prop:wbprojective}, it follows that $X\in \pcat^\sharp$. As $\pcat$ is generated by $\pcat_\kappa$, it follows that there is a set $\{P_i : i \in I\}$ of objects of $\pcat_\kappa$ for which $X$ is a retract of $\coprod_{i\in I} P_i$. As $X$ is $\kappa$-compact, we have
\[
\map_\pcat(X,\coprod_{i\in I}P_i)\simeq \colim_{F\subset I,\, |F| < \kappa}\map_\pcat(X,\coprod_{i\in F}P_i),
\]
and therefore $X$ is a retract of $\coprod_{i\in F}P_i$ for some $\kappa$-small set $F$. As $\pcat_\kappa$ is closed under $\kappa$-small products we have $\coprod_{i\in F}P_i \in \pcat_\kappa$, and therefore $X \in \pcat_\kappa^\sharp$.
\end{proof}

We can now characterize the $\infty$-categories of the form $\Model_\pcat$ for a theory $\pcat$.

\begin{definition}
Given an $\infty$-category $\dcat$, say that a class $\{P_s : s\in S\}$ of objects of $\dcat$ \emph{detects equivalences} if the following condition holds:
\begin{enumerate}
\item[{($\ast$)}] 
A morphism $f\colon X \to Y$ in $\dcat$ is an equivalence provided $\map_\dcat(P_s,X)\to\map_\dcat(P_s,Y)$ is an equivalence for all $s\in S$.
\end{enumerate}
\end{definition}

\begin{remark}
Let $\{P_s : s\in S\}$ be a class of objects in an $\infty$-category $\dcat$, and let $\pcat_0\subset\dcat$ be the full subcategory generated by this class. Then $\{P_s : s\in S\}$ detects equivalences if and only if the restricted Yoneda embedding $\dcat \to \Fun(\pcat_0^\op,\spaces)$ is conservative.
\end{remark}

\begin{definition}
Let $\dcat$ be an $\infty$-category which admits finite limits, small coproducts of weakly projective objects, and geometric realizations of simplicial objects satisfying the Kan condition. We say that $\dcat$ is \emph{weakly projectively generated} if it satisfies the following condition:
\begin{itemize}
\item There is a set $\{P_s : s\in S\}$ of weakly projective objects in $\dcat$ which detects equivalences.
\end{itemize}
In this case, we say that $\{P_s : s\in S\}$ is a set of weakly projective generators for $\dcat$.
\end{definition}

\begin{lemma}
Let $\dcat$ be an $\infty$-category which admits finite limits and geometric realizations of simplicial objects satisfying the Kan condition. Let $P \in \pcat$ be a weakly projective object. 
\begin{enumerate}
\item If $f$ is an effective epimorphism in $\dcat$, then $\map_\dcat(P,f)$ is an effective epimorphism of spaces.
\item If a simplicial object $\Xss \in \Fun(\Delta^\op,\dcat)$ satisfies the Kan condition, then the simplicial space $\map_\dcat(P,\Xss)$ satisfies the Kan condition.
\end{enumerate}
If $\{P_s : s\in S\}$ is a set of weakly projective objects in $\dcat$ which detect equivalences, then
\begin{enumerate}[resume]
\item A morphism $f$ in $\dcat$ is an effective epimorphism if and only if $\map_\dcat(P_s,f)$ is an effective epimorphism of spaces for all $s\in S$.
\item A simplicial object $\Xss\in\Fun(\Delta^\op,\dcat)$ satisfies the Kan condition if and only if the simplicial space $\map_\dcat(P_s,\Xss)$ satisfies the Kan condition for all $s\in S$.
\end{enumerate}
\end{lemma}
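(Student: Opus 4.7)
My plan is to prove (1)--(4) in order, with each later part reducing to the earlier ones. The workhorse is that $\map_\dcat(P,-)$ automatically preserves finite limits, and by weak projectivity of $P$ it preserves geometric realizations of simplicial objects satisfying the Kan condition. For (1), I would start from the \v{C}ech nerve $C(f)_\bullet$ of $f\colon X\to Y$. It satisfies the Kan condition by \cref{example:cech_nerve_is_a_kan_complex}, and since $f$ is an effective epimorphism we have $|C(f)_\bullet|\simeq Y$ in $\dcat$. Applying $\map_\dcat(P,-)$, which commutes with the termwise finite limits defining the \v{C}ech nerve and (by weak projectivity) with this particular geometric realization, yields $|C(\map_\dcat(P,f))_\bullet|\simeq \map_\dcat(P,Y)$, exhibiting $\map_\dcat(P,f)$ as an effective epimorphism of spaces.

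For (2), the key observation is that the cotensors $\Xss[\Delta^n]$ and $\Xss[\Lambda^n_i]$ appearing in \cref{def:kancondition} are finite limits built out of the $X_n$ (see \cref{example:cotensors_out_of_simplex}), so they are preserved by mapping out of $P$; concretely $\map_\dcat(P,\Xss)[\Lambda]\simeq \map_\dcat(P,\Xss[\Lambda])$ for any simplicial set $\Lambda$ with finitely many nondegenerate simplices. Applying $\map_\dcat(P,-)$ to the horn-filling effective epimorphisms witnessing the Kan condition for $\Xss$, and invoking (1), then produces the analogous effective epimorphisms for $\map_\dcat(P,\Xss)$.

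For (3), the forward direction is (1). For the converse, I would again form the \v{C}ech nerve $C(f)_\bullet$ in $\dcat$; its geometric realization exists in $\dcat$ by hypothesis, since the \v{C}ech nerve satisfies the Kan condition. The goal becomes showing that the canonical comparison $|C(f)_\bullet|\to Y$ is an equivalence, which by the detection hypothesis on $\{P_s\}$ reduces to checking that it becomes one after applying $\map_\dcat(P_s,-)$ for each $s$. By the same computation as in (1), that application gives the map $|C(\map_\dcat(P_s,f))_\bullet|\to \map_\dcat(P_s,Y)$, and this is an equivalence precisely because $\map_\dcat(P_s,f)$ is assumed to be an effective epimorphism of spaces.

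Finally, (4) combines (2) and (3): the Kan condition for $\Xss$ asks that each $\Xss[\Delta^n]\to \Xss[\Lambda^n_i]$ be an effective epimorphism in $\dcat$, and by (3) this is equivalent to the corresponding assertion after applying $\map_\dcat(P_s,-)$ for every $s$, which by the cotensor preservation of (2) is exactly the Kan condition for each $\map_\dcat(P_s,\Xss)$. The only real subtlety anywhere in the plan is keeping track of which simplicial objects satisfy the Kan condition so that weak projectivity of $P$ can legitimately be invoked, but this never bites: the only realizations we need to commute past $\map_\dcat(P,-)$ are realizations of \v{C}ech nerves, which always satisfy the Kan condition by \cref{example:cech_nerve_is_a_kan_complex}.
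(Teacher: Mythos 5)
Your argument is correct and follows the paper's own proof essentially step for step: (1) via preserving both the limit structure and the Kan-realization of the Čech nerve, (2) via the cotensor compatibility $\map_\dcat(P,\Xss)[\Lambda]\simeq\map_\dcat(P,\Xss[\Lambda])$ plus (1), (3) by the same Čech nerve computation combined with the detection hypothesis, and (4) by feeding the cotensor identification back into (3). The one detail you flag at the end — that the only realizations we need $\map_\dcat(P,-)$ to commute with are those of Čech nerves, which always satisfy the Kan condition — is exactly the observation that makes the weak-projectivity hypothesis sufficient, and you have invoked it correctly.
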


\begin{proof}
(1)~~As $P$ is weakly projective, in particular $\map_\dcat(P,\bs)$ preserves geometric realizations of \v{C}ech nerves. As $\map_\dcat(P,\bs)$ always preserves any limits that exist in $\dcat$, it follows that $\map_\dcat(P,\bs)$ preserves the formation of \v{C}ech nerves, and therefore preserves effective epimorphisms.

(2)~~If $\Xss \in \Fun(\Delta^\op,\dcat)$, then by construction
\[
\map_\dcat(P,\Xss[\Lambda])\simeq \map_\dcat(P,\Xss)[\Lambda]
\]
for any simplicial space $\Lambda$ for which $\Xss[\Lambda]$ exists in $\dcat$. By (1), it follows that if $\Xss[\Delta^n] \to \Xss[\Lambda^n_i]$ is an effective epimorphism then so is $\map_\dcat(P,\Xss)[\Delta^n] \to \map_\dcat(P,\Xss)[\Lambda^n_i]$.

(3,4)~~We just prove (3), as (4) follows as in the proof of (2). Suppose that $f$ is a morphism in $\dcat$ for which $\map_\dcat(P_s,f)$ is an effective epimorphism for all $s\in S$. As in the proof of (1), this implies that
\[
\map_\dcat(P_s,\check{C}(f)) \to \map_\dcat(P_s,Y)
\]
is a colimit diagram for all $s\in S$. As $P_s$ is weakly projective, this means that
\[
\map_\dcat(P_s,|\check{C}(f)|) \simeq \map_\dcat(P_s,Y)
\]
for all $s\in S$. Therefore $|\check{C}(f)|\simeq Y$ and $f$ is an effective epimorphism.
\end{proof}

\begin{theorem}
\label{thm:weaklyprojectivecats}
Let $\dcat$ be an $\infty$-category. Then there exists an equivalence $\dcat\simeq\Model_\pcat$ for a theory $\pcat$ if and only if $\dcat$ is weakly projectively generated. 
\end{theorem}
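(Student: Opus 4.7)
The forward implication is routine unpacking. Given $\dcat \simeq \Model_\pcat$ with $\pcat$ generated by a set $\{P_s : s \in S\}$, I take $\{\nu P_s\}$ as the witnessing family: each $\nu P_s$ is weakly projective by \cref{prop:wbprojective} (equivalently by \cref{prop:limitsinlargemodels}(2), which says Kan geometric realizations in $\largemodels_\pcat$ are computed pointwise); they detect equivalences via the standard Yoneda argument, since a morphism of models is an equivalence iff it is so after evaluation on each generator, using that every $P \in \pcat$ is a retract of a coproduct of $P_s$'s and models turn these into retracts of products; and $\Model_\pcat$ has the required small limits, Kan geometric realizations, and coproducts of weakly projectives by \cref{cor:modelscomplete}, \cref{prop:limitsinlargemodels}, and the fact that $\nu(\coprod_i P_i) \simeq \coprod_i \nu P_i$ in $\Model_\pcat$ (by Yoneda and product-preservation of models).

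For the converse, I set $\pcat \subset \dcat$ to be the full subcategory generated by the weakly projective generators $\{P_s : s \in S\}$ under small coproducts; these coproducts exist in $\dcat$ by hypothesis. A coproduct of weakly projective objects is again weakly projective: for $P = \coprod_i Q_i$ with each $Q_i$ weakly projective and $\Xss$ a Kan simplicial object in $\dcat$, the chain
\[
\map_\dcat(P, |\Xss|) \simeq \prod_i \map_\dcat(Q_i, |\Xss|) \simeq \prod_i |\map_\dcat(Q_i, \Xss)| \simeq |\prod_i \map_\dcat(Q_i, \Xss)| \simeq |\map_\dcat(P, \Xss)|
\]
uses weak projectivity of each $Q_i$ together with \cref{prop:kanlimits}(1). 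Thus $\pcat$ consists of weakly projective objects, is a theory with generating set $\{P_s\}$, and the restricted Yoneda functor
\[
R\colon \dcat \to \Fun(\pcat^\op, \spaces), \qquad X \mapsto \map_\dcat(\bs, X)|_\pcat
\]
lands in $\largemodels_\pcat = \Model_\pcat$, the equality holding for any theory by \cref{thm:splithypercovering} and \cref{lem:resolutions}. The functor $R$ is conservative (by the detection property of $\{P_s\}$), preserves all small limits (computed pointwise), and preserves Kan geometric realizations (since every $P \in \pcat$ is weakly projective in $\dcat$).

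The substantive part of the plan is promoting $R$ to an equivalence, which I will do by transferring free Kan resolutions from $\Model_\pcat$ back to $\dcat$. Given $X \in \dcat$, \cref{thm:splithypercovering} applied to $R(X)$ produces $\Pss \in \Fun(\Delta^\op, \pcat)$ with $\nu \Pss$ Kan in $\Model_\pcat$ and $R(X) \simeq |\nu \Pss|$. The Kan condition transfers to $\dcat$: since $R$ preserves finite limits, $R(\Pss[\Lambda^n_i]) = (\nu \Pss)[\Lambda^n_i]$, and since $R$ preserves Kan geometric realizations and is conservative, it reflects effective epimorphisms as well, so each horn matching map in $\dcat$ is an effective epimorphism iff its $R$-image is. Hence $\Pss$ is Kan in $\dcat$ and $|\Pss|$ exists. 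The colimit augmentation $\nu \Pss \to R(X)$ corresponds, under the Yoneda identification $R(X)(P_n) = \map_\dcat(P_n, X)$, to an augmentation $\Pss \to X$ in $\dcat$, hence to a canonical map $f\colon |\Pss| \to X$ whose image under $R$ is the chosen equivalence; by conservativity, $f$ itself is an equivalence. Applying the same construction to an arbitrary $Z \in \Model_\pcat$ in place of $R(X)$ yields essential surjectivity, while full faithfulness follows from the Yoneda computation
\[
\map_\dcat(X, Y) \simeq \Tot\, \map_\dcat(\Pss, Y) \simeq \Tot\, \map_{\Model_\pcat}(\nu \Pss, R(Y)) \simeq \map_{\Model_\pcat}(R(X), R(Y)).
\]
The main obstacle lies precisely in this lifting step: one only has $R(X) \simeq R(|\Pss|)$ as an abstract equivalence in $\Model_\pcat$, and the Yoneda translation of the colimit augmentation back to $\dcat$, combined with conservativity of $R$, is the key technical device that produces the genuine equivalence $X \simeq |\Pss|$ in $\dcat$.
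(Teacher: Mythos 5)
Your proof is correct and follows essentially the same route as the paper. The only difference is in packaging: you show directly that the restricted Yoneda functor $R$ is essentially surjective and fully faithful, while the paper shows $R$ is conservative and constructs a fully faithful \emph{left adjoint} $LX = |\Pss|$ via corepresentability (which then forces $R$ to be an equivalence). The substantive steps — choosing a Kan free resolution $\nu\Pss \to R(X)$ via \cref{thm:splithypercovering}, transferring the Kan condition from $\Model_\pcat$ back to $\dcat$ using that the generators detect effective epimorphisms, and then realizing $|\Pss|$ in $\dcat$ — are identical. Your observation that the Yoneda identification $R(X)(P_n) \simeq \map_\dcat(P_n,X)$ converts the augmentation $\nu\Pss\to R(X)$ into an augmentation $\Pss\to X$ in $\dcat$ is precisely the mechanism the paper's corepresentability argument packages implicitly; the paper's formulation is a bit tidier since it avoids having to treat essential surjectivity as a separate case, but the content is the same.
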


\begin{proof}
We have already established that if $\pcat$ is a theory then $\Model_\pcat$ is weakly projectively generated, so suppose conversely that $\dcat$ is a weakly projectively generated $\infty$-category. Let $\pcat\subset\dcat$ be the full subcategory generated by a set of weakly projective generators under coproducts and retracts. Then $\pcat$ is a theory, and there exists a restricted Yoneda embedding
\[
\nu\colon \dcat\to\Model_\pcat.
\]
We claim that $\nu$ is an equivalence.

The assumption that $\pcat$ is generated by a set of weakly projective generators for $\dcat$ implies that $\nu$ is conservative. It therefore suffices to prove that $\nu$ admits a fully faithful left adjoint $L$. To show that $\nu$ admits a left adjoint $L$, we must verify that if $X \in \Model_\pcat$ then
\[
\map_\pcat(X,\nu(\bs))\colon \dcat\to\spaces
\]
is corepresentable; $LX$ is then a corepresenting object. By \cref{thm:splithypercovering}, we may choose a resolution $X\simeq |\nu \Pss|$ for which $\nu \Pss$ satisfies the Kan condition. It follows that
\begin{align*}
\map_\pcat(X,\nu(\bs))&\simeq \map_\pcat(|\nu \Pss|,\nu(\bs))\\
&\simeq \Tot \map_\pcat(\nu \Pss,\nu(\bs))\\
&\simeq \Tot\map_\dcat(\Pss,\bs) \simeq \map_\dcat(|\Pss|,\bs),
\end{align*}
implying that $LX = |\Pss|$ is the desired corepresenting object. To see that $L$ is fully faithful, fix $X,Y\in \Model_\pcat$ and choose resolutions $X\simeq |\nu \Pss|$ and $Y\simeq |\nu \Qss|$ as above. As each $P_n \in \dcat$ is weakly projective, we may compute
\begin{align*}
\map_\dcat(LX,LY) &\simeq \map_\dcat(|\Pss|,|\Qss|)\\
&\simeq \lim_{n\in\Delta}\map_\dcat(P_n,|\Qss|)\\
&\simeq\lim_{n\in\Delta}\colim_{m\in\Delta^\op}\map_\dcat(P_n,Q_m)\\
&\simeq\lim_{n\in\Delta}\colim_{m\in\Delta^\op}\map_\pcat(\nu P_n,\nu Q_m)\\
&\simeq \map_\pcat(|\nu \Pss|,|\nu \Qss|) = \map_\pcat(X,Y),
\end{align*}
proving that $L$ is fully faithful as claimed.
\end{proof}

\begin{example}
Let $\ccat$ be a small $\infty$-category. Then the representables form a set of weakly projective generators for the presheaf category $\presheaves(\ccat)$. Therefore if we write $\ccat^\amalg \subset \presheaves(\ccat)$ for the full subcategory generated under coproducts by the representables, then 
\[
\Model_{\ccat^\amalg}\simeq\presheaves(\ccat).
\]
This is a manifestation of the fact that $\ccat^\amalg$ is equivalent to the free cocompletion of $\ccat$ under small coproducts.
\end{example}

\subsection{Strongly projectively generated \texorpdfstring{$\infty$}{infty}-categories}
\label{ssec:stronglyprojectivelygenerated}

In the previous section, we characterized $\infty$-categories of the form $\Model_{\pcat}$ for a theory $\pcat$ in terms of a weakening of the classical notion of projectivity. In this section, we will prove analogous results in the case where $\pcat$ is Malcev, where we instead work with the following \emph{strengthening} of projectivity: 

\begin{definition}
\label{def:stronglyprojective}
Let $\ccat$ be an $\infty$-category which admits small colimits. An object $P \in \ccat$ is said to be \emph{strongly projective} if for every simplicial object $\Xss$ in $\ccat$, the simplicial space $\map_\ccat(P,\Xss)$ is a hypercovering of $\map_\ccat(P,|\Xss|)$.
\end{definition}

\begin{example}
Let $\pcat$ be a Malcev theory. Then $\nu P \in \Model_\pcat$ is strongly projective for $P \in \pcat$.
\end{example}

\begin{lemma}
\label{lem:strongproj}
Let $\ccat$ be a cocomplete $\infty$-category. Then an object $P \in \ccat$ is strongly projective if and only if it is projective and couniversally Kan.
\end{lemma}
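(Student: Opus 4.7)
The plan is to reformulate strong projectivity of $P$ as the conjunction of projectivity and the Kan condition on the mapping simplicial spaces, using a key equivalence between hypercoverings and Kan objects in the hypercomplete $\infty$-topos $\spaces$.

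Specifically, I would first establish the following reformulation: for an augmented simplicial space $\Yss \to Z$ in $\spaces$, being a hypercovering of $Z$ (in the sense recalled in the remark following \cref{def:kancondition}, i.e., satisfying the matching condition in the slice $\spaces_{/Z}$) is equivalent to the conjunction of
\begin{enumerate}
\item[(a)] $\Yss$ satisfies the Kan condition in $\spaces$, and
\item[(b)] the canonical augmentation $|\Yss| \to Z$ is an equivalence.
\end{enumerate}
For the ``if'' direction, I would apply \cite[Corollary A.5.6.4]{lurie_spectral_algebraic_geometry}, which identifies the Kan condition on $\Yss$ with $\Yss$ being a hypercovering of $|\Yss|$ (a matching condition in $\spaces_{/|\Yss|}$), and then transport this condition along the equivalence of slice $\infty$-categories $\spaces_{/|\Yss|} \simeq \spaces_{/Z}$ induced by (b). For the ``only if'' direction, I would use hypercompleteness of $\spaces$: in any $\infty$-topos, the augmentation $|\Yss| \to Z$ of a hypercovering is $\infty$-connective, and this is automatically an equivalence in the hypercomplete case, which gives (b); once $|\Yss| \simeq Z$, the same Lurie citation applied in reverse yields (a).

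With the reformulation in hand, the lemma follows immediately upon setting $\Yss = \map_\ccat(P, \Xss)$ and $Z = \map_\ccat(P, |\Xss|)$ for arbitrary simplicial $\Xss$ in $\ccat$. Strong projectivity of $P$ is by definition the condition that $\Yss$ is a hypercovering of $Z$ for every such $\Xss$; by the key equivalence this splits into (b), which for all $\Xss$ is precisely the statement that $\map_\ccat(P, -)$ preserves geometric realizations (i.e.\ $P$ is projective), and (a), which for all $\Xss$ asserts that $\map_\ccat(P, \Xss)$ satisfies the Kan condition, and which by the dual form of \cref{prop:univkancharacterize} is exactly the defining characterization of $P$ being couniversally Kan. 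The main technical hurdle is the ``only if'' direction of the key equivalence, where one invokes hypercompleteness of $\spaces$ to deduce invertibility of the augmentation from the matching condition in the slice; the remaining manipulations are essentially formal consequences of Lurie's identification of Kan simplicial objects in an $\infty$-topos with hypercoverings of their geometric realizations.
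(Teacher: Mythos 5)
Your proof is correct and follows essentially the same approach as the paper's: both proofs combine Lurie's identification of the Kan condition with being a hypercovering of one's own geometric realization (Corollary A.5.6.4) with the effectivity of hypercoverings in the hypercomplete topos $\spaces$, and both decompose strong projectivity into the conjunction of projectivity (realization preservation) and the Kan condition on mapping simplicial spaces (which by \cref{prop:univkancharacterize}(3) is couniversal Kanness). The only organizational difference is that you abstract the two-way implication into a standalone statement about augmented simplicial spaces $\Yss\to Z$ and are more explicit about the role of hypercompleteness, which the paper compresses into a single ``in particular''.
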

\begin{proof}
First suppose that $P$ is strongly projective. Let $\Xss$ be a simplicial object in $\ccat$. As $\map_\ccat(P,\Xss)$ is a hypercovering of $\map_\ccat(P,|\Xss|)$, in particular $|\map_\ccat(P,\Xss)|\simeq \map_\ccat(P,|\Xss|)$, and so $P$ is projective. By \cite[Corollary A.5.6.4]{lurie_spectral_algebraic_geometry}, $\map_\ccat(P,\Xss)$ satisfies the Kan condition. As $\Xss$ was arbitrary, it follows from \cref{prop:univkancharacterize}.(3) that $P$ is couniversally Kan.

Next suppose that $P$ is projective and couniversally Kan. Let $\Xss$ be a simplicial object in $\ccat$. As $P$ is couniversally Kan, it follows from \cite[Corollary A.5.6.4]{lurie_spectral_algebraic_geometry} that $\map_\ccat(P,\Xss)$ is a hypercovering of $|\map_\ccat(P,\Xss)|$. As $P$ is projective, $|\map_\ccat(P,\Xss)|\simeq\map_\ccat(P,|\Xss|)$. Therefore $\map_\ccat(P,\Xss)$ is a hypercovering of $\map_\ccat(P,|\Xss|)$, and so $P$ is strongly projective.
\end{proof}

\begin{prop}
\label{prop:projectivesinomalcevmodels}
Let $\pcat$ be a Malcev theory. For $X \in \Model_\pcat$, the following are equivalent:
\begin{enumerate}
\item $X$ is a retract of a representable;
\item $X$ is strongly projective;
\item $X$ is projective;
\item $X$ is weakly projective.
\end{enumerate}
In particular, the idempotent completion $\pcat^\sharp$ may be identified as the full subcategory of $\Model_\pcat$ spanned by the projective objects, all of which are strongly projective.
\end{prop}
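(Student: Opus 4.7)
The natural strategy is to prove the cyclic chain (1) $\Rightarrow$ (2) $\Rightarrow$ (3) $\Rightarrow$ (4) $\Rightarrow$ (1), with each implication reducing to results already established in the preceding sections.

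For (1) $\Rightarrow$ (2), the key point is that representables are strongly projective, together with the observation that strongly projective objects are closed under retracts. To see representability implies strong projectivity, one uses for $P \in \pcat$ and any simplicial model $\Xss$ the identification $\map_\pcat(\nu P, \Xss) \simeq \Xss(P)$. Since $\pcat$ is Malcev, \cref{prop:malcevcharacterizations} ensures $\Xss(P)$ satisfies the Kan condition, and \cref{prop:pointwisegeoreal} gives $\map_\pcat(\nu P, |\Xss|) \simeq |\Xss|(P) \simeq |\Xss(P)|$; a simplicial space satisfying the Kan condition is automatically a hypercovering of its geometric realization (this was used already in the proof of \cref{lem:strongproj}). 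Closure of strong projectivity under retracts follows from the corresponding closure of hypercoverings.

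The implications (2) $\Rightarrow$ (3) $\Rightarrow$ (4) are essentially formal: (2) $\Rightarrow$ (3) is one half of \cref{lem:strongproj}, while (3) $\Rightarrow$ (4) holds because weak projectivity requires preservation of only those geometric realizations coming from simplicial objects satisfying the Kan condition, a weaker demand than projectivity.

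The content of (4) $\Rightarrow$ (1) is entirely subsumed by \cref{prop:wbprojective}, which identifies the full subcategory of weakly projective objects of $\Model_\pcat$ with the idempotent completion $\pcat^\sharp$; every object of $\pcat^\sharp$ is by construction a retract of a representable. The concluding assertion of the proposition — that $\pcat^\sharp$ coincides with the full subcategory of projective objects, each of which is moreover strongly projective — is then read off from the equivalences (1) $\Leftrightarrow$ (2) $\Leftrightarrow$ (3) combined with \cref{prop:wbprojective}. There is no real obstacle here; the proposition is essentially a synthesis of the Malcev hypothesis, \cref{lem:strongproj}, and \cref{prop:wbprojective}.
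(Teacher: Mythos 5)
Your proof is correct and follows essentially the same route as the paper's: representables are strongly projective because the Malcev condition makes $\Xss(P)$ a Kan simplicial space whose realization is computed levelwise, $(2)\Rightarrow(3)\Rightarrow(4)$ are formal, and the weak-projectives are exactly $\pcat^\sharp$. The only cosmetic differences are that the paper routes $(1)\Rightarrow(2)$ through the characterization in \cref{lem:strongproj} (projective and couniversally Kan) rather than verifying the hypercovering condition directly, and for $(4)\Rightarrow(1)$ it re-runs the argument of \cref{prop:wbprojective} using \cref{lem:malcevcocomplete} instead of citing \cref{prop:wbprojective} outright; since a Malcev theory is in particular a theory, your direct citation is equally valid.
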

\begin{proof}
(1)$\Rightarrow$(2): By \cref{prop:pointwisegeoreal}, if $P \in \pcat$ then $\nu P \in \Model_\pcat$ is projective. The Malcev condition implies that $\nu P$ is universally Kan when considered as an object of $\Model_\pcat^\op$. As these two conditions are closed under retracts, it follows from \cref{lem:strongproj} that if $X$ is a retract of a representable then $X$ is strongly projective.

The implications (2)$\Rightarrow$(3)$\Rightarrow$(4) are clear, and (4)$\Rightarrow$(1) follows from \cref{lem:malcevcocomplete} by the same argument as in the proof of \cref{prop:wbprojective}.
\end{proof}

We also have the following bounded variant.

\begin{prop}
Let $\pcat$ be a $\kappa$-bounded Malcev theory, generated by a $\kappa$-ary theory $\pcat_\kappa\subset\pcat$. Then the idempotent completion $\pcat_\kappa^\sharp$ may be identified as the full subcategory of $\kappa$-compact projective objects of $\Model_\pcat$.
\end{prop}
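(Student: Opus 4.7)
The plan is to combine the two immediately preceding propositions: \Cref{prop:compactweaklyprojective}, which identifies $\pcat_\kappa^\sharp$ with the $\kappa$-compact weakly projective objects of $\Model_\pcat$, and \Cref{prop:projectivesinomalcevmodels}, which in the Malcev setting collapses the distinction between (strongly) projective and weakly projective objects. The argument should be a two-line reduction.

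First, I would handle the inclusion $\pcat_\kappa^\sharp \subseteq \Model_\pcat^{\kappa\text{-cpt, proj}}$. Any $X \in \pcat_\kappa^\sharp$ lies in $\pcat^\sharp \subseteq \Model_\pcat$, so by \Cref{prop:projectivesinomalcevmodels} it is projective (indeed, strongly projective) in $\Model_\pcat$. On the other hand, $\pcat_\kappa^\sharp$ consists of $\kappa$-compact objects of $\Model_\pcat$ by \Cref{prop:compactweaklyprojective}. Hence every object of $\pcat_\kappa^\sharp$ is a $\kappa$-compact projective object of $\Model_\pcat$.

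Conversely, suppose $X \in \Model_\pcat$ is $\kappa$-compact and projective. Since projective objects are in particular weakly projective (the defining condition for weak projectivity is a special case of preserving all geometric realizations), \Cref{prop:compactweaklyprojective} applies and yields $X \in \pcat_\kappa^\sharp$. This completes the identification.

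There is essentially no obstacle here once the preceding material is in place; the only point to be careful about is the observation that projectivity implies weak projectivity, which is immediate from \Cref{def:weakprojective}. If one wished to make the proof self-contained, one could instead argue directly: use \Cref{prop:projectivesinomalcevmodels} to write $X$ as a retract of $\coprod_{i \in I} \nu P_i$ with $P_i \in \pcat_\kappa$, and then invoke $\kappa$-compactness of $X$ together with condition (3) of \Cref{def:boundedtheory} to factor the identity of $X$ through $\coprod_{i \in F} \nu P_i$ for some $\kappa$-small $F \subseteq I$; since $\pcat_\kappa$ is closed under $\kappa$-small coproducts, this exhibits $X$ as a retract of an object of $\pcat_\kappa$, whence $X \in \pcat_\kappa^\sharp$.
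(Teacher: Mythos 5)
Your proposal is correct and follows exactly the same route as the paper's proof, which cites \cref{prop:projectivesinomalcevmodels} and \cref{prop:compactweaklyprojective} without further elaboration; you have simply spelled out the two-step reduction explicitly. The self-contained alternative you sketch at the end is also sound, but it amounts to re-running the argument of \cref{prop:compactweaklyprojective} inline, so it adds nothing beyond what the cited result already provides.
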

\begin{proof}
This follows from \cref{prop:projectivesinomalcevmodels} and \cref{prop:compactweaklyprojective}.
\end{proof}

We now give the promised characterization of categories of models of Malcev theories.

\begin{definition}\label{def:stronglyprojectivelgenerated}
Let $\dcat$ be a cocomplete $\infty$-category. We say  that $\dcat$ is \emph{strongly projectively generated} if it satisfies the following condition:
\begin{enumerate}
\item[{($\ast$)}] There exists a set $\{P_s : s\in S\}$ of strongly projective objects in $\dcat$ which detect equivalences.
\end{enumerate}
In this case, we say that $\{P_s : s\in S\}$ is a set of projective generators for $\dcat$.
\end{definition}

\begin{theorem}
\label{thm:characterization_of_malcev_theories_in_terms_of_strong_projectivity}
Let $\dcat$ be a cocomplete $\infty$-category. Then there exists an equivalence $\dcat \simeq \Model_\pcat$ for some Malcev theory $\pcat$ if and only if $\dcat$ is strongly projectively generated.
\end{theorem}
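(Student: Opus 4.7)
The plan is to handle both directions separately, with the forward implication being quick. For the forward direction, if $\dcat \simeq \Model_\pcat$ for a Malcev theory $\pcat$, then $\dcat$ is cocomplete by \cref{cor:malcevcomplete}. Fix a set of generators $\{P_s : s \in S\}$ for $\pcat$. Each $\nu P_s \in \Model_\pcat$ is strongly projective by \cref{prop:projectivesinomalcevmodels}, and the set $\{\nu P_s\}$ detects equivalences: an equivalence of models is detected pointwise, hence by $\{\map_{\Model_\pcat}(\nu P, -) : P \in \pcat\}$; and every $P \in \pcat$ is a retract of some $\coprod_i P_{s_i}$, making $\map_{\Model_\pcat}(\nu P, -)$ a retract of $\prod_i \map_{\Model_\pcat}(\nu P_{s_i}, -)$.

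For the reverse direction, suppose $\dcat$ is cocomplete with a set $\{P_s\}$ of strongly projective generators. Let $\pcat \subset \dcat$ be the full subcategory generated by $\{P_s\}$ under small coproducts and retracts. The key step will be to show that every object of $\pcat$ is strongly projective in $\dcat$. By \cref{lem:strongproj} this amounts to showing that the joint properties of projectivity and couniversal Kan-ness are closed under coproducts and retracts. Both properties are preserved under retracts formally. Couniversal Kan-ness is preserved under coproducts by the dual of \cref{prop:univkanclosure}(1). Projectivity of $\coprod_i Q_i$ will follow from the chain
\[
\map_\dcat(\coprod_i Q_i, |\Xss|) \simeq \prod_i |\map_\dcat(Q_i, \Xss)| \simeq |\prod_i \map_\dcat(Q_i, \Xss)| \simeq |\map_\dcat(\coprod_i Q_i, \Xss)|,
\]
where the first uses projectivity of each $Q_i$, the middle uses \cref{prop:kanlimits}(1) together with the Kan condition on each $\map_\dcat(Q_i, \Xss)$ afforded by couniversal Kan-ness of $Q_i$, and the last rewrites the coproduct out of the mapping space. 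This closure step is the main obstacle; it is precisely where strong projectivity (rather than mere projectivity) is essential.

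Once this is in hand, $\pcat$ is a Malcev theory essentially for free: it is a theory by construction (closed under small coproducts in $\dcat$, generated by a set under coproducts and retracts), and the co-Malcev operations on each object (coming from couniversal Kan-ness via \cref{prop:univkancharacterize}) live in $\pcat$ since coproducts in $\pcat$ agree with those in $\dcat$, so \cref{prop:malcevcharacterizations}(5) applies. To conclude, I will apply \cref{cor:restriction}(3) to the coproduct-preserving inclusion $f \colon \pcat \hookrightarrow \dcat$: the functor $f$ is fully faithful by construction, each $f(P)$ is projective in $\dcat$ by the first step, and $f^\ast = \nu \colon \dcat \to \Model_\pcat$ is conservative because $\{P_s\} \subset \pcat$ detects equivalences in $\dcat$. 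Hence $f_! \dashv f^\ast$ is an adjoint equivalence, exhibiting $\dcat \simeq \Model_\pcat$ as required.
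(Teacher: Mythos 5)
Your proof is correct and follows essentially the same route as the paper's: take $\pcat\subset\dcat$ to be generated by the strongly projective generators under coproducts (and retracts), argue $\pcat$ is Malcev via \cref{lem:strongproj}, and then apply \cref{cor:restriction}(3) to the inclusion $\pcat\hookrightarrow\dcat$ to produce the equivalence $\dcat\simeq\Model_\pcat$. The one point where you are more explicit than the paper is the verification that \emph{every} object of $\pcat$, not just a chosen generator, is projective in $\dcat$ — i.e.\ condition (b) of \cref{cor:restriction}(3) — which the paper leaves implicit; your closure argument showing that strongly projective objects are closed under small coproducts, via the chain
\[
\map_\dcat\bigl(\textstyle\coprod_i Q_i,|\Xss|\bigr)\simeq \textstyle\prod_i|\map_\dcat(Q_i,\Xss)|\simeq|\textstyle\prod_i\map_\dcat(Q_i,\Xss)|\simeq|\map_\dcat(\textstyle\coprod_i Q_i,\Xss)|,
\]
using \cref{prop:kanlimits}(1) and the Kan condition coming from couniversal Kan-ness of each $Q_i$, is exactly the right way to fill that in.
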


\begin{proof}
We have already established that if $\pcat$ is a Malcev theory then $\Model_\pcat$ is strongly projectively generated, with set of strongly projective generators given by any set of generators for $\pcat$.

Conversely, suppose that $\dcat$ is a strongly projectively generated $\infty$-category. Let $\pcat_0 \subset \dcat$ be the full subcategory generated by a set of strongly projective generators for $\dcat$ and let $\pcat\subset\dcat$ be the full subcategory generated by $\pcat_0$ under coproducts. Clearly $\pcat$ is a theory, and the condition that $\pcat_0$ consists of strong projective objects ensures by \cref{lem:strongproj} that $\pcat$ is Malcev. Now \cref{cor:restriction} applied to the inclusion $\pcat\subset\dcat$ implies that the restricted Yoneda embedding defines an equivalence $\dcat\simeq\Model_\pcat$.
\end{proof}

\subsection{The additive case}
\label{subsection:prestable_infty_cats}

We can further specialize the above discussion to the case where the Malcev theory under consideration is additive.

\begin{definition}
Let $\dcat$ be a cocomplete prestable $\infty$-category (see \cite[Appendix C]{lurie_spectral_algebraic_geometry}). We say  that $\dcat$ is \emph{projectively generated} if there exists a set of projective objects $\pcat_0\subset\ccat$ satisfying the following condition:
\begin{itemize}
\item[($\ast$)] If $X \in \dcat$, then $X\simeq 0$ if and only if $\map_\dcat(P,X)\simeq \ast$ for all $P \in \pcat_0$.
\end{itemize}
In this case, we say that $\pcat_0$ is a set of \emph{projective generators} for $\dcat$.
\end{definition}

We first compare this notion of projective generation to the previous notions.

\begin{lemma}\label{lem:suspensiondelooping}
Let $\dcat$ be an additive $\infty$-category. If $P \in \dcat$ is projective, then
\[
\map_\dcat(P,\Sigma X)\simeq B\map_\dcat(P,X)
\]
for any $X \in \dcat$ which admits a suspension.
\end{lemma}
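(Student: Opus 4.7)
The plan is to realize the suspension $\Sigma X$ as the geometric realization of a bar construction in $\dcat$, and then to commute $\map_\dcat(P,-)$ past this realization using projectivity of $P$. Since $\dcat$ is additive, each object carries a canonical grouplike $\mathbf{E}_\infty$-structure and finite biproducts coincide with finite products. Using this, form the simplicial object $B_\bullet X$ in $\dcat$ with $B_n X = X^{\oplus n}$ (and $B_0 X = 0$), whose face maps are built from the fold/addition maps using the $\mathbf{E}_\infty$-structure on $X$ and whose degeneracy maps are built from zero insertions. This is the usual monadic bar resolution for $\cofib(X\to 0)$; a standard identification gives a canonical equivalence $|B_\bullet X|\simeq \cofib(X\to 0) = \Sigma X$ in $\dcat$, and in particular the realization $|B_\bullet X|$ exists whenever $\Sigma X$ does.

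Next I would apply $\map_\dcat(P,-)$ levelwise to $B_\bullet X$. As a corepresentable functor, $\map_\dcat(P,-)$ preserves finite products; since these coincide with finite biproducts in the additive $\dcat$, we obtain $\map_\dcat(P, B_n X)\simeq\map_\dcat(P,X)^n$, and the structure maps assemble this into the bar construction of $\map_\dcat(P,X)$ viewed as a grouplike $\mathbf{E}_\infty$-space (which it is, being an abelian group object in spaces by additivity of $\dcat$). The geometric realization of this simplicial space in $\spaces$ is by definition the delooping $B\map_\dcat(P,X)$. Finally, projectivity of $P$ lets $\map_\dcat(P,-)$ commute with the geometric realization in $\dcat$, so chaining the identifications yields
\[
\map_\dcat(P,\Sigma X)\simeq\map_\dcat(P,|B_\bullet X|)\simeq |\map_\dcat(P,B_\bullet X)|\simeq |B_\bullet\map_\dcat(P,X)|\simeq B\map_\dcat(P,X),
\]
as claimed.

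The main obstacle is the initial identification $|B_\bullet X|\simeq \Sigma X$ in $\dcat$. In the stable setting this is classical, but in the merely additive setting it requires a direct verification. The cleanest route is to compare universal properties: the pushout $0\cup_X 0$ satisfies a universal property with respect to maps \emph{out} of $\Sigma X$, and one checks that the bar construction $B_\bullet X$, augmented appropriately, presents the same universal object via the standard monadic resolution argument. A mild subtlety is that we must verify this identification without a priori assuming $\dcat$ admits the realization $|B_\bullet X|$; this is handled by noting that the pushout description of $\Sigma X$, together with existence of biproducts in $\dcat$, forces the realization to exist and to agree with $\Sigma X$.
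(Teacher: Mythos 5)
Your proof is correct and follows the same route as the paper's: write $\Sigma X$ as the geometric realization of the bar construction $B_\bullet X$ with $B_n X = X^{\oplus n}$, commute $\map_\dcat(P,\bs)$ through the realization using projectivity of $P$ and through the biproducts using corepresentability, and recognize the result as the bar construction of the grouplike $\bfE_\infty$-space $\map_\dcat(P,X)$. The identification $\Sigma X \simeq |B_\bullet X|$, which you rightly single out as the one nontrivial input, is simply asserted in the paper; your universal-property sketch for it is the right idea, and the cleanest way to carry it out is to check directly that $\map_\dcat(\Sigma X,Y) \simeq \Omega\map_\dcat(X,Y)$ agrees with the totalization of $\map_\dcat(B_\bullet X, Y) \simeq \map_\dcat(X,Y)^\bullet$, using that $\map_\dcat(X,Y)$ is a grouplike $\bfE_\infty$-space by additivity of $\dcat$.
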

\begin{proof}
The suspension $\Sigma X$, if it exists, may be written as the geometric realization
\[
\Sigma X \simeq \colim \left(\begin{tikzcd}
0&\ar[l,shift right]\ar[l,shift left]X&\ar[l,shift right]\ar[l,shift left]\ar[l]X \oplus X&\ar[l,shift right=0.5mm]\ar[l,shift right=1.5mm]\ar[l,shift left=0.5mm]\ar[l,shift left=1.5mm]\cdots
\end{tikzcd}\right).
\]
If $P \in \dcat$ is projective, then it follows that
\[
\map_\dcat(P,\Sigma X)\simeq \colim \left(\begin{tikzcd}
\ast&\ar[l,shift right]\ar[l,shift left]\map_\dcat(P,X)&\ar[l,shift right]\ar[l,shift left]\ar[l]\map_\dcat(P,X)\times \map_\dcat(P,X)&\ar[l,shift right=0.5mm]\ar[l,shift right=1.5mm]\ar[l,shift left=0.5mm]\ar[l,shift left=1.5mm]\cdots
\end{tikzcd}\right).
\]
This is the bar resolution for $\map_\dcat(P,X)$, so $\map_\dcat(P,\Sigma X)\simeq   B \map_\dcat(P,X)$ as claimed.
\end{proof}

It seems worth observing the following consequence. 

\begin{cor}
\label{cor:neverstable}
Let $\pcat$ be a Malcev theory. If $\Model_\pcat$ is stable, then $\pcat$ is the terminal theory.
\end{cor}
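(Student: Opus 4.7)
The plan is to show that $\nu P \simeq 0$ in $\Model_\pcat$ for every $P\in\pcat$. Via the fully faithful Yoneda embedding $\nu\colon \pcat\hookrightarrow\Model_\pcat$, this will force any two objects of $\pcat$ to be equivalent through a contractible space of morphisms, so that $\pcat$ is equivalent to the contractible $\infty$-category, which is the terminal object of $\pretheories$.

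Fix $P\in\pcat$. By \cref{prop:projectivesinomalcevmodels}, the representable $\nu P\in\Model_\pcat$ is projective, and by \cref{cor:malcevcomplete} the $\infty$-category $\Model_\pcat$ admits all small limits and colimits. Stability of $\Model_\pcat$ makes $\Sigma$ an equivalence with inverse $\Omega$, so every $X\in\Model_\pcat$ satisfies $X\simeq\Sigma\Omega X$. Feeding this into \cref{lem:suspensiondelooping} applied to the projective object $\nu P$ and the object $\Omega X$, and using that $\Omega$ commutes with mapping spaces, yields the self-referential identity
\[
\map_{\Model_\pcat}(\nu P, X)\;\simeq\; B\map_{\Model_\pcat}(\nu P,\Omega X)\;\simeq\; B\Omega\map_{\Model_\pcat}(\nu P, X).
\]

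The key observation is that a classifying space $BY$ is always $0$-connected, which immediately forces $\pi_0\map_{\Model_\pcat}(\nu P, X)=0$. Iterating the identity with $X$ replaced by $\Omega^n X$ and using $\map_{\Model_\pcat}(\nu P,\Omega^n X)\simeq\Omega^n\map_{\Model_\pcat}(\nu P, X)$, we deduce that every $\Omega^n\map_{\Model_\pcat}(\nu P, X)$ is $0$-connected, and hence $\pi_n\map_{\Model_\pcat}(\nu P, X)=0$ for all $n\geq 0$. Therefore $\map_{\Model_\pcat}(\nu P, X)\simeq\ast$ for every $X\in\Model_\pcat$, so $\nu P$ is initial; in a stable $\infty$-category initial objects are zero, so $\nu P\simeq 0$.

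Applying the above to every $P,Q\in\pcat$ and using fully faithfulness, $\map_\pcat(P,Q)\simeq\map_{\Model_\pcat}(\nu P,\nu Q)\simeq\map_{\Model_\pcat}(0,0)\simeq\ast$, so $\pcat$ is equivalent to the contractible $\infty$-category as claimed. I do not anticipate a serious obstacle; the crux is the self-delooping identity obtained by combining projectivity of representables with invertibility of $\Sigma$, and the rest is bookkeeping of homotopy groups.
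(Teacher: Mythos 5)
Your proof is correct and takes essentially the same route as the paper's: both proofs hinge on \cref{lem:suspensiondelooping} applied to the projective object $\nu P$, combined with invertibility of $\Sigma$, to show that $\map_{\Model_\pcat}(\nu P, X)$ is an $n$-fold delooping of itself for all $n$ and hence contractible. The paper phrases the iteration as $\map_\pcat(P,X)\simeq B^n\Omega^n\map_\pcat(P,X)\simeq\map_\pcat(P,X)_{\geq n}$, while you phrase it via connectivity of $BY$ and $\pi_n = \pi_0\Omega^n$, but these are the same bookkeeping.
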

\begin{proof}
As $\Model_\pcat$ is stable, it is additive. If $X \in \Model_\pcat$ and $P \in \pcat$, then iterating \cref{lem:suspensiondelooping} we find that
\[
\map_\pcat(P,X)\simeq \map_\pcat(P,\Sigma^n \Omega^n X) \simeq B^n \Omega^n \map_\pcat(P,X)\simeq \map_\pcat(P,X)_{\geq n}
\]
for all $n \geq 0$. It follows that $\map_\pcat(P,X)\simeq \ast$. As $X$ was arbitrary, this implies that $\pcat$ is the terminal theory.
\end{proof}

\begin{lemma}
\label{lem:additiveprojgen}
Let $\dcat$ be a prestable $\infty$-category. Then $\dcat$ is projectively generated if and only if it is strongly projectively generated in the sense of \cref{def:stronglyprojectivelgenerated}.
\end{lemma}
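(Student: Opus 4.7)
The plan proceeds in two independent steps.

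First, I reduce the claim to comparing ``detecting zero'' with ``detecting equivalences'' for projective objects. Since $\dcat$ is prestable, it is in particular additive, and so is $\dcat^{\op}$. By \cref{ex:additiveukan}, every object of $\dcat^{\op}$ is universally Kan, hence every object of $\dcat$ is couniversally Kan. Combined with \cref{lem:strongproj}, this shows that the strongly projective objects of $\dcat$ coincide with the projective ones. Thus $\dcat$ is strongly projectively generated if and only if it admits a set of projective objects that detects equivalences.

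It therefore remains to show that, for a set $\{P_s\}$ of projective objects, detecting zero is equivalent to detecting equivalences. One direction is immediate by applying the hypothesis to morphisms of the form $X \to 0$. For the converse, suppose $\{P_s\}$ detects zero, and let $f \colon X \to Y$ be a morphism with $\map_{\dcat}(P_s, f)$ an equivalence for every $s$. By \cref{lem:suspensiondelooping}, the induced map $\map_{\dcat}(P_s, \Sigma f) \simeq B\,\map_{\dcat}(P_s, f)$ is then also an equivalence, so its fiber over the basepoint is contractible. Since mapping spaces preserve limits, this forces $\map_{\dcat}(P_s, \mathrm{fib}(\Sigma f)) \simeq \ast$ for every $s$, and the detect-zero hypothesis yields $\mathrm{fib}(\Sigma f) \simeq 0$ in $\dcat$.

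The last step upgrades this to the equivalence of $f$ itself, and uses twice the defining prestable axiom that every cofiber sequence is a fiber sequence. Applied to the cofiber sequence $\Sigma X \to \Sigma Y \to \Sigma \mathrm{cofib}(f)$ and then rotated using the full faithfulness of $\Sigma$ (which gives $\Omega \Sigma Z \simeq Z$), this identifies $\mathrm{fib}(\Sigma f) \simeq \mathrm{cofib}(f)$; hence $\mathrm{cofib}(f) \simeq 0$. Reading the cofiber sequence $X \to Y \to 0$ as a fiber sequence then identifies $X$ with $\mathrm{fib}(Y \to 0) \simeq Y$ via $f$, so $f$ is an equivalence. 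The main obstacle is this prestable bookkeeping: in contrast to the stable case, $\mathrm{fib}(f) \simeq 0$ alone does not force $f$ to be an equivalence in the prestable setting, so the argument must pass through $\Sigma f$ and invoke both directions of the cofiber/fiber sequence axiom together with the full faithfulness of $\Sigma$.
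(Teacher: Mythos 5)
Your proof is correct and essentially reproduces the paper's argument: both reduce to showing that projective objects coincide with strongly projective ones in the additive setting (via couniversal Kanness), both apply \cref{lem:suspensiondelooping} to deduce that $\map(P,\mathrm{fib}(\Sigma f))$ is contractible, and both invoke the prestable interplay between fiber and cofiber squares to conclude. The only organizational difference is in the last step: the paper observes directly that the pullback square defining $\mathrm{fib}(\Sigma f)$ over $\Sigma Y$ is also a pushout, so $\mathrm{fib}(\Sigma f)\simeq 0$ immediately forces $\Sigma f$ to be an equivalence, whereas you route through the rotation $\mathrm{fib}(\Sigma f)\simeq\mathrm{cofib}(f)$ and then a second application of the prestable property to $X\to Y\to 0$ — the same content, packaged with one extra step.
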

\begin{proof}
Clearly if $\dcat$ is strongly projectively generated then it is projectively generated. Suppose conversely that $\dcat$ is projectively generated, with $\pcat_0\subset\dcat$ a set of projective generators. As $\dcat$ is additive, it follows that the objects of $\pcat_0$ are strongly projective. We must therefore verify that if $f\colon X \to Y$ is a map in $\dcat$ for which $\map_\dcat(P,X) \to \map_\dcat(P,Y)$ is an equivalence for all $P \in \pcat_0$, then in fact $f$ is an equivalence.

As $\dcat$ is prestable, $\Sigma\colon \dcat\to\dcat$ is fully faithful, so it suffices to prove that $\Sigma f$ is an equivalence. If $P \in \pcat_0$, then as $\map_\dcat(P,\Sigma X)\simeq B \map_\dcat(P,X)$ by \cref{lem:suspensiondelooping}, we find that if $f\colon X \to Y$ induces an equivalence $\map_\dcat(P,X) \to \map_\dcat(P,Y)$ then $\Sigma f \colon \Sigma X \to \Sigma Y$ induces an equivalence $\map_\dcat(P,\Sigma X) \to \map_\dcat(P,\Sigma Y)$. The morphism $\Sigma f\colon \Sigma X \to \Sigma Y$ sits in a bicartesian square
\begin{center}\begin{tikzcd}
Z\ar[r]\ar[d]&\Sigma X \ar[d,"\Sigma f"]\\
0\ar[r]&\Sigma Y
\end{tikzcd}.\end{center}
As this square is cocartesian, to show that $\Sigma f$ is an equivalence it suffices to show that $Z\simeq 0$. As this square is cartesian, we see that if $\map(P,\Sigma X)\to\map(P,\Sigma Y)$ is an equivalence then $\map(P,Z) \simeq 0$. If this holds for all $P\in \pcat_0$, then $Z\simeq 0$ by the assumption that $\pcat_0$ is a set of projective generators for $\dcat$, completing the proof.
\end{proof}

\begin{prop}
\label{prop:projprestable}
Let $\dcat$ be an $\infty$-category. Then $\dcat\simeq\Model_\pcat$ for an additive Malcev theory if and only if $\dcat$ is a projectively generated prestable $\infty$-category. Moreover, in this case,
\begin{enumerate}
\item One may take $\pcat\subset\dcat$ to be the full subcategory of projective objects;
\item $\Model_\pcat$ embeds into its stabilization $\LMod_\pcat$, which may be identified with the full subcategory
\[
\LMod_\pcat\subset\Fun(\pcat^\op,\spectra)
\]
of product-preserving functors $\pcat^\op\to\spectra$.
\end{enumerate}
\end{prop}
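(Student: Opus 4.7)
The plan is to combine the characterization in \cref{thm:characterization_of_malcev_theories_in_terms_of_strong_projectivity} with the prestable comparison \cref{lem:additiveprojgen}, interpreting additive models as presheaves of connective spectra. For the forward direction, suppose $\pcat$ is an additive Malcev theory. By \cref{thm:characterization_of_malcev_theories_in_terms_of_strong_projectivity}, $\Model_\pcat$ is strongly projectively generated by the Yoneda images of the objects of $\pcat$. Additivity of $\pcat$ means every object of $\pcat$ is canonically a grouplike $\bfE_\infty$-cogroup, so for any $X \in \Fun^\times(\pcat^\op, \spaces)$ the value $X(P)$ is canonically a grouplike $\bfE_\infty$-space, i.e.\ a connective spectrum. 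This promotes $\Model_\pcat$ to an equivalent full subcategory
\[
\Model_\pcat \simeq \Fun^\times(\pcat^\op, \spectra_{\geq 0}).
\]
Since $\spectra_{\geq 0}$ is prestable and product-preservation is preserved under passage to fibres and suspensions, this identifies $\Model_\pcat$ with the connective part of the pointwise t-structure on the stable $\infty$-category $\Fun^\times(\pcat^\op, \spectra)$. Hence $\Model_\pcat$ is prestable, and \cref{lem:additiveprojgen} upgrades strong projective generation to projective generation.

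For the reverse direction and assertion (1), suppose $\dcat$ is a projectively generated prestable $\infty$-category with projective generators $\{P_s\}_{s\in S}$, and let $\pcat \subset \dcat$ be the full subcategory of projective objects. Prestability implies additivity of $\dcat$, hence of $\pcat$, and an additive $\infty$-category is automatically Malcev by \cref{ex:additiveukan} applied to $\pcat^\op$. To see $\pcat$ is a theory, observe that any projective $Q \in \dcat$ receives the canonical map $\coprod_{s \in S,\, f \in \pi_0\map_\dcat(P_s, Q)} P_s \to Q$, which is an effective epimorphism because each $P_s$ is in particular weakly projective and $\{P_s\}$ detects equivalences; since $Q$ is projective this effective epimorphism splits, exhibiting $Q$ as a retract of a coproduct of the $P_s$. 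So $\pcat$ is generated under coproducts and retracts by $\{P_s\}$, and in particular is a theory. Applying \cref{lem:additiveprojgen} and \cref{thm:characterization_of_malcev_theories_in_terms_of_strong_projectivity} to the inclusion $\pcat \subset \dcat$ then supplies the desired equivalence $\dcat \simeq \Model_\pcat$.

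For assertion (2), the equivalence $\Model_\pcat \simeq \Fun^\times(\pcat^\op, \spectra_{\geq 0})$ sits inside $\LMod_\pcat := \Fun^\times(\pcat^\op, \spectra)$, which is stable as a full subcategory of the stable $\infty$-category $\Fun(\pcat^\op, \spectra)$ closed under finite limits. Since stabilization commutes with the limit constructions defining functor categories and product-preserving subcategories, we obtain
\[
\Sp(\Model_\pcat) \simeq \Sp(\Fun^\times(\pcat^\op, \spectra_{\geq 0})) \simeq \Fun^\times(\pcat^\op, \spectra) = \LMod_\pcat,
\]
and the embedding $\Model_\pcat \hookrightarrow \LMod_\pcat$ is the unit of stabilization. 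The main technical obstacle is the equivalence $\Model_\pcat \simeq \Fun^\times(\pcat^\op, \spectra_{\geq 0})$: while the identification of grouplike $\bfE_\infty$-spaces with connective spectra is standard, implementing it pointwise over $\pcat^\op$ in a way that is simultaneously compatible with the limit definition of $\Fun^\times$ and with the stabilization construction is precisely what is needed so that prestability of $\Model_\pcat$ and the identification of $\LMod_\pcat$ both follow from a single calculation.
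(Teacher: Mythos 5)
Your proposal is correct and follows essentially the same route as the paper. The paper's proof of the forward direction is a pointer to Lurie's \cite[Proposition C.1.5.7, Remark C.1.5.9]{lurie_spectral_algebraic_geometry} extended to the infinitary setting; your argument — that additivity of $\pcat$ lets every product-preserving presheaf of spaces factor canonically through grouplike $\bfE_\infty$-spaces, giving $\Model_\pcat\simeq\Fun^\times(\pcat^\op,\spectra_{\geq 0})$ as the connective part of the pointwise $t$-structure on the stable $\infty$-category $\Fun^\times(\pcat^\op,\spectra)$ — is exactly the content of that citation, and it survives passage from finitary to infinitary theories because it only uses products, not colimits. The reverse direction (additive $\Rightarrow$ Malcev by \cref{ex:additiveukan} applied to $\pcat^\op$; splitting effective epis onto projectives to exhibit $\pcat$ as a theory; then \cref{lem:additiveprojgen} and \cref{thm:characterization_of_malcev_theories_in_terms_of_strong_projectivity}) matches the paper's converse. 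The only place you should be a little more careful is the one-line justification that "stabilization commutes with the limit constructions defining functor categories and product-preserving subcategories": what actually does the work is that $\Fun^\times(\pcat^\op,\spectra)$ is stable (it is closed under limits and desuspensions in $\Fun(\pcat^\op,\spectra)$), that $\Fun^\times(\pcat^\op,\spectra_{\geq 0})$ is the connective part of the pointwise $t$-structure, and that projective generation forces this $t$-structure to be right separated/complete enough that the ambient stable category is actually the stabilization of its connective part. You flag the coherence point about the pointwise lift at the end; that, rather than stabilization, is where the real technical content lies, and it is handled by the observation that in a semiadditive $\infty$-category product-preserving space-valued functors factor uniquely through $\CMon(\spaces)$, and through $\CMon^{\gp}(\spaces)$ once hom-objects are grouplike.
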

\begin{proof}
The claim that if $\pcat$ is an additive Malcev theory then $\Model_\pcat$ is a prestable $\infty$-category with stabilization $\LMod_\pcat$ as described appears, for finitary theories, in \cite[Proposition C.1.5.7, Remark C.1.5.9]{lurie_spectral_algebraic_geometry}, and the same proof applies here. The converse holds by combining \cref{thm:characterization_of_malcev_theories_in_terms_of_strong_projectivity} and \cref{lem:additiveprojgen}.
\end{proof}

\begin{example}
\label{ex:connectivering}
Let $R$ be a connective $\bfE_1$-ring spectrum, and let $\frees_0(R)\subset\LMod_R$ be the full subcategory generated under $R$ by direct sums. Then $\frees_0(R)$ is an $\omega$-bounded Malcev theory and the restricted Yoneda embedding
\[
\nu\colon \LMod_R^{\geq 0} \to \Model_{\frees_0(R)}
\]
defines an equivalence between the $\infty$-category of connective $R$-modules and the $\infty$-category of models for $\frees_0(R)$. 
\end{example}

\begin{example}
Let $R$ be a connective $\bfE_1$-ring spectrum, and suppose that $\pi_0 R$ contains no infinitely $p$-divisible elements. Let $\frees_0^{p}(R)\subset\LMod_R$ be the full subcategory generated by objects of the form $(R^{\oplus I})_p^\wedge$ for some set $I$. Then $\frees_0^{p}(R)$ is an $\aleph_1$-bounded Malcev which is generally not $\omega$-bounded, and the restricted Yoneda embedding
\[
\nu\colon \LMod_R^{{\geq 0},\Cpl(p)} \to \Model_{\frees_0^{p}(R)}
\]
defines an equivalence between the $\infty$-category of $p$-complete connective $R$-modules and the $\infty$-category of models for $\frees_0^{p}(R)$.
\end{example}

\begin{remark}
Let $\pcat_\omega$ be a \emph{finitary} additive theory. Then there is an equivalence
\[
\Fun^R(\presheaves_\Sigma(\pcat_\omega)^\op,\spaces)\simeq \Fun^\times(\pcat_\omega,\spaces^\op)^\op\simeq \presheaves_\Sigma(\pcat_\omega^\op),
\]
where $\Fun^R$ and $\Fun^\times$ denote limit-preserving and finite product-preserving functors respectively. In particular, every \emph{compactly} projectively generated prestable $\infty$-category is dualizable as a presentable $\infty$-category.
\end{remark}

\section{Derived functors}

Recall from \S\ref{ssec:quillen} that if $\pcat$ is a discrete Malcev theory, then $\Model_\pcat$ is equivalent to the classical homotopy theory of simplicial set-valued models of $\pcat$, or what is the same, to the animation of $\Model_\pcat^\heartsuit$. This makes $\Model_\pcat$ a convenient setting for doing homotopical algebra. 

In this section, we introduce and begin our study of the \emph{derived functors} between Malcev theories, as a natural generalization of the left-derived functors of classical homotopical algebra. We make basic observations in \S\ref{subsection:basic_definitions_of_derived_functors} and study exactness properties of derived functors in \S\ref{subsection:left_and_right_exactness_of_derived_functors}. As we explain, derived functors are rarely left exact, and in \S\ref{ssec:weakleftexact} we introduce and study the more general property of being weakly left exact which is satisfied in many more examples of interest.

\subsection{Basic definitions}
\label{subsection:basic_definitions_of_derived_functors}

In \cref{thm:freecocompletion}, we observed that the $\infty$-category of models of a Malcev theory is freely generated under geometric realizations. This motivates the following definition.

\begin{definition}
\label{definition:derived_functors_between_malcev_pretheories}
Let $\pcat$ and $\qcat$ be Malcev pretheories. A \emph{derived functor} $f\colon \pcat\pto\qcat$ is either of the following equivalent pieces of data:
\begin{enumerate}
\item An arbitrary functor $f\colon \pcat\to\Model_\qcat$;
\item A geometric realization-preserving functor $f_!\colon \Model_\pcat\to\Model_\qcat$.
\end{enumerate}
We write
\[
\fun_!(\pcat,\qcat)\simeq\Fun(\pcat,\Model_\qcat)\subset \Fun(\Model_\pcat,\Model_\qcat)
\]
for the full subcategory of derived functors.
\end{definition}

\begin{example}
Let $f\colon \pcat\to\qcat$ be a homomorphism of Malcev pretheories. Then left Kan extension of $\nu\circ f \colon \pcat\to\qcat\to\Model_\qcat$ along $\nu\colon \pcat\to\Model_\pcat$ defines a functor
\[
f_!\colon \Model_\pcat\to\Model_\qcat
\]
which preserves all colimits by \cref{thm:freecocompletion}. This identifies homomorphisms $f\colon \pcat\to\qcat$ with the subclass of derived functors $f\colon \pcat\pto\qcat$ which preserve coproducts and satisfy $f(\pcat)\subset\qcat$.
\end{example}

\begin{example}
\label{example:restriction_along_a_morphism_of_malcev_theories_is_a_derived_functor}
Let $f\colon \pcat\to\qcat$ be a homomorphism between Malcev theories. Then restriction defines a right adjoint
\[
f^\ast\colon \Model_\qcat\to\Model_\pcat
\]
to $f_!$ which preserves geometric realizations by \cref{cor:restriction}, and which can therefore be regarded as a derived functor $\qcat\pto\pcat$.
\end{example}

Derived functors in the sense of \cref{definition:derived_functors_between_malcev_pretheories} are a generalization of the \emph{left-derived functors} of homological algebra.

\begin{example}\label{ex:classicalderived}
Let $\acat$ be a cocomplete abelian category with enough projectives. By \cref{thm:animation}, if $\pcat\subset\acat$ is the full subcategory of projective objects, then $\pcat$ forms a Malcev theory for which $\Model_\pcat$ is the connective derived $\infty$-category $\dcat_{\geq 0}(\acat)$ of $\acat$. Given another abelian category $\bcat$ with enough projectives and a functor $F\colon \acat \to \bcat$, the composite
\[
f\colon \pcat\subset\acat\to\bcat\to\dcat_{\geq 0}(\bcat)
\]
therefore determines a total derived functor
\[
f_!\colon \dcat_{\geq 0}(\acat) \to \dcat_{\geq 0}(\bcat).
\]
The composite
\begin{center}\begin{tikzcd}
\acat\ar[r,tail]&\dcat_{\geq 0}(\acat)\ar[r,"\Sigma^n"]&\dcat_{\geq 0}(\acat)\ar[r,"f_!"]&\dcat_{\geq 0}(\bcat)\ar[r,"\pi_q"]&\bcat
\end{tikzcd}\end{center}
is isomorphic to the \emph{$q$th derived functor of $F$ of type $n$} in the sense of Dold--Puppe \cite{doldpuppe1961homologie}. If $F$ is additive, then this is the $(q-n)$th left-derived functor of $F$ in the sense of Cartan--Eilenberg, i.e.\ in the sense of ordinary homological algebra.
\end{example}

\begin{prop}\label{prop:derivedfunctorcc}
Let $f\colon \pcat\pto\qcat$ be a derived functor of Malcev pretheories, and also write $f_!\colon \presheaves(\pcat) \to \presheaves(\qcat)$ for the left Kan extension of $f\colon \pcat\to\presheaves(\qcat)$ along $\nu\colon \pcat\to\presheaves(\pcat)$. Then the following diagram commutes:
\begin{center}\begin{tikzcd}
\Model_\pcat\ar[r,"f_!"]\ar[d,tail]&\Model_\qcat\ar[d,tail]\\
\presheaves(\pcat)\ar[r,"f_!"]&\presheaves(\qcat)
\end{tikzcd}.\end{center}
\end{prop}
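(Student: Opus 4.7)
The plan is to invoke the universal property of $\Model_\pcat$ as the free cocompletion of $\pcat$ under geometric realizations, provided by \cref{thm:freecocompletion}. Concretely, I claim that both composites
\[
\Model_\pcat \xrightarrow{f_!} \Model_\qcat \hookrightarrow \presheaves(\qcat) \quad\text{and}\quad \Model_\pcat \hookrightarrow \presheaves(\pcat) \xrightarrow{f_!} \presheaves(\qcat)
\]
are geometric realization-preserving functors which agree after restriction along $\nu\colon \pcat\to\Model_\pcat$, so by \cref{thm:freecocompletion}(2) applied with $\dcat = \presheaves(\qcat)$ they are naturally equivalent.

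For the top composite, the arrow $f_!\colon\Model_\pcat\to\Model_\qcat$ preserves geometric realizations by the very definition of a derived functor, and the inclusion $\Model_\qcat\hookrightarrow\presheaves(\qcat)$ preserves geometric realizations by \cref{prop:pointwisegeoreal} applied to the Malcev pretheory $\qcat$. For the bottom composite, the inclusion $\Model_\pcat\hookrightarrow\presheaves(\pcat)$ preserves geometric realizations by the same \cref{prop:pointwisegeoreal} applied to $\pcat$, and the Kan-extended $f_!\colon \presheaves(\pcat)\to\presheaves(\qcat)$ preserves all small colimits, since it is the left Kan extension along the Yoneda embedding into a presheaf category.

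To compare the two composites on $\pcat$, note that in both cases the restriction is the functor $f\colon\pcat\to\presheaves(\qcat)$ given by the derived functor composed with $\Model_\qcat\hookrightarrow\presheaves(\qcat)$: for the top composite this holds because $f_!\colon\Model_\pcat\to\Model_\qcat$ restricts to the derived functor $f$ along $\nu$ by construction; for the bottom composite it holds because left Kan extension along a fully faithful functor recovers the original functor, so $f_!\circ\nu \simeq f$ on $\pcat$. Since both sides are geometric realization-preserving and their restrictions along $\nu$ agree, \cref{thm:freecocompletion}(2) delivers the claimed equivalence.

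There is no real obstacle here; the only subtle point to get right is distinguishing $\Model_\qcat$ and $\presheaves(\qcat)$ carefully, and noting that geometric realizations in $\Model_\qcat$ are computed pointwise (so the inclusion into presheaves preserves them), which is exactly the content of the Malcev hypothesis encoded in \cref{prop:pointwisegeoreal}.
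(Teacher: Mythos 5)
Your proof is correct and takes essentially the same route as the paper's, which is somewhat terser: the paper observes that $f_!\colon\Model_\pcat\to\Model_\qcat$ extends $f$ by geometric realizations (citing Lemma \ref{lem:malcevcocomplete}(2)), that these are preserved by the inclusion $\Model_\qcat\hookrightarrow\presheaves(\qcat)$ (Proposition \ref{prop:pointwisegeoreal}), and leaves it implicit that the bottom composite is also a geometric realization-preserving extension of $f$, so the two must coincide. You spell out the uniqueness step via Theorem \ref{thm:freecocompletion}(2) and verify both legs explicitly, which is the right level of detail but not a different argument.
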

\begin{proof}
First consider the top square. By \cref{lem:malcevcocomplete}.(2), the left Kan extension $f_!\colon \Model_\pcat\to\Model_\qcat$ extends $f\colon \pcat\to\Model_\qcat$ by geometric realizations. Commutativity follows as these are preserved by $\Model_\qcat\to\presheaves(\qcat)$ by \cref{prop:pointwisegeoreal}.
\end{proof}

\begin{cor}
The assignment to a Malcev theory $\pcat$ its $\infty$-category $\Model_\pcat$ of models is functorial in arbitrary functors between theories. More precisely, let $\malcevtheories'\subset\largecatinfty$ denote the \emph{full} subcategory on the Malcev theories. Then the assignment
\[
\pcat \mapsto \Model_\pcat
\]
forms a subfunctor of
\[
\presheaves(\bs)\colon \malcevtheories' \subset\largecatinfty \to \largecatinfty.
\]
\end{cor}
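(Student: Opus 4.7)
The plan is to deduce the corollary from \cref{prop:derivedfunctorcc} via the identification of arbitrary functors $\pcat \to \qcat$ of Malcev theories with a special class of derived functors $\pcat \pto \qcat$.

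First, I would reduce the claim to a pointwise preservation statement: to exhibit $\pcat \mapsto \Model_\pcat$ as a subfunctor of $\presheaves(\bs) \colon \malcevtheories' \to \largecatinfty$ it suffices, since each $\Model_\pcat \hookrightarrow \presheaves(\pcat)$ is fully faithful, to show that for every arrow $f \colon \pcat \to \qcat$ of $\malcevtheories'$ the induced functor $f_! = \presheaves(f) \colon \presheaves(\pcat) \to \presheaves(\qcat)$ sends $\Model_\pcat$ into $\Model_\qcat$.

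This containment follows directly from the previous proposition. Composing $f$ with the Yoneda embedding $\nu \colon \qcat \to \Model_\qcat$ produces a functor $\nu \circ f \colon \pcat \to \Model_\qcat$, which by \cref{definition:derived_functors_between_malcev_pretheories} is precisely the data of a derived functor $\pcat \pto \qcat$. The total functor $(\nu \circ f)_! \colon \presheaves(\pcat) \to \presheaves(\qcat)$ appearing in \cref{prop:derivedfunctorcc} is computed as the left Kan extension of $\nu \circ f$ along $\nu \colon \pcat \to \presheaves(\pcat)$, and by the universal property of $\presheaves(\pcat)$ as the free cocompletion of $\pcat$ it coincides with $f_!$ as supplied by the functoriality of $\presheaves(\bs)$. \cref{prop:derivedfunctorcc} then asserts exactly that this functor restricts to $\Model_\pcat \to \Model_\qcat$, which is the desired pointwise preservation.

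The main subtlety I anticipate is purely $\infty$-categorical bookkeeping: assembling the pointwise restrictions into a bona fide natural transformation, rather than just a pointwise collection of fully faithful inclusions, requires higher coherence data. This is, however, a formal matter: given that the full subcategory $\Model_\pcat \subset \presheaves(\pcat)$ is preserved by $\presheaves(f)$ for every morphism $f$ in $\malcevtheories'$, one obtains the desired subfunctor by restricting $\presheaves(\bs)$ to these full subcategories, which can be realized concretely as a suitable pullback in $\Fun(\malcevtheories', \largecatinfty)$. All higher coherences are inherited from those of $\presheaves(\bs)$, and uniqueness follows from the full faithfulness of the inclusions, so no genuine obstacle arises beyond the containment established above.
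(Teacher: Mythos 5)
Your proof is correct and follows essentially the same route as the paper: the paper proves the corollary in one sentence by invoking \cref{prop:derivedfunctorcc} "in the special case when $f\colon \pcat\pto\qcat$ is a derived functor satisfying $f(\pcat)\subset\qcat$," which is precisely your identification of an arbitrary functor $f\colon\pcat\to\qcat$ with the derived functor $\nu\circ f$ and the observation that its presheaf-level left Kan extension recovers $\presheaves(f)$. Your final paragraph on assembling the pointwise containments into a coherent subfunctor makes explicit a step the paper leaves tacit, but this is the same argument throughout.
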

\begin{proof}
This is the special case of \cref{prop:derivedfunctorcc} obtained when $f\colon \pcat\pto\qcat$ is a derived functor satisfying $f(\pcat)\subset\qcat$.
\end{proof}

We note that there is a universal way to approximate an arbitrary functor by a derived one: 

\begin{prop}
\label{lemma:derivization_of_functor}
Let $\pcat, \qcat$ be Malcev pretheories. Then the inclusion 
\[
\fun_!(\pcat,\qcat) \subset \Fun(\Model_{\pcat}, \Model_{\qcat}) 
\]
of the full subcategory spanned by the derived functors admits a right adjoint 
\[
\der \colon \Fun(\Model_{\pcat}, \Model_{\qcat}) \rightarrow \fun_!(\pcat,\qcat).
\] 
\end{prop}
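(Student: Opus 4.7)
The plan is to identify the inclusion with left Kan extension along the Yoneda embedding, which is always a left adjoint by general nonsense.

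Applying \cref{thm:freecocompletion} with target $\dcat = \Model_\qcat$---which admits geometric realizations by \cref{lem:malcevcocomplete}---yields an adjunction
\[
\nu_!\colon \Fun(\pcat,\Model_\qcat)\rightleftarrows\Fun(\Model_\pcat,\Model_\qcat)\colon\nu^*,
\]
where $\nu^*$ denotes restriction along the Yoneda embedding $\nu\colon\pcat\hookrightarrow\Model_\pcat$ and $\nu_!$ denotes pointwise left Kan extension, whose existence is part (1) of \cref{thm:freecocompletion}. Part (2) of that theorem further guarantees that $\nu_!$ lands in the full subcategory $\fun_!(\pcat,\qcat)\subseteq\Fun(\Model_\pcat,\Model_\qcat)$ of derived functors, and that restriction defines an equivalence $\phi \colonequals \nu^*|_{\fun_!(\pcat,\qcat)}\colon \fun_!(\pcat,\qcat)\xrightarrow{\simeq}\Fun(\pcat,\Model_\qcat)$ whose inverse is given by $\nu_!$.

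Next I would observe that the inclusion $i\colon \fun_!(\pcat,\qcat)\hookrightarrow\Fun(\Model_\pcat,\Model_\qcat)$ factors, up to natural equivalence, as $i \simeq \nu_!\circ\phi$: for any derived functor $G$ we have $G \simeq \nu_!(\nu^* G)$ by the universal property of left Kan extension along the fully faithful functor $\nu$. As a composite of two left adjoints---the equivalence $\phi$ and the left Kan extension $\nu_!$---the functor $i$ is itself a left adjoint, with right adjoint given by composing the right adjoints in reverse order:
\[
\der \colonequals \phi^{-1}\circ\nu^* \simeq \nu_!\circ\nu^*.
\]
Explicitly, given $F\in\Fun(\Model_\pcat,\Model_\qcat)$, the derived functor $\der(F)$ is obtained by first restricting $F$ to the full subcategory $\pcat\hookrightarrow\Model_\pcat$ and then left Kan extending the result back along $\nu$. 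The counit of the adjunction $i\dashv\der$ is induced by the counit of $\nu_!\dashv\nu^*$, namely the natural comparison map $\nu_!(F\circ\nu)\to F$.

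The argument contains no real obstacle; it reduces to invoking the universal property of \cref{thm:freecocompletion} and then composing adjunctions. The only tacit input is that the functor $\nu_!(F\circ\nu)$ always lies in $\fun_!(\pcat,\qcat)$, which is the content of part (2) of \cref{thm:freecocompletion}.
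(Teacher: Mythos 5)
Your proposal is correct and follows essentially the same route as the paper: identify the inclusion, via the equivalence $\fun_!(\pcat,\qcat)\simeq\Fun(\pcat,\Model_\qcat)$ of \cref{thm:freecocompletion}, with left Kan extension along $\nu\colon\pcat\hookrightarrow\Model_\pcat$, whose right adjoint is restriction. The paper states this in one sentence; your write-up just spells out the factorization and the resulting formula $\der \simeq \nu_!\circ\nu^*$ explicitly.
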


\begin{proof}
Under the equivalence $\fun_!(\pcat,\qcat)\simeq\Fun(\pcat,\Model_\qcat)$, the above inclusion can be identified with the left Kan extension along $\pcat \hookrightarrow \Model_{\pcat}$, which has a right adjoint given by restriction. 
\end{proof}

\begin{remark}
Concretely, if $f \colon \Model_{\pcat} \rightarrow \Model_{\qcat}$ is any functor, then $\der(f)\colon \pcat\pto\qcat$ is the derived functor associated to the composite $f\circ \nu\colon \pcat\to\Model_\pcat\to\Model_\qcat$.
\end{remark}

\begin{remark}
\label{remark:lax_monoidal_structure_on_the_derivization_of_a_functor}
In the special case of endomorphisms, the subcategory $\Endo_!(\pcat) \subseteq \End(\Model_\pcat)$ of derived endofunctors is closed under compositions. It follows that the right adjoint 
\[
\der \colon \End(\Model_\pcat) \rightarrow \Endo_!(\pcat)
\]
of \cref{lemma:derivization_of_functor} is lax monoidal with respect to composition \cite[Corollary 7.3.2.7]{higher_algebra}. 
\end{remark}

\subsection{Left and right exactness}
\label{subsection:left_and_right_exactness_of_derived_functors}

By definition, if $f\colon \pcat\pto\qcat$ is a derived functor then
\[
f_! \colon \Model_\pcat\to\Model_\qcat
\]
preserves geometric realizations. In this sense one can regard derived functors as being \emph{right-exact}. They satisfy the following further right-exactness property.

\begin{prop}
\label{prop:derivedfunctorpreservesconnectivity}
Let $f\colon \pcat\pto\qcat$ be a derived functor. Then $f_!$ preserves $n$-connective morphisms for all $-1\leq n \leq \infty$.
\end{prop}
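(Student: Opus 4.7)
The plan is to reduce to the analogous statement for the colimit-preserving extension of $f_!$ to presheaves, where one can deploy the adjunction to move across the connective/truncated factorization system. By \cref{prop:derivedfunctorcc}, the derived functor $f_!$ is compatible, via the fully faithful inclusions, with a colimit-preserving functor $\tilde{f}_!\colon \presheaves(\pcat)\to\presheaves(\qcat)$. Combined with \cref{prop:levelwiseimage}, which asserts that $n$-connectivity in $\largemodels_\pcat$ is detected levelwise and so agrees with $n$-connectivity of the underlying morphism of presheaves, this reduces the statement to showing that $\tilde{f}_!$ preserves $n$-connective morphisms between presheaves.

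Being a left Kan extension along the Yoneda embedding, $\tilde{f}_!$ admits a right adjoint $\tilde{f}^\ast\colon \presheaves(\qcat)\to\presheaves(\pcat)$, given on $Y \in \presheaves(\qcat)$ by $\tilde{f}^\ast(Y)(P) = \map_{\presheaves(\qcat)}(f(P),Y)$, which as a right adjoint preserves all small limits. A straightforward induction on $n\geq -2$, using only that equivalences are preserved by every functor and pullbacks by every limit-preserving functor, then shows that $\tilde{f}^\ast$ preserves $n$-truncated morphisms.

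The $(n+1)$-connective versus $n$-truncated factorization system of \cref{prop:imagefactorizations} is constructed pointwise in $\spaces$ and so is inherited by the ambient presheaf categories on both sides; in particular a morphism in $\presheaves(\pcat)$, respectively $\presheaves(\qcat)$, is $(n+1)$-connective precisely if it is left orthogonal to every $n$-truncated morphism. Given an $(n+1)$-connective $g$ in $\presheaves(\pcat)$ and an $n$-truncated $h$ in $\presheaves(\qcat)$, the adjunction $\tilde{f}_! \dashv \tilde{f}^\ast$ identifies lifting problems from $\tilde{f}_!(g)$ against $h$ with lifting problems from $g$ against $\tilde{f}^\ast(h)$; since the latter is $n$-truncated by the previous paragraph, these admit unique solutions, so $\tilde{f}_!(g)$ is $(n+1)$-connective. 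Letting $n$ range over $-2, -1, 0, \dots$ covers all finite connectivities $\geq -1$, and the case $n = \infty$ follows since an $\infty$-connective morphism is one which is $n$-connective for every finite $n$. The main point requiring care is confirming that the factorization system of \cref{prop:imagefactorizations} extends unchanged from $\largemodels$ to the ambient presheaf categories, but this is automatic from the levelwise construction of factorizations given in the cited proof.
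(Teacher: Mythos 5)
Your proposal is correct and follows essentially the same route as the paper's proof: reduce via \cref{prop:derivedfunctorcc} and \cref{prop:levelwiseimage} to the colimit-preserving extension on presheaf categories, then use the $(n+1)$-connective/$n$-truncated orthogonal factorization system to transfer the lifting problem across the adjunction $\tilde{f}_! \dashv \tilde{f}^\ast$, concluding from the fact that the right adjoint preserves $n$-truncated morphisms because it preserves limits. The only cosmetic difference is that the paper phrases the key step as a general statement about colimit-preserving functors between $\infty$-topoi and cites HTT for the factorization system, whereas you invoke the paper's own \cref{prop:imagefactorizations} and spell out the orthogonality argument directly; the underlying reasoning is identical.
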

\begin{proof}
By \cref{prop:derivedfunctorcc}, the derived functor $f_!\colon \Model_\pcat\to\Model_\qcat$ is obtained by restriction from the left adjoint $f_!\colon \presheaves(\pcat)\to\presheaves(\qcat)$. As connectivity in $\Model_\pcat$ is a pointwise condition by \cref{prop:levelwiseimage}, we therefore reduce to proving the following generalization of \cite[Proposition 6.5.1.16.(4)]{lurie_higher_topos_theory}:
\begin{itemize}
\item Let $L\colon \xcat\to\ycat$ be a colimit-preserving functor between $\infty$-topoi. Then $L$ preserves $n$-connective morphisms for all $n\geq 0$.
\end{itemize}
As $n$-connective and $(n-1)$-truncated morphisms form a factorization system on any $\infty$-topos \cite[Example 5.2.8.16]{lurie_higher_topos_theory}, it suffices to prove that if $f$ is an $n$-connective morphism in $\xcat$, then $Lf \perp g$ for all $(n-1)$-truncated morphisms $g$ in $\ycat$. Write $R$ for the right adjoint of $L$. Then $Lf \perp g$ if and only if $f \perp Rg$ \cite[Remark 5.2.8.7]{lurie_higher_topos_theory}, so it suffices to prove that $R$ preserves $(n-1)$-truncated morphisms, which holds as $R$ preserves limits \cite[Proposition 5.5.6.16]{lurie_higher_topos_theory}.
\end{proof}

On the other hand, we have the following.

\begin{definition}
A derived functor $f\colon \pcat\pto\qcat$ is said to be \emph{left exact} if $f_!\colon \Model_\pcat\to\Model_\qcat$ preserves finite limits.
\end{definition}

Left exactness is a very stringent condition on a derived functor. 

\begin{example}
\label{ex:derived functor need not preserve pullback}
Let $\pcat \subseteq \spectra$ be the full subcategory spanned by coproducts of the sphere spectrum. Then the derived functor associated to the truncation $\tau\colon \pcat\to\h\pcat$ may be identified with the functor of base-change
\[
\integers \otimes_{\thesphere} - \colon \spectra^{\geq 0} \rightarrow \Mod_{\integers}^{\geq 0}. 
\]
As limits in $\Mod_\integers^{\geq 0}$ are given by connective covers of the corresponding limit taken in $\Mod_\integers$, we may compute
\[
0 = \lim (0 \rightarrow \integers \leftarrow 0) \neq \integers \otimes_{\thesphere} \lim (0 \rightarrow \thesphere \leftarrow 0) \simeq \integers \otimes_\thesphere \Omega \thesphere_{\geq 1}
\]
in $\Model_\integers^{\geq 0}$. In particular, $\tau_!$ is not left exact.
\end{example}

Left exactness is sufficient to guarantee a counterpart to \cref{prop:derivedfunctorpreservesconnectivity}.

\begin{prop}\label{prop:leftexactpreservestruncations}
If $f\colon \pcat\pto\qcat$ is left exact, then $f_!\colon \Model_\pcat\to\Model_\qcat$ preserves $n$-truncated morphisms for all $-2\leq n \leq \infty$.
\end{prop}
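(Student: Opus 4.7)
The plan is to proceed by induction on $n$, using the recursive definition of $n$-truncated morphisms from \cref{def:truncated}. The base case $n = -2$ is trivial: a $(-2)$-truncated morphism is an equivalence, and any functor preserves equivalences.

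For the inductive step, suppose $f_!$ preserves $n$-truncated morphisms for some $n \geq -2$, and let $g\colon X \to Y$ be an $(n+1)$-truncated morphism in $\Model_\pcat$. By definition, the diagonal $\Delta_g\colon X \to X\times_Y X$ is $n$-truncated. We want to show that the diagonal $\Delta_{f_!(g)}\colon f_!(X) \to f_!(X) \times_{f_!(Y)} f_!(X)$ of $f_!(g)$ is $n$-truncated. Since $f_!$ is left exact by assumption, it preserves the pullback $X\times_Y X$, so the canonical comparison map $f_!(X\times_Y X) \to f_!(X)\times_{f_!(Y)} f_!(X)$ is an equivalence. Under this equivalence, $f_!(\Delta_g)$ is identified with $\Delta_{f_!(g)}$, and the former is $n$-truncated by the inductive hypothesis. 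Therefore $\Delta_{f_!(g)}$ is $n$-truncated, so $f_!(g)$ is $(n+1)$-truncated, completing the induction for finite $n$.

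Finally, the case $n = \infty$ follows immediately: an $\infty$-truncated morphism is one which is $n$-truncated for every finite $n \geq -2$ (this being the intersection of the conditions), and this property is preserved by $f_!$ by what we just established. There is no substantive obstacle here; the argument is a direct unwinding of the definitions, and the only ingredient used beyond the inductive definition of $n$-truncatedness is that $f_!$ preserves pullbacks, which is precisely the part of left exactness being invoked.
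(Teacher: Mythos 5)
Your proof is correct and takes exactly the approach the paper has in mind: the paper's own proof simply says the statement is immediate from the recursive definition of $n$-truncatedness, and your argument is the straightforward unwinding of that remark (induction on $n$, using that left exactness identifies $f_!(\Delta_g)$ with $\Delta_{f_!(g)}$).
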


\begin{proof}
This is immediate from the definition of an $n$-truncated morphism recalled in \cref{def:truncated}.
\end{proof}

\subsection{Weak left exactness}
\label{ssec:weakleftexact}

Although exactness is a very stringent condition on a derived functor, many of the derived functors that arise in higher universal algebra turn out to satisfy the following weaker exactness condition.

\begin{definition}
A derived functor $f\colon \pcat\pto\qcat$ is \emph{weakly left exact} if for every cartesian square
\begin{center}\begin{tikzcd}
X'\ar[r]\ar[d]&X\ar[d,"\alpha"]\\
Y'\ar[r]&Y
\end{tikzcd}\end{center}
in $\Model_\pcat$ in which $\alpha$ is an effective epimorphism, the comparison map
\[
f_!(X') \to f_!(Y')\times_{f_!(Y)} f_!(X)
\]
is an equivalence.
\end{definition}

For example, we have the following. 

\begin{lemma}\label{lem:prestablepushpull}
Suppose given a square
\begin{center}\begin{tikzcd}
X'\ar[r]\ar[d]&X\ar[d,"\alpha"]\\
Y'\ar[r]&Y
\end{tikzcd}\end{center}
in a prestable $\infty$-category $\dcat$, and suppose that $\alpha$ induces an epimorphism $\pi_0 X \to \pi_0 Y$ in $\dcat^\heartsuit$. Then this square is cartesian if and only if it is cocartesian. In particular, this holds if $\dcat$ is projectively generated and $\alpha$ is an effective epimorphism.
\end{lemma}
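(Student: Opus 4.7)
My plan is to embed $\dcat$ into its stabilization $\mathcal{E} \colonequals \mathrm{Sp}(\dcat)$ and use that in a stable $\infty$-category cartesian and cocartesian squares coincide. Recall that the inclusion $\iota \colon \dcat \hookrightarrow \mathcal{E}$ is fully faithful, identifies $\dcat$ with the connective part of a t-structure on $\mathcal{E}$, and preserves small colimits; consequently pushouts in $\dcat$ agree with pushouts computed in $\mathcal{E}$, whereas a pullback in $\dcat$ is obtained by applying $\tau_{\geq 0}$ to the corresponding pullback in $\mathcal{E}$. The equivalence (cartesian $\iff$ cocartesian) for our square in $\dcat$ therefore reduces to showing that the pullback $Z \colonequals Y' \times_Y X$ formed in $\mathcal{E}$ already lies in $\dcat$, i.e.\ is connective: once this is granted, the chain ``cartesian in $\dcat$ $\iff$ cartesian in $\mathcal{E}$ $\iff$ cocartesian in $\mathcal{E}$ $\iff$ cocartesian in $\dcat$'' is formal.

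The connectivity of $Z$ is the heart of the argument. In the stable category $\mathcal{E}$, the pullback $Z$ fits into a fibre sequence $Z \to Y' \oplus X \to Y$, and the long exact sequence in $\pi_*$ identifies $\pi_{-1}(Z)$ with the cokernel of the map $\pi_0(Y') \oplus \pi_0(X) \to \pi_0(Y)$. The hypothesis that $\pi_0 \alpha$ is surjective forces this cokernel to vanish, and all lower homotopy groups of $Z$ vanish automatically since $Y' \oplus X$ and $Y$ are connective. This gives $Z \in \dcat$, and hence the main equivalence. The main obstacle is essentially this $\pi_{-1}$-computation; everything else is bookkeeping with the t-structure.

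For the ``in particular'' clause, I must check that an effective epimorphism $\alpha$ in a projectively generated prestable $\infty$-category induces a surjection on $\pi_0$. The truncation $\pi_0 \colon \dcat \to \dcat^\heartsuit$ is left adjoint to the inclusion of the heart and hence preserves colimits. Applying $\pi_0$ to the presentation $Y \simeq |\check{C}(\alpha)|$ yields $\pi_0 Y \simeq |\pi_0 \check{C}(\alpha)|$ as a geometric realization in the abelian category $\dcat^\heartsuit$, which is simply the coequalizer of the two face maps out of $\pi_0(X \times_Y X)$ and hence a quotient of $\pi_0 X$. This forces $\pi_0 \alpha$ to be surjective, reducing the ``in particular'' case to the main statement and completing the argument.
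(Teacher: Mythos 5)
Your proof is correct and fills in precisely the details the paper elides with its one-line citation of the Mayer--Vietoris sequence. The reduction to the stabilization, the identification of limits in $\dcat$ as $\tau_{\geq 0}$ of limits in $\mathrm{Sp}(\dcat)$, and the long exact sequence computation showing $\pi_{-1}(Y' \times_Y^{\mathrm{Sp}(\dcat)} X) = 0$ are exactly the content of the Mayer--Vietoris argument; the treatment of the ``in particular'' clause via colimit-preservation of $\pi_0 = \tau_{\leq 0}$ is likewise the standard way to see that effective epimorphisms induce $\pi_0$-surjections.
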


\begin{proof}
This is an easy consequence of the Mayer--Vietoris sequence. 
\end{proof}

\begin{proposition}
\label{proposition:additive_derived_functors_preserve_pullbacks_along_pi0_epis}
Let $f\colon \pcat\pto\qcat$ be a colimit-preserving derived functor between additive Malcev theories. Then $f$ is weakly left exact.
\end{proposition}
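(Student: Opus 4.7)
The plan is to reduce the claim to a direct application of \cref{lem:prestablepushpril}—wait, \cref{lem:prestablepushpull}—in both directions, using the fact that colimit-preserving derived functors preserve both pushouts and effective epimorphisms.

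First I would observe that since $\pcat$ and $\qcat$ are additive Malcev theories, \cref{prop:projprestable} identifies $\Model_\pcat$ and $\Model_\qcat$ as projectively generated prestable $\infty$-categories. Thus \cref{lem:prestablepushpull} applies in both $\Model_\pcat$ and $\Model_\qcat$: in either of these categories, a commutative square whose right vertical map is an effective epimorphism is cartesian if and only if it is cocartesian.

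Given a cartesian square in $\Model_\pcat$
\[
\begin{tikzcd}
X'\ar[r]\ar[d]&X\ar[d,"\alpha"]\\
Y'\ar[r]&Y
\end{tikzcd}
\]
with $\alpha$ an effective epimorphism, the first application of \cref{lem:prestablepushpull} shows that this square is also cocartesian. Since $f_!$ preserves all small colimits by hypothesis, the image square in $\Model_\qcat$ is cocartesian. Moreover, by \cref{prop:derivedfunctorpreservesconnectivity}, $f_!$ preserves $(-1)$-connective morphisms, so $f_!(\alpha)$ is again an effective epimorphism.

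Applying \cref{lem:prestablepushpull} in the other direction inside $\Model_\qcat$, the image square is cocartesian with $f_!(\alpha)$ an effective epimorphism, hence it is also cartesian. This is precisely the statement that $f_!$ preserves the original pullback, and since the square was arbitrary, $f$ is weakly left exact. There is no real obstacle here—the argument is a short sandwich between the two directions of \cref{lem:prestablepushpull}—the only subtlety is remembering to invoke \cref{prop:derivedfunctorpreservesconnectivity} to know that effective epimorphisms persist to the target, so that the pushout–pullback equivalence can be run in reverse.
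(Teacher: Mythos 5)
Your proof is correct and takes essentially the same route as the paper, which likewise deduces the result from \cref{lem:prestablepushpull} using that $f_!$ preserves colimits and effective epimorphisms. One small terminological slip worth flagging: effective epimorphisms are the $0$-connective morphisms, not the $(-1)$-connective ones (which by \cref{def:connected} is the class of \emph{all} morphisms); but since \cref{prop:derivedfunctorpreservesconnectivity} covers all $n \geq -1$, your conclusion that $f_!$ preserves effective epimorphisms still stands.
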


\begin{proof}
As $f_!$ preserves colimits and effective epimorphisms, this follows from \cref{lem:prestablepushpull}.
\end{proof}

On the other hand, derived functors may quickly fail to be weakly left exact in nonabelian situations.

\begin{example}
\label{warning:truncation_need_not_preserve_pullbacks_along_epis}
For $k\geq 0$, let $\spheres_k\subset\spaces_\ast$ denote the full subcategory generated by the $k$-dimensional sphere under coproducts. This is clearly an $\omega$-bounded theory, Malcev if and only if $k\geq 1$. As $S^k$ forms a weakly projective generator for $\spaces_\ast^{\geq k}$, \cref{thm:weaklyprojectivecats} implies that
\[
\Model_{\spheres_k}\simeq \spaces_\ast^{\geq k}.
\]
If $k\leq 1$, then $\spheres_k$ is a discrete theory, equivalent to the classical theory of pointed sets when $k=0$ and groups when $k=1$. If $k\geq 2$, then the Hurewicz theorem implies that the homotopy category $\h(\spheres_k)$ may be identified as the category of free abelian groups. In this case the derived functor associated to the truncation $\tau\colon \spheres_k \to \h(\spheres_k)$ can be identified with the functor
\[
\Sigma^{-k}\widetilde{C}_\bullet(\bs;\integers)\colon \spaces_\ast^{\geq k} \to \Mod_{\integers}^{\geq 0}
\]
of desuspended integral chains, which does not preserve products. As $X \to \pt$ is an effective epimorphism for any $X \in \spaces_\ast^{\geq k}$, it follows that $\tau$ is not weakly left exact.
\end{example}

\begin{example}
Let $\pcat$ be a loop theory in the sense of \cref{def:looptheory}; that is, a Malcev theory which admits $S^{1}$-tensors. In the sequel \cite{usd2}, we will show that the homomorphism
\[
\tau\colon \pcat\to\h\pcat
\]
projecting onto the homotopy category of $\pcat$ is weakly left exact. That is, the behaviour observed in \cref{warning:truncation_need_not_preserve_pullbacks_along_epis} cannot occur in the setting of loop theories. 
\end{example}

We end this section by proving a natural criterion for a derived functor to be weakly left exact, and by observing some general closure properties of such functors.

\begin{prop}
\label{theorem:criterion_for_preserving_pullbacks_along_epis}
Let $f\colon \pcat\pto\qcat$ be a derived functor. For $f$ to be weakly left exact, it suffices to assume that $f_!$ preserves pullbacks of spans of representables
\[
\nu Q' \to \nu Q \xleftarrow{\alpha} \nu P
\]
in which $\alpha$ admits a section.
\end{prop}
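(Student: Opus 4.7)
The plan is to reduce preservation of pullbacks along general effective epimorphisms to the hypothesized class of split spans of representables, through a sequence of resolutions by representables. The key technical inputs are: every model admits a Kan-condition simplicial resolution by representables (\cref{thm:splithypercovering}, which uses the Malcev property); geometric realizations in both $\Model_\pcat$ and $\Model_\qcat$ are strongly universal (\cref{cor:universality}); and $f_!$ preserves geometric realizations by definition as well as effective epimorphisms by \cref{prop:derivedfunctorpreservesconnectivity}.

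I would first bootstrap the hypothesis to allow the opposite leg of the span to be an arbitrary model. Given $Y' \times_{\nu Q} \nu P$ with $\alpha \colon \nu P \to \nu Q$ admitting a section but $Y'$ arbitrary, choose a Kan-condition resolution $\nu Q'_\bullet \to Y'$ by representables. Strong universality in $\Model_\pcat$ gives $Y' \times_{\nu Q} \nu P \simeq |\nu Q'_\bullet \times_{\nu Q} \nu P|$, and the hypothesis applies level-wise. Applying $f_!$, which preserves this realization, and invoking strong universality in $\Model_\qcat$ a second time (which is legitimate because $f_!\alpha$ still admits a section, and is hence in particular an effective epimorphism) produces the desired identification $f_!(Y' \times_{\nu Q} \nu P) \simeq f_!(Y') \times_{f_!(\nu Q)} f_!(\nu P)$.

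The general case $Y' \times_Y X$ with $\alpha \colon X \to Y$ an effective epimorphism is then handled by resolving $X$ and $Y$ simultaneously by representables. I would choose a Kan-condition resolution $\nu P_\bullet \to X$ by representables and a compatible Kan-condition resolution $\nu Q_\bullet \to Y$ by representables refining $\alpha$ (for instance, by starting with the \v{C}ech nerve of $X \to Y$ and then resolving each of its terms level-wise). Strong universality decomposes $Y' \times_Y X$ into a bisimplicial diagram of pullbacks of the form handled in the first step: in a Kan-condition resolution by representables, every face map $\nu Q_n \to \nu Q_{n-1}$ splits via the corresponding degeneracy, so every pullback leg arising in the bisimplicial reduction is split. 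Passing to $f_!$ and reassembling via strong universality in $\Model_\qcat$, using that $f_!$ preserves the effective epimorphisms required at each universality step, completes the argument. The main technical obstacle is precisely this last step: coordinating the double resolution so that every level-wise pullback is of the hypothesized form and verifying that strong universality applies at each stage in both $\Model_\pcat$ and $\Model_\qcat$ requires careful bookkeeping with the bisimplicial structure.
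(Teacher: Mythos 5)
Your general strategy — reduce a pullback along an arbitrary effective epimorphism to pullbacks of split spans of representables by resolving the objects via Kan-condition hypercoverings and invoking strong universality on both sides — is the same strategy the paper uses. But your two-stage execution differs from the paper's, and the second stage has a genuine gap. The paper avoids bisimplicial bookkeeping entirely by reducing one object at a time, in the order $Y$, then $X$, then $Y'$: resolve $Y \simeq |\nu\Qss|$ and use that the induced comparison between $(\Model_\pcat)_{/Y}$ and $\Tot((\Model_\pcat)_{/\nu Q_n})$ is an equivalence (and likewise over $\qcat$, using that $f_!$ preserves geometric realizations and effective epimorphisms) to reduce to $Y$ representable, noting that $X\times_Y \nu Q_n \to \nu Q_n$ is still an effective epimorphism; then resolve $X$, noting that $\nu P_n \to X \to \nu Q$ is a composite of effective epimorphisms hence an effective epimorphism; then resolve $Y'$.

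The gap in your proposal is the justification offered for why the levelwise pullbacks after the double resolution are of the hypothesized form. You claim that ``in a Kan-condition resolution by representables, every face map $\nu Q_n \to \nu Q_{n-1}$ splits via the corresponding degeneracy, so every pullback leg arising in the bisimplicial reduction is split.'' This is true but irrelevant: the pullback legs arising in the reduction are not face maps. They are maps $\nu P_{n,m} \to \nu Q_n$ comparing the levelwise resolution of $X \times_Y \nu Q_n$ (or of the \v{C}ech nerve terms) with the resolution of $Y$, and nothing about the simplicial identities makes these split. The observation that is actually needed — and which the paper makes, as its closing line — is that \emph{any effective epimorphism between representables admits a section}: indeed $\map_\pcat(\nu Q, \nu P) \to \map_\pcat(\nu Q, \nu Q)$ is surjective on $\pi_0$, so the identity lifts. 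With that substituted for your face-map argument, and the effective-epimorphism condition tracked through each stage of the resolution (as the paper does), the argument closes; as written, your second step does not.
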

\begin{proof}
To show that $f$ is weakly left exact, we must show that $f_!\colon \Model_\pcat\to\Model_\qcat$ preserves cartesian diagrams of the form
\begin{center}\begin{tikzcd}
X'\ar[r]\ar[d]&X\ar[d,"\alpha"]\\
Y'\ar[r]&Y
\end{tikzcd}\end{center}
where $\alpha$ is an effective epimorphism. Choose a free resolution $Y\simeq |\nu \Qss|$ of $Y$, and consider the diagram
\begin{center}\begin{tikzcd}
(\Model_\pcat)_{/Y}\ar[r]\ar[d,"f_!"]&\Tot \left((\Model_\pcat)_{/\nu \Qss}\right)\ar[d,"\Tot f_!"]\\
(\Model_\qcat)_{/f_! Y}\ar[r]&\Tot\left( (\Model_\pcat)_{/f_! \nu \Qss}\right)
\end{tikzcd}\end{center}
of $\infty$-categories. The claim is that the left vertical morphism preserves certain products. The horizontal morphisms are equivalences by \cref{cor:universality}, as $f_! Y \simeq |f_! \nu \Qss|$. As limits in the totalizations on the right hand side are computed levelwise, and as the functors
\[
(\Model_\pcat)_{/Y} \to (\Model_\pcat)_{/\nu Q_n},\qquad (X \to Y) \mapsto (\nu Q_n \times_Y X \to \nu Q_n)
\]
send effective epimorphisms over $Y$ to effective epimorphisms over $\nu Q_n$, we therefore reduce to the case where $Y \simeq \nu Q$ is representable.

Choose a free resolution $X\simeq |\nu \Pss|$. In particular, each $\nu P_n \to X$ is an effective epimorphism, and hence so is the composite $\nu P_n \to X \to Y$. By universality of geometric realizations, the maps
\[
|Y'\times_Y \nu \Pss| \to Y'\times_Y X,\qquad |f_! Y'\times_{f_! Y} f_! \nu \Pss| \to f_! Y'\times_{f_! Y} f_! X
\]
are equivalences, so we reduce to the case where $X \simeq \nu P$ is representable. In the same way we reduce to the case where $Y'\simeq \nu Q'$ is representable. 

We conclude by observing that every effective epimorphism between representable functors admits a section.
\end{proof}

\begin{prop}
\label{prop:preexactclosureproperties}
Consider derived functors $f\colon \pcat\pto\qcat$ and $g\colon \qcat\pto\rcat$.
\begin{enumerate}
\item Suppose that $f$ and $g$ are weakly left exact. Then $g\circ f$ is weakly left exact.
\item Suppose that $g$ and $g\circ f$ are weakly left exact and that $g_!$ is conservative. Then $f$ is weakly left exact.
\item Suppose that $f$ and $g\circ f$ are weakly left exact, and that $f$ is derived from an essentially surjective homomorphism $f\colon \pcat\to\qcat$. Then $g$ is weakly left exact.
\end{enumerate}
\end{prop}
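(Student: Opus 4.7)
The plan is to handle the three parts in order, freely using the identification $(g \circ f)_! \simeq g_! \circ f_!$, which follows from \cref{thm:freecocompletion} since both sides preserve geometric realizations and agree on $\pcat$ as functors into $\Model_\rcat$.

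For (1), given a pullback $\sigma$ in $\Model_\pcat$ along an effective epi $\alpha$, weak left exactness of $f$ makes $f_!(\sigma)$ a pullback in $\Model_\qcat$; by \cref{prop:derivedfunctorpreservesconnectivity}, $f_!(\alpha)$ remains an effective epi, so weak left exactness of $g$ applied to $f_!(\sigma)$ yields that $g_!(f_!(\sigma)) \simeq (g \circ f)_!(\sigma)$ is a pullback in $\Model_\rcat$. For (2), let $\phi$ be the comparison map in $\Model_\qcat$ for $f_!(\sigma)$, so $g_!(\phi)\colon g_! f_!(\sigma_{00}) \to g_!(f_!(\sigma_{01}) \times_{f_!(\sigma_{11})} f_!(\sigma_{10}))$. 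Weak left exactness of $g \circ f = g_! \circ f_!$ identifies the source with the iterated pullback of the $g_! f_!(\sigma_{ij})$, while weak left exactness of $g$ applied to $f_!(\sigma)$ (again noting $f_!(\alpha)$ is an effective epi) identifies the target with the same iterated pullback. Hence $g_!(\phi)$ is an equivalence, and conservativity of $g_!$ yields the claim.

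Part (3) is the main obstacle, since essential surjectivity of $f$ on $\pcat$ does not naively let one lift a given pullback square from $\Model_\qcat$ to $\Model_\pcat$: the maps in the diagram need not come from $\pcat$. The key tool is the bar resolution associated to the adjunction $f_! \colon \Model_\pcat \rightleftarrows \Model_\qcat \colon f^*$ coming from the homomorphism $f$ (see \cref{example:restriction_along_a_morphism_of_malcev_theories_is_a_derived_functor,cor:nearmonadic2}), whose right adjoint $f^*$ is conservative and moreover preserves and reflects effective epis since these are detected pointwise on the essentially surjective image of $f$. For $Y \in \Model_\qcat$, consider the simplicial object $B_\bullet(Y)$ with $B_n(Y) = f_!(f^* f_!)^n f^*(Y)$ and augmentation $B_0(Y) \to Y$ given by the counit. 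Applying $f^*$ yields the simplicial object $T^{\bullet+1} f^*(Y)$ for the monad $T = f^* f_!$, which is the standard split bar resolution of $f^*(Y)$ as a $T$-algebra and realizes to $f^*(Y)$. Since $\qcat$ is Malcev, every simplicial model satisfies the Kan condition by \cref{prop:malcevcharacterizations}, so $f^*$ preserves the realization $|B_\bullet(Y)|$ by \cref{cor:nearmonadic2}; combined with conservativity of $f^*$, this gives $|B_\bullet(Y)| \simeq Y$ in $\Model_\qcat$.

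Now let $\sigma$ be a pullback square in $\Model_\qcat$ along an effective epi, and apply $B_\bullet$ levelwise to obtain a simplicial square $B_\bullet(\sigma)$ realizing to $\sigma$. A straightforward induction shows $(f^* f_!)^n f^*(\sigma)$ is a pullback square in $\Model_\pcat$ along an effective epi, using that $f^*$ preserves limits and effective epis, and that $f$ is weakly left exact so $f_!$ preserves pullbacks along effective epis (while preserving the epis themselves by \cref{prop:derivedfunctorpreservesconnectivity}). Rewriting $g_! \circ f_!$ as $(g \circ f)_!$, weak left exactness of $g \circ f$ then implies that each $g_!(B_n(\sigma))$ is a pullback square along an effective epi in $\Model_\rcat$. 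Since $g_!$ preserves geometric realizations, $g_!(\sigma) \simeq |g_!(B_\bullet(\sigma))|$, and the second part of \cref{corollary:for_models_of_malcev_theory_geometric_realization_commutes_with_products_and_pullbacks_along_effective_epi} applied to the Malcev pretheory $\rcat$ upgrades this levelwise pullback data to a pullback of the realizations, establishing weak left exactness of $g$.
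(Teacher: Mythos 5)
Your proof is correct and follows essentially the same route as the paper's. For (1) and (2), you factor the comparison map through $g_!\bigl(f_!(Y')\times_{f_!(Y)}f_!(X)\bigr)$ exactly as the paper does, using \cref{prop:derivedfunctorpreservesconnectivity} to see $f_!(\alpha)$ is an effective epimorphism and then weak left exactness of $g$ to identify the second leg. For (3), you use the same bar-resolution strategy: write $\sigma \simeq |(f_!f^\ast)^{\bullet+1}\sigma|$, verify each level is a cartesian square along an effective epimorphism in $\Model_\pcat$ after stripping the outer $f_!$, apply $(g\circ f)_!$, and then invoke \cref{corollary:for_models_of_malcev_theory_geometric_realization_commutes_with_products_and_pullbacks_along_effective_epi} to pass to the realization. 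The only stylistic difference is that the paper cites the monadicity of $f^\ast$ (\cref{prop:restrictionmonadic}) to justify $Y \simeq |(f_!f^\ast)^{\bullet+1}Y|$, whereas you prove this by hand: you observe that $f^\ast B_\bullet(Y)$ is the split bar resolution of the $T$-algebra $f^\ast Y$, and that $f^\ast$ preserves the realization of $B_\bullet(Y)$ since $\qcat$ is Malcev and so $B_\bullet(Y)$ satisfies the Kan condition, then conclude by conservativity. This is a self-contained unwinding of the monadicity argument and is equally valid; it buys nothing extra but also assumes nothing extra.
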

\begin{proof}
(1, 2)~~Fix a cartesian square
\begin{equation}\label{eq:wlesquare}\begin{tikzcd}
X'\ar[r]\ar[d]&X\ar[d,"\alpha"]\\
Y'\ar[r]&Y
\end{tikzcd}\end{equation}
in $\Model_\pcat$ with $\alpha$ an effective epimorphism. Then the comparison map
\[
(gf)_!(X') \to (gf)_!(Y')\times_{(gf)_!(Y)}(gf)_!(X)
\]
factors as the composite
\[
g_!f_!(X') \to g_!(f_!(Y')\times_{f_!(Y)}f_!(X)) \to g_! f_!(Y')\times_{g_! f_!(Y)}g_! f_!(X).
\]
By \cref{prop:derivedfunctorpreservesconnectivity}, $f_!$ preserves effective epimorphisms, and therefore as $g$ is weakly left exact the second map is an equivalence. For (1), if $f$ is weakly left exact, then the first morphism is an equivalence, hence the composite is, proving that $g\circ f$ is weakly left exact. For (2), if $g\circ f$ is weakly left exact, then the composite is an equivalence, hence the first map is. As $g_!$ is conservative, this implies that $f_!(X') \to f_!(Y')\times_{f_!(Y)} f_!(X)$ is an equivalence, and hence $f$ is weakly left exact.

(3)~~Fix a cartesian square as in \cref{eq:wlesquare}. As we will discuss in further detail in \S\ref{sec:monads}, it follows easily from the monadicity theorem that $f^\ast\colon \Model_\qcat\to\Model_\pcat$ is the forgetful functor of a monadic adjunction. In particular, any model $X \in \Model_\qcat$ may be written as the geometric realization of its bar resolution:
\[
X\simeq |(f_! f^\ast)^{\bullet+1}X|.
\]
This is functorial in $X$, and therefore the cartesian square \cref{eq:wlesquare} is equivalent to the geometric realization of its bar resolution, which takes the form
\begin{center}\begin{tikzcd}
(f_! f^\ast)^{\bullet+1}X'\ar[r]\ar[d]&(f_! f^\ast)^{\bullet+1}X\ar[d,"(f_! f^\ast)^{\bullet+1}\alpha"]\\
(f_! f^\ast)^{\bullet+1}Y'\ar[r]&(f_! f^\ast)^{\bullet+1}Y
\end{tikzcd}.\end{center}
As both $f_!$ and $f^\ast$ preserve connectivity, the assumption that $f_!$ is weakly left exact implies that this square of simplicial objects is levelwise cartesian. As $g\circ f \simeq h$ is weakly left exact and each $(f_! f^\ast)^{\bullet+1}\alpha$ is an effective epimorphism, it follows that the square
\begin{center}\begin{tikzcd}
g_!(f_! f^\ast)^{\bullet+1}X'\ar[r]\ar[d]&g_!(f_! f^\ast)^{\bullet+1}X\ar[d,"g_!(f_! f^\ast)^{\bullet+1}\alpha"]\\
g_!(f_! f^\ast)^{\bullet+1}Y'\ar[r]&g_!(f_! f^\ast)^{\bullet+1}Y
\end{tikzcd}\end{center}
in $\Model_\rcat$ is levelwise cartesian. As $g_!(f_! f^\ast)^{\bullet+1}\alpha$ is an effective epimorphism, it therefore remains cartesian after taking geometric realizations by \cref{corollary:for_models_of_malcev_theory_geometric_realization_commutes_with_products_and_pullbacks_along_effective_epi}
. As $g_!$ preserves geometric realizations, this proves that
\[
g_! X' \to g_! X \times_{g_! Y'} g_! Y
\]
is an equivalence, and so $g$ is weakly left exact as claimed.
\end{proof}

\section{Loop theories and loop models}
\label{section:loop_theories_and_models_as_a_deformation}

In \cref{thm:weaklyprojectivecats}, we characterized $\infty$-categories which can be written as $\infty$-categories of models of a Malcev theory as exactly those which are weakly projectively generated. However, many naturally occuring $\infty$-categories, such as any non-zero stable $\infty$-category, do not satisfy this condition. In this section, we introduce the more refined notion of a \emph{loop theory}, for which we can speak of the subcategory $\Model_{\pcat}^{\Omega} \subseteq \Model_{\pcat}$ of \emph{loop models}.

The advantage of this concept is that many more $\infty$-categories, such as $\infty$-categories of spectra and algebras therein, can be written in the form $\Model_{\pcat}^{\Omega}$ for a suitable choice of loop theory $\pcat$. In this case, the larger $\infty$-category $\Model_{\pcat}$ provides a useful deformation for which $\Model_\pcat^\Omega$ can be thought of as a generic fibre $\Model_\pcat^\Omega$.

We set up the basics of the theory in \S\ref{subsection:what_is_a_loop_theory}, and describe a useful variant in \S\ref{subsection:deloop_theories}. In \S\ref{subsection:loop_theories_and_derived_functors}, we discuss how functors defined on the subcategory of loop models can be extended to the whole $\infty$-category of models, and in \S\ref{subsection:lax_monoidality_and_actions} we study the behavior of this extension procedure with respect to compositions of endofunctors. Finally, in \S\ref{subsection:coalgebras_in_models_and_loop_models} we apply these results to study when a comonad on an $\infty$-category of loop models may be derived to a comonad on the full $\infty$-category of models.

\subsection{What is a loop theory?}
\label{subsection:what_is_a_loop_theory}

Recall that if $X \in \ccat$ is an object of an $\infty$-category and $T \in \spaces$ is a space, then the \emph{tensor} $T \otimes X \in \ccat$, if it exists, is the colimit of the constant diagram $T \to \ccat$ on $X$, characterized by the existence of a natural equivalence $\map_{\ccat}(T\otimes X,\bs)\simeq \map_{\spaces}(T,\map_\ccat(X,\bs))$.

\begin{definition}
\label{def:looptheory}
A \emph{loop theory} is a Malcev theory $\pcat$ which admits tensors by $S^1$. A \emph{loop homomorphism} $f\colon \pcat\to\qcat$ of loop theories is a homomorphism which preserves tensors by $S^1$. 
\end{definition}

\begin{definition}
\label{def:loop_model}
A model $X \in \Model_{\pcat}$ of a loop theory $\pcat$ is a \emph{loop model} if for every $P \in \pcat$, the canonical comparison map 
\[
X(S^1 \otimes P) \to X(P)^{S^1}
\]
is an equivalence. We write 
\[
\nu \colon \Model_\pcat^\Omega\subset\Model_\pcat
\]
for the full subcategory spanned by loop models. 
\end{definition}

\begin{remark}\label{rmk:loopdefns}
The notion of loop theory presented here is a generalization of the notions previously studied in \cite{balderrama2021deformations} and \cite{pstrkagowski2023moduli}. \cref{prop:loopidempotentcompletion} below implies that for idempotent complete loop theories, the only difference between \cref{def:looptheory} and \cite[{Definition 3.1.1}]{balderrama2021deformations} is our more flexible notion of a Malcev theory, see \S\ref{subsection:malcevkanhistory}. 
\end{remark}

\begin{example}
Let $\ccat$ be a cocomplete stable $\infty$-category and let $\pcat \subseteq \ccat$ be a full subcategory generated under coproducts and suspensions by a small set of objects. Then $\pcat$ is a loop theory, with $S^1 \otimes P \simeq P \oplus \Sigma P$ for $P \in \pcat$.
\end{example}

\begin{example}
Let $\pcat$ be a discrete Malcev theory. Then $\pcat$ is a loop theory, with
\[
S^1 \otimes P \simeq P
\]
for all $P \in \pcat$. The $\infty$-category $\Model_\pcat^\Omega$ of loop models of $\pcat$ may be identified as the ordinary category $\Model_\pcat^\heartsuit$ of set-valued models of $\pcat$.
\end{example}

\begin{example}\label{ex:idempotentcompletion}
If $\pcat$ is a loop theory, then so is its idempotent completion $\pcat^\sharp$, and as in \cref{prop:idempotentcompletion} restriction along the inclusion $\pcat\subset\pcat^\sharp$ induces an equivalence $\Model_\pcat^\Omega\simeq\Model_{\pcat^\sharp}^\Omega$. 
\end{example}

Because of \ref{ex:idempotentcompletion}, it is generally harmless to assume that a loop theory is idempotent complete. This is particularly convenient as a consequence of the following.

\begin{defn}
Write $\Sph\subset\spaces$ for the full subcategory generated by $S^1$ under products, wedges, and smash products (with respect to any basepoint).
\end{defn}

In particular, $\Sph$ contains all spaces equivalence to a finite product of finite wedges of positive-dimensional spheres.

\begin{prop}
\label{prop:loopidempotentcompletion}
Let $\pcat$ be an idempotent complete loop theory. Then
\begin{enumerate}
\item $\pcat$ admits tensors by all $T\in \Sph$.
\item If $T_1,T_2\in \Sph$, then for each $P \in \pcat$ there exists a (noncanonical) equivalence
\[
(T_1\times T_2)\otimes P\simeq (T_1\vee T_1\wedge T_2 \vee T_2) \otimes P
\]
in $\pcat$. In particular, if $T\in \Sph$ then $T\otimes P$ is (noncanonically) a retract of $(S^1)^n \otimes P$ for some $n \geq 0$.
\item If $X \in \Model_{\pcat}^\Omega$, then the comparison map
\[
X(T\otimes P) \to X(P)^T
\]
is an equivalence for $T \in \Sph$.
\end{enumerate}
\end{prop}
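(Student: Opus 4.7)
The plan is to establish (1), (2), and (3) simultaneously by induction on the number of operations $\times, \vee, \wedge$ used to construct $T \in \Sph$ from $S^1$. The base case $T = S^1$ is immediate: (1) holds by the loop theory hypothesis, (2) is trivial, and (3) is the defining condition for a loop model.

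For the product case $T = T_1 \times T_2$, the cartesian closure of $\spaces$ together with the defining adjunction of the tensor gives a natural equivalence $(T_1 \times T_2) \otimes P \simeq T_1 \otimes (T_2 \otimes P)$: both sides corepresent $Q \mapsto \map_\spaces(T_1 \times T_2, \map_\pcat(P,Q))$. Applying the inductive hypothesis first for $T_2$ at $P$ and then for $T_1$ at $T_2 \otimes P$ gives existence in (1), and the retract claim in (2) is obtained by composing the inductive retracts for each factor. For (3), the same iteration yields $X((T_1 \times T_2) \otimes P) \simeq X(T_2 \otimes P)^{T_1} \simeq (X(P)^{T_2})^{T_1} \simeq X(P)^{T_1 \times T_2}$.

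For the wedge and smash cases, every object of $\Sph$ is pointed connected, and the Malcev hypothesis on $\pcat$ makes every object couniversally Kan in the sense of \cref{prop:univkancharacterize}. Dualizing \cref{lem:productsplitting} to $\pcat^\op$ and applying it at $P$, the existence of $(T_1 \times T_2) \otimes P$ from the product case implies the existence of $(T_1 \vee (T_1 \wedge T_2) \vee T_2) \otimes P$ together with the equivalence asserted in (2). Since $T_1 \vee T_2$ and $T_1 \wedge T_2$ are each retracts of $T_1 \vee (T_1 \wedge T_2) \vee T_2$ in $\spaces$ (by including, and then collapsing, the appropriate summands), idempotent completeness of $\pcat$ produces $(T_1 \vee T_2) \otimes P$ and $(T_1 \wedge T_2) \otimes P$ as retracts of $(T_1 \times T_2) \otimes P$, finishing (1) and (2).

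For (3) in these cases, the new input is that $X(P)$ is itself supersimple: any co-Malcev operation on $P$, which exists by \cref{prop:malcevcharacterizations}(5), is sent by the product-preserving functor $X$ to a Malcev operation on $X(P)$, so \cref{lem:productsplitting} applies to $X(P)$. Using the contravariant naturality of the comparison map $\alpha_T \colon X(T \otimes P) \to X(P)^T$ in $T$, the retracts of $T_1 \vee T_2$ and $T_1 \wedge T_2$ inside $T_1 \vee (T_1 \wedge T_2) \vee T_2$ exhibit $\alpha_{T_1 \vee T_2}$ and $\alpha_{T_1 \wedge T_2}$ as retracts (in the arrow category) of $\alpha_{T_1 \vee (T_1 \wedge T_2) \vee T_2}$; the splittings in $\pcat$ and in $\spaces$ in turn identify the latter with $\alpha_{T_1 \times T_2}$, which is an equivalence by the product case. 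The main technical subtlety will be checking that the two splittings — one in $\pcat$ coming from the co-Malcev structure on $P$, and one in $\spaces$ coming from the Malcev structure on $X(P)$ — correspond under $\alpha$. This will follow from the naturality clause in \cref{cor:truncatemalcevsquare}(4) together with the fact that the Malcev operation on $X(P)$ was, by construction, obtained by applying $X$ to a co-Malcev operation on $P$.
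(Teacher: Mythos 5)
Your proposal is essentially the same argument as the paper's, using the same two ingredients: the adjunction $(T_1 \times T_2) \otimes P \simeq T_1 \otimes (T_2 \otimes P)$ for the product case, and the dual of \cref{lem:productsplitting} together with idempotent completeness for the wedge and smash cases. The main organizational difference is that you run all three assertions through a single structural induction on $T$, whereas the paper first establishes (1) and (2) for all $T \in \Sph$ by showing the class of tensorable connected spaces is closed under $\times$, $\vee$, $\wedge$, and then proves (3) once and for all by noting that the retract statement in (2) exhibits the comparison map $\alpha_T$ as a retract of $\alpha_{(S^1)^n}$, reducing to the product case $T = (S^1)^n$. This global reduction is a bit slicker than your per-case handling of wedges and smashes in (3), but it relies on exactly the compatibility you flag in your final paragraph: the retraction of $T \otimes P$ onto $(S^1)^n \otimes P$ in $\pcat$ (built from the co-Malcev structure on $P$) must correspond, under $\alpha$, to the retraction of $X(P)^T$ onto $X(P)^{(S^1)^n}$ in $\spaces$ (built from the Malcev structure on $X(P)$). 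The paper treats this as immediate; you are right that it requires the naturality baked into \cref{cor:truncatemalcevsquare} and \cref{ex:splitproduct} together with the fact that the Malcev operation on $X(P)$ is $X$ applied to the co-Malcev operation on $P$. To make the argument fully airtight, you should also note that the co-Malcev operation on $T \otimes P$ can be taken to be $T \otimes (\text{co-Malcev on } P)$, since $T \otimes (\bs)$ preserves coproducts, so that $\alpha_T$ intertwines the two Malcev structures before the splitting construction is invoked.
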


\begin{proof}
(1)~~Let $\tcat\subset\spaces$ be the class of connected spaces for which $\pcat$ admits tensors by $T \in \tcat$. We must show that $\Sph\subset\tcat$. As $S^1 \in \tcat$ by construction, it suffices to show that $\tcat$ is closed under products, wedges, and smash products. Indeed, if $T_1,T_2 \in \tcat$, then $T_1\times T_2\in \tcat$ via the formula
\[
(T_1\times T_2) \otimes P \simeq T_1 \otimes (T_2\otimes P).
\]
Applying \cref{lem:productsplitting} to $\pcat^\op$, we learn that if $P \in \pcat$ then there exists a (noncanonical) equivalence
\[
(T_1\times T_2) \otimes P \simeq (T_1 \vee (T_1\wedge T_2)\vee T_2) \otimes P;
\]
in particular, $\pcat$ admits tensors by $T_1 \vee (T_1\wedge T_2)\vee T_2$. As $\pcat$ is idempotent complete, so is $\tcat$. As $T_1\vee T_2$ and $T_1\wedge T_2$ are retracts of $T_1 \vee (T_1\wedge T_2)\vee T_2$, it follows that $\pcat$ admits tensors by $T_1\vee T_2$ and $T_1\wedge T_2$ as claimed.

(2)~~As in (1), this follows from \cref{lem:productsplitting}.

(3)~~By (2), if $T\in \Sph$ then the comparison map $X(T\otimes P) \to X(P)^T$ is a retract of the comparison map for $T = (S^1)^n$ for some $n\geq 0$, and so we reduce to the case where $T = (S^1)^n$. If $n = 1$, then this is the assumption that $X$ is a loop model, so it suffices to show that the class of $T \in \Sph$ for which $X(T\otimes P) \to X(P)^T$ is an equivalence is closed under finite products. Indeed, suppose given $T_1,T_2 \in \Sph$ for which the comparison maps
\[
X(T_1\otimes P) \to X(P)^{T_1},\qquad X(T_2\otimes P)\to X(P)^{T_2}.
\]

It follows that the comparison map filling in the top of the square
\begin{center}\begin{tikzcd}
X((T_1\times T_2)\otimes P)\ar[rr]\ar[d,"\simeq"]&&X(P)^{T_1\times T_2}\\
X(T_1\otimes (T_2\otimes P))\ar[r]&X(T_2\otimes P)^{T_1}\ar[r]&(X(P)^{T_2})^{T_1}\ar[u,"\simeq"]
\end{tikzcd}\end{center}
is an equivalence as needed.
\end{proof}

The following variant is also useful.

\begin{prop}
\label{prop:pointedlooptheory}
Let $\pcat$ be an idempotent complete pointed Malcev theory, and suppose that the objects of $\pcat$ admit grouplike $H$-comultiplications. Then $\pcat$ is a loop theory if and only if it admits suspensions, in which case $X \in \Model_\pcat$ is a loop model if and only if $X(\Sigma P) \to \Omega X(P)$ is an equivalence for all $P \in \pcat^\sharp$.
\end{prop}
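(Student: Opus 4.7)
The plan hinges on the identification $S^1 \otimes P \simeq P \coprod \Sigma P$, which I will establish using the grouplike $H$-comultiplication hypothesis.

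For any $Y \in \pcat$, the grouplike $H$-comultiplication on $P$ endows $\map_\pcat(P,Y)$ with a grouplike $H$-space structure, natural in $Y$. For any grouplike $H$-space $Z$, the evaluation fibration $\Omega Z \to Z^{S^1} \to Z$ splits naturally via the map $Z \times \Omega Z \to Z^{S^1}$ sending $(z,\gamma)$ to $t \mapsto z \cdot \gamma(t)$. Applied to $Z = \map_\pcat(P,Y)$, this yields a natural equivalence
\[
\map_\spaces(S^1,\map_\pcat(P,Y)) \simeq \map_\pcat(P,Y) \times \Omega \map_\pcat(P,Y).
\]
If $\Sigma P$ exists in $\pcat$, the right-hand side is $\map_\pcat(P \coprod \Sigma P, Y)$, so $P \coprod \Sigma P$ corepresents $Y \mapsto \map_\spaces(S^1,\map_\pcat(P,Y))$; by definition of tensor this means $P \coprod \Sigma P \simeq S^1 \otimes P$. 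Conversely, if $S^1 \otimes P$ exists then the splitting exhibits $\Omega \map_\pcat(P,-)$ as a retract of the corepresentable functor $\map_\pcat(S^1 \otimes P, -)$, so by idempotent completeness of $\pcat$ its corepresenting object $\Sigma P$ exists in $\pcat$. Hence $\pcat$ admits $S^1$-tensors if and only if it admits suspensions.

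Granted this, for $X \in \Model_\pcat$ the loop-model condition $X(S^1 \otimes P) \to X(P)^{S^1}$ rewrites as follows. Under $S^1 \otimes P \simeq P \coprod \Sigma P$, the left side becomes $X(P) \times X(\Sigma P)$, since $X$ converts coproducts to products. On the right, $X(P)$ inherits a grouplike $H$-space structure from the $H$-cogroup on $P$ by functoriality, so the same free-loop-space splitting gives $X(P)^{S^1} \simeq X(P) \times \Omega X(P)$. Under these identifications the comparison map is the identity on the $X(P)$-factor (both retractions arise from the same basepoint $\ast \hookrightarrow S^1$) and the tautological map $X(\Sigma P) \to \Omega X(P)$ on the other factor. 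Hence $X$ is a loop model if and only if $X(\Sigma P) \to \Omega X(P)$ is an equivalence for all $P \in \pcat = \pcat^\sharp$.

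The main subtlety is the compatibility of the two splittings with the comparison map; this is a naturality check tracking the basepoint $\ast \hookrightarrow S^1$ through both the tensor construction (the inclusion $P = \ast \otimes P \to S^1 \otimes P$) and the cotensor construction (the evaluation $X(P)^{S^1} \to X(P)$), identifying the two $X(P)$-summands. Once this identification is made, the reduction to $X(\Sigma P) \to \Omega X(P)$ follows by cancelling the common $X(P)$-factor.
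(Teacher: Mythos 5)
Your proof is correct and follows the same key idea as the paper's: the paper's one-line proof says the argument is "essentially identical to that of \cref{prop:loopidempotentcompletion}, only using the grouplike $H$-comultiplication on an object $P \in \pcat$ to realize $S^1\otimes P \simeq \Sigma P \coprod P$" — and establishing exactly that splitting is the engine of your argument. Your derivation of the splitting, via the free-loop-space decomposition $Z^{S^1}\simeq Z\times\Omega Z$ for grouplike $H$-spaces applied to the naturally-in-$Y$ grouplike $H$-space $\map_\pcat(P,Y)$, together with the idempotent-completeness argument for the converse, is a clean and complete rendering of what the paper only gestures at. One remark: the final "cancelling the common $X(P)$-factor" is where the real content lies, since a map over $X(P)$ with compatible sections need not be block-diagonal a priori. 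You correctly flag this as the main subtlety, but the justification as stated is a bit thin: the naturality of $\ast\hookrightarrow S^1$ only pins down the $X(P)$-factor of both sides, not the behavior on the complementary factor. The cleanest way to close this is to observe that the equivalence $S^1\otimes P\simeq P\coprod\Sigma P$ and the splitting $X(P)^{S^1}\simeq X(P)\times\Omega X(P)$ are both instances of the same natural free-loop-space decomposition (for $\map_\pcat(P,Y)$ and for $X(P)$ respectively, both $H$-space structures arising from the single $H$-cogroup structure $\mu:P\to P\coprod P$), and that the comparison map $\alpha$ respects these by naturality in $P$ against $\mu$. Still, the overall plan is the same as the paper's, and the extra detail you supply is a genuine improvement over its terse reference.
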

\begin{proof}
The proof is essentially identical to that of \cref{prop:loopidempotentcompletion}, only using the grouplike $H$-comultiplication on an object $P \in \pcat$ to realize $S^1\otimes P \simeq \Sigma P \coprod P$.
\end{proof}

From here, loop theories and their models may be developed just as in \cite{balderrama2021deformations}. For example, the following alternate characterization of the loop models of $\pcat$ is often useful.

\begin{prop}\label{prop:loopmodelifspecialfiberdiscrete}
Let $\pcat$ be a loop theory. Then the following square commutes and is cartesian:
\begin{center}\begin{tikzcd}
\Model_\pcat^\Omega\ar[r,tail]\ar[d,"\pi_0"]&\Model_\pcat\ar[d,"\tau_!"]\\
\Model_{\h\pcat}^\heartsuit\ar[r,tail]&\Model_{\h\pcat}
\end{tikzcd}.\end{center}
In other words, a model $X \in \Model_\pcat$ is a loop model if and only if $\tau_! X$ is discrete. Moreover, the truncation $\tau_n\colon \pcat\to\h_n\pcat$ satisfies $\tau_{n!}X\simeq X_{<n}$ for any loop model $X$.
\end{prop}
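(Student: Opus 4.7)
The overall strategy is to first establish the ``moreover'' statement $\tau_{n!}X \simeq X_{<n}$ for loop models, deducing the commutativity of the displayed square by specializing to $n=1$, and then separately argue the cartesian property.

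For the moreover, I would fix a Kan resolution $X \simeq |\nu P_\bullet|$ via \cref{thm:splithypercovering}. Since $\tau_{n!}$ is a derived functor, it preserves geometric realizations, and the right adjoint $\tau_n^*$ (pointwise restriction along $\tau_n$) also preserves geometric realizations because colimits in $\Model_\pcat$ are pointwise. Using the identification $\tau_n^*(\nu \tau_n P) \simeq (\nu P)_{<n}$, I obtain
\[
\tau_n^*\tau_{n!}X \;\simeq\; |(\nu P_\bullet)_{<n}|
\]
in $\Model_\pcat$. A key structural observation is that $\tau_n^*$ restricts to an equivalence $\Model_{\h_n\pcat}^{<n} \simeq \Model_\pcat^{<n}$, because $\spaces^{<n}$ is an $n$-category in the $(\infty,1)$-sense (mapping spaces between $(n-1)$-truncated spaces are $(n-1)$-truncated, since $\Omega^k Y \simeq *$ for $k \geq n$ when $Y$ is $(n-1)$-truncated), and $\tau_n$ is the $n$-categorical localization of $\pcat$. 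Hence the equivalence $\tau_{n!}X \simeq X_{<n}$ reduces to the commutation $|(\nu P_\bullet)_{<n}| \simeq X_{<n} = |\nu P_\bullet|_{<n}$, equivalently the pointwise statement $|S_{\bullet,<n}| \simeq |S_\bullet|_{<n}$ for the Kan simplicial space $S_\bullet := \map_\pcat(P', P_\bullet)$.

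The loop model hypothesis on $X$ yields equivalences $|S_\bullet^{S^k}| \simeq |S_\bullet|^{S^k}$ for all $k \geq 1$, by iterating the $k=1$ case through the framework of \cref{prop:loopidempotentcompletion}. These equivalences control the higher homotopy groups of $|S_\bullet|$ in terms of the simplicial structure of $S_\bullet$, and an induction on $k$ in the spirit of the $\pi_*$-Kan analysis of \S\ref{ssec:surjherd} pins down that $|S_{\bullet,<n}| \to |S_\bullet|_{<n}$ is an equivalence. Setting $n=1$ then recovers the commutativity of the displayed square: $\tau_!X \simeq X_{\leq 0}$ corresponds to $\pi_0 X \in \Model_{\h\pcat}^\heartsuit$ under the canonical equivalence $\Model_\pcat^\heartsuit \simeq \Model_{\h\pcat}^\heartsuit$ (every set-valued product-preserving functor on $\pcat$ factors through $\h\pcat$, as homotopies between morphisms in $\pcat$ become equalities of functions between sets).

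For the cartesian property, I must argue the converse: if $\tau_!X$ is discrete, then $X$ is a loop model. Via the Kan resolution computation, discreteness of $\tau_!X$ translates into the Kan simplicial set $\pi_0 S_\bullet$ having discrete realization for every $P'$. The task is then to upgrade this $\pi_0$-level vanishing to the full loop condition $|S_\bullet^{S^1}| \simeq |S_\bullet|^{S^1}$. My plan is to express the free loop space and its levelwise counterpart via pullbacks along diagonals (noting that $X(P') \to X(P') \times X(P')$ arises from the coproduct $P' \sqcup P' \to P'$ in $\pcat$ after taking a resolution) and to invoke \cref{corollary:for_models_of_malcev_theory_geometric_realization_commutes_with_products_and_pullbacks_along_effective_epi}, which says geometric realizations in $\Model_\pcat$ commute with products and with pullbacks along effective epimorphisms. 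Combined with the $\pi_0$-discreteness, this should force the comparison $|S_\bullet^{S^1}| \to |S_\bullet|^{S^1}$ to be an equivalence. The main obstacle is this last step: converting $\pi_0$-level information to the full loop equivalence genuinely exploits the Malcev structure of $\pcat$ via \cref{corollary:for_models_of_malcev_theory_geometric_realization_commutes_with_products_and_pullbacks_along_effective_epi}, and making the reconstruction precise is the technical heart of the argument.
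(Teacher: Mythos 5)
The paper gives no self-contained argument for this proposition; its proof is the single line ``The same proofs as \cite[Corollary 3.2.2, Theorem 5.4.1]{balderrama2021deformations} apply.'' So your write-up has to stand on its own, and I will assess it that way.

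Your reduction of the ``moreover'' statement is structurally reasonable. You correctly resolve $X \simeq |\nu P_\bullet|$, use that both $\tau_{n!}$ and $\tau_n^*$ preserve geometric realizations (the latter because $\h_n\pcat$ is again Malcev, so realizations on both sides are pointwise and $\tau_n^*$ is a pointwise reindexing), identify $\tau_n^*(\nu\tau_n P) \simeq (\nu P)_{<n}$, and observe that $\tau_n^*$ restricts to an equivalence on $(n-1)$-truncated models since $\spaces^{<n}$ is an $n$-category. This does reduce the claim to the pointwise statement that for each $P'$, writing $S_\bullet = \map_\pcat(P',P_\bullet)$, the natural map $|S_{\bullet,<n}| \to |S_\bullet|_{<n}$ is an equivalence, given the loop-model identities $|S_\bullet^{S^k}| \simeq |S_\bullet|^{S^k}$ for all $k \geq 1$. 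But the argument for this pointwise statement --- which you describe only as ``an induction on $k$ in the spirit of the $\pi_*$-Kan analysis of \S\ref{ssec:surjherd}'' --- is a genuine Bousfield--Friedlander-type computation and is the actual content of the proposition, not a routine verification; as written it is a sketch, not a proof. You should at least record that the loop conditions at $S^k \otimes P'$ force $\pi_p(\pi_q S_\bullet)=0$ for $p>0$ and $0 \le q < n$, from which the truncation-commutation follows, and spell out the induction.

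The cartesian direction has a concrete error beyond being under-detailed. You propose to realize the free loop comparison as a pullback of the diagonal $X(P') \to X(P')\times X(P')$ and then apply \cref{corollary:for_models_of_malcev_theory_geometric_realization_commutes_with_products_and_pullbacks_along_effective_epi}. But that corollary commutes geometric realizations past pullbacks along \emph{effective epimorphisms}, while the diagonal of any object is a monomorphism --- it is an effective epimorphism if and only if it is an equivalence, i.e.\ if and only if $X(P')$ is $(-1)$-truncated, which is never the interesting case. So the corollary simply does not apply to the square you propose, and the observation that the diagonal ``arises from the coproduct $P'\sqcup P'\to P'$'' does not change this: while the fold map admits a section and is therefore an effective epi, what you would need to be an effective epi in order to apply the corollary is the diagonal of $X(P')$, not the fold map. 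The correct route is again of Bousfield--Friedlander type: use the uniform $\pi_0$-discreteness at all $P'$ (including $S^k\otimes P'$) to show the simplicial homotopy groups $\pi_p(\pi_q S_\bullet)$ vanish for $p>0$, so the spiral or Bousfield--Friedlander tower degenerates and $|S_\bullet^{S^1}| \to |S_\bullet|^{S^1}$ is forced to be an equivalence. You yourself flag this as ``the technical heart of the argument''; the tool you reach for does not get you there.
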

\begin{proof}
The same proofs as \cite[Corollary 3.2.2, Theorem 5.4.1]{balderrama2021deformations} apply.
\end{proof}

We also have the following.

\begin{prop}\label{prop:boundedlooptheory}
Let $\pcat$ be a loop theory.
\begin{enumerate}
\item $\Model_\pcat^\Omega\subset\Model_\pcat$ is closed under all small limits and geometric realizations of simplicial objects which are levelwise split.
\item Let $\kappa$ be a regular cardinal, and suppose that $\pcat$ is generated by a $\kappa$-ary theory $\pcat_\kappa\subset\pcat$.
\begin{enumerate}
\item $\pcat_\kappa\subset\pcat$ is closed under tensors by $S^1$.
\item The equivalence $\Model_\pcat\simeq\presheaves_\Sigma^\kappa(\pcat_\kappa)$ of \cref{thm:bounded} restricts to an equivalence between $\Model_\pcat^\Omega$ and the $\infty$-category of presheaves $X$ on $\pcat_\kappa$ that preserve $\kappa$-small products and $S^1$-cotensors.
\end{enumerate}
\item In particular, if $\pcat$ is a bounded loop theory then $\Model_\pcat^\Omega$ is an accessible localization of $\Model_\pcat$, and is therefore a presentable $\infty$-category.
\end{enumerate}
\end{prop}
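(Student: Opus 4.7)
For part (1), the loop condition is equivalent to requiring that the comparison map $X(S^{1}\otimes P)\to X(P)^{S^{1}}$ is an equivalence for each $P\in\pcat$, and both sides depend on $X$ through limits: the left-hand side is evaluation on $S^{1}\otimes P$ and the right-hand side is a finite limit of evaluations on $P$. By \cref{prop:limitsinlargemodels}, limits and geometric realizations of levelwise split simplicial objects in $\Model_\pcat$ are computed pointwise on $\pcat$. Closure under limits is then immediate. For levelwise split realizations $|X_\bullet|$, one checks pointwise that both $|X_\bullet|(S^{1}\otimes P)$ and $|X_\bullet|(P)^{S^{1}}$ agree with the $(-1)$-term of the underlying augmented split simplicial space, using that such realizations are preserved by every functor---in particular by $(\bs)^{S^{1}}$---and that the comparison map is an equivalence at each simplicial level.

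For (2a), we may assume $\pcat_\kappa$ is idempotent complete by enlarging it (this does not affect any of the hypotheses of \cref{def:boundedtheory} or the equivalence of \cref{thm:bounded}). Fix $P\in\pcat_\kappa$; since every object of $\pcat$ is a retract of a small coproduct of objects of $\pcat_\kappa$, the object $S^{1}\otimes P$ sits in a retract diagram with some $\coprod_{i\in I}P_i$ where $P_i\in\pcat_\kappa$. Such a retract is witnessed by a map $f\colon S^{1}\otimes P\to\coprod_{i\in I}P_i$, corresponding by adjunction to an element of $\map_\pcat(P,\coprod_{i\in I}P_i)^{S^{1}}$. By condition (3) of \cref{def:boundedtheory} and the fact that $\kappa$-filtered colimits in $\spaces$ commute with finite limits (like $(\bs)^{S^1}$, which is a pullback along the diagonal), we obtain
\[
\map_\pcat(S^{1}\otimes P,\coprod_{i\in I}P_i)\simeq\colim_{F\subset I,\,|F|<\kappa}\map_\pcat(S^{1}\otimes P,\coprod_{i\in F}P_i).
\]
Hence $f$ factors through $\coprod_{i\in F}P_i$ for some $\kappa$-small $F$, exhibiting $S^{1}\otimes P$ as a retract of an object of $\pcat_\kappa$ and therefore, by idempotent completeness, as an object of $\pcat_\kappa$.

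For (2b), the forward direction is tautological. For the converse, let $Y\colon\pcat_\kappa^\op\to\spaces$ preserve $\kappa$-small products and $S^{1}$-cotensors, and let $G(Y)\in\Model_\pcat$ denote its extension via \cref{thm:bounded}. The two functors $G(Y)(S^{1}\otimes\bs)$ and $G(Y)(\bs)^{S^{1}}$ on $\pcat^\op$ send coproducts to products (using that $S^{1}\otimes(\bs)$ distributes over coproducts in $\pcat$, as is checked by the universal property of tensors and coproducts) and agree on $\pcat_\kappa$ by hypothesis; since $\pcat$ is generated from $\pcat_\kappa$ under coproducts and retracts, they agree on all of $\pcat$, so $G(Y)$ is a loop model. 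Finally, (3) follows: under the equivalence $\Model_\pcat\simeq\presheaves_\Sigma^\kappa(\pcat_\kappa)$ of \cref{prop:kappaaccessible}, the full subcategory $\Model_\pcat^\Omega$ is the localization cut out by the small set of morphisms $\nu(S^{1}\otimes P)\to S^{1}\otimes\nu P$ for $P\in\pcat_\kappa$ (in addition to those cutting out $\presheaves_\Sigma^\kappa$ inside $\presheaves$), hence is accessible and presentable. The main technical step is (2a), where the commutation of $\kappa$-filtered colimits with finite limits is precisely what allows the $S^{1}$-tensor to stay within the $\kappa$-ary generating subcategory.
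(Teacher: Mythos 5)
The paper itself does not supply a proof of this proposition; it states only that ``The proofs are straightforward variants of the proofs for the analogous statements for plain theories,'' referring to \cref{prop:limitsinlargemodels}, \cref{thm:bounded}, and \cref{prop:kappaaccessible}. Your proof is a correct and complete filling-in of those omitted details, and the route you take is exactly the one the authors have in mind.

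Two minor comments. First, in part (2a), you observe that one may enlarge $\pcat_\kappa$ to its idempotent completion; this is a genuine necessity rather than mere convenience, since \cref{def:boundedtheory} does not require $\pcat_\kappa$ to be closed under retracts and your argument produces $S^1\otimes P$ only as a retract of a $\kappa$-small coproduct. Your check that the three conditions of \cref{def:boundedtheory} are stable under passing to retract-closure (and that \cref{thm:bounded} is unaffected, by \cref{prop:idempotentcompletion}) is the right way to justify this. Second, in part (3) the displayed localizing morphisms should run $S^1\otimes\nu P\to\nu(S^1\otimes P)$, not the other way around: the canonical comparison map goes from the tensor formed in $\Model_\pcat$ to $\nu$ applied to the tensor formed in $\pcat$, and locality with respect to this map is precisely the loop-model condition $X(S^1\otimes P)\xrightarrow{\sim}X(P)^{S^1}$. (This is also the convention used in the proof of \cref{lemma:induced_derived_functor_of_a_cocontinuous_functor_preserves_l_equivalences}.) The rest of the argument for (3) is unaffected by this slip.
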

\begin{proof}
The proofs are straightforward variants of the proofs for the analogous statements for plain theories, and so we omit the details.
\end{proof}

\begin{warning}
\label{warning:genericfibre}
Given a bounded loop theory $\pcat$, we do not have an explicit formula for the localization
\[
L\colon \Model_\pcat\to\Model_\pcat^\Omega,
\]
except in some particular special cases, see \cref{prop:loopcompletion} and \cref{prop:delooptheories}. As a consequence, if $\pcat$ is an arbitrary non-bounded loop theory, then we do not know whether $\Model_\pcat^\Omega$ is a localization of $\Model_\pcat$, or equivalently, whether $\Model_\pcat^\Omega$ admits all small colimits.
\end{warning}

Finally, we note that any Malcev theory can be completed to a loop theory in a natural way.

\begin{defn}\label{def:loopcompletion}
The \emph{loop completion} of a Malcev theory $\pcat$ is the full subcategory
\[
\pcat_\Sigma\subset\Model_\pcat
\]
generated by $\pcat$ under coproducts and tensors by $T \in \Sph$.
\end{defn}

\begin{prop}
\label{prop:loopcompletion}
Let $\pcat$ be a Malcev theory. Then the restricted Yoneda embedding
\[
\nu\colon \Model_\pcat\to\Model_{\pcat_\Sigma}
\]
is a fully faithful right adjoint, with left adjoint given by restriction along the homomorphism $\pcat\to\pcat_\Sigma$. The essential image of $\nu$ is spanned by the loop models of $\pcat_\Sigma$; that is,\ $\nu$ induces an equivalence
\[
\Model_\pcat\simeq\Model_{\pcat_\Sigma}^\Omega.
\]
\end{prop}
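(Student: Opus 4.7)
The plan is to verify that $\pcat_\Sigma$ is a loop theory, construct the adjunction $i^* \dashv \nu$ where $i \colon \pcat \hookrightarrow \pcat_\Sigma$ is the inclusion and $i^* \colon \Model_{\pcat_\Sigma} \to \Model_\pcat$ is restriction, and then identify the essential image of $\nu$ with the loop models of $\pcat_\Sigma$.

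First I would verify $\pcat_\Sigma$ is a loop theory. Given a small set of generators $\{P_s\}$ of $\pcat$, the family $\{T \otimes P_s : T \in \Sph,\ s \in S\}$ generates $\pcat_\Sigma$ under small coproducts, so $\pcat_\Sigma$ is a theory; it admits $S^1$-tensors by construction. To see it is Malcev, it suffices to check that every object of $\pcat_\Sigma^{\op}$ is universally Kan: for $P \in \pcat$ this follows by applying \cref{prop:univkanclosure}(6) to the finite-product-preserving inclusion $\pcat^{\op} \hookrightarrow \pcat_\Sigma^{\op}$ (noting that Yoneda preserves coproducts of $\pcat$ into $\Model_\pcat$ and hence into $\pcat_\Sigma$), and closure under products (coproducts in $\pcat_\Sigma$) and cotensors (tensors in $\pcat_\Sigma$) then follows inductively via parts (1) and (7) of \cref{prop:univkanclosure}.

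Next I would establish the adjunction. For $X \in \Model_\pcat$ and $P \in \pcat$, one computes $(i^* \nu X)(P) = \map_{\Model_\pcat}(\nu P, X) = X(P)$, yielding a canonical identification $i^* \nu X \simeq X$ that will serve as the counit. To exhibit the natural equivalence $\map_{\Model_\pcat}(i^* Y, X) \simeq \map_{\Model_{\pcat_\Sigma}}(Y, \nu X)$, I would first verify it on representables $Y = \nu Q$, where both sides reduce to $\map_{\Model_\pcat}(Q, X)$, and then extend to all $Y$ via a free resolution $Y \simeq |\nu Q_\bullet|$ furnished by \cref{thm:splithypercovering}, using that $i^*$ preserves this Kan geometric realization by \cref{cor:nearmonadic2} and that mapping spaces on both sides convert the realization into the same totalization. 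Since the counit is an equivalence, $\nu$ is automatically fully faithful.

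Finally I would identify the essential image. That $\nu X$ is always a loop model is formal: since the tensor in $\pcat_\Sigma$ is inherited from $\Model_\pcat$, for $Q \in \pcat_\Sigma$ and $T \in \Sph$ we have $(\nu X)(T \otimes Q) = \map_{\Model_\pcat}(Q, X)^T = (\nu X)(Q)^T$. For the converse, given a loop model $Y$, I would show the unit $\eta_Y \colon Y \to \nu i^* Y$ is an equivalence by verifying that the full subcategory of $\pcat_\Sigma$ on which $\eta_Y$ is an equivalence contains $\pcat$ and is closed under coproducts and tensors, hence coincides with $\pcat_\Sigma$: the base case is the identity, the coproduct step uses product-preservation on both sides, and the tensor step $Q = T \otimes Q'$ reduces to showing $Y(T \otimes Q') \simeq Y(Q')^T$ for all $T \in \Sph$. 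This last point is the main technical obstacle, since the loop-model hypothesis directly supplies only the case $T = S^1$; I would bridge the gap by passing to the idempotent completion $\pcat_\Sigma^\sharp$, where \cref{prop:loopidempotentcompletion}(3) yields the identification for all $T \in \Sph$, and transporting the statement back via the equivalence $\Model_{\pcat_\Sigma}^\Omega \simeq \Model_{\pcat_\Sigma^\sharp}^\Omega$ of \cref{ex:idempotentcompletion}.
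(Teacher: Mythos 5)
Your proof is correct and complete. The paper defers to \cite[Theorem 3.3.1]{balderrama2021deformations} rather than giving a direct argument, so I cannot line up the two proofs explicitly, but your argument is the natural one and hits all the necessary points: that $\pcat_\Sigma$ is a (Malcev) loop theory, that $i^*\dashv\nu$, and the two-directional identification of the essential image with loop models. In particular, you are right to flag and then close the gap between the loop-model hypothesis (which gives only $T=S^1$) and the need for all $T\in\Sph$, via passage to the idempotent completion and \cref{prop:loopidempotentcompletion}(3) together with \cref{ex:idempotentcompletion}.

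One small streamlining you might prefer: rather than verifying the adjunction formula by hand on representables and bootstrapping via a free resolution, observe that the inclusion $f\colon\pcat_\Sigma\hookrightarrow\Model_\pcat$ is a coproduct-preserving functor into a cocomplete $\infty$-category, so \cref{cor:restriction} immediately gives a colimit-preserving $f_!\colon\Model_{\pcat_\Sigma}\to\Model_\pcat$ with right adjoint $f^*$, and $f^*$ is by its very formula the restricted Yoneda embedding $\nu$. Identifying $f_!$ with $i^*$ is then a matter of noting that both preserve geometric realizations (for $i^*$ via the Malcev condition and \cref{cor:nearmonadic2}, for $f_!$ by construction), agree on representables, and that $\Model_{\pcat_\Sigma}$ is generated by representables under geometric realizations by \cref{lem:malcevcocomplete}(2). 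Either route lands in the same place, but this variant avoids the hand-verification of naturality that your free-resolution argument implicitly requires.
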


\begin{proof}
The same proof as \cite[Theorem 3.3.1]{balderrama2021deformations} applies. 
\end{proof}

\begin{example}
Let $\gcat$ denote the theory of groups, and identify the $\infty$-category of models of $\gcat$ with the $\infty$-category of pointed connected spaces. If $X \in \spaces_\ast^{\geq 1}$ and $T$ is any space, then
\[
T\otimes X \simeq T_+ \wedge X.
\]
It follows that the loop completion of $\gcat_\Sigma$ of $\gcat$ is equivalent to the full subcategory of $\spaces_\ast^{\geq 1}$ spanned by the wedges of positive-dimensional spheres. This is the classical example of a loop theory, going back to work of Blanc, Dwyer, Goerss, Kan, and Stover on $\Pi$-algebras \cite{stover1990vankampen, dwyerkan1989enveloping, realization_space_of_a_pi_algebra}, interpreted $\infty$-categorically in \cite{pstrkagowski2023moduli}.
\end{example}

\subsection{Variant: Deloop theories}
\label{subsection:deloop_theories}

The following dual variant of a loop theory is also convenient for modeling stable $\infty$-categories.

\begin{definition}
\label{def:stabledelooptheory}
A \emph{stable deloop theory} is a Malcev theory $\pcat$ which is pointed, admits loops, and for which $\Omega \colon \pcat \to \pcat$ preserves coproducts. The $\infty$-category of \emph{deloop models} for a stable deloop theory $\pcat$ is the full subcategory
\[
\Model_\pcat^\Sigma\subset\Model_\pcat
\]
consisting of those models $X$ for which the comparison map
\[
X(P) \to \Omega X(\Omega P)
\]
is an equivalence for all $P \in \pcat$.
\end{definition}

The following shows that stable deloop theories are a generalization of \emph{stable loop theories} considered in \cite{balderrama2021deformations}.

\begin{prop}
Let $\pcat$ be a stable deloop theory.
\begin{enumerate}
\item $\Model_\pcat^\Sigma$ is a stable $\infty$-category.
\item $\nu \colon \pcat\to\Model_\pcat$ lands in $\Model_\pcat^\Sigma$ if and only if $\Omega\colon \pcat\to\pcat$ is fully faithful. In this case, $\pcat$ is an additive $\infty$-category.
\item Suppose $\Omega\colon \pcat\to\pcat$ is an equivalence. Then $\pcat$ is a stable loop theory, that is, $\pcat$ admits suspensions and $\Sigma \colon \pcat \rightarrow \pcat$ is an equivalence. Moreover $\Model_\pcat^\Sigma\simeq\Model_\pcat^\Omega$.
\end{enumerate}
\end{prop}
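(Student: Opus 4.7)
My plan handles the three parts in sequence. For (1), I first observe that $\Model_\pcat^\Sigma \subset \Model_\pcat$ is closed under small limits, because $\Omega$ commutes with limits of spaces: if $X \simeq \lim_i X_i$ with each $X_i$ a deloop model, then $X(P) \simeq \lim_i X_i(P) \simeq \lim_i \Omega X_i(\Omega P) \simeq \Omega X(\Omega P)$. In particular $\Model_\pcat^\Sigma$ is pointed and has finite limits. To show the loop functor is an equivalence, I define its candidate inverse by $(\Sigma X)(P) \colonequals X(\Omega P)$: this preserves products because $\Omega\colon \pcat \to \pcat$ preserves coproducts, it is a deloop model by substituting $\Omega P$ for $P$ in the deloop condition for $X$, and mutual invertibility $\Omega \Sigma X \simeq X \simeq \Sigma \Omega X$ is immediate from the deloop condition on $X$. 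Stability then follows from the standard criterion that a pointed $\infty$-category with finite limits on which $\Omega$ is an equivalence is stable (with the inverse $\Sigma$ already providing a left adjoint from which the remaining finite colimits can be extracted).

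For (2), the deloop condition on a representable $\nu P$ unfolds to requiring
\[
\map_\pcat(Q,P) \longrightarrow \Omega \map_\pcat(\Omega Q, P) \simeq \map_\pcat(\Omega Q, \Omega P)
\]
to be an equivalence for every $Q \in \pcat$; the second equivalence uses that $\pcat$ is pointed, so that $\Omega \map_\pcat(A,B) \simeq \map_\pcat(A, \Omega B)$. This is precisely the condition that $\Omega\colon \pcat \to \pcat$ be fully faithful. Assuming this, iterating the equivalence $\map_\pcat(Q,P) \simeq \Omega \map_\pcat(\Omega Q, P) \simeq \Omega^2 \map_\pcat(\Omega^2 Q,P) \simeq \cdots$ exhibits every mapping space in $\pcat$ as an infinite loop space, hence a grouplike $\bfE_\infty$-space. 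A pointed $\infty$-category with finite coproducts and grouplike $\bfE_\infty$-valued mapping spaces is additive, concluding (2).

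For (3), if $\Omega$ is an equivalence, its inverse $\Sigma$ is automatically a left adjoint to $\Omega$, so exhibits suspensions in $\pcat$; both $\Sigma$ and $\Omega$ are equivalences, and by (2) $\pcat$ is additive. To see $\pcat$ is a loop theory and to compare the two subcategories, I will first identify $S^1 \otimes P \simeq P \coprod \Sigma P$ in $\pcat$: for each $Q$,
\[
\map_\pcat(P \coprod \Sigma P, Q) \simeq \map_\pcat(P,Q) \times \map_\pcat(\Sigma P, Q) \simeq \map_\pcat(P,Q) \times \Omega \map_\pcat(P,Q),
\]
which equals the free loop space $\map_\pcat(P,Q)^{S^1}$ because additivity makes $\map_\pcat(P,Q)$ grouplike. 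Now additivity of $\pcat$ also forces every model $X \in \Model_\pcat$ to take values in grouplike commutative monoids, so $X(P)^{S^1} \simeq X(P) \times \Omega X(P)$. The loop condition $X(S^1 \otimes P) \simeq X(P)^{S^1}$ thus reads $X(P) \times X(\Sigma P) \simeq X(P) \times \Omega X(P)$, which after cancelling the common factor reduces to $X(\Sigma P) \simeq \Omega X(P)$. The deloop condition $X(P') \simeq \Omega X(\Omega P')$ applied to $P' = \Sigma P$ reads identically, and since $\Sigma$ is an equivalence, quantifying over all $P$ or all $P'$ is the same. Hence $\Model_\pcat^\Sigma = \Model_\pcat^\Omega$.

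The main obstacle I expect is the identification $S^1 \otimes P \simeq P \coprod \Sigma P$ in part (3), which crucially rests on the additivity of $\pcat$ established in (2); once that is in place, the comparison of the loop and deloop conditions is a direct cancellation. There is also a minor bookkeeping issue about smallness of the presheaf $\Sigma X$ defined in (1), which should be harmless but requires noting that precomposition with $\Omega$ preserves the small-presheaf property.
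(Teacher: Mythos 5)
Your plan follows the paper's proof closely for (1) and (3), but the additivity step in (2) takes a different route that, as written, has a gap.

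For (1), the paper likewise defines the candidate inverse by precomposition with $\Omega$ (writing $X_\Omega = X\circ\Omega$) and uses the deloop condition to verify mutual invertibility, so your argument is the same. For the unfolding in (2), the identification of the deloop condition on $\nu P$ with fully faithfulness of $\Omega$ is also the same as the paper's.

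For the additivity of $\pcat$ in (2), the paper uses (1): $\nu$ exhibits $\pcat$ as a full subcategory of the stable $\infty$-category $\Model_\pcat^\Sigma$ that is closed under finite coproducts, and any such subcategory of an additive $\infty$-category is automatically additive. You instead iterate $\map_\pcat(Q,P)\simeq\Omega\map_\pcat(\Omega Q,P)$ to exhibit each mapping space as an infinite loop space and then invoke the claim that a pointed $\infty$-category with finite coproducts whose mapping spaces are grouplike $\bfE_\infty$-spaces is additive. That claim is not correct as stated: having each mapping space abstractly admit a grouplike $\bfE_\infty$-structure does not determine any compatibility with composition, and without that no conclusion about (pre)additivity follows. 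What is actually needed is an \emph{enrichment} in grouplike $\bfE_\infty$-spaces — compatible with composition in both variables — and verifying that the structure induced by your chain of equivalences has this coherence is a nontrivial step you do not address. The paper's route sidesteps the issue entirely and is the one you should use; it also makes (2) formally depend on (1), which is why the paper establishes stability first.

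For (3), your reduction is in substance a re-derivation of \cref{prop:pointedlooptheory}, which the paper simply cites. The phrase ``after cancelling the common factor'' glosses over the need to know that the comparison map $X(S^1\otimes P)\to X(P)^{S^1}$ actually respects the two product splittings $X(S^1\otimes P)\simeq X(P)\times X(\Sigma P)$ and $X(P)^{S^1}\simeq X(P)\times\Omega X(P)$, rather than merely that the products are abstractly equivalent. This retract-tracking is precisely the content of \cref{prop:loopidempotentcompletion}.(3), on which \cref{prop:pointedlooptheory} is based. With that detail supplied (or the proposition cited), your argument is correct.
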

\begin{proof}
(1)~~Clearly $\Model_\pcat^\Sigma\subset\Model_\pcat$ is closed under small limits. In particular $\Model_\pcat^\Sigma$ admits finite limits, and by \cite[Corollary 1.4.2.27]{higher_algebra} is therefore stable provided $\Omega\colon \Model_\pcat^\Sigma\to\Model_\pcat^\Sigma$ is an equivalence. For $X \in \Model_\pcat$, write $X_\Omega = X \circ \Omega = \Omega^\ast X$ for the precomposition of $X$ with $\Omega$. The condition that $\Omega$ preserves coproducts ensures that $X_\Omega$ is again a model of $\pcat$, and by definition $X \in \Model_\pcat^\Sigma$ if and only if the canonical map
\[
X \to \Omega X_\Omega
\]
is an equivalence. This shows that $(\bs)_\Omega$ defines an essential inverse to $\Omega$ on $\Model_\pcat^\Sigma$, proving that $\Omega$ defines an automorphism of $\Model_\pcat^\Sigma$ as claimed.

(2)~~By definition, $\nu(Q) \in \Model_\pcat^\Sigma$ if and only if, for all $P\in \pcat$, the map
\[
\map_\pcat(P,Q) \to \Omega \map_{\pcat}(\Omega P, Q) \simeq  \map_\pcat(\Omega P,\Omega Q)
\]
is an equivalence. This holds for all $Q \in \pcat$ if and only if $\Omega$ is fully faithful. By (1), it follows that $\nu\colon \pcat\to\Model_\pcat^\Sigma$ realizes $\pcat$ as a full subcategory of a stable $\infty$-category which is closed under coproducts, and is therefore additive.

(3)~~As $\Omega\colon \pcat\to\pcat$ is an equivalence, it admits an inverse which must be given by $\Sigma\colon \pcat\to\pcat$. By (2) we see that $\pcat$ is additive, and thus if $P \in \pcat$ then $S^1\otimes P \simeq P \oplus \Sigma P \in \pcat$, implying that $\pcat$ is a loop theory. Following \cref{prop:pointedlooptheory}, we see that $\Model_\pcat^\Omega\subset\Model_\pcat$ consists of those models $X$ for which
\[
X(\Sigma P) \to \Omega X(P)
\]
is an equivalence for all $P\in \pcat$, whereas $\Model_\pcat^\Sigma\subset\Model_\pcat$ consists of those models $X$ for which
\[
X(P) \to \Omega X(\Omega P)
\]
is an equivalence for all $P \in \pcat$. As $\Sigma \dashv \Omega$ are inverse automorphisms of $\pcat$, these conditions coincide.
\end{proof}

From here we have the following variant of \cite[Theorem 3.3.3]{balderrama2021deformations}.

\begin{prop}\label{prop:stableloopmodels}
Let $\dcat$ be a stable $\infty$-category which admits small colimits, and let $\pcat\subset\dcat$ be a full subcategory closed under coproducts and desuspensions, and generated under coproducts and retracts by a small subcategory $\pcat_0\subset\pcat$.
\begin{enumerate}
\item The restricted Yoneda embedding
\[
\nu\colon \dcat\to\Model_\pcat
\]
is fully faithful on restriction to the thick subcategory generated by $\pcat$.
\item Suppose that $\pcat_0$ is a set of generators for $\dcat$, in sense that if $X\in \dcat$ then $X\simeq 0$ if and only if $\map_\dcat(P,X) \simeq \ast$ for all $P \in \pcat_0$. Then $\nu$ restricts to an equivalence $\dcat\simeq\Model_\pcat^\Sigma$ in either of the following cases:
\begin{enumerate}
\item Every object of $\pcat_0$ is compact as an object of $\dcat$;
\item There is a fixed finite diagram $\jcat$ for which every object of $\dcat$ is equivalent to a retract of the colimit of a $\jcat$-indexed diagram of objects of $\pcat$.
\end{enumerate}
\end{enumerate}
\end{prop}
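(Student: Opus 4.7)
The plan for Part (1) is to frame the argument inside the stable $\infty$-category $\Model_\pcat^\Sigma$. First I would verify that $\pcat$ is a stable deloop theory in the sense of \cref{def:stabledelooptheory}: it is additive (hence Malcev) as a coproduct-closed full subcategory of the stable $\dcat$; it admits loops by the assumed closure under desuspensions; and $\Omega\colon\pcat\to\pcat$ preserves coproducts since $\Omega$ commutes with every small colimit in the stable $\dcat$. Next, the restricted Yoneda $\nu$ factors through $\Model_\pcat^\Sigma$: the comparison $\nu Y(P)\to\Omega\nu Y(\Omega P)$ unwinds to the equivalence $\map_\dcat(P,Y)\simeq\Omega\map_\dcat(\Omega P,Y)$, valid because $P\simeq\Sigma\Omega P$ in $\dcat$. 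Since $\nu$ preserves limits (as a restricted Yoneda embedding) and $\Model_\pcat^\Sigma$ is stable, the resulting functor $\nu\colon\dcat\to\Model_\pcat^\Sigma$ is exact between stable $\infty$-categories, and it is fully faithful on $\pcat$ directly by the Yoneda lemma: $\map_{\Model_\pcat^\Sigma}(\nu P,\nu Q)=\nu Q(P)=\map_\dcat(P,Q)$. A standard inductive argument over the cofibre sequences and retracts used to construct the thick subcategory then propagates full faithfulness from $\pcat$ to all of the thick subcategory, using that both $\map_\dcat(-,Y)$ and $\map_{\Model_\pcat^\Sigma}(\nu-,\nu Y)$ send cofibre sequences and retracts in the first variable to fibre sequences and retracts of spaces.

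For Part (2), I would combine Part (1) with the additional hypotheses. First, the generation hypothesis on $\pcat_0$ yields conservativity of $\nu$: if $\nu Y\simeq 0$ then $\map_\dcat(P_0,Y)\simeq\ast$ for all $P_0\in\pcat_0$, forcing $Y\simeq 0$. Second, I would use the adjunction $\iota_!\dashv\nu\colon\Model_\pcat\rightleftarrows\dcat$ supplied by \cref{thm:freecocompletion} and \cref{cor:restriction}. In case (b), every $Y\in\dcat$ is a retract of a $\jcat$-indexed colimit of objects of $\pcat$, hence belongs to the thick subcategory, so Part (1) directly gives full faithfulness of $\nu$ on all of $\dcat$; essential surjectivity would follow by showing every $X\in\Model_\pcat^\Sigma$ may be realized as a $\jcat$-shaped colimit (or retract thereof) of representables in $\Model_\pcat^\Sigma$, which is then identified with $\nu$ applied to the corresponding $\jcat$-colimit in $\dcat$. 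In case (a), the compactness hypothesis makes $\dcat$ a compactly generated stable $\infty$-category, and I would identify both $\dcat$ and $\Model_\pcat^\Sigma$ with categories of additive spectrum-valued presheaves on $\pcat_0$ (using that a loop model is determined by its values on $\pcat_0$ through the product formula for coproducts in $\pcat$). Since the restricted Yoneda $\nu|_{\pcat_0}$ preserves filtered colimits by compactness, one extends full faithfulness from the thick subcategory of $\pcat_0$ to all of $\dcat\simeq\Ind(\dcat^\omega)$, with essential surjectivity then emerging from this identification combined with conservativity.

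The main technical obstacle will be the essential surjectivity in Part (2). For a general loop model $X\in\Model_\pcat^\Sigma$, the natural candidate preimage under $\nu$ is $\iota_!X$, computed from a free resolution $X\simeq|\nu P_\bullet|$ in $\Model_\pcat$ as $|P_\bullet|$ in $\dcat$. The desired identity $\nu\iota_!X(P)=\map_\dcat(P,|P_\bullet|)\simeq|\map_\dcat(P,P_\bullet)|=X(P)$ amounts to a projectivity statement for $P$ in $\dcat$ that does not hold in a general stable $\dcat$. The two hypotheses of Part (2) are calibrated precisely to circumvent this: case (b) restricts to finite-shape colimits which an exact functor preserves automatically, while case (a) converts the problem, via the identification with additive presheaves on $\pcat_0$, into a compatibility with filtered colimits of compact generators, which $\map_\dcat(P_0,-)$ handles for $P_0\in\pcat_0$.
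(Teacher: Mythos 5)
The paper itself does not give a proof of this proposition; it defers to \cite[Theorem 3.3.3]{balderrama2021deformations}. I therefore assess your proposal on its own terms.

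Your treatment of Part (1) is correct and essentially complete. You correctly verify that $\pcat$ is a stable deloop theory (pointed, closed under $\Omega$, with $\Omega$ preserving coproducts because $\dcat$ is stable), that $\nu$ factors through $\Model_\pcat^\Sigma$ via $P\simeq\Sigma\Omega P$, that $\nu\colon\dcat\to\Model_\pcat^\Sigma$ is exact since it is a limit-preserving functor into a stable $\infty$-category, and that full faithfulness on $\pcat$ (Yoneda) then propagates to the thick closure because both $\map_\dcat(-,-)$ and $\map_{\Model_\pcat^\Sigma}(\nu-,\nu-)$ are exact in each variable. Your use of the adjunction $\iota_!\dashv\nu$ from \cref{thm:freecocompletion} and \cref{cor:restriction} is also the right tool for Part (2). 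For case (a), the plan of identifying both $\dcat\simeq\mathrm{Ind}(\mathrm{Thick}(\pcat_0))$ and $\Model_\pcat^\Sigma$ with product-preserving spectrum-valued presheaves on (a coproduct- and $\Omega$-closed enlargement of) $\pcat_0$, and then using that compactness of $\pcat_0$ makes $\nu$ commute with filtered colimits on the generators, is a viable route, though the details of matching the deloop condition with the spectrification across this identification would need to be filled in.

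The genuine gap is in case (b). You reduce essential surjectivity to the assertion that ``every $X\in\Model_\pcat^\Sigma$ may be realized as a $\jcat$-shaped colimit (or retract thereof) of representables,'' but you do not argue this, and it does not follow from hypothesis (b), which constrains $\dcat$ rather than $\Model_\pcat^\Sigma$. What you obtain from Part (1) plus hypothesis (b) is only that $\nu$ is a fully faithful right adjoint, so that the counit $\iota_!\nu\to\mathrm{id}_\dcat$ is an equivalence and $\iota_!|_{\Model_\pcat^\Sigma}$ is a Bousfield localization. The missing step is precisely that this localization is conservative (equivalently, that the unit $X\to\nu\iota_!X$ is an equivalence for every deloop model $X$). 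Your own diagnosis of the obstacle—that for a free resolution $X\simeq|\nu P_\bullet|$ one needs $\map_\dcat(P,|P_\bullet|)\simeq|\map_\dcat(P,P_\bullet)|$, i.e.\ projectivity of $P$ in $\dcat$—is exactly right; indeed by \cref{cor:neverstable} no nonzero object of a stable $\infty$-category is projective, so this cannot be circumvented by a naive projectivity claim. But the conclusion you draw, that ``case (b) restricts to finite-shape colimits which an exact functor preserves automatically,'' does not repair this: the geometric realization presenting $X$ is not of shape $\jcat$, and the finiteness of $\jcat$ does not let you replace it by one. To close the gap you would need an actual argument (e.g.\ that the uniform bound on $\jcat$ forces $\iota_!|_{\Model_\pcat^\Sigma}$ to be conservative, or that the thick subcategory of $\Model_\pcat^\Sigma$ generated by the representables is everything under this hypothesis), and as written your sketch does not supply it.
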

\begin{proof}
The same proof as \cite[Theorem 3.3.3]{balderrama2021deformations} applies.
\end{proof}

\begin{remark}
The criteria of \cref{prop:stableloopmodels}.(2) are not sharp: there exist equivalences $\dcat\simeq\Model_\pcat^\Omega$ with $\pcat$ a stable deloop theory not satisfying either of the given hypotheses.
\end{remark}

\begin{remark}
\label{remark:writing_something_as_loop_models_requires_choice}
We note that \cref{prop:stableloopmodels} shows that, unlike for ordinary theories and their $\infty$-categories of models, the presentation of a $\infty$-category $\dcat$ as of the form $\Model_\pcat^\Sigma$ or $\Model_\pcat^\Omega$ depends in an essential way on the choice of deloop or loop theory $\pcat$.

For example, let $\Mod_\integers^{\Cpl(p)}\subset\Mod_\integers$ be the full subcategory of $p$-complete $\integers$-modules, and let $\pcat,\qcat\subset\Mod_\integers^{\Cpl(p)}$ be the full subcategories generated under coproducts, suspensions, and desuspensions by $\integers_p^\wedge$ and $\integers/(p)$ respectively. Then $\pcat$ and $\qcat$ are (de)loop theories satisfying
\[
\Model_\pcat^\Omega\simeq\Model_\pcat^\Sigma\simeq\Mod_\integers^{\Cpl(p)}\simeq\Model_\qcat^\Sigma\simeq\Model_\qcat^\Omega,
\]
but the theories $\pcat$ and $\qcat$ are quite different: for example, $\Model_\qcat$ is compactly generated whereas $\Model_\pcat$ is not. Informally, whereas $\pcat$ encodes the homotopy groups of a $p$-complete $\integers$-module, $\qcat$ instead records homotopy groups mod $p$ together with residual Bockstein information.
\end{remark}

As with loop theories, a bit more can be said for bounded deloop theories.

\begin{prop}\label{prop:delooptheories}
Let $\pcat$ be a bounded deloop theory.
\begin{enumerate}
\item $\Model_\pcat^\Sigma$ is an accessible localization of $\Model_\pcat$. In particular, $\Model_\pcat^\Sigma$ is presentable (but of a possibly higher accessibility rank than $\Model_\pcat$).
\item Suppose that $\pcat$ is generated by a $\kappa$-ary theory $\pcat_\kappa\subset\pcat$ which is closed under $\Omega$.
\begin{enumerate}
\item $\Model_\pcat^\Sigma$ is equivalent to the $\infty$-category of functors $X\colon \pcat_\kappa\to\spaces$ which preserve $\kappa$-small products and for which $X(P) \to \Omega X(\Omega P)$ is an equivalence for $P \in \pcat$.
\item If $\kappa = \omega$, then the localization $L\colon \Model_\pcat\to\Model_\pcat^\Sigma$ is determined by
\[
(LX)(P) = \colim_n \Omega^n X(\Omega^n P)
\]
for $P \in \pcat_\omega$.
\end{enumerate}
\end{enumerate}
\end{prop}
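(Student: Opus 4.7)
The plan is to first establish part (2a) by a direct comparison using \cref{thm:bounded}, then deduce (1) as an immediate consequence, and finally verify the explicit formula in (2b) by checking three properties.

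For (2a), apply \cref{thm:bounded} to identify $\Model_\pcat$ with $\presheaves_\Sigma^\kappa(\pcat_\kappa)$ via restriction. Under this equivalence, a model $X$ lies in $\Model_\pcat^\Sigma$ if and only if $X(P) \to \Omega X(\Omega P)$ is an equivalence for all $P \in \pcat$. The key point is that this condition need only be verified for $P \in \pcat_\kappa$: any $P \in \pcat$ is a retract of a coproduct $\coprod_i P_i$ of objects of $\pcat_\kappa$, and since $X$ is product-preserving and $\Omega$ preserves small coproducts (the latter being the defining property of a stable deloop theory), the loop condition at $\coprod_i P_i$ decomposes as the product over $i$ of the loop conditions at $P_i$, and stability of the loop condition under retracts finishes the argument.

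For (1), the description just obtained exhibits $\Model_\pcat^\Sigma$ as the full subcategory of $\Model_\pcat$ consisting of objects local with respect to the small set of morphisms $\{\nu P \to (\nu \Omega P)^{S^1} : P \in \pcat_\kappa\}$. One may arrange the hypothesis of (2) by enlarging a given $\kappa$-ary generating subcategory to one closed under $\Omega$, which remains $\kappa$-ary (after possibly increasing $\kappa$) since $\Omega$ preserves $\kappa$-small coproducts. Since $\Model_\pcat$ is presentable by \cref{cor:presentable}, Proposition 5.5.4.15 of \cite{lurie_higher_topos_theory} then implies $\Model_\pcat^\Sigma$ is an accessible localization of $\Model_\pcat$.

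For (2b), define $Y \colon \pcat_\omega \to \spaces$ by the colimit formula, making use of the canonical basepoints on $X(P)$ coming from $X(0)\simeq\ast$ and the zero morphism $P \to 0$ (available since $\pcat$ is pointed). There are three claims to verify: (i) $Y$ preserves finite products, which reduces to the fact that finite products commute with sequential colimits in $\spaces$; (ii) the comparison map $Y(P) \to \Omega Y(\Omega P)$ is an equivalence, which uses that $\Omega$ commutes with sequential colimits of pointed spaces (since $S^1$ is compact in $\spaces$) and then amounts to a cofinal reindexing of the defining diagram; and (iii) the canonical natural transformation $\eta \colon X \to Y$ is initial among maps to deloop models, since if $Z \in \Model_\pcat^\Sigma$ then the equivalences $\Omega^n Z(\Omega^n P) \simeq Z(P)$ turn any map $X \to Z$ into a compatible tower of maps $\Omega^n X(\Omega^n P) \to Z(P)$ whose colimit provides the desired factorization through $Y$. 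The main obstacle should be step (2b)(ii): the pointed structure on models must be handled carefully when arguing that $\Omega$ commutes with the defining sequential colimit, and the cofinality of the reindexing involves a careful but routine verification.
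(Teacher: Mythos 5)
Your proposal is essentially the paper's proof, with the same three verifications in (2b) (finite products via filtered colimits, deloop condition via a shift-and-commute argument, and the universal property for $X \to LX$) and the same reduction in (2a) using retracts of coproducts and that $\Omega$ preserves coproducts. The order is reversed (you derive (1) from (2a); the paper proves (1) directly and notes (2a) is a consequence), but the content is the same.

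Two small points deserve attention. First, your justification for why one may enlarge $\kappa$ to arrange the hypothesis of (2) misattributes the reason: you write that the closure of $\pcat_\kappa$ under $\Omega$ "remains $\kappa$-ary (after possibly increasing $\kappa$) since $\Omega$ preserves $\kappa$-small coproducts," but preservation of coproducts by $\Omega$ is only what ensures the enlarged subcategory is \emph{closed under} coproducts. The actual reason one must pass to a larger cardinal $\lambda$ is condition~(3) of \cref{def:boundedtheory}: the newly adjoined objects $\Omega^n P$ need not be $\kappa$-compact in $\Model_\pcat$, and one uses presentability of $\Model_\pcat$ to find $\lambda$ making them $\lambda$-compact before taking the closure under $\lambda$-small coproducts. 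Second, the localizing set of morphisms should be $\{\Sigma\nu(\Omega P)\to\nu P\}$ (coming from the counit $\Sigma\Omega P\to P$), not $\{\nu P \to (\nu\Omega P)^{S^1}\}$; a model $X$ is local exactly when $\map(\nu P, X)\to\map(\Sigma\nu\Omega P, X)\simeq\Omega X(\Omega P)$ is an equivalence. Neither point affects the correctness of the overall strategy.
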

\begin{proof}
(1)~~Choose a regular cardinal $\kappa$ and $\kappa$-ary theory $\pcat_\kappa\subset\pcat$ which generates $\pcat_\kappa$. Let $\pcat_0\subset\pcat$ be the full subcategory generated by $\pcat_\kappa$ under loops. As $\Model_\pcat$ is presentable and $\pcat_0\subset\Model_\pcat$ is small, there exists a regular cardinal $\lambda \geq \kappa$ for which all objects of $\pcat_0$ are $\lambda$-compact in $\Model_\pcat$. Let $\pcat_\lambda\subset\pcat$ be the full subcategory generated by $\pcat_0$ under $\lambda$-small coproducts. As $\Omega$ preserves coproducts, we see that $\pcat_\lambda$ is still closed under $\Omega$, and that $\pcat$ is generated by the $\lambda$-ary theory $\pcat_\lambda$.

By \cref{thm:bounded}, the restricted Yoneda embedding realizes $\Model_\pcat \simeq \presheaves_\Sigma^\lambda(\pcat_\lambda)$ as the full subcategory of presheaves on $\pcat_\lambda$ which preserve $\lambda$-small products. It is then easily seen that $\Model_\pcat^\Sigma$ is equivalent to the full subcategory of $\Model_\pcat$ for which $X(P) \to \Omega X(\Omega P)$ is an equivalence for all $P\in \pcat_\lambda$. This is the subcategory of objects of $\Model_\pcat$ which are local with respect to the set of morphisms $\{\Sigma \nu \Omega P \to \nu P : P \in \pcat_\lambda\}$, and therefore forms a presentable localization of $\Model_\pcat$.

(2a) is an easy consequence of \cref{thm:bounded}, as in the above argument.

(2b)~~Write $\presheaves_\Sigma^\Sigma(\pcat_\omega)\subset\presheaves_\Sigma(\pcat_\omega)$ for the full subcategory of deloop models in the sense of (2a). Given $X \in \presheaves_\Sigma(\pcat_\omega)$, define $LX \in \presheaves_\Sigma(\pcat_\omega)$ by
\[
(LX)(P) = \colim_n \Omega^n X(\Omega^n P).
\]
This indeed lives in $\presheaves_\Sigma(\pcat_\omega)$ as filtered colimits preserve finite products. We must show that $LX \in \presheaves_\Sigma^\Sigma(\pcat_\omega)$, and that if $X \in \presheaves_\Sigma^\Sigma(\pcat_\omega)$ then the evident map $X \to LX$ is an equivalence.

To see that $LX \in \presheaves_\Sigma^\Sigma(\pcat_\omega)$, we compute for $P \in \pcat_\omega$ that
\[
(LX)(P)\simeq \colim_n \Omega^n X(\Omega^n P)\simeq \colim_n \Omega^{n+1}X(\Omega^{n+1}P)\simeq \Omega \colim_n \Omega^n X(\Omega^{n+1}P)\simeq \Omega (LX)(\Omega P),
\]
as filtered colimits commute with $\Omega$. Conversely, if $X \in \presheaves_\Sigma^\Sigma(\pcat_\omega)$ then for all $P\in \pcat$ the tower $\{\Omega^n X(\Omega^n P) : n \geq 0\}$ is equivalent to the constant tower on $X(P)$, and therefore $X \to LX$ is an equivalence.
\end{proof}

\subsection{Functoriality and loop models}
\label{subsection:loop_theories_and_derived_functors}

Given a loop theory $\pcat$, the $\infty$-category $\Model_\pcat$ can be thought of as an $\infty$-category of ``formal resolutions'' of the loop models of $\pcat$ by a given choice of free objects $\pcat\subset\Model_\pcat^\Omega$. In this section, we describe the ways in which $\Model_{\pcat}$ is functorial in $\Model_\pcat^\Omega$ and vice versa. 

To relate the two, we make use of the inclusion functor $\nu_{\pcat} \colon \Model_{\pcat}^{\Omega} \rightarrow \Model_{\pcat}$ and its left adjoint $L_{\pcat} \colon \Model_{\pcat} \rightarrow \Model_{\pcat}^{\Omega}$, which exists when $\pcat$ is bounded by \cref{prop:boundedlooptheory}. The main, elementary construction can be set up at this level of generality, so we do so first: 

\begin{construction}
\label{construction:conjugation_by_an_adjunction}
Let 
\[
L_{1} \colon \ccat_1 \rightleftarrows \dcat_{1} \noloc R_{1},\qquad L_{2} \colon \ccat_2 \rightleftarrows \dcat_{2} \noloc R_2.
\]
be adjunctions of $\infty$-categories. 
\begin{enumerate}
\item The \emph{upper conjugate} of a functor $f\colon \ccat_1\to\ccat_2$ is the composite 
\[
f^{L \dashv R} \colonequals L_{2} \circ f \circ R_{1} \colon \dcat_{1} \rightarrow \dcat_{2};
\]
\item The \emph{lower conjugate} of a functor $s\colon \dcat_1\to\dcat_2$ is the composite
\[
s_{L \dashv R} \colonequals R_{2} \circ s \circ L_{1} \colon \ccat_{1} \rightarrow \ccat_{2}. 
\]
\end{enumerate}
The units of the adjunctions $L_i \dashv R_i$ induce a natural transformation
\[
\eta = \eta_2 f\eta_1 \colon f \to (f^{L \dashv R})_{L \dashv R} = R_{2} L_{2} f R_{1} L_{1} ,
\]
while the counits induce a natural transformation 
\[
\epsilon = \epsilon_2 s \epsilon_1 \colon (s_{L \dashv R})^{L \dashv R} = L_{2} R_{2} s L_{1} R_{1} \rightarrow s .
\]
\end{construction}

\begin{proposition}\label{prop:conjugationisadjunction}
The natural transformations $\eta$ and $\epsilon$ of \cref{construction:conjugation_by_an_adjunction} are the unit and counit of an adjunction
\[
(-)^{L \dashv R}  \colon \Fun(\ccat_{1}, \ccat_{2}) \rightleftarrows \Fun(\dcat_{1}, \dcat_{2}) \noloc (-)_{L \dashv R}
\] 
\end{proposition}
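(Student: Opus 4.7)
The plan is to realize the desired adjunction as a composition of two standard adjunctions between functor $\infty$-categories, one induced by $L_2 \dashv R_2$ via postcomposition and one induced by $L_1 \dashv R_1$ via precomposition.

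First, the pointwise adjunction $L_2 \dashv R_2$ induces, by whiskering the unit $\eta_2$ and counit $\epsilon_2$, an adjunction
\[
L_2 \circ (-) \colon \Fun(\dcat_1, \ccat_2) \rightleftarrows \Fun(\dcat_1, \dcat_2) \noloc R_2 \circ (-),
\]
whose unit at $h$ is $\eta_2 \cdot h$ and whose counit at $s$ is $\epsilon_2 \cdot s$; this is a direct consequence of $L_2 \dashv R_2$ applied in each variable. Second, $L_1 \dashv R_1$ induces a precomposition adjunction
\[
(-) \circ R_1 \colon \Fun(\ccat_1, \ccat_2) \rightleftarrows \Fun(\dcat_1, \ccat_2) \noloc (-) \circ L_1,
\]
whose unit at $f$ is $f \cdot \eta_1$ and whose counit at $g$ is $g \cdot \epsilon_1$. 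Note that the direction of this second adjunction is opposite to that of $L_1 \dashv R_1$, since $(-) \circ R_1$ is identified with right Kan extension along $L_1$.

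Composing these two adjunctions, using that the target $\Fun(\dcat_1, \ccat_2)$ of the first matches the source of the second, produces the adjunction
\[
(L_2 \circ -) \circ ((-) \circ R_1) \dashv ((-) \circ L_1) \circ (R_2 \circ -).
\]
On objects the left adjoint sends $f \colon \ccat_1 \to \ccat_2$ to $L_2 \circ f \circ R_1 = f^{L \dashv R}$, and the right adjoint sends $s \colon \dcat_1 \to \dcat_2$ to $R_2 \circ s \circ L_1 = s_{L \dashv R}$. By the standard formula for the unit of a composite adjunction, the unit at $f$ is the vertical composition
\[
f \xrightarrow{f \cdot \eta_1} f R_1 L_1 \xrightarrow{\eta_2 \cdot f R_1 L_1} R_2 L_2 f R_1 L_1,
\]
which equals the horizontal composite $\eta_2 f \eta_1$ defined in \cref{construction:conjugation_by_an_adjunction}; dually, the counit is $\epsilon_2 s \epsilon_1$.

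The only step requiring genuine work is establishing the precomposition adjunction in the $\infty$-categorical setting, as the postcomposition adjunction is immediate from the pointwise one. This can be handled either by identifying $(-) \circ R_1$ with right Kan extension along $L_1$ (using that when $L_1$ has a right adjoint, this right Kan extension is computed by precomposition with $R_1$), or by directly exhibiting triangle identities via those of $L_1 \dashv R_1$. Both routes are standard, and this verification does not present a substantial obstacle.
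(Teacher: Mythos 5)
Your proof is correct and takes a genuinely different (and arguably more structured) route than the paper's. The paper's own proof is a one-liner: it simply asserts that the triangle identities for $\eta$ and $\epsilon$ follow from those of $L_1 \dashv R_1$ and $L_2 \dashv R_2$. You instead factor the conjugation adjunction as the composite of the postcomposition adjunction $(L_2 \circ -) \dashv (R_2 \circ -)$ on $\Fun(\dcat_1, -)$ and the precomposition adjunction $(- \circ R_1) \dashv (- \circ L_1)$, verify each separately, and check that the composite unit and counit reduce (via the interchange law) to the Godement products $\eta_2 f \eta_1$ and $\epsilon_2 s \epsilon_1$ of the Construction. This buys you an honest $\infty$-categorical adjunction as a composite of two standard ones, rather than having to invoke the (true but nontrivial) fact that incoherent triangle identities determine an adjunction of $\infty$-categories; the paper leaves that point implicit.

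One slip in your parenthetical: you claim $(- \circ R_1) \simeq \mathrm{Ran}_{L_1}$. It should be the \emph{left} Kan extension, $(- \circ R_1) \simeq \mathrm{Lan}_{L_1}$, since when $L_1 \dashv R_1$ the comma category $\ccat_1 \times_{\dcat_1} (\dcat_1)_{/d}$ has terminal object $(R_1 d, \epsilon_d)$, so the colimit defining $\mathrm{Lan}_{L_1}(f)(d)$ evaluates to $f(R_1 d)$. (Moreover, had $(- \circ R_1)$ been the right Kan extension $\mathrm{Ran}_{L_1}$, it would be the \emph{right} adjoint of $(- \circ L_1)$, which is the wrong variance for your argument.) Equivalently, one may note $(- \circ L_1) \simeq \mathrm{Ran}_{R_1}$. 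This does not affect the overall correctness, since your alternative of verifying the triangle identities for the precomposition adjunction directly is also fine, and is closer to what the paper does.
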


\begin{proof}
The triangle identities for the (co)units follow from those of $L_{1} \dashv R_{1}$ and $L_{2} \dashv R_{2}$. 
\end{proof}

We will primarily be concerned with the case where $L_1 = L_2$ and $R_1 = R_2$. The conjugation construction is compatible with composition in the following sense: 

\begin{construction}
\label{construction:monoidal_structure_on_conjugation_by_adjunction}
Let $L \dashv R \colon \ccat \rightleftarrows \dcat$ be an adjunction. Then the counit provides a natural transformation 
\[
\mathrm{id}_{\ccat}^{L \dashv R} = LR \rightarrow \mathrm{id}_{\dcat},
\]
while the unit provides for any pair of functors $f, g \colon \ccat \rightarrow \dcat$ a natural transformation 
\[
(f \circ g)^{L \dashv R} = LfgR \rightarrow LfRLgR = f^{L \dashv R} \circ g ^{L \dashv R} 
\]
In \cite[Lemma 3.10]{brantner2021pd}, Brantner--Campos--Nuiten show that these natural transformations extend to an oplax monoidal structure on the upper conjugation functor
\[
(-)^{L \dashv R} \colon \End(\ccat) \rightarrow \End(\dcat). 
\] 
Dually, the lower conjugation 
\[
(-)_{L \dashv R} \colon \End(\dcat) \rightarrow \End(\ccat) 
\] 
is canonically lax monoidal. 
\end{construction}

\begin{remark}
Suppose that $L \dashv R$ is a localization; that is, that the unit $LR \rightarrow \mathrm{id}_{\dcat}$ is an equivalence. Then the explicit description shows that the canonical maps
\[
\mathrm{id}_{\ccat}^{L \dashv R} \rightarrow \mathrm{id}_{\dcat}
\]
and
\[
(s \circ t)_{L \dashv R} \rightarrow s_{L \dashv R} \circ t_{L \dashv R}
\]
are natural equivalences. A dual statement holds when $L \dashv R$ is a colocalization; that is, when $\mathrm{id}_{\ccat} \rightarrow RL$ is an equivalence. 
\end{remark}

We now specialize to the case of bounded loop theories and the localization adjunction $L_{\pcat} \colon \Model_{\pcat} \rightleftarrows \Model_{\pcat}^{\Omega}\noloc \nu_\pcat$ of \cref{prop:boundedlooptheory}. We slightly adapt the construction in order to stay within the world of derived functors.

\begin{definition}
\label{definition:induced_derived_functor_from_a_functor_of_loop_models_and_vice_versa}
Let $\pcat$ and $\qcat$ be bounded loop theories. The \emph{induced derived functor} construction $s \mapsto D(s)$ is the composite 
\[
D:\begin{tikzcd}
	{\Fun(\Model_{\pcat}^\Omega, \Model_{\qcat}^\Omega)} & {\Fun(\Model_{\pcat} , \Model_{\qcat} )} & {\fun_!(\pcat,\qcat)}
	\arrow["{(-)_{L \dashv \nu} }", from=1-1, to=1-2]
	\arrow["{\mathrm{der}}", from=1-2, to=1-3]
\end{tikzcd}
\]
of lower conjugation of \cref{construction:conjugation_by_an_adjunction} and the right adjoint to the inclusion of derived functors of \cref{lemma:derivization_of_functor}. Dually, the \emph{induced functor between loop models} construction $f \mapsto E(f)$ is given by the composite
\[
E:\begin{tikzcd}
	{\fun_!(\pcat,\qcat)} & {\Fun(\Model_\pcat, \Model_\qcat)} & {\Fun(\Model_\pcat^{\Omega}, \Model_\qcat^{\Omega})}
	\arrow[hook, from=1-1, to=1-2]
	\arrow["{(-)^{L \dashv i} }", from=1-2, to=1-3]
\end{tikzcd}
\] 
As a composite of adjunctions, these two functors form an adjunction $E \dashv D$. 
\end{definition}

\begin{remark}
\label{remark:associated_derived_functor_construction_is_monoidal}
In the case of endomorphisms, $E \colon \Endo_!(\pcat) \rightarrow \End(\Model_{\pcat}^{\Omega})$ is canonically oplax monoidal by a combination of \cref{construction:monoidal_structure_on_conjugation_by_adjunction} and \cref{remark:lax_monoidal_structure_on_the_derivization_of_a_functor}. Dually, its right adjoint $D$ is canonically lax monoidal. 
\end{remark}

\begin{remark}
\label{remark:concrete_description_of_the_induced_functor_between_models_from_a_functor_of_loop_models}
If $\pcat, \qcat$ are bounded loop theories and $s \colon \Model_{\pcat}^{\Omega} \rightarrow \Model_{\qcat}^{\Omega}$ is a functor, then
\[
D(s) \colon \pcat\pto\qcat
\]
can be concretely described as the derived functor associated to the composite
\[
\begin{tikzcd}
	{\pcat} & {\Model_{\pcat}^{\Omega}} & {\Model_{\qcat}^{\Omega}} & {\Model_{\qcat}}
	\arrow["\nu", from=1-1, to=1-2]
	\arrow["s", from=1-2, to=1-3]
	\arrow["\nu_{\qcat}", tail, from=1-3, to=1-4]
\end{tikzcd}.
\]
This crucially uses the fact that the image of the Yoneda embedding is contained in the subcategory of loop models.  Conversely, if $f\colon \pcat\pto\qcat$ is a derived functor, then 
\[
E(f)\colon \Model_\pcat^\Omega\to\Model_\qcat^\Omega
\]
can be concretely described as the composite
\begin{center}\begin{tikzcd}
\Model_\pcat^\Omega\ar[r,"\nu_\pcat"]& \Model_\pcat\ar[r,"f_!"]&\Model_\qcat\ar[r,"L_\qcat"]&\Model_\qcat^\Omega
\end{tikzcd}.\end{center}
\end{remark}

\begin{remark}
Using \cref{remark:concrete_description_of_the_induced_functor_between_models_from_a_functor_of_loop_models} one can define $D$ without assuming that either $\pcat$ or $\qcat$ is bounded. Similarly, unwrapping the construction shows that for $E$ to be defined one only needs the inclusion $\Model_\qcat^\Omega\subset\Model_\qcat$ to admit a left adjoint, e.g.\ for $\qcat$ to be bounded.
\end{remark}

\begin{example}
Suppose that $\pcat$ and $\qcat$ are discrete Malcev theories. Then $\pcat$ and $\qcat$ are loop theories, and \cref{remark:concrete_description_of_the_induced_functor_between_models_from_a_functor_of_loop_models} shows that the induced derived functor construction $D$ of \cref{definition:induced_derived_functor_from_a_functor_of_loop_models_and_vice_versa} is equivalent to the construction
\[
\Fun(\Model_\pcat^\heartsuit,\Model_\qcat^\heartsuit)\to \fun_!(\pcat,\qcat)\subset \Fun(\Model_\pcat,\Model_\qcat)
\]
sending an ordinary functor $\Model_\pcat^\heartsuit\to\Model_\qcat^\heartsuit$ to its total derived functor $\Model_\pcat\to\Model_\qcat$ in the sense of classical homotopical algebra \cite[Section I.4]{quillen1967homotopical}.
\end{example}

As in the classical theory of left-derived functors, there is a canonical natural transformation to a functor from its associated derived functor. This will be important later in \S\ref{subsection:coalgebras_in_models_and_loop_models} and so we record the relevant notions here.

\begin{construction}
\label{construction:comparison_map_from_a_derived_functor_into_the_original_functor}
Let $f \colon \Model_{\pcat}^{\Omega} \rightarrow \Model_{\qcat}^{\Omega}$ be a functor and $X \in \Model_{\pcat}^{\Omega}$. Then the counit of the adjunction $E \dashv D$ provides a canonical map
\[
L_\qcat(D(f)(\nu_\pcat X)) \simeq E(D(f))(X) \to f(X),
\]
adjoint to a map $D(f)(\nu_\pcat X) \to \nu_\qcat(f(X))$.
\end{construction}

\begin{definition}
\label{definition:canonical_comparison_map_from_derived_functor_to_the_original_functor}
We call the map $D(f)(\nu_\pcat X) \rightarrow \nu_\qcat(f(X))$ of \cref{construction:comparison_map_from_a_derived_functor_into_the_original_functor} the \emph{canonical comparison map}. We say that $X$ is \emph{$f$-tame} if this comparison map is an equivalence. 
\end{definition}

\begin{rmk}
Let $f\colon \Model_\pcat^\Omega\to\Model_\qcat^\Omega$ be a functor for which the counit $E(D(f)) \to f$ is an equivalence. Then the canonical comparison map $D(f)(\nu_\pcat X) \to \nu_\qcat(f(X))$ is equivalent to the unit $D(f)(X) \to \nu_\qcat L_\qcat  D(f)(X)$, and so $X$ is $f$-tame if and only if $D(f)(X)$ is a loop model.
\end{rmk}

\begin{remark}
\label{remark:explicit_description_of_the_canonical_comparison_map_between_derived_functor_and_original_functor}
If $X$ is a loop model and $\nu_{\pcat} X \simeq | \nu_{\pcat} \Pss |$ is a simplicial resolution of its underlying model by representables, then the comparison map of \cref{definition:canonical_comparison_map_from_derived_functor_to_the_original_functor} can be concretely identified with the composite
\[
D(f)(\nu_{\pcat} X) \simeq D(f)(|\nu_{\pcat} \Pss| ) \simeq | D(f)(\nu_{\pcat} \Pss)| \simeq | \nu_{\qcat} (f \Pss) |\rightarrow \nu_{\qcat}(f(| \Pss|)) \simeq \nu_{\qcat}(f(X)),
\]
where the arrow in the middle is the colimit-comparison map. Our somewhat abstract construction ensures that this map is natural in $X$ and does not depend on the choice of resolution.
\end{remark}

\begin{example}
\label{example:representables_are_tame_for_all_functors}
Any $P \in \pcat\subset\Model_\pcat^\Omega$ is $f$-tame for any functor $f$. This follows from the explicit description given in 
\cref{remark:explicit_description_of_the_canonical_comparison_map_between_derived_functor_and_original_functor}. 
\end{example}

We record the following here for later use.

\begin{lemma}
\label{lemma:induced_derived_functor_of_a_cocontinuous_functor_preserves_l_equivalences}
Let $\pcat, \qcat$ be bounded loop theories and let $f \colon \Model_{\pcat}^{\Omega} \rightarrow \Model_{\qcat}^{\Omega}$ be a functor which preserves small colimits. Then $D(f)$ preserves takes $L_{\pcat}$-equivalences to $L_{\qcat}$-equivalences. 
\end{lemma}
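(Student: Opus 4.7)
The plan is to establish the stronger statement that there is a natural equivalence of functors
\[
L_\qcat \circ D(f)_! \simeq f \circ L_\pcat \colon \Model_\pcat \to \Model_\qcat^\Omega.
\]
The conclusion of the lemma follows immediately: if $g$ is an $L_\pcat$-equivalence then $f(L_\pcat g)$ is an equivalence in $\Model_\qcat^\Omega$ (by functoriality of $f$), hence so is $L_\qcat(D(f)_!(g))$.

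To verify the identification, I plan to appeal to the universal property of \cref{thm:freecocompletion}: restriction along $\pcat \hookrightarrow \Model_\pcat$ induces an equivalence between geometric realization-preserving functors $\Model_\pcat \to \Model_\qcat^\Omega$ and arbitrary functors $\pcat \to \Model_\qcat^\Omega$, using that $\Model_\qcat^\Omega$ is presentable by \cref{prop:boundedlooptheory} and so admits all small colimits. It therefore suffices to check two things: that both composites preserve geometric realizations, and that their restrictions to $\pcat$ are canonically equivalent.

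For preservation of geometric realizations: $D(f)_!$ preserves geometric realizations by definition of a derived functor, and $L_\qcat$ preserves all small colimits as a left adjoint, so the left-hand composite is geometric realization-preserving. The right-hand composite preserves all small colimits, since both $L_\pcat$ and $f$ do. For agreement on $\pcat$: on $P \in \pcat$, using the concrete description of $D(f)_!$ from \cref{remark:concrete_description_of_the_induced_functor_between_models_from_a_functor_of_loop_models} we compute
\[
L_\qcat \circ D(f)_!(\nu_\pcat \nu P) \simeq L_\qcat \nu_\qcat f(\nu P) \simeq f(\nu P),
\]
where the last equivalence uses that $\nu_\qcat$ is fully faithful so the counit $L_\qcat \nu_\qcat \simeq \id$. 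On the other hand, $L_\pcat \nu_\pcat \simeq \id$ gives
\[
f \circ L_\pcat(\nu_\pcat \nu P) \simeq f(\nu P).
\]
Both restrictions are therefore canonically equivalent to $f \circ \nu \colon \pcat \to \Model_\qcat^\Omega$.

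The only subtle point in assembling the argument is producing a natural transformation between the two composites to which the universal property can be applied; this is supplied by the counit $\varepsilon \colon E(D(f)) \to f$ of the adjunction $E \dashv D$ of \cref{definition:induced_derived_functor_from_a_functor_of_loop_models_and_vice_versa}, which after precomposition with $L_\pcat$ and use of $L_\pcat \nu_\pcat \simeq \id$ yields the comparison $L_\qcat \circ D(f)_! \to f \circ L_\pcat$. This comparison is an equivalence on $\pcat$ by the computation above, and both sides preserve geometric realizations, so it is an equivalence everywhere by the universal property.
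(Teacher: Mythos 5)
Your proposal is correct and takes a genuinely different route from the paper's proof. The paper argues directly: it identifies the class of $L_\pcat$-equivalences as the strongly saturated class generated by the comparison maps $S^1 \otimes \nu_\pcat P \to \nu_\pcat(S^1\otimes P)$ for $P \in \pcat$, observes that the colimit-preserving functor $L_\qcat \circ D(f)$ inverts a strongly saturated class, and finishes by a direct computation with $S^1$-tensors showing these generators are inverted. Your proof instead upgrades the conclusion to the stronger natural equivalence $L_\qcat \circ D(f) \simeq f \circ L_\pcat$ using the universal property of \cref{thm:freecocompletion}, from which the lemma is immediate. Both arguments rest on the same underlying ingredients (colimit-preservation of $L_\qcat \circ D(f)$, presentability of $\Model_\qcat^\Omega$, the free-cocompletion universal property), but yours yields the explicit commuting square, which is more information and potentially reusable elsewhere, at the cost of needing to manufacture the comparison natural transformation.

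One small slip in the write-up: to build the comparison $L_\qcat \circ D(f) \to f \circ L_\pcat$ you need the \emph{unit} $\id_{\Model_\pcat} \to \nu_\pcat L_\pcat$ of the adjunction $L_\pcat \dashv \nu_\pcat$, not the counit $L_\pcat \nu_\pcat \simeq \id$ as stated. Concretely, the natural transformation is the composite
\[
L_\qcat D(f) \longrightarrow L_\qcat D(f)\,\nu_\pcat L_\pcat \simeq E(D(f))\circ L_\pcat \longrightarrow f \circ L_\pcat,
\]
where the first arrow is induced by the unit of $L_\pcat \dashv \nu_\pcat$ (an equivalence on representables, since representables are loop models) and the second by the counit of $E \dashv D$. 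With this correction the rest of your argument, in particular the verification that the restriction to $\pcat$ is an equivalence, goes through as you describe.
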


\begin{proof}
We must show that if $X \to Y$ is a map in $\Model_\pcat$ for which the induced map $L_\pcat X \to L_\pcat Y$ is an equivalence, then $L_\qcat D(f) (X) \to L_\qcat D(f) (Y)$ is also an equivalence. By definition, $L_\pcat$ is the presentable localization of $\Model_\pcat$ onto the full subcategory of objects local with respect to the maps $S^1 \otimes \nu_\pcat P \to \nu_\pcat(S^1\otimes P)$ for $P \in \pcat$. The criteria of \cref{thm:freecocompletion} guarantee that the composite $L_\pcat \circ D(f)$ preserves colimits, and therefore it suffices to prove that if $P \in \pcat$ then the comparison map
\[
L_\qcat D(f)(S^1 \otimes \nu_\pcat P) \to L_\qcat D(f)(\nu_\pcat(S^1\otimes P))
\]
is an equivalence. Indeed, as $f$ and $L_\qcat D(f)$ both preserve $S^1$-tensors, we have
\[
L_\qcat D(f)(S^1\otimes \nu_\pcat P)\simeq S^1 \otimes L_\qcat D(f)(\nu_\pcat P)\simeq S^1 \otimes f(P) \simeq f(S^1 \otimes P) \simeq L_\qcat D(f)(\nu_\pcat(S^1\otimes P))
\]
as claimed.
\end{proof}

In general, the adjunction $E \dashv D$ is neither a localization nor a colocalization. We now identify useful criteria under which its (co)unit is an equivalence, i.e.\ under which a functor between loop models can be recovered from its associated derived functor, and conversely under which a derived functor is derived from its associated functor on loop models.

\begin{lemma} 
\label{lemma:a_criterion_for_a_functor_between_loop_models_is_determined_by_its_derived_functor}
Let $\pcat, \qcat$ be bounded loop theories. For a functor $f \colon \Model_{\pcat}^{\Omega} \rightarrow \Model_{\qcat}^{\Omega}$, the following are equivalent: 
\begin{enumerate}
    \item The counit map $E(D(f)) \rightarrow f$ of the adjunction of \cref{definition:induced_derived_functor_from_a_functor_of_loop_models_and_vice_versa} is an equivalence, 
    \item $f$ preserves geometric realization of those colimit diagrams which are preserved by the inclusion $\nu_{\pcat} \colon \Model_{\pcat}^{\Omega} \hookrightarrow \Model_{\pcat}$. 
\end{enumerate}
\end{lemma}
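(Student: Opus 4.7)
The plan is to unwind both conditions as statements about geometric realizations and then show they match via a canonical simplicial resolution.

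First I would set up notation. Given $X \in \Model_\pcat^\Omega$, pick a resolution $\nu_\pcat X \simeq |\nu_\pcat \Pss|$ in $\Model_\pcat$ with $P_\bullet$ a simplicial object in $\pcat$ (available by \cref{thm:splithypercovering}). Since each $\nu_\pcat P_n$ is a loop model (representables of $\pcat$ always are, by the defining adjunction of tensors), we can view $P_\bullet$ as a simplicial diagram in $\Model_\pcat^\Omega$ as well, and applying $L_\pcat$ yields $X \simeq |\Pss|$ computed in $\Model_\pcat^\Omega$. Moreover, this particular geometric realization \emph{is} preserved by $\nu_\pcat$, since $\nu_\pcat |\Pss|_{\Model_\pcat^\Omega} = \nu_\pcat X \simeq |\nu_\pcat \Pss|_{\Model_\pcat}$ tautologically. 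Under the equivalence $D(f)_!(\nu_\pcat P) \simeq \nu_\qcat f(P)$ for $P \in \pcat$ (which holds by construction of $D$; see \cref{remark:concrete_description_of_the_induced_functor_between_models_from_a_functor_of_loop_models}), together with the fact that $D(f)_!$ and $L_\qcat$ preserve all geometric realizations and $L_\qcat \nu_\qcat \simeq \id$, we obtain
\[
E(D(f))(X) \;=\; L_\qcat D(f)_!(\nu_\pcat X) \;\simeq\; L_\qcat \bigl|D(f)_!(\nu_\pcat \Pss)\bigr| \;\simeq\; L_\qcat |\nu_\qcat f(\Pss)| \;\simeq\; |f(\Pss)|_{\Model_\qcat^\Omega}.
\]
Tracing through \cref{remark:explicit_description_of_the_canonical_comparison_map_between_derived_functor_and_original_functor}, the counit $E(D(f))(X) \to f(X)$ becomes the canonical map $|f(\Pss)|_{\Model_\qcat^\Omega} \to f(|\Pss|_{\Model_\pcat^\Omega})$.

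For (2) $\Rightarrow$ (1), fix $X \in \Model_\pcat^\Omega$ and take the resolution $X \simeq |\Pss|_{\Model_\pcat^\Omega}$ above, whose geometric realization is preserved by $\nu_\pcat$ by the observation just made. Hypothesis (2) then gives $f(X) \simeq |f(\Pss)|_{\Model_\qcat^\Omega}$, matching the computation of $E(D(f))(X)$ and showing the counit is an equivalence at $X$.

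For (1) $\Rightarrow$ (2), suppose $E(D(f)) \to f$ is an equivalence and let $Y_\bullet$ be a simplicial object in $\Model_\pcat^\Omega$ whose geometric realization is preserved by $\nu_\pcat$. Then
\[
f(|Y_\bullet|) \;\simeq\; L_\qcat D(f)_!(\nu_\pcat |Y_\bullet|) \;\simeq\; L_\qcat D(f)_!(|\nu_\pcat Y_\bullet|) \;\simeq\; L_\qcat|D(f)_!(\nu_\pcat Y_\bullet)| \;\simeq\; |L_\qcat D(f)_!(\nu_\pcat Y_\bullet)| \;\simeq\; |f(Y_\bullet)|,
\]
where the first and last equivalences apply (1), the second uses the hypothesis on $Y_\bullet$, the third uses that $D(f)_!$ preserves geometric realizations, and the fourth uses that $L_\qcat$ preserves colimits. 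This is exactly (2). No single step is difficult here; the only real content is the observation that resolutions by representables of loop models are automatically preserved by $\nu_\pcat$, which is what makes the two conditions align.
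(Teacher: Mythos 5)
Your proof is correct and takes essentially the same approach as the paper's: both identify $E(D(f))$ with $L_\qcat\circ D(f)\circ\nu_\pcat$, observe that $D(f)$ and $L_\qcat$ preserve all geometric realizations (so the composite preserves exactly those preserved by $\nu_\pcat$), and reduce (2)$\Rightarrow$(1) to representables via a resolution $\nu_\pcat X\simeq|\nu_\pcat\Pss|$, which is automatically a $\nu_\pcat$-preserved realization. The only cosmetic difference is that the paper phrases (1)$\Rightarrow$(2) by noting $E(D(f))$ unconditionally satisfies property (2), rather than unwinding the equivalence step by step as you do.
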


\begin{proof}
By unwrapping the construction, we see that $E(D(f))$ can be identified with the composite 
\[
\begin{tikzcd}
	{\Model_{\pcat}^{\Omega}} & {\Model_{\pcat}} & {\Model_{\qcat}} & {\Model_{\qcat}^{\Omega}}
	\arrow["\nu_{\pcat}", from=1-1, to=1-2]
	\arrow["{f_{!}}", from=1-2, to=1-3]
	\arrow["L_{\qcat}", from=1-3, to=1-4]
\end{tikzcd}, 
\]
where $f_{!} = D(f) \colon \Model_{\pcat} \rightarrow \Model_{\qcat}$ is the derived functor associated to the restriction of $f$ to $\pcat$. As $f_!$ and $L_\qcat$ preserve all geometric realizations, it follows that $E(D(f))$ always preserves geometric realizations of those colimit diagrams preserved by the inclusion $\nu_\pcat\colon \Model_\pcat^\Omega\subset\Model_\pcat$, implying (1)$\Rightarrow$(2).

Conversely, suppose that $f$ preserves geometric realizations of those colimit diagrams preserved by $\nu_\pcat$. By the above paragraph, the same is also true of $E(D(f))$. Given $X \in \Model_\pcat^\Omega$, we may choose a resolution $\nu_\pcat X \simeq |\nu_\pcat\Pss|$ of its underlying model, and as $L_\pcat$ is a localization it follows that $X\simeq |\Pss|$ realizes $X$ as a geometric realization of representables which is preserved by $\nu_\pcat$. Therefore to show that $E(D(f))(X) \to f(X)$ is an equivalence we reduce to the case where $X = \nu P$ for some $P \in \pcat$, where it is automatic, as we observed in \cref{example:representables_are_tame_for_all_functors}. 
\end{proof}

\begin{lemma}
\label{lemma:a_derived_functor_can_be_recovered_from_the_induced_functor_between_loop_models_iff_it_takes_representables_to_loop_models}
Let $\pcat, \qcat$ be bounded loop theories. For a geometric realization-preserving functor $f_{!} \colon \Model_{\pcat} \rightarrow \Model_{\qcat}$, the following are equivalent: 
\begin{enumerate}
    \item The unit map $f_{!} \rightarrow D(E(f_{!}))$ is a natural equivalence, 
    \item $f_{!}$ takes representables to loop models. 
\end{enumerate}
\end{lemma}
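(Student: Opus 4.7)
The plan is to reduce this equivalence to two ingredients: the fact that $\nu_{\qcat}$ is fully faithful, so that $Y \in \Model_{\qcat}$ lies in $\Model_{\qcat}^{\Omega}$ if and only if the unit $Y \to \nu_{\qcat} L_{\qcat} Y$ of $L_{\qcat} \dashv \nu_{\qcat}$ is an equivalence, and the universal property of $\Model_{\pcat}$ as the free cocompletion of $\pcat$ under geometric realizations from \cref{thm:freecocompletion}.

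The main preliminary step is to identify, explicitly, the restriction of the unit $f_{!} \to D(E(f_{!}))$ to the subcategory $\pcat \subset \Model_{\pcat}$ of representables. Using \cref{remark:concrete_description_of_the_induced_functor_between_models_from_a_functor_of_loop_models}, $E(f_{!})$ is the composite $L_{\qcat} \circ f_{!} \circ \nu_{\pcat}$, and $D(E(f_{!}))$ is by definition the derived functor extending $\nu_{\qcat} \circ E(f_{!}) \circ \nu \simeq \nu_{\qcat} L_{\qcat} f_{!} \nu$ along $\nu \colon \pcat \to \Model_{\pcat}$. Unwinding the counit of $L_{\pcat} \dashv \nu_{\pcat}$ (which is an equivalence, since $\nu_{\pcat}$ is fully faithful), the unit at a representable $\nu P$ for $P \in \pcat$ identifies with the unit of $L_{\qcat} \dashv \nu_{\qcat}$ applied to $f_{!}(\nu P)$; that is, with the map $f_{!}(\nu P) \to \nu_{\qcat} L_{\qcat} f_{!}(\nu P)$.

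Granted this identification, the direction (1) $\Rightarrow$ (2) is immediate: if the unit is a natural equivalence, then in particular on a representable $\nu P$ the map $f_{!}(\nu P) \to \nu_{\qcat} L_{\qcat} f_{!}(\nu P)$ is an equivalence, which by full faithfulness of $\nu_{\qcat}$ puts $f_{!}(\nu P)$ into the essential image $\Model_{\qcat}^{\Omega}$. Conversely, for (2) $\Rightarrow$ (1), the hypothesis that each $f_{!}(\nu P)$ is a loop model makes the unit an equivalence on every object of $\pcat$. Both $f_{!}$ (by hypothesis) and $D(E(f_{!}))$ (by construction as a derived functor) preserve geometric realizations, so $f_{!} \to D(E(f_{!}))$ is a morphism in $\fun_!(\pcat, \qcat)$. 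By the equivalence $\fun_!(\pcat, \qcat) \simeq \Fun(\pcat, \Model_{\qcat})$ of \cref{thm:freecocompletion}, such a morphism is an equivalence as soon as its restriction to $\pcat$ is, and we are done.

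The only nontrivial step is the bookkeeping identification of the unit on representables, which I expect to be routine once one carefully tracks the counit of $L_{\pcat} \dashv \nu_{\pcat}$ through \cref{construction:conjugation_by_an_adjunction}; everything else is a direct application of the universal property of $\Model_{\pcat}$.
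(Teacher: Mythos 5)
Your argument is correct and matches the paper's: reduce to representables by observing both sides are derived functors (\cref{thm:freecocompletion}), then identify the unit at $\nu P$ with the unit $f_!(\nu P) \to \nu_\qcat L_\qcat f_!(\nu P)$ of $L_\qcat \dashv \nu_\qcat$, and conclude by full faithfulness of $\nu_\qcat$. The paper's proof is terser — it simply asserts $D(E(f_!))(\nu P) \simeq L_\qcat f_!\nu(P)$ and invokes preservation of geometric realizations on both sides — but your more explicit tracing of the unit through \cref{construction:conjugation_by_an_adjunction} is exactly the bookkeeping the paper leaves implicit.
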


\begin{proof}
Since both sides of the unit transformation preserve geometric realizations, they agree if and only if they agree on representables. Since $D(E(f_{!}))(\nu P) \simeq L_{\qcat}(f_{!}\nu(P))$ for $P \in \pcat$, the result follows. 
\end{proof}

Another useful application of induced functors between loop models is an explicit description of functoriality of the assignment $\pcat \mapsto \Model_\pcat^\Omega$.

\begin{proposition}
\label{proposition:loopnearmonadicity}
Let $f\colon \pcat\to\qcat$ be a loop homomorphism of loop theories. Then restriction along $f$ defines a functor
\[
f^\ast\colon \Model_\qcat^\Omega\to\Model_\pcat^\Omega.
\]
which preserves all small limits and all geometric realizations of simplicial objects which are levelwise split. If $f$ is essentially surjective, then $f^\ast$ is conservative. If $\qcat$ is bounded, then $f^\ast$ is right adjoint to $E(f_!)\colon \Model_\pcat^\Omega\to\Model_\qcat^\Omega$.
\end{proposition}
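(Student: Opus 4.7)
The plan is to verify the four assertions in turn, each of which reduces to combining properties already established for the restriction $f^\ast\colon\Model_\qcat\to\Model_\pcat$ between the full $\infty$-categories of models with the way in which loop models sit inside all models.

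First I would show that $f^\ast$ actually restricts to a functor $\Model_\qcat^\Omega\to\Model_\pcat^\Omega$. Given $X\in\Model_\qcat^\Omega$ and $P\in\pcat$, since $f$ is a loop homomorphism we have
\[
(f^\ast X)(S^1\otimes P)=X(f(S^1\otimes P))\simeq X(S^1\otimes f(P))\simeq X(f(P))^{S^1}=(f^\ast X)(P)^{S^1},
\]
where the middle equivalence uses that $f$ preserves $S^1$-tensors and the last uses that $X$ is a loop model. Equivalently, $f^\ast\nu_\qcat\simeq\nu_\pcat f^\ast$ as functors $\Model_\qcat^\Omega\to\Model_\pcat$.

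Next, preservation of limits and of geometric realizations of levelwise split simplicial objects follows immediately: by \cref{prop:boundedlooptheory}(1) the inclusions $\Model_\pcat^\Omega\subset\Model_\pcat$ and $\Model_\qcat^\Omega\subset\Model_\qcat$ create these limits and colimits, and by \cref{cor:nearmonadic2} $f^\ast\colon\Model_\qcat\to\Model_\pcat$ preserves them. Similarly, conservativity of $f^\ast$ on loop models when $f$ is essentially surjective is inherited from the conservativity of $f^\ast\colon\Model_\qcat\to\Model_\pcat$ given by \cref{cor:nearmonadic2}, since a full subcategory inclusion reflects equivalences.

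Finally, for the adjunction when $\qcat$ is bounded, I would chain together three known adjunctions: the localization $L_\qcat\dashv\nu_\qcat$ (from \cref{prop:boundedlooptheory}(3)), the adjunction $f_!\dashv f^\ast$ between full $\infty$-categories of models (from \cref{cor:restriction}), and the compatibility $f^\ast\nu_\qcat\simeq\nu_\pcat f^\ast$ established above. Concretely, for $X\in\Model_\pcat^\Omega$ and $Y\in\Model_\qcat^\Omega$,
\[
\map_{\Model_\qcat^\Omega}(L_\qcat f_!\nu_\pcat X,Y)\simeq\map_{\Model_\qcat}(f_!\nu_\pcat X,\nu_\qcat Y)\simeq\map_{\Model_\pcat}(\nu_\pcat X,f^\ast\nu_\qcat Y)\simeq\map_{\Model_\pcat^\Omega}(X,f^\ast Y).
\]
Under \cref{remark:concrete_description_of_the_induced_functor_between_models_from_a_functor_of_loop_models}, the composite $L_\qcat\circ f_!\circ\nu_\pcat$ is exactly $E(f_!)$, so this exhibits $E(f_!)\dashv f^\ast$. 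There is no substantive obstacle here; the only point requiring care is the identification $f^\ast\nu_\qcat\simeq\nu_\pcat f^\ast$, which is precisely what the loop homomorphism hypothesis ensures, and the use of boundedness of $\qcat$ to guarantee existence of $L_\qcat$.
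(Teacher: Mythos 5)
Your proposal is correct and follows essentially the same route as the paper's proof: verify that $f^\ast$ preserves the loop-model condition using the fact that $f$ preserves $S^1$-tensors, deduce preservation of limits and levelwise split realizations from the closure properties of $\Model^\Omega_\pcat\subset\Model_\pcat$, and establish the adjunction by the same chain of mapping-space equivalences through the localization $L_\qcat\dashv\nu_\qcat$ and the adjunction $f_!\dashv f^\ast$. Your argument is slightly more explicit about the identity $f^\ast\nu_\qcat\simeq\nu_\pcat f^\ast$, but the substance is identical.
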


\begin{proof}
If $X \colon \qcat^\op\to\spaces$ is a loop model, then as $f$ is a loop homomorphism, $f^\ast X = X \circ f \colon \pcat^\op\to\spaces$ is again a loop model. This construction preserves all limits and all levelwise split geometric realizations by \cref{prop:boundedlooptheory}.(1), and is clearly conservative if $f$ is essentially surjective. For the adjunction, observe that if $X \in \Model_\pcat^\Omega$ and $Y \in \Model_\qcat^\Omega$ then
\begin{align*}
\map_{\Model_\qcat^\Omega}(E(f_!)X,Y)&\simeq \map_{\Model_\qcat^\Omega}(L_\qcat f_!\nu_\pcat X,Y)\simeq \map_{\Model_\pcat}(f_! \nu_\pcat X,\nu_\qcat Y)\\
&\simeq \map_{\Model_\pcat}(\nu_\pcat X,f^\ast \nu_\qcat Y) \simeq \map_{\Model_\qcat^\Omega}(X,f^\ast Y),
\end{align*}
using the adjunction $f_! \dashv f^\ast$ of \cref{example:restriction_along_a_morphism_of_malcev_theories_is_a_derived_functor}.
\end{proof}

\subsection{Lax monoidality and actions} 
\label{subsection:lax_monoidality_and_actions}

In the previous section, we described how functors between $\infty$-categories of loop models induce derived functors between $\infty$-categories of models, and vice versa. For endomorphisms, these constructions assemble into an adjunction
\[
E\colon \Endo_!(\pcat) \rightleftarrows \End(\Model_{\pcat}^{\Omega}) \noloc D,
\]
where $\Endo_{!}(\pcat) \subseteq \End(\Model_{\pcat})$ is the full subcategory spanned by functors which preserve geometric realization. Our goal in this section is to establish that this is a monoidal adjunction, in the sense that the right adjoint $D$ is canonically lax monoidal, and that this monoidality is in a precise sense compatible with the tautological actions of these endomorphism $\infty$-categories (\cref{theorem:monoidal_functor_between_lmod_induced_by_induced_derived_functor_construction}). The results in this section will be applied in \S\ref{subsection:coalgebras_in_models_and_loop_models} to our discussion of the relation between of (co)monads and (co)algebras in the $\infty$-categories of models and loop models.

\begin{notation}
\label{notation:lmodoperad_monoidal_inftycategories}
Let $\lmodoperad^{\otimes}$ be the $\infty$-operad encoding a pair of an associative algebra and a left module of \cite[{Definition 4.2.1.7}]{higher_algebra}. An \emph{$\lmodoperad$-monoidal $\infty$-category} is a cocartesian fibration 
\[
(\acat, \mcat)^{\otimes} \rightarrow \lmodoperad^{\otimes}.
\]
satisfying the conditions of \cite[{Proposition 2.1.2.12}]{higher_algebra}. This determines and is determined by 
\begin{enumerate}
    \item A monoidal $\infty$-category $\acat$;
    \item An $\infty$-category $\mcat$;
    \item An action of $\acat$ on $\mcat$. 
\end{enumerate}
\end{notation}

\begin{recollection}[{Lax morphisms of $\infty$-operads}] 
If $(\acat_{1}, \mcat_{1}), (\acat_{2}, \mcat_{2})$ are $\lmodoperad$-monoidal $\infty$-categories, then a \emph{lax morphism} $f \colon (\acat_{1}, \mcat_{1}) \rightarrow (\acat_{2}, \mcat_{2})$ is a commutative diagram 
\[
\begin{tikzcd}
	{(\acat_{1}, \mcat_{1})^{\otimes}} && {(\acat_{2}, \mcat_{2})^{\otimes}} \\
	& {\lmodoperad^{\otimes}}
	\arrow["f", from=1-1, to=1-3]
	\arrow[from=1-1, to=2-2]
	\arrow[from=1-3, to=2-2]
\end{tikzcd}
\]
where the horizontal arrow $f$ is a morphism of $\infty$-operads. 
Concretely, $f$ determines 
\begin{enumerate}
    \item A lax monoidal functor $f_{\acat} \colon \acat_{1} \rightarrow \acat_{2}$ of monoidal $\infty$-categories;
    \item A functor $f_{\mcat} \colon \mcat_{1} \rightarrow \mcat_{2}$;
    \item A natural transformation 
    \[
    f_{\acat}(a) \cdot f_{\mcat}(m) \rightarrow f_{\mcat}(a \cdot m)
    \]
    of functors $\acat_{1} \times \mcat_{1} \rightarrow \mcat_{2}$. 
\end{enumerate}
This data is subject to a variety of coherences which informally say that the given natural transformation is suitably compatible with the lax monoidal structure of $f_{\acat}$ and actions of $\acat_1$ and $\acat_2$ on $\mcat_1$ and $\mcat_2$.
\end{recollection}

\begin{recollection}[{Algebras for an $\infty$-operad}] 
\label{recollection:algebras_for_the_lmod_operad}
If $p \colon (\acat, \mcat)^{\otimes} \rightarrow \lmodoperad^{\otimes}$ is an $\lmodoperad$-monoidal $\infty$-category, then an \emph{$\lmodoperad$-algebra} is a section 
\[
s \colon \lmodoperad^{\otimes} \rightarrow (\acat, \mcat)^{\otimes}
\]
which takes inert morphisms to inert morphisms. Such an algebra can be identified with a pair $(a, m)$ of an associative algebra $a \in \acat$ and an $a$-module $m \in \mcat$. Note that composition with a lax morphism of $\lmodoperad$-monoidal $\infty$-categories preserves algebras; that is, the association 
\[
(\acat, \mcat) \mapsto \Alg_{\lmodoperad}(\acat, \mcat) 
\]
is functorial in lax morphisms. 
\end{recollection}

We can now state the main theorem of this section.

\begin{theorem}
\label{theorem:monoidal_functor_between_lmod_induced_by_induced_derived_functor_construction}
The functors $D \colon \End(\Model_{\pcat}^{\Omega}) \rightarrow \End^{\sigma}(\Model_{\pcat})$ and $\nu \colon \Model_{\pcat}^{\Omega} \hookrightarrow \Model_{\pcat}$ can be promoted to a lax functor 
\[
(D, \nu) \colon (\End(\Model_{\pcat}^{\Omega}), \Model_{\pcat}^{\Omega}) \rightarrow (\Endo_!(\pcat),\Model_\pcat)
\]
of $\lmodoperad$-monoidal $\infty$-categories. 
\end{theorem}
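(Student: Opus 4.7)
The plan is to factor the desired lax morphism as a composite of two lax $\lmodoperad$-morphisms, using the identity $D = \der \circ (-)_{L_\pcat \dashv \nu_\pcat}$ built into \cref{definition:induced_derived_functor_from_a_functor_of_loop_models_and_vice_versa}. First, I would upgrade the pair $((-)_{L_\pcat \dashv \nu_\pcat}, \nu_\pcat)$ to a lax morphism
\[
(\End(\Model_\pcat^\Omega), \Model_\pcat^\Omega) \to (\End(\Model_\pcat), \Model_\pcat),
\]
and second, I would upgrade the pair $(\der, \id_{\Model_\pcat})$ to a lax morphism
\[
(\End(\Model_\pcat), \Model_\pcat) \to (\Endo_!(\pcat), \Model_\pcat),
\]
where the module structure on the target is obtained from the evaluation action by restriction along the fully faithful monoidal inclusion $\iota\colon \Endo_!(\pcat) \subseteq \End(\Model_\pcat)$. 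Composing these two gives the required lax morphism $(D, \nu)$.

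The underlying general principle for both steps is that a right adjoint whose left adjoint is strict (or oplax) monoidal inherits a lax monoidal structure, and this extends to lax $\lmodoperad$-morphisms once the module-side data is specified. For the first step, lower conjugation is lax monoidal by \cref{construction:monoidal_structure_on_conjugation_by_adjunction}, and the coherent action comparison map, for $s \in \End(\Model_\pcat^\Omega)$ and $X \in \Model_\pcat^\Omega$, is the natural equivalence
\[
\nu_\pcat s L_\pcat \nu_\pcat X \xrightarrow{\nu_\pcat s(\epsilon_X)} \nu_\pcat s X
\]
induced by the counit of $L_\pcat \dashv \nu_\pcat$, which is an equivalence because $\nu_\pcat$ is fully faithful. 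For the second step, $\der$ is lax monoidal by \cref{remark:lax_monoidal_structure_on_the_derivization_of_a_functor}, and the coherent action comparison map is the counit $\iota(\der(f))(X) \to f(X)$ of the adjunction $\iota \dashv \der$. The coherences of these structure maps with the lax monoidal structures on the endomorphism side follow from the triangle identities of the respective adjunctions together with the construction of the lax monoidal structures themselves, which in both cases are built from the same (co)units.

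The main technical obstacle is producing the $\lmodoperad$-monoidal coherence data rigorously at the $\infty$-categorical level, since neither \cref{construction:monoidal_structure_on_conjugation_by_adjunction} nor \cref{remark:lax_monoidal_structure_on_the_derivization_of_a_functor} is formulated in terms of the full $\lmodoperad$-monoidal structure. The cleanest approach is to work with cocartesian fibrations over $\lmodoperad^\otimes$ directly and exhibit each lax morphism as a map of $\infty$-operads, using the following abstract input: for any adjunction $L \dashv R \colon \ccat \rightleftarrows \dcat$ in $\catinfty$, the tautological $\lmodoperad$-monoidal structures $(\End(\ccat), \ccat)^\otimes$ and $(\End(\dcat), \dcat)^\otimes$ (the endomorphism algebras of $\ccat$ and $\dcat$ in the presentably symmetric monoidal structure on $\catinfty$) are canonically related by a lax morphism whose monoid part is lower conjugation and whose module part is $R$, with coherent action map coming from the counit. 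Applying this to $L_\pcat \dashv \nu_\pcat$ produces the first lax morphism. For the second, one applies the same principle to the inclusion-derivation adjunction $\iota \dashv \der$, where the inclusion is strict monoidal, so that $\der$ inherits a canonical lax $\lmodoperad$-morphism structure; here the module-side functor is the identity since $\iota$ and $\der$ both act on $\Model_\pcat$. Composition of the two lax $\lmodoperad$-morphisms, which is well-defined at the level of maps of $\infty$-operads, completes the argument.
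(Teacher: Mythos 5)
Your proposal follows essentially the same route as the paper's proof. Both factor $(D,\nu)$ as the composite of a lax $\lmodoperad$-morphism built from lower conjugation along $L_\pcat\dashv\nu_\pcat$ (with module part $\nu_\pcat$) and a lax $\lmodoperad$-morphism $(\der,\id)$ built from the adjunction $j\dashv\der$; both obtain the coherent $\lmodoperad$-structure by identifying the relevant map of cocartesian fibrations over $\lmodoperad^\otimes$ as a fibrewise left adjoint and applying \cite[Corollary 7.3.2.7]{higher_algebra} to pass to the fibrewise right adjoint. The one cosmetic difference is that for the first factor the paper routes through the bicartesian fibration over $\Delta^1$ encoding $L\dashv\nu$ (via the section and restriction categories $\mathrm{Sec}(\ecat)$, $\End_{\Delta^1}(\ecat)$) to exhibit the conjugation as a composite of fibrewise adjoints, whereas you state the conclusion — the ``abstract input'' about $(\End(\ccat),\ccat)$ and $(\End(\dcat),\dcat)$ being related by a lax morphism with monoid part $(-)_{L\dashv R}$ and module part $R$ — as a black box; that abstract input is precisely what \cref{construction:conjugation_construction_and_algebras_for_a_monad} proves. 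Your remark that the action-comparison map $\nu s L\nu X \to \nu s X$ is actually an equivalence because $\nu$ is fully faithful is correct but not needed for lax $\lmodoperad$-monoidality. Also note that your phrase ``applying the same principle'' to $\iota\dashv\der$ is slightly loose — the second factor is not a conjugation but rather the right adjoint of an $\lmodoperad$-monoidal subcategory inclusion — but you correctly identify the module-side data as the identity, which is what the paper does in \cref{construction:algebras_over_a_derivation_of_a_functor}.
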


The rest of this section is devoted to the proof of \cref{theorem:monoidal_functor_between_lmod_induced_by_induced_derived_functor_construction}. As $D$ is the composite of two construction, the conjugation construction of \cref{construction:conjugation_by_an_adjunction} and a left Kan extension, we shall deal with these separately.

\begin{construction}
\label{construction:conjugation_construction_and_algebras_for_a_monad}
Let $L \dashv R \colon \ccat \rightleftarrows \dcat$ be an adjunction. In \cref{construction:monoidal_structure_on_conjugation_by_adjunction}, we observed that the lower conjugation construction $(-)_{L \dashv R} = R \circ - \circ L$ can be made lax monoidal. We will extend this to a lax monoidal morphism 
\[
((-)_{L \dashv R}, R) \colon (\End(\dcat), \dcat) \rightarrow (\End(\ccat), \ccat) 
\] 
of $\lmodoperad$-monoidal $\infty$-categories. 

To ground the discussion, first recall that the lax monoidal structure on $(-)_{L \dashv R}$ comes from the span of restriction functors 
\[
\begin{tikzcd}
	{\End(\dcat)} & {\End_{\Delta^{1}}(\ecat)} & {\End(\ccat)}
	\arrow["{\res_{1}}"', from=1-2, to=1-1]
	\arrow["{\res_{0}}", from=1-2, to=1-3]
\end{tikzcd},
\]
where $\ecat \rightarrow \Delta^{1}$ is the bicartesian fibration encoding $L \dashv R$, so that $\ecat \times_{\Delta^{1}} \{ 0 \} \simeq \ccat$, $\ecat \times_{\Delta^{1}} \{ 1 \}$. Here $\End_{\Delta^{1}}$ is the $\infty$-category of endomorphisms over $\Delta^{1}$, and both functors are monoidal. The monoidal structure comes from the identification of \cite[{Proof of Lemma 3.10}]{brantner2021pd}, which describes $(-)_{L \dashv R}$ as the composite of the right adjoint of $\res_{1}$, and $\res_{0}$. 

Let $\mathrm{Sec}(\ecat)$ denote the $\infty$-category of sections $s \colon \Delta^{1} \rightarrow \ecat$. The functor $\ev_{1} \colon \mathrm{Sec}(\ecat) \rightarrow \dcat$ given by $\ev_{1}(s) = s(1)$ has a right adjoint, informally given by 
\[
d \mapsto (Rd \rightarrow d),
\]
where the arrow in $\ecat$ corresponds to the counit $LRd \rightarrow d$. We can identify $\mathrm{ev}_{1}$ with the pullback 
\[
\begin{tikzcd}
	{\mathrm{Sec}(\ecat) \simeq \Fun_{\Delta^{1}}(\Delta^{1}, \ecat)} & {} & {\Fun_{\Delta^{0}}(\Delta^{0}, \Delta^{0} \times_{\Delta^{1}} \ecat) \simeq \dcat }
	\arrow["{\Delta^{0} \times_{\Delta^{1}} -}", from=1-1, to=1-3]
\end{tikzcd}
\]
along $1 \colon \Delta^{0} \rightarrow \Delta^{1}$. The $\infty$-category $\End_{\Delta^{1}}(\ecat)$ acts on $\mathrm{Sec}(\ecat)$, and the above identification shows that $\mathrm{ev}_{1}$ is $\mathrm{res}_{1}$-linear. It follows that we have a morphism 
\[
\begin{tikzcd}
	{(\End(\dcat), \dcat)^{\otimes}} && {(\End_{\Delta^{1}}(\ecat), \mathrm{Sec}(\ecat))^{\otimes}} \\
	& {\lmodoperad^{\otimes}} 
	\arrow[from=1-1, to=2-2]
	\arrow["{a_{1}}"', from=1-3, to=1-1]
	\arrow[from=1-3, to=2-2]
\end{tikzcd}
\]
of $\lmodoperad$-monoidal $\infty$-categories as in \cref{notation:lmodoperad_monoidal_inftycategories}, where $a_{1}$ can be fibrewise identified with the cartesian product of $\res_{1} \colon \End_{\Delta^{1}}(\ecat) \rightarrow \End(\dcat)$ and $\ev_{1} \colon \mathrm{Sec}(\ecat) \rightarrow \dcat$. This is fibrewise a left adjoint and so by \cite[{Corollary 7.3.2.7}]{higher_algebra} admits a right adjoint 
\[
\begin{tikzcd}
	{(\End(\dcat), \dcat)^{\otimes}} && {(\End_{\Delta^{1}}(\ecat), \mathrm{Sec}(\ecat))^{\otimes}} \\
	& {\lmodoperad^{\otimes}} 
	\arrow[from=1-1, to=2-2]
	\arrow[from=1-1, to=1-3]
	\arrow[from=1-3, to=2-2]
\end{tikzcd}
\]
which is moreover a lax morphism of $\lmodoperad$-monoidal $\infty$-categories. Composing this with the analogous 
\[
a_{0} \colon (\End_{\Delta^{1}}(\ecat), \mathrm{Sec}(\ecat))^{\otimes} \rightarrow (\End(\ccat), \ccat)^{\otimes} 
\]
induced by pullback along $0 \colon \Delta^{0} \rightarrow \Delta^{1}$ yields a lax $\lmodoperad$-monoidal morphism of the required form. 
\end{construction}

\begin{construction}
\label{construction:algebras_over_a_derivation_of_a_functor}
We will extend the lax monoidal structure on the right adjoint $\mathrm{der}$ of \cref{lemma:derivization_of_functor} to a lax morphism 
\[
(\mathrm{der}, \mathrm{id}) \colon (\End(\Model_{\pcat}), \Model_{\pcat}) \rightarrow (\Endo_!(\pcat), \Model_{\pcat}). 
\] 
Observe that the inclusion 
\[
\begin{tikzcd}
	{(\Endo_!(\pcat), \Model_{\pcat})^{\otimes}} && {(\End(\Model_{\pcat}), \Model_{\pcat})^{\otimes}} \\
	& {\lmodoperad^{\otimes}} 
	\arrow[from=1-1, to=2-2]
	\arrow["j", hook, from=1-1, to=1-3]
	\arrow[from=1-3, to=2-2]
\end{tikzcd}
\]
is an inclusion of an $\lmodoperad$-monoidal subcategory. Fibrewise, it can be identified with products of the inclusion $\Endo_!(\pcat) \rightarrow \End(\Model_{\pcat})$ and the identity of $\Model_{\pcat}$. These are both right adjoints and hence $j$ admits a right adjoint by \cite[{Corollary 7.3.2.7}]{higher_algebra}. This is the required lax morphism. 
\end{construction}

\begin{proof}[{Proof of \cref{theorem:monoidal_functor_between_lmod_induced_by_induced_derived_functor_construction}:}] 
The needed lax morphism is the composite 
\[
\begin{tikzcd}[column sep=large]
	{(\End(\Model_{\pcat}^{\Omega}), \Model_{\pcat}^{\Omega})} & {(\End(\Model_{\pcat}), \Model_{\pcat})} & {(\Endo_!(\pcat), \Model_{\pcat})}
	\arrow["{((-)_{L \dashv \nu}, \nu)}", from=1-1, to=1-2]
	\arrow["{(\mathrm{der}, \mathrm{id})}", from=1-2, to=1-3]
\end{tikzcd}
\]
of the two lax morphisms of  \cref{construction:conjugation_construction_and_algebras_for_a_monad} and \cref{construction:algebras_over_a_derivation_of_a_functor}. 
\end{proof}

\begin{remark}
\label{remark:for_endofunctors_the_lmod_lax_monoidal_multiplication_map_is_the_canonical_comparison_map}
If $f$ is an endofunctor of loop models, then the lax monoidal structure on the pair $(D, \nu)$ provides in particular a canonical morphism
\[
D(f)(\nu X) \rightarrow \nu(f(X)). 
\]
By unwrapping the construction, one sees that this map coincides with the canonical comparison map of \cref{definition:canonical_comparison_map_from_derived_functor_to_the_original_functor}. Thus, \cref{theorem:monoidal_functor_between_lmod_induced_by_induced_derived_functor_construction} can be interpreted as saying that in the case of endofunctors, the canonical comparison map is coherently compatible with composition. 
\end{remark}

\subsection{(Co)algebras in models and loop models} 
\label{subsection:coalgebras_in_models_and_loop_models}

In this section, we apply the above results to describe the behavior of the adjunctions $E \dashv D$ and $L \dashv \nu$ with respect to $\infty$-categories of (co)algebras for (co)monads. The key technical point, \cref{theorem:monoidal_functor_between_lmod_induced_by_induced_derived_functor_construction}, was already established in the previous section, and our goal here is to concretely spell out the consequences. In more detail: 
\begin{enumerate}
    \item By \cref{proposition:induced_derived_functor_construction_passes_to_algebras_over_a_monad}, $\nu$ preserves algebras for a monad, and preserves coalgebras for a comonad under certain technical assumptions as we show in \cref{proposition:induced_derived_functor_construction_gives_a_functor_on_coalgebras}, 
    \item By \cref{proposition:e_and_l_take_coalgebras_to_coalgebras}, $L$ preserves coalgebras, and preserves algebras under certain technical assumptions as we show in \cref{proposition:l_gives_a_functor_on_algebras_over_l_equivalence_preserving_monad}.
\end{enumerate}

\begin{rmk}
Although we consider both monads and comonads in this section for completeness, much of the interaction between monads, theories, and loop theories can be accessed by significantly more direct means, bypassing the $\infty$-operadic technology of the preceding subsection. We will return to this in \S\ref{ssec:malcevmonads}.
\end{rmk}

We start with the simplest case. Fix throughout this section a bounded loop theory $\pcat$.

\begin{proposition}
\label{proposition:induced_derived_functor_construction_passes_to_algebras_over_a_monad}
Let $m$ be a monad on $\Model_\pcat^\Omega$. Then $D(m)$ can be made into a monad on $\Model_\pcat$ in such a way that if $X$ is an $m$-algebra then $\nu X$ is a $D(m)$-algebra, in the sense that there exists a dashed arrow filling in the diagram
\begin{center}\begin{tikzcd}
\Alg_m(\Model_\pcat^\Omega)\ar[r,dashed]\ar[d]&\Alg_{D(m)}(\Model_\pcat)\ar[d]\\
\Model_\pcat^\Omega\ar[r,"\nu"]&\Model_\pcat
\end{tikzcd}.\end{center}
\end{proposition}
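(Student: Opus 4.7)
The plan is to invoke \cref{theorem:monoidal_functor_between_lmod_induced_by_induced_derived_functor_construction}, which equips the pair $(D, \nu)$ with the structure of a lax morphism of $\lmodoperad$-monoidal $\infty$-categories
\[
(D, \nu) \colon (\End(\Model_{\pcat}^{\Omega}), \Model_{\pcat}^{\Omega}) \rightarrow (\Endo_!(\pcat), \Model_{\pcat}).
\]
As recalled in \cref{recollection:algebras_for_the_lmod_operad}, composition with a lax morphism of $\infty$-operads preserves $\lmodoperad$-algebras, so this lax morphism induces a functor
\[
\Alg_{\lmodoperad}(\End(\Model_{\pcat}^{\Omega}), \Model_{\pcat}^{\Omega}) \to \Alg_{\lmodoperad}(\Endo_!(\pcat), \Model_{\pcat})
\]
on the associated $\infty$-categories of $\lmodoperad$-algebras.

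The key point is then the identification of these $\infty$-categories of $\lmodoperad$-algebras. For any $\infty$-category $\ccat$ with its tautological $\End(\ccat)$-action, an $\lmodoperad$-algebra in the pair $(\End(\ccat), \ccat)$ is precisely a pair $(m, X)$ consisting of an associative algebra $m$ in $\End(\ccat)$, i.e.\ a monad on $\ccat$, together with an $m$-module structure on some object $X \in \ccat$, i.e.\ an $m$-algebra. Thus the above functor sends a pair $(m, X)$ of a monad $m$ on $\Model_\pcat^\Omega$ and an $m$-algebra $X$ to a pair consisting of a monad structure on the derived functor $D(m)\colon\Model_\pcat\to\Model_\pcat$ together with a $D(m)$-algebra structure on $\nu X \in \Model_\pcat$. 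Forgetting the algebra structure produces the promised monad on $\Model_\pcat$, and the compatibility of the construction with the forgetful functors to the underlying objects yields the commutative square of the statement.

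No substantial obstacle remains at this stage: the entire content has been packaged into the lax $\lmodoperad$-monoidal structure established in \cref{theorem:monoidal_functor_between_lmod_induced_by_induced_derived_functor_construction}. The only thing worth remarking is that by \cref{remark:for_endofunctors_the_lmod_lax_monoidal_multiplication_map_is_the_canonical_comparison_map}, the lax structure map $D(m)(\nu X) \to \nu(m X)$ underlying the $D(m)$-algebra structure on $\nu X$ is the canonical comparison map of \cref{definition:canonical_comparison_map_from_derived_functor_to_the_original_functor}, so the $D(m)$-action on $\nu X$ is given concretely by
\[
D(m)(\nu X) \to \nu(m X) \to \nu X,
\]
where the second map is $\nu$ applied to the structure map of $X$ as an $m$-algebra.
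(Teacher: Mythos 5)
Your proof is correct and follows essentially the same route as the paper's: invoke the lax $\lmodoperad$-monoidal structure on $(D,\nu)$ from \cref{theorem:monoidal_functor_between_lmod_induced_by_induced_derived_functor_construction}, pass to $\lmodoperad$-algebras, identify those with monad/algebra pairs, and restrict to the fibre over $m$. The only minor difference is presentational: the paper states the final step tersely as ``passing to fibres over $m$ and $D(m)$,'' while you spell out the unwinding of the $\lmodoperad$-algebra identification and append the explicit formula for the $D(m)$-action on $\nu X$ via the canonical comparison map — the latter appearing in the paper as a follow-up remark rather than inside the proof itself.
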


\begin{proof}
By passing to algebras using the lax morphism of \cref{theorem:monoidal_functor_between_lmod_induced_by_induced_derived_functor_construction} as in \cref{recollection:algebras_for_the_lmod_operad}, we obtain a functor 
\[
\Alg_{\lmodoperad}(\End(\Model_{\pcat}^{\Omega}), \Model_{\pcat}^{\Omega}) \rightarrow \Alg_{\lmodoperad}(\End^{\sigma}(\Model_{\pcat}^{\Omega}), \Model_{\pcat}) 
\]
between the $\infty$-categories of pairs of a monad and an algebra over it. Passing to fibres over, respectively, $m$ and $D(m)$, we obtain a functor as in the statement. 
\end{proof}

\begin{remark}
Informally, if $X \in \Alg_m(\Model_\pcat^\Omega)$, then the $D(m)$-algebra structure on $\nu X$ is given by the composite
\[
D(m(\nu X) \to \nu(m X) \to \nu(X),
\]
where the first arrow is the canonical comparison map of \cref{definition:canonical_comparison_map_from_derived_functor_to_the_original_functor} and the second arrow is the $m$-algebra structure map of $X$. 
\end{remark}

The situation for comonads and coalgebras is more subtle. If $c$ is a comonad on $\Model_\pcat^\Omega$, then there is a span
\[
D(c)\circ D(c) \rightarrow D(c\circ c) \leftarrow D(c),
\]
where the right arrow is induced by the comultiplication on $c$. If the left arrow is an equivalence, then this can be turned into an honest arrow $D(c) \to D(c)\circ D(c)$, forming the first piece of the structure of a comonad on $D(c)$. We begin by identifying conditions that guarantee that this arrow is an equivalence.

\begin{lemma}
\label{lemma:criterion_for_the_associated_derived_functor_to_respect_composition}
Let $f$ and $g$ be endofunctors of $\Model_{\pcat}^{\Omega}$. Then the following are equivalent: 
\begin{enumerate}
    \item The canonical natural transformation $D(g) \circ D(f) \rightarrow D(g \circ f)$ coming from the lax monoidal structure of $D$ is an equivalence;
    \item For every representable $P \in \pcat$, the loop model $f(P) \in \Model^{\Omega}_{\pcat}$ is $g$-tame; that is, the canonical map $D(g)(\nu f(P)) \rightarrow \nu(g(f(P))$ is an equivalence. 
\end{enumerate}
\end{lemma}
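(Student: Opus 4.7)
The plan is to reduce the claim to a pointwise comparison on representables, and then to identify the resulting map, via the coherences of the lax morphism in \cref{theorem:monoidal_functor_between_lmod_induced_by_induced_derived_functor_construction}, with the canonical comparison map that defines $g$-tameness.

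To reduce to representables, I would observe that both $D(g\circ f)$ and $D(g)\circ D(f)$ lie in $\Endo_!(\pcat)$, i.e.\ preserve geometric realizations: the former by definition, the latter as a composite of two such. Since every object of $\Model_\pcat$ is a geometric realization of representables by \cref{thm:splithypercovering}, a natural transformation between such functors is an equivalence if and only if it is an equivalence at every representable $\nu P$, $P \in \pcat$. Moreover, by \cref{remark:concrete_description_of_the_induced_functor_between_models_from_a_functor_of_loop_models} we have $D(f)(\nu P)\simeq \nu(f(P))$ and $D(g\circ f)(\nu P)\simeq\nu(g(f(P)))$, so the map in question at $\nu P$ takes the form
\[
D(g)(\nu(f(P))) \to \nu(g(f(P))).
\]

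Combining the lax monoidality of $D$ with the action-compatibility of the pair $(D,\nu)$ from \cref{theorem:monoidal_functor_between_lmod_induced_by_induced_derived_functor_construction}, and invoking \cref{remark:for_endofunctors_the_lmod_lax_monoidal_multiplication_map_is_the_canonical_comparison_map} to identify the action-compatibility with the canonical comparison map, I would produce a commutative square
\[
\begin{tikzcd}
D(g)(D(f)(\nu P)) \arrow[r] \arrow[d] & D(g)(\nu(f(P))) \arrow[d] \\
D(g\circ f)(\nu P) \arrow[r] & \nu(g(f(P)))
\end{tikzcd}
\]
in which the top map is $D(g)$ applied to the canonical comparison map for $f$ at $P$, the bottom is the canonical comparison map for $g\circ f$ at $P$, the left is the lax multiplication at $\nu P$, and the right is the canonical comparison map for $g$ at $f(P)$. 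By \cref{example:representables_are_tame_for_all_functors}, representables are $f$-tame and $(g\circ f)$-tame, so the top and bottom arrows are equivalences. Hence the left vertical is an equivalence if and only if the right vertical is, which is precisely the condition that $f(P)$ is $g$-tame.

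The main subtlety lies in verifying commutativity of the square, which is an instance of the coherences in the $\lmodoperad$-monoidal lax morphism of \cref{theorem:monoidal_functor_between_lmod_induced_by_induced_derived_functor_construction}: both composites realize the unique way of multiplying the action of $D(f)$ on $\nu P$ by $D(g)$ and transporting the result to $\Model_\pcat$. I expect this coherence can be extracted directly from the construction and presents no essential difficulty.
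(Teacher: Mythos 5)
Your proof is correct and takes essentially the same approach as the paper's: reduce to representables using that both sides preserve geometric realizations, then identify the natural transformation at $\nu P$ with the canonical comparison map for $g$ at $f(P)$. The only difference is that you make this identification via the coherence square for the $\lmodoperad$-monoidal lax morphism $(D,\nu)$ (with \cref{remark:for_endofunctors_the_lmod_lax_monoidal_multiplication_map_is_the_canonical_comparison_map} feeding in), whereas the paper appeals to the resolution-based explicit description of the comparison map in \cref{remark:explicit_description_of_the_canonical_comparison_map_between_derived_functor_and_original_functor}; these amount to the same thing, and the square you write down is indeed one of the coherences packaged in the $\lmodoperad$-algebra data, so there is no gap.
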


\begin{proof}
Since both sides preserve geometric realizations, the given natural transformation is an equivalence if and only if it is so on $P \in \pcat$. In this case, we can rewrite the source as 
\[
D(g)(D(f)(\nu P)) \simeq D(g)(\nu f(P))
\]
and the explicit description given in \cref{remark:explicit_description_of_the_canonical_comparison_map_between_derived_functor_and_original_functor} shows that under this identification the natural transformation coincides with the canonical comparison map. 
\end{proof}

\begin{remark}
In the statement of \cref{lemma:criterion_for_the_associated_derived_functor_to_respect_composition}, we restrict to endofunctors as this is the context in which we made $D$ lax monoidal. However, more generally there is a canonical natural transformation $D(f \circ g) \rightarrow D(f) \circ D(g)$ for an arbitrary composable pair of functors between loop models, and \cref{lemma:criterion_for_the_associated_derived_functor_to_respect_composition} holds in this context as well, with the same proof. 
\end{remark}

\begin{ex}
\label{remark:d_is_monoidal_if_the_first_functor_preserves_representables}
By \cref{example:representables_are_tame_for_all_functors}, we see that if $f(\pcat)\subset\pcat$ then $D(g)\circ D(f) \to D(g\circ f)$ is an equivalence for any endofunctor $g$. 
\end{ex}

\begin{definition}
\label{definition:self_tame_endofunctor}
We say that an endofunctor $c$ of loop models is \emph{\derivable{}} if it satisfies either of the following two equivalent conditions:
\begin{enumerate}
    \item $D(c) \circ D(c) \rightarrow D(c^{2})$ is an equivalence,
    \item $c(P)$ is $c$-tame for all representables $P \in \pcat$. 
\end{enumerate}
\end{definition}

\begin{example}
\label{example:representable_preserving_endofunctors_of_models_are_self_tame}
Suppose that $c$ is an endofunctor of loop models which satisfies $c(\pcat)\subset\pcat$. Then $c$ is \derivable{}.
\end{example}

\begin{remark}
\label{remark:for_self_tame_endofunctors_d_is_strongly_monoidal}
Since $D$ preserves the identity functor, associativity shows that if $c$ is \derivable{}, then $D$ restricts to a strongly monoidal functor on the full monoidal subcategory of $\End(\Model_\pcat^\Omega)$ generated by $c$, i.e.\ $D(c^k)\circ D(c^l) \to D(c^{k+l})$ is an equivalence for all $k,l\geq 0$. In particular, $c^{k}(P)$ is $c^{l}$-tame for all $k, l \geq 0$. 
\end{remark}

As strong monoidal functors preserve coalgebras, we find that if $c$ is a comonad on loop models whose underlying endofunctor is \derivable{}, then $D(c)$ inherits the structure of a comonad. Similar discussions apply to coalgebras over comonads: if $X \in \Model_\pcat^\Omega$ is a $c$-coalgebra, then $\nu X$ will inherit the structure of a $D(c)$-coalgebra under appropriate tameness conditions on $X$. As $c$ need not preserve $c$-tame objects, the appropriate notion of tameness in this case is the following.

\begin{definition}
\label{definition:universally_tame_model}
Let $c \in \End(\Model_{\pcat}^{\Omega})$ be an endofunctor. We say that a loop model $X \in \Model_\pcat^\Omega$ is \emph{stably $c$-tame} if $c^{k}X$ is $c$-tame for all $k \geq 0$. We write
\[
\Model_\pcat^{\Omega,\stablyctame}\subset\Model_\pcat^\Omega
\]
for the full subcategory of stably $c$-tame loop models.
\end{definition}

By construction, if $X \in \Model_\pcat^\Omega$ is stably $c$-tame, then so is $c^k(X)$ for any $k\geq 0$; in other words, the endomorphism $c$ of $\Model_\pcat^\Omega$ preserves the full subcategory $\Model_\pcat^{\Omega,\stablyctame}$. We can now state the following.

\begin{proposition}
\label{proposition:induced_derived_functor_construction_gives_a_functor_on_coalgebras}
Let $c$ be a comonad on $\Model_\pcat^\Omega$ whose underlying endofunctor is \derivable{}. Then
\begin{enumerate}
    \item $D(c)\in \Endo_!(\pcat)$ inherits from $c$ the structure of a comonad on $\Model_\pcat$;
    \item The functor $\nu\colon \Model_\pcat^\Omega\to\Model_\pcat$, when restricted to stably $c$-tame loop models, lifts canonically to provide a cartesian diagram
    \begin{center}\begin{tikzcd}
    \CoAlg_c(\Model_\pcat^{\Omega,\stablyctame})\ar[r,tail,dashed]\ar[d]&\CoAlg_{D(c)}(\Model_\pcat)\ar[d]\\
    \Model_\pcat^{\Omega,\stablyctame}\ar[r,"\nu",tail]&\Model_\pcat
    \end{tikzcd}.\end{center}
\end{enumerate}
\end{proposition}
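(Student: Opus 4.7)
The plan for part (1) is to apply \cref{remark:for_self_tame_endofunctors_d_is_strongly_monoidal}: the derivability hypothesis on $c$ ensures that the lax monoidal functor $D$ of \cref{remark:lax_monoidal_structure_on_the_derivization_of_a_functor} restricts to a strong monoidal functor on the full monoidal subcategory of $\End(\Model_\pcat^\Omega)$ generated by powers of $c$ under composition. Since strong monoidal functors preserve coalgebra objects, and a comonad on an $\infty$-category $\ccat$ is by definition a coalgebra object in $(\End(\ccat),\circ)$, this transfers the comonad structure on $c$ to a comonad structure on $D(c) \in \Endo_!(\pcat)$.

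For part (2), the dashed functor should be built from the lax $\lmodoperad$-monoidal morphism $(D,\nu)$ of \cref{theorem:monoidal_functor_between_lmod_induced_by_induced_derived_functor_construction}, combined with inversion of comparison maps on stably tame objects. Concretely, for a coalgebra $X \in \CoAlg_c(\Model_\pcat^\Omega)$ with structure map $\alpha\colon X \to c(X)$, the plan is to form the zig-zag
\[
\nu(X) \xrightarrow{\nu(\alpha)} \nu(c(X)) \xleftarrow{\phi_X} D(c)(\nu X),
\]
where $\phi_X$ is the canonical comparison map of \cref{definition:canonical_comparison_map_from_derived_functor_to_the_original_functor}, and then invert $\phi_X$ to produce a $D(c)$-coalgebra structure map on $\nu(X)$. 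The stable $c$-tameness of $X$ ensures, via \cref{lemma:criterion_for_the_associated_derived_functor_to_respect_composition} and the derivability of $c$, that all iterated comparison maps $D(c)^k(\nu X) \simeq D(c^k)(\nu X) \xrightarrow{\sim} \nu(c^k X)$ are equivalences, which is precisely what is needed for the coherent comultiplicative compatibility of the resulting structure.

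For the cartesian property, the plan is to argue that given a $D(c)$-coalgebra $Y \in \CoAlg_{D(c)}(\Model_\pcat)$ whose underlying model is of the form $\nu(X)$ for a stably $c$-tame loop model $X$, the structure map $\nu(X) \to D(c)(\nu X) \simeq \nu(c(X))$ lies in the essential image of the fully faithful inclusion $\nu \colon \Model_\pcat^\Omega \hookrightarrow \Model_\pcat$, and so uniquely lifts to a morphism $X \to c(X)$ in $\Model_\pcat^\Omega$; compatibility with the comultiplication reduces to that of $Y$ via the tameness equivalences. The hardest part will be handling all of this coherently at the level of $\infty$-categories rather than merely on objects. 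The cleanest approach is likely to upgrade the lax morphism of \cref{theorem:monoidal_functor_between_lmod_induced_by_induced_derived_functor_construction} (in its coalgebra-theoretic variant) to a strict morphism after restriction to the sub-$\infty$-operad spanned by $c$ together with the stably $c$-tame loop models, and then pass to $\infty$-categories of coalgebras directly.
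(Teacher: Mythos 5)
Your proposal is essentially the paper's proof. The paper carries out precisely the "cleanest approach" you land on at the end: it takes the $\lmodoperad$-monoidal subcategory $(\ccat_0,\ccat_1)\subseteq(\End(\Model_\pcat^\Omega),\Model_\pcat^\Omega)$ spanned by the powers $c^k$ together with the stably $c$-tame loop models, uses \cref{remark:for_self_tame_endofunctors_d_is_strongly_monoidal} and tameness to see that the restriction of $(D,\nu)$ there is strongly $\lmodoperad$-monoidal, and then passes to coalgebras by dualizing to cartesian fibrations (so that coalgebras become inert-preserving sections, to which a strongly monoidal morphism restricts); part (1) falls out of the same restriction since $c$ lives in $\ccat_0$. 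The one small awkwardness in your sketch is the "zig-zag inversion" language: for a stably $c$-tame $X$ the comparison map $\phi_X\colon D(c)(\nu X)\to\nu(cX)$ is already an equivalence, so there is no inversion step to perform—the point is precisely that $(D,\nu)$ is strongly monoidal on the relevant subcategory, which is what you say next, so this is a phrasing issue rather than a gap.
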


\begin{proof}
Let $(\ccat_0, \ccat_1) \subseteq (\End(\Model_{\pcat}^{\Omega}), \Model_{\pcat}^{\Omega})$ be the pair of full subcategories spanned by pairs of the form $(c^{k}, X)$ for $k \geq 0$ and $X$ a stably $c$-tame loop model. This is an $\lmodoperad$-monoidal subcategory, and \cref{remark:for_self_tame_endofunctors_d_is_strongly_monoidal} and the stable tameness assumption imply that the restriction of $(D,\nu)$ to
\[
(D, \nu) \colon (\ccat_0, \ccat_1)^{\otimes} \rightarrow (\Endo_!(\pcat),\Model_\pcat)^{\otimes} 
\]
is (strongly) $\lmodoperad$-monoidal; that is, it preserves all cocartesian morphisms. It thus induces a morphism between the dual cartesian fibrations over $(\lmodoperad^{\otimes})^{\mathrm{op}}$ which preserves all cartesian maps. 

The sections of the the dual fibration of $(\ccat_0, \ccat_1)^{\otimes}$ which preserve inert morphisms can be identified with the $\infty$-category of pairs of a comonad whose underlying endofunctor is equivalent to $c$ together with a $c$-coalgebra $X$ whose underlying loop model is $c$-tame. Passing to fibers over our fixed comonad $c$ then provides the promised fully faithful functor
\[
\CoAlg_c(\Model_\pcat^{\Omega,\stablyctame}) \to \CoAlg_{D(c)}(\Model_\pcat),
\]
realizing $\CoAlg_c(\Model_\pcat^{\Omega,\stablyctame})$ as the full subcategory of $\CoAlg_{D(c)}(\Model_\pcat)$ spanned by those $D(c)$-coalgebras whose underlying model is of the form $\nu X$ for a stably $c$-tame loop model $X$, as claimed.
\end{proof}

To summarize, $D$ preserves monads and their algebras, and comonads and coalgebras under some assumptions. The situation with its left adjoint $E$ is completely dual.

\begin{recollection}[{Dual cartesian fibration}]
\label{recollection:oplax_monoidal_structure_on_e_and_l}
The morphism of cocartesian fibrations 
\[
\begin{tikzcd}
	{(\End(\Model_{\pcat}^{\Omega}), \Model_{\pcat}^{\Omega})^{\otimes} } && {(\Endo_!(\pcat),\Model_\pcat)^{\otimes} } \\
	& {\lmodoperad^{\otimes}}
	\arrow[from=1-1, to=1-3]
	\arrow[from=1-1, to=2-2]
	\arrow[from=1-3, to=2-2]
\end{tikzcd}
\]
of \cref{theorem:monoidal_functor_between_lmod_induced_by_induced_derived_functor_construction} can be fibrewise identified with products of $D$ and $i$, both of which are right adjoints. By a result of Haugseng-Hebestreit-Linskens-Nuiten \cite[{Theorem B}]{haugsenghebestreitlinskensnuiten2023lax}, taking left adjoints levelwise assembles to a morphism between the dual cartesian fibrations over $(\lmodoperad^{\otimes})^{\op}$, which we can interpret as an \emph{oplax} $\lmodoperad$-monoidal structure on the pair 
\[
(E, L) \colon (\Endo_!(\pcat),\Model_\pcat) \rightarrow (\End(\Model_{\pcat}^{\Omega}), \Model_{\pcat}^{\Omega}),
\]
\end{recollection}

\begin{proposition}
\label{proposition:e_and_l_take_coalgebras_to_coalgebras}
Let $c_! \in \Endo_!(\Model_\pcat)$ be a comonad. Then $E(c_!)$ inherits from $c_!$ the structure of a comonad in such a way there exists a dashed arrow filling in the diagram
\begin{center}\begin{tikzcd}
\CoAlg_{c_!}(\Model_\pcat)\ar[r,dashed]\ar[d]&\CoAlg_{E(c_!)}(\Model_\pcat^\Omega)\ar[d]\\
\Model_\pcat\ar[r,"L"]&\Model_\pcat^\Omega
\end{tikzcd}.\end{center}
\end{proposition}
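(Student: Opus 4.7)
The plan is to formally dualize the proof of \cref{proposition:induced_derived_functor_construction_passes_to_algebras_over_a_monad}, replacing the lax $\lmodoperad$-monoidal morphism $(D, \nu)$ by its adjoint, the oplax $\lmodoperad$-monoidal morphism $(E, L)$ of \cref{recollection:oplax_monoidal_structure_on_e_and_l}. The key formal observation is that oplax monoidal morphisms preserve comonoids and modules over them, just as lax monoidal morphisms preserve monoids and modules. By definition, an oplax $\lmodoperad$-monoidal morphism is a morphism of the dual cartesian fibrations over $(\lmodoperad^{\otimes})^{\op}$ preserving cartesian morphisms, and hence induces a functor between $\infty$-categories of inert-preserving sections of these dual fibrations.

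Concretely, I would first spell out the identification of the $\infty$-category of inert-preserving sections of the dual fibration of $(\End(\ccat), \ccat)^{\otimes}$ with the $\infty$-category of pairs $(c, X)$ consisting of a comonad $c$ on $\ccat$ together with a $c$-coalgebra $X$; this is entirely formal, being the dual of the identification recalled in \cref{recollection:algebras_for_the_lmod_operad}. Applying the oplax morphism $(E, L)$ then yields a functor
\[
\CoAlg_{\lmodoperad}(\Endo_!(\pcat), \Model_\pcat) \to \CoAlg_{\lmodoperad}(\End(\Model_{\pcat}^{\Omega}), \Model_{\pcat}^{\Omega}),
\]
which in particular endows $E(c_!)$ with a canonical comonad structure, whose comultiplication is obtained as the composite of $E$ applied to the comultiplication of $c_!$ with the oplax structure map $E(c_! \circ c_!) \to E(c_!) \circ E(c_!)$. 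Passing to fibres over $c_!$ on the left and over $E(c_!)$ on the right then produces the desired functor $\CoAlg_{c_!}(\Model_\pcat) \to \CoAlg_{E(c_!)}(\Model_{\pcat}^{\Omega})$, and the commutativity of the required square with $L$ follows from the fact that the projection to the underlying object is itself induced by the $\lmodoperad$-monoidal structure map and is therefore preserved by any lax or oplax morphism.

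I do not expect any serious obstacle beyond unwinding the formalism; once the oplax $\lmodoperad$-monoidal structure of \cref{recollection:oplax_monoidal_structure_on_e_and_l} is in hand, everything is a formal consequence of how oplax monoidal morphisms of $\infty$-operads interact with coalgebras. It is worth emphasizing a contrast with \cref{proposition:induced_derived_functor_construction_gives_a_functor_on_coalgebras}: no tameness assumptions are required here. The fact that $E(c_! \circ c_!) \to E(c_!) \circ E(c_!)$ need not be invertible is precisely what makes $(E, L)$ genuinely oplax rather than strong monoidal, but is perfectly compatible with preservation of comonoid structures. Dually, it is the lax (and not strong) character of $(D, \nu)$ that forced the introduction of the derivability and stable tameness hypotheses in the statement of \cref{proposition:induced_derived_functor_construction_gives_a_functor_on_coalgebras}.
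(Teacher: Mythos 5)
Your proposal is correct and is essentially the paper's own proof: both pass to inert-preserving sections of the dual cartesian fibrations over $(\lmodoperad^{\otimes})^{\op}$ associated to the oplax morphism $(E,L)$ of \cref{recollection:oplax_monoidal_structure_on_e_and_l}, obtain a functor between $\infty$-categories of pairs of a comonad and a coalgebra over it, and then pass to fibres over $c_!$ and $E(c_!)$. Your additional remarks on why no tameness hypotheses are needed here (oplax vs.\ lax) are accurate but not strictly part of the argument.
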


\begin{proof}
Passing to inert morphism-preserving sections from the morphism of the dual cartesian fibrations of \cref{recollection:oplax_monoidal_structure_on_e_and_l} yields a functor 
\[
\CoAlg_{\lmodoperad}(\Endo_!(\pcat),\Model_\pcat), \Model_{\pcat}) \rightarrow \CoAlg_{\lmodoperad}(\End(\Model_{\pcat}^{\Omega}), \Model_{\pcat}^{\Omega})
\]
between $\infty$-categories of pairs of a comonad and a coalgebra over it. Passing to fibres over $c_{!}$ and $E(c)$ yields the needed lift of $L$. 
\end{proof}

As before, the situation for monads is more subtle, but we may proceed by restricting to subcategories over which the oplax monoidal structure on $(E,L)$ of \cref{recollection:oplax_monoidal_structure_on_e_and_l} is strong monoidal. Two natural such subcategories are given the following.

\begin{lemma}
\label{lemma:for_l_equivalence_preserving_endofunctors_e_is_strongly_monoidal}
Let $f_{!}, g_{!} \in \Endo_{!}(\pcat)$ be derived endofunctors, and suppose that at least one of the following holds:
\begin{enumerate}
    \item $f_{!}$ preserves $L$-equivalences,
    \item $g_{!}$ preserves loop models. 
\end{enumerate}
Then the oplax structure map $E(f_{!} \circ g_{!}) \rightarrow E(f_{!}) \circ E(g_{!})$ is an equivalence.
\end{lemma}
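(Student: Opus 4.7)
The plan is to unwind the definition of the oplax structure map in concrete terms and then observe that each hypothesis makes the relevant unit an equivalence on the nose or up to $L$. Concretely, since $E(f_!) = L f_! \nu$, the oplax structure map in question should identify with
\[
L f_! g_! \nu \xrightarrow{\;L f_! \eta g_! \nu\;} L f_! \nu L g_! \nu = E(f_!) \circ E(g_!),
\]
where $\eta \colon \mathrm{id} \to \nu L$ is the unit of the adjunction $L \dashv \nu$. This should match the oplax structure of \cref{recollection:oplax_monoidal_structure_on_e_and_l} because that structure is defined by taking componentwise left adjoints of the lax monoidal structure on $(D,\nu)$, which in turn is built from the standard oplax structure on the upper conjugation construction $(-)^{L \dashv \nu}$ of \cref{construction:monoidal_structure_on_conjugation_by_adjunction}; the oplax structure map there is precisely the one obtained by inserting the unit.

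Assuming the identification above, case (2) is essentially immediate: if $g_!$ preserves loop models, then $g_!\nu X$ lies in the essential image of the fully faithful right adjoint $\nu$ for every $X \in \Model_{\pcat}^{\Omega}$, and the unit of a reflective localization is a natural equivalence on the essential image of the right adjoint. Hence $\eta_{g_! \nu}$ is a natural equivalence, and applying $Lf_!$ preserves this.

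For case (1), I would first observe that $\eta_{g_!\nu}$ is always an $L$-equivalence, regardless of any hypothesis on $f_!$ or $g_!$. Indeed, applying $L$ to $\eta_Y \colon Y \to \nu L Y$ yields $L\eta_Y \colon LY \to L\nu L Y$, which is an equivalence by the triangle identity $\epsilon_L \circ L\eta = \mathrm{id}_L$ combined with the fact that the counit $\epsilon \colon L\nu \to \mathrm{id}$ is an equivalence (since $\nu$ is fully faithful). Applying the hypothesis that $f_!$ preserves $L$-equivalences gives that $f_! \eta_{g_!\nu}$ is an $L$-equivalence, so applying $L$ yields the required equivalence. The only real step is the identification of the oplax map with the unit insertion; once this translation from the $\lmodoperad$-operadic formalism into elementary adjunction data is accomplished, the two cases are short arguments using, respectively, the fact that units of reflective localizations are equivalences on the image of the right adjoint, and that they are $L$-equivalences everywhere.
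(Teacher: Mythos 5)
Your proposal is correct and follows essentially the same route as the paper's proof: both identify the oplax structure map with the unit-insertion map $Lf_!g_!\nu \to Lf_!\nu Lg_!\nu$ (the paper does this by unwinding the adjoint of the lax structure on $D$), then argue case (2) from fully-faithfulness of $\nu$ and case (1) from the observation that unit components are $L$-equivalences which $f_!$ preserves and $L$ inverts.
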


\begin{proof}
Concretely, this oplax structure map is adjoint to the composite
\[
f_{!} \circ g_{!} \rightarrow D(E(f_{!})) \circ D(E(g_{!})) \rightarrow D(E(f_{!}) \circ E(g_{!})),
\]
where the first arrow is induced by the units of $E \dashv D$ and the second arrow comes from the lax monoidal structure on $D$. As $E$ is given by conjugation along the adjunction $L \dashv \nu$, we see that the oplax structure map can be identified with the map $Lf_{!}g_{!}\nu \rightarrow Lf_{!}\nu \circ L g_{!} \nu$ induced by the unit $\mathrm{id}_{\Model_{\pcat}} \rightarrow \nu \circ L$. If $(1)$ holds, then as the unit has components given by $L$-equivalences, which are preserved by $f_{!}$ and inverted by $L$, this is a natural equivalence. If $(2)$ holds, then $g_{!} \nu$ takes values in loop models, and on these the unit is already a natural equivalence. 
\end{proof}

For models we have the following.

\begin{lemma}
\label{lemma:applying_an_endofunctor_which_preserves_l_equivalences_commutes_with_l}
Let $f_{!} \in \Endo_{!}(\pcat)$ be an endofunctor which preserves $L$-equivalences. Then for any model $X \in \Model_{\pcat}$, the map $L(f_{!}X) \rightarrow E(f_{!})(LX)$, adjoint to the composite
\[
f_{!}X \rightarrow D(E(f_{!}))(\nu L X) \rightarrow \nu(E(f_{!})LX)
\]
with first arrow induced by the units of $E \circ D$ and $L \dashv \nu$ and second arrow coming from the lax monoidal structure on $(D, \nu)$, is an equivalence. 
\end{lemma}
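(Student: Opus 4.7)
The plan is to identify the map $L(f_!X) \to E(f_!)(LX)$ with the obvious candidate, namely $L$ applied to $f_!(\eta_X)$ where $\eta_X \colon X \to \nu L X$ is the unit of $L \dashv \nu$, and then use the hypothesis that $f_!$ preserves $L$-equivalences to conclude.

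First I would write down the natural candidate. Since $E(f_!) = L \circ f_! \circ \nu$ by Definition 7.3.8, the target equals $L f_! \nu L X$, and the map $L(f_!(\eta_X)) \colon L f_! X \to L f_! \nu L X$ is an obvious natural arrow. Its adjoint under $L \dashv \nu$ is the composite
\[
f_! X \xrightarrow{f_!(\eta_X)} f_! \nu L X \xrightarrow{\eta_{f_! \nu L X}} \nu L f_! \nu L X = \nu(E(f_!)(LX)).
\]
The content of the lemma reduces, modulo formalities, to identifying this composite with the one described in the statement.

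To perform this identification, recall that by Remark 7.4.22 the second arrow in the statement, coming from the lax monoidal structure of $(D,\nu)$, coincides with the canonical comparison map $D(E(f_!))(\nu L X) \to \nu E(f_!)(L X)$. Since $D = \mathrm{der} \circ (-)_{L \dashv \nu}$, this comparison map is the $\nu L X$-component of the counit $D(E(f_!)) \to \nu \circ E(f_!) \circ L = \nu L f_! \nu L$ of the derivation adjunction, and the unit $f_! \to D(E(f_!))$ of $E \dashv D$ is, up to the equivalence $\mathrm{der}(f_!) \simeq f_!$, obtained by applying $\mathrm{der}$ to the unit $f_! \to \nu L f_! \nu L$ of the conjugation adjunction $(-)^{L \dashv \nu} \dashv (-)_{L \dashv \nu}$. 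The general adjunction identity (the composition of a unit and the corresponding counit of a right adjoint) then shows that the composite of steps (2) and (3) in the statement is simply the conjugation unit $f_! \to \nu L f_! \nu L$ evaluated at $\nu L X$. By Construction 7.2.4 this unit is $\eta_{\nu L X} \circ f_!(\eta_{\nu L X}) = \eta_{f_! \nu L X}$, using $L\nu \simeq \mathrm{id}$ to simplify. Composing with the first arrow $f_!(\eta_X)$ recovers the formula above.

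Once the map is identified as $L(f_!(\eta_X))$, the conclusion is immediate: $\eta_X$ is an $L$-equivalence, since $L\eta_X$ is an equivalence by the triangle identity for the localization $L \dashv \nu$ (the counit is invertible because $\nu$ is fully faithful). By hypothesis $f_!$ sends $L$-equivalences to $L$-equivalences, so $f_!(\eta_X)$ is an $L$-equivalence, and therefore $L(f_!(\eta_X))$ is an equivalence in $\Model_\pcat^\Omega$. The main obstacle is entirely the bookkeeping in the middle paragraph, namely tracing the unit of $E \dashv D$ and the canonical comparison map through the two-step construction of $D$ and recognizing the composite as the conjugation unit; all of this is formal manipulation of adjunctions, and once carried out the hypothesis delivers the conclusion in a single line.
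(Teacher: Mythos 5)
Your proof is correct and takes essentially the same approach as the paper's: identify the adjoint map with $L f_!(\eta_X)$, where $\eta_X\colon X \to \nu L X$ is the unit of $L \dashv \nu$, then use that $\eta_X$ is an $L$-equivalence, $f_!$ preserves $L$-equivalences, and $L$ inverts them. The paper states the identification of the adjoint with $Lf_!(\eta_X)$ tersely and refers to the proof of \cref{lemma:for_l_equivalence_preserving_endofunctors_e_is_strongly_monoidal} for the rest; you spell out the formal bookkeeping (tracing the unit of $E \dashv D$ and the comparison map through the two-step construction of $D$ via the conjugation adjunction and derivization) which the paper leaves implicit. One minor typo: in the middle paragraph the conjugation unit at $\nu L X$ should read $\eta_{f_!\nu L\nu L X}\circ f_!(\eta_{\nu L X})\simeq\eta_{f_!\nu L X}$ rather than ``$\eta_{\nu L X}\circ f_!(\eta_{\nu L X})$''; this does not affect the argument.
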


\begin{proof}
This adjoint can be identified with the map $Lf_{!}X \rightarrow Lf_{!}\nu L X$ induced by the unit $\mathrm{id}_{\Model_{\pcat}} \rightarrow \nu \circ L$. 
The result then follows in the same way as in the proof of \cref{lemma:for_l_equivalence_preserving_endofunctors_e_is_strongly_monoidal}. 
\end{proof}

\begin{proposition}
\label{proposition:l_gives_a_functor_on_algebras_over_l_equivalence_preserving_monad}
Let $m_{!} \in \Endo_{!}(\pcat)$ be an monad such whose underlying functor preserves $L$-equivalences. Then $E(m_!)$ inherits the structure of a comonad in such a way that if $X$ is an $m_!$-algebra then $LX$ is an $E(m_!)$-algebra, in the sense that there exists a dashed arrow filling in the diagram
\begin{center}\begin{tikzcd}
\Alg_{m_!}(\Model_\pcat)\ar[r]\ar[d]&\Alg_{E(m!)}(\Model_\pcat^\Omega)\ar[d]\\
\Model_\pcat\ar[r,"L"]&\Model_\pcat^\Omega
\end{tikzcd}.\end{center}
\end{proposition}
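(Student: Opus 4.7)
The plan is to mirror the proof of \cref{proposition:induced_derived_functor_construction_gives_a_functor_on_coalgebras}, but working dually with the oplax $\lmodoperad$-monoidal structure on $(E, L)$ recalled in \cref{recollection:oplax_monoidal_structure_on_e_and_l} in place of the lax $\lmodoperad$-monoidal structure on $(D, \nu)$. The main point is to isolate an $\lmodoperad$-monoidal subcategory on which this oplax structure is strongly monoidal (i.e.\ the oplax structure maps are equivalences), since strong monoidal morphisms of $\lmodoperad$-monoidal $\infty$-categories pass to algebras. (Note that the statement of the proposition contains a small typo: $E(m_!)$ should inherit a \emph{monad} rather than comonad structure, which is what we produce.)

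The first step is to define the relevant subcategory. Let $(\ccat_0, \ccat_1) \subseteq (\Endo_!(\pcat), \Model_\pcat)$ be the smallest $\lmodoperad$-monoidal full subcategory containing the pair $(m_!, X)$ for every model $X \in \Model_\pcat$; concretely $\ccat_0$ consists of the iterates $m_!^k$ for $k \geq 0$ and $\ccat_1 = \Model_\pcat$. Since the underlying endofunctor of $m_!$ preserves $L$-equivalences by assumption, so does each iterate $m_!^k$. Consequently \cref{lemma:for_l_equivalence_preserving_endofunctors_e_is_strongly_monoidal}(1) implies that the oplax structure maps $E(m_!^k \circ m_!^l) \to E(m_!^k) \circ E(m_!^l)$ are all equivalences, and \cref{lemma:applying_an_endofunctor_which_preserves_l_equivalences_commutes_with_l} implies that the oplax action-compatibility maps $L(m_!^k X) \to E(m_!^k)(LX)$ are equivalences for every $X \in \Model_\pcat$.

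Having shown that all oplax structure maps are equivalences, the restriction
\[
(E, L) \colon (\ccat_0, \ccat_1)^{\otimes} \rightarrow (\End(\Model_\pcat^\Omega), \Model_\pcat^\Omega)^{\otimes}
\]
is strong $\lmodoperad$-monoidal, i.e.\ preserves cocartesian morphisms over $\lmodoperad^\otimes$. Passing to inert morphism-preserving sections yields a functor between the associated $\infty$-categories of $\lmodoperad$-algebras, i.e.\ of pairs consisting of a monad together with an algebra over it. Taking fibres over $m_!$ and $E(m_!)$, this induces the required functor $\Alg_{m_!}(\Model_\pcat) \to \Alg_{E(m_!)}(\Model_\pcat^\Omega)$ lifting $L$.

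The main subtlety is verifying that the oplax structure genuinely strictifies on $(\ccat_0, \ccat_1)$: the compatibility between composition of monad iterates and the action on an arbitrary algebra has two different sources of potential failure, one from each of \cref{lemma:for_l_equivalence_preserving_endofunctors_e_is_strongly_monoidal} and \cref{lemma:applying_an_endofunctor_which_preserves_l_equivalences_commutes_with_l}, and both must be handled simultaneously. The hypothesis that $m_!$ preserves $L$-equivalences is precisely the common input that allows both lemmas to apply, and without which one would need to further cut down $\ccat_1$ to a full subcategory of $L$-locally well-behaved models (analogous to the stably $c$-tame loop models in the coalgebra case of \cref{proposition:induced_derived_functor_construction_gives_a_functor_on_coalgebras}).
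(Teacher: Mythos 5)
Your proposal is correct and supplies the details that the paper leaves implicit behind the one-line remark that the proof is "exactly analogous to" \cref{proposition:induced_derived_functor_construction_passes_to_algebras_over_a_monad}. You rightly observe that a more precise model is \cref{proposition:induced_derived_functor_construction_gives_a_functor_on_coalgebras}: the pair $(E,L)$ is only oplax $\lmodoperad$-monoidal, and an oplax morphism does not preserve $\lmodoperad$-algebras on the nose, so one must first cut down to the sub-$\lmodoperad$-monoidal pair $(\ccat_0,\ccat_1) = (\{m_!^k\}_{k\geq 0}, \Model_\pcat)$ and use \cref{lemma:for_l_equivalence_preserving_endofunctors_e_is_strongly_monoidal} together with \cref{lemma:applying_an_endofunctor_which_preserves_l_equivalences_commutes_with_l} (both applicable because every iterate $m_!^k$ again preserves $L$-equivalences) to see that the oplax structure maps strictify there, after which passing to inert-preserving sections and taking fibres over $m_!$ and $E(m_!)$ gives the desired lift of $L$. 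You also correctly flag the typo in the statement ("comonad" should read "monad"). The only thing glossed over — in both your account and the paper's — is the bookkeeping of passing between cocartesian and dual cartesian fibrations over $\lmodoperad^\otimes$, which in the $(E,L)$ direction runs in the opposite sense to the coalgebra case of \cref{proposition:induced_derived_functor_construction_gives_a_functor_on_coalgebras}; this is routine but worth keeping in mind when dualizing that argument.
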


\begin{proof}
The proof is exactly analogous to that of \cref{proposition:induced_derived_functor_construction_passes_to_algebras_over_a_monad}.
\end{proof}

With an eye towards future applications, we explicitly record some basic criteria for the adjunctions $E \dashv D$ to restrict to a monoidal equivalence between suitable subcategories.

\begin{lemma}
\label{lemma:passing_to_loop_models_fully_faithful_on_monoidal_on_derived_functors_which_preserve_loop_models}
Let $\Endo_{!}^{\Omega}(\pcat) \subseteq \Endo_{!}(\pcat) \subseteq \End(\Model_{\pcat})$ denote the subcategory of those derived functors which preserve loop models. Then the restriction 
\[
E \colon \Endo_{!}^{\Omega}(\pcat) \rightarrow \End(\Model_{\pcat}^{\Omega})
\]
is both fully faithful and strongly monoidal. 
\end{lemma}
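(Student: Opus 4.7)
The plan is to deduce both properties directly from the two preceding lemmas, after a preliminary verification that $\Endo_!^\Omega(\pcat)$ really is a monoidal subcategory. Closure under composition is evident: if $f_!, g_! \in \Endo_!^\Omega(\pcat)$ both preserve loop models, then so does $f_! \circ g_!$; the identity of $\Model_\pcat$ trivially preserves loop models. Consequently, the $\lmodoperad$-monoidal structure from \cref{remark:associated_derived_functor_construction_is_monoidal} restricts to a monoidal subcategory $\Endo_!^\Omega(\pcat) \subseteq \Endo_!(\pcat)$, and the statement of the lemma makes sense.

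For fully faithfulness, since $E$ is the left adjoint in the adjunction $E \dashv D$ of \cref{definition:induced_derived_functor_from_a_functor_of_loop_models_and_vice_versa}, it suffices to check that the unit $f_! \to D(E(f_!))$ is an equivalence for every $f_! \in \Endo_!^\Omega(\pcat)$. The criterion in \cref{lemma:a_derived_functor_can_be_recovered_from_the_induced_functor_between_loop_models_iff_it_takes_representables_to_loop_models} reduces this to verifying that $f_!$ sends representables $\nu P$ for $P\in \pcat$ to loop models. Since any such $\nu P$ is itself a loop model (the Yoneda embedding $\pcat \hookrightarrow \Model_\pcat^\Omega$ factors the inclusion $\pcat \hookrightarrow \Model_\pcat$), the hypothesis that $f_!$ preserves loop models gives this immediately.

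For strong monoidality, I need to show that the oplax monoidal structure on $E$ of \cref{recollection:oplax_monoidal_structure_on_e_and_l} becomes strong when restricted to $\Endo_!^\Omega(\pcat)$. The unit constraint is handled by unwinding $E(\mathrm{id}_{\Model_\pcat}) = L \circ \nu_\pcat \simeq \mathrm{id}_{\Model_\pcat^\Omega}$, which is an equivalence precisely because $L \dashv \nu_\pcat$ is a localization (so the counit is invertible). For the binary composition constraint, \cref{lemma:for_l_equivalence_preserving_endofunctors_e_is_strongly_monoidal}(2) directly supplies what is needed: whenever the inner functor $g_!$ preserves loop models, the oplax structure map $E(f_! \circ g_!) \to E(f_!) \circ E(g_!)$ is an equivalence, and this hypothesis holds automatically on $\Endo_!^\Omega(\pcat)$.

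I do not anticipate any genuine obstacle; the argument is a direct bookkeeping application of the two preceding lemmas, together with the localization identity $L \circ \nu_\pcat \simeq \mathrm{id}$. If anything, the subtlest point is simply recognizing that \emph{both} the fully faithfulness criterion (which needs only that representables map to loop models) and the strong monoidality criterion (which needs the inner argument $g_!$ to preserve loop models) are simultaneously forced by the single condition defining $\Endo_!^\Omega(\pcat)$.
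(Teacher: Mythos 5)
Your proof is correct and takes the same route as the paper: both properties are read off from \cref{lemma:a_derived_functor_can_be_recovered_from_the_induced_functor_between_loop_models_iff_it_takes_representables_to_loop_models} (using that representables are themselves loop models) and from condition (2) of \cref{lemma:for_l_equivalence_preserving_endofunctors_e_is_strongly_monoidal}. Your preliminary remarks about closure under composition, the unit constraint via $L\nu \simeq \mathrm{id}$, and the reduction of fully faithfulness to invertibility of the unit are all details the paper leaves implicit, but they are exactly the right things to say.
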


\begin{proof}
These two properties follow from 
\cref{lemma:a_derived_functor_can_be_recovered_from_the_induced_functor_between_loop_models_iff_it_takes_representables_to_loop_models} and \cref{lemma:for_l_equivalence_preserving_endofunctors_e_is_strongly_monoidal} respectively.
\end{proof}

\begin{lemma}
\label{lemma:endofunctors_whose_derived_functors_preserve_loop_models_are_which_can_be_recovered_from_them_form_a_monoidal_subcategory_on_which_d_is_monoidal}
Let $\ccat \subseteq \End(\Model_{\pcat}^{\Omega})$ denote the full subcategory of those $f$ for which
\begin{enumerate}
    \item The unit $E(D(f)) \rightarrow f$ is an equivalence;
    \item $D(f)$ preserves loop models. 
\end{enumerate}
Then $\ccat$ is a monoidal subcategory of $\End(\Model_{\pcat}^{\Omega})$ on which $D$ restricts to a strongly monoidal functor. 
\end{lemma}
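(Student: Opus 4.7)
My plan is to verify three things in turn: that the identity lies in $\ccat$, that $\ccat$ is closed under composition, and that for $f,g \in \ccat$ the lax monoidal structure map $D(g) \circ D(f) \to D(g \circ f)$ of \cref{remark:associated_derived_functor_construction_is_monoidal} is an equivalence. The identity case is immediate: since $D$ is lax unital, $D(\mathrm{id}) = \mathrm{id}$ visibly preserves loop models, and $E(D(\mathrm{id})) = E(\mathrm{id}) = \mathrm{id}$ tautologically. Once the third point is established, the first two clauses of the lemma (monoidal subcategory, strongly monoidal restriction) will both be in hand.

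The heart of the argument, and what I expect to be the main technical step, will be to verify that the lax structure map is an equivalence for $f,g \in \ccat$. By \cref{lemma:criterion_for_the_associated_derived_functor_to_respect_composition}, this reduces to checking that for every representable $P \in \pcat$, the object $f(P)$ is $g$-tame in the sense of \cref{definition:canonical_comparison_map_from_derived_functor_to_the_original_functor}. Following \cref{construction:comparison_map_from_a_derived_functor_into_the_original_functor}, the canonical comparison map $D(g)(\nu f(P)) \to \nu(g(f(P)))$ factors as the unit $D(g)(\nu f(P)) \to \nu L D(g)(\nu f(P))$ of $L \dashv \nu$ followed by $\nu$ applied to the counit $E(D(g)) \to g$ evaluated at $f(P)$. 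I will argue that both factors are equivalences: the first because $\nu f(P)$ is a loop model and $D(g)$ preserves loop models (using $g \in \ccat$), so that $D(g)(\nu f(P))$ is itself a loop model and the unit of $L \dashv \nu$ is an equivalence on it; the second because $g \in \ccat$ asserts exactly that $E(D(g)) \to g$ is a natural equivalence. The main subtlety here is that the verification genuinely needs both defining properties of $\ccat$ simultaneously on $g$.

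Once strong monoidality on $\ccat$ is in place, closure under composition will follow readily. Given $f,g \in \ccat$, each of $f$ and $g$ preserves geometric realizations of those colimit diagrams preserved by $\nu$ by \cref{lemma:a_criterion_for_a_functor_between_loop_models_is_determined_by_its_derived_functor}, so their composite $g \circ f$ does as well, and that same lemma then yields that $E(D(g \circ f)) \to g \circ f$ is an equivalence. For the second defining condition, $D(g \circ f) \simeq D(g) \circ D(f)$ by the previous paragraph, and this composite preserves loop models since each factor does. Hence $g \circ f \in \ccat$, completing the verification that $\ccat$ is a monoidal subcategory on which $D$ is strongly monoidal.
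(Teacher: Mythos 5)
Your argument for the key step — that the lax structure map $D(g)\circ D(f)\to D(gf)$ is an equivalence for $f,g\in\ccat$ — is correct and in fact spells out the paper's terse reasoning ("$E(D(g))\simeq g$ can be verified on restriction to loop models") more explicitly by factoring the comparison map through the unit of $L\dashv\nu$ and the counit of $E\dashv D$. Your observation that both defining conditions on $g$ are needed simultaneously is accurate and worth making.

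However, there is a gap in the closure-under-composition step. You assert that because $f$ and $g$ each preserve geometric realizations of $\nu$-preserved colimit diagrams (by \cref{lemma:a_criterion_for_a_functor_between_loop_models_is_determined_by_its_derived_functor}), "their composite $g\circ f$ does as well." This does not follow from those two facts alone: given a $\nu$-preserved realization $|X_\bullet|$, you get $f(|X_\bullet|)\simeq |f(X_\bullet)|$, but to apply $g$'s preservation property you must know that the resulting realization $|f(X_\bullet)|$ is again $\nu$-preserved, which is not automatic. To close the gap, observe that for $f\in\ccat$ one has a natural equivalence $\nu f\simeq \nu L D(f)\nu\simeq D(f)\nu$ (the second step uses that $D(f)\nu$ takes values in loop models, i.e., condition (2)), so $\nu|f(X_\bullet)|=\nu f(|X_\bullet|)\simeq D(f)\nu(|X_\bullet|)\simeq D(f)|\nu X_\bullet|\simeq |D(f)\nu X_\bullet|\simeq|\nu f X_\bullet|$, as required. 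The paper avoids this issue by a different route: once $D(gf)\simeq D(g)D(f)$ is known to preserve loop models, it invokes \cref{lemma:passing_to_loop_models_fully_faithful_on_monoidal_on_derived_functors_which_preserve_loop_models} to get $E(D(gf))\simeq E(D(g))E(D(f))\simeq gf$ directly, sidestepping the question of composites of $\nu$-preserved realizations. Your approach is workable but needs the extra observation above; alternatively, switch to the paper's appeal to that lemma.
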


\begin{proof}
Suppose that $f, g \in \ccat$. We first claim that the canonical map $D(g) \circ D(f) \rightarrow D(gf)$ is an equivalence. By \cref{lemma:criterion_for_the_associated_derived_functor_to_respect_composition}, this amounts to verifying that $D(g)(\nu f(P)) \simeq \nu(g(f(P))$ for all $P\in \pcat$. As $D(g)$ preserves loop models, $E(D(g)) \simeq g$ can be verified with its restriction to the subcategory of loop models, which proves the claim. This shows, in particular, that $D(gf)$ preserves loop models. 

By \cref{lemma:passing_to_loop_models_fully_faithful_on_monoidal_on_derived_functors_which_preserve_loop_models}, we have $E(D(gf)) \simeq E(D(g)D(f)) \simeq E(D(g))E(D(f)) \simeq gf$, which shows that $gf \in \ccat$, showing that $\ccat$ is a monoidal subcategory. Monoidality of $D$ then follows from the first paragraph.
\end{proof}

\begin{corollary}
\label{corollary:endofunctors_whose_derived_functor_preserves_loop_models_and_which_are_its_restriction_are_self_tame}
Let $f$ be an endofunctor of $\Model_\pcat^\Omega$ for which $E(D(f)) \simeq f$ and $D(f)$ preserves loop models. Then $f$ is \derivable{} and every loop model is stably $f$-tame. \qed
\end{corollary}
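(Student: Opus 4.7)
The plan is to deduce the corollary as a short consequence of the preceding \cref{lemma:endofunctors_whose_derived_functors_preserve_loop_models_are_which_can_be_recovered_from_them_form_a_monoidal_subcategory_on_which_d_is_monoidal}, together with an unpacking of the canonical comparison map in the presence of the hypothesis $E(D(f)) \simeq f$.

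First, for derivability: the hypotheses on $f$ are precisely the conditions defining membership in the monoidal subcategory $\ccat \subseteq \End(\Model_\pcat^\Omega)$ of the preceding lemma. Since $\ccat$ is closed under composition and $D|_\ccat$ is strongly monoidal, the canonical map $D(f)\circ D(f) \to D(f^{\circ 2})$ is an equivalence. By \cref{lemma:criterion_for_the_associated_derived_functor_to_respect_composition} this means exactly that $f(P)$ is $f$-tame for every representable $P \in \pcat$, i.e.\ $f$ is \derivable{}.

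For stable tameness, I will reinterpret $f$-tameness in the presence of the hypothesis $E(D(f)) \simeq f$. For any $Y \in \Model_\pcat^\Omega$, the canonical comparison map $D(f)(\nu Y) \to \nu f(Y)$ is adjoint to the counit $E(D(f))(Y) = L D(f)\nu Y \to f(Y)$, and therefore factors as
\[
D(f)(\nu Y) \xrightarrow{\eta} \nu L D(f)(\nu Y) = \nu E(D(f))(Y) \xrightarrow{\nu(\epsilon)} \nu f(Y),
\]
where $\eta$ is the unit of $L \dashv \nu$. The second arrow is an equivalence by hypothesis, so $Y$ is $f$-tame if and only if the unit $\eta$ at $D(f)(\nu Y)$ is an equivalence, which is the case if and only if $D(f)(\nu Y)$ is a loop model.

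Now let $X \in \Model_\pcat^\Omega$ be any loop model. For each $k \geq 0$, $f^k X$ is a loop model, so $\nu f^k X$ is a loop model; by hypothesis (2), $D(f)(\nu f^k X)$ is then also a loop model. By the criterion above, $f^k X$ is $f$-tame, and hence $X$ is stably $f$-tame. The main (and essentially only) obstacle is correctly identifying the factorization of the canonical comparison map through the unit of $L \dashv \nu$; once this is in hand both conclusions are immediate.
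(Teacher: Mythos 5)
Your proof is correct and matches the route the paper intends: derivability follows from the strong monoidality of $D$ on the subcategory $\ccat$ of the preceding lemma, and stable tameness is immediate once one observes that, when the counit $E(D(f)) \to f$ is an equivalence, $Y$ is $f$-tame iff $D(f)(\nu Y)$ is a loop model. Note that the factorization of the canonical comparison map through the unit of $L \dashv \nu$ that you re-derive is already recorded in the remark directly following \cref{definition:canonical_comparison_map_from_derived_functor_to_the_original_functor}, so you could have cited it outright.
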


\section{Theories and monads}
\label{sec:monads}

Theories and monads form the two main approaches to classical categorical algebra. In the classical context, they are essentially equivalent: more precisely, for any set $S$, there is a one-to-one correspondence between
\begin{enumerate}
\item Discrete $S$-sorted theories; that is, locally small categories generated under coproducts by an $S$-indexed set $\{P_s : s\in S\}$ of objects;
\item Categories which are monadic over $\sets^S$. 
\end{enumerate}
For details when $S = \ast$, see for example \cite[Section 6]{wraith1969algebraic}; the general case is not essentially different. As we outlined in \cref{question:properties_of_models_outside_of_the_malcev_case}, we do not know whether this correspondence extends to higher universal algebra in general. Still, some useful things can be said, and we record these here.

\subsection{General monadicity theorems}
\label{sec:genmonadic}

The \emph{monadicity theorem} \cite[Theorem 4.7.3.5]{higher_algebra} asserts that a functor $U\colon \dcat\to\ccat$ is the forgetful functor of a monadic adjunction if and only if it is a right adjoint, is conservative, and creates $U$-split geometric realizations. Such functors abound when working with theories as a consequence of the following.

\begin{prop}
\label{prop:nearmonadic}
Let $f\colon \pcat \to\qcat$ be an essentially surjective homomorphism of pretheories. Then the restriction
\[
f^\ast\colon \largemodels_\qcat\to\largemodels_\pcat
\]
is conservative and creates limits and $f^\ast$-split geometric realizations.
\end{prop}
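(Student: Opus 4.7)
The bulk of this statement is already in hand. Conservativity of $f^{\ast}$ when $f$ is essentially surjective is recorded in \cref{cor:nearmonadicity}, and that same corollary combined with \cref{prop:limitsinlargemodels} shows that $f^{\ast}$ preserves all small limits and all geometric realizations of levelwise split simplicial objects. The plan is therefore to upgrade ``preserves'' to ``creates'' in each case, by combining the preservation results we already have with conservativity plus the pointwise description of limits and split realizations in $\largemodels_\qcat$.

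For limits, the approach is as follows. By \cref{prop:limitsinlargemodels}(1) every small diagram $p\colon K \to \largemodels_\qcat$ admits a limit, computed pointwise in $\Fun(\qcat^{\op},\spaces)$, and $f^{\ast}$ preserves this limit by \cref{cor:nearmonadicity}. Given a limit cone $\bar q\colon K^{\triangleleft} \to \largemodels_\pcat$ extending $f^{\ast}\circ p$, I will produce the canonical limit extension $\bar p$ of $p$ in $\largemodels_\qcat$; then $f^{\ast}\bar p$ is a limit cone in $\largemodels_\pcat$, hence equivalent to $\bar q$. Essential uniqueness of the lift is then a formal consequence of conservativity, which reflects equivalences of cones whose images are cones over the same diagram.

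For $f^{\ast}$-split geometric realizations, the main substantive point is to transfer the $f^{\ast}$-split hypothesis to a levelwise splitness hypothesis on $X_\bullet$ itself. Suppose $X_\bullet\colon \Delta^{\op}\to\largemodels_\qcat$ is such that $f^{\ast}X_\bullet$ extends to some $\widetilde{f^{\ast}X_\bullet}\colon \Delta_{-\infty}^{\op}\to\largemodels_\pcat$. Evaluating this extension at any $P\in \pcat$ produces a split extension of the simplicial space $X_\bullet(f(P))$. Since $f$ is essentially surjective, every $Q\in\qcat$ is equivalent to some $f(P)$, so $X_\bullet(Q)$ is a split simplicial space for every $Q\in\qcat$; that is, $X_\bullet$ is levelwise split. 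Now \cref{prop:limitsinlargemodels}(3) provides a geometric realization $|X_\bullet|$ in $\largemodels_\qcat$, computed pointwise, so in particular $f^{\ast}|X_\bullet|\simeq |f^{\ast}X_\bullet|$, and the creation property then follows from conservativity exactly as in the limits case.

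I do not anticipate a serious obstacle. The only step that requires thought rather than bookkeeping is the deduction that ``$f^{\ast}X_\bullet$ split'' implies ``$X_\bullet$ levelwise split,'' and this rests only on the elementary fact that an extension to $\Delta_{-\infty}^{\op}$ is preserved by evaluation functors, together with the essential surjectivity of $f$. Everything else is an application of results already established in the paper.
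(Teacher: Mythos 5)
Your proof is correct and follows essentially the same approach as the paper, which compresses the argument into a single sentence: observe that essential surjectivity of $f$ makes any $f^{\ast}$-split simplicial object levelwise split, then cite \cref{cor:nearmonadicity}. The explicit upgrade from ``preserves'' to ``creates'' — via conservativity together with the existence and pointwise computation of limits and levelwise-split realizations in $\largemodels_\qcat$ — that you spell out is left implicit in the paper.
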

\begin{proof}
As $f$ is essentially surjective, $f^\ast$-split geometric realizations are in particular levelwise split geometric realizations, so this is \cref{cor:nearmonadicity}.
\end{proof}

\begin{example}
Let $\pcat$ be a theory with set of generators $\{P_s : s \in S\}$. Then
\[
U\colon \Model_\pcat \to \spaces^S,\qquad UX = \{X(P_s) : s\in S\}
\]
is equivalent to restriction along the essentially surjective homomorphism
\[
f\colon \sets^S \to \pcat,\qquad \{A_s : s\in S \} \mapsto \coprod_{s\in S} \coprod_{a\in A_s} P_s
\]
of theories, and therefore is conservative and creates limits and $U$-split geometric realizations.
\end{example}

\cref{prop:nearmonadic} is very close to implying that $f^\ast$ is the forgetful functor of a monadic adjunction: all that is missing is the existence of a left adjoint, which one expects to exist in light of the fact that $f^\ast$ preserves limits. However in the absence of additional assumptions, such as presentability, it is generally a delicate matter to bridge the gap between limit-preserving functors and right adjoints. We record some cases that can be handled.

\begin{prop}\label{prop:sometimesmonadic}
Let $\pcat$ be an $S$-sorted theory, with sorts $\{P_s : s\in S\}$. Then
\[
U\colon \Model_\pcat\to\spaces^S,\qquad UX = \{X(P_s) : s\in S\}
\]
is monadic if and only if the following condition is satisfied:
\begin{enumerate}
\item[($\ast$)] For every $S$-indexed family $\{A_s : s\in S\}$ of spaces, $\coprod_{s\in S}A_s \otimes \nu P_s$ exists in $\Model_\pcat$.
\end{enumerate}
In particular, this holds if $\Model_\pcat$ is cocomplete, such as if $\pcat$ is bounded or Malcev.
\end{prop}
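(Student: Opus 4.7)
The plan is to deduce this from the Barr--Beck--Lurie monadicity theorem combined with \cref{prop:nearmonadic}. First, I would identify $U$ as a restriction functor between $\infty$-categories of models. Write $\sfS$ for the free $S$-sorted theory (the full subcategory of $\largemodels$ generated under small coproducts by $S$ as a discrete set), so that $\Model_\sfS \simeq \spaces^S$. The choice of sorts $\{P_s : s\in S\}$ determines an essentially surjective homomorphism $f\colon \sfS \to \pcat$ sending $s \mapsto P_s$, and under the above identification $U$ is naturally equivalent to $f^\ast$. Applying \cref{prop:nearmonadic} to $f$ shows that $U$ is conservative and creates all small limits and $U$-split geometric realizations. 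By the monadicity theorem \cite[Theorem 4.7.3.5]{higher_algebra}, $U$ is therefore monadic if and only if it admits a left adjoint.

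The remaining task is to show that the existence of a left adjoint to $U$ is equivalent to condition $(\ast)$. To this end, I would unwind the universal property: a left adjoint $F\colon \spaces^S \to \Model_\pcat$ at an object $\{A_s : s \in S\}$ must corepresent the functor
\[
X \longmapsto \map_{\spaces^S}(\{A_s\},UX) \simeq \prod_{s\in S}\map_\spaces(A_s,X(P_s)).
\]
Applying the Yoneda lemma in the form $X(P_s) \simeq \map_{\Model_\pcat}(\nu P_s, X)$ rewrites this as
\[
\prod_{s\in S}\map_\spaces(A_s,\map_{\Model_\pcat}(\nu P_s,X)),
\]
which is precisely the universal property of the colimit $\coprod_{s\in S} A_s \otimes \nu P_s$. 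Hence $U$ admits a left adjoint at $\{A_s\}$ if and only if this colimit exists in $\Model_\pcat$, proving the equivalence.

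For the final assertion, if $\pcat$ is bounded then $\Model_\pcat$ is presentable by \cref{cor:presentable}, hence cocomplete; if $\pcat$ is Malcev then $\Model_\pcat$ is cocomplete by \cref{cor:malcevcomplete}. In either case $(\ast)$ is automatic, so $U$ is monadic.

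There is no real obstacle here: the genuine content of the argument is \cref{prop:nearmonadic}, after which everything is formal. The only subtle point to handle with care is that the colimit $\coprod_{s \in S} A_s \otimes \nu P_s$ is a single combined colimit; one should not try to first form the individual tensors $A_s \otimes \nu P_s$ and then take a coproduct, since either step could fail to exist in isolation, and the corepresentability argument above makes this unnecessary.
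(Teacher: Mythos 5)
Your proof is correct and follows essentially the same route as the paper: identify $U$ as restriction along the essentially surjective homomorphism $\sets^S\to\pcat$ (as in the example preceding the proposition), apply \cref{prop:nearmonadic} together with the monadicity theorem to reduce to existence of a left adjoint, and then unwind corepresentability of $\map_{\spaces^S}(\{A_s\},U(\bs))$ to obtain condition $(\ast)$. Your closing remark about treating $\coprod_{s\in S}A_s\otimes\nu P_s$ as a single colimit rather than a coproduct of tensors is a good clarification that the paper leaves implicit.
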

\begin{proof}
By the above discussion, $U$ is monadic if and only if it admits a left adjoint $L$. This in turn holds if and only if for every $S$-indexed family of spaces $\{A_s : s\in S\}$, the functor 
\[
\map_{\spaces^S}(\{A_s : s\in S\},U(\bs))\colon \Model_\pcat\to\spaces
\]
is corepresentable, in which case $L(\{A_s : s\in S\})$ is the corepresenting object. Given $X \in \Model_\pcat$, we may identify
\[
\map_{\spaces^S}(\{A_s : s\in S\},UX)\simeq \prod_{s\in S}\map_{\spaces}(A_s,X(P_s))\simeq\prod_{s\in S}\map_{\spaces}(A_s,\map_\pcat(\nu P_s,X)).
\]
Thus if $\map_{\spaces^S}(\{A_s : s\in S\},U(\bs))$ is corepresentable then it is corepresented by $\coprod_{s\in S} A_s \otimes \nu P_s$, proving the stated characterization.

If $\pcat$ is bounded or Malcev, then $\Model_\pcat$ admits all small colimits by \cref{cor:presentable} or \cref{lem:malcevcocomplete}, and so in particular $\coprod_{s\in S}A_s \otimes \nu P_s$ may always be formed in $\Model_\pcat$.
\end{proof}

\begin{prop}
\label{prop:restrictionmonadic}
Let $f\colon \pcat\to\qcat$ be an essentially surjective homomorphism of pretheories. If $\pcat$ and $\qcat$ are bounded, or $\pcat$ is a Malcev theory, then
\[
f^\ast\colon \Model_\qcat\to\Model_\pcat
\]
is the forgetful functor of a monadic adjunction.
\end{prop}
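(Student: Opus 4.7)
The plan is to invoke the monadicity theorem \cite[Theorem 4.7.3.5]{higher_algebra}. By \cref{prop:nearmonadic}, the restriction $f^\ast\colon \Model_\qcat\to\Model_\pcat$ is already conservative and creates limits and $f^\ast$-split geometric realizations, so it remains only to produce a left adjoint. I would handle the two cases separately.

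First, suppose $\pcat$ is a Malcev theory. Since $f$ is essentially surjective, \cref{lem:malcevclosure}.(3) shows that $\qcat$ is also a Malcev theory, so by \cref{cor:malcevcomplete} the $\infty$-category $\Model_\qcat$ is cocomplete. Now apply \cref{cor:restriction} to the coproduct-preserving composite
\[
\pcat \xrightarrow{f} \qcat \xrightarrow{\nu} \Model_\qcat\noloc
\]
this yields a colimit-preserving functor $f_!\colon \Model_\pcat\to\Model_\qcat$ whose right adjoint is given by $Y \mapsto \map_\qcat(f(-),Y) = Y\circ f = f^\ast Y$. Thus $f_!\dashv f^\ast$, completing this case.

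Second, suppose both $\pcat$ and $\qcat$ are bounded. Choose a regular cardinal $\kappa$ such that $\pcat$ and $\qcat$ are each generated by a $\kappa$-ary theory. By \cref{thm:bounded} and \cref{prop:kappaaccessible}, both $\Model_\pcat$ and $\Model_\qcat$ are presentable. By \cref{cor:nearmonadic2}, $f^\ast$ preserves all small limits. Moreover, $\kappa$-filtered colimits in either $\infty$-category are computed pointwise (since $\kappa$-filtered colimits of spaces commute with $\kappa$-small products, a pointwise $\kappa$-filtered colimit of $\kappa$-small product-preserving presheaves is again such), and precomposition with $f$ trivially commutes with pointwise colimits, so $f^\ast$ is also $\kappa$-accessible. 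By the adjoint functor theorem \cite[Corollary 5.5.2.9]{lurie_higher_topos_theory}, $f^\ast$ admits a left adjoint.

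The only mildly subtle point is in the bounded case, where one must check that $f^\ast$ is accessible: this amounts to observing that, for a suitably chosen $\kappa$, $\kappa$-filtered colimits in $\Model_\pcat$ and $\Model_\qcat$ agree with the pointwise ones and are therefore preserved by restriction along $f$. Once this is in hand, both cases reduce to straightforward applications of the monadicity theorem.
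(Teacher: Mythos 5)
Your overall strategy matches the paper's: reduce to producing a left adjoint via \cref{prop:nearmonadic} and the monadicity theorem, then treat the two cases separately. Your Malcev case, though spelled out more explicitly, is precisely the paper's argument (the paper simply states ``the derived functor $f_!\colon\pcat\pto\qcat$ defines a left adjoint to $f^\ast$'').

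The bounded case, however, contains a gap. You claim ``$\kappa$-filtered colimits in either $\infty$-category are computed pointwise,'' but models of an infinitary theory are presheaves preserving \emph{all} small products, and a pointwise $\kappa$-filtered colimit of such presheaves fails to preserve products of cardinality $\geq\kappa$, so is generally not itself a model. What is true is that $\kappa$-filtered colimits are computed pointwise on the generating $\kappa$-ary subcategory $\pcat_\kappa$, via the identification $\Model_\pcat\simeq\presheaves_\Sigma^\kappa(\pcat_\kappa)$ of \cref{thm:bounded}. Once you pass to that identification, verifying that $f^\ast$ preserves these colimits requires knowing that $f$ carries $\pcat_\kappa$ into $\qcat_\kappa$ (so that evaluating at $f(P)$ for $P\in\pcat_\kappa$ still commutes with the pointwise colimit on $\qcat_\kappa$). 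Your ``precomposition with $f$ trivially commutes with pointwise colimits'' does not supply this; the paper handles it explicitly, first finding a common $\kappa$ such that the generating $\kappa$-ary theories can be taken with $f(\pcat_\kappa)\subset\qcat_\kappa$, and then identifying $f^\ast$ as the restriction functor $\presheaves_\Sigma^\kappa(\qcat_\kappa)\to\presheaves_\Sigma^\kappa(\pcat_\kappa)$, which is visibly limit-preserving and $\kappa$-accessible.
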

\begin{proof}
The condition that at least $\pcat$ is a theory is necessary to ensure that $f^\ast\colon \largemodels_\qcat\to\largemodels_\pcat$ restricts to a functor $\Model_\qcat\to\Model_\pcat$. By the above discussion, $f^\ast$ is monadic if and only if it admits a left adjoint. If $\pcat$ is Malcev, then the derived functor $f_!\colon \pcat\pto\qcat$ defines a left adjoint to $f^\ast$ 

So suppose that $\pcat$ and $\qcat$ are bounded. By routine arguments, we may find a regular cardinal $\kappa$ for which $\pcat$ and $\qcat$ are generated by $\kappa$-ary theories $\pcat_\kappa\subset\pcat$ and $\qcat_\kappa\subset\qcat$ satisfying $f(\pcat_\kappa)\subset\qcat_\kappa$. By \cref{thm:bounded}, we may then identify $f^\ast$ as the restriction
\[
f^\ast\colon \presheaves_\Sigma^\kappa(\qcat_\kappa)\to\presheaves_\Sigma^\kappa(\pcat_\kappa).
\]
In particular, $f^\ast$ is a limit-preserving $\kappa$-accessible functor between presentable categories, which therefore admits a left adjoint by the adjoint functor theorem \cite[Corollary 5.5.2.9]{lurie_higher_topos_theory}.
\end{proof}

\subsection{Malcev theories and monads}
\label{ssec:malcevmonads}

Malcev theories are extremely well behaved with respect to monads that preserve geometric realizations. In particular, in this context we have the following converse to \cref{prop:restrictionmonadic}: 

\begin{prop}
\label{prop:malcevmonads}
Let $\pcat$ be a Malcev theory, and let $T$ be a monad on $\Model_\pcat$. Let $T \pcat\subset\Alg_T$ denote the essential image of $\pcat$. Then $T\pcat$ is a Malcev theory, and the restricted Yoneda embedding
\[
\Alg_T\to \Model_{T\pcat}
\]
is an equivalence if and only if $T$ preserves geometric realizations.
\end{prop}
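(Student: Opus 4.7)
The plan has two parts: first to verify that $T\pcat$ is a Malcev theory, and then to analyze when the restricted Yoneda functor $\tilde{\nu} \colon \Alg_T \to \Model_{T\pcat}$ is an equivalence.

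For the first part, I would observe that the composite $F \circ \nu \colon \pcat \to \Model_\pcat \to \Alg_T$ preserves small coproducts: the Yoneda embedding $\nu$ preserves coproducts by the universal property of \cref{thm:freecocompletion}.(4) applied to the identity of $\Model_\pcat$, and $F$ preserves all colimits as a left adjoint. Hence $T\pcat \subseteq \Alg_T$ is closed under small coproducts and is generated under coproducts by the image of any set of generators of $\pcat$; so $T\pcat$ is a theory and $F\nu$ corestricts to an essentially surjective homomorphism $\pcat \to T\pcat$. As any homomorphism preserves co-Malcev operations, this corestriction inherits them from $\pcat$, and \cref{lem:malcevclosure}.(3) then gives that $T\pcat$ is Malcev.

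For the equivalence, I would first note that the homomorphism $F_0\colon \pcat \to T\pcat$ just constructed fits into a commutative triangle
\[
\tilde{\nu} \colon \Alg_T \longrightarrow \Model_{T\pcat}, \qquad F_0^\ast \circ \tilde{\nu} \;\simeq\; U,
\]
by direct computation using $\map_{\Alg_T}(FP,A) \simeq \map_{\Model_\pcat}(P,UA)$. Since $F_0$ is essentially surjective, $F_0^\ast$ is conservative and preserves geometric realizations by \cref{cor:nearmonadic2}, and $U$ is conservative by monadicity; it follows that $\tilde{\nu}$ is always conservative. This also gives the ``only if'' direction for free: if $\tilde{\nu}$ is an equivalence, then $U \simeq F_0^\ast \circ \tilde{\nu}$ preserves geometric realizations, and so does $T = UF$.

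For the ``if'' direction, assume $T$ preserves geometric realizations. The plan is to apply \cref{cor:restriction} to the inclusion $T\pcat \hookrightarrow \Alg_T$, whose three hypotheses I would verify as follows. Full faithfulness is automatic. For conservativity of the restricted Yoneda embedding, use the argument of the previous paragraph. The key new step is to check that every representable $FP \in T\pcat$ is projective in $\Alg_T$; for this one shows $U$ preserves geometric realizations by the standard monadic argument (any simplicial object $\Ass$ in $\Alg_T$ admits a realization with underlying object $|U\Ass|$, structured via $T|U\Ass| \simeq |TU\Ass| \to |U\Ass|$), and then computes $\map_{\Alg_T}(FP,-) \simeq \map_{\Model_\pcat}(P,U(-))$ to conclude as $P$ is strongly projective in $\Model_\pcat$ by \cref{prop:projectivesinomalcevmodels}. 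Finally, \cref{cor:restriction} also requires $\Alg_T$ to be cocomplete; once geometric realizations exist in $\Alg_T$ and are preserved by $U$, every algebra is the realization of its bar resolution by free algebras, and arbitrary small colimits can be computed by realizing them on bar resolutions, which reduces them to coproducts of free algebras (which exist, as $F$ preserves coproducts) and geometric realizations.

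The main obstacle I anticipate is the bookkeeping around cocompleteness of $\Alg_T$ and applying \cref{cor:restriction} cleanly; the key technical input that unlocks everything is the observation that, once $T$ preserves geometric realizations, the forgetful functor $U$ preserves them as well, after which the Malcev machinery of \S\ref{ssec:stronglyprojectivelygenerated} takes over.
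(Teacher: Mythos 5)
Your proof follows essentially the same route as the paper's. In both, the Malcev condition on $T\pcat$ follows from essential surjectivity of $\pcat\to T\pcat$, the "only if" direction identifies $T$ with the monad $t^\ast t_!$ and uses that both factors preserve geometric realizations, and the "if" direction shows that free algebras $T\nu P$ are projective in $\Alg_T$ (via $U$ creating geometric realizations once $T$ preserves them) and then invokes the recognition machinery of \S\ref{ssec:cocompletion}--\S\ref{ssec:stronglyprojectivelygenerated}. The only cosmetic difference is that you route through \cref{cor:restriction} whereas the paper applies \cref{thm:freecocompletion} directly and concludes via conservativity of the right adjoint; you are also somewhat more explicit in spelling out why $\Alg_T$ is cocomplete, a point the paper leaves implicit.
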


\begin{proof}
Since the composite $\pcat \rightarrow \Model_{\pcat} \rightarrow \Alg_{T}$ preserves coproducts, it is clear that the essential image of $\pcat$ is a Malcev theory. Write $t\colon \pcat\to T \pcat$ for the induced homomorphism. If $\Alg_T \to \Model_{T\pcat}$ is an equivalence, then $T$ can be identified with the monad $t^{*} t_{!}$ associated to the adjunction $t_!\dashv t^\ast$. As $t_!$ and $t^\ast$ preserve geometric realizations, it follows that so does $T$. We must prove conversely that if $T$ preserves geometric realizations then $\Alg_T \simeq \Model_{T\pcat}$.

As the forgetful functor $\Alg_T \to \Model_\pcat$ is conservative, so is the restricted Yoneda embedding $\Alg_T \to \Model_{T\pcat}$. Because $T$ preserves geometric realizations, the $\infty$-category $\Alg_T$ admits geometric realizations, and these are created by the forgetful functor $\Alg_T \to \Model_\pcat$. In particular, if $P \in \pcat$ then $T \nu P \in \Alg_T$ is projective, and hence strongly projective by the Malcev condition. Applying \cref{thm:freecocompletion}, we see that the inclusion $T\pcat\subset\Alg_T$ extends to a fully faithful and colimit-preserving functor $\Model_{T\pcat} \to \Alg_T$ which is left adjoint to a conservative functor, and hence an equivalence. 
\end{proof}

There is a corresponding statement for loop theories, first stated in \cite[Theorem 3.3.7]{balderrama2021deformations}. We give a self-contained treatment from our perspective here, starting with the following.

\begin{lemma}
\label{lemma:essentially_surjective_loop_morphsisms_have_monadic_right_adjoint}
Let $f\colon \pcat\to\qcat$ be an essentially surjective loop homomorphism of bounded loop theories. Then
\[
f^\ast\colon \Model_\qcat^\Omega\to\Model_\pcat^\Omega
\]
is the forgetful functor of a monadic adjunction.
\end{lemma}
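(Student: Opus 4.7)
The plan is to apply the $\infty$-categorical monadicity theorem. Since $\qcat$ is bounded, \cref{proposition:loopnearmonadicity} equips $f^\ast$ with the left adjoint $E(f_!)$, and essential surjectivity of $f$ makes $f^\ast$ conservative. It will therefore suffice to show that $f^\ast$ creates geometric realizations of $f^\ast$-split simplicial objects in $\Model_\qcat^\Omega$.

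Let $\Xss$ be such a simplicial object. My first step is to view it as a simplicial object of $\Model_\qcat$ via the fully faithful inclusion $\Model_\qcat^\Omega\subset\Model_\qcat$; it remains $f^\ast$-split there. The monadicity of $f^\ast\colon\Model_\qcat\to\Model_\pcat$ established in \cref{prop:restrictionmonadic} then produces a geometric realization $|\Xss|\in\Model_\qcat$ whose formation is preserved by $f^\ast$. The remaining task is to verify that $|\Xss|$ lies in the full subcategory $\Model_\qcat^\Omega$; once this is done, $|\Xss|$ automatically serves as a geometric realization of $\Xss$ in $\Model_\qcat^\Omega$, and the detection property required for creation will follow from conservativity of $f^\ast$.

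To check the loop-model condition for $|\Xss|$, fix $Q\in\qcat$ and apply \cref{prop:pointwisegeoreal} to compute the geometric realization pointwise, obtaining $|\Xss|(Q)\simeq |\Xss(Q)|$ and likewise for $S^1\otimes Q$. Since each level $X_n$ is a loop model, $|\Xss(S^1\otimes Q)|\simeq |\Xss(Q)^{S^1}|$, reducing the question to whether the comparison map
\[
|\Xss(Q)^{S^1}|\to |\Xss(Q)|^{S^1}
\]
is an equivalence. Essential surjectivity of $f$ enters here: choosing $P\in\pcat$ with $f(P)\simeq Q$ identifies $\Xss(Q)\simeq (f^\ast\Xss)(P)$ as a split simplicial space, and for split simplicial spaces the colimit is realized by a specific term of the extension and hence commutes with cotensoring by $S^1$.

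The only genuinely subtle point in the plan is this last verification that the geometric realization remains a loop model; the remainder is the expected combination of the Barr--Beck--Lurie monadicity theorem with the pointwise formula for geometric realizations in the Malcev setting.
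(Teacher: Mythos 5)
Your proof is correct, and it shares the same skeleton as the paper's — Barr–Beck–Lurie with left adjoint and conservativity supplied by \cref{proposition:loopnearmonadicity} — but it takes a noticeably longer detour for the creation step. The paper's proof is a three-liner: it cites \cref{proposition:loopnearmonadicity} again for the fact that $f^\ast$ (between loop-model categories) preserves geometric realizations of levelwise split simplicial objects, then observes that essential surjectivity of $f$ forces $f^\ast$-split simplicial objects to be levelwise split, and is done. You instead pass to the ambient categories $\Model_\qcat$ and $\Model_\pcat$, invoke \cref{prop:restrictionmonadic} to manufacture the realization there, and then verify by hand (via the pointwise formula of \cref{prop:pointwisegeoreal} and a $\Delta^{\op}_{-\infty}$-cofinality argument) that the realization lands in $\Model_\qcat^\Omega$. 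The verification is correct — the cotensor by $S^1$ commutes with the realization of a split simplicial space because the splitting identifies the colimit with the value at $[-1]$, and cotensoring preserves the lift to $\Delta^{\op}_{-\infty}$. But everything you unpack is already encapsulated in the closure property of $\Model_\qcat^\Omega$ under levelwise split realizations (\cref{prop:boundedlooptheory}(1), which feeds \cref{proposition:loopnearmonadicity}), so you could have saved yourself the trip through $\Model_\qcat$. Your route does have the virtue of making visible where the split structure actually earns its keep, which the paper's appeal to \cref{proposition:loopnearmonadicity} leaves implicit.
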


\begin{proof}
By the monadicity theorem, it suffices to verify that $f^\ast$ admits a left adjoint and preserves geometric realizations of $f^\ast$-split simplicial objects. By \cref{proposition:loopnearmonadicity}, $f^\ast$ admits a left adjoint and preserves geometric realizations of levelwise split simplicial objects. As $f$ is essentially surjective, all $f^\ast$-split simplicial objects are levelwise split, proving the lemma.
\end{proof}

\begin{prop}
\label{proposition:recognizing_algebras_in_loop_models_as_loop_models_of_algebras}
Let $\pcat$ be a bounded loop theory and $T$ be an accessible monad on $\Model_\pcat^\Omega$. Let $i \colon T\pcat \hookrightarrow \Alg_T$ denote the essential image of $T\colon \pcat\to\Alg_T$, and write $t\colon \pcat\to T\pcat$ for the associated loop homomorphism. Then $T\pcat$ is a bounded loop theory, and the restricted Yoneda embedding defines a comparison map
\[
\begin{tikzcd}
\Alg_T\ar[rr,"i^{*}"]\ar[dr,"U"']&&\Model_{T\pcat}^\Omega\ar[dl,"t^\ast"]\\
&\Model_\pcat^\Omega
\end{tikzcd}
\]
which is an equivalence provided all composites $T^{k}$ preserve geometric realizations of those colimit diagrams which are preserved by the inclusion $\nu_\pcat\colon \Model_\pcat^\Omega\to\Model_\pcat$.
\end{prop}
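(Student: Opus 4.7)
The plan is to verify three ingredients: that $T\pcat$ is a bounded loop theory, that the triangle commutes and $i^{*}$ factors through loop models, and finally that $i^{*}$ is an equivalence under the tameness hypothesis. For the first, since $T$ is left adjoint to the monadic forgetful functor $U$, it preserves all small colimits; in particular coproducts and $S^{1}$-tensors. Hence the essential image $T\pcat$ is closed under these and inherits the Malcev condition from $\pcat$, making it a loop theory. Accessibility of $T$ combined with boundedness of $\pcat$ gives boundedness of $T\pcat$. The triangle commutes by the adjunction computation $(t^{*}i^{*}X)(P) = \map_{\Alg_{T}}(TP, X) \simeq \map(P, UX) = (UX)(P)$, and $i^{*}X$ lies in $\Model_{T\pcat}^{\Omega}$ because $T$ preserves $S^{1}$-tensors.

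For the equivalence, the strategy is to reduce to a comparison of monads on $\Model_{\pcat}^{\Omega}$. The forgetful functor $U$ is tautologically monadic, with monad $T$. By \cref{lemma:essentially_surjective_loop_morphsisms_have_monadic_right_adjoint}, $t^{*}$ is also monadic, with monad $t^{*}E(t_{!})$. The commutative triangle then realizes $i^{*}$ as the comparison functor for a map of monads $t^{*}E(t_{!}) \to T$, so $i^{*}$ is an equivalence if and only if this is an equivalence of underlying endofunctors. A direct calculation verifies the map is an equivalence on representables $P \in \pcat$: one has $E(t_{!})(P) \simeq \nu_{T\pcat}(TP)$, and
\[
(t^{*}E(t_{!}))(P)(Q) = \map_{T\pcat}(TQ, TP) \simeq \map(Q, UTP) = T(P)(Q)
\]
naturally in $Q \in \pcat$.

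To extend to all of $\Model_{\pcat}^{\Omega}$, the plan is to use that every $X$ admits a simplicial resolution $X \simeq |P_{\bullet}|$ in $\Model_\pcat^\Omega$ by objects of $\pcat$ such that the realization is preserved by $\nu_\pcat$, obtained by applying $L_\pcat$ to a free resolution of $\nu_\pcat X$ guaranteed by \cref{thm:splithypercovering}. The hypothesis on all iterates $T^k$, combined with \cref{lemma:a_criterion_for_a_functor_between_loop_models_is_determined_by_its_derived_functor}, yields $E(D(T^k)) \simeq T^k$; together with \cref{lemma:criterion_for_the_associated_derived_functor_to_respect_composition} this shows that $T$ is derivable in the sense of \cref{definition:self_tame_endofunctor} and identifies both sides of the monad map as $E$ applied to a common derived functor on $\Model_\pcat$ — namely, the derived functor of $T|_{\pcat}$, which can also be described as the monad $t^{*}t_{!}$ arising from the monadicity of $t^{*}\colon \Model_{T\pcat}\to\Model_\pcat$ via \cref{prop:restrictionmonadic}. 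The main obstacle will be this final identification, reconciling the fact that $t^{*}$ only preserves levelwise split geometric realizations with the more general preservation of realizations encoded by the hypothesis on $T^{k}$. The plan to bridge this gap is to work through the preserved resolution by representables term-by-term, using that on each $P_{n}$ the comparison reduces to the already-verified representable case, and then to invoke the hypothesis on $T^{k}$ to assemble these into an equivalence on $X$.
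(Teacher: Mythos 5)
Your overall architecture matches the paper's: reduce via Barr--Beck to showing the comparison map $t^*E(t_!) \to T$ of monads on $\Model_\pcat^\Omega$ is an equivalence of underlying endofunctors, verify on representables, and then extend via a resolution $X \simeq |P_\bullet|$ preserved by $\nu_\pcat$, using the hypothesis on $T^k$. You also correctly locate the crux — that $t^*$ only a priori preserves levelwise split (or Kan) geometric realizations, whereas you need it to preserve the realization $|tP_\bullet|$ in $\Model_{T\pcat}^\Omega$.

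However, your final paragraph stops at a plan rather than closing this gap, and the closing move is not a formality. The paper's actual mechanism is to translate the question into one about the forgetful functor $U\colon \Alg_T \to \Model_\pcat^\Omega$ preserving the geometric realization $|\Qss|$ (where $\Qss = tP_\bullet$), and then to apply the criterion of \cite[Corollary 4.2.3.5]{higher_algebra}: $U$ preserves $|\Qss|$ provided each comparison map $|T^kU\Qss| \to T^k|U\Qss|$ is an equivalence. Your hypothesis on $T^k$ is precisely what makes this criterion applicable, because $U\Qss$ has its realization preserved by $\nu_\pcat$. Your sketch — ``work through the preserved resolution term-by-term and invoke the hypothesis on $T^k$'' — gestures at this but does not name the tool, and ``reducing to the representable case on each $P_n$'' by itself only establishes the comparison levelwise; the issue is whether either side then commutes with the geometric realization, and for $t^*E(t_!)$ this is exactly what requires the Lurie criterion (or an equivalent). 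Also, your proposed intermediate identification $t^*E(t_!) \simeq E(t^*t_!)$ is essentially the statement $L_\pcat t^* \simeq t^* L_{T\pcat}$, i.e.\ a Beck--Chevalley condition; this is not automatic and is itself what the paper proves by assembling a larger diagram of right adjoints and passing to mates, so asserting it as the identification you want to establish is somewhat circular without that additional diagram chase. To turn your plan into a proof, invoke the monadicity-preservation criterion explicitly; alternatively, argue as the paper does through the big diagram and reduce to the left triangle.
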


\begin{proof}
We shall moderately abuse notation by writing $T$ for both the monad $T\colon \Model_\pcat^\Omega\to\Model_\pcat^\Omega$ and its associated free functor $T\colon \Model_\pcat^\Omega\to\Alg_T$. As $T\colon \Model_\pcat^\Omega\to\Alg_T$ preserves colimits, $T\pcat$ is a loop theory. As $T$ is accessible, $T\pcat$ is bounded. Both $U$ and $t^\ast$ are forgetful functors of monadic adjunctions, the first by assumption and the second by \cref{lemma:essentially_surjective_loop_morphsisms_have_monadic_right_adjoint}. By \cref{proposition:loopnearmonadicity}, the left adjoint to $t^\ast$ is given by $E(t_{!}) = L_{T \pcat}t_{!} \nu_{\pcat}$. By \cite[Corollary 4.7.3.16]{higher_algebra}, $i^{*}$ is an equivalence if and only if the induced natural transformation
\[
t^\ast E(t_{!}) \rightarrow T
\]
of endofunctors of $\Model_\pcat^\Omega$ is an equivalence. We claim that the given assumption on $T$ guarantees that $t^\ast E(t_!)\simeq E(D(T))$, from which it then also follows by \cref{lemma:a_criterion_for_a_functor_between_loop_models_is_determined_by_its_derived_functor} that $t^\ast E(t_!)\simeq T$. 

To see that $t^\ast E(t_!)\simeq E(D(T))$, first observe that the diagram
\begin{center}\begin{tikzcd}
\Model_{T\pcat}^\Omega\ar[r,"\nu_{T\pcat}"]\ar[d,"t^\ast"]&\Model_{T\pcat}\ar[d,"t^\ast"]\\
\Model_{\pcat}^\Omega\ar[r,"\nu_\pcat"]&\Model_\pcat
\end{tikzcd}\end{center}
of right adjoints clearly commutes. Unraveling the definitions, we see that it sufffices to show that its mate, filling in the bottom right portion of the diagram
\begin{center}\begin{tikzcd}
\Alg_T\ar[dd,"U"]&&\Model_{T\pcat}\ar[ll,"i_!"']\ar[dl,"L_{T\pcat}"]\ar[dd,"t^\ast"]\\
&\Model_{T\pcat}^\Omega\ar[ul,dashed,"\tilde{i}_!"]\ar[dl,"t^\ast"',dashed]\\
\Model_\pcat^\Omega&&\Model_{\pcat}\ar[ll,"L_\pcat"']
\end{tikzcd},\end{center}
again commutes. Here, the inclusion $i\colon T\pcat\to\Model_{T\pcat}$ preserves coproducts and $S^1$-tensors, and therefore extends uniquely to a colimit-preserving functor $i_!\colon \Model_{T\pcat}\to\Alg_T$ which factors uniquely through the localization $\Model_{T\pcat}^\Omega$, proving the upper triangle. The outer square commutes, as it commutes on restriction to $T\pcat\subset\Model_{T\pcat}$ by construction and all functors involved preserve geometric realizations, and so as $L_{T\pcat}$ is a localization it now suffices to prove that the left triangle commutes.

The proof is analogous to \cref{lemma:a_criterion_for_a_functor_between_loop_models_is_determined_by_its_derived_functor}. The left triangle clearly commutes on restriction to $T\pcat\subset\Model_{T\pcat}^\Omega$, and every $X \in \Model_{T\pcat}^\Omega$ admits a levelwise resolution $X\simeq |\nu_{T\pcat}\Qss|$ in $\Model_{T\pcat}$ with $\Qss \in T\pcat$. As $t^\ast$ preserves levelwise geometric realizations and $\tilde{i}_!$ preserves all colimits, it suffices to prove that the geometric realization of $\tilde{i}_!\nu_{T\pcat}\Qss = \Qss$ is preserved by the forgetful functor $U\colon \Alg_T\to\Model_\pcat$. By \cite[Corollary 4.2.3.5]{higher_algebra}, it suffices to verify that the comparison map $|T^k U\Qss|\to T^k |U\Qss|$ is an equivalence in $\Model_\pcat$ for $k\geq 0$. Here, the geometric realization of $U\Qss = t^\ast \nu_{T\pcat}\Qss$ is preserved by the inclusion $\Model_\pcat^\Omega\subset\Model_\pcat$, so this is exactly our assumption on $T$.
\end{proof}

\section{Examples}
\label{sec:examples}

In this section, we give examples of various $\infty$-categories that can be modeled using the framework developed in this paper. We discuss connective modules in \S\ref{sssec:connectivemodules}, filtered modules in \S\ref{subsection:filtered_modules}, synthetic $R$-algebras in \S\ref{subsection:synthetic_r_algebras}, synthetic spectra in \S\ref{subsection:synthetic_spectra}, synthetic spaces in \S\ref{subsection:examples_synthetic_spaces}, and synthetic $\mathbf{E}_{k}$-rings in \S\ref{ssec:examples:syntheticekrings}. 

\subsection{Connective modules}
\label{sssec:connectivemodules}

Given a connective $\bfE_1$-ring spectrum $R$, write
\[
\cfrees(R) = \langle \bigoplus R \rangle \subset \LMod_R
\]
for the full subcategory of $\LMod_R$ generated by $R$ under coproducts. This is an additive Malcev theory. Given a homomorphism $\phi\colon R \to S$ of $\bfE_1$-ring spectra, the extension of scalars functor $R\otimes_S(\bs)\colon \LMod_R\to\LMod_S$ sends $\cfrees(R)$ into $\cfrees(S)$, and therefore this defines a functor
\[
\cfrees\colon \Alg_{\bfE_1}(\spectra^{\geq 0}) \to \malcevtheories.
\]

\begin{prop}\label{cor:cnmodules}
Let $R$ be a connective $\bfE_1$-ring. Then the restricted Yoneda embedding
\[
\nu\colon \LMod_R^{\geq 0} \to \Model_{\cfrees(R)}
\]
defines an equivalence between the $\infty$-category of connective modules over $R$ and the $\infty$-category of models for $\cfrees(R)$. Moreover, given a map $\phi\colon R \to S$ of connective $\bfE_1$-ring spectra, the diagram
\begin{center}\begin{tikzcd}
\LMod_R^{\geq 0}\ar[r,"{S\otimes_R(\bs)}"]\ar[d,"\nu","\simeq"']&\LMod_S^{\geq 0}\ar[d,"\nu","\simeq"']\\
\Model_{\cfrees(R)}\ar[r,"{\cfrees(\phi)_!}"]&\Model_{\cfrees(S)}
\end{tikzcd}\end{center}
canonically commutes, i.e. $R \mapsto \LMod_R^{\geq 0}$ and $R \mapsto \Model_{\cfrees(R)}$ agree as functors.
\end{prop}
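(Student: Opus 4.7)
The equivalence is essentially a direct invocation of Example~\ref{ex:connectivering}, itself a specialization of Proposition~\ref{prop:projprestable}. The plan is to verify the hypotheses: $\cfrees(R)$ is additive as a full subcategory of the additive $\infty$-category $\LMod_R$ closed under coproducts, and hence Malcev; moreover $R$ is a projective generator of the prestable $\infty$-category $\LMod_R^{\geq 0}$ because $\map_{\LMod_R}(R,\bs) \simeq \Omega^{\infty}$ preserves geometric realizations of connective spectra and detects equivalences on $\LMod_R^{\geq 0}$. Proposition~\ref{prop:projprestable} then yields that the restricted Yoneda embedding $\nu\colon \LMod_R^{\geq 0} \to \Model_{\cfrees(R)}$ is an equivalence.

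For the commutativity of the square attached to a given $\phi\colon R\to S$: the top arrow $S \otimes_R (\bs)$ is a left adjoint, while the bottom $\cfrees(\phi)_!$ preserves all small colimits by Theorem~\ref{thm:freecocompletion}. Both composites $\nu\circ (S\otimes_R \bs)$ and $\cfrees(\phi)_! \circ \nu$ restrict on $\cfrees(R) \subset \LMod_R^{\geq 0}$ to the functor $R^{\oplus I}\mapsto \nu(S^{\oplus I})$, since $S\otimes_R R^{\oplus I}\simeq S^{\oplus I}$ and since by definition $\cfrees(\phi)_!$ extends the coproduct-preserving homomorphism $\cfrees(\phi)\colon \cfrees(R)\to\cfrees(S)$. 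Since $\Model_{\cfrees(R)}$ is the free cocompletion of $\cfrees(R)$ under geometric realizations (Theorem~\ref{thm:freecocompletion}) and both composites preserve such realizations, they are canonically equivalent.

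The main obstacle is upgrading these fibrewise statements into a natural equivalence of functors $\Alg_{\bfE_1}(\spectra^{\geq 0}) \to \largecatinfty$. My plan is to realize both assignments as cocartesian fibrations over $\Alg_{\bfE_1}(\spectra^{\geq 0})$ and construct a map between them: on the one hand $R\mapsto \LMod_R^{\geq 0}$ arises from the standard cocartesian fibration of left modules restricted to connective parts; on the other hand $R\mapsto \Model_{\cfrees(R)}$ can be built using the functoriality of $\cfrees$ together with the product-preserving presheaf construction. The pointwise restricted Yoneda embeddings assemble into a map of cocartesian fibrations (using that extension of scalars preserves representable objects, as verified above), and the previous paragraph shows this map is fibrewise an equivalence. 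Invoking the fibrewise criterion for equivalences of cocartesian fibrations then gives the desired naturality.
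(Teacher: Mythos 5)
Your first two paragraphs match the paper's proof almost exactly: the equivalence is Proposition~\ref{prop:projprestable} applied to the projective generator $R$, and the square is checked on representables (where both composites send $R^{\oplus I}\mapsto \nu(S^{\oplus I})$) and then propagated by the universal property of $\Model_{\cfrees(R)}$ as a free cocompletion under geometric realizations.

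Your third paragraph goes beyond the paper, which addresses the ``agree as functors'' claim only by verifying the commutativity of a single square. Your proposal to realize both assignments as cocartesian fibrations over $\Alg_{\bfE_1}(\spectra^{\geq 0})$ and then apply the fibrewise criterion is a reasonable strategy for the stronger statement, but it conceals a gap: the assertion that ``the pointwise restricted Yoneda embeddings assemble into a map of cocartesian fibrations'' is not a consequence of the fibrewise commutativity you have already verified --- it is precisely the coherence one is trying to establish. One still needs to exhibit a map of the total spaces over the base, compatible with the projections, and only \emph{then} observe that cocartesian edges are preserved because the fibrewise comparisons are colimit-preserving left adjoints agreeing on generators. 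A cleaner route is to construct the map of fibrations \emph{first} (for instance by noting that both total categories map to the cocartesian fibration of $\Model_{(-)}$ over $\malcevtheories'$ and that the comparison is induced by a single map of theory-valued functors $R\mapsto\cfrees(R)$, or alternatively by corepresenting both sides), after which the fibrewise equivalence you have already established finishes the job. Absent such a construction, your sketch is circular at this step. That said, this is a refinement the paper itself leaves implicit.
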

\begin{proof}
The equivalence $\LMod_R^{\geq 0}\simeq\Model_{\cfrees(R)}$ follows from \cref{prop:projprestable}, as $R$ is a projective generator for $\LMod_R^{\geq 0}$. To make the given diagram commute, as all functors preserve colimits it suffices suffices to make it commute on restriction to the subcategory $\cfrees(R)\subset\LMod_R^{\geq 0}$, where it commutes by definition.
\end{proof}

\begin{example}
The $\infty$-category $\cfrees(R)$ is easily seen to be an $n$-category if and only if $R$ is $(n-1)$-truncated, and in general if $R \in \Alg_{\bfE_1}(\Sp^{\geq 0})$ then 
\[
\h_n\cfrees(R) \to \cfrees(R_{<n})
\]
is an equivalence. In particular, the diagrams
\begin{center}\begin{tikzcd}[column sep=large]
\LMod_R^{{\geq 0}}\ar[r,"R_{<n} \otimes_R (\bs)"]\ar[d,"\nu","\simeq"']&\LMod_{R_{<n}}^{{\geq 0}}\ar[d,"\nu","\simeq"']\\
\Model_{\cfrees(R)}\ar[r,"\tau_{n!}"]&\Model_{\h_n\cfrees(R)}
\end{tikzcd}\end{center}
commute.
\end{example}

\begin{example}
\label{ex:connectivealgebras}
Fix $ k \leq \infty$ and let $R$ be a connective $\bfE_{k+1}$-ring, so that $\LMod_R$ is a $\bfE_k$-monoidal $\infty$-category. Let
\[
\lcat_0^{\bfE_k}(R)\subset\Alg_{\bfE_k}(\LMod_R^{\geq 0})
\]
be the full subcategory spanned by those $\bfE_k$-$R$-algebras which are free on a set. By \cref{prop:malcevmonads}, the restricted Yoneda embedding defines an equivalence
\[
\Model_{\lcat_0^{\bfE_k}}(R)\simeq\Alg_{\bfE_k}(\LMod_R^{{\geq 0}}).
\]
In the special case where $k=1$, we can moreover identify $h_n\lcat_0^{\bfE_1}(R)\simeq \lcat_0^{\bfE_1}(R_{<n})$, and for any map $R\to S$ of $\bfE_2$-rings the diagram
\begin{center}\begin{tikzcd}
\Model_{\lcat_0^{\bfE_1}(R)}\ar[d,"f_!"]\ar[r,"\nu","\simeq"']&\Alg_{\bfE_1}(\LMod_R^{\geq 0})\ar[d,"S\otimes_R(\bs)"]\\
\Model_{\h_n\lcat_0^{\bfE_1}(S)}\ar[r,"\nu","\simeq"']&\Alg_{\bfE_1}(\LMod_{S}^{\geq 0})
\end{tikzcd}\end{center}
commutes. 
\end{example}

\begin{remark}
The construction $\lfrees_{0}$ does not lose any information about the ring if we remember the canonical map from the sphere; that is, it induces a fully faithful embedding 
\[
\Alg_{\bfE_1}^{\geq 0}(\spectra) \to \malcevtheories_{\cfrees(\thesphere)/}.
\]
To see this, note that passing to $\infty$-categories of models provides an equivalence
\[
\Map_{\malcevtheories_{\cfrees(\thesphere)/}}(\lfrees_0(R), \lfrees_0(S)) \simeq \Hom_{\PrL_{\spectra^{\geq 0} / }}(\LMod_{R}^{\geq 0}, \LMod_{S}^{\geq 0}), 
\]
as any colimit-preserving functor $\LMod_R^{\geq 0} \to \LMod_S^{\geq 0}$ of $\infty$-categories under $\spectra^{\geq 0}$ automatically sends $\cfrees(R)$ into $\cfrees(S)$. The latter mapping space can be identified with $\map_{\Alg_{\bfE_1}(\Sp)}(R,S)$ by \cite[{Proposition 7.1.2.6}]{lurie2017higheralgebra}. 
\end{remark}

\begin{rmk}\label{rmk:manygeneralizations}
The construction of $\lfrees_0(R)$ and equivalence $\Model_{\lfrees_0(R)}\simeq\LMod_R^{\geq 0}$ can be generalized to various other settings, such as to equivariant ring spectra, completed ring spectra, and more. The proof in each case is the same, and proceeds by identifying a family of projective generators then applying \cref{prop:projprestable}.
\end{rmk}

\subsection{Filtered modules}
\label{subsection:filtered_modules}

In the previous example, we saw that the $\infty$-category of connective modules over a connective ring spectrum can be described as the $\infty$-category of models for a Malcev theory. In this section, we show that the extra flexibility present in $\infty$-categorical algebraic theories allows one to also use Malcev theories to naturally encode certain deformations based on \emph{filtered modules}. It is this identification that ultimately underlies the various descriptions of synthetic deformations in terms of filrations.

\begin{notation}
\label{notation:various_variants_on_free_r_module_theories}
Given a (possibly nonconnective) $\bfE_1$-ring spectrum $R$, we write 
\[
\lfrees_0(R),\lfrees(R),\lfrees_+(R),\lfrees_-(R)\subset\LMod_\pcat
\]
for the full subcategories generated under coproducts by $\Sigma^n R$ for $n = 0$, $n \in \integers$, $n \geq 0$, and $n \leq 0$ respectively. These are additive Malcev theories, sitting in a commutative diagram
\begin{center}\begin{tikzcd}
\lfrees_0(R)\ar[r]\ar[d]&\lfrees_+(R)\ar[d]\\
\lfrees_-(R)\ar[r]&\lfrees(R)
\end{tikzcd}\end{center}
of theories and fully faithful homomorphisms. 
\end{notation}

Note that $\lfrees(R), \lfrees_+(R)$ are loop theories, while $\lfrees_-(R)$ is a stable deloop theory in the sense of \cref{def:stabledelooptheory}. The associated $\infty$-categories of (de)loop models are easy to identify:

\begin{prop}\label{example:loop_models_and_deloop_models_for_variants_of_free_modules}
The restricted Yoneda embeddings from $\LMod_R$ provides equivalences
\[
\Model_{\lfrees(R)}\simeq \LMod_{R}\simeq\Model_{\lfrees_-(R)}^\Sigma.
\]
If $R$ is connective, then moreover
\[
\Model_{\lfrees_+(R)}^\Omega\simeq\LMod_R^{\geq 0}\simeq \Model_{\lfrees_0(R)}.
\]
\end{prop}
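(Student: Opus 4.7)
The proposition assembles four equivalences, and the plan is to treat each using a recognition principle or universal property from earlier in the paper. Before starting, I note that the first displayed formula must be read as $\Model_{\lfrees(R)}^\Omega \simeq \LMod_R \simeq \Model_{\lfrees_-(R)}^\Sigma$ rather than as literally written, since $\LMod_R$ is stable and nonzero while $\Model_{\lfrees(R)}$ cannot be stable unless $\lfrees(R)$ is terminal by \cref{cor:neverstable}; this $\Omega$-reading also matches the corresponding claim in the introduction.

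The two stable equivalences are both direct applications of \cref{prop:stableloopmodels}(2a). For $\LMod_R \simeq \Model_{\lfrees_-(R)}^\Sigma$: the $\infty$-category $\LMod_R$ is stable and cocomplete, the full subcategory $\lfrees_-(R) \subseteq \LMod_R$ is closed under coproducts and desuspensions by definition, and $\{R\}$ is a compact generator of $\LMod_R$ generating $\lfrees_-(R)$ under coproducts and desuspensions. For $\Model_{\lfrees(R)}^\Omega \simeq \LMod_R$, we first observe that $\lfrees(R)$ is a stable loop theory because $\Omega$ restricts to an autoequivalence of its set of generators $\{\Sigma^n R\}_{n \in \integers}$, so $\Model_{\lfrees(R)}^\Omega \simeq \Model_{\lfrees(R)}^\Sigma$ per the discussion of stable loop theories in \S\ref{subsection:deloop_theories}; a second application of \cref{prop:stableloopmodels}(2a) then yields the equivalence with $\LMod_R$.

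The equivalence $\LMod_R^{\geq 0} \simeq \Model_{\lfrees_0(R)}$ is \cref{cor:cnmodules}, a direct consequence of \cref{prop:projprestable}: $R$ is a compact projective generator of the prestable $\infty$-category $\LMod_R^{\geq 0}$ whose full subcategory of compact projective objects is $\lfrees_0(R)^\sharp$, and \cref{prop:idempotentcompletion} identifies $\Model_{\lfrees_0(R)^\sharp}$ with $\Model_{\lfrees_0(R)}$. For the remaining equivalence $\Model_{\lfrees_+(R)}^\Omega \simeq \LMod_R^{\geq 0}$, the plan is to identify $\lfrees_+(R)$ up to idempotent completion with the loop completion $(\lfrees_0(R))_\Sigma$ of \cref{def:loopcompletion}, and then invoke \cref{prop:loopcompletion} to conclude $\LMod_R^{\geq 0} \simeq \Model_{\lfrees_0(R)} \simeq \Model_{(\lfrees_0(R))_\Sigma}^\Omega \simeq \Model_{\lfrees_+(R)}^\Omega$.

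The hard part will be this last identification. Inside $\Model_{\lfrees_0(R)} \simeq \LMod_R^{\geq 0}$, the tensor $S^1 \otimes M$ of an object $M$ decomposes in the additive setting as $M \oplus \Sigma M$; by induction on the construction of $T \in \Sph$ from $S^1$ via products, wedges, and smash products, we get that every tensor $T \otimes R$ is a finite direct sum of shifts $\Sigma^n R$ with $n \geq 0$, giving $(\lfrees_0(R))_\Sigma \subseteq \lfrees_+(R)$. Conversely each $\Sigma^n R$ appears as a direct summand of $S^n \otimes R \simeq R \oplus \Sigma^n R$, so the idempotent completions agree, and \cref{prop:idempotentcompletion} lifts this to the desired equivalence on $\infty$-categories of models.
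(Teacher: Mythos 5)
Your proof is correct and follows the same route the paper intends: \cref{prop:stableloopmodels} for the two stable equivalences and \cref{prop:loopcompletion} plus \cref{cor:cnmodules} for the two prestable ones. The key value you add is spelling out what the paper's two-line proof sketch elides, namely the identification of $\lfrees_+(R)$ with the loop completion $(\lfrees_0(R))_\Sigma$ up to idempotent completion; that step does need the additivity computation $T\otimes R\simeq\bigoplus\Sigma^{n_i}R$ for $T\in\Sph$, and you give it. You also correctly flagged what is in fact a typo in the statement: the paper's own proof of \cref{proposition:comonad_defining_synthetic_spectra_comes_is_a_derived_functor_as_a_comonad} and the discussion in \S\ref{subsection:outline_of_the_project} both read $\Model_{\lfrees(R)}^\Omega\simeq\LMod_R$, and \cref{cor:neverstable} rules out the literal reading, so your emendation is right.

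Two small imprecisions, neither fatal. First, when invoking \cref{prop:stableloopmodels}(2a) for $\lfrees_-(R)$, the hypothesis asks that the generating set $\pcat_0$ of the \emph{theory} jointly detect vanishing in $\LMod_R$; you want $\pcat_0 = \{\Sigma^n R : n\le 0\}$ here, not the single object $\{R\}$, since $\map_{\LMod_R}(R,-)\simeq\ast$ only kills nonnegative homotopy. (Saying ``$R$ is a compact generator'' in the localizing sense is of course fine in spirit, but the statement of the proposition is pickier.) Second, when you invoke idempotent completion at the end, the result you actually need is that the equivalence $\Model_\pcat\simeq\Model_{\pcat^\sharp}$ restricts to loop models; this is \cref{ex:idempotentcompletion} rather than \cref{prop:idempotentcompletion}, though the content is the same.
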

\begin{proof}
The first equivalences follow from \cref{prop:stableloopmodels}, whereas the second equivalences follow from \cref{prop:loopcompletion} and \cref{cor:cnmodules}.
\end{proof}

In general, if $\pcat$ is a loop theory whose $\infty$-category $\Model_\pcat^\Omega$ of loop models is some well-understood $\infty$-category, then the $\infty$-category $\Model_\pcat$ of all models may be more exotic, and admit no classical description. Instead, $\Model_\pcat$ can be thought of as an $\infty$-categorical deformation, whose generic fibre is given by the subcategory of loop models, but whose objects are ``formal resolutions'' of loop models by the objects of $\pcat\subset\Model_\pcat^\Omega$. In the special case where $\pcat = \lfrees(R)$, this deformation admits a more familiar explicit description in terms of filtered spectra. We refer the reader to  \cite[{\S 3}]{van2025introduction} for a pleasant introduction to the filtered perspective on deformations.

\begin{notation}
We write $\Sp^{\fil} \colonequals \Fun(\mathbb{Z}^{\op}, \Sp)$ for the $\infty$-category of filtered spectra, whose objects we identify with diagrams 
\[
\cdots\to X_{q+1} \to X_q \to X_{q-1}\to \cdots.
\]
This forms a symmetric monoidal stable $\infty$-category under Day convolution with respect to $+$. Any filtered spectrum has a canonical degree-shifting endomorphism $\tau$ corresponding to the map of diagrams 
\[
\begin{tikzcd}
	\ldots & {X_{q+1}} & {X_{q}} & {X_{q-1}} & \ldots \\
	\ldots & {X_{q}} & {X_{q-1}} & {X_{q-2}} & \ldots
	\arrow[from=1-1, to=1-2]
	\arrow[from=1-2, to=1-3]
	\arrow[from=1-2, to=2-2]
	\arrow[from=1-3, to=1-4]
	\arrow[from=1-3, to=2-3]
	\arrow[from=1-4, to=1-5]
	\arrow[from=1-4, to=2-4]
	\arrow[from=2-1, to=2-2]
	\arrow[from=2-2, to=2-3]
	\arrow[from=2-3, to=2-4]
	\arrow[from=2-4, to=2-5]
\end{tikzcd}
\]
\end{notation}

\begin{recollection}
We write $C(\tau^{n})$ for the cofibre of the endomorphism $\tau^{n}$ of the monoidal unit of filtered spectra. Explicitly, this is the filtered spectrum 
\[
\cdots \rightarrow 0 \rightarrow \thesphere \rightarrow \cdots  \rightarrow \thesphere \rightarrow 0 \rightarrow \cdots
\]
with $n$ copies of the sphere spectrum, ending in $\thesphere$ in degree zero. By \cite[{Proposition 3.2.5}]{lurie2015rotation}, $C(\tau^n)$ has a unique commutative algebra structure compatible with the canonical map from the unit. 
\end{recollection}

\begin{recollection}[{The diagonal $t$-structure}]
We will consider $\Sp^\fil$ as equipped with its \emph{diagonal $t$-structure}, whose full subcategory $\Sp^\fil_{\geq 0}\subset\Sp^\fil$ of connective objects consists of those filtered spectra $X$ for which $X_{q}$ is $q$-connective. This $t$-structure is compatible with the symmetric monoidal structure, and taking Whitehead towers defines a lax symmetric monoidal \cite[Proposition A.1.2]{balderrama2021deformations} and fully faithful \cite[{Theorem A.1}]{annala2025note} functor
\[
\tau_{\geq \star} \colon \Sp\to \Sp^\fil_{\geq 0}\subset\Sp^\fil,\qquad \tau_{\geq q}(X) \colonequals X_{\geq q}.
\]
\end{recollection}

We note that since $\tau_{\geq \star}$ is lax monoidal, if $R$ is a $\bfE_1$-ring spectrum then $\tau_{\geq \star} R \in \Alg_{\bfE_1}(\Sp^\fil_{\geq 0})$.

\begin{prop}
\label{prop:models_of_free_r_modules_as_modules_over_the_whitehead_tower}
Let $R$ be a $\bfE_1$-ring spectrum. 
\begin{enumerate}
\item There is an equivalence 
\[
w_! \colon \Model_{\lfrees(R)} \simeq \LMod_{\tau_{\geq \star} R}(\Sp^\fil_{\geq 0})
\]
sending $X \in \Model_{\lfrees(R)}$ to a $\tau_{\geq \star} R$-module with underlying filtered spectrum
\[
w_!(X) = \{\cdots \to \Sigma^{q+1}X(\Sigma^{q+1}R) \to \Sigma^q X(\Sigma^q R) \to \Sigma^{q-1}X(\Sigma^{q-1}R) \to \cdots\},
\]
where the arrows are adjoint by the canonical colimit-to-limit comparison map 
\[
X(\Sigma (\Sigma^{q} R)) \rightarrow \Omega X(\Sigma^{q} R). 
\]
\item For $1 \leq n < \infty$, there are equivalences
\[
w_{n!}\colon \Model_{\h_n\lfrees(R)}\simeq\LMod_{C(\tau^n)\otimes \tau_{\geq \star} R}(\Sp^\fil_{\geq 0})
\]
for which the following diagrams commute
\[
\begin{tikzcd}
\Model_{\lfrees(R)}\ar[r,"w_!","\simeq"']\ar[d,"\tau_{n!}"]&\LMod_{\tau_{\geq \star} R}^{\geq 0}(\Sp^\fil)\ar[d,"C(\tau^n)\otimes (\bs)"]\\
\Model_{\h_{n}\lfrees(R)}\ar[r,"w_{n!}","\simeq"']&\LMod_{C(\tau^{n}) \otimes \tau_{\geq \star}R}(\Sp^\fil_{\geq 0})
\end{tikzcd}. 
\]
\item The equivalence $w_{!}$ identifies the subcategory 
\[
 \Model_{\lfrees(R)}^\Omega \subseteq \Model_{\lfrees(R)},
\]
of loop models with the subcategory of those objects of $\LMod_{\tau_{\geq \star} R}^{\geq 0}(\Sp^\fil)$ which are $\tau$-periodic in the sense of \cite[{Definition 2.17}]{abstract_gh_theory}; that is, such that $\tau$ is a $1$-connective cover. 
\end{enumerate}
\end{prop}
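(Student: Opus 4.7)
My plan for parts (1) and (2) is to invoke the recognition principle of \cref{prop:projprestable} for projectively generated prestable $\infty$-categories. For part (1), the candidate set of projective generators for $\LMod_{\tau_{\geq \star}R}(\Sp^\fil_{\geq 0})$ consists of the Whitehead filtrations $\tau_{\geq \star}\Sigma^n R$ for $n \in \integers$, regarded as $\tau_{\geq \star}R$-modules via the lax symmetric monoidal structure on $\tau_{\geq \star}\colon \Sp \to \Sp^\fil_{\geq 0}$. This construction defines a functor $w\colon \lfrees(R) \to \LMod_{\tau_{\geq \star}R}(\Sp^\fil_{\geq 0})$, and the full faithfulness of $\tau_{\geq \star}$ cited from \cite[Theorem A.1]{annala2025note} makes $w$ additive and fully faithful. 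Once I verify that the image of $w$ consists of projective generators, the equivalence $w_!$ of part (1) follows from \cref{prop:projprestable}. The explicit formula stated for $w_!(X)$ then follows by computing the inverse $w^*(M)(\Sigma^q R) \simeq \Omega^q M_q$ on representables: for $M = \tau_{\geq \star}\Sigma^n R$ this gives $\Omega^q \tau_{\geq q}\Sigma^n R \simeq \tau_{\geq 0}\Sigma^{n-q}R \simeq \map_{\lfrees(R)}(\Sigma^q R, \Sigma^n R)$, as desired.

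The main technical step is verifying the two hypotheses of \cref{prop:projprestable}: that each $\tau_{\geq \star}\Sigma^n R$ is projective in $\LMod_{\tau_{\geq \star}R}(\Sp^\fil_{\geq 0})$ and that these objects jointly detect zero. The key lemma underlying both is the identification
\[
\map_{\tau_{\geq \star}R}(\tau_{\geq \star}\Sigma^n R, M) \simeq \Omega^n M_n
\]
as a natural equivalence of functors $\LMod_{\tau_{\geq \star}R}(\Sp^\fil_{\geq 0}) \to \Sp$, which combines the full faithfulness of $\tau_{\geq \star}$ on module categories with the connectivity constraint of the diagonal $t$-structure. Projectivity then follows since connective evaluation preserves geometric realizations, and joint detection of zero follows since the vanishing of $M_n$ for all $n$ forces $M \simeq 0$ by definition of the diagonal $t$-structure.

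For part (2), I would run the analogous argument in $\LMod_{C(\tau^n) \otimes \tau_{\geq \star}R}(\Sp^\fil_{\geq 0})$ with projective generators $C(\tau^n) \otimes \tau_{\geq \star}\Sigma^q R$ for $q \in \integers$. The key compatibility is that the mapping spectrum between two such generators equals $\tau_{<n}\Sigma^{l-q}R$, matching by construction the $n$-truncation of the mapping spectrum in $\lfrees(R)$, which identifies the full subcategory generated by these with $\h_n \lfrees(R)$. Compatibility with truncation $\tau_{n!}$ then follows formally from the universal property of left Kan extension since both composites agree on representables. For part (3), the loop model condition that $X(\Sigma(\Sigma^q R)) \to \Omega X(\Sigma^q R)$ be an equivalence of spaces translates, via the formula for $w_!(X)$, into the condition that $\tau \colon w_!(X)_{q+1} \to w_!(X)_q$ exhibits $w_!(X)_{q+1}$ as the $(q+1)$-connective cover of $w_!(X)_q$, which is precisely $\tau$-periodicity in the sense of \cite[Definition 2.17]{abstract_gh_theory}. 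The main obstacle throughout is establishing the key mapping-spectrum identification, which requires a delicate interplay between the Whitehead filtration, the Day-convolution module structure on $\tau_{\geq \star}R$-modules, and the diagonal $t$-structure, particularly to untangle why $\tau_{\geq \star}$ lifts to a fully faithful functor between module categories despite being only lax monoidal.
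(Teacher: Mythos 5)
Your treatment of parts (1) and (2) follows the paper's proof nearly step for step: both rest on \cref{prop:projprestable}, both identify $\{\tau_{\geq\star}\Sigma^q R\}_{q\in\integers}$ (resp.\ $\{C(\tau^n)\otimes\tau_{\geq\star}\Sigma^q R\}$) as projective generators via the mapping formula $\map_{\tau_{\geq\star}R}(\tau_{\geq\star}\Sigma^q R, M) \simeq \Omega^{\infty+q}M_q$, and both conclude projectivity from the fact that $q$-connective evaluation preserves geometric realizations and conservativity from the diagonal $t$-structure. The compatibility with $\tau_{n!}$ is handled identically, by checking agreement on representables and invoking the universal property of $\Model_{\lfrees(R)}$ as a free geometric-realization cocompletion.

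For part (3), however, your route genuinely diverges from the paper's. The paper proves (3) structurally, by combining \cref{prop:loopmodelifspecialfiberdiscrete} (a model is a loop model iff $\tau_! X$ is discrete) with the cited result from the Goerss--Hopkins paper that a filtered module is $\tau$-periodic iff $C(\tau)\otimes X$ is discrete, then matching the two via the equivalences established in (1) and (2). You instead unwind both definitions directly: the loop-model condition $X(\Sigma^{q+1}R) \simeq \Omega X(\Sigma^q R)$ translates under the explicit formula for $w_!(X)$ into $w_!(X)_{q+1} \simeq \tau_{\geq q+1}\, w_!(X)_q$, which is the $\tau$-periodicity condition. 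Your argument is more elementary in that it avoids the external reference and \cref{prop:loopmodelifspecialfiberdiscrete} entirely, at the cost of some care in translating the spatial statement about $\Omega$ into the spectral one about connective covers (the translation works because equivalences of connective deloopings are detected by the underlying spaces). Both arguments are valid; the paper's has the modest advantage of immediately reusing the commutative square from (2), while yours keeps the proof self-contained.

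One small point worth tightening: "full faithfulness of $\tau_{\geq\star}$ makes $w$ fully faithful" conflates full faithfulness of the underlying lax monoidal functor $\Sp \to \Sp^\fil_{\geq 0}$ with full faithfulness of the induced functor on module categories, which is not automatic for a lax monoidal functor. In practice the mapping formula you write down handles this (full faithfulness on representables, then extend along coproducts and check $\tau_{\geq\star}$ preserves them), but it deserves a sentence rather than being bundled into the citation. The paper is similarly terse here, so this is a shared omission rather than a distinctive gap.
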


\begin{proof}
(1)~~By \cref{prop:projprestable}, to give an equivalence $\Model_{\lfrees(R)}\simeq\LMod_{\tau_{\geq \star}R}(\Sp^\fil_{\geq 0})$ is to give a fully faithful functor
\[
\lfrees(R) \to \LMod_{\tau_{\geq \star}R}(\Sp^\fil_{\geq 0})
\]
whose essential image consists of a family of projective generators for $\LMod_{\tau_{\geq \star}R}(\Sp^\fil_{\geq 0})$. As $\tau_{\geq \star}$ is lax monoidal, the Whitehead tower admits a lift to a fully faithful functor
\[
\tau_{\geq \star}\colon \LMod_R\to\LMod_{\tau_{\geq \star}R}(\Sp^\fil_{\geq 0}).
\]
We claim that the desired functor is given by the restriction of this functor by $\lfrees(R)\subset\LMod_R$. As the Whitehead filtration commutes with direct sums, it suffices to show that the family $\{\tau_{\geq \star}(\Sigma^q R) : q \in \integers\}$ is a set of projective generators for $\LMod_{\tau_{\geq \star}R}(\Sp^\fil_{\geq 0})$. The filtered module $\tau_{\geq \star}(\Sigma^q R)$ is a shift and suspension of the monoidal unit, and satisfies
\[
\map_{\tau_{\geq\bullet R}}(\tau_{\geq \star}(\Sigma^q R),X)\simeq\Omega^{\infty+q}X_q.
\]
As $X$ is connective in the diagonal $t$-structure, $X_q$ is $q$-connective, and so this functor preserves geometric realization by \cite[{Proposition 1.4.3.9}]{higher_algebra}. These functors are clearly conservative as $q$ varies, establishing (1).

(2)~~By the proof of (1), we find that $\{C(\tau^n)\otimes\tau_{\geq \star}(\Sigma^q R) : q\in\integers\}$ form a set of projective generators for $\LMod_{C(\tau^n)\otimes\tau_{\geq \star}R}(\Sp^\fil_{\geq 0})$. These objects corepresent the functors
\[
X \mapsto \mathrm{fib}(\Omega^{\infty+q} X_{q} \rightarrow \Omega^{\infty+q} X_{q-n}). 
\]
Using this formula we see that the composite
\[
\begin{tikzcd}[column sep=large]
	{\lfrees(R)} & {\LMod_{\tau_{\geq \ast}R}(\Sp^\fil_{\geq 0})} & {\LMod_{C(\tau^{n}) \otimes \tau_{\geq \ast}R}(\Sp^\fil_{\geq 0})}
	\arrow["{\tau_{\geq \ast}}", from=1-1, to=1-2]
	\arrow["{C(\tau^{n})\otimes-}", from=1-2, to=1-3]
\end{tikzcd}
\]
factors uniquely through an equivalence from $\h_n\lfrees(R)$ to the full subcategory generated under coproducts by these projective generators. This yields the stated equivalence by \cref{prop:projprestable}, which fits in the given diagram by construction.

(3)~~Recall from \cref{prop:loopmodelifspecialfiberdiscrete} that $X \in \Model_{\lfrees(R)}$ is a loop model if and only if $\tau_! X$ is discrete, and from \cite[{Proposition 2.16}]{abstract_gh_theory} that $X \in \Model_{\tau_{\geq \star}R}(\Sp^\fil_{\geq 0})$ is $\tau$-periodic if and only if $C(\tau)\otimes X$ is discrete. These conditions agree under the equivalences of (1,2), yielding the stated identification.
\end{proof}

\begin{remark}
Using \cref{prop:models_of_free_r_modules_as_modules_over_the_whitehead_tower}, the $\infty$-categories $\Model_{\lfrees_\varepsilon(R)}$ with $\varepsilon \in \{-,0,+\}$ can be identified as the full subcategories of $\LMod_{\tau_{\geq \star}R}(\Sp^\fil_{\geq 0})$ generated under colimits by $\tau_{\geq \star}(\Sigma^n R)$ for, respectively, $n \leq 0$, $n=0$, and $n\geq 0$.
\end{remark}

\begin{remark}
As with \cref{rmk:manygeneralizations}, many variations of \cref{prop:models_of_free_r_modules_as_modules_over_the_whitehead_tower} are possible, providing deformations based on the Whitehead tower for other stable $\infty$-categories.
\end{remark}

When $R$ is Adams-type in the sense of \cite[{Definition 3.13}]{pstrkagowski2018synthetic}, we can also give a description of the $\infty$-category of models of $\lfrees(R)$ in terms of the $\infty$-category $\Syn_{R}$ of $R$-based synthetic spectra. This will be used in \S\ref{subsection:synthetic_spectra}, where we describe a completion of $\Syn_R$ in terms of $\infty$-categories of models.

\begin{proposition}
\label{proposition:synthetic_nur_modules_as_models}
Let $R$ be an Adams-type $\bfE_1$-ring spectrum. Then there is a unique equivalence
\[
\nu_!\colon \Model_{\lfrees(R)} \to \LMod_{\nu(R)}(\Syn^{\geq 0}_R)
\]
satisfying $\nu_!(P) = \nu(P)$ for $P \in \lfrees(R)\subset\Model_{\lfrees(R)}$.
\end{proposition}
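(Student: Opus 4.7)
The strategy is to apply \cref{prop:projprestable} to identify $\Model_{\lfrees(R)}$ with $\LMod_{\nu(R)}(\Syn^{\geq 0}_R)$ via its projective generators. Since $\lfrees(R)$ is an additive Malcev theory, it suffices to exhibit a fully faithful functor $\nu \colon \lfrees(R) \to \LMod_{\nu(R)}(\Syn^{\geq 0}_R)$ landing in the subcategory of projective generators whose image jointly detects equivalences. Uniqueness of the resulting equivalence $\nu_!$ with the prescribed restriction will then follow from the universal property in \cref{thm:freecocompletion}: any colimit-preserving functor out of $\Model_{\lfrees(R)}$ is determined by its values on representables, so if $\nu_!$ exists with $\nu_!(P) = \nu(P)$ for $P \in \lfrees(R)$, it is essentially unique.

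First, I would construct the candidate functor. The symmetric monoidal synthetic analogue functor $\nu \colon \Sp \to \Syn_R$ sends $R$ to an $\bfE_1$-algebra $\nu(R) \in \Syn_R$, and passage to left modules gives a functor $\nu \colon \LMod_R \to \LMod_{\nu(R)}(\Syn_R)$ sending $\Sigma^q R$ to $\Sigma^q \nu(R) \simeq \nu(\Sigma^q R)$. Because $\nu(X)$ is connective for every spectrum $X$ in the Postnikov $t$-structure on $\Syn_R$, this restricts to a coproduct-preserving functor $\nu \colon \lfrees(R) \to \LMod_{\nu(R)}(\Syn^{\geq 0}_R)$, which in turn extends by colimits to the colimit-preserving functor $\nu_!$ via \cref{thm:freecocompletion}.

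Second, I would verify the hypotheses of \cref{prop:projprestable}. The $\infty$-category $\LMod_{\nu(R)}(\Syn^{\geq 0}_R)$ is prestable and presentable, and it remains to check that the set $\{\Sigma^q \nu(R) : q \in \integers\}$ consists of projective generators and that $\nu$ is fully faithful on $\lfrees(R)$. For projectivity: the forgetful functor $\LMod_{\nu(R)}(\Syn^{\geq 0}_R) \to \Syn^{\geq 0}_R$ is conservative and preserves geometric realizations, and $\nu(R)$ is free as a $\nu(R)$-module, so the claim reduces to $\nu(\Sigma^q R)$ being projective in $\Syn^{\geq 0}_R$; this is a foundational property of synthetic spectra when $R$ is Adams-type, following from flatness of the cooperations $R_\ast R$. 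For joint detection of equivalences: same reduction, combined with the fact that $\Syn^{\geq 0}_R$ is generated under colimits and extensions by the $\nu(\Sigma^q X)$ for $X$ Adams-type, so in particular the compact projectives $\Sigma^q \nu(R)$ detect equivalences in the $\nu(R)$-module category. For full faithfulness, after reducing to the case of a single shift by using that $\nu$ preserves coproducts and $\nu(R)$ is free, one computes
\[
\map_{\LMod_{\nu(R)}(\Syn_R)}(\nu(R),\Sigma^q \nu(R)) \simeq \map_{\Syn_R}(\nu(\thesphere),\nu(\Sigma^q R)) \simeq \map_\Sp(\thesphere,\Sigma^q R),
\]
using the free--forgetful adjunction and the defining full faithfulness of $\nu$ on Adams-type spectra.

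The main obstacle will be carefully verifying that the Postnikov $t$-structure on $\Syn_R$ induces a well-behaved $t$-structure on $\LMod_{\nu(R)}(\Syn_R)$ for which the forgetful and free functors are $t$-exact in the expected way, and that the projective generation of $\Syn^{\geq 0}_R$ by Adams-type synthetic analogues translates cleanly into projective generation of $\LMod_{\nu(R)}(\Syn^{\geq 0}_R)$ by the shifts of $\nu(R)$ alone (rather than by a larger class of free modules on Adams-type spectra). Once these compatibilities are in place, \cref{prop:projprestable} packages the verifications into the desired equivalence $\nu_!$.
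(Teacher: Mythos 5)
Your approach differs from the paper's: you apply \cref{prop:projprestable} directly by exhibiting $\{\Sigma^q\nu(R) : q\in\integers\}$ as a set of projective generators for $\LMod_{\nu(R)}(\Syn_R^{\geq 0})$, whereas the paper instead uses the defining sheaf-theoretic description of $\Syn_R^{\geq 0}$ to identify $\LMod_{\nu(R)}(\Syn_R^{\geq 0})$ with additive presheaves on $\lfrees^\omega(R)$, then invokes \cref{thm:bounded}. Both strategies are sound, and your route via projective generation is arguably cleaner since it avoids re-opening the sheaf description; however, your write-up contains two slips that need repair.

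First, you write that ``the claim reduces to $\nu(\Sigma^q R)$ being projective in $\Syn^{\geq 0}_R$.'' This is not what the free--forgetful adjunction gives you, and it is not true in general: $\nu(R)$ is a filtered colimit of compacts $\nu(F_\alpha)$, and filtered colimits of projectives need not be projective. What the free--forgetful adjunction actually reduces to is the projectivity of $\Sigma^q\mathbbm{1} = \nu(\Sigma^q\thesphere)$ in $\Syn_R^{\geq 0}$, since $\nu(\Sigma^q R) \simeq \nu(R)\otimes \nu(\Sigma^q\thesphere)$ is the free $\nu(R)$-module on $\Sigma^q\mathbbm{1}$, so that $\map_{\LMod_{\nu(R)}}(\nu(\Sigma^q R), M) \simeq \map_{\Syn_R}(\Sigma^q\mathbbm{1}, UM)$. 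Projectivity of $\Sigma^q\mathbbm{1}$ holds: it is compact (as $\thesphere$ is finite $R_\ast$-projective) and evaluation at $\thesphere$ is right $t$-exact by definition of the Postnikov $t$-structure. Relatedly, your phrase ``the compact projectives $\Sigma^q\nu(R)$'' is off: these objects are generally not compact.

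Second, your verification that $\{\Sigma^q\nu(R)\}$ jointly detects zero is too vague and is precisely where the Adams-type hypothesis enters. The argument should be: for $M\in\LMod_{\nu(R)}(\Syn_R^{\geq 0})$ with $\map(\Sigma^q\nu(R), M)\simeq\ast$ for all $q$, one has $(UM)(\Sigma^q\thesphere)\simeq\ast$ for all $q$; then for any finite $R_\ast$-projective $F$, the key observation is that $\nu(R)\otimes\nu(F)\simeq\nu(R\otimes F)$ is a \emph{retract of a direct sum} of copies of $\nu(\Sigma^q R)$, so $(UM)(F)\simeq\map_{\LMod_{\nu(R)}}(\nu(R\otimes F), M)$ is a retract of a product of contractible spaces, hence contractible; as the $\nu(F)$ generate $\Syn_R^{\geq 0}$, this forces $UM\simeq 0$ and hence $M\simeq 0$. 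This retract step, which the paper's proof makes explicit, is the load-bearing use of the Adams-type hypothesis and cannot be left implicit. With these two points repaired, your proof goes through.
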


\begin{proof}
By \cref{thm:freecocompletion}, the given description of $\nu_!$ on representables extends uniquely to a colimit-preserving functor $\nu_!\colon \Model_{\lfrees(R)}\to\LMod_{\nu(R)}(\Syn^{\geq 0}_R)$, which we must show is an equivalence. By construction, the $\infty$-category $\Syn_R^{\geq 0}$ of connective $R$-synthetic spectra is generated under colimits by objects of the form $\nu(F)$ where $F$ is a finite $R_\ast$-projective spectrum. It follows that connective $\nu(R)$-modules are generated under colimits by objects of the form
\[
\nu(R) \otimes \nu(F) \simeq \nu(R \otimes F),
\]
where we use the equivalence of \cite[{Lemma 4.24}]{pstrkagowski2018synthetic}. As $F$ is a finite $R_\ast$-projective spectrum, $\nu(R\otimes F)$ is a retract of a direct sum of copies of $\nu(\Sigma^q R)$ for $q \in \integers$. As $\nu$ is fully faithful \cite[{Corollary 4.38}]{pstrkagowski2018synthetic}, it follows that this set of generators is equivalent to the idempotent completion of the full subcategory $\lfrees^\omega(R)\subset\lfrees(R)$ generated by $\Sigma^q R$ for $q\in \integers$ under \emph{finite} direct sums.

By \cite[{Theorem 2.8, Lemma 4.23}]{pstrkagowski2018synthetic}, $\Syn_{R}^{{\geq 0}}$ is equivalent to a full subcategory of the $\infty$-category of presheaves of connective spectra on its full subcategory of objects of the form $\nu(F)$ for $F$ finite $R_\ast$-projective, spanned by those presheaves that send cofibre sequences to fibre sequences. Any such cofibre sequence is split after tensoring with $\nu(R)$, implying that $\LMod_{\nu(R)}(\Syn^{\geq 0}_R)$ is equivalent to the $\infty$-category of additive presheaves on its full subcategory generated under finite coproducts by $\nu(\Sigma^q R)$ for $q\in \integers$.

By the above, this full subcategory is equivalent to $\lfrees^\omega(R)$. Applying \cref{thm:bounded}, we find that
\[
\LMod_{\nu(R)}(\Syn^{{\geq 0}}_{R}) \simeq \presheaves_{\Sigma}(\lfrees(R)^{\omega}) \simeq \Model_{\lfrees(R)}. 
\]
Tracing through the definitions, we see that this chain of equivalences agrees with $\nu$ on representables, and must therefore coincide with the claimed equivalence $\nu_!$.
\end{proof}

\begin{remark}
\label{remark:synthetic_analogue_functor_corresponds_to_inclusion_of_loop_models_under_equivalence_between_nur_modules_and_models_of_lfreesr}
Both the synthetic analogue $\nu \colon \LMod_{R} \rightarrow \LMod_{\nu(R)}(\Syn_{R}^{{\geq 0}})$ and the functor $\LMod_{R} \rightarrow \Model_{\lfrees(R)}$ inducing the equivalence $\LMod_{R} \simeq \Model_{\lfrees(R)}$ of \cref{prop:stableloopmodels} are defined using the restricted Yoneda embedding. It follows that under \cref{proposition:synthetic_nur_modules_as_models}, the two uses of $\nu$, either as the synthetic analogue functor or the inclusion of the subcategory of loop models, are compatible with each other. 
\end{remark}

\begin{remark}
Combining \cref{prop:models_of_free_r_modules_as_modules_over_the_whitehead_tower} and \cref{proposition:synthetic_nur_modules_as_models} gives a description of $\LMod_{\nu(R)}(\Syn_{R})$ in terms of filtered spectra, recovering a variation on \cite[{Remark 6.13}]{gheorghe2022c} and \cite[{\S 3.5}]{pstrkagowski2025perfect}.
\end{remark}

\subsection{Synthetic \texorpdfstring{$R$}{R}-algebras}
\label{subsection:synthetic_r_algebras}

In \cref{prop:models_of_free_r_modules_as_modules_over_the_whitehead_tower}, we saw that if $R$ is an $\bfE_1$-ring spectrum, then the $\infty$-category of models for the theory $\lfrees(R) \subseteq \Mod_{R}(\spectra)$ generated under direct sums by $\Sigma^{q} R$ for $q \in \integers$ is equivalent to the $\infty$-category of connective filtered modules over the filtered spectrum $\tau_{\geq \star}R$. The advantage of working with models is that it provides a clean universal property for this $\infty$-category, and that it naturally generalizes to nonadditive settings, such as for algebras over ring spectrum.

\begin{example}\label{ex:freeoalgebras}
Let $R$ be (for simplicity) an $\bfE_\infty$-ring spectrum and $\calO$ be an $\infty$-operad. We write 
\[
\lfrees^\calO(R) \subset\Alg_\calO(\Mod_R)
\]
for the full subcategory spanned by the essential image of the free $\calO$-algebra functor
\[
\Free_\calO\colon \lfrees(R) \subset\Mod_R \to \Alg_\calO(\Mod_R).
\]
Then $\lfrees^\calO(R)$ is a loop theory whose $\infty$-category of modules acts a deformation with generic fiber the $\infty$-category of $\calO$-algebras over $R$, deformed along the homotopy groups functor. Indeed:

\begin{enumerate}
    \item By \cref{proposition:recognizing_algebras_in_loop_models_as_loop_models_of_algebras} and \cref{example:loop_models_and_deloop_models_for_variants_of_free_modules}, the restricted Yoneda embedding determines an equivalence
    \[
    \Alg_\calO(\Mod_R)\simeq\Model_{\lfrees^\calO(R)}^\Omega
    \]
between $\Alg_\calO(\Mod_R)$ and the $\infty$-category of loop models of $\lfrees^\calO(R)$, thought of as the generic fiber of this deformation.

\item The subcategory $\Model_{\lfrees^\calO(R)}^\heartsuit$ of discrete models is the ``best possible'' target for the homotopy groups of an $R$-$\calO$-algebra; its objects can be identified with graded sets equipped with an action by the natural transformations between products of the homotopy group functors
\[
\pi_q\colon \Alg_\calO(\Mod_R)\to\sets.
\]
This follows from \cref{remark:for_a_theory_discrete_models_are_the_same_as_for_its_homotopy_category} and the fact that, by the Yoneda lemma, such natural transformations are in bijective correspondence with sets of maps in the homotopy category $\h\lfrees^\calO(R)$. \cref{thm:animation} now provides an equivalence
\[
\Model_{\h \lfrees^\calO(R)} \simeq \dcat_{\geq 0}(\acat) \simeq \Fun(\Delta^{\op}, \Model_{\lfrees^\calO(R)}^\heartsuit)[\mathrm{W}^{-1}],
\]
realizing $\Model_{\h\lfrees^\calO(R)}$, thought of as the special fiber of the deformation, as the classical nonabelian derived $\infty$-category, or animation, of $\Model_{\lfrees^\calO(R)}^\heartsuit$.
\end{enumerate} 
\end{example}

We refer the reader to \cite{balderrama2023algebraic} for further exposition and applications of these ideas.

\begin{remark}
By \cref{prop:restrictionmonadic}, the adjunction 
\[
\Model_{\lfrees(R)} \rightleftarrows \Model_{\lfrees^{\calO}(R)} 
\]
induced by the homomorphism $\Free_{\calO} \colon \lfrees(R) \rightarrow \lfrees^{\calO}(R)$ of Malcev theories is monadic. By unwrapping the definitions, one sees that under the equivalence 
\[
\Model_{\lfrees(R)} \simeq \Mod_{\tau_{\geq \star} R}(\Sp^{\fil}_{\geq 0}) 
\]
of \cref{prop:models_of_free_r_modules_as_modules_over_the_whitehead_tower}, the corresponding monad on $\Model_{\tau_{\geq \star}R}(\Sp^\fil_{\geq 0})$ is the unique geometric realization-preserving monad satisfying
\[
\tau_{\geq \star} (M) \mapsto \tau_{\geq \star} (\Free_\calO(M))
\]
for $M \in \lfrees(R) \subseteq \Mod_{R}$. This is one advantege of working with models of Malcev theories, even if one is ultimately interested in invariants valued in filtered spectra: the universal property of the $\infty$-category of models allows for a very efficient construction of monads and more general functors.
\end{remark}

\subsection{Synthetic spectra} 
\label{subsection:synthetic_spectra}

Let $R$ be an Adams-type $\mathbf{E}_{1}$-ring spectrum. In 
\cref{proposition:synthetic_nur_modules_as_models}, we gave a description of $\nu(R)$-modules in (connective) synthetic spectra as models of a Malcev theory $\lfrees(R)$. One might ask whether such a description is possible for the $\infty$-category of synthetic spectra itself, rather than for $\nu(R)$-modules. Except in very simple cases, such as $R = \thesphere$, such a direct description is not possible\footnote{To see this, observe that if $\Model_\pcat$ is additve, then its full subcategory of discrete objects must be an abelian category with enough projectives. On the other hand, by \cite[{Proposition 4.16}]{pstrkagowski2018synthetic} the heart of $\Syn_R$ can be identified with the abelian category of $R_{*}R$-comodules, which in general does not have enough projectives.}. In this section, we instead describe how: 
\begin{enumerate}
\item Up to Postnikov completion, synthetic spectra can be identified with the $\infty$-category of \emph{coalgebras} for a suitable comonad on the $\infty$-category $\Model_{\lfrees(R)}$, and
\item This comonad can be produced independently of the usual description of $\Syn_{R}$, using the machinery of \S\ref{subsection:coalgebras_in_models_and_loop_models}. 
\end{enumerate}
Taken together, this gives a new construction of the Postnikov completion of the $\infty$-category of synthetic spectra, see \cref{theorem:nilpotent_complete_synthetic_spectra_as_coalgebras_for_a_derived_functor}. The advantage of the current approach is that it immediately generalizes to other contexts, such as to construct new $\infty$-categories of synthetic spaces and synthetic $\calO$-algebras, as we explain in \S\ref{subsection:examples_synthetic_spaces} and \S\ref{ssec:examples:syntheticekrings}. 

\begin{recollection}
Let $\acat$ be a prestable $\infty$-category with finite limits; that is, an $\infty$-category which can be written in the form $\ccat_{\geq 0}$ where $(\ccat_{\geq 0}, \ccat_{\leq 0})$ is a $t$-structure on a stable $\infty$-category \cite[{Proposition C.1.2.9}]{lurie_spectral_algebraic_geometry}. The \emph{Postnikov completion} of $\acat$ is
\[
\acat_{\cpl} \colonequals \lim_{n\to\infty} \acat_{\leq n}, 
\]
the limit of its subcategories of $n$-truncated objects along the truncation functors. This is again a prestable $\infty$-category, and it can be identified with the subcategory $\widehat{\ccat}_{\geq 0}$ of connective objects of the left completion $\widehat{\ccat}$ of \cite[{Proposition 1.2.1.17}]{higher_algebra}. 

The truncation functors $\acat \rightarrow \acat_{\leq n}$ induce a canonical functor $\acat \rightarrow \acat_{\cpl}$ which is exact and restricts to an equivalence on the subcategories of bounded objects. This functor is initial with respect to exact functors into Postnikov complete prestable $\infty$-categories\footnote{This follows formally from the fact that $(\acat_{\cpl})_{\cpl} \simeq \acat_{\cpl}$. Beware that this does not mean that $\acat \rightarrow \acat_{\cpl}$ is a localization of $\infty$-categories, and indeed this is usually not the case.}. 
\end{recollection}

Our main result in this section, \cref{theorem:nilpotent_complete_synthetic_spectra_as_coalgebras_for_a_derived_functor}, gives a coalgebraic description of $(\Syn_{R}^{\geq 0})_{\cpl}$, the Postnikov completion of the $\infty$-category of connective $R$-based synthetic spectra. 

\begin{remark}
Any Postnikov complete prestable $\infty$-category is in particular \emph{separated}; that is, the $t$-structure homotopy groups detect equivalences. In particular, the canonical functor from $\Syn_R^{\geq 0}$ to its Postnikov completion factors uniquely through a conservative functor
\[
\widehat{\Syn}{}_{R}^{{\geq 0}} \rightarrow (\Syn_{R}^{{\geq 0}})_{\cpl}
\]
from the $\infty$-category of hypercomplete synthetic spectra of \cite[{\S 5.1}]{pstrkagowski2018synthetic}. Beware that even this induced functor is not in general an equivalence; this is analogous to the difference between $E$-local and $E$-nilpotent complete spectra. 

A notable case where these $\infty$-categories agree is when $R$ is a Morava $E$-theory spectrum, as proven in \cite[{Theorem 7.4}]{abstract_gh_theory}. Thus, in this case the results of this section give a description of hypercomplete synthetic spectra. 
\end{remark}

\begin{construction}
Consider the free and forgetful adjunction
\[
\begin{tikzcd}
	{\Syn^{{\geq 0}}_R} && {\Mod_{\nu(R)}(\Syn^{{\geq 0}}_{R})}
	\arrow["{\nu(R) \otimes_{\nu(\thesphere)} \bs}", from=1-1, to=1-3, shift left=1]
	\arrow[shift left=1, from=1-3, to=1-1]
\end{tikzcd}.
\]
This gives $\nu(R) \otimes_{\nu(S)} \bs$, considered as an endofunctor of $\nu(R)$-modules, the structure of a comonad. Moreover, the adjunction yields a canonical comparison functor 
\begin{equation}
\label{equation:comparison_functor_from_synthetic_spectra_into_coalgebras_for_nur_tensor_comonad}
\phi \colon \Syn_{R}^{{\geq 0}} \rightarrow \CoAlg_{\nu(R) \otimes_{\nu(\thesphere)} \bs}(\Mod_{\nu(R)}(\Syn_{R}^{{\geq 0}}))
\end{equation}
into the associated $\infty$-category of coalgebras.
\end{construction}

\begin{lemma}
\label{lemma:functor_out_of_synthetic_spectra_into_coalgebras_is_postnikov_completion}
The target of (\ref{equation:comparison_functor_from_synthetic_spectra_into_coalgebras_for_nur_tensor_comonad}) is a Postnikov complete prestable $\infty$-category, $\phi$ is exact, and the induced functor 
\[
\widehat{\phi} \colon (\Syn_{R}^{{\geq 0}})_{\cpl} \rightarrow \CoAlg_{\nu(R) \otimes_{\nu(\thesphere)} \bs}(\Mod_{\nu(R)}(\Syn_{R}^{{\geq 0}}))
\]
from the Postnikov completion of $\Syn_R^{\geq 0}$ is an equivalence. 
\end{lemma}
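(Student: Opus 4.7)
The plan is to reduce the statement to a comonadic Barr--Beck--Lurie assertion at each finite Postnikov truncation, and then glue via Postnikov completeness. Let $c = \nu(R) \otimes_{\nu(\thesphere)} (\bs)$ denote the comonad in question. I would first endow the target with a $t$-structure in which an object is connective if and only if its underlying $\nu(R)$-module is connective. Via \cref{proposition:synthetic_nur_modules_as_models} and \cref{prop:models_of_free_r_modules_as_modules_over_the_whitehead_tower}, the base $\Mod_{\nu(R)}(\Syn_R^{\geq 0})$ is identified with connective filtered $\tau_{\geq \star} R$-modules in the diagonal $t$-structure, which is presentable prestable and Postnikov complete. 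Since $c$ preserves connective objects and the coalgebra forgetful functor $U$ creates limits, this $t$-structure lifts to the coalgebra $\infty$-category, which inherits Postnikov completeness from the base. For exactness of $\phi$, observe that $U \circ \phi \simeq c$, which is both exact and $t$-exact; since $U$ is conservative and creates limits, this forces exactness of $\phi$ itself.

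For the equivalence $\widehat{\phi}$, since both source and target are Postnikov complete and both $\phi$ and $\widehat{\phi}$ are exact, it suffices to show that for each $n \geq 0$ the induced functor
\[
\phi_{\leq n} \colon (\Syn_R^{\geq 0})_{\leq n} \to \CoAlg_c(\Mod_{\nu(R)}(\Syn_R^{\geq 0}))_{\leq n}
\]
is an equivalence; these equivalences then assemble in the inverse limit. By $t$-exactness of $c$, $n$-truncated coalgebras coincide with coalgebras on $n$-truncated modules, so that $\phi_{\leq n}$ is the comparison functor for the induced adjunction $c \colon (\Syn_R^{\geq 0})_{\leq n} \rightleftarrows \Mod_{\nu(R)}(\Syn_R^{\geq 0})_{\leq n} \noloc U$. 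I would then apply the comonadic Barr--Beck--Lurie theorem. Since $c$-split cosimplicial objects have absolute limits, $c$ automatically preserves the relevant totalizations, and the source admits all small limits; so the only nontrivial condition to verify is conservativity of $c$ on $(\Syn_R^{\geq 0})_{\leq n}$.

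The main obstacle is this conservativity. Given $X \in (\Syn_R^{\geq 0})_{\leq n}$ with $\nu(R) \otimes X \simeq 0$, write $\overline{\nu R}$ for the fibre of $\nu(\thesphere) \to \nu(R)$, so that the cofibre sequence
\[
\overline{\nu R} \otimes X \to X \to \nu(R) \otimes X
\]
yields $X \simeq \overline{\nu R} \otimes X$, and iterating, $X \simeq (\overline{\nu R})^{\otimes k} \otimes X$ for every $k \geq 0$. The Adams-type hypothesis on $R$ ensures that tensoring with $\overline{\nu R}$ strictly raises the synthetic Adams filtration (this is essentially the same convergence input used to analyze the $R$-Adams spectral sequence), which, combined with the $n$-truncation of $X$, forces $X \simeq 0$. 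Granting this conservativity, $\phi_{\leq n}$ is an equivalence for every $n$, and assembling these equivalences along the Postnikov tower gives $\widehat{\phi}$ as claimed.
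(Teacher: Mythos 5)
Your proposal takes a genuinely different route from the paper at the final step, but it contains two significant gaps, the second of which I believe is a real error.

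First, you assert without justification that $c = \nu(R) \otimes_{\nu(\thesphere)}(-)$ ``is both exact and $t$-exact.'' Right exactness is automatic, but left exactness is nontrivial in the prestable $\infty$-category $\Mod_{\nu(R)}(\Syn_R^{\geq 0})$ and is precisely where the Adams-type hypothesis enters: one writes $R \simeq \colim R_\alpha$ as a filtered colimit of finite $R$-projective spectra, notes that $\nu(F) \otimes (-)$ has both adjoints (hence is exact) when $F$ is finite $R$-projective, and uses that filtered colimits are exact in $\Syn_R^{\geq 0}$. Your proof needs this paragraph, and everything downstream (Postnikov completeness of the coalgebra category, creation of the $t$-structure along $U$, your use of $t$-exactness to pass to truncated adjunctions) depends on it.

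Second, and more seriously, your conservativity argument via the synthetic Adams filtration does not work as stated. From $X \simeq (\overline{\nu R})^{\otimes k} \otimes X$ you want to conclude $X \simeq 0$ when $X$ is $n$-truncated, but the $n$-truncation constraint lives on the $t$-structure degree axis, whereas ``raising Adams filtration'' lives on the weight/$\tau$-adic axis, and these two axes are independent. Concretely, $\overline{\nu R} = \fib(\nu S \to \nu R)$ is not $\tau$-divisible (its image under $C\tau \otimes (-)$ is $\Sigma^{-1}\overline{R_*R}$, not zero), nor is it $1$-connective in the diagonal $t$-structure (in the filtered picture $(\overline{\nu R})_q = \fib(\tau_{\geq q}S \to \tau_{\geq q}R)$ is only $(q-1)$-connective). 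So iterated tensoring with $\overline{\nu R}$ raises neither connectivity nor $\tau$-divisibility in a way that collides with $n$-truncation. A correct proof of conservativity on $n$-truncated objects has to go by induction on the Postnikov tower, reducing to conservativity on the heart — and on the heart, $c$ is the forgetful functor $\Comod_{R_*R} \to \Mod_{R_*}$, whose conservativity is immediate. But at that point you have reproduced the paper's argument: the paper observes directly that an exact functor between Postnikov complete prestable $\infty$-categories is an equivalence as soon as it is one on hearts, cites the known heart identification $\Syn_R^\heartsuit \simeq \Comod_{R_*R}$, and concludes. The paper's route is both correct and shorter; your route, once the conservativity gap is filled correctly, collapses into it.
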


\begin{proof}
We first claim that the right exact functor $\nu(R) \otimes_{\nu(\thesphere)} -$ is also left exact; note that there is something to check here, as we are working in the prestable context of \emph{connective} synthetic spectra. Indeed, if $F$ is a finite $E$-projective spectrum as in \cite[{Definition 3.13}]{pstrkagowski2018synthetic}, then $\nu(F) \otimes_{\nu(\thesphere)} \bs$ has both a left and right adjoint, given by tensoring with the Spanier-Whitehead dual of $F$ \cite[{Lemma 3.21}]{pstrkagowski2018synthetic}, and so is exact. Since $R$ is Adams-type, we can write $R \simeq \colim R_{\alpha}$ as a filtered colimit of finite $R$-projective spectra, which shows that 
\[
\nu(R) \otimes_{\nu(\thesphere)} (\bs) \simeq \colim \nu(R_{\alpha}) \otimes_{\nu(\thesphere)} (\bs)
\]
is exact, as filtered colimits are exact in connective synthetic spectra \cite[{Proposition 4.16}]{pstrkagowski2018synthetic}. 

As $\nu(R)\otimes_{\nu(\thesphere)}\bs$ is exact, it follows that the forgetful functor
\[
 \CoAlg_{\nu(R) \otimes_{\nu(\thesphere)} \bs}(\Mod_{\nu(R)}(\Syn_{R}^{{\geq 0}})) \rightarrow \Mod_{\nu(R)}(\Syn_{R}^{{\geq 0}})
\]
is also exact. As this functor is conservative and the target $\Model_{\nu(R)}(\Syn_R^{\geq 0})\simeq\Model_{\lfrees(R)}$ is Postnikov complete, it follows that $\CoAlg_{\nu(R) \otimes_{\nu(\thesphere)} \bs}(\Mod_{\nu(R)}(\Syn_{R}^{{\geq 0}}))$ is Postnikov complete.

The comparison functor $\widehat{\phi}$ is an exact functor between Postnikov complete prestable $\infty$-categories, and so to show that it is an equivalence it suffices to show that it is an equivalence between the hearts. Indeed, it then follows using exactness by induction that it induces equivalences between subcategories of $n$-truncated objects for all $n \geq 0$, and completeness allows us to pass to the limit. We have 
\[
\Mod_{\nu(R)}(\Syn_{R}^{\heartsuit}) \simeq \Model_{\lfrees(R)}^{\heartsuit} \simeq \Mod_{R_{*}}^\heartsuit,
\]
and the comonad $\nu(R)\otimes_{\nu(\thesphere)}\bs$ restricts to $R_\ast R \otimes_{R_\ast}\bs$ on discrete objects. Thus the result follows from the equivalence $\Syn_{R}^{\heartsuit} \simeq \CoAlg_{R_{*}R\otimes_{R_\ast}\bs}(\Mod_{R_{*}}^\heartsuit)$ of \cite[{Proposition 4.16}]{pstrkagowski2018synthetic}. 
\end{proof}

\begin{notation}
\label{notation:comonad_defining_synthetic_spectra}
Under the equivalence $\Mod_{\nu(R)}(\Syn_{R}^{{\geq 0}}) \simeq \Model_{\lfrees(R)}$ of \cref{proposition:synthetic_nur_modules_as_models}, $\nu(R) \otimes_{\nu(\thesphere)} -$ corresponds to a comonad on $\Model_{\lfrees(R)}$, which we will temporarily denote by
\[
c_{!} \colon \Model_{\lfrees(R)} \rightarrow \Model_{\lfrees(R)}.
\]
Note that we have equivalences 
\[
\CoAlg_{c_{!}}(\Model_{\lfrees(R)}) \simeq \CoAlg_{\nu(R) \otimes_{\nu(\thesphere)} -}(\Mod_{\nu(R)}(\Syn_{R}^{{\geq 0}})) \simeq (\Syn_{R}^{{\geq 0}})_{\cpl}, 
\]
the first being the definition of $c_{!}$ and the second being \cref{lemma:functor_out_of_synthetic_spectra_into_coalgebras_is_postnikov_completion}. 
\end{notation}

The rest of this section is devoted to showing that $c_{!}$ can be constructed independently of synthetic spectra, derived from the corresponding non-synthetic comonad on the $\infty$-category of $R$-modules. This yields the desired coalgebraic description of Postnikov-complete synthetic spectra, which we prove in \cref{theorem:nilpotent_complete_synthetic_spectra_as_coalgebras_for_a_derived_functor}. 

\begin{notation}
By \cref{prop:stableloopmodels}, the restricted Yoneda embedding $\LMod_{R} \rightarrow \Model_{\lfrees(R)}$ induces an equivalence $\LMod_{R} \simeq \Model_{\lfrees(R)}^{\Omega}$. In what follows, we will implicitly identify these $\infty$-categories, and write $\nu$ for the inclusion of either into $\Model_{\lfrees(R)}$.

Note that under the equivalence $\Mod_{\nu(R)}(\Syn_{R}^{{\geq 0}}) \simeq \Model_{\lfrees(R)}$, the synthetic analogue functor corresponds to the inclusion of loop models, see \cref{remark:synthetic_analogue_functor_corresponds_to_inclusion_of_loop_models_under_equivalence_between_nur_modules_and_models_of_lfreesr}, so our use of $\nu$ in both contexts is consistent. 
\end{notation}

\begin{lemma}
The underlying functor of the comonad $c_{!}$:
\begin{enumerate}
    \item Preserves geometric realizations;
    \item Is given by $\nu(P) \mapsto \nu(R \otimes_{\thesphere} P)$ for $P \in \lfrees(R)$.
\end{enumerate}
Moreover, it is uniquely determined by these two properties. 
\end{lemma}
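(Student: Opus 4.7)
The plan is to package all three statements into a single verification that uses the universal property of $\Model_{\lfrees(R)}$ together with the basic multiplicativity properties of the synthetic analogue functor established in \cite{pstrkagowski2018synthetic}. I would handle uniqueness first: by \cref{thm:freecocompletion}, any geometric realization-preserving endofunctor of $\Model_{\lfrees(R)}$ is the left Kan extension of its restriction to the full subcategory $\lfrees(R)\subset\Model_{\lfrees(R)}$ along $\nu$, so (1) and (2) together determine $c_!$ up to natural equivalence, as claimed.

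For (1), I would observe that the comonad on $\Mod_{\nu(R)}(\Syn_R^{\geq 0})$ induced by the free-forgetful adjunction with $\Syn_R^{\geq 0}$ preserves all small colimits. Indeed, it factors as
\[
\Mod_{\nu(R)}(\Syn_R^{\geq 0}) \xrightarrow{U} \Syn_R^{\geq 0} \xrightarrow{\nu(R)\otimes_{\nu(\thesphere)}(-)} \Mod_{\nu(R)}(\Syn_R^{\geq 0})
\]
where $U$ is conservative, preserves colimits (being the left module forgetful functor for an associative algebra in a stable $\infty$-category), and the second functor is a left adjoint. In particular this composite preserves geometric realizations, a property which transports across the equivalence $\Mod_{\nu(R)}(\Syn_R^{\geq 0})\simeq \Model_{\lfrees(R)}$ of \cref{proposition:synthetic_nur_modules_as_models}.

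For (2), I would use the characterization of that equivalence as $\nu_!(P) = \nu(P)$ for $P\in \lfrees(R)$, together with \cref{remark:synthetic_analogue_functor_corresponds_to_inclusion_of_loop_models_under_equivalence_between_nur_modules_and_models_of_lfreesr}, so that the representable $\nu P \in \Model_{\lfrees(R)}$ corresponds to the synthetic analogue $\nu(P)\in \Mod_{\nu(R)}(\Syn_R^{\geq 0})$. Thus $c_!(\nu P)$ corresponds under the equivalence to $\nu(R)\otimes_{\nu(\thesphere)}\nu(P)$, and the statement reduces to the identification $\nu(R)\otimes_{\nu(\thesphere)}\nu(P) \simeq \nu(R\otimes_\thesphere P)$ inside $\Mod_{\nu(R)}(\Syn_R^{\geq 0})$. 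Writing $P \simeq \bigoplus_i \Sigma^{q_i} R$ and using that $\nu$ preserves direct sums \cite[{Lemma 4.23}]{pstrkagowski2018synthetic}, together with compatibility of $\nu(R)\otimes_{\nu(\thesphere)}(-)$ with colimits established in the proof of (1), this reduces further to the case $P = R$, i.e.\ to the equivalence $\nu(R)\otimes_{\nu(\thesphere)}\nu(R)\simeq \nu(R\otimes_\thesphere R)$, which is \cite[{Lemma 4.24}]{pstrkagowski2018synthetic} and is exactly where the Adams-type hypothesis on $R$ is used (through writing $R$ as a filtered colimit of finite $R$-projective spectra, on which $\nu$ is strong symmetric monoidal).

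The whole argument is essentially bookkeeping, with the only nontrivial ingredient being the monoidality of the synthetic analogue functor, already available in the references. The mild subtlety I would pay attention to is that $R\otimes_\thesphere P$ need not itself lie in $\lfrees(R)$ (its $R$-homotopy is $R_*R\otimes_{R_*}P_*$, which need not be a free $R_*$-module), so the object $\nu(R\otimes_\thesphere P)$ in (2) should be read as the image of $R\otimes_\thesphere P \in \LMod_R$ under the restricted Yoneda embedding $\LMod_R \simeq \Model_{\lfrees(R)}^\Omega \hookrightarrow \Model_{\lfrees(R)}$ of \cref{prop:stableloopmodels}, and one should verify that this object corresponds to the synthetic analogue $\nu(R\otimes_\thesphere P)\in \Mod_{\nu(R)}(\Syn_R^{\geq 0})$ across the equivalence of \cref{proposition:synthetic_nur_modules_as_models} — again a consequence of \cref{remark:synthetic_analogue_functor_corresponds_to_inclusion_of_loop_models_under_equivalence_between_nur_modules_and_models_of_lfreesr}.
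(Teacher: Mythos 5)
Your proposal is correct and follows essentially the same route as the paper's own (very terse) proof: the paper simply asserts that (1) and (2) are restatements of the corresponding properties of $\nu(R)\otimes_{\nu(\thesphere)}(-)$ on $\Mod_{\nu(R)}(\Syn_R^{\geq 0})$, and invokes \cref{thm:freecocompletion} for uniqueness. You have unfolded what "restatement" means — tracking the comonad through the equivalence of \cref{proposition:synthetic_nur_modules_as_models}, recalling that $\nu(R)\otimes_{\nu(\thesphere)}(-)$ preserves colimits as a composite of the forgetful functor and a left adjoint, and reducing (2) to $\nu(R)\otimes_{\nu(\thesphere)}\nu(R)\simeq\nu(R\otimes_\thesphere R)$ via \cite[Lemma 4.24]{pstrkagowski2018synthetic}, which is where Adams-type enters; the closing remark that $\nu(R\otimes_\thesphere P)$ is to be read under the restricted Yoneda embedding, and that this matches the synthetic analogue by \cref{remark:synthetic_analogue_functor_corresponds_to_inclusion_of_loop_models_under_equivalence_between_nur_modules_and_models_of_lfreesr}, is exactly the point the paper leaves implicit.
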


\begin{proof}
These properties are a restatement of the corresponding properties of the endofunctor $\nu(R)\otimes_{\nu(\thesphere)}\bs$ of $\Model_{\nu(R)}(\Syn_R^{\geq 0})$. They uniquely determine it by the universal property of $\Model_{\lfrees(R)}$ as a cocompletion given in \cref{thm:freecocompletion}.
\end{proof}

As $R \otimes_{\thesphere}M\simeq (R\otimes_\thesphere R)\otimes_R M$ for any $M \in \LMod_R$, the underlying functor of the comonad $c_!$ can be thought of the derived functor of tensoring with the $R$-bimodule $R\otimes_\thesphere R$. The following describes some basic properties of such functors in general.

\begin{lemma}
\label{lemma:properties_of_the_functor_between_models_of_freesr_induced_by_tensoring_with_bimodule}
Let $R$ and $S$ be $\mathbf{E}_{1}$-rings and $M$ be an $S$-$R$-bimodule. Consider the derived functor
\[
D(M\otimes_R \bs)\colon \Model_{\lfrees(R)}\to\Model_{\lfrees(S)}
\]
of tensoring with $M$, as in \cref{definition:induced_derived_functor_from_a_functor_of_loop_models_and_vice_versa}. Then 
\begin{enumerate}
    \item $D(M\otimes_R\bs)$ takes $L_{\lfrees(R)}$-equivalences to $L_{\frees(S)}$-equivalences;
    \item The counit $E(D(M \otimes_{R} -)) \rightarrow M \otimes_{R} -$ is an equivalence;
    \item $D(M\otimes_R\bs)$ preserves loop models provided $\pi_{*}(M)$ is flat as a right $\pi_{*}(R)$-module.;
    \item If $R = S$, then the endofunctor $M\otimes_R\bs$ of $\LMod_R$ is \derivable{} in the sense of \cref{definition:self_tame_endofunctor} if and only if $\Tor_{R_\ast}^k(M_\ast,M_\ast) = 0$ for $k > 0$.
\end{enumerate}
\end{lemma}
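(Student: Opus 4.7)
Parts (1) and (2) are quick applications of earlier results. For (1), the functor $M \otimes_R -$ preserves all small colimits, so \cref{lemma:induced_derived_functor_of_a_cocontinuous_functor_preserves_l_equivalences} applies directly to give that $D(M \otimes_R -)$ takes $L_{\lfrees(R)}$-equivalences to $L_{\lfrees(S)}$-equivalences. For (2), \cref{lemma:a_criterion_for_a_functor_between_loop_models_is_determined_by_its_derived_functor} shows the counit $E(D(M \otimes_R -)) \to M \otimes_R -$ is an equivalence as soon as $M \otimes_R -$ preserves those geometric realizations preserved by the inclusion $\LMod_R \hookrightarrow \Model_{\lfrees(R)}$, which is automatic since $M \otimes_R -$ preserves every small colimit in $\LMod_R$.

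For (3) and (4) I will work through the filtered description of \cref{prop:models_of_free_r_modules_as_modules_over_the_whitehead_tower}. Under the equivalences
\[
\Model_{\lfrees(R)} \simeq \LMod_{\tau_{\geq \star} R}(\Sp^{\fil}_{\geq 0}), \qquad \Model_{\lfrees(S)} \simeq \LMod_{\tau_{\geq \star} S}(\Sp^{\fil}_{\geq 0}),
\]
the generator $\nu(\Sigma^{q} R)$ corresponds to $\Sigma^{q}\tau_{\geq \star} R$. Since $\tau_{\geq \star}$ is lax symmetric monoidal, $\tau_{\geq \star} M$ inherits the structure of a $\tau_{\geq \star} S$--$\tau_{\geq \star} R$-bimodule, and the functor $\tau_{\geq \star} M \otimes_{\tau_{\geq \star} R} (-)$ preserves all colimits and sends $\Sigma^{q}\tau_{\geq \star} R$ to $\Sigma^{q}\tau_{\geq \star} M$. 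By the universal property of \cref{thm:freecocompletion} it must therefore agree with $D(M \otimes_R -)$. Under this identification, the loop models correspond to the $\tau$-periodic filtered modules of \cref{prop:models_of_free_r_modules_as_modules_over_the_whitehead_tower}(3), which in the connective setting are exactly the objects of the form $\tau_{\geq \star} N$ for $N \in \LMod_R$.

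Both remaining parts now reduce to asking when the canonical comparison
\[
\tau_{\geq \star} M \otimes_{\tau_{\geq \star} R} \tau_{\geq \star} N \longrightarrow \tau_{\geq \star}(M \otimes_R N),
\]
coming from the lax monoidality of $\tau_{\geq \star}$, is an equivalence. Both sides are connective in the diagonal $t$-structure, so it is enough to check this on associated graded: the right-hand side yields the graded Eilenberg--MacLane object on $\pi_{*}(M \otimes_R N)$, while the Tor spectral sequence for the filtered tensor product identifies the associated graded of the left-hand side with $\pi_{*} M \otimes^{L}_{\pi_{*} R} \pi_{*} N$. Hence the comparison is an equivalence precisely when $\Tor^{\pi_{*} R}_{k}(\pi_{*} M, \pi_{*} N) = 0$ for all $k > 0$. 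For (3), flatness of $\pi_{*} M$ as a right $\pi_{*} R$-module forces this $\Tor$-vanishing uniformly in $N$, so $D(M \otimes_R -)$ sends each loop model $\tau_{\geq \star} N$ to $\tau_{\geq \star}(M \otimes_R N)$, which is again a loop model. For (4), \cref{definition:self_tame_endofunctor} together with the identification above shows that $M \otimes_R -$ is \derivable{} iff the comparison is an equivalence at $N = M$ (it suffices to test on the representable generators $\nu(\Sigma^{q} R)$, and by suspension one reduces to $q = 0$), which by the Tor criterion amounts to $\Tor^{\pi_{*} R}_{k}(\pi_{*} M, \pi_{*} M) = 0$ for all $k > 0$. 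The main obstacle will be pinning down the identification of $D(M \otimes_R -)$ with $\tau_{\geq \star} M \otimes_{\tau_{\geq \star} R} -$ carefully, and setting up the associated graded / Tor-spectral-sequence argument rigorously in the filtered setting, where one must use that a connective filtered spectrum is determined by its associated graded to promote an equivalence on the latter to an equivalence of filtered spectra.
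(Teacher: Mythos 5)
Your treatment of (1) and (2) is the same as the paper's: both are immediate from the cited lemmas once one observes that $M \otimes_R -$ is a left adjoint.

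For (3) and (4) you take a genuinely different, and correct, route. The paper never passes to the filtered model: it invokes \cref{prop:loopmodelifspecialfiberdiscrete} to reduce (3) to showing that $\tau_! D(M \otimes_R \bs)(\nu N)$ is discrete, identifies the special fibre functor $\tau_! D(M \otimes_R \bs)$ with the (left) derived tensor product $M_\ast \otimes^{L}_{R_\ast} (-)$ directly on $\Model_{\h\lfrees(R)} \simeq \LMod_{R_\ast}^{\geq 0}$ (both sides preserve geometric realizations and agree on representables), and then reads off the $\Tor$-vanishing criterion; for (4) it evaluates the derivability natural transformation on $\nu R$, applies $\tau_!$ again, and uses \cref{prop:loopmodelifspecialfiberdiscrete} once more to promote a $\pi_0$-isomorphism between loop models to an equivalence. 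You instead realize $D(M\otimes_R \bs)$ concretely as $\tau_{\geq\star}M \otimes_{\tau_{\geq\star}R}(-)$ under the equivalence of \cref{prop:models_of_free_r_modules_as_modules_over_the_whitehead_tower}, identify loop models with the image of $\tau_{\geq\star}$, and then reduce to the associated-graded Künneth comparison. The two proofs are really the same computation in different clothing: your "associated graded" is, via \cref{prop:models_of_free_r_modules_as_modules_over_the_whitehead_tower}(2) with $n=1$, exactly the paper's $\tau_!$. Where you check that a map of diagonally connective filtered spectra is an equivalence once it is so on $\gr$ (true because $\mathrm{fib}$ is then simultaneously $\tau$-periodic and uniformly connective, hence zero — note you want this conservativity statement, not the stronger "determined by its associated graded"), the paper uses the cleaner \cref{prop:loopmodelifspecialfiberdiscrete}. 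The paper's version is shorter because it never leaves the $\Model_{\pcat}$-formalism, but your filtered picture has the pedagogical advantage of making the $\Tor$ spectral sequence and the Künneth map visible. One small thing worth spelling out in your version: the identification $D(M\otimes_R \bs) \simeq \tau_{\geq\star}M \otimes_{\tau_{\geq\star}R}(-)$ uses that $M\otimes_R -$ preserves coproducts, so $D(M\otimes_R -)$ preserves all colimits (not just geometric realizations) by \cref{thm:freecocompletion}(4); without this, the two colimit-preserving functors agreeing on representables wouldn't force them to agree.
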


\begin{proof}
(1)~~This follows from \cref{lemma:induced_derived_functor_of_a_cocontinuous_functor_preserves_l_equivalences}  as $M\otimes_R\bs$ preserves colimits.

(2)~~This follows from \cref{lemma:a_criterion_for_a_functor_between_loop_models_is_determined_by_its_derived_functor} as $M\otimes_R\bs$ preserves colimits.

(3)~~By \cref{prop:loopmodelifspecialfiberdiscrete}, $D(M\otimes_R\bs)$ preserves loop models if and only if $\tau_! D(M\otimes R\bs)(\nu N)$ is discrete for any left $R$-module $N$. Under the identification of $\Model_{\h\lfrees(R)}$ and $\Model_{\h\lfrees(S)}$ with the connective derived $\infty$-categories of graded $R_\ast$ and $S_\ast$-modules, we can identify
\[
\tau_! D(M\otimes_R\bs)(\nu N)\simeq M_\ast \otimes_{R_\ast}N_\ast
\]
as the \emph{derived} tensor product with $M_\ast$. This is discrete provided $\Tor_{R_\ast}^f(M_\ast,N_\ast) = 0$ for $f\geq 1$, which is automatic if $M_\ast$ is flat as a right $R_\ast$-module.

(4)~~By definition, $M\otimes_R\bs$ is \derivable{} if and only if the natural transformation
\[
D(M\otimes_R\bs)\circ D(M\otimes_R\bs) \to D(M\otimes_R M \otimes_R\bs)
\]
of derived functors is an equivalence. As both sides are compatible with sums and suspensions, this natural transformation is an equivalence if and only if it evaluates to an equivalence on $\nu R$. As with (3), applying $\tau_!$ to this natural transformation and evaluating on $\nu R$ provides the map
\[
M_\ast\otimes_{R_\ast}M_\ast \to \Tor_{R_\ast}^0(M_\ast,M_\ast)
\]
in $\LMod_{R_\ast}^{\geq 0}$ which projects the derived tensor product $M_\ast\otimes_{R_\ast}M_\ast$ onto its $0$-truncation. Clearly this is an equivalence if $\Tor_{R_\ast}^k(M_\ast,M_\ast) = 0$ for $k > 0$. Conversely, if this is an equivalence then \cref{prop:loopmodelifspecialfiberdiscrete} implies that
\[
D(M\otimes_R\bs)\circ D(M\otimes_R\bs)(\nu R) \to D(M\otimes_R M \otimes_R\bs)(\nu R)
\]
is a map between loop models which induces an isomorphism on $\pi_0$, and which is therefore itself an equivalence.
\end{proof}

\begin{proposition}
\label{proposition:comonad_defining_synthetic_spectra_comes_is_a_derived_functor_as_a_comonad}
Suppose that $R$ is Adams-type. Then 
\begin{enumerate}
    \item The endofunctor of $R$-modules given by $R \otimes_{\thesphere} \bs$ is \derivable{} in the sense of \cref{definition:self_tame_endofunctor}, so that $D(R\otimes_\thesphere\bs)$ inherits the structure of a comonad;
    \item There is a canonical equivalence of comonads $D(R\otimes_\thesphere\bs)\simeq c_!$.
\end{enumerate} 
\end{proposition}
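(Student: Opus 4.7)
For (1), my plan is to invoke Lemma \ref{lemma:properties_of_the_functor_between_models_of_freesr_induced_by_tensoring_with_bimodule}.(4) directly. Writing $R \otimes_{\thesphere} \bs \simeq (R \otimes_{\thesphere} R) \otimes_{R} \bs$, the endofunctor in question is tensoring with the $R$-bimodule $M = R \otimes_{\thesphere} R$, whose homotopy groups are $M_{*} = R_{*}R$. The Adams-type hypothesis is exactly that $R_{*}R$ is flat as a right $R_{*}$-module, so in particular $\Tor^{k}_{R_{*}}(R_{*}R, R_{*}R) = 0$ for $k > 0$. This verifies the criterion of the lemma, so $R \otimes_{\thesphere} \bs$ is \derivable{}, and by Theorem \ref{theorem:monoidal_functor_between_lmod_induced_by_induced_derived_functor_construction} the lax monoidal structure of $D$ equips $D(R \otimes_{\thesphere} \bs)$ with a canonical comonad structure inherited from that of $R \otimes_{\thesphere} \bs$.

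For (2), I would first identify $D(R \otimes_{\thesphere} \bs)$ and $c_{!}$ as endofunctors via the universal property of the cocompletion. Both functors preserve geometric realizations (indeed all small colimits), so by Theorem \ref{thm:freecocompletion} they are determined by their restrictions to $\lfrees(R)$. By construction $c_{!}(\nu P) \simeq \nu(R \otimes_{\thesphere} P)$ for $P \in \lfrees(R)$ (this uses symmetric monoidality of $\nu$ on Adams-type data, which is where the hypothesis enters again), and the same formula holds for $D(R \otimes_{\thesphere} \bs)$ directly from the definition of an induced derived functor. Hence the underlying functors agree.

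To promote this to an equivalence of comonads, my plan is to use the monoidal subcategory lemma \ref{lemma:endofunctors_whose_derived_functors_preserve_loop_models_are_which_can_be_recovered_from_them_form_a_monoidal_subcategory_on_which_d_is_monoidal}: one checks that $R \otimes_{\thesphere} \bs$ belongs to the subcategory $\ccat \subseteq \End(\Model_{\lfrees(R)}^{\Omega})$ on which $D$ is strongly monoidal. The two conditions defining $\ccat$ follow from parts (2) and (3) of Lemma \ref{lemma:properties_of_the_functor_between_models_of_freesr_induced_by_tensoring_with_bimodule}: the counit $E(D(R \otimes_{\thesphere} \bs)) \to R \otimes_{\thesphere} \bs$ is an equivalence because the functor preserves colimits, and $D(R \otimes_{\thesphere} \bs)$ preserves loop models because $R_{*}R$ is flat. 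Strong monoidality then guarantees that $D$ transports the comonad structure on $R \otimes_{\thesphere} \bs$ (coming from the $\bfE_{1}$-algebra structure of $R$ in spectra) to a comonad structure on $D(R \otimes_{\thesphere} \bs)$. On the other hand, since $c_{!}$ takes representables to loop models, Lemma \ref{lemma:a_derived_functor_can_be_recovered_from_the_induced_functor_between_loop_models_iff_it_takes_representables_to_loop_models} shows $c_{!} \simeq D(E(c_{!}))$, and a direct computation identifies $E(c_{!})$ with $R \otimes_{\thesphere} \bs$ as an endofunctor of $\LMod_{R}$, so that combining with the lemma on $\ccat$ yields the desired equivalence of comonads $c_{!} \simeq D(R \otimes_{\thesphere} \bs)$.

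\textbf{Main obstacle.} The functor-level identification is straightforward, but promoting it to an equivalence of \emph{comonads} is the delicate step: one must verify that the comonad structure on $E(c_{!})$ inherited from $c_{!}$ coincides with the structure on $R \otimes_{\thesphere} \bs$ coming from the spectrum-level adjunction. Conceptually this is because both structures descend from the unit $\thesphere \to R$ via the lax symmetric monoidal synthetic analogue functor $\nu \colon \spectra \to \Syn_{R}$, but the cleanest formal justification is through the strong monoidality of $D$ on the subcategory $\ccat$, which is what avoids having to unwind the comparison maps by hand.
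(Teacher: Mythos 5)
Your treatment of part (1) is correct and matches the paper's argument: invoke \cref{lemma:properties_of_the_functor_between_models_of_freesr_induced_by_tensoring_with_bimodule}.(4) via the flatness of $R_*R$ granted by the Adams-type hypothesis.

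For part (2), you set up the reduction correctly — you use \cref{lemma:endofunctors_whose_derived_functors_preserve_loop_models_are_which_can_be_recovered_from_them_form_a_monoidal_subcategory_on_which_d_is_monoidal} to observe that $D$ is strongly monoidal on the relevant subcategory $\ccat$, and you note that $c_! \simeq D(E(c_!))$ as functors. But the crucial step is then to identify $E(c_!)$ with $R\otimes_\thesphere\bs$ as \emph{comonads} on $\LMod_R$, and this is precisely what your proposal does not do. You only claim the endofunctor-level identification ("a direct computation identifies $E(c_{!})$ with $R \otimes_{\thesphere} \bs$ as an endofunctor of $\LMod_{R}$"), and your "Main obstacle" paragraph correctly flags that the comonad comparison is the delicate point but then offers strong monoidality of $D$ as "the cleanest formal justification." This is circular: strong monoidality of $D$ on $\ccat$ is what reduces the problem to comparing comonads on the generic fibre; it does not itself perform that comparison. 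After the reduction you are back to proving $E(c_!) \simeq R\otimes_\thesphere\bs$ as comonads, and no tool from $\S\ref{subsection:lax_monoidality_and_actions}$ accomplishes this.

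What's missing is the paper's final argument: one must identify the localization $L\colon \Model_{\lfrees(R)} \to \Model_{\lfrees(R)}^\Omega$, under the equivalence $\Model_{\lfrees(R)} \simeq \Mod_{\nu(R)}(\Syn_R^{\geq 0})$ of \cref{proposition:synthetic_nur_modules_as_models}, with $\tau$-inversion in the sense of \cite[\S4.4]{pstrkagowski2018synthetic}. Since $E$ is lower conjugation by $L \dashv \nu$, the comonad $E(c_!)$ is exactly the comonad of the adjunction obtained from $\Syn_R^{\geq 0} \rightleftarrows \Mod_{\nu(R)}(\Syn_R^{\geq 0})$ by inverting $\tau$ on both sides, and the fact that this $\tau$-inverted adjunction is $\spectra \rightleftarrows \LMod_R$ is one of the central results of \cite{pstrkagowski2018synthetic}. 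Your "conceptual" remark about both structures descending from $\thesphere \to R$ via lax monoidality of $\nu$ gestures in the right direction, but as written it is not a proof, and the mechanism you propose (strong monoidality of $D$) does not substitute for the $\tau$-inversion argument.
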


\begin{proof}
The first claim follows from \cref{lemma:properties_of_the_functor_between_models_of_freesr_induced_by_tensoring_with_bimodule}.(4), as if $R$ is Adams type then $R_\ast R$ is flat over $R_\ast$. As $D(R\otimes_\thesphere\bs)\simeq c_!\simeq D(E(c_!))$ as functors, to give an equivalence as comonads it therefore suffices by 
\cref{lemma:endofunctors_whose_derived_functors_preserve_loop_models_are_which_can_be_recovered_from_them_form_a_monoidal_subcategory_on_which_d_is_monoidal}
to prove that $R\otimes_\thesphere\bs\simeq E(c_!)$ as comonads on $\LMod_R$. Under the equivalence $\Mod_{\nu(R)}(\Syn_R^{\geq 0})\simeq\Model_{\lfrees(R)}$, the localization $L\colon \Model_{\lfrees(R)}\to\Model_{\lfrees(R)}^\Omega$ can be identified with inverting $\tau$ in the sense of \cite[{\S 4.4}]{pstrkagowski2018synthetic}. Thus $E(c_!)$ is the comonad associated to the adjunction obtained from
\[
\Syn_{R}^{{\geq 0}} \rightleftarrows \Mod_{\nu(R)}(\Syn_{R}^{{\geq 0}})
\]
by inverting $\tau$ on both sides, which is exactly the comonad $R \otimes_\thesphere\bs$ as needed.
\end{proof}

Putting everything together yields the main theorem of this section.

\begin{theorem}
\label{theorem:nilpotent_complete_synthetic_spectra_as_coalgebras_for_a_derived_functor}
There is an equivalence of $\infty$-categories
\[
(\Syn_{R}^{{\geq 0}})_{\cpl} \simeq \CoAlg_{D(R \otimes_{\thesphere} \bs)}(\Model_{\lfrees(R)})
\]
between the Postnikov completion of $\Syn_{R}^{{\geq 0}}$ and coalgebras for the derived comonad $D(R \otimes_{\thesphere} -)$. 
\end{theorem}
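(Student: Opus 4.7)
The statement is essentially a summary: all the substantive work has already been carried out in the preceding lemmas and propositions, so the proof amounts to chaining them together. My plan is as follows.

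First, I would invoke \cref{lemma:functor_out_of_synthetic_spectra_into_coalgebras_is_postnikov_completion}, which identifies the Postnikov completion $(\Syn_R^{\geq 0})_{\cpl}$ with the $\infty$-category of coalgebras for the free-forgetful comonad associated to the adjunction $\Syn_R^{\geq 0} \rightleftarrows \Mod_{\nu(R)}(\Syn_R^{\geq 0})$. Next, I would transport this along the equivalence $\Mod_{\nu(R)}(\Syn_R^{\geq 0}) \simeq \Model_{\lfrees(R)}$ of \cref{proposition:synthetic_nur_modules_as_models} to obtain, by definition of $c_!$ in \cref{notation:comonad_defining_synthetic_spectra}, an equivalence
\[
(\Syn_R^{\geq 0})_{\cpl} \simeq \CoAlg_{c_!}(\Model_{\lfrees(R)}).
\]

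Finally, I would apply \cref{proposition:comonad_defining_synthetic_spectra_comes_is_a_derived_functor_as_a_comonad}, which uses the Adams-type hypothesis on $R$ to show that the endofunctor $R \otimes_\thesphere \bs$ of $\LMod_R$ is \derivable{}, so that $D(R \otimes_\thesphere \bs)$ canonically inherits a comonad structure on $\Model_{\lfrees(R)}$, and moreover provides an equivalence of comonads $D(R \otimes_\thesphere \bs) \simeq c_!$. Passing to coalgebras yields the desired equivalence.

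The only subtle point is that all three intermediate equivalences must be compatible at the level of comonads (not just functors), so in writing up the proof I would be careful to invoke the equivalence of comonads from \cref{proposition:comonad_defining_synthetic_spectra_comes_is_a_derived_functor_as_a_comonad} rather than merely an equivalence of underlying endofunctors; but this is exactly what was established there using the monoidal properties of the $E \dashv D$ adjunction in \cref{lemma:endofunctors_whose_derived_functors_preserve_loop_models_are_which_can_be_recovered_from_them_form_a_monoidal_subcategory_on_which_d_is_monoidal}. Since the hard work lies in the preceding results, there is no additional obstacle to overcome at this stage.
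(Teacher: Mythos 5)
Your proposal follows the paper's proof exactly: it applies \cref{lemma:functor_out_of_synthetic_spectra_into_coalgebras_is_postnikov_completion} to identify the left-hand side with coalgebras over $c_!$, and then invokes \cref{proposition:comonad_defining_synthetic_spectra_comes_is_a_derived_functor_as_a_comonad} to identify $c_!$ with $D(R\otimes_\thesphere\bs)$ as comonads. Your explicit remark that one needs an equivalence of comonads rather than merely of underlying endofunctors correctly pinpoints the one subtlety, which is exactly what \cref{proposition:comonad_defining_synthetic_spectra_comes_is_a_derived_functor_as_a_comonad} was designed to address.
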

\begin{proof}
By \cref{lemma:functor_out_of_synthetic_spectra_into_coalgebras_is_postnikov_completion}, the left hand side can be identified with coalgebras for the comonad $\nu(R) \otimes_{\nu(\thesphere)} \bs$, which in turn can be identified with coalgebras for $c_{!}$ of \cref{notation:comonad_defining_synthetic_spectra}. The result then follows from \cref{proposition:comonad_defining_synthetic_spectra_comes_is_a_derived_functor_as_a_comonad}. 
\end{proof}

\begin{rmk}\label{rmk:twoconstructionsofsyntheticspectra}
Let $R$ be a homotopy ring spectrum for which $\Tor_{R_\ast}^{>0}(R_\ast R,R_\ast R) = 0$. In this case, there is a good $\infty$-category $\Syn_{R,\cpl}^{\geq 0}$ of Postnikov-complete $R$-synthetic spectra in either of the following two cases:
\begin{enumerate}
\item $R$ is Adams-type, in which case we may define $\Syn_{R,\cpl}^{\geq 0} \colonequals (\Syn_R^{\geq 0})_\cpl$ as the Postnikov completion of the connective $R$-synthetic spectra of \cite{pstrkagowski2018synthetic};
\item $R$ is equipped with an $\bfE_1$-ring structure, in which case we may define 
\[
\Syn_{R,\cpl}^{\geq 0} \colonequals \CoAlg_{D(R \otimes_{\thesphere} \bs)}(\Model_{\lfrees(R)})
\]
\end{enumerate}
By \cref{theorem:nilpotent_complete_synthetic_spectra_as_coalgebras_for_a_derived_functor}, these two $\infty$-categories are equivalent when both assumptions are satisfied. It is interesting to observe that these are the same assumptions that enable one to construct a K\"unneth spectral sequence for $R$-homology \cite{adams1969lectures,elmendorf2007rings}. It is an interesting question whether there is some weaker common assumption on $R$ that allows one to construct a good $\infty$-category of $R$-synthetic spectra.
\end{rmk}

\subsection{Synthetic spaces} 
\label{subsection:examples_synthetic_spaces}

Given an $\bfE_1$-ring spectrum $R$, the $R$-Adams spectral sequence is equivalent to the descent spectral sequence associated to the adjunction $\Sp \rightleftarrows \LMod_{R}$ between spectra and $R$-modules. \cref{theorem:nilpotent_complete_synthetic_spectra_as_coalgebras_for_a_derived_functor} categorifies this description, showing that the Postnikov completion of $R$-synthetic spectra is equvalent to the $\infty$-category of coalgebras for the derived comonad associated to this adjunction.

The \emph{unstable} $R$-based Adams spectral sequence of Bendersky--Curtis--Miller \cite{bendersky1978unstable} and Bendersky--Thompson \cite{benderskythompson2000bousfield} is again the descent spectral sequence for an adjunction, but now comes from the adjunction
\begin{equation}
\label{equation:adjunction_giving_rise_to_synthetic_spaces}
R \otimes \Sigma^{\infty}_{+} \dashv \Omega^{\infty}: \spaces \rightleftarrows \Mod_{R}(\Sp)
\end{equation}
between \emph{spaces} and $R$-modules. Combined with our machinery, this naturally leads to a categorical deformation encoding the unstable $R$-Adams spectral sequence, as we explain this section.

\begin{definition}
\label{definition:ring_spectrum_with_r_free_loop_spaces}
Let $R$ be an $\mathbf{E}_{1}$-ring spectrum. We say  that $R$ has \emph{projective loop spaces} if $R_\ast (\Omega^\infty \Sigma^n R)$ is projective as a left $R_\ast$-module for all $n \in \integers$.
\end{definition}

\begin{example}
If $R_{*}$ is a field, then every spectrum has projective (in fact, free) $R$-homology, and thus $R$ has free loop spaces.
\end{example}

\begin{example}
Both $\mathrm{MU}$ and $\mathrm{BP}$ have projective (in fact, free) loop spaces, by a fundamental result of Wilson \cite{wilson1973omega, wilson1975omega}. 
\end{example}

\begin{rmk}
Any spectrum $M$ satisfies
\[
M\simeq\colim_{n\to\infty}\Sigma^{-n}\Sigma^\infty\Omega^\infty\Sigma^n M.
\]
It follows that if $R$ has projective loop spaces, then $R_\ast R$ is a filtered colimit of projective $R_\ast$-modules, and is therefore flat as a left $R_\ast$-module.
\end{rmk}

Recall that $\calL(R)\subset\LMod_R$ is the full subcategory generated under coproducts by $\Sigma^n R$ for $n \in\integers$. Its idempotent completion $\calL^\sharp(R)\subset\LMod_R$ may be identified as the full subcategory of projective modules. We will implicitly identify the loop models of $\calL^\sharp(R)$ with $\LMod_R$ via the restricted Yoneda embedding.

\begin{prop}\label{prop:projloopspaces}
Let $R$ be an $\bfE_1$-ring spectrum with projective loop spaces. Then the full subcategory $\calL^\sharp(R)\subset\LMod_R$ is closed under the endofunctor $R \otimes \Sigma^\infty_+\Omega^\infty(\bs)$.
\end{prop}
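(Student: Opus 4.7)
The plan is to prove that $\calL^\sharp(R)$ is closed under $R \otimes \Sigma^\infty_+ \Omega^\infty(\bs)$ via a reduction to the free case followed by an explicit $R_*$-homology computation. Since this endofunctor preserves retracts (being built from a right adjoint, a left adjoint, and a tensor product) and $\calL^\sharp(R) \subset \LMod_R$ is by definition closed under retracts, it suffices to treat the case $M = \bigoplus_{i \in I} \Sigma^{n_i} R$. I will also use the identification of $\calL^\sharp(R)$ with the full subcategory of $R$-modules whose $R_*$-homotopy is projective: one direction is immediate, while the converse follows from a standard idempotent lifting argument based on the identification $[F, F'] \simeq \Hom_{R_*}(\pi_* F, \pi_* F')$ for free $F, F'$ combined with idempotent completeness of the stable $\infty$-category $\LMod_R$. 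Thus the problem reduces to showing that $R_*(\Omega^\infty M)$ is projective over $R_*$.

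To compute the latter, I will use three compatibilities. First, finite coproducts of spectra coincide with finite products, and $\Omega^\infty$ preserves products as a right adjoint. Second, $\Omega^\infty \colon \Sp \to \spaces$ commutes with filtered colimits. These together yield
\[
\Omega^\infty M \simeq \colim_{J \subset I \text{ finite}} \prod_{i \in J} \Omega^\infty \Sigma^{n_i} R.
\]
Third, $R_*(\bs) = \pi_*(R \otimes \Sigma^\infty_+ \bs)$ commutes with filtered colimits of spaces. Applying it and invoking the Künneth isomorphism $R_*(X \times Y) \simeq R_*(X) \otimes_{R_*} R_*(Y)$, which is valid whenever one factor is $R_*$-flat (as is the case here by the projective loop spaces hypothesis), gives
\[
R_*(\Omega^\infty M) \simeq \colim_{J} \bigotimes_{R_*,\, i \in J} P_i, \qquad P_i = R_*(\Omega^\infty \Sigma^{n_i} R).
\]

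To conclude, the basepoint $\ast \to \Omega^\infty \Sigma^{n_i} R$ induces a splitting $P_i \simeq R_* \oplus \widetilde{P}_i$ of $R_*$-modules. Expanding each tensor product along this splitting and identifying the transition maps for $J \subset J'$ as the inclusions of the direct summands indexed by subsets $S \subset J \subset J'$, the filtered colimit collapses to
\[
R_*(\Omega^\infty M) \simeq \bigoplus_{S \subset I \text{ finite}} \bigotimes_{i \in S} \widetilde{P}_i,
\]
a direct sum of finite tensor products of projective $R_*$-modules, hence itself projective. The main technical obstacle is justifying the Künneth isomorphism above for an $\bfE_1$-ring $R$; this can be obtained from the Künneth spectral sequence $\Tor_{R_*}^{*,*}(R_*(X), R_*(Y)) \Rightarrow R_*(X \times Y)$, which degenerates at $E_2$ when one factor is $R_*$-flat.
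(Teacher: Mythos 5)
Your proof is correct and follows essentially the same strategy as the paper's: reduce to the case where $M$ is free using closure under retracts, exploit the exponential splitting of $\Omega^\infty$ of a direct sum via $\Omega^\infty(N \oplus N') \simeq \Omega^\infty N \times \Omega^\infty N'$, $\Sigma^\infty_+(X \times Y) \simeq \Sigma^\infty_+ X \otimes \Sigma^\infty_+ Y$, and preservation of filtered colimits, then conclude from the hypothesis. The one place where you diverge is the final step: you apply $R_*(\bs)$ immediately and invoke the K\"unneth spectral sequence, whereas the paper's proof stays at the level of spectra — it establishes $\Sigma^\infty_+\Omega^\infty(\bigoplus_{i\in I}M_i) \simeq \bigoplus_{F\subset I\text{ finite}}\bigotimes_{i\in F}\Sigma^\infty\Omega^\infty M_i$ as an equivalence of spectra and then concludes because the class of spectra with $R$-projective homology is closed under direct sums and smash products. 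The latter closure is proved by the retract-of-free characterization of $\calL^\sharp(R)$: if $R\otimes X$ is a retract of a free left $R$-module $F$, then $R\otimes X\otimes Y$ is a retract of $F\otimes Y$, which is a sum of shifts of $R\otimes Y$. This sidesteps the K\"unneth spectral sequence entirely, which is preferable here because $R$ is only $\bfE_1$: your invocation of K\"unneth implicitly requires tracking left versus right $R_*$-module structures on $R_*(\Omega^\infty\Sigma^n R)$ (the hypothesis only supplies left projectivity, while the standard K\"unneth spectral sequence $\Tor_{R_*}(M_*, N_*) \Rightarrow \pi_*(M\otimes_R N)$ needs $M$ a right $R$-module), a subtlety your sketch does not address. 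It can be patched — the bimodule structure on $R\otimes\Sigma^\infty_+ X$ coming from centrality of $\Sigma^\infty_+ X$ does the job — but the retract argument is cleaner and more robust, and it is the argument the paper uses.
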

\begin{proof}
It suffices to show that the class of spectra $M$ for which $R \otimes\Sigma^\infty\Omega^\infty M$ is projective as a left $R$-module is closed under retracts and direct sums. It is clearly closed under retracts, so we must only contend with direct sums.

In general, the functor $\Sigma^\infty_+\Omega^\infty$ preserves filtered colimits and satisfies
\begin{align*}
\Sigma^\infty_+\Omega^\infty(M\times N)&\simeq \Sigma^\infty_+\Omega^\infty M \otimes \Sigma^\infty_+\Omega^\infty N\\
&\simeq \thesphere \oplus \Sigma^\infty \Omega^\infty M\oplus \Sigma^\infty\Omega^\infty N \oplus \Sigma^\infty\Omega^\infty M \otimes \Sigma^\infty\Omega^\infty N
\end{align*}
for any two spectra $M$ and $N$. By iterating this, we find that if $\{M_i : i \in I\}$ is any set of spectra then
\[
\Sigma^\infty_+\Omega^\infty(\bigoplus_{i\in I}M_i) \simeq \bigoplus_{\substack{F\subset I\\\text{finite}}} \bigotimes_{i\in F}\Sigma^\infty\Omega^\infty M_i.
\]
As the class of spectra with $R$-projective homology is closed under direct sums and smash products, it follows that the class of spectra $M$ for which $\Sigma^\infty_+\Omega^\infty M$ has $R$-projective homology is closed under direct sums.
\end{proof}

We assume for the rest of this section that $R$ is an $\bfE_1$-ring spectrum with projective loop spaces.

\begin{notation}
We write $c \colonequals R \otimes \Sigma^{\infty}_{+} \Omega^{\infty}$ for the comonad on $\LMod_{R}\simeq\Model_{\lfrees^\sharp(R)}^\Omega$ induced by the adjunction (\ref{equation:adjunction_giving_rise_to_synthetic_spaces}).
\end{notation}

\begin{rmk}\label{rmk:genericfibersynspaces}
By construction an object of $\CoAlg_c(\LMod_R)$ consists of an $R$-module equipped with descent data for the adjunction $\spaces\rightleftarrows\LMod_R$. In particular, there is a lift in 
\begin{center}\begin{tikzcd}
&\CoAlg_c(\LMod_R)\ar[d]\\
\spaces\ar[r,"R\otimes\Sigma^\infty_+(\bs)"']\ar[ur,dashed,"{R[\bs]}",dashed]&\LMod_R
\end{tikzcd}\end{center}
with the property that if $X$ and $Y$ are spaces then
\begin{align*}
\map_{c}(R[Y],R[X])&\simeq \Tot \map_c(R[Y],(R\otimes\Sigma^\infty_+\Omega^\infty\bs)^{\bullet+1}R[X])\\
&\simeq\Tot\map_{\spaces}(Y,(\Omega^\infty(R\otimes\Sigma^\infty_+\bs))^{\bullet+1}X)\simeq
\map_{\spaces}(Y,X_R^\wedge),
\end{align*}
where $X_R^\wedge$ is the unstable $R$-nilpotent completion of $X$.
\end{rmk}

\begin{proposition}
\label{proposition:comonad_defining_synthetic_spaces_is_self_tame}
The underlying endofunctor $c$ has the following properties: 
\begin{enumerate}
\item $c$ is \derivable{} in the sense of \cref{definition:self_tame_endofunctor}; that is, $D(c) \circ D(c) \simeq D(c^{2})$,
\item The counit natural transformation $E(D(c)) \rightarrow c$ is an equivalence. 
\end{enumerate}
\end{proposition}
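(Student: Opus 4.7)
Part (1) follows immediately from work already in place. By \cref{prop:projloopspaces}, the projective loop-space hypothesis gives $c(\pcat) \subseteq \pcat$, where $\pcat = \lfrees^\sharp(R)$, at which point \cref{example:representable_preserving_endofunctors_of_models_are_self_tame} delivers that $c$ is \derivable{}.

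For part (2), the plan is to invoke the criterion of \cref{lemma:a_criterion_for_a_functor_between_loop_models_is_determined_by_its_derived_functor}, which reduces the equivalence $E(D(c)) \simeq c$ to the statement that $c$ preserves geometric realizations of those colimit diagrams in $\Model_{\lfrees^\sharp(R)}^{\Omega}$ which are in turn preserved by the inclusion $\nu$. In fact I would show that both $\nu$ and $c$ preserve all sifted colimits, which makes the criterion tautological. For $\nu$, the filtered-module identification of \cref{prop:models_of_free_r_modules_as_modules_over_the_whitehead_tower} realizes $\nu$ as the Whitehead-filtration functor $M \mapsto \{\tau_{\geq n} M\}_{n \in \integers}$ into $\LMod_{\tau_{\geq \star} R}(\Sp_{\geq 0}^{\fil})$. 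Since colimits in the target are computed pointwise, sifted-preservation reduces to the same for each truncation $\tau_{\geq n} \colon \Sp \to \Sp_{\geq 0}$, which is immediate by computing homotopy groups: $\pi_i$ commutes with sifted colimits, and $\tau_{\geq n}$ is characterized by killing homotopy groups below degree $n$.

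For $c = R \otimes \Sigma_+^{\infty} \Omega^{\infty}$, the factors $R \otimes \bs$ and $\Sigma_+^{\infty}$ are left adjoints and the forgetful $\LMod_R \to \Sp$ preserves all colimits, so sifted-cocontinuity of $c$ reduces to that of $\Omega^{\infty} \colon \Sp \to \spaces$. Using the natural equivalence $\Omega^{\infty} X \simeq \Omega^{\infty} \tau_{\geq 0} X$ together with the sifted-preservation of $\tau_{\geq 0}$ established in the previous paragraph, this further reduces to the same claim for $\Omega^{\infty} \colon \Sp_{\geq 0} \to \spaces$. This last functor preserves sifted colimits by May recognition: it factors as the equivalence $\Sp_{\geq 0} \simeq \spaces_*^{\bfE_\infty, \gp}$ followed by forgetful functors down to $\spaces_*^{\bfE_\infty}$, $\spaces_*$, and $\spaces$, each of which preserves sifted colimits because the free $\bfE_\infty$-monad does, grouplikeness is a sifted-stable condition, and basepoints obstruct only non-sifted colimits. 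This sifted-cocontinuity of $\Omega^{\infty}$, which holds despite it being only a right adjoint globally, is the main non-formal input; once it is in hand the rest of the argument is bookkeeping.
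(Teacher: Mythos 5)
Part (1) is correct and matches the paper: $c$ preserves representables by \cref{prop:projloopspaces}, hence is derivable by \cref{remark:d_is_monoidal_if_the_first_functor_preserves_representables} (equivalently \cref{example:representable_preserving_endofunctors_of_models_are_self_tame}).

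Part (2) has a fundamental gap. Your strategy is to show that both $\nu$ and $c$ preserve \emph{all} sifted colimits, so that the hypothesis of \cref{lemma:a_criterion_for_a_functor_between_loop_models_is_determined_by_its_derived_functor} is automatic. But neither preserves all geometric realizations, and the intermediate claims you use to argue that they do are false. The central error is the assertion that $\tau_{\geq n}\colon \Sp \to \Sp_{\geq 0}$ preserves sifted colimits ``by computing homotopy groups.'' In fact $\pi_i$ does \emph{not} commute with geometric realizations (only with filtered colimits; for realizations there is a Bousfield--Kan spectral sequence), and $\tau_{\geq n}$ does not preserve geometric realizations. Concretely, for $Y = \Sigma^{-1}\thesphere$ with its abelian group structure, the bar construction satisfies $|B(\ast,Y,\ast)| \simeq \thesphere$, so $\tau_{\geq 0}|B(\ast,Y,\ast)| \simeq \thesphere$, whereas $|\tau_{\geq 0} B(\ast,Y,\ast)| \simeq \Sigma\tau_{\geq 0}\Sigma^{-1}\thesphere$, which has $\pi_0 = 0$; these disagree. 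Since your argument for both $\nu$ (via the Whitehead filtration) and for $\Omega^\infty\colon\Sp\to\spaces$ (via the factorization through $\tau_{\geq 0}$) rests on this false claim, neither preservation statement goes through. Indeed they cannot: if $\nu$ preserved all geometric realizations, the inclusion $\Model_\pcat^\Omega\subset\Model_\pcat$ would be an equivalence, collapsing the deformation entirely.

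What the criterion actually requires is weaker and more structural: $c$ must preserve only those realizations that $\nu$ happens to preserve. The observation that makes the proof go through, which your argument misses, is that ``$\nu$ preserves $M \simeq |M_\bullet|$'' is, by definition of $\nu$ as the restricted Yoneda embedding, exactly the statement that $\map_{\LMod_R}(P, M) \simeq |\map_{\LMod_R}(P,M_\bullet)|$ for every $P\in\lfrees(R)$. Taking $P = R$ and using that $\Omega^\infty$ is corepresented by $R$, this says $\Omega^\infty M \simeq |\Omega^\infty M_\bullet|$ in spaces for this particular diagram. The remaining factor $R\otimes\Sigma^\infty_+$ is a left adjoint, so $c M = R\otimes\Sigma^\infty_+\Omega^\infty M \simeq |c M_\bullet|$, verifying the criterion. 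The point is that the hypothesis of the lemma is tailored precisely to hand you the fact about $\Omega^\infty$ you need, on exactly the diagrams where you need it; there is no need (and no way) to establish a global cocontinuity statement.
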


\begin{proof}
The first claim follows from \cref{prop:projloopspaces} and  \cref{remark:d_is_monoidal_if_the_first_functor_preserves_representables}. For the second, we apply the criterion of \cref{lemma:a_criterion_for_a_functor_between_loop_models_is_determined_by_its_derived_functor}. 

Suppose that $M \simeq | M_{\bullet}|$ is a geometric realization in the $\infty$-category of $R$-modules which is preserved by the restricted Yoneda embedding $\LMod_{R} \hookrightarrow \Model_{\lfrees(R)}$. As $\Omega^{\infty}$ is corepresentable in $R$-modules by $R \in \lfrees(R)$, this in particular means that $\Omega^{\infty} M \simeq | \Omega^{\infty} M_{\bullet}|$ as spaces. It follows that 
\[
R \otimes \Sigma_{+}^{\infty} \Omega^{\infty} M \simeq | R \otimes \Sigma_{+}^{\infty} \Omega^{\infty} M_{\bullet} |, 
\]
which verifies the criterion. 
\end{proof}

\begin{definition}
\label{def:rsyntheticspaes}
An \emph{$R$-synthetic space} is a $D(c)$-coalgebra in $\Model_{\lfrees^\sharp(R)}$. We write 
\[
\Syn\spaces_{R} \colonequals \CoAlg_{D(c)}(\Model_{\lfrees^\sharp(R)})
\]
for the $\infty$-category of $R$-synthetic spaces. 
\end{definition}

We now explain how $\Syn\spaces_{R}$ can be thought of as a deformation. 

\begin{ex}[The generic fiber]
By \cref{proposition:e_and_l_take_coalgebras_to_coalgebras}, there is a generic fibre functor
\[
L\colon \Syn\spaces_R \to \CoAlg_c(\LMod_R),
\]
where $\CoAlg_c(\LMod_R)$ is the $\infty$-category discussed in \cref{rmk:genericfibersynspaces}. Postcomposing with $\map_c(R[\ast],\bs)$ then provides a realization functor
\[
\Syn\spaces_R \to \spaces.
\]
In the other direction, if $\spaces^{R{\hbox{{-}}}\text{proj}}\subset\spaces$ is the full subcategory of spaces with projective $R$-homology, then \cref{proposition:induced_derived_functor_construction_gives_a_functor_on_coalgebras}
provides a synthetic analogue functor
\[
\nu\colon \spaces^{R\hbox{{-}}\text{proj}} \to \Syn\spaces_R.
\]
\end{ex}

\begin{ex}[The special fiber]
The comonad $c$ restricted to $\lfrees^\sharp(R)$ induces a comonad $c_1$ on its homotopy category $\h\lfrees^\sharp(R)$, equivalent to the category $\LMod_{R_\ast}^{\proj}$ of projective graded $R_\ast$-modules. By construction, $\CoAlg_{c_1}(\LMod_{R_\ast}^{\proj})$ is the category of \emph{unstable $R_\ast R$-coalgebras} whose underlying $R_\ast$-module is projective; see \cite[\S6--8]{bendersky1978unstable} for further discussion. This comonad may itself be extended to define a comonad $c_{1!}$ on the nonnegative derived $\infty$-category of graded $R_\ast$-modules
\[
\Model_{\h\lfrees^\sharp(R)}\simeq\LMod_{R_\ast}^{\geq 0}.
\]
The $\infty$-category of $c_{1!}$-coalgebras is therefore a \emph{derived $\infty$-category of unstable $R_\ast R$-coalgebras}. In the sequel \cite{usd3}, we will explain how there is a special fiber functor
\[
\tau_!\colon \Syn\spaces_R\to\CoAlg_{c_{1!}}(\LMod_{R_\ast}^{\geq 0}),
\]
factoring as a tower of square-zero extensions, categorifying the unstable Adams spectral sequence in a manner analogous to the corresponding stable tower $\Syn_R\to\cdots\to\Mod_{C(\tau)}(\Syn_R)$.

Note that as $\CoAlg_{c_{1!}}(\LMod_{R_\ast}^{\geq 0})$ is comonadic over $\LMod_{R_\ast}^{\geq 0}$ by construction, it might be thought of as a homotopy theory of \emph{simplicial} unstable coalgebras, although we do not know whether it is (for example) the underlying $\infty$-category of some model structure on the category of simplicial unstable coalgebras. This is by contrast with the homotopy theories of \emph{cosimplicial} unstable coalgebras that have appeared in prior work related to the unstable Adams spectral sequence \cite{bousfield1989homotopy,blanc2001realizing,bousfield2003cosimplicial, biedermannraptisstelzer2017realization}. It would be desirable to have a comparison of these algebraic homotopy theories.
\end{ex}

\begin{rmk}
Suppose that $R$ is connective, and write $\calL_+^\sharp(R)\subset\calL^\sharp(R)$ for the full subcategory of connective objects, itself a loop theory satisfying $\Model_{\smash{\calL_+^\sharp(R)}}^\Omega\simeq \LMod_R^{\geq 0}$ by \cref{prop:loopcompletion}. In this case, the comonad $c$ on $\lfrees^\sharp(R)$ satisfies $c(\calL^\sharp(R))\subset\calL_+^\sharp(R)$, and so the inclusion $\calL_+^\sharp(R)\subset\calL^\sharp(R)$ extends to an equivalence
\[
\Syn\spaces_R\simeq\CoAlg_{D(c)}(\Model_{\calL_+^\sharp(R)}).
\]
This alternate definition has some technical advantages: for example, the comonad $c$ already preserves all geometric realizations on $\LMod_R^{\geq 0}$, and by \cref{prop:loopcompletion} we see that the realization functor is given simply by
\[
\map_{\Syn\spaces_R}(\nu(\ast),\bs)\colon \Syn\spaces_R \to \spaces.
\]
Moreover, this $\infty$-category $\CoAlg_{D(c)}(\Model_{\calL_+^\sharp(R)})$ of $R$-synthetic spaces may be defined under just the assumption that $R_\ast\Omega^\infty\Sigma^n R$ is projective as a left $R_\ast$-module for $n\geq 0$.
\end{rmk}

\subsection{Synthetic \texorpdfstring{$\mathbf{E}_{k}$}{E\_k}-rings} 
\label{ssec:examples:syntheticekrings}

In the previous sections, we saw how one may construct $\infty$-categories of $R$-synthetic spectra and spaces by deriving the adjunctions
\[
\spectra\rightleftarrows \LMod_R,\qquad \spaces\rightleftarrows \LMod_R.
\]
This robust technique can be applied in a variety of situations. In this section, we highlight the example of \emph{$R$-synthetic $\calO$-algebras} for an $\infty$-operad $\calO$. 

For simplicity, let $R$ be a $\bfE_\infty$-ring spectrum for which $R_\ast R$ is flat over $R_\ast$, and let $\calO$ be an $\infty$-operad. In \cref{ex:freeoalgebras}, we described a loop theory $\lfrees^\calO(R)$ satisfying
\[
\Model_{\lfrees^\calO(R)}^\Omega\simeq\Alg_\calO(\Mod_R).
\]
We will construct an $\infty$-category of $R$-synthetic $\calO$-algebras by using this loop theory to derive the free-forgetful adjunction
\[
R\otimes_\thesphere\bs\colon \Alg_\calO\rightleftarrows\Alg_\calO(\Mod_R)\noloc U.
\]
To do so, we must verify some of the technical conditions discussed in \S\ref{subsection:coalgebras_in_models_and_loop_models}. These come from the following.

\begin{lemma}
\label{lem:flatadjointable}
Under the assumption that $R_\ast R$ is flat over $R_\ast$, the diagram
\begin{center}\begin{tikzcd}[column sep=large]
\Model_{\lfrees^\calO(R)}\ar[d,"\Free_\calO^\ast"]\ar[r,"D(R\otimes_\thesphere\bs)"]&\Model_{\lfrees^\calO(R)}\ar[d,"\Free_\calO^\ast"]\\
\Model_{\lfrees(R)}\ar[r,"D(R\otimes_\thesphere\bs)"]&\Model_{\lfrees(R)}
\end{tikzcd}\end{center}
commutes.
\end{lemma}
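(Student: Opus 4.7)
The plan is to use the universal property of $\Model_{\lfrees^\calO(R)}$ as the free geometric realization-cocompletion of $\lfrees^\calO(R)$ (\cref{thm:freecocompletion}) to reduce to a check on representables, and then identify both paths using the tameness results of \cref{lemma:properties_of_the_functor_between_models_of_freesr_induced_by_tensoring_with_bimodule}.

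First I would verify that both compositions in the square preserve geometric realizations. The horizontal arrows are derived functors, hence preserve geometric realizations by \cref{definition:derived_functors_between_malcev_pretheories}. The vertical restrictions $\Free_\calO^\ast$ preserve geometric realizations by \cref{cor:nearmonadic2} combined with the Malcev condition on $\lfrees^\calO(R)$, which ensures that every simplicial model satisfies the Kan condition levelwise (\cref{prop:malcevcharacterizations}). Hence by \cref{thm:freecocompletion} it suffices to check that the square commutes after restriction to $\lfrees^\calO(R)$, and moreover to do so naturally in the input.

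Fix $A = \Free_\calO(M) \in \lfrees^\calO(R)$ with $M \in \lfrees(R)$, and let $U \colon \Alg_\calO(\Mod_R) \to \Mod_R$ denote the forgetful functor. Using the concrete description from \cref{remark:concrete_description_of_the_induced_functor_between_models_from_a_functor_of_loop_models}, the left path is
\[
\Free_\calO^\ast D(R \otimes_\thesphere \bs)(A) \simeq \Free_\calO^\ast \bigl(\nu_{\lfrees^\calO(R)}(R \otimes_\thesphere A)\bigr) \simeq \nu_{\lfrees(R)}(U(R \otimes_\thesphere A)),
\]
where the second identification unwinds restriction along $\Free_\calO$ using the adjunction $\Free_\calO \dashv U$. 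Since base change along $\thesphere \to R$ commutes with forgetting the operadic structure at the level of loop models, $U(R \otimes_\thesphere A) \simeq R \otimes_\thesphere U(A)$ naturally in $A$, so the left path produces $\nu_{\lfrees(R)}(R \otimes_\thesphere U(A))$.

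For the right path, $\Free_\calO^\ast(A) \simeq \nu_{\lfrees(R)}(U(A))$, and I must evaluate $D(R \otimes_\thesphere \bs)$ on this loop model. Here the flatness hypothesis on $R_\ast R$ enters: by \cref{lemma:properties_of_the_functor_between_models_of_freesr_induced_by_tensoring_with_bimodule}(2, 3) the derived functor $D(R \otimes_\thesphere \bs)$ on $\Model_{\lfrees(R)}$ preserves loop models and restricts on loop models to $R \otimes_\thesphere \bs$. By \cref{corollary:endofunctors_whose_derived_functor_preserves_loop_models_and_which_are_its_restriction_are_self_tame}, every loop model is then $R \otimes_\thesphere \bs$-tame, so $D(R \otimes_\thesphere \bs)(\nu_{\lfrees(R)}(X)) \simeq \nu_{\lfrees(R)}(R \otimes_\thesphere X)$ naturally in $X \in \Mod_R$. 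Applied to $X = U(A)$ this matches the left path. The main obstacle is ensuring these pointwise equivalences on representables assemble into a genuine natural equivalence of derived functors, but this is automatic from \cref{thm:freecocompletion} once we know both compositions are geometric realization-preserving extensions of naturally equivalent functors out of $\lfrees^\calO(R)$.
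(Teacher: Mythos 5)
Your proof is correct and takes essentially the same approach as the paper: reduce to representables by noting that all four functors preserve geometric realizations, then identify both paths using the explicit description of $D$ on representables together with \cref{lemma:properties_of_the_functor_between_models_of_freesr_induced_by_tensoring_with_bimodule}. You fill in some details the paper glosses over, notably making explicit the identification $U(R\otimes_\thesphere A) \simeq R\otimes_\thesphere U(A)$ and using \cref{corollary:endofunctors_whose_derived_functor_preserves_loop_models_and_which_are_its_restriction_are_self_tame} to package the tameness of the loop model $U(A)$, which the paper handles more implicitly.
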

\begin{proof}
As all functors in this diagram preserve geometric realizations, to make it commute we must only make it commute on restriction to $\lfrees^\calO(R)\subset\Model_{\lfrees^\calO(R)}$. Indeed, if $A \in \lfrees^\calO(R)$ then by construction
\[
\Free_\calO^\ast \simeq D(R\otimes_\thesphere\bs)(R\otimes_\thesphere\bs)(\nu_{\lfrees^\calO(R)} A) = \Free_\calO^\ast \nu_{\lfrees^\calO(R)}(R\otimes_\thesphere A) = \nu_{\lfrees(R)}(R\otimes_\thesphere A),
\]
whereas by \cref{lemma:properties_of_the_functor_between_models_of_freesr_induced_by_tensoring_with_bimodule}
we have
\[
D(R\otimes_\thesphere\bs) \Free_\calO^\ast(\nu_{\lfrees^\calO(R)}(A)) = D(R\otimes_\thesphere\bs)(\nu_{\lfrees(R)}(A)) \simeq \nu_{\lfrees(R)}(R\otimes_\thesphere A),
\]
and the comparison map identifies these.
\end{proof}

\begin{prop}
The endofunctor $D(R\otimes_\thesphere\bs)$ of $\Model_{\calL^\calO(R)}$ has the following properties.
\begin{enumerate}
\item It preserves the full subcategory of loop models;
\item The counit $E(D(R\otimes_\thesphere\bs)) \to R\otimes_\thesphere\bs$ is an equivalence.
\item It is \derivable{}, and therefore inherits the structure of a comonad.
\item Every object of $\Alg_\calO(\Mod_R)$ is stably tame with respect to $D(R\otimes_\thesphere\bs)$.
\end{enumerate}
\end{prop}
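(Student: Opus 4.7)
The plan is to reduce (1) and (2) to the already-established analogous statements on $\Model_{\lfrees(R)}$, exploiting the commutative square of \cref{lem:flatadjointable}, and then to extract (3) and (4) from (1) and (2) by the general machinery of \S\ref{subsection:coalgebras_in_models_and_loop_models}.

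First I would establish (1). Under the identifications $\Model_{\lfrees(R)}^{\Omega}\simeq \LMod_R$ and $\Model_{\lfrees^\calO(R)}^{\Omega}\simeq \Alg_\calO(\Mod_R)$, the loop homomorphism $\Free_\calO\colon \lfrees(R)\to \lfrees^\calO(R)$ induces a functor $\Free_\calO^{\ast}$ on models which is conservative by \cref{cor:nearmonadic2}. Moreover, $\Free_\calO^\ast$ reflects the loop-model condition: since every object of $\lfrees^\calO(R)$ is equivalent to $\Free_\calO(M)$ for some $M\in \lfrees(R)$, the condition $X(S^{1}\otimes \Free_\calO M)\to X(\Free_\calO M)^{S^{1}}$ equivalence coincides with the loop-model condition on $\Free_\calO^{\ast}X$. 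Now the assumption that $R_{\ast}R$ is flat over $R_{\ast}$, together with \cref{lemma:properties_of_the_functor_between_models_of_freesr_induced_by_tensoring_with_bimodule}(3), implies that $D(R\otimes_{\thesphere}\bs)$ preserves loop models on $\Model_{\lfrees(R)}$. Combined with \cref{lem:flatadjointable} we obtain
\[
\Free_\calO^{\ast}\, D(R\otimes_{\thesphere}\bs)\, X \;\simeq\; D(R\otimes_{\thesphere}\bs)\, \Free_\calO^{\ast}X,
\]
which is a loop model whenever $X$ is; since $\Free_\calO^{\ast}$ reflects this property, $D(R\otimes_{\thesphere}\bs)X$ is a loop model.

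Next I would prove (2). Fix $A\in \Alg_\calO(\Mod_R)$ and consider the canonical comparison map
\[
D(R\otimes_{\thesphere}\bs)(\nu A)\longrightarrow \nu(R\otimes_{\thesphere}A)
\]
of \cref{definition:canonical_comparison_map_from_derived_functor_to_the_original_functor}; that every $A$ is $(R\otimes_{\thesphere}\bs)$-tame is, via \cref{lemma:a_criterion_for_a_functor_between_loop_models_is_determined_by_its_derived_functor}, equivalent to (2). Using naturality and \cref{lem:flatadjointable}, the image of this comparison map under $\Free_\calO^{\ast}$ is canonically identified with the corresponding comparison map in $\Model_{\lfrees(R)}$ evaluated on the underlying $R$-module $UA$; this latter map is an equivalence by \cref{lemma:properties_of_the_functor_between_models_of_freesr_induced_by_tensoring_with_bimodule}(2) (which asserts that the counit $E(D(R\otimes_{\thesphere}\bs))\to R\otimes_{\thesphere}\bs$ is an equivalence on $\LMod_R$), combined with the fact, already established in (1), that $D(R\otimes_{\thesphere}\bs)(\nu M)$ is a loop model when $M$ is. Conservativity of $\Free_\calO^{\ast}$ then upgrades this to an equivalence of the original comparison map, proving (2).

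Finally, (3) and (4) are formal consequences of (1) and (2). Indeed, (1) and (2) assert exactly that $c\colonequals R\otimes_{\thesphere}\bs$ lies in the monoidal subcategory $\ccat\subseteq \End(\Model_{\lfrees^\calO(R)}^{\Omega})$ of \cref{lemma:endofunctors_whose_derived_functors_preserve_loop_models_are_which_can_be_recovered_from_them_form_a_monoidal_subcategory_on_which_d_is_monoidal}, on which $D$ is strongly monoidal. Hence the lax structure map $D(c)\circ D(c)\to D(c^{2})$ is an equivalence, so $c$ is \derivable{} and $D(c)$ inherits a comonad structure from $c$, yielding (3). Statement (4) is then precisely the content of \cref{corollary:endofunctors_whose_derived_functor_preserves_loop_models_and_which_are_its_restriction_are_self_tame}. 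The main technical point throughout is (2); everything turns on verifying that the canonical comparison map is compatible with $\Free_\calO^{\ast}$ — a naturality check which is guaranteed by the compatibility of lax structures established in \cref{theorem:monoidal_functor_between_lmod_induced_by_induced_derived_functor_construction}, ultimately reducing the difficulty to the already-proved bimodule statement.
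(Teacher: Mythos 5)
Your treatment of (1), (3) and (4) coincides with the paper's, and your expansion of (1) (making explicit that one first needs $D(R\otimes_\thesphere\bs)$ to preserve loop models on $\Model_{\lfrees(R)}$, which is \cref{lemma:properties_of_the_functor_between_models_of_freesr_induced_by_tensoring_with_bimodule}(3), before invoking \cref{lem:flatadjointable} and the fact that $\Free_\calO^\ast$ reflects loop models) is a reasonable way to spell out what the paper leaves implicit.

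Your proof of (2) is where you diverge, and where the paper is substantially simpler. The paper observes that $R\otimes_\thesphere\bs$, considered as the comonad endofunctor of $\Alg_\calO(\Mod_R)$, preserves geometric realizations outright (it is the composite of the left adjoint $R\otimes_\thesphere\bs\colon \Alg_\calO\to\Alg_\calO(\Mod_R)$ with the sifted-colimit-preserving forgetful functor), and so \cref{lemma:a_criterion_for_a_functor_between_loop_models_is_determined_by_its_derived_functor} gives the counit equivalence in one line. You instead reduce through $\Free_\calO^\ast$ to the $\lfrees(R)$-case and invoke conservativity. This route can be made to work, but it costs you two things: first, you need to verify that the canonical comparison map of \cref{definition:canonical_comparison_map_from_derived_functor_to_the_original_functor} is taken by $\Free_\calO^\ast$ to the corresponding comparison map for $\lfrees(R)$ — this is plausible (e.g.\ via the explicit description in \cref{remark:explicit_description_of_the_canonical_comparison_map_between_derived_functor_and_original_functor}, since $\Free_\calO^\ast$ preserves levelwise-Kan geometric realizations), but it is asserted rather than proved, and is not a consequence of \cref{lem:flatadjointable} alone (which only identifies the functors, not the comparison maps). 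Second, your opening sentence misattributes the content of \cref{lemma:a_criterion_for_a_functor_between_loop_models_is_determined_by_its_derived_functor}: that lemma equates the counit equivalence with preservation of certain geometric realizations, not with tameness of all objects. Tameness of all loop models is equivalent to (1) \emph{and} (2) together, as follows from the remark after \cref{definition:canonical_comparison_map_from_derived_functor_to_the_original_functor}; it is not the criterion stated in the lemma. Since you have already established (1), your argument does ultimately yield (2), but the lemma you cite is doing a different job than you describe, and the direct application of that lemma — the paper's route — bypasses all of this.
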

\begin{proof}
The first claim follows from \cref{lem:flatadjointable}, as $\Free_\calO^\ast$ reflects the property of being a loop model, and the second claim follows from \cref{lemma:a_criterion_for_a_functor_between_loop_models_is_determined_by_its_derived_functor} as $R\otimes_\thesphere\bs\colon \Alg_\calO(\Mod_R)\to\Alg_\calO(\Mod_R)$ preserves geometric realizations.  The third and fourth now follow from \cref{corollary:endofunctors_whose_derived_functor_preserves_loop_models_and_which_are_its_restriction_are_self_tame}
\end{proof}

\begin{defn}
\label{def:syntheticoalgebras}
Let $R$ be an $\bfE_\infty$-ring for which $R_\ast R$ is flat as an $R_\ast$-module, and let $\calO$ be an $\infty$-operad. The $\infty$-category of \emph{$R$-synthetic $\calO$-algebras} is given by
\[
\Syn_{R,\cpl}^{\geq 0}(\Alg_\calO) = \CoAlg_{D(R\otimes_\thesphere\bs)}(\Model_{\lfrees^\calO(R)}).
\]
\end{defn}

We note that when $\calO$ is the trivial $\infty$-operad, the above recovers the $\infty$-category $\Syn_{R,\cpl}^{\geq 0}$ of $R$-synthetic spectra of \cref{rmk:twoconstructionsofsyntheticspectra}. We now discuss how our formalism recovers the expected features of the deformation.

\begin{ex}[The generic fibre]
Postcomposing the generic fibre functor constructed in \cref{proposition:e_and_l_take_coalgebras_to_coalgebras} with the right adjoint $\CoAlg_{R\otimes_\thesphere\bs}(\Alg_\calO(\Mod_R))\to\Alg_\calO$ provides a realization functor
\[
\Syn_R(\Alg_\calO) \to \Alg_\calO.
\]
\end{ex}

\begin{ex}[The special fibre]
Abbreviate $c_! = D(R\otimes_\thesphere\bs)$. As we will explain in \cite{usd3}, this comonad induces in a canonical way a comonad $c_{1!}$ on the special fiber $\Model_{\h\lfrees^\calO(R)}$. By construction, $\CoAlg_{c_{1!}}(\Model_{\h\lfrees^\calO(R)})$ is a suitable derived $\infty$-category of $R_\ast R$-comodules equipped with compatible $\calO$-$R$-power operations. 

For example, if $R = \bfF_p$ and $\calO = \bfE_\infty$, then its full subcategory of discrete objects is exactly equivalent to the category of graded commutative $\bbF_p$-algebras equipped with a compatible coaction by the dual Steenrod algebra and action by the Dyer--Lashof operations, satisfying the usual Cartan formulae, instability conditions, and Nishida relations.
\end{ex}

\begin{ex}[{The forgetful functor}]
\label{example:forgetful_functor_from_synr_algebras_to_synr_spectra}
The forgetful functor $\Alg_\calO\to\Sp$ commutes with the comonads $R\otimes_\thesphere\bs$, and \cref{lem:flatadjointable} shows that its induced derived functor $\Free_\calO^\ast\colon \Model_{\lfrees^\calO(R)}\to\Model_{\lfrees(R)}$ similarly commutes with the derived comonad $D(R\otimes_\thesphere\bs)$. Passing to $\infty$-categories of coalgebras then provides a forgetful functor
\[
U\colon \Syn_{R,\cpl}^{\geq 0}(\Alg_\calO) \to \Syn_{R,\cpl}^{\geq 0}.
\]
To write down $U$ formally, it is convenient to use the language of $(\infty,2)$-categories, and so we defer a formal description to \cite{usd3}. By construction, $U$ fits into a commutative diagram 
\begin{center}
\begin{tikzcd}
\Syn_{R,\cpl}^{\geq 0}(\Alg_\calO)\ar[d,"U"]\ar[r]&\Model_{\calL^\calO(R)}\ar[d,"\Free_\calO^\ast"]\\
\Syn_{R,\cpl}^{\geq 0}\ar[r]&\Model_{\calL(R)}
\end{tikzcd},
\end{center}
where the horizontal arrows forget the coalgeba structure. In particular, since $\Free_\calO^\ast$ preserves geometric realizations and is conservative, the same is true for $U$. 
\end{ex}

\begin{ex}[Synthetic analogues]
We observe that \cref{proposition:induced_derived_functor_construction_gives_a_functor_on_coalgebras} provides a synthetic analogue functor
\[
\nu\colon \Alg_\calO \to \Syn_{R,\cpl}^{\geq 0}(\Alg_\calO)
\]
lifting the synthetic analogue functor for spectra. Together with the forgetful functor $U$ of \cref{example:forgetful_functor_from_synr_algebras_to_synr_spectra}, this fits into a commutative diagram 
\begin{center}
\begin{tikzcd}
\Alg_\calO\ar[r,"\nu"]\ar[d]&\Syn_{R,\cpl}^{\geq 0}(\Alg_\calO)\ar[d,"U"]\\
\Sp\ar[r,"\nu"]\ar[r]&\Syn_{R,\cpl}^{\geq 0}
\end{tikzcd}. 
\end{center}
\end{ex}

\begin{rmk}
There are other ways one might try to deform the $\infty$-category of $\calO$-algebras: 
\begin{enumerate}
\item If $R$ is Adams-type, one can consider the $\infty$-category $\Alg_{\calO}(\Syn_{R})$ of algebras in synthetic spectra. Here, the special fibre is given by
\[
\Alg_{\calO}(\Mod_{C\tau}(\Syn_{R})) \simeq \Alg_{\calO}(\mathrm{Stable}_{R_{*}R}), 
\]
the $\infty$-category of $\mathcal{O}$-algebras in Hovey's stable $\infty$-category of $E_{*}E$-comodules. Informally, this means that the special fibre sees the product on homology, but not the $R$-$\calO$-power operations, and so this deformation is in a sense less structured than deformation introduced in \cref{def:syntheticoalgebras}.
\item In upcoming work, Devalapurkar-Hahn-Raksit-Senger introduce a variant of even $\MU$-synthetic $\bfE_\infty$-rings which is closely related to the $\bfE_\infty$-even filtration of Hahn--Raksit--Wilson \cite{hahnmotivic} and should be the purely homotopy-theoretic equivalent of the normed rings of motivic homotopy theory \cite{bachmann2017norms}. This deformation is more refined than the $\MU$-synthetic $\bfE_\infty$-rings presented here.
\end{enumerate}
\end{rmk}

Other variations are also possible, such as by taking $R$ to instead be a highly structured equivariant or motivic ring spectrum. We just give one additional class of examples that makes essential use of the fact that all of our machinery has been developed for \emph{infinitary} theories.

\begin{ex}\label{ex:kn}
Let $\Gamma$ be a formal group of height $n$ over a perfect field of characteristic $p$, and let $E_n$ denote the corresponding spectrum of Morava $E$-theory, with maximal ideal $\frakm\subset\pi_0 E_n$. If $\calL_\frakm^\wedge(E_n)\subset\Mod_{E_n}^{\Cpl(\frakm)}$ is the full subcategory generated under direct sums of $\Sigma^n E_n$ for $n \in \integers$, then \cref{prop:stableloopmodels} implies that
\[
\Model_{\calL_\frakm^\wedge(E_n)}\simeq\Mod_{E_n}^{\Cpl(\frakm)}.
\]
As a varation of \cref{def:syntheticoalgebras}, deforming the comonad associated to the adjunction
\[
\CAlg(\Sp_{K(n)}) \rightleftarrows \CAlg(\Mod_{E_n}^{\Cpl(\frakm)})
\]
produces an $\infty$-category $\Syn_{E_n}(\CAlg(\Sp_{K(n)}))$ of synthetic $K(n)$-local $\bfE_\infty$-rings, with generic fibre exactly equivalent to $\CAlg(\Sp_{K(n)})$.

Our work in the sequel \cite{usd3} will show that $\Syn_{E_n}(\CAlg(\Sp_{K(n)}))$ admits a well-behaved special fiber which houses the obstruction groups for various spectral sequences and obstruction theories internal to $\Syn_{E_n}(\CAlg(\Sp_{K(n)}))$. In general, $\Syn_{E_n}(\CAlg(\Sp_{K(n)}))$ acts as a categorified extension of the spectral sequence for maps between $K(1)$-local $\bfE_\infty$-rings originally constructed by Goerss--Hopkins \cite[Theorem 2.4.14]{moduli_problems_for_structured_ring_spectra}; when $n=1$, this extension removes the technical $p$-completeness assumption needed there.

Informally, the special fiber of $\Syn_{E_n}(\CAlg(\Sp_{K(n)}))$ is a derived $\infty$-category of $\frakm$-complete $\bbT$-algebras in the sense of Rezk \cite{rezk2009congruence} equipped with a suitably compatible and continuous action by the Morava stabilizer group. Heuristically, such objects consist of $\frakm$-complete alternating $E_\ast$-algebras equipped with an action by isogenies of formal groups deforming $\Gamma$ (subject to the congruence criterion of \cite{rezk2009congruence}). Thus the deformation $\Syn_{E_n}(\CAlg(\Sp_{K(n)}))$ provides one answer to a conjecture of Lawson \cite[Conjecture 1.6.6]{lawson2020en} on an obstruction theory for maps between $K(n)$-local $\bfE_\infty$-rings.

By instead deforming the comonad associated to the adjunction
\[
\Lie(\Sp_{K(n)}) \rightleftarrows\Lie(\Mod_{E_n}^{\Cpl(\frakm)}),
\]
one may obtain an $\infty$-category of synthetic $K(n)$-local spectral Lie algebras, which by work of Heuts \cite{heuts2021lie} can be interpreted as an $\infty$-category of synthetic (non-telescopic) $v_n$-periodic spaces. Its special fiber is a derived $\infty$-category of Brantner's Hecke Lie algebras \cite{brantner2017lubin} equipped with suitably compatible and continuous action by the Morava stabilizer group.
\end{ex}

\begingroup
\raggedright
\bibliographystyle{amsalpha}
\bibliography{bibliography}

\providecommand{\bysame}{\leavevmode\hbox to3em{\hrulefill}\thinspace}
\providecommand{\MR}{\relax\ifhmode\unskip\space\fi MR }
\providecommand{\MRhref}[2]{%
  \href{http://www.ams.org/mathscinet-getitem?mr=#1}{#2}
}
\providecommand{\href}[2]{#2}
\begin{thebibliography}{EKMM07}

\bibitem[Ada69]{adams1969lectures}
J.~F. Adams, \emph{Lectures on generalised cohomology}, Category {T}heory,
  {H}omology {T}heory and their {A}pplications, {III} ({B}attelle {I}nstitute
  {C}onference, {S}eattle, {W}ash., 1968, {V}ol. {T}hree), Lecture Notes in
  Math., vol. No. 99, Springer, Berlin-New York, 1969, pp.~1--138. \MR{251716}

\bibitem[AP25]{annala2025note}
Toni {Annala} and Piotr {Pstr{\k{a}}gowski}, \emph{{A note on weight
  filtrations at the characteristic}}, arXiv e-prints (2025), arXiv:2502.19626.

\bibitem[Bad02]{badzioch2002algebraic}
Bernard Badzioch, \emph{Algebraic theories in homotopy theory}, Ann. of Math.
  (2) \textbf{155} (2002), no.~3, 895--913. \MR{1923968}

\bibitem[Bal23]{balderrama2023algebraic}
William Balderrama, \emph{Algebraic theories of power operations}, J. Topol.
  \textbf{16} (2023), no.~4, 1543--1640 (English).

\bibitem[Bal25]{balderrama2021deformations}
\bysame, \emph{Deformations of homotopy theories via algebraic theories}, 2025,
  pp.~Paper No. 110496, 76. \MR{4950976}

\bibitem[BB04]{borceuxbourn2004malcev}
Francis Borceux and Dominique Bourn, \emph{Mal'cev, protomodular, homological
  and semi-abelian categories}, Mathematics and its Applications, vol. 566,
  Kluwer Academic Publishers, Dordrecht, 2004. \MR{2044291}

\bibitem[BCM78]{bendersky1978unstable}
Martin Bendersky, Edward~B Curtis, and Haynes~R Miller, \emph{The unstable
  {A}dams spectral sequence for generalized homology}, Topology \textbf{17}
  (1978), no.~3, 229--248.

\bibitem[BCN25]{brantner2021pd}
D.~Lukas~B. Brantner, Ricardo Campos, and Joost Nuiten, \emph{P{D} {O}perads
  and {E}xplicit {P}artition {L}ie {A}lgebras}, Mem. Amer. Math. Soc.
  \textbf{315} (2025), no.~1597, v+125. \MR{5003477}

\bibitem[BDG04]{realization_space_of_a_pi_algebra}
D.~Blanc, W.~G. Dwyer, and P.~G. Goerss, \emph{The realization space of a
  {$\Pi$}-algebra: a moduli problem in algebraic topology}, Topology
  \textbf{43} (2004), no.~4, 857--892.

\bibitem[Ber06]{bergner_rigidification_of_algebras}
Julia~E. Bergner, \emph{Rigidification of algebras over multi-sorted theories},
  Algebr. Geom. Topol. \textbf{6} (2006), 1925--1955. \MR{2263055
  (2007f:18005)}

\bibitem[BF78]{bousfieldfriedlander1978homotopy}
A.~K. Bousfield and E.~M. Friedlander, \emph{Homotopy theory of
  {{\(\Gamma\)}}-spaces, spectra, and bisimplicial sets}, Geom. {Appl}.
  {Homotopy} {Theory}, {II}, {Proc}. {Conf}., {Evanston} 1977, {Lect}. {Notes}
  {Math}. 658, 80-130 (1978)., 1978.

\bibitem[BGvO71]{barrgrilletosdol1971exact}
Michael Barr, Pierre~A. Grillet, and Donovan~H. van Osdol, \emph{Exact
  categories and categories of sheaves}, Lecture Notes in Mathematics, vol.
  236, Springer-Verlag, Berlin, 1971. \MR{3727441}

\bibitem[BH21]{bachmann2017norms}
Tom Bachmann and Marc Hoyois, \emph{Norms in motivic homotopy theory},
  Ast\'erisque (2021), no.~425, ix+207. \MR{4288071}

\bibitem[Bla01]{blanc2001realizing}
David Blanc, \emph{Realizing coalgebras over the {Steenrod} algebra}, Topology
  \textbf{40} (2001), no.~5, 993--1016 (English).

\bibitem[Bou89]{bousfield1989homotopy}
A.~K. Bousfield, \emph{Homotopy spectral sequences and obstructions}, Israel J.
  Math. \textbf{66} (1989), no.~1-3, 54--104. \MR{1017155}

\bibitem[Bou03]{bousfield2003cosimplicial}
AK~Bousfield, \emph{Cosimplicial resolutions and homotopy spectral sequences in
  model categories}, Geometry \& Topology \textbf{7} (2003), no.~2, 1001--1053.

\bibitem[Bou17]{bourn2017groups}
Dominique Bourn, \emph{From groups to categorial algebra}, Compact Textbooks in
  Mathematics, Birkh\"auser/Springer, Cham, 2017, Introduction to protomodular
  and Mal'tsev categories. \MR{3674493}

\bibitem[BPa]{usd2}
William Balderrama and Piotr Pstr\k{a}gowski, \emph{Unstable synthetic
  deformations {II}: Infinitesimal extensions}.

\bibitem[BPb]{usd3}
\bysame, \emph{Unstable synthetic deformations {III}: Naturality of the spiral
  tower}.

\bibitem[Bra17]{brantner2017lubin}
Lukas Brantner, \emph{{The Lubin-Tate Theory of Spectral Lie Algebras}},
  Ph.{D}. thesis, Harvard University, 2017, urn-3:HUL.InstRepos:41140243.

\bibitem[BRS17]{biedermannraptisstelzer2017realization}
Georg Biedermann, Georgios Raptis, and Manfred Stelzer, \emph{The realization
  space of an unstable coalgebra}, Ast{\'e}risque, vol. 393, Paris:
  Soci{\'e}t{\'e} Math{\'e}matique de France (SMF), 2017 (English).

\bibitem[BT00]{benderskythompson2000bousfield}
Martin Bendersky and Robert~D. Thompson, \emph{The {Bousfield}-{Kan} spectral
  sequence for periodic homology theories}, Am. J. Math. \textbf{122} (2000),
  no.~3, 599--635 (English).

\bibitem[CKP93]{carbonikellypedicchio}
A.~Carboni, G.~M. Kelly, and M.~C. Pedicchio, \emph{Some remarks on {M}al' tsev
  and {G}oursat categories}, Appl. Categ. Structures \textbf{1} (1993), no.~4,
  385--421. \MR{1268510}

\bibitem[DK89]{dwyerkan1989enveloping}
W.~G. Dwyer and D.~M. Kan, \emph{The enveloping ring of a {{\(\Pi\)}}-algebra},
  Advances in homotopy theory, {Proc}. {Conf}. in {Honour} of {I}.{M}. {James},
  {Cortona}/{Italy} 1988, {Lond}. {Math}. {Soc}. {Lect}. {Note} {Ser}. 139,
  49-60 (1989)., 1989.

\bibitem[DKS93]{dwyerkanstover1993e2}
W.~G. Dwyer, D.~M. Kan, and C.~R. Stover, \emph{An {{\({E}^ 2\)}} model
  category structure for pointed simplicial spaces}, J. Pure Appl. Algebra
  \textbf{90} (1993), no.~2, 137--152 (English).

\bibitem[DP61]{doldpuppe1961homologie}
Albrecht Dold and Dieter Puppe, \emph{Homologie nicht-additiver {F}unktoren.
  {A}nwendungen}, Ann. Inst. Fourier (Grenoble) \textbf{11} (1961), 201--312.
  \MR{150183}

\bibitem[EKMM07]{elmendorf2007rings}
Anthony~D Elmendorf, Igor Kriz, Michael~A Mandell, and J~Peter May,
  \emph{Rings, modules, and algebras in stable homotopy theory}, no.~47,
  American Mathematical Soc., 2007.

\bibitem[Fin60]{findlay1960reflexive}
G.~D. Findlay, \emph{Reflexive homomorphic relations}, Canad. Math. Bull.
  \textbf{3} (1960), 131--132. \MR{124251}

\bibitem[GH04]{moduli_spaces_of_commutative_ring_spectra}
P.~G. Goerss and M.~J. Hopkins, \emph{Moduli spaces of commutative ring
  spectra}, Structured ring spectra, London Math. Soc. Lecture Note Ser., vol.
  315, Cambridge Univ. Press, Cambridge, 2004, pp.~151--200.

\bibitem[GH05]{moduli_problems_for_structured_ring_spectra}
P.~G. Goerss and M.~J. Hopkins, \emph{Moduli problems for structured ring
  spectra}, 2005.

\bibitem[GIKR22]{gheorghe2022c}
Bogdan Gheorghe, Daniel~C Isaksen, Achim Krause, and Nicolas Ricka,
  \emph{{$\mathbb{C}$}-motivic modular forms}, Journal of the European
  Mathematical Society \textbf{24} (2022), no.~10, 3597--3628.

\bibitem[Hen08]{henriques2008integrating}
Andr{\'e} Henriques, \emph{Integrating {{\(L_\infty\)}}-algebras}, Compos.
  Math. \textbf{144} (2008), no.~4, 1017--1045 (English).

\bibitem[Heu21]{heuts2021lie}
Gijs Heuts, \emph{Lie algebras and {{\(v_n\)}}-periodic spaces}, Ann. Math. (2)
  \textbf{193} (2021), no.~1, 223--301 (English).

\bibitem[HHLN23]{haugsenghebestreitlinskensnuiten2023lax}
Rune Haugseng, Fabian Hebestreit, Sil Linskens, and Joost Nuiten, \emph{Lax
  monoidal adjunctions, two-variable fibrations and the calculus of mates},
  Proc. Lond. Math. Soc. (3) \textbf{127} (2023), no.~4, 889--957. \MR{4655344}

\bibitem[HL17]{hollingslawson2017wagner}
Christopher~D. Hollings and Mark~V. Lawson, \emph{Wagner's theory of
  generalised heaps}, Springer, Cham, 2017. \MR{3729305}

\bibitem[HRW]{hahnmotivic}
Jeremy Hahn, Arpon Raksit, and Dylan Wilson, \emph{A motivic filtration on the
  topological cyclic homology of commutative ring spectra}, Annals of
  Mathematics.

\bibitem[JP02]{jibladzepirashvili2002kan}
M.~Jibladze and T.~Pirashvili, \emph{On {K}an fibrations for {M}altsev
  algebras}, Georgian Math. J. \textbf{9} (2002), no.~1, 71--74. \MR{1916490}

\bibitem[Kla01]{klaus2001simplicial}
Stephan Klaus, \emph{On simplicial loops and {$H$}-spaces}, Topology Appl.
  \textbf{112} (2001), no.~3, 337--348. \MR{1824167}

\bibitem[Lam55]{lambek1955groups}
J.~Lambek, \emph{Groups and herds}, Bull. Amer. Math. Soc. \textbf{61} (1955),
  78.

\bibitem[Lam92]{lambek1992ubiquity}
\bysame, \emph{On the ubiquity of {M}al{'}cev operations}, Proceedings of the
  {I}nternational {C}onference on {A}lgebra, {P}art 3 ({N}ovosibirsk, 1989),
  Contemp. Math., vol. 131, Amer. Math. Soc., Providence, RI, 1992,
  pp.~135--146. \MR{1175879}

\bibitem[Law63]{lawvere1963functorial}
F~William Lawvere, \emph{Functorial semantics of algebraic theories},
  Proceedings of the National Academy of Sciences \textbf{50} (1963), no.~5,
  869--872.

\bibitem[Law19]{lawson2019calculating}
Tyler Lawson, \emph{Calculating obstruction groups for {$E_\infty$} ring
  spectra}, Homotopy theory: tools and applications, Contemp. Math., vol. 729,
  Amer. Math. Soc., [Providence], RI, [2019] \copyright 2019, pp.~179--203.
  \MR{3959600}

\bibitem[Law20]{lawson2020en}
\bysame, \emph{{{\(E_n\)}}-spectra and {Dyer}-{Lashof} operations}, Handbook of
  homotopy theory, Boca Raton, FL: CRC Press, 2020, pp.~793--849 (English).

\bibitem[Lin69]{linton1969coequalizers}
F.~E.~J. Linton, \emph{Coequalizers in categories of algebras}, Sem. on
  {T}riples and {C}ategorical {H}omology {T}heory ({ETH}, {Z}\"urich, 1966/67),
  Lecture Notes in Math., vol. No. 80, Springer, Berlin-New York, 1969,
  pp.~75--90. \MR{244341}

\bibitem[LS10]{luptonsmith2010whitehead}
Gregory Lupton and Samuel~Bruce Smith, \emph{Whitehead products in function
  spaces: {Q}uillen model formulae}, J. Math. Soc. Japan \textbf{62} (2010),
  no.~1, 49--81. \MR{2648216}

\bibitem[Lura]{higher_algebra}
Jacob Lurie, \emph{Higher algebra},
  \url{http://www.math.harvard.edu/~lurie/papers/HA.pdf}.

\bibitem[Lurb]{lurie_spectral_algebraic_geometry}
\bysame, \emph{Spectral algebraic geometry},
  \url{http://www.math.harvard.edu/~lurie/papers/SAG-rootfile.pdf}.

\bibitem[Lur09]{lurie_higher_topos_theory}
\bysame, \emph{Higher topos theory}, Annals of Mathematics Studies, vol. 170,
  Princeton University Press, Princeton, NJ, 2009.

\bibitem[Lur11]{lurie2011dag8}
Jacob Lurie, \emph{Derived algebraic geometry viii: Quasi-coherent sheaves and
  tannaka duality theorems}, 2011.

\bibitem[Lur15]{lurie2015rotation}
\bysame, \emph{Rotation invariance in algebraic {K}-theory},
  \url{https://www.math.ias.edu/~lurie/papers/Waldhaus.pdf}, 2015.

\bibitem[Lur17]{lurie2017higheralgebra}
\bysame, \emph{Higher algebra},
  \url{http://www.math.ias.edu/~lurie/papers/HA.pdf}, 2017.

\bibitem[Mal54]{malcev1954general}
A.~I. Mal'cev, \emph{On the general theory of algebraic systems}, Mat. Sb.
  (N.S.) \textbf{35(77)} (1954), 3--20. \MR{65533}

\bibitem[Moo55]{moore1954homotopie}
J.~C. Moore, \emph{Homotopie des complexes monoïdaux, i}, Séminaire Henri
  Cartan \textbf{7} (1954-1955), no.~2, 1--8 (fre).

\bibitem[MP89]{makkaipare1989accessible}
Michael Makkai and Robert Par{\'e}, \emph{Accessible categories: {The}
  foundations of categorical model theory}, Contemp. Math., vol. 104,
  Providence, RI: American Mathematical Society, 1989 (English).

\bibitem[Por65]{porter1965spaces}
G.~J. Porter, \emph{Spaces with vanishing {W}hitehead products}, Quart. J.
  Math. Oxford Ser. (2) \textbf{16} (1965), 77--84. \MR{172292}

\bibitem[PP21]{patchkoria2021adams}
Irakli {Patchkoria} and Piotr {Pstr{\k{a}}gowski}, \emph{{Adams spectral
  sequences and Franke's algebraicity conjecture}}, arXiv e-prints (2021),
  arXiv:2110.03669.

\bibitem[Pr{\"u}24]{prufer1924theorie}
H.~Pr{\"u}fer, \emph{Theorie der {Abelschen} {Gruppen}. {I}.
  {Grundeigenschaften}.}, Math. Z. \textbf{20} (1924), 165--187 (German).

\bibitem[Pst23a]{pstrkagowski2018synthetic}
Piotr Pstr{\k{a}}gowski, \emph{Synthetic spectra and the cellular motivic
  category}, Invent. Math. \textbf{232} (2023), no.~2, 553--681 (English).

\bibitem[Pst23b]{pstrkagowski2023moduli}
Piotr Pstrągowski, \emph{Moduli of spaces with prescribed homotopy groups},
  Journal of Pure and Applied Algebra \textbf{227} (2023), no.~10, 107409.

\bibitem[Pst25]{pstrkagowski2025perfect}
Piotr Pstr{\k{a}}gowski, \emph{Perfect even modules and the even filtration},
  Journal of the European Mathematical Society (2025).

\bibitem[PV22]{abstract_gh_theory}
Piotr Pstr{\k{a}}gowski and Paul VanKoughnett, \emph{Abstract
  {G}oerss-{H}opkins theory}, 2022, pp.~Paper No. 108098, 51. \MR{4363589}

\bibitem[Qui67]{quillen1967homotopical}
Daniel~G. Quillen, \emph{Homotopical algebra}, Lecture Notes in Mathematics,
  vol. No. 43, Springer-Verlag, Berlin-New York, 1967. \MR{223432}

\bibitem[Rak]{arpon_spectral_algebraic_theories}
Arpon Raksit, \emph{Spectral algebraic theories {I}: Derived categories}, To
  appear.

\bibitem[Rez]{rezk_realization_fibration}
Charles Rezk, \emph{When are homotopy colimits compatible with homotopy base
  change?},
  \url{http://www.math.uiuc.edu/~rezk/i-hate-the-pi-star-kan-condition.pdf}.

\bibitem[Rez09]{rezk2009congruence}
Charles Rezk, \emph{The congruence criterion for power operations in {M}orava
  {$E$}-theory}, Homology Homotopy Appl. \textbf{11} (2009), no.~2, 327--379.
  \MR{2591924}

\bibitem[Sey80]{seymour1980kan}
R.~M. Seymour, \emph{Kan fibrations in the category of simplicial spaces},
  Fundam. Math. \textbf{106} (1980), 141--152 (English).

\bibitem[Sha74]{shafaat1974note}
Ahmad Shafaat, \emph{A note on {M}al'cevian varieties}, Canad. Math. Bull.
  \textbf{17} (1974), no.~4, 609. \MR{382125}

\bibitem[Smi76]{smith1976malcev}
Jonathan D.~H. Smith, \emph{Mal'cev varieties}, Lecture Notes in Mathematics,
  vol. Vol. 554, Springer-Verlag, Berlin-New York, 1976. \MR{432511}

\bibitem[Sto90]{stover1990vankampen}
Christopher~R. Stover, \emph{A van {Kampen} spectral sequence for higher
  homotopy groups}, Topology \textbf{29} (1990), no.~1, 9--26 (English).

\bibitem[Vag53]{wagner1953theory}
V.~V. Vagner, \emph{The theory of generalized heaps and generalized groups},
  Mat. Sbornik N.S. \textbf{32/74} (1953), 545--632. \MR{59267}

\bibitem[{van}25]{van2025introduction}
Sven {van Nigtevecht}, \emph{{An introduction to filtered and synthetic
  spectra}}, arXiv e-prints (2025), arXiv:2509.21127.

\bibitem[Vr13]{vokrinek2013computing}
Luk\'a\v~s Vok\v~r\'inek, \emph{Computing the abelian heap of unpointed stable
  homotopy classes of maps}, Arch. Math. (Brno) \textbf{49} (2013), no.~5,
  359--368. \MR{3159334}

\bibitem[Vr14]{vokrinek2014heaps}
\bysame, \emph{Heaps and unpointed stable homotopy theory}, Arch. Math. (Brno)
  \textbf{50} (2014), no.~5, 323--332. \MR{3303781}

\bibitem[Wil73]{wilson1973omega}
W~Stephen Wilson, \emph{The {$\Omega$}-spectrum for {B}rown-{P}eterson
  cohomology. {P}art {I}}, Commentarii Mathematici Helvetici \textbf{48}
  (1973), no.~1, 45--55.

\bibitem[Wil75]{wilson1975omega}
\bysame, \emph{The Ω-spectrum for {B}rown-{P}eterson cohomology part ii},
  American Journal of Mathematics \textbf{97} (1975), no.~1, 101--123.

\bibitem[Wra70]{wraith1969algebraic}
G.~C. Wraith, \emph{Algebraic theories}, Lectures Autumn 1969. Lecture Notes
  Series, No. 22, Matematisk Institut, Aarhus Universitet, Aarhus, 1970.
  \MR{0262334}

\end{thebibliography}
\endgroup

\end{document}